\numberwithin{equation}{section}
\newtheorem{theorem}{Theorem}[section]
\newtheorem{prop}[theorem]{Proposition}
\newtheorem{lemma}[theorem]{Lemma}
\theoremstyle{remark}
\newtheorem*{remark}{Remark}
\theoremstyle{definition}
\newcommand{\tC}{\widetilde{\chi}}
\newcommand{\onabla}{\overline{\nabla}}
\newcommand{\sonabla}{\slashed{\onabla}}
\newcommand{\oH}{G}
\newcommand{\R}{\mathbb{R}}
\newcommand{\W}{\mathcal{W}}
\newcommand{\T}{{\mathcal{T}^{}}}
\renewcommand{\S}{\mathcal{S}}
\DeclareMathOperator{\tr}{tr}
\newcommand{\D}{\mathcal{D}}
\newcommand{\tD}{\widetilde{\D}}
\newcommand{\hD}{D}
\newcommand{\M}{\mathcal{M}}
\newcommand{\pa}{\partial}
\newcommand{\ve}{\varepsilon}
\newcommand{\fd}{\langle \pa_{\theta{}_{\!}} \rangle}
\newcommand{\fdh}{\fd^{\!\nicefrac{1}{2}}}
\newcommand{\fdm}{\fd_\mu}
\newcommand{\fdhm}{\fd^{\!\nicefrac{1}{2}}_\mu}
\newcommand{\Dve}{\widetilde{\Delta}}
\newcommand{\pave}{\widetilde{\pa}}
\newcommand{\xve}{\widetilde{x}}
\newcommand{\wGamma}{\widetilde{\Gamma}}
\newcommand{\wg}{\widetilde{g}}
\newcommand{\sm}{S_\ve}
\newcommand{\vertiii}[1]{{\vert\kern-0.25ex\vert\kern-0.25ex\vert #1
    \vert\kern-0.25ex\vert\kern-0.25ex\vert}}
\let\div\relax
\DeclareMathOperator{\curl}{curl}
\DeclareMathOperator{\div}{div}
\DeclareMathOperator{\tsum}{{\textstyle{\sum}}}
\newcommand{\shortminus}{\scalebox{0.75}[1.0]{\( - \)}}%textminus \text{-}
\newcommand*{\bigtwo}[1]{\vcenter{\hbox{\scalebox{1.4}{\ensuremath#1}}}}
\newcommand*{\bigthree}[1]{\vcenter{\hbox{\scalebox{1.6}{\ensuremath#1}}}}
\newcommand*{\bigfour}[1]{\vcenter{\hbox{\scalebox{1.8}{\ensuremath#1}}}}
\newcommand*{\bigletter}[1]{{\hbox{\scalebox{1.6}{\ensuremath#1}}}}
\newcommand*{\subbigletter}[1]{{\hbox{\scalebox{1.1}{\ensuremath#1}}}}
\newcommand{\Tau}{\bigletter{\tau}}
\newcommand{\wTau}{\widetilde{\Tau}}
\newcommand{\subTau}{\subbigletter{\tau}}
\newcommand{\wsubTau}{\widetilde{\subTau}}
\newcommand{\nquad}{\!\!\!\!\!\!}
\newcommand{\na}{{\nabla\!}}
\newcommand{\nave}{\widetilde{\pa}}
\newcommand{\wcurl}{\widetilde{\curl}}
\newcommand{\sV}{\overline{V}}
\newcommand{\su}{\overline{u}}
\newcommand{\sg}{\overline{g}}
\newcommand{\tH}{\widetilde{H}}
\newcommand{\Dt}{\hD_s}
\newcommand{\N}{\mathcal{N}}
\newcommand{\n}{n}
\date{\today}
\title{On the local well-posedness for the relativistic Euler equations for
%an isolated
a liquid body}
\author{Daniel Ginsberg}
\address[D.G.]{Program in Applied and Computational Mathematics, Princeton University, Princeton, NJ 08544}
\email{ dg42@princeton.edu}
\author{Hans Lindblad} \address[H.L.]{Johns Hopkins University, Department of Mathematics, 3400 N.\@ Charles St., Baltimore, MD 21218, USA}
\email{lindblad@math.jhu.edu}
\begin{document}

\mathtoolsset{showonlyrefs=true}
\nocite{*}

\begin{abstract}
  We prove a local existence theorem for
  the free boundary problem for a relativistic fluid in a fixed
   spacetime. Our proof involves an a priori
  estimate which only requires control of derivatives tangential to the boundary, which holds also in the Newtonian compressible case.
\end{abstract}
\maketitle
\tableofcontents
\section{Introduction}

Fix a Lorentz metric $g$ and a four-dimensional globally hyperbolic spacetime
$(\M,g)$.
In units where the speed of light is one,
the motion of a perfect fluid in the spacetime $(\M, g)$ is described by
Einstein's equations
\begin{equation}
  R_{\mu\nu} - \frac{1}{2}g_{\mu\nu} R  =  T_{\mu\nu},
  \label{eineul}
\end{equation}
where $R_{\mu\nu}$ is the Ricci curvature of $g$, $R = g^{\mu\nu} R_{\mu\nu}$
is the scalar curvature and $T$ is the energy-momentum tensor of a perfect fluid,
\begin{equation}
    T_{\mu\nu} = (\rho + p) u_\mu u_\nu + p g_{\mu\nu}.
    \label{enmom}
\end{equation}
Here, $u = u^\mu \pa_\mu$ is the fluid velocity, by assumption a unit timelike future-directed
vector,
\begin{equation}
    g(u,u)=-1,\qquad \text{and} \qquad
    g(u, \Tau) <0,
\end{equation}
where $\Tau$ is the future-directed timelike vector defining the time axis in $(\M, g)$.
The quantity $\rho\! \geq\! 0$ is the energy density of matter and $p\! \geq \! 0$
is the pressure. In \eqref{enmom},  $u_\mu\!= \!g_{\mu\nu} u^\nu$
are the components of the one-form associated to $u$.
By the Bianchi identity, Einstein's equations \eqref{eineul} imply
\begin{equation}
    \nabla^\mu T_{\mu\nu} = 0,
    \label{conservation}
\end{equation}
where $\nabla$ denotes the Levi-Civita connection with respect to the metric $g$.

We assume that mass is conserved, so that if $n$ denotes
the mass density,
 \begin{equation}
  \na_\mu (u^\mu n) = 0.
  \label{particle}
 \end{equation}
 For an isentropic fluid, the laws of thermodynamics give
 the following relation between $p, \rho, n$,
 \begin{equation}
 \frac{d\rho}{dn} = \frac{p + \rho}{n}.
 \label{thermo}
 \end{equation}

 We will consider here a barotropic fluid, meaning that
 the energy density and pressure are determined from the mass density
 alone by prescribed equations of state,
 \begin{equation}
  \rho = E(n), \qquad p = P(n),
  \label{baro}
 \end{equation}
 where $P$ and $E$
 are assumed to be invertible smooth positive functions of $n\geq 0$.
   We can therefore think of any one
 of $p, \rho, n$ as the fundamental thermodynamical variable. In fact it
 is more conveninent to work in terms of the enthalpy $\sigma$ defined by
 \begin{equation}
 \sigma = \frac{p + \rho}{n}.
  \label{sigmadef}
 \end{equation}
 Introducing the rescaled fluid velocity $v_\mu = \sqrt{\sigma} u_\mu$,
 combining the equations \eqref{conservation}-\eqref{particle} with
 \eqref{thermo} we find the system (see \cite{Christo1})
\begin{alignat}{2}
 v^\nu\nabla_\nu v^\mu + \frac{1}{2} \nabla^\mu \sigma &=
 0,
 &&\qquad \text{ in } \D_t, \label{rescaledreleul}\\
 v^\nu\nabla_\nu e(\sigma) + \nabla_\mu v^\mu &= 0,
 &&\qquad
 \text{ in } \D_t,
 \label{rescaledrelcont}
\end{alignat}
with $e(\sigma) = \log (n(\sigma)/\sqrt{\sigma})$, where
$n(\sigma)$ is obtained by inverting the relation \eqref{sigmadef} after
expressing $p = P(n), \rho = \rho(n)$.
We define the sound speed by
\begin{equation}
 \eta^2 = \frac{d}{d \rho} P(\rho).
 \label{soundspeed}
\end{equation}
In our units the speed of light is one and so a
 basic physical requirement on $\eta$ is
\begin{equation}
 \eta^2 \leq 1.
 \label{soundspeedbd}
\end{equation}
In this case the quantity $e'(\sigma) \!\geq \!0$.
The case $\eta \!\equiv\! 1$ corresponding to $e'(\sigma) \!= \!0$
 is the relativistic analogue
of an incompressible fluid for which the continuity equation
\eqref{rescaledrelcont} takes the form $\nabla_{\!{}_{\!}\mu} v^\mu \!=\! 0$.
We consider here an equation of state with sufficiently ``large'' sound speed,
\begin{equation}
 1- \delta \leq \eta^2,
 \label{largesoundspeed}
\end{equation}
for $\delta$ sufficiently small.

Let $t$ denote the time function
associated to $(\mathcal{M}, g)$. We are interested in the system
\eqref{rescaledreleul}-\eqref{rescaledrelcont} when $(v, \sigma)$ describe a
fluid body surrounded by a pressureless dust and where
the boundary moves with the velocity
of the fluid. If at time $t$ the fluid body occupies a region $\D_t$,
 the boundary conditions are
\begin{alignat}{2}
  p &=0, \qquad \text{ on } \pa \D_t,\label{vac}\\
g(  \mathcal{N}, v)  &= 0, \qquad \text{ on } \Lambda = \cup_{0 \leq t \leq T}\,
  \pa \D_t , \label{freebdy}
\end{alignat}
where $\mathcal{N}$ is a normal vector field to $\Lambda$.
These conditions ensure that the integral form
of the conservation laws \eqref{conservation}-\eqref{particle} hold across
the surface $\Lambda$ and they
imply energy conservation \eqref{basicidentrel}.
From \eqref{vac}, \eqref{baro} we get
$\rho \!=\! \rho_0$ on $\pa \D_t\!$ for a constant $\rho_0$. We will consider
equations of state with
\begin{equation}
  \rho_0 > 0,
  \label{liquid}
\end{equation}
in which case the fluid is caled a ``liquid''.
We will also assume that the mass and energy densitities $\rho, n$ are
strictly bounded below in the fluid domain,
\begin{align}
 \rho \geq \rho_1 > 0, \qquad
 n \geq n_1 > 0, \qquad \text{ in } \D_t.
 \label{densitypositivity}
\end{align}
In this case the physical energy \eqref{basicidentrel} gives uniform
control over all components of $u$ up to the boundary since even though
$p = 0$ at the boundary, we have
$g(u,u)=u_\subTau^2+g(\su,\su)=-1$ where $u_{\subTau} = g(u, \tau)$.
In order to get bounds for higher-order energies we require that the Taylor sign
condition holds
\begin{equation}
 \pa_\mathcal{N} p \leq -c < 0, \qquad \text{ on } \Lambda.
 \label{tsc}
\end{equation}
In the non-relativistic setting it was shown in \cite{E1} that the
corresponding free-boundary problem for Euler's equations is ill-posed
in Sobolev spaces unless \eqref{tsc} holds.

The problem \eqref{rescaledreleul}-\eqref{rescaledrelcont} with liquid
boundary condition \eqref{liquid}
was considered by \cite{Christo1} as a model
for the gravitational collapse of a star. See also \cite{FS19}.

Here we consider the system \eqref{conservation}-\eqref{particle} with
$(\M, g)$ a fixed globally hyperbolic spacetime, with initial
data satisfying the conditions \eqref{densitypositivity} and the  sign
condition \eqref{tsc}. Our main result is that for sufficiently
smooth initial data satisfying
compatibility conditions (which are given in section
\ref{existencerel}), and for a sufficiently smooth background metric
$g$, the problem \eqref{rescaledreleul}-\eqref{rescaledrelcont} is locally well-posed.
\begin{theorem}
  \label{mainrelthm}
  Fix $r \geq 10$, a globally hyperbolic spacetime
  $( M\times [0,T],g)$, a global coordinate system $\{x^1, x^2, x^3\}\times \{t\}$
  on $M\times [0, T]$, and invertible functions
  $P, E \in C^\infty(\R_{\geq 0};\R_{\geq 0})$ so that the
  sound speed \eqref{soundspeed} satisfies
  \eqref{soundspeedbd} and \eqref{largesoundspeed} for $\delta$ sufficiently
  small.
   Suppose that expressed in this coordinate system the
  components of the metric $g_{\mu\nu}$ satisfy
  $\pa_t^k g_{\mu\nu}(t,\cdot) \in C^{r-k+2}(M)$ for $k = 0,..., r$, where
  $C^j(M)$ denotes the usual H\"{o}lder space on $M$.

  Let $\D_0 \subset M\times \{t = 0\}$
  be diffeomorphic to the unit ball and fix initial data $\mathring{u},
  \mathring{\rho}$ with
  \begin{equation}
     {\sum}_{\mu = 0}^3
   \|\mathring{u}^\mu\|_{H^{r}(\D_0)}
   + \|\mathring{\rho}\|_{H^{r}(\D_0)} < \infty, \quad
   S \in \mathcal{S},
   \qquad
   \mathring{\rho} \geq \rho_1 > 0,
   %\label{initialbds}
  \end{equation}
  for a constant $\rho_1$, and moreover
  which satisfies the compatibility conditions
  \ref{relcompat} to order $r$.

  Then the problem \eqref{rescaledreleul}-\eqref{rescaledrelcont} with boundary conditions
  \eqref{vac}-\eqref{freebdy} has a unique solution
  $u^\mu(t)\! \in\! H^r(\D_t)$, $0 \!\leq \!\mu\! \leq \!3$, $\rho(t)\! \in \! H^r(\D_t)$
  with $\rho\! =\! E(n)$, $p \!=\! P(n)$ for $t\! \leq \!T_0 $ for some
  $0\! < \!T_0\! \leq\! T$,
  with initial data $u|_{t= 0} = \mathring{u}, \rho|_{t = 0} = \mathring{\rho}$.
  The Taylor sign condition \eqref{tsc} holds on $[0, T_0]$
  with $c$ replaced by $c/2$.
\end{theorem}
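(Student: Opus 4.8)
The plan is to fix the moving domain, construct solutions by a tangentially smoothed iteration, close a priori estimates that require only tangential derivatives of $(v,\sigma)$ together with elliptic div--curl estimates, pass to the limit, and obtain uniqueness from a lower order energy estimate. First I would fix the free boundary: write $\Omega=\D_0$ for the reference ball and introduce a time dependent diffeomorphism $\Phi_t\colon\Omega\to\D_t$ — obtained either as the flow of the fluid velocity $v$ (Lagrangian coordinates) or as an elliptic extension of a parametrization of $\pa\D_t$ — and pull the metric $g$, the rescaled velocity $v$ and the enthalpy $\sigma$ back to $\Omega$. Under this change of variables \eqref{vac} becomes ``$\sigma$ equals a fixed constant on $\pa\Omega$'' (since $p=0$ forces $\rho=\rho_0$, hence $n$ and $\sigma$ constant by \eqref{baro}, \eqref{sigmadef}), the kinematic condition \eqref{freebdy} says $\pa\D_t$ is transported by $v$, the system \eqref{rescaledreleul}--\eqref{rescaledrelcont} becomes a first order quasilinear system on the fixed domain $\Omega$ with coefficients depending on $\Phi$, and \eqref{tsc} reads $-\pa_\N p\ge c>0$ on $\pa\Omega$.

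For the construction I would regularize: a na\"ive linearization loses a derivative because the regularity of the free surface is coupled to that of $v$, so I replace $v$ (and the domain map) by its image under a tangential mollifier $\sm$ and solve, at each step, a linear problem that recovers the lost derivative, producing a family $(v_\ve,\sigma_\ve,\Phi_\ve)$ of approximate solutions; equivalently one mollifies the data and solves a smoothed problem. Here the compatibility conditions \ref{relcompat} ensure the (approximate) data are consistent with the boundary conditions to order $r$ so the iterates stay bounded in $H^r$ up to $t=0$.

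The heart of the matter is the a priori estimate. I would introduce a higher order energy $E_r(t)$ built from the $L^2(\D_t)$ norms of up to $r$ tangential derivatives of $v$ and $\sigma$, a positive boundary term on $\pa\D_t$ weighted by the Taylor sign factor $-\pa_\N p$, and the $H^{r-1}(\D_t)$ norms of $\div v$ and $\curl v$. Commuting vector fields tangent to $\Lambda$ through \eqref{rescaledreleul}--\eqref{rescaledrelcont}, integrating by parts, and using \eqref{freebdy} to cancel the dangerous boundary contribution up to the sign term (which is controlled because it is positive by \eqref{tsc}), one obtains $\tfrac{d}{dt}E_r\le\P(E_r)$. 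The large sound speed hypothesis \eqref{largesoundspeed} enters here: since $e'(\sigma)=O(\delta)$, the compressible correction in \eqref{rescaledrelcont} is a small perturbation, the energy stays coercive with constants uniform in $\delta$, and the estimate degenerates continuously to the relativistic incompressible problem. To upgrade tangential control to full $H^r$ control I would use elliptic estimates on $\D_t$: $\div v$ is read from the continuity equation \eqref{rescaledrelcont}, $\curl v$ satisfies a transport equation obtained by taking the curl of \eqref{rescaledreleul} (the relativistic vorticity two-form), and the normal trace $v\cdot\N$ on $\pa\D_t$ is controlled by the tangential energy; a Hodge-type estimate then bounds $\|v\|_{H^r(\D_t)}$ while an elliptic estimate for $\Phi$ bounds the domain, and Gronwall closes everything on a time interval $[0,T_0]$ independent of $\ve$.

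The step I expect to be the main obstacle is exactly making the scheme close with only tangential control: normal derivatives of $v$ are available only through the elliptic estimates, so one must verify that no uncontrolled normal derivative of $v$, of $\sigma$, or of the free surface ever appears after commuting — in particular in the vorticity equation and in the boundary terms — and that every constant, including those in the elliptic estimates and in the lower bound in \eqref{tsc}, stays uniform as $\ve\to0$ (and as $\delta\to0$). Granting the uniform bounds, weak compactness extracts a limit $(v,\sigma)$ solving the free boundary problem on $[0,T_0]$ with $u^\mu(t),\rho(t)\in H^r(\D_t)$ and $\D_t$ an $H^r$ domain, attaining the prescribed data, and \eqref{tsc} persists on a possibly shorter interval with $c$ replaced by $c/2$ by continuity in $t$. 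Uniqueness follows by subtracting two solutions, mapping both to a common reference domain, and running the energy estimate at a lower order (an $L^2$-based norm, with nonlinear terms controlled by the already established $H^r$ bounds), again using the positivity in \eqref{tsc} for the boundary term.
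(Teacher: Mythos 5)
Your high-level outline — Lagrangian coordinates, tangential mollification in the style of Coutand--Shkoller, an energy built from tangential derivatives plus a Taylor-sign boundary term plus separate div/curl control, compactness as $\ve\to0$ — does match the paper's framework. But the proposal stops precisely where you yourself flag the main obstacle, and the devices that make the scheme actually close are absent; as written, several steps would fail.

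\textbf{1. The commutator is not lower order without the good unknowns.} You write that commuting tangential $T^I$ through \eqref{rescaledreleul}--\eqref{rescaledrelcont}, integrating by parts, and using \eqref{freebdy} yields $\tfrac{d}{dt}E_r\le\P(E_r)$. This fails for the naive $T^I v,\,T^I\sigma$: since $[T^I,\pave_\mu]$ contains $\pave_\mu T^I \xve^\nu\,\pave_\nu(\cdot)$, one picks up $r{+}1$ derivatives of the coordinate map that are not majorized by $E_r$, and the operator $T^I\pave_\mu$ is not (anti-)symmetric so the boundary term does not reduce cleanly to the Taylor-sign term. The paper's fix is to introduce the Alinhac-type unknowns
$V^I_\mu=\wg_{\mu\nu}T^IV^\nu-\wg_{\mu\nu}\pave_\alpha V^\nu\,T^I\xve^\alpha$ and $\sigma^I=T^I\sigma-\pave_\nu\sigma\,T^I\xve^\nu$,
for which \eqref{eq:highereulermodifiedrel}--\eqref{eq:highercontinuitymodifiedrel} is symmetric to highest order; the top-order $\xve$ contribution cancels, and on $\pa\Omega$ one has $\sigma^I=\widetilde\N_\nu T^I\xve^\nu\,|\pave\sigma|$, which is exactly what lets $Q[V^I](\widetilde V,\widetilde\N)$ produce a time derivative of a square on $\Lambda$. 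Without this device your energy identity does not close, even before smoothing.

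\textbf{2. The relativistic coercivity is not addressed.} Your sketch silently treats the integrand on the spacelike slices as positive. In the Lorentzian setting the $L^2$-positivity of the energy is the positivity of the momentum flux $Q[V^I](\widetilde V,n^\Sigma)$ for two timelike future-directed vectors (Lemma \ref{emtensorpositivity}) together with the construction of the Riemannian metric $\oH$ in \eqref{Gdef}, which rewrites the wave operator as $D_s^2$ plus an operator elliptic on $\{s=\text{const}\}$. Some version of this is needed both for the energy coercivity and for your ``Hodge-type'' div--curl estimate to even make sense on slices; it is a genuine step, not a routine adaptation from the Newtonian case.

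\textbf{3. The linear problem is unspecified, and the paper's choice is the point.} You say ``solve, at each step, a linear problem that recovers the lost derivative'' — but what that problem is determines whether the scheme converges. The authors' main departure from the existing existence proof (Oliynyk) is to derive a second-order wave equation
$e'(\sigma)\hD_s^2\sigma-\tfrac12\widetilde\nabla_\nu(\wg^{\mu\nu}\widetilde\nabla_\mu\sigma)=\text{(rhs)}$
with the \emph{Dirichlet} boundary condition $\sigma|_{\pa\Omega}=\overline\sigma$, solve it (Galerkin) for $\sigma$, then recover $V$ from the momentum equation as an ODE in time. Your proposal, as written, does not identify where a well-posed linear problem comes from, and an arbitrary choice of linearization does not have this structure.

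\textbf{4. Uniformity in $\ve$ requires extra half-derivative control of the coordinate.} For $\ve>0$ the symmetrization of the boundary term leaves over a contribution controlled only by \emph{all} tangential components of $T^I x_\ve$ on $\pa\Omega$, not just the normal one that the energy bounds. Closing this requires the additional $\fdh$ half-derivative div--curl machinery for $S_\ve x$ (Sections \ref{sec:improvediv}, \ref{sec:improvedcurl}, \ref{controll2}), plus the companion wave-equation estimate with one extra time derivative. Nothing in your sketch recovers this; Gronwall alone will not give a time interval independent of $\ve$.

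In short: the skeleton is right, but the load-bearing pieces — the good unknowns, the positivity structure of $Q$ and the elliptic decomposition of the wave operator, the Dirichlet wave equation for $\sigma$ as the linearization, and the half-derivative bound on $S_\ve x$ — are exactly what is missing, and each is essential.
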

Apriori bounds for this system were previously proven in \cite{O17}, \cite{G19}.
Existence for this problem was first proven in
\cite{O19}, by solving an evolution equation for the boundary condition for the velocity and using a Galerkin method.
In \cite{Miao20}, the authors gave a simpler proof using the same idea
in the special case that $g$ is the Minkowski metric and the fluid is irrotational and divergence free.
Our approach is different, for existence we instead solve a Dirichlet problem for the enthalpy. We also give a simplification and an improvement of our previous proof for the related compressible case \cite{GLL19}. Our norms use only one derivative normal to the boundary and apart from that only tangential regularity, and this is new also in the compressible case. We expect this to be important for the nonlinear coupled problem where the metric satisfies Einstein's equations since these hold also outside the domain and we expect that the metric will have limited normal regularity over the boundary, as was the case for the Newtonian gravity potential in \cite{GLL19}. Moreover we get additional regularity of the Lagrangian coordinates and hence of the boundary.

In the remainder of this section we give an outline of the main ideas involved
in the proof.

\subsection{The energy estimate}
There is a physical energy associated to the conservation law \eqref{conservation}.
Multiplying \eqref{conservation}
by the generator of the time axis $\Tau$ and integrating over the region bounded by two time slices
$\D_0, \D_t$ and the lateral boundary $\Lambda$, after using the
boundary conditions \eqref{vac}-\eqref{freebdy},
\begin{equation}
 \mathcal{E}_0(t) = \mathcal{E}_0(0) +
 \int_0^t \int_{\D_t} T_{\mu\nu} \mathcal{L}_\subTau g^{\mu\nu}\, dx dt,
 \qquad \text{ where } \mathcal{E}_0(t) = \int_{\D_t} \rho u_\subTau^2 + p g(\su, \su)\, dx
 \label{basicidentrel}
\end{equation}
Here $\mathcal{L}_{\subTau}g$ denotes the Lie derivative of $g$ with
respect to $\Tau$. The last term vanishes if $g$ is stationary with
respect to $\Tau\!$, e.g.  when $g$ is the Minkowski metric and $t$ is
the standard time coordinate.

In order to prove a higher-order version of the energy identity \eqref{basicidentrel},
we introduce Lagrangian coordinates which fix the boundary.
Let $\Omega \subset \mathcal{M}\cap\{t = 0\}$
denote the unit ball. The Lagrangian coordinates $x^\mu = x^\mu(s, y)$ are
maps $x^\mu(s,\cdot): \Omega \to \mathcal{M}$
given by solving
\begin{equation}
 \frac{d}{ds} x^\mu(s, y) = v^\mu(x(s,y)),
 \qquad x^0(0, y) = 0, x^i(0, y) = y^i.
\end{equation}
We fix a family of vector fields in the $y$-coordinates $T = T^a(y)\pa_{y^a}$
which are tangent to the boundary $\pa \Omega$ at the boundary. Then
$T$ commutes with the material derivative
\begin{equation}
D_s=v^\mu \pa_\mu,
\end{equation}
 but the commutator $[T, \pa_\mu]$ involves
$x$ to highest order,
\begin{equation}
 [T, \pa_\mu] = -(\pa_\mu T x^\nu)\pa_\nu.
 \label{com}
\end{equation}
Let $T^I$ denote a collection of the vector fields $T$.
Applying $T^I\!$ to  \eqref{rescaledreleul}  using
\eqref{com} we find that
\begin{equation}
 v^\nu \na_\nu T^I \!v^\mu\! - \frac{1}{2} \nabla^\mu T^I\! x^\nu\,\, \na_\nu \sigma
 + \frac{1}{2} \nabla^\mu T^I \!\sigma = F^{\mu I}\!\!\!,
 \qquad
 e'({}_{\!}\sigma{}_{\!}) v^\mu\nabla_\mu T^I\!\sigma
 +\na_\mu T^I\! v^\mu\! - \na_\mu T^I \!x^\nu\,\,  \na_\nu v^\mu\!= G^I\!\!,
 \label{TIeqn}
\end{equation}
for lower-order terms $F^{\mu I}, G^I$.
If we define
\begin{equation}
 v_I^\mu = T^I v^\mu - T^I x^\nu \nabla_\nu v^\mu,
 \qquad
 \sigma_I = T^I \sigma - T^I x^\nu \nabla_\nu \sigma,
\end{equation}
then \eqref{TIeqn} take the form
\begin{equation}
 v^\nu \nabla_\nu v^{\mu}_I + \frac{1}{2} \nabla^\mu \sigma_I = F^{\mu}_I,
 \qquad
 e'(\sigma) v^\mu\nabla_\mu \sigma_I +
\nabla_\mu v^{\mu}_I = G_I,
 \label{diffmom}
\end{equation}
where $F^{\mu}_I, G_I$ are lower-order. The variables
$v_I, \sigma_I$ are related
to Alinhac's good unknowns and also to covariant differentiation in the Lagragian coordinates used in \cite{CL00}, see
\eqref{alinhacdiscussion}.

Multiplying both sides of the first equation in \eqref{diffmom} by $g_{\mu\nu} v^\nu_I$
we get
\begin{equation}
 \frac{1}{2} \nabla_\nu \left( v^\nu g(v_I, v_I) + v^{ \nu}_I \sigma_I
 + v^\nu e'(\sigma) (\sigma_I)^2\right)
 = g(F_I, v_I) + \frac{1}{2}\sigma G_I
 - \nabla_\nu(v^\nu e'(\sigma)) (\sigma_I)^2.
 \label{divformdiffmom}
\end{equation}
We note that since $\sigma = -g(v,v)$, to highest order we have
$\sigma^I\! = -2 g(v_I, v)$,
and we get
\begin{equation}
 \frac{1}{2}\nabla_\nu \left( v^\nu g(v_I, v_I) -2 v^{\nu}_I g(v_I, v)
 +
 v^\nu e'(\sigma) (\sigma_I)^2\right)
 = H_I,
\end{equation}
where $H_I$ is lower-order.
Introducing the higher-order energy-momentum tensor $Q[v_I]$,
\begin{equation}
 Q[v_I](X, Y) = 2 g(v_I, X) g(v_I, Y)-g(X, Y) g(v_I, v_I),
\end{equation}
and taking $e' = 0$ for the moment for the sake of simplicity,
integrating the expression \eqref{divformdiffmom} over the region
$\mathcal{R}$ bounded between two spacelike surfaces
$\Sigma_1, \Sigma_0$ and the timelike surface $\Lambda$
and using the divergence theorem leads to the identity
\begin{equation}
 \int_{\Sigma_1} Q[v_I](v, n_{\Sigma_1}) -
 \int_{\Sigma_0} Q[v_I](v, n_{\Sigma_0}) + \int_{\Lambda} Q[v_I](v, \mathcal{N})\, dS
 = \int_{\mathcal{R}}H_I.
 \label{integrated}
\end{equation}

We claim that the integrands over the spacelike surfaces $\Sigma_1, \Sigma_0$
are positive-definite. This is the usual positivity of the
energy-momentum tensor $Q_I$ evaluated at the timelike future-directed
vector fields $v, n_{\Sigma}$. This positivity can be seen by
recalling that $g(n_{\Sigma}, n_{\Sigma}) = -1$ and writing
\begin{equation}
 v_I = -g(v_I, n_{\Sigma}) n_{\Sigma} + \overline{v}_I,
% \label{projnotation}
\end{equation}
where $\overline{v}_I$ is orthogonal to $n_{\Sigma}$ and thus spacelike.
A simple calculation (see Lemma \ref{emtensorpositivity}) shows that
\begin{equation}
 Q[v_I](v, n_{\Sigma}) \geq
\left(g(v_I, n_{\Sigma})^2 +
 g(\overline{v}_I,\overline{v}_I)\right)\alpha,
  \qquad \text{ where }\quad
  \alpha =
  \frac{-g (v,v)}{g(\overline{v}, \overline{v})^{1/2}
  - g(v, n_{\Sigma})} > 0,
  \label{alphanotation}
\end{equation}
where the statement $\alpha > 0$ follows from the fact that $v$ is timelike $g(v,v) < 0$ and
future-directed, so $g(v, n_{\Sigma}) < 0$ if $n_{\Sigma}$ is
future-directed.

Then \eqref{integrated} implies that
\begin{equation}
 E_I[\Sigma_1] + \int_{\Lambda} Q[v_I](v, \mathcal{N})\, dS
 \lesssim E_I[\Sigma_0] + \int_{\mathcal{R}} |H_I|,
 \qquad \text{ where }\quad E_I[\Sigma] =
 \int_{\Sigma} \left(g(v_I, n_{\Sigma})^2  + g(\overline{v}_I, \overline{v}_I)\right)\,
 \alpha.
 \label{integrated2}
\end{equation}

As for the integral over $\Lambda$, the observation is that if
the Taylor sign condition \eqref{tsc} holds then this contributes a positive term
to the energy.
Recalling that $2g(v_I, v) = -\sigma_I$ to highest order,
 using \eqref{freebdy}, we find
\begin{equation}
 Q[v_I](v, \mathcal{N}) = 2g(v_I, v) g(v_I, \mathcal{N})
 =- \sigma_I g(v_I, \mathcal{N}) + \text{ lower-order terms}.
\end{equation}
Now we note that at the boundary,
\begin{equation}
  \sigma_I = T^I\sigma - T^Ix^\nu \nabla_\nu \sigma =
  g(T^I x, \mathcal{N})  \nabla_{\mathcal{N}}\sigma,
\end{equation}
where we used the Taylor sign condition \eqref{tsc} to write
$\nabla_\nu \sigma = - \mathcal{N}_\nu (\nabla_{\mathcal{N}}\sigma)$.
Therefore, since the difference $v_I - T^I v$ is lower-order, we
find that to highest order,
\begin{equation}
 Q[v_I](v, \mathcal{N}) = - g(T^Ix, \mathcal{N}) g(T^I v, \mathcal{N})\nabla_{\mathcal{N}}\sigma
 = \frac{1}{2} \frac{d}{ds} \left(g(T^Ix, \mathcal{N})^2 \right)\nabla_{\mathcal{N}}\sigma.
\end{equation}
Therefore if we set $\Lambda_{q} = \Lambda \cap \Sigma_q$ we find that
\begin{equation}
 E_I[\Sigma_1] + B_I[\Lambda_1]
 \lesssim
 E_I[\Sigma_0] + B_I[\Lambda_0] +
 \!\int_{\mathcal{R}} \!\!|H_I|
 +\! \int_{\Lambda} \!\!|R_I|,
 \quad \text{where}\quad B_I[\Lambda_q]
 =\! \int_{\Lambda_q}\!\!\! g(T^I\!x, \mathcal{N})^2 \nabla_{\mathcal{N}}\sigma,
 \label{higherorderenergy}
\end{equation}
where $R_I$ collects the error terms we generated on the boundary.

To deal with the case $e'(\sigma)\not=0$, we argue just as above
but note that since $v^\nu\mathcal{N}_\nu = 0$ there is no contribution
from the term $v^\nu e'(\sigma)  (\sigma_I)^2$ at the boundary.
We therefore get
\eqref{higherorderenergy} but where the energies $E_I[\Sigma]$ on
the time slices are replaced by
\begin{equation}
 E_I[\Sigma] = \int_{\Sigma}
 \left(g(v_I, n_{\Sigma})^2  + g(\overline{v}_I, \overline{v}_I)\right)\,
 \alpha - e'(\sigma) |\sigma_I|^2 g(v, n_{\Sigma}).
\end{equation}

\subsubsection{The $L^2$ norms}
In order to control the remainder terms in the right hand side of \eqref{higherorderenergy} it is not quite enough to only control $r=|I|$ tangential derivatives only but we have to control the full gradient
of $r-1$ tangential derivatives. However, any derivative can be controlled in terms of these and tangential derivatives by the point wise estimate:
\begin{equation}%\label{eq:partialVintro}
|{\pa} T^J V|
\lesssim |{\div}\,T^J V|+|{\curl}\,T^J V|
+\!\! {\sum}_{T\in{\mathcal T}}|S T^J V|,
\end{equation}
where here the divergence and curl stands for the space time divergence and curl and $\mathcal{T}$ are the space time tangential vector fields.
This together with good equations for the divergence and for the curl of the velocity, gives us control of the energies
 \begin{equation}
  E_r(t) =
  {\sum}_{|I| \leq r}
  \int_{\D_t} |\pa T^J v|^2
  +  e'(\sigma) | T^I \sigma|^2\, dx
   + {\sum}_{|I| \leq r }\int_{\pa \D_t} \left( (T^I x^\mu) \mathcal{N}_\mu
  \right)^2 (-\nabla_{\mathcal{N}} p)^{-1}\, dS,
  %\label{introenergy}
 \end{equation}
 where $e(\sigma) = \log(n(\sigma)/\sqrt{\sigma})$ is determined by the
 equation of state, $x^\mu$ is defined by $v^\nu \pa_\nu  x^\mu = v^\mu$,
 and $\mathcal{N}$ denotes the spacetime normal vector field to the
 boundary $\pa \D$. Energies of this type with an interior term and a boundary term was first introduced in \cite{CL00} in the Eulerian coordinates where the boundary term was interpreted as norms of the second fundamental form of the free boundary, assuming the physical condition that $-\nabla_{\mathcal{N}} p\geq c>0$ on the boundary.

\subsubsection{The curl estimate and the divergence estimates} Taking the space-time curl of the first equation in \eqref{diffmom} we see that
\begin{equation}
|D_s {\curl}\,T^J V|\lesssim |\pa \,T^J V|+\text{ lower-order terms},
\end{equation}
is bounded by the energy for $|J|=r-1$.
On the other hand, the second equation in \eqref{diffmom} gives an equation for the space time divergence
\begin{equation}
|\div \,T^J V|\lesssim e^\prime(\sigma)|D_s \sigma_J|+\text{ lower-order terms},
\end{equation}
which is bounded by the energy for $|J|=r-1$.

\subsubsection{The $L^\infty$ norms}

There are similar evolution equations for the $L^\infty$ norms assuming bounds for tangential derivatives
which allow us to control the quantity
\begin{equation}
 M_r(t)= {\sum}_{|K| \leq r} \|\pa T^K x\|_{L^\infty}+
 \|\pa T^L V\|_{L^\infty}
 +\|\pa T^L {V}\|_{L^\infty}
 +\|\pa T^L \pa\sigma\|_{L^\infty}.
 \label{capitalMdef}
\end{equation}
Let $(r/2)$ denote $r/2$ when $r$ is even and $(r-1)/2$ when $r$ is odd.
Then our energies are bounded provided we have a bound for $M_{(r/2)}$,
\begin{equation}
 M_{(r/2)}(t) \leq M < \infty,
 \label{lownorms}
\end{equation}
and moreover
we can control $M_{(r/2)}$ provided we control the energies, see the next
section.

\subsubsection{Control of the energies under a priori assumptions}
It turns out that we can prove energy estimates assuming only tangential
regularity of the background metric $g$ to top order. We will prove bounds
provided we have control over the following quantities.
We will assume that in the fluid domain $\D_t$ we have
the bounds
\begin{multline}
  \sum_{|I| \leq r}
 \sum_{\mu,\nu,\gamma = 0}^3\int_{D_t}
|\pa T^I \Gamma_{\mu\nu}^\gamma|^2
+ |\pa T^I g_{\mu\nu}|^2
+|T^I \Gamma_{\mu\nu}^\gamma|^2
+|T^I g_{\mu\nu}|^2\\
+  \sum_{|K| \leq r/2+1}
\sum_{\mu,\nu,\gamma = 0}^3
\|\pa T^K \Gamma_{\mu\nu}^\gamma(t)\|_{L^\infty}
+ \|\pa T^K g_{\mu\nu}(t)\|_{L^\infty}\\
+
\sum_{|K| \leq r/2+1}
\sum_{\mu,\nu,\gamma = 0}^3\|T^K \Gamma_{\mu\nu}^\gamma(t)\|_{L^\infty}
+\|T^K g_{\mu\nu}(t)\|_{L^\infty}
+ \|g^{\mu\nu}(t)\|_{L^\infty}
 \leq G_r.
 \label{metricbdsintro0}
\end{multline}
for $0 \leq t \leq T$. Then we have the following a priori
estimate, proven in Section \ref{eulconclusionsec}.
\begin{theorem}
  There are continuous functions $C_r$ so that any smooth solution
  of \eqref{rescaledreleul}-\eqref{rescaledrelcont} with sound speed
  $\eta$ as in \eqref{soundspeedbd}-\eqref{largesoundspeed} for
  $\delta$ sufficiently small,
  which satisfies the Taylor sign condition
  \eqref{tsc}, the a priori assumption \eqref{lownorms}, the condition
   $\rho \geq \rho_1 > 0$ in $\D$ and for which the bounds for the metric
 \eqref{metricbdsintro0} hold for $0\leq t \leq T$, satisfies the energy estimate
 \begin{equation}
  E_r(t) \leq C_r(t, M, G_{r-1}, 1/c, \delta, E_{r-1}(0))  E_r(0), \qquad 0\leq t \leq T.
  \label{highnormbd}
 \end{equation}
 Moreover, there are a continuous functions $\mathcal{T}_r
 = \mathcal{T}_r(G_{r-1}, 1/c, \delta, E_{r}(0))$ so that for
 $k\leq r/2$,
  \begin{equation}
  M_k(t) \leq 2 M_k(0), \qquad 0\leq t \leq
  \mathcal{T}_r.
  \label{lownormbd}
 \end{equation}
\end{theorem}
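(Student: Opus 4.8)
The plan is to turn the higher-order energy identity outlined above into a differential inequality for $E_r(t)$, close it by Gronwall, and then run a short-time continuity argument for the low-order $L^\infty$ norms $M_k$. For the first part I would, for each multi-index $I$ with $|I|\le r$, apply $T^I$ to \eqref{rescaledreleul}--\eqref{rescaledrelcont}, use the commutator identity \eqref{com} to rewrite the result as \eqref{TIeqn}, and pass to the good unknowns $v_I^\mu,\sigma_I$ so that the system becomes \eqref{diffmom} with lower-order right-hand sides $F_I^\mu,G_I$. Contracting the momentum part of \eqref{diffmom} with $g_{\mu\nu}v_I^\nu$ gives the divergence identity \eqref{divformdiffmom}; integrating it over the spacetime slab $\mathcal R$ between two time slices and $\Lambda$ and applying the divergence theorem yields \eqref{integrated}. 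On the time slices the integrand is coerced by Lemma \ref{emtensorpositivity}: decomposing $v_I=-g(v_I,n_\Sigma)n_\Sigma+\overline{v}_I$ and using that $v$ and $n_\Sigma$ are timelike and future-directed gives \eqref{alphanotation}, hence the positive quantity $E_I[\Sigma]$ (with the extra term $-e'(\sigma)|\sigma_I|^2g(v,n_\Sigma)\ge0$ when $e'(\sigma)\ne0$, which contributes nothing on $\Lambda$ since $v^\nu\mathcal N_\nu=0$). The flux through $\Lambda$ is treated using the boundary conditions \eqref{vac}--\eqref{freebdy}, which kill the bad part and leave a perfect $s$-derivative times $\nabla_{\mathcal N}\sigma$; by the equation of state and the Taylor sign condition \eqref{tsc} this has a definite sign and reproduces the (positive) boundary term of $E_r$. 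The outcome is \eqref{higherorderenergy} for all $|I|\le r$.

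Next I would upgrade from $E_I,B_I$, which control only $|I|$ tangential derivatives, to the full gradient of $r-1$ tangential derivatives appearing in $E_r$. This uses the pointwise inequality $|\pa T^JV|\lesssim|\div T^JV|+|\curl T^JV|+\sum|S\,T^JV|$ (sum over the spacetime tangential vector fields $S$, $|J|\le r-1$) together with the transport equation for the spacetime curl coming from the momentum part of \eqref{diffmom}, $|D_s\curl T^JV|\lesssim|\pa T^JV|+\text{lower order}$, and the identity for the spacetime divergence coming from the continuity part, $|\div T^JV|\lesssim e'(\sigma)|D_s\sigma_J|+\text{lower order}$. Integrating the curl equation in $s$, inserting the divergence identity and \eqref{higherorderenergy} over all $|I|\le r$, and recombining gives $E_r(t)\lesssim E_r(0)+\int_0^t(\text{remainders})$. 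The crux is then to bound each remainder ($F_I,G_I,H_I,R_I$ and the errors from the div--curl step) by $C_r(t,M,G_{r-1},1/c,\delta)\,E_r(t)$ plus quantities controlled at order $\le r-1$: by construction all of these carry at most $r$ \emph{tangential} derivatives and the full gradient only to order $\le r-1$, so the top-order factor is always measured by $E_r$ while the remaining factors are absorbed by the a priori assumption \eqref{lownorms} and the metric bounds \eqref{metricbdsintro0}, and the hypotheses $1-\delta\le\eta^2$, $\rho\ge\rho_1>0$ and \eqref{tsc} are used to estimate the weights $e'(\sigma)$, $1/\sigma$, $\alpha$ and $(-\nabla_{\mathcal N}p)^{-1}$ uniformly; the order-$(r-1)$ remainders are handled by induction on $r$ (whence the $E_{r-1}(0)$-dependence of $C_r$), and Gronwall's inequality then yields \eqref{highnormbd}. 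I expect this remainder analysis — verifying that the commutators from \eqref{com} and the error terms created when the $\Lambda$-integral is turned into a perfect derivative never cost more than $r$ tangential derivatives, i.e. that $v_I,\sigma_I$ genuinely behave like Alinhac good unknowns — to be the main obstacle.

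Finally, for $|K|\le r/2$ the quantities entering $M_K$, namely $\pa T^Kx$, $\pa T^KV$ and $\pa T^K\pa\sigma$, satisfy transport equations along $D_s$ — for $x$ directly from $D_sx^\mu=v^\mu$, for the others by differentiating \eqref{TIeqn}--\eqref{diffmom} — whose right-hand sides, since $|K|\le r/2$, are controlled by $M_{(r/2)}$ and, through Sobolev embedding, by quantities bounded by $E_r$. Taking $M:=2M_{(r/2)}(0)+1$ in the first part, a continuity argument shows that \eqref{highnormbd} holds on the interval where $M_{(r/2)}(t)\le M$; integrating the transport equations and inserting that bound gives $M_k(t)\le M_k(0)+t\,C_r(G_{r-1},1/c,\delta,E_r(0))$, so choosing $\mathcal T_r$ small enough forces $M_k(t)\le 2M_k(0)$ for $k\le r/2$, which in particular keeps $M_{(r/2)}(t)<M$ and closes the bootstrap on $[0,\mathcal T_r]$.
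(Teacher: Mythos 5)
Your overall architecture for the energy estimate \eqref{highnormbd} is the right one and closely follows the paper: apply $T^I$, pass to the good unknowns $v_I,\sigma_I$ so that \eqref{diffmom} holds, contract with $v_I$, integrate over a spacetime slab, use Lemma \ref{emtensorpositivity} for coercivity on the spacelike slices, and convert the $\Lambda$--flux into the signed boundary term via the Taylor sign condition; then upgrade from tangential to full-gradient control by the div--curl pointwise estimate, the transport equation for the curl, and the algebraic divergence relation from the continuity equation. That is the paper's scheme (sections 4.3--4.12), and your remainder bookkeeping (top-order factor measured by $E_r$, remaining factors absorbed by \eqref{lownorms}, \eqref{metricbdsintro0}, and the positivity/Taylor hypotheses; induction in $r$; Gronwall) is the same.

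Where your proposal goes off the rails is the treatment of $M_k$. You claim that $\pa T^K V$ and $\pa T^K\pa\sigma$ ``satisfy transport equations along $D_s$ \dots by differentiating \eqref{TIeqn}--\eqref{diffmom}.'' If you apply $\pa$ to the momentum equation you get $D_s(\pa v_I) = -\tfrac12 \pa\nabla\sigma_I + \pa F_I - [\pa, D_s]v_I$, and the source contains $\pa^2\sigma_I$ at the \emph{same} order, not a lower-order one. So $\pa T^K V$ does not satisfy a closed transport equation, and a continuity argument based on such an equation would not close. The paper avoids this precisely by never transporting $\pa T^K V$ directly: only the modified curl $K^J$ and the coordinate gradient $\pa T^J x$ obey genuine pointwise transport equations (the curl kills the Hessian $\nabla^2\sigma_I$), and the full gradient $\pa T^J V$ is then recovered pointwise by the div--curl elliptic estimate $|\pa T^J V|\lesssim |\widetilde{\div}\,T^J V|+|K^J|+\sum_S|S T^J V|+\dots$, with the divergence coming from the continuity equation. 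Similarly $\pa T^K\pave\sigma$ is controlled by the \emph{elliptic} estimate coming from the wave equation for $\sigma$ (sections 1.1.5--1.1.6, 4.6.1), not by a transport equation; this is the quantity $\Sigma^{r-1}$ in \eqref{Sigma2bd}, paired with $W_2^r$ from the wave-equation energy \eqref{W2relbd}. The closed system \eqref{rellinftyfirst}--\eqref{rellinftylast} thus has \emph{two} transport unknowns ($K$, $X^1$) and \emph{two} elliptic unknowns ($V^1$, $\Sigma$), with the wave energy $W$ and the Euler energy $E$ supplying the tangential input. You need this separation to make the bootstrap work; as written, your $M_k$ step has a genuine gap. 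A secondary, smaller point: you never mention the higher-order wave equation for $\sigma$; the paper's Remark after \eqref{eq:waveequationenergy} shows the order-$(r-1)$ wave estimate can in principle be extracted from the order-$r$ Euler estimate, so this is repairable, but you need $T^J\pave\sigma$ controlled \emph{without} the small factor $e'(\sigma)$, and you should say explicitly how that control arises (either from the wave energy $\W^J$ or from $\pave\sigma_I=-2D_s v_I+2F_I$), since the Euler energy only gives $e'(\sigma)|T^I\sigma|^2$.
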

Using the elliptic estimates from Lemma \ref{pwHlemma}, these energies also control normal derivatives;
\begin{equation}
 \int_{\D_t}
 \sum_{|I| \leq r-1}
 |\pa T^I v|^2 + \sum_{|J| \leq r-2}
 |\pa^2 T^J \sigma|^2
 \lesssim E_r(t).
\end{equation}

\subsubsection{The wave equation estimate for the enthalphy}
 Subtracting \eqref{div1} from $\hD_s\! = \!V^\nu\pa_\nu$ applied to \eqref{rescaledrelcont2} we find
\begin{equation}
 e'(\sigma) \hD_s^2 \sigma - \frac{1}{2} \nabla_\nu
 (g^{\mu\nu}\nabla_\mu
 \sigma)
 = R,
 \label{sigmawavesetup1}
\end{equation}
where
\begin{equation}
 R =\nabla_\mu V^\nu \nabla_\nu V^\mu +  R_{\mu\nu\alpha}^\mu V^\nu V^\alpha
 - e''(\sigma) (\hD_s\sigma)^2.
 \label{Rdef}
\end{equation}
and corresponding equations for higher derivatives
\begin{equation}
 e'(\sigma) \hD_s^2 T^J \sigma - \frac{1}{2} \nabla_\nu
 (g^{\mu\nu}\nabla_\mu
T^J \sigma)
 = R_J.
 \label{sigmawavesetup1intro}
\end{equation}
When $e'(\sigma) \equiv 0$ this is just a wave equation
with respect to the metric $g$ and when $e'(\sigma) >0$
the first term in \eqref{sigmawavesetup1intro} contributes
an additional positive term to the energy.
Define the higher-order energy-momentum tensor for $\sigma$
\begin{equation}
 Q[\sigma^\prime_J]_{\alpha\beta}=\na_\alpha \sigma^\prime_J \, \na_\beta \sigma^\prime_J- \frac{1}{2}g_{\alpha\beta} g^{\mu\nu} \na_\mu \sigma^\prime_J \na_\nu \sigma^\prime_J , \qquad \sigma^\prime_{J\alpha}=\nabla_\alpha T^J\sigma .
\end{equation}
Then with $\sigma^\prime_{Js}= D_s T^J \sigma$, after multiplying \eqref{sigmawavesetup1intro}
by $\hD_s T^J \sigma = V^\alpha \pa_\alpha T^J\sigma$ we find the identity
\begin{equation}
  \big( \nabla^\alpha \na_\alpha T^J\!\sigma - 2e^\prime(\sigma) D_s^2 T^J\!\sigma\big)V^\beta \na_\beta  T^J\!\sigma\!
=\!
\nabla^\alpha \big(Q[\sigma^\prime_J]_{\alpha\beta}V^\beta + 2e^\prime(\sigma) V_\alpha {\sigma^\prime_{\!Js}\!\!}^2 \big)
+\big(\nabla^\alpha( e^\prime(\sigma) V_\alpha)\big) {\sigma^\prime_{\!Js}\!\!}^2+ K_J,
\label{identintro}
\end{equation}
with
\begin{equation}
 K_J = -Q[\sigma^\prime_J]_{\alpha\beta}\nabla^\alpha V^\beta - 2\hD_s e'(\sigma)
 (\sigma^\prime_{Js}).
 %\label{}
\end{equation}

Taking $X = V$
and integrating the identity \eqref{identintro} over the region $\mathcal{R}$ bounded by two spacelike
surfaces $\Sigma_0, \Sigma_1$
and the timelike surface $\Lambda$, with $\Sigma_1$ lying to the future of $\Sigma_0$
gives
\begin{equation}
 \int_{\Sigma_1} Q[\sigma^\prime_J](V, n^{\Sigma_1}) -
 \int_{\Sigma_0} Q[\sigma^\prime_J](V, n^{\Sigma_0})
 +\int_{\Lambda} Q[\sigma^\prime_J](V, \mathcal{N}) =
 \int_{\mathcal{R}}K_J + R_J \hD_s T^J \sigma.
\end{equation}
The term on $\Lambda$ vanishes
since $\sigma$ is constant on the boundary and
$g(V, \mathcal{N}) = 0$ so $V$ is tangent to the boundary.
As for the terms on the spacelike surfaces, we have,
with $\overline{X}$ the part of $X$ parallel to $\Sigma$
and notation as in \eqref{alphanotation},
\begin{equation}
 Q[\sigma^\prime_J](V, n^\Sigma)
\geq
 \frac{1}{2}\left( (n^{\Sigma} \cdot \nabla T^J \sigma)^2
+ |\overline{\nabla}T^J \sigma|^2\right) \alpha + \frac{1}{2} e'(\sigma) (\hD_sT^J\sigma)^2.
\end{equation}
Therefore with
\begin{equation}
 W_J[\Sigma] = \int_{\Sigma} \left( (n^{\Sigma} \sigma)^2
 + |\overline{\nabla}\sigma|^2\right) \alpha + \int_{\Sigma} e'(\sigma) (\hD_sT^J\sigma)^2,
\end{equation}
we find the energy identity
\begin{equation}
 W[\Sigma_1] \lesssim W[\Sigma_0] + \int_{\mathcal{R}} |K_J|
 + |R_J| |\hD_s T^J \sigma|,
 %\label{}
\end{equation}
where $|K_J|$, $|R_J|$ consist of lower-order terms, which give
control along the spacelike surface
$\Sigma_1$.

\subsubsection{The elliptic estimate for the enthalpy}
As it turns out in the proof we also need an improved elliptic estimate for
the enthalpy to get better control of spatial derivatives.
With $n_\alpha=\pa_\alpha s$ the conormal to the surfaces $s=const$, we can write
$\pa_\alpha=n_\alpha\,\pa_s +\overline{\pa}_\alpha $,
where $\overline{\pa}_\alpha$ differentiates along the surfaces $s=const$.
Since ${V}^\alpha n_\alpha ={V}^\alpha\pa_\alpha s=1$ we have $\overline{\pa}_\alpha =\gamma_\alpha^{\alpha^\prime}\pa_{\alpha^\prime}$,
where $\gamma_\alpha^{\alpha^\prime}=\delta_\alpha^{\alpha^\prime}-n_\alpha \, {V}^{\alpha^\prime}$. With $\xi_s={V}^\alpha \xi_\alpha$ and $\overline{\xi}_\alpha=\gamma_\alpha^{\alpha^\prime}\xi_{\alpha^\prime}$
the symbol for the wave operator can hence be decomposed
\begin{equation}
  \label{coordwavedecomp1}
g^{\alpha\beta}\xi_\alpha \xi_\beta =
g^{\alpha\beta}n_{\alpha} \, n_{\beta}\,\xi_s \xi_s +
2 g^{\alpha\beta}n_{\alpha}\, \xi_s\overline{\xi}_{\beta}
+g^{\alpha\beta}\overline{\xi}_{\alpha}
\overline{\xi}_{\beta}.
\end{equation}
The principal part that only differentiates along the surface $s=const$ is
\begin{equation}
  \label{coordwavedecomp2}
g^{\alpha\beta}\overline{\xi}_{\alpha}
\overline{\xi}_{\beta}=G_1^{\alpha\beta}{\xi}_{\alpha}
{\xi}_{\beta},\qquad\text{where}\qquad
G_1^{\alpha\beta}=g^{\alpha'\beta'}\gamma_{\alpha'}^{\alpha}\gamma_{\beta'}^{\beta} .
\end{equation}
This gives an elliptic operator restricted to the surfaces $s\!=\!const$.
i.e. $g^{\alpha\beta}\overline{\xi}_{\alpha}
\overline{\xi}_{\beta}\!>\!c\delta^{\alpha\beta}\overline{\xi}_{\alpha}
\overline{\xi}_{\beta}$, for some $c\!>\!0$.
In fact,
 $\overline{\xi}^\alpha\!\!=\!g^{\alpha\beta}\overline{\xi}_\beta$ is in the orthogonal complement of ${{V}}^\beta\!\!$, since $g_{\alpha\beta}\overline{\xi}^\alpha{{V}}^\beta\!
\!=\!\overline{\xi}_\beta{{V}}^\beta\!\!=\!0$, since ${V}^\alpha n_\alpha\! =\!1$.  Since
${V}\!$ is timelike
$g_{\alpha\beta}{{V}}^\alpha{{V}}^\beta\!<\!0$ it follows that
$\overline{\xi}$ is spacelike $g_{\alpha\beta}\overline{\xi}^\alpha\overline{\xi}^\beta\!>\!0$.

\subsubsection{Comparison with the Newtonian case}
In \eqref{rescaledreleul}-\eqref{rescaledrelcont}
and the following, we use the convention that Greek indices run over 0,1,2,3.
For a scalar $\nabla^\nu q = g^{\mu\nu}\pa_\mu q$ and for a
vector field $T = T^\mu \pa_{x^\mu}$,
$\nabla_\nu T^\mu = \pa_\nu T^\mu + \Gamma_{\nu\alpha}^\mu T^\alpha$ where
$\Gamma_{\nu\alpha}^\mu$ are the Christoffel
symbols of the metric $g$
\begin{equation}
  \Gamma_{\nu \alpha}^\mu = \frac{1}{2}g^{\mu \beta} \big( \pa_\nu g_{\alpha \beta}
  + \pa_{\alpha} g_{\nu \beta} - \pa_\beta g_{\nu \alpha}\big),
% \label{}
\end{equation}
so \eqref{rescaledreleul}-\eqref{rescaledrelcont}
can be written as
\begin{align}
 v^\nu\pa_\nu v^\mu + \frac{1}{2}g^{\mu\nu} \pa_\mu \sigma =
 \Gamma_{\alpha\nu}^\mu v^\alpha v^\nu,\qquad
 v^\nu\pa_\nu e(\sigma) + \pa_\mu v^\mu =
 -\Gamma_{\mu \nu}^\mu v^\nu,
 \qquad
 \text{ in } \D_t,
  \label{rescaledreleulcoord}
\end{align}

These equations are very similar in structure to the non-relativistic (Newtonian)
compressible
Euler equations with nonzero right-hand side,
\begin{align}
 (\pa_t + v^k\pa_k) v^i + \delta^{ij}\pa_j h &= f^i, \qquad
 (\pa_t + v^k\pa_k)e(h) + \pa_i v^i = g,\qquad
 \text{ in } \D_t,
 \label{nonrelcont}
\end{align}
where $i = 1,2,3$,
for given functions $f, g$.
 Here, $h$ denotes the Newtonian enthalpy,
defined through the equation of state $p = p(\rho)$ where
$\rho$ now denotes the mass density, by $\rho h'(\rho) = p'(\rho)$
and $e(h) =\log \rho(h)$. With the sound speed
defined as in \eqref{soundspeed},
in the Newtonian setting an incompressible fluid formally corresponds to the
case $\eta \to \infty$.

 In order to simplify notation and to focus on the ideas, in the first part of this paper
we consider the problem
\eqref{nonrelcont} with $f, g = 0$ and
 with boundary conditions
\begin{align}
 p &=0, \qquad \text{ on } \pa \D_t,\\
 n_t + v^in_i &=0, \qquad \text{ on } \pa \D_t,
 \label{}
\end{align}
where $n_t$ denotes the velocity of the boundary and $n_i$ denotes
the conormal to the boundary. In this setting the Taylor sign condition is
\begin{equation}
 \nabla_\n p \leq c < 0.
 \label{tscnonrel}
\end{equation}
In Section \ref{rescaledreleulex} we then show how the argument works in the
relativistic case.

The well-posedness result in the Newtonian case is
\begin{theorem}
  \label{mainthmnonrel}
 Fix $r \geq 10$. Let $v_0, h_0$ be initial data
 satisfying the compatibility conditions from Section \ref{compatsec}
 to order $r$, satisfying $E_0 =  \|v_0\|_{H^{r}(\mathcal{D}_0)}
 +\|h_0\|_{H^{r}(\mathcal{D}_0)} < \infty$, and for which the Taylor sign
 condition \eqref{tscnonrel} holds. Suppose also that
 the sound speed $c_s = \sqrt{P'(\rho)}$ is sufficiently large.
 Then there is a time $T' = T'(E_0,c, c_s) > 0$ so that
 the problem \eqref{nonrelcont}-\eqref{nonrelcont}
 has a solution $v(t), h(t)$
 for $0 \leq t \leq T'$ which satisfies
 $ \sup_{0 \leq t \leq T' }\| v(t)\|_{H^{r}(\mathcal{D}_t)}
 + \sum_{k \leq r}
 \|D_t^{k+1} h(t)\|_{H^{r-k}(\mathcal{D}_t)} + \|D_t^k \pa h(t)\|_{H^{r-k}(\mathcal{D}_t)} \leq
 2 E_0$ and so that the Taylor sign condition \eqref{tsc} holds
 for $0 \leq t \leq T'$ with $c$ replaced with $c/2$.
\end{theorem}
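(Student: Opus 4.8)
The plan is to construct the solution by a Picard-type iteration built around a Dirichlet problem for the enthalpy, closing it with the a priori estimates for \eqref{nonrelcont} described above; because those estimates lose no derivatives, no Nash--Moser scheme is needed. First I would pass to Lagrangian coordinates $x=x(s,y)$, $y\in\Omega$ the unit ball, $\pa_s x = v$, which maps the moving fluid domain $\D_t$ onto the fixed domain $\Omega$ at the cost of bringing the inverse Jacobian $A=(\pa x/\pa y)^{-1}$ into the equations: \eqref{nonrelcont} becomes $\pa_s V^i = -(A)^a_i\pa_a H$, $e'(H)\pa_s H + (A)^a_i\pa_a V^i = 0$, $\pa_s x = V$, with the clean boundary condition $H=0$ on the now fixed boundary $\pa\Omega$. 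Taking the Lagrangian divergence of the momentum equation and using the continuity equation turns this pair into a wave equation $e'(H)\pa_s^2 H - \Delta_A H = R[V]$ with $R$ quadratic in $A\,\pa V$ (the Newtonian analogue of \eqref{sigmawavesetup1}), still with Dirichlet data $H=0$ on $\pa\Omega$. When $e'\equiv 0$ this is the standard Poisson problem; since the sound speed is assumed large, i.e. $e'$ small, the linear Dirichlet problem is solvable with bounds uniform as $e'\to 0$.

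Given the $n$-th iterate $(x^{(n)},V^{(n)},H^{(n)})$ on a time interval $[0,T']$, I would build the next iterate by (i) solving the linear wave/Dirichlet problem for $H^{(n+1)}$ with coefficients frozen at $A^{(n)}$, source $R[V^{(n)}]$, Dirichlet data $0$ on $\pa\Omega$, and initial data $H^{(n+1)}|_{s=0}=h_0$, $\pa_s H^{(n+1)}|_{s=0}$ read off from the continuity equation; (ii) setting $\pa_s (V^{(n+1)})^i = -(A^{(n)})^a_i\pa_a H^{(n+1)}$, $V^{(n+1)}|_{s=0}=v_0$; and (iii) $\pa_s x^{(n+1)}=V^{(n+1)}$, $x^{(n+1)}|_{s=0}=\mathrm{id}$. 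The compatibility conditions of Section \ref{compatsec} are exactly what is needed for each linear problem to be solvable in the relevant $H^r$-type spaces with traces matching on the corner $\{s=0\}\times\pa\Omega$; they also force the continuity-equation defect $e'(H^{(n+1)})\pa_s H^{(n+1)}+(A^{(n)})^a_i\pa_a (V^{(n+1)})^i$ to vanish at $s=0$, so it satisfies a transport equation along the flow and stays negligible, recovering the continuity equation in the limit. (If necessary one first regularizes the initial data, runs the scheme, and passes to the limit.)

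Next I would check that the iteration maps the ball $\{E_r\le 2E_0\}$ into itself on an interval $[0,T']$ independent of $n$. This is where the a priori machinery enters: written in terms of the Newtonian good unknowns $v_I = T^I V - (T^I x)\!\cdot\!\nabla V$, $h_I = T^I H - (T^I x)\!\cdot\!\nabla H$, the linearized system for $(V^{(n+1)},H^{(n+1)})$ has the structure of the Newtonian analogue of \eqref{diffmom}, so the energy identity \eqref{higherorderenergy}, the divergence and curl estimates, the wave-equation energy for $H$, and the elliptic estimate for the enthalpy (Lemma \ref{pwHlemma}) all apply, with background-metric bounds replaced by bounds on $x^{(n)}$ supplied by the induction hypothesis; the pointwise estimate controlling $|\pa T^J V|$ by the space-time divergence and curl of $T^J V$ and its tangential derivatives then upgrades these into control of $E_r$. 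The boundary term in $E_r$ is coercive as long as the Taylor sign condition \eqref{tscnonrel} holds, which persists on a short time interval by continuity since $-\nabla_n p$ is bounded below by a positive constant at $t=0$; choosing $T'$ small, depending only on $E_0$, $c$, and the sound speed, Gronwall closes the bound at $2E_0$ and keeps \eqref{tscnonrel} with $c$ replaced by $c/2$.

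Finally, convergence and uniqueness follow from a low-norm contraction: the differences $(x^{(n+1)}-x^{(n)},V^{(n+1)}-V^{(n)},H^{(n+1)}-H^{(n)})$ solve a linear system of the same type whose sources are bounded, after shrinking $T'$, by a fraction of the previous difference measured one derivative below $E_r$; this makes the sequence Cauchy in the low norm, and with the uniform $E_r$ bound and interpolation it converges in every intermediate norm, the limit lying in the $E_r$ ball by lower semicontinuity. The limit solves \eqref{nonrelcont} with the stated boundary conditions and bounds, and transforming back to Eulerian coordinates gives $v(t)\in H^r(\D_t)$ together with the asserted enthalpy estimates; applying the same low-norm estimate to two solutions gives uniqueness. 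The step I expect to be hardest is making the Dirichlet problem for the enthalpy interface correctly with the energy $E_r$, which uses only one normal derivative beyond tangential regularity: one must verify that the solution has exactly this anisotropic regularity (this is precisely where the elliptic estimate for $\Delta_A$ with $H=0$ on $\pa\Omega$ is essential), that the quadratic source $R[V^{(n)}]$ requires no more regularity of $V^{(n)}$ than $E_r$ provides, and --- the genuinely new difficulty over the fixed-domain compressible problem --- that the top-order term $T^I x$ produced by the commutator \eqref{com} is absorbed by the boundary energy under the Taylor sign condition, uniformly as $e'(h)\to 0$ so that one never relies on the vanishing interior term $e'(h)\,|T^I h|^2$.
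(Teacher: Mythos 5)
The proposal's overall architecture (Lagrangian reduction, wave/Dirichlet problem for the enthalpy, a Picard-type iteration closed with the tangential a priori machinery) is close in spirit to the paper, but it omits the ingredient that actually makes the iteration close at the boundary, and without that ingredient the scheme does not work as written.

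The failure is in the step where you claim the iteration maps the ball $\{E_r\le 2E_0\}$ into itself by applying the energy identity \eqref{higherorderenergy}. That identity closes for a genuine solution because the boundary term becomes $\int_{\pa\Omega}(\widetilde{\mathcal{N}}_j T^I\!\widetilde x^{\,j})\,D_t(\widetilde{\mathcal{N}}_i T^I x^i)\,|\pave h|$ with $\widetilde x=x$, so it is a perfect $D_t$ of a square (up to lower order) and contributes a \emph{positive} boundary energy. In your iteration the coefficients are frozen at $A^{(n)}$, i.e.\ $\widetilde x=x^{(n)}$, while the coordinate being differentiated is $x^{(n+1)}$. The integrand $(\widetilde{\mathcal{N}}_j T^I x^{(n)\,j})\,D_t(\widetilde{\mathcal{N}}_i T^I x^{(n+1)\,i})$ is no longer a total time derivative, and it involves all components of $T^I x^{(n)}$ on the boundary at the same order as the energy, while the energy (and the Taylor sign condition) only control the \emph{normal} component $\widetilde{\mathcal{N}}\!\cdot T^I x$. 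There is nothing in the proposal to close this loop. This is precisely the point the paper flags (Section~\ref{sec:apriorilinear} and around \eqref{eq:boundaryproblems2}): ``since when $z\neq x$ the integrand can no longer be written as a time derivative plus lower order.'' The paper's response is to introduce the Coutand--Shkoller tangential smoothing $S_\ve^* S_\ve$ \emph{in the equations}, not just the data. The smoothing does two things you do not have: (i) in the self-consistent smoothed problem the symmetry of $S_\ve$ on $L^2(\pa\Omega)$ lets one move $S_\ve$ across the product and rewrite the boundary term as a total time derivative of $(\widetilde{\mathcal{N}}\!\cdot T^I x_\ve)^2$ plus controlled errors; and (ii) in the iteration one can trade one tangential derivative for a factor $\ve^{-1}$, which gives the linear iteration an $\ve$-dependent lifespan, after which the $\ve$-uniform a priori bounds restart and extend the smoothed solution. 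Your parenthetical ``regularize the initial data'' does not substitute for this: the regularization is of the coefficients/coordinates in the equations, and is what generates the extra half tangential derivative $\fdh T^I x_\ve$ (controlled via \eqref{eq:coordinatehalfest}) needed to bound the non-normal boundary components that the energy alone misses.

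A secondary point: ``the estimates lose no derivatives, so no Nash--Moser is needed'' is a true statement about the a priori estimate for actual solutions, but it does not imply a naive Picard iteration closes. The derivative loss that Nash--Moser or tangential smoothing is designed to bypass lives exactly in the iterate-to-iterate mismatch $\widetilde x\ne x$ at the boundary. Your contraction step inherits the same difficulty (the difference of two iterates produces the same non-symmetric boundary integrand), so the low-norm Cauchy argument would also need the smoothing to run. The remainder of the proposal (good unknowns, wave equation for $h$, divergence/curl decomposition, large sound speed to keep the Dirichlet problem uniformly elliptic as $e'\to 0$, persistence of the Taylor sign condition by continuity) is consistent with the paper's approach and not the issue.
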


The system \eqref{nonrelcont}-\eqref{nonrelcont} with boundary conditions
\eqref{vac}-\eqref{freebdy} has been considered by many authors
and there are now many methods to prove existence. For the irrotational
incompressible case, see Wu\cite{W99}. Existence in the case of nonzero
vorticity was first shown in the incompressible case in \cite{L05a}
and then in the compressible case in
\cite{Lindblad2005}, using a Nash-Moser iteration. In later works
( \cite{CS07},\cite{CHS13},
\cite{Nordgren2008}, \cite{GLL19}) the authors used instead tangential
smoothing estimates and estimates in fractional Sobolev spaces. See also
\cite{BW} for the irrotational case with self-gravity.

In the case that $\rho|_{\pa\D_t} = 0$, the fluid is called a ``gas''.
In the Newtonian case, a priori estimates were proven in \cite{CLS10}.
For existence, see \cite{JM09}, \cite{IT20}.
A priori estimates for the relativistic problem were proven in \cite{JLM16}
and \cite{MSS19}. Local well-posedness was proven in \cite{DIT20}

We present here a new proof, which is a considerably
simplified version of the proof appearing in \cite{GLL19}. The differences between
the present proof and the one in \cite{GLL19} will be explained in the
upcoming sections.

 In section
\ref{sec:nonrel} we reformulate the problem \eqref{nonrelcont} in
Lagrangian coordinates and introduce a tangentially-smoothed
 version of this problem which is based on the method introduced by
Coutand-Shkoller in \cite{CS07}. The main result of
section \ref{sec:nonrel} is a uniform apriori bound for both the
smoothed and non-smoothed problem.
In section \ref{sec:smreldef} we introduce the tangentially-smoothed version
of the relativistic problem \eqref{rescaledreleulcoord} with
boundary conditions \eqref{vac}-\eqref{freebdy} and just as in the previous
section prove a priori bounds for this system.
In section \ref{nonlinexistence} we prove the well-posedness results. In both the
Newtonian and relativistic case the strategy is the same.
The smoothed equations are ODEs in an appropriate function space and in
the appendix we prove existence for these smoothed problems, but we are
only able to prove existence on a time interval which degenerates as the
smoothing is taken away. Since we also have a priori bounds which hold on
an interval independent of the smoothing, a standard compactness argument
then gives existence for the non-smoothed problem.

\section{Uniform energy estimates for the smoothed problem in
the Newtonian case}
\label{sec:nonrel}

In this section we consider the equations of motion of a compressible
barotropic fluid,
\begin{equation}
 \rho (\pa_t + v^j \pa_j )v_i + \pa_i p = 0,
 \qquad
 (\pa_t + v^j\pa_j) \rho + \rho \div v = 0,
 \qquad \text{ in } \D_t,
 \label{nonreleul}
\end{equation}
where $p = P(\rho)$ for a given function $P$ with $P(0) > 0$,
subject to the boundary conditions
\begin{equation}
 p =0, \qquad
 n_t + v^j n_j =0,
 \qquad \text{ on } \pa \D_t.
 \label{newtbc}
\end{equation}
It is convenient to reformulate \eqref{nonreleul} in terms of the
enthalpy $h$ defined by $h'(\rho) = P'(\rho)/\rho$,
\begin{equation}
 (\pa_t + v^j\pa_j) v_i + \pa_i h =0,
 \qquad
 (\pa_t + v^j\pa_j)e(h) + \div v =0,
 \label{}
\end{equation}
where $e(h) = \log \rho(h)$. In order to fix the position of the
boundary, in the next section we reformulate the above equations in
Lagrangian coordinates. Let us note at this point that if the pressure
satisfies the Taylor sign condition \eqref{tsc} then since
$\frac{dp}{dh}$ is assumed to be positive, we have
\begin{equation}
 \pa_\mathcal{N} h \leq -c' < 0, \qquad \text{ on } \pa \D_t.
 %\label{tscenth}
\end{equation}

\subsection{Lagrangian coordinates}
\label{setupnonrel}
We fix $\Omega$ to be the unit ball in $\R^3$
and fix a diffeomorphism $x_0 : \Omega \to \D_0$.
We introduce Lagrangian coordinates, which fix the position of the boundary,
\begin{equation}\label{eq:lagrange}
\frac{d x(t,y)}{dt}=v(t,x),\qquad x(0,y)=x_0(y),\qquad y\in \Omega.
\end{equation}
We express Euler's equations in these coordinates, $V(t,y)=v\big(t,x(t,y)\big)$, $h=h(t,y)$
\begin{equation}\label{eq:eulerlagrangiancoordintro}
D_t V^i\!=-\delta^{ij}\pa_j h,\quad\text{in}\quad [0,t_1]\!\times\!\Omega,\quad \text{where}\quad
D_t=\partial_t\Big|_{y =const}\!\!\!\!\!=\pa_t+\!v^k\pa_k,\qquad \pa_i \!=\frac{\pa y^a}{\pa x^i} \frac{\pa}{\pa y^a},
\end{equation}
and
 the continuity equation becomes
$$
 D_t e(h)=-\div V.
 \label{eq:contlagrangiancoordintro}
 $$
 Note that the second boundary condition in \eqref{newtbc} implies that
 the operator $D_t$ is tangent to the boundary.
 Taking the material derivative $D_t$ of the continuity equation
 and the divergence of Euler's equations  we get
\begin{equation}\label{eq:waveequationintro}
 D_t^2 e(h) - \Delta h = (\pa_i V^j)(\pa_j V^i),\quad \textrm{ in }
 [0, t_1]\! \times\! \Omega, \quad\text{with}\quad
 h|_{[0,t_1]\times\pa \Omega}=0, \quad\text{where}\quad {\Delta}\!=\delta^{ij}\pa_i\pa_j.
\end{equation}
To reduce the number of lower order terms to deal with we will assume that
\begin{equation}\label{eq:e1cond}
e^\prime(h)=e_1 > 0,
\end{equation}
is constant. In general we would get more lower order terms containing $D_t$ derivatives.

Our main result in the Newtonian setting, Theorem \ref{mainthmnonrel},
is a consequence of the following existence result for the system
\eqref{eq:lagrange}-
\eqref{eq:eulerlagrangiancoordintro} in Lagrangian coordinates.
\begin{theorem}
  \label{lagthmnonrel}
   Fix $r \geq 10$. Let $V_0, h_0$ be initial data
 satisfying the compatibility conditions \eqref{eq:compatibilityconditiondef}
 to order $r$ and $E_0 = \|V_0\|_{H^{r}(\Omega)}^2
 +\| h_0\|_{H^{r}(\Omega)}^2 < \infty$, and for which the Taylor sign
 condition \eqref{tsc}, $\pa_{N}h_0 > c$ holds. Suppose also that
 the sound speed is sufficiently large.

 Then there is a time $T'\!=\! T'(E_0, c)\! >\! 0$ so that
 the problem \eqref{eq:lagrange}-\eqref{eq:eulerlagrangiancoordintro}
 has a solution $V\!:\![0,T']\!\times \!\Omega\! \to \!\R^3$, $h\!:\![0,T']\!\times\! \Omega\!
 \to \!\R$ with
 $\|V(t)\|_{H^r(\Omega)}\! + \sum_{k \leq r}
 \|D_t^{k+1} h(t)\|_{H^{r-k}(\Omega)}\! + \|D_t^k \pa_x h(t)\|_{H^{r-k}(\Omega)}\! \leq\!
 2 E_0$.
\end{theorem}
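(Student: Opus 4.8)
The plan is to obtain Theorem \ref{lagthmnonrel} by combining the uniform a priori energy estimates of Section \ref{sec:nonrel} (which, as stated, hold for both the smoothed and non-smoothed problems) with an existence result for the tangentially-smoothed problem, followed by a compactness/limit argument. The smoothed problem is, by construction, an ODE in an appropriate Banach space of functions on $\Omega$, so existence for it is handled separately (in the appendix); the obstacle there is that the lifespan of the smoothed solution \emph{degenerates} as the smoothing parameter is removed, so it cannot by itself give a solution of \eqref{eq:lagrange}--\eqref{eq:eulerlagrangiancoordintro}. The point is that the a priori bounds do \emph{not} degenerate.

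First I would fix the smoothing parameter $\ve > 0$ and invoke the appendix to produce a solution $(V_\ve, h_\ve)$ of the smoothed system on some interval $[0, T_\ve]$, with the same initial data $(V_0, h_0)$; here one must check that the compatibility conditions \eqref{eq:compatibilityconditiondef} are preserved by the smoothing, or rather that the smoothed data still satisfy the smoothed compatibility conditions to order $r$, and that the smoothed Taylor sign condition $\pa_N h_0 > c$ continues to hold for $\ve$ small (this uses $h_0 \in H^r$, $r \geq 10$, so $h_0$ is $C^2$ up to the boundary and the sign condition is stable under $L^\infty$-small perturbations of $\pa_N h_0$). Second, I would feed $(V_\ve, h_\ve)$ into the uniform a priori estimate from Section \ref{sec:nonrel}: this produces a continuous function $T' = T'(E_0, c) > 0$, \emph{independent of $\ve$}, and a bound
\[
 \|V_\ve(t)\|_{H^r(\Omega)} + {\sum}_{k \leq r}\big( \|D_t^{k+1} h_\ve(t)\|_{H^{r-k}(\Omega)} + \|D_t^k \pa_x h_\ve(t)\|_{H^{r-k}(\Omega)}\big) \leq 2 E_0, \qquad 0 \leq t \leq T',
\]
provided $T' \leq T_\ve$. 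A continuity (bootstrap) argument then shows that in fact $T_\ve \geq T'$: on the maximal interval where the solution exists with the bound $\leq 3E_0$, the a priori estimate improves this to $\leq 2E_0$, so the solution cannot leave this ball before time $T'$, and the ODE existence theory extends it; hence the smoothed solutions all live on the common interval $[0, T']$ with the uniform bound $2E_0$.

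Third, with uniform bounds in hand, I would extract a weak-$*$ convergent subsequence $(V_\ve, h_\ve) \to (V, h)$ as $\ve \to 0$. The uniform $H^r$ control in space together with the uniform control of $D_t^{k+1} h$ and $D_t^k \pa_x h$ gives, via the equations, uniform control of enough time derivatives to apply Aubin--Lions and obtain strong convergence in a slightly weaker norm (say $H^{r-1}$ in space, uniformly in $t$), which suffices to pass to the limit in the quadratic nonlinearities $(\pa_i V^j)(\pa_j V^i)$, in the Lagrangian map $x(t,y)$ and its inverse Jacobian (here one needs the strong convergence to ensure $\pa y/\pa x$ converges and stays nondegenerate, using that $\ve$-independent bounds keep $\det(\pa x/\pa y)$ bounded away from $0$ on $[0, T']$), and in the boundary condition $h|_{\pa\Omega} = 0$. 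The limit $(V,h)$ then solves \eqref{eq:lagrange}--\eqref{eq:eulerlagrangiancoordintro}; lower semicontinuity of the norms under weak-$*$ limits preserves the bound $\leq 2E_0$, and the strong convergence up to the boundary transfers the Taylor sign condition $\pa_N h \geq c/2$. I expect the main obstacle to be the uniform a priori estimate itself — i.e. Section \ref{sec:nonrel} — since one must verify that \emph{every} constant produced in the tangential energy estimates, the div-curl estimates, the elliptic (normal-regularity) estimates, and the $L^\infty$ estimates is genuinely independent of the smoothing parameter $\ve$; the limit argument afterwards, while requiring care with the Lagrangian change of variables and with recovering normal derivatives from the div-curl-tangential decomposition, is comparatively routine.
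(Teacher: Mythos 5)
Your proposal follows essentially the same route as the paper: Proposition~\ref{nlsmoothexist} supplies the $\ve$-dependent existence for the smoothed problem, Proposition~\ref{uniformtime} carries out exactly the bootstrap you describe to get a uniform lifespan, and Theorem~\ref{nonrelexist} performs the $\ve\to 0$ compactness argument. The paper even isolates the same two potential difficulties you flag at the end — the $\ve$-uniformity of the a priori constants (which is the reason all the estimates in Section~\ref{sec:nonrel} are carried out carefully in terms of $\sm$) and the nondegeneracy of the Lagrangian Jacobian, and the passage to the limit in $\widetilde{\div}_\ve V_\ve$.

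One point deserves sharper statement than you give it: the initial data cannot simply be ``smoothed'' so that the $\ve$-compatibility conditions hold. The original $(V_0,h_0)$ satisfy the $\ve=0$ compatibility conditions \eqref{eq:compatibilityconditiondef}, but the smoothed problem \eqref{eq:eulerlagrangiancoordsmoothed}--\eqref{eq:waveequationsmoothed} has \emph{different} compatibility conditions (involving $\widetilde{\Delta}$ rather than $\Delta$), which neither $(V_0,h_0)$ nor any naive smoothing of it will satisfy exactly. The paper instead constructs, in Section~\ref{compatconstruction}, a genuinely new family of data $(V_0^\ve,h_0^\ve)$ by solving a nonlinear elliptic system — an iteration in which at each step one solves a Dirichlet problem — and then verifies $(V_0^\ve,h_0^\ve)\to(V_0,h_0)$. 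This step requires the elliptic estimate of Proposition~\ref{prop:dirichlet} and a smallness assumption on $e_1$, so it is a real piece of work rather than a routine check. Your phrasing ``check that the smoothed data still satisfy the smoothed compatibility conditions'' suggests a verification rather than a construction; in the paper's argument, without this construction the linear existence theory of Proposition~\ref{linexist} would not apply, and the iteration producing $V_{(k)}$ would not get off the ground.
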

We are going to prove this result by first solving a tangentially-smoothed
version of the problem \eqref{eq:lagrange}-\eqref{eq:eulerlagrangiancoordintro}
which is introduced in the next section.

\subsection{The smoothed problem}
It is possible to obtain  apriori energy bounds for the system
\eqref{eq:eulerlagrangiancoordintro}-\eqref{eq:waveequationintro}
but it is difficult to come up with an iteration
scheme that doesn't lose regularity. We will therefore
smooth out the equations, using a tangential regularization
that was first introduced in the incompressible case in \cite{CS07}.
Let $\!S_\varepsilon^* S_\varepsilon$ be a regularization in directions tangential to the boundary that is self adjoint, see Section
\ref{smoothingsec}.
Given a velocity vector field $V\!$, we define the tangentially regularized velocity
and the regularized coordinates by
\begin{equation}\label{eq:eulerlagrangiancoordxsmoothed}
\widetilde{V}=S_\varepsilon^* S_\varepsilon V,\qquad
\frac{d \widetilde{x}}{dt}=\widetilde{V}(t,y),\qquad \xve(0,y)=x_0(y),\qquad y\in \Omega.
\end{equation}
Using these regularized coordinates we define the smoothed equations by
\begin{equation}\label{eq:eulerlagrangiancoordsmoothed}
D_t V^i=-\delta^{ij}\widetilde{\pa}_j h,\quad\text{in}\quad [0,t_1]\times\Omega,\quad \text{where}\quad
D_t=\partial_t\big|_{y =const},\qquad \widetilde{\pa}_i =\frac{\pa y^a}{\pa \widetilde{x}^i} \frac{\pa}{\pa y^a},
\end{equation}
where $h$ is given by
\begin{equation}\label{eq:waveequationsmoothed}
  D_t\big( e_1 D_t h\big) - \widetilde{\Delta} h = \widetilde{\pa}_i \widetilde{V}^j\, \widetilde{\pa}_j V^i, \quad \textrm{ in }
 [0, t_1]\! \times\!\Omega, \quad\text{with}\quad
 h\big|_{ [0, t_1] \times\pa \Omega}=0,\quad\text{where}\quad \widetilde{\Delta}\!=\delta^{ij}\nave_i\nave_j,\!
\end{equation}
Taking the divergence of \eqref{eq:eulerlagrangiancoordsmoothed} and adding it to \eqref{eq:waveequationsmoothed} gives $D_t\big( e_1 D_t h+\widetilde{\div} V\big)=0$, which shows that the continuity equation is preserved,
\begin{equation}%\label{eq:continuityequationsmoothedsec2}
 e_1 D_t h=-\widetilde{\div} V.
\end{equation}

\subsection{A priori bounds for the smoothed problem}
We are going to
 prove uniform apriori energy bounds for the smoothed system \eqref{eq:eulerlagrangiancoordxsmoothed}-\eqref{eq:waveequationsmoothed} up to a time $t_1\!>\!0$, independent of $\varepsilon$.
In section \ref{nonrelsmexist}
we will also show that we have existence for the smoothed problem as long as the apriori bounds hold.
 Passing to the limit as $\varepsilon \to 0$ will then give us a solution to Euler's equations \eqref{eq:eulerlagrangiancoordintro}-\eqref{eq:waveequationintro}.

We will prove $\varepsilon$ dependent bounds for the iteration scheme:
(i) Given
$V\!$ and $x$ satisfying \eqref{eq:lagrange}, define smoothed $\widetilde{V}$ and $\widetilde{x}$ by  \eqref{eq:eulerlagrangiancoordxsmoothed},
(ii) given smoothed $\widetilde{V}\!$ and $\widetilde{x}$ solve the linear system  \eqref{eq:eulerlagrangiancoordsmoothed}-\eqref{eq:waveequationsmoothed}
for $h$ and new $V\!$ and $x$.  This leads to existence for the smoothed problem
up to a time $T(\varepsilon)\!>\!0$, depending on $\ve$. However, the local existence will also allow us to continue the solution for as long as we have energy bounds,
i.e. up to the time $t_1$ independent of $\varepsilon$. Existence for the linear system follows e.g. from the Galerkin method. (If $e_1$  is not constant we
evaluate it at the previous iterate of $h$ to get a linear system.)

\subsubsection{The lowest-order energy estimate} Let $\mathcal{E}$ be the energy for Euler's equations.
With  ${\kappa}=|\det{(\pa {x}/\pa y)}|,$
\begin{equation}\label{eq:TheEnergy}
\mathcal{E}(t)= \int_{\D_t}\!\!\!\big( |V|^2\! +Q(\rho)\big)\, \rho \, dx
=\int_{\Omega}\!\big( |V|^2\! +Q(\rho) \big)\,  \rho
\kappa dy,\quad\text{where}\quad Q(\rho)\!=\!2\!\int\! p(\rho) \rho^{-2} d\rho,\quad D_t (\kappa\rho)\!=\!0.
\end{equation}
If we take the time derivative of the integral expressed in the fixed Lagrangian coordinates we get $D_t$ applied to the integrand. We then use Euler's equation $D_t V=-\rho^{-1} \pa p$ and integrate by parts:
\begin{equation}
\frac{ d\mathcal{E}\!}{dt}
=\!\int_{{}_{\!}\D_{t}}\!\!\! 2V^i (-\pa_i p
)\!+ Q^{\,\prime}(_{\!}\rho_{\!})\rho D_{t} \rho \,  dx
=\!\int_{\!\D_{{}_{\!}t}}\!\! 2\div\! V \, p +Q^{\,\prime}(_{\!}\rho_{\!})\rho D_{{}_{\!}t} \rho  \, dx
+\!\int_{\!\pa\D_{{}_{\!}t}} \!\!\!\! 2  v_i  p\, \mathcal{N}^i dS=0,
\end{equation}
using the continuity equation $D_t \rho\!=\!-\rho\div V\!\!$ and the boundary condition $p\!=\!0$.
This energy for the smoothed problem with $\!\D_t,dx,\kappa$ replaced by $\!\tD_t,d\widetilde{x},
\widetilde{\kappa}=|\det{(\pa \widetilde{x}/\pa y)}|$ is almost conserved apart from that the measure changes a bit, $D_t(\rho\widetilde{\kappa})\!=\!
\rho\widetilde{\kappa}(\widetilde{\div}\widetilde{V}\!\!-\widetilde{\div}V\!)$.
We will obtain a priori bounds for higher-order derivatives of the solution to
the smoothed problem which will contain a boundary term where the symmetry of the smoothing matters, see section \ref{higherorderenergies}.

\subsubsection{Higher order estimates}
In Section \ref{def T and FD} we construct a set of vector fields $S\in {\mathcal{S}}$ that are tangential at the boundary of $\Omega$ and span the tangent space at the boundary. In addition we will also use the space time tangential vector fields $\mathcal{T}\!=\!\S\!\cup\! D_t$.
In section \ref{higherorderenergies} we derive higher order energies for any combination of tangential vector fields $T^I$ applied to the solution. These together with separate
estimates for the curl and the divergence gives an estimate for the full gradient of tangential vector fields applied to the solution.
Since $h=0$ and hence $D_t h=0$ on the boundary one can also get an energy estimate for the gradient of the enthalpy from the wave equation. Since $T^I h$ also vanishes at the boundary one get higher order energy estimates as well.

However, the higher order energies for the velocity contain a boundary term
(see \eqref{newtendef}
and \eqref{eq:energydef}) with the norm of the normal component of tangential derivatives of the coordinate at the boundary (or equivalently the second fundamental form at the boundary, see Christodoulou-Lindblad \cite{CL00}). It is critical that this boundary term is positive for the apriori energy bounds to hold, which is where the sign condition $\pa_\mathcal{N} p\leq -c<0$ is used. For the proof of existence, because of some lower order terms one needs to have more control at the boundary and this requires control of an extra half tangential derivative $\fdh$ in the interior of the coordinate.
In the remainder of this subsection we outline the proof of
the a priori bounds.
The energies we control are defined in section \ref{higherorderenergies} and
the uniform bounds are proven in section \ref{controlL2}.

To simplify notation, in what follows we let $c_J, c_r$ denote constants
depending on pointwise norms of lower-order terms,
\begin{equation}
 c_J = c_J\bigfour( {\sum}_{|K| \leq |J|/2} |\pave T^K \xve|+
 |\pave T^L V| + |\pave T^L \widetilde{V}|
 +|\pave T^L \pave h|\bigfour),
 \qquad
 c_r = {\sum}_{|J| \leq r} c_J,
 \label{lowercasecdef}
\end{equation}
and similarly $C_J$ depends on $L^\infty$ norms of lower-order
terms,
\begin{equation}
 C_J = C_J\bigfour( {\sum}_{|K| \leq |J|/2} \|\pave T^K \xve\|_{L^\infty}+
 \|\pave T^L V\|_{L^\infty}
 +\|\pave T^L \widetilde{V}\|_{L^\infty}
 +\|\pave T^L \pave h\|_{L^\infty}\bigfour),
 \qquad
 C_r = {\sum}_{|J| \leq r} C_J.
 \label{capitalCdef}
\end{equation}

\subsubsection{Control of the $L^2$ norms of the velocity and enthalpy}
We expect to control the norms
\begin{equation}\label{eq:normsV}
{\sum}_{|J|\leq r-1}\|\pa T^J V\|_{L^2(\Omega)}^2\!
\lesssim \!{\sum}_{|J|\leq r- 1}\!\Big({\sum}_{S\in\mathcal{S}}\| S T^J V\|_{L^2(\Omega)}^2\!
+\!\|\curl T^J V\|_{L^2(\Omega)}^2\!
+\! \|\div T^J V\|_{L^2(\Omega)}^2\!\Big),
\end{equation}
by Lemma \ref{app:pwdiff}.
We will show that we control the norms of $V$ on the right-hand side and it follows that we have control of the coordinate $\sum_{|J|\leq r-1} \|\pa T^J x\|_{L^2(\Omega)}^2$ just by taking the time derivative of this quantity.
The first term on the right will be controlled by the Euler energy, the second by a pointwise evolution equation for the curl and the last by the continuity equation and the energy for the wave equation.
From higher order wave equations we will get control of $\sum_{|J|\leq r-1} \| T^J\pave h\|_{L^2(\Omega)}^2$.

\subsubsection{Control of the $L^\infty$ norms of the velocity and enthalpy} When estimating the $L^2$ norms we will need to control commutators using $L^\infty$ bounds for a low number of tangential vector fields.
From control of the $L^2$ norms we will also derive control of lower order $L^\infty$ norms. In fact
from the pointwise estimate \eqref{app:pwdiff},
\begin{equation}\label{eq:partialV}
|\widetilde{\pa} T^J V|
\lesssim |\widetilde{\div}\,T^J V|+|\widetilde{\curl}\,T^J V|
+\!\! {\sum}_{S\in{\mathcal S}}|S T^J V|.
\end{equation}
Here the last term is controlled in $L^\infty$ by Sobolev's lemma from \eqref{eq:normsV} for $|J|\leq r-3$,
 the second term is controlled by a point wise evolution equation for the curl and
the first from the continuity equation and control of Sobolev norms from the wave equation for $h$. In addition we have a point wise evolution equation for the coordinate $\widetilde{\pa}T^J \widetilde{x}$ since $D_t x=V$.

\subsubsection{The additional norm control of the smoothed coordinate $S_\varepsilon x$} The higher order energies also give control of an additional norm of the smoothed coordinate on the boundary, $\sum_{|I|\leq r} \| \mathcal{N}\!\!\cdot\! T^I\! S_\varepsilon x\|_{L^2(\pa\Omega)}^2$.
 %which in Christodoulou-Lindblad \cite{CL00} gives control of the second fundamental fom of the boundary.
 When $\ve > 0$, controlling this term gives rise to error terms that have to be controlled through the elliptic estimates in Section \ref{sec:divcurlL2}:
 \begin{multline}
 \sum_{|J|\leq r-1}\! \!\!\!||\pa T^J\! \fdh\sm x||_{L^2(\Omega)}^2\!
 +\!\!\sum_{|I|\leq r} \!\| T^I\! S_\varepsilon x\|_{L^2(\pa\Omega)}^2\\
 \leq c_{{}_{\!}1\!}\!\sum_{|I|\leq r} \!\| \mathcal{N}\!\cdot T^I\! S_\varepsilon x\|_{L^2(\pa\Omega)}^2
 +c_{{}_{\!}1\!}\!\!\!\!\sum_{|J|\leq r-1}\! \!\!||\div T^J\! \fdh\sm x||_{L^2(\Omega)}^2\!
 + ||\curl T^J\! \fdh\sm x||_{L^2(\Omega)}^2
 +\|\pa T^J \sm x\|_{L^2(\Omega)}^2.
\end{multline}

\subsection{Higher order equations for the velocity vector field}
\label{higherorderequations}
Before deriving the energy estimates we find a higher-order version of
our equations.
\subsubsection{Higher order Euler's equations}
If $T=T^a(y)\pa_a $ is tangential then
\begin{equation*}
D_t T V_i-\widetilde{\pa}_j h \,\widetilde{\pa}_i T\widetilde{x}^j +\widetilde{\pa}_i Th=0.
\end{equation*}
Similarly applying a product of tangential vector fields $T^I=T_{i_1}\cdots T_{i_r}$ where $I=(i_1,\dots,i_r)$ is a multiindiex of length $r=|I|$, we get
 \begin{equation}\label{eq:TIV}
D_t  T^I  V_i
-\widetilde{\pa}_j h \,\widetilde{\pa}_i T^{I\!} \widetilde{x}^j - \widetilde{\pa}_i T^{I\!}h
=F_i^I ,
\end{equation}
where $F_i^I$ is a sum of terms of the form
$\widetilde{\pa}_i  T^{I_1} \widetilde{x}\cdots \widetilde{\pa} T^{I_{k-1}} \widetilde{x}\cdot T^{I_{k}} \widetilde{\pa} h$, for $I_1+\dots+I_k=I$ and
$|I_i|<|I|$ and hence is lower order
\begin{equation}
|F^I|\lesssim c_I{\sum}_{|J|\leq |I|-1} |\widetilde{\pa} T^J \widetilde{x}|
+|D_t T^J V|,
\end{equation}
 and $c_I$ stand for a constant that depends on
$|\widetilde{\pa} T^L \widetilde{x}|$ and  $|D_t T^L V|$, for
$|L|\leq |I|/2 $.

 We now want to rewrite this in a way which to highest order is a symmetric operator for which it is easier to obtain energy conservation:
\begin{equation}\label{eq:symmetricequation}
D_t  T^I  V_i
-\widetilde{\pa}_i\big(\widetilde{\pa}_j h \, T^{I\!} \widetilde{x}^j -  T^{I\!}h\big)=F_i^{\prime I}\!,
\end{equation}
where $F_i^{\prime I}=F_i^{I}-\widetilde{\pa}_i\widetilde{\pa}_j h \,  \,T^{I\!} \widetilde{x}^j $ is lower order.

\subsubsection{Higher order continuity equations}
Similarly one can get a higher order version of the continuity equation.
From
$e_1 D_t T^I h=-T^I\widetilde{\div} V$ we have
\begin{equation}\label{eq:GIequation}
 e_1 D_t T^I h+\widetilde{\div} (T^I V)-\widetilde{\pa}_i T^{I\!} \widetilde{x}^k \,\,\widetilde{\pa}_k V^i =G^I,
\end{equation}
where $G^I$ is a sum of terms of the form $\widetilde{\pa}  T^{I_1} \widetilde{x}\cdots \widetilde{\pa} T^{I_{k-1}} \widetilde{x}\cdot \widetilde{\pa} T^{I_{k}}V$,
for $I_1+\dots+I_k=I$ and $|I_i|<|I|$, and hence is lower order
\begin{equation}\label{eq:GIestimate}
 |G^I|\lesssim c_I {\sum}_{|J|\leq |I|-1}
 |\widetilde{\pa} T^J V|+|\widetilde{\pa} T^J\! \xve|,
 \end{equation}
 and $c_I$ stands for a constant that depends on
$|\widetilde{\pa} T^L \widetilde{x}|$, $|\widetilde{\pa} T^L V|$,
 for
$|L|\leq |I|/2 $.
 Hence
\begin{equation}\label{eq:highercontinuity}
 e_1 D_t T^I h+\widetilde{\pa}_i \bigtwo(T^I V^i-T^{I\!} \widetilde{x}^k \,\,\widetilde{\pa}_k V^i\bigtwo) =G^{{}^{\,}\prime I},
\end{equation}
where $G^{{}^{\,}\prime I}=G^I-T^{I\!} \widetilde{x}^k \,\,\widetilde{\pa}_k \, \widetilde{\div} V$ is lower order.

\subsubsection{New unknowns}
Given the form of \eqref{eq:symmetricequation} and \eqref{eq:highercontinuity} it is natural to introduce
\begin{equation}\label{eq:eulerianvariables}
V^{I i} =T^I V^i- \widetilde{\pa}_k V^i\,T^{I\!} \widetilde{x}^k,\qquad
\text{and}\qquad
h^I=  T^{I\!}h-\widetilde{\pa}_j h \,\, T^{I\!} \widetilde{x}^j .
\end{equation}
In terms of these quantities \eqref{eq:symmetricequation} and \eqref{eq:highercontinuity} takes the form
\begin{align}\label{eq:highereulermodified}
D_t V^{Ii}+\widetilde{\pa}_i h^I&=F_i^{\prime\prime I},\\
\label{eq:highercontinuitymodified}
e_1 D_{t} h^{I}+\widetilde{\pa}_i V^{I i}&=G^{\prime\prime I},
\end{align}
where $F_i^{\prime\prime I}=F_i^{\prime I}-D_t\big(\widetilde{\pa}_k V^i\,T^{I\!} \widetilde{x}^k\big)$ and $G^{\prime\prime I}=G^{\prime I}-D_t\big(\widetilde{\pa}_j h \,\, T^{I\!} \widetilde{x}^j\big)$ are lower order.

\begin{remark}
 We remark that \eqref{eq:eulerianvariables} are related to Alinhac's 'good unknowns', see \cite{XW19}, well as to covariant derivatives. These quantities also indirectly showed up in this context in Christodoulou-Lindblad \cite{CL00} where energy estimates $v$ and $h$ were in terms of the original Eulerian coordinates $(t,\widetilde{x})$ instead of the Lagrangian coordinates.
 If $\widetilde{v}(t,\widetilde{x})$ denote a functions in terms of the original Eulerian coordinates that were controlled in \cite{CL00} then in the Lagrangian coordinates  $v(t,y)=\widetilde{v}(t,\widetilde{x})$, where $\widetilde{x}=\widetilde{x}(t,y)$. We have
 \begin{equation}
 \pa_y^\bold{a} \widetilde{v}(t,\widetilde{x})
 ={\sum}_{|\beta|=|\bold{a}|=r}\big(\widetilde{\pa}_x^\beta \widetilde{v}\big)(t,\widetilde{x}) \frac{\pa \widetilde{x}^{\beta_1}}{\pa y^{a_1}}\cdots \frac{\pa \widetilde{x}^{\beta_r}}{\pa y^{a_r}}
 +\big(\widetilde{\pa}_{k} \widetilde{v}\big)(t,\widetilde{x})\pa_y^\bold{a} \widetilde{x}^k+ M^\bold{a}\!,
 \label{alinhacdiscussion}
 \end{equation}
 where $M^\alpha$ is lower order. Going back to the Lagrangian coordinates it therefore follows that quantity
 $\pa_y^{\bold{a} } v\!-\!\widetilde{\pa}_k v\, \pa_y^{\bold{a} } \widetilde{x}^k\! $ can
 modulo lower order terms be controlled by $\widetilde{\pa}{}^\beta v$, for $|\beta|\!=\!|{\bold{a} }|$, which was controlled in \cite{CL00}. To leading order this is of course nothing but the covariant derivative ${\na}_a=\na_{\pa/\pa y^a}$ corresponding to the partial derivatives $\widetilde{\pa}_x$ expressed in the $y$ coordinates:
  \begin{equation}
 \big( \na_{{a_1}} \cdots  \na_{{a_r}} \widetilde{v}\big)(t,\widetilde{x})
 ={\sum}_{|\beta|=|\bold{a}|=r}\big(\widetilde{\pa}_x^\beta \widetilde{v}\big)(t,\widetilde{x}) \frac{\pa \widetilde{x}^{\beta_1}}{\pa y^{a_1}}\cdots \frac{\pa \widetilde{x}^{\beta_r}}{\pa y^{a_r}}.
 \end{equation}
 \end{remark}
 We note at this point that by working in terms of these new variables, we are able to
 prove a priori estimates for the non-smoothed problem $\ve = 0$ without working
 in fractional Sobolev spaces which were needed in \cite{CS07} and \cite{GLL19}.

 \subsection{Higher order energies for the velocity vector field}
 \label{higherorderenergies}
Multiplying the left hand side of \eqref{eq:highereulermodified} by $V^{I i}$ and integrating we get
\begin{equation}
\int_\Omega \! V^{I i} D_t  V^{I }_i  \,  \widetilde{\kappa} dy
+\int_\Omega V^{I i}\, \widetilde{\pa}_i h^I\,  \widetilde{\kappa}dy=\int_\Omega V^{I i} F_i^{\prime\prime I} \widetilde{\kappa}\, dy .
\end{equation}
If we integrate the second term by parts using that $\widetilde{\pa}_i$ is symmetric with respect to  $d\widetilde{x}=\widetilde{\kappa} dy$:
\begin{equation}
\int_\Omega  V^{I i}\, \widetilde{\pa}_i h^I\,  \widetilde{\kappa}dy
=\!\int_{\pa\Omega}\!\!\!{\widetilde{\mathcal{N}}_i }  V^{I i} h^I  \widetilde{\nu} d S
-\!\int_\Omega\!\widetilde{\pa}_i V^{I i}\,  h^I \widetilde{\kappa} dy
=\!\int_{\pa\Omega}\!\!\!\!{\widetilde{\mathcal{N}}_i }  V^{I i} h^I  \widetilde{\nu} d S
+\!\int_\Omega\!\!   e_1 h^I D_t  h^I\, \widetilde{\kappa} dy
-\!\int_\Omega\!\!G^{\prime\prime I}  h^I \widetilde{\kappa} dy,
\end{equation}
where $\widetilde{\nu} d S$ is the measure on $\pa \Omega$ induced by the measure
$\kappa dy$ on $\Omega$.

Hence
\begin{equation}
\frac{1}{2}\frac{d}{dt}\int_\Omega |V^{I }|^2\!+ e_1 (h^I)^2 \,\,  \widetilde{\kappa}dy
+\int_{\pa\Omega}\!\!\!\!{\widetilde{\mathcal{N}}_i }  V^{I i} h^I  \widetilde{\nu} d S
=\int_\Omega \!|V^{I }|^2\!+ e_1 (h^I)^2\,D_t \widetilde{\kappa}dy
+\!\!\int_\Omega\! V^{I i} F_i^{\prime\prime I}\!\!+G^{\prime\prime I}\!  h^I \, \widetilde{\kappa}\, dy.
\label{dtEnewt}
\end{equation}

\subsubsection{The boundary term}
\label{sec:nonrelbdy}
It remains to deal with the boundary term.
If we  use that  $T^I\! h\!=\!0$ on $\pa \Omega$ and  $\widetilde{\pa}_j h\!=\! \widetilde{\mathcal{N}}_j \pa_{\!\widetilde{\mathcal{N}}}h\!=\!-\widetilde{\mathcal{N}}_j |\widetilde{\pa} h|$
there since $h\!=\!0$  and by assumption $\pa_{\widetilde{\mathcal{N}}} h\!<\!0$ we see
that
\begin{equation}
{\widetilde{\mathcal{N}}_i }  V^{I i}\!=\widetilde{\mathcal{N}}_i T^I V^{i}\!-\widetilde{\mathcal{N}}_i\widetilde{\pa}_k {V}^i\,  T^I \widetilde{x}^{k}\!,\qquad\text{and}\qquad
h^I\!=\widetilde{\mathcal{N}}_{\!j} T^{I} \widetilde{x}^j\, |\widetilde{\pa} h| ,\qquad\text{on}\quad \pa\Omega.
\end{equation}
On the other hand, since $D_t \widetilde{\mathcal{N}}_i=-\widetilde{\pa}_i \widetilde{V}_k\, \widetilde{\mathcal{N}}^k
+\eta\widetilde{\mathcal{N}}_i$, where
 $ \eta = \widetilde{\pa}_j \widetilde{V}_k\, \widetilde{\mathcal{N}}^k\widetilde{\mathcal{N}}^j$ we have
\begin{equation}
D_t \big( \widetilde{\mathcal{N}}_i \,  T^I {x}^{i} \big)
=\widetilde{\mathcal{N}}_i T^I V^{i}-\widetilde{\mathcal{N}}_i\widetilde{\pa}_k \widetilde{V}^i\,  T^I {x}^{k}+\eta\,\widetilde{\mathcal{N}}_i \,  T^I {x}^{i}  ,\qquad\text{on}\quad \pa\Omega,
\end{equation}
and hence
\begin{equation}\label{eq:badboundaryterms}
{\widetilde{\mathcal{N}}_i }  V^{I i}\!=D_t \big( \widetilde{\mathcal{N}}_i \,  T^I {x}^{i} \big)
-\eta\,\widetilde{\mathcal{N}}_i \,  T^I {x}^{i}+\widetilde{\mathcal{N}}_i\widetilde{\pa}_k \widetilde{V}^i\,  T^I {x}^{k}
-\widetilde{\mathcal{N}}_i\widetilde{\pa}_k {V}^i\,  T^I \widetilde{x}^{k} .
\end{equation}
We hence have
\begin{multline}\label{eq:boundaryproblems}
\int_{\pa\Omega}\!\!\!\!\!{\widetilde{\mathcal{N}}_{{}_{\!}i} }  V^{I i} h^I  \widetilde{\nu} d S
=\!\!\int_{\pa\Omega}\!\!\!\!\!\widetilde{\mathcal{N}}_{{}_{\!}j} T^{I\!} \widetilde{x}^j D_t \big( \widetilde{\mathcal{N}}_i   T^{I\!} {x}^{i} \big)  |\widetilde{\pa} h| \widetilde{\nu} d S
-\!\int_{\pa\Omega}\!\!\!\!\!
\widetilde{\mathcal{N}}_{{}_{\!}j}   T^{I\!}  \widetilde{x}^{\,j}\,\widetilde{\mathcal{N}}_{{}_{\!}i} T^{I\!}  {x}^{i}\, \eta |\widetilde{\pa} h| \widetilde{\nu} d S\\
+\!\int_{\pa\Omega}\!\!\!\!\! \widetilde{\mathcal{N}}_{{}_{\!}j}  T^{I\!}  \widetilde{x}^{\,j}\, \bigtwo(\widetilde{\mathcal{N}}_i\widetilde{\pa}_k \widetilde{V}^i\,  T^I {x}^{k}
-\widetilde{\mathcal{N}}_i\widetilde{\pa}_k {V}^i\,  T^{I\!} \widetilde{x}^{k}\bigtwo)  |\widetilde{\pa} h| \widetilde{\nu} d S.
\end{multline}

\subsubsection{The apriori energy bounds for Euler's equation}
For a solution of Euler's equations $\varepsilon=0$ we have $\widetilde{x}=x$ and
the last integral in \eqref{eq:boundaryproblems} vanishes. It follows that
\begin{equation}
  \label{newtendef}
\mathcal{E}^{I\!}(t)=
  \int_{\Omega} |V^I|^2 \kappa dy + \int_{\Omega} e_1 (h^I)^2 \kappa dy
  +\int_\Omega |T^I x|^2\, \kappa dy
  + \int_{\pa\Omega} (\mathcal{N}_i T^I\! x^i)^2|\pa h| \nu dS,
\end{equation}
satisfies
\begin{equation}\label{eq:energyestiamteeuler}
\frac{d}{dt}\mathcal{E}^{I\!}(t)\lesssim c_0\, \mathcal{E}^{I\!}(t)+\int_{\Omega}  |F^{\prime\prime I}|^2+(G^{\prime\prime I})^2\, dy,\qquad\text{if}\quad \varepsilon=0.
\end{equation}
Here
the integral in the right can be bounded by \eqref{eq:normsV} times lower order $L^\infty$ norms that we can bound by \eqref{eq:partialV} that we expect to control at this order.
Moreover
\begin{equation}
\int_\Omega |T^I V|^2 +
e_1(T^I h)^2+ |T^I x|^2\,  dy \lesssim c_0 \, \mathcal{E}^I(t).
\end{equation}

\subsubsection{The apriori energy bounds for the smoothed Euler's equation}\label{smoothennonrel}
For the smoothed problems it is more work to close the energy bounds.
In particular the term $B^I\!$ in \eqref{eq:badboundaryterms}-\eqref{eq:boundaryproblems}
contains all components of $T^I\! x$ and not only the normal one,
and for that we need more elliptic estimates.

We will now modify the definition of the unknowns in \eqref{eq:eulerianvariables} slightly to make it more symmetric
by replacing $T^I \widetilde{x}^i=T^I \sm^2 x^i$ with
$\sm T^I \sm x^i=\sm T^I x_\varepsilon^i$, where $ x_\varepsilon^i=\sm x^i$:
\begin{equation}%\label{eq:eulerianvariablessmoothed}
V^{I i} =T^I V^i- \widetilde{\pa}_k V^i\,\sm T^{I\!} {x}_\varepsilon^k,\qquad
\text{and}\qquad
h^I=  T^{I\!}h-\widetilde{\pa}_j h \,\,\sm T^{I\!}  {x}_\varepsilon^j ,\qquad\text{where}\quad {x}_\varepsilon^j=\sm x^j.
\end{equation}
In terms of these quantities we have
\begin{equation}%\label{eq:highereulermodifiedsmoothed}
D_t V^{Ii}+\widetilde{\pa}_i h^I=F_{ i}^{\prime\prime I}+C_{\varepsilon\, i}^I,
\end{equation}
\begin{equation}%\label{eq:highercontinuitymodifiedsmoothed}
e_1 D_{t} h^{I}+\widetilde{\pa}_i V^{I i}=G^{\prime\prime I}+C_\varepsilon^I,
\end{equation}
where the smoothing errors $C_{\varepsilon\, i}^I$, $C_\varepsilon^I$ are bounded by lower norms times $[\sm, T^I] x_\varepsilon^k$, or $\widetilde{\pa}_i\big( [\sm, T^I] x_\varepsilon^k \big)$, or $D_t \big( [\sm, T^I] x_\varepsilon^k \big)$,
which are lower order, %meaning they can be estimated by fewer tangential derivative:
\begin{equation}
\|C^I_{\varepsilon}\|_{L^2(\Omega)}\lesssim c_0 {\sum}_{|J|\leq |I|-1}
\|T^J x_\varepsilon\|_{L^2(\Omega)}+\|\partial T^J x_\varepsilon\|_{L^2(\Omega)}
+\|T^J D_t\, x_\varepsilon\|_{L^2(\Omega)},
\end{equation}
 by Lemma \ref{lem:mutiplicationsmoothingcommute}
since $T^I$ is tangential. These particular smoothing commutators are just a matter of which coordinates we choose to parameterize the domain and define the smoothing operators and vector fields and they would vanish in flat coordinates.
For these new variables \eqref{eq:boundaryproblems} become
\begin{multline}\label{eq:boundaryproblemssmoothed}
\int_{\pa\Omega}\!\!\!\!\!{\widetilde{\mathcal{N}}_i }  V^{I i} h^I  \widetilde{\nu} d S
=\!\!\int_{\pa\Omega}\!\!\!\!\!\widetilde{\mathcal{N}}_j \sm T^{I\!} {x}_\varepsilon^j D_t \big( \widetilde{\mathcal{N}}_i   T^{I\!} {x}^{i} \big) |\widetilde{\pa} h| \widetilde{\nu} d S
-\!\int_{\pa\Omega}\!\!\!\!\!
\widetilde{\mathcal{N}}_j \sm  T^{I\!} {x}_\varepsilon^{\,j}\,\widetilde{\mathcal{N}}_i  T^{I\!}  {x}^{i}\, \eta|\widetilde{\pa} h| \widetilde{\nu} d S\\
+\!\int_{\pa\Omega}\!\!\!\!\! \widetilde{\mathcal{N}}_j  \sm T^{I\!}  {x}_\varepsilon^{\,j}\,
\bigtwo(\widetilde{\mathcal{N}}_i\widetilde{\pa}_k \widetilde{V}^i\,  T^{I\!} {x}^{k}\!
-\widetilde{\mathcal{N}}_i\widetilde{\pa}_k {V}^i\, \sm T^{I\!} {x}_\varepsilon^{k} \bigtwo) |\widetilde{\pa} h| \widetilde{\nu} d S.
\end{multline}
We want to use that the smoothing $\sm$, as constructed in Section
\ref{smoothingsec}, is symmetric on $L^2(\pa \Omega)$ to move one smoothing $\sm$ from the first  factor $\sm T^I x_\varepsilon $ of the boundary integrals
to the other factors, and then commute it through first to $ T^I x$ and then to $x$. For the first term we have
\begin{equation*}
\sm\bigtwo( {\widetilde{\mathcal{N}}_j } |\widetilde{\pa} h|\widetilde{\nu}\,\,D_t\big({\widetilde{\mathcal{N}}_i }  T^{I\!} x^i\bigtwo)\bigtwo)=  \widetilde{\mathcal{N}}_j |\widetilde{\pa} h|\widetilde{\nu}\,\, D_t \big(\widetilde{\mathcal{N}}_i T^{I\!}\sm  x^i\big) +C_{\varepsilon j}^{I\!,1},
\end{equation*}
where $C_{\varepsilon j}^{I\!,1}$ is lower order by Lemma \ref{lem:mutiplicationsmoothingcommute}
since $T^I$ is tangential:
\begin{equation}\label{eq:smoothingcommutator}
\|C^{I\!,1}_{\varepsilon}\|_{L^2(\pa\Omega)}\lesssim C_0
\!\!\!\!\! \sum_{|J|\leq |I|-1}\!\!\!\!\!
\|T^J x\|_{L^2(\pa\Omega)}
+\|T^J D_t\, x\|_{L^2(\pa\Omega)}
\lesssim C_0 \!\!\!\!\!\sum_{|J|\leq |I|-1}\!\!\!\!\!
\|\widetilde{\pa}T^J x\|_{L^2(\Omega)}
+\|\widetilde{\pa}T^J D_t\, x\|_{L^2(\Omega)}.
\end{equation}

 Similarly we can move $\sm$ from the first factor in the other two boundary integrals to obtain
\begin{multline}\label{eq:boundaryproblemssmoothedsymmetric}
\int_{\pa\Omega}\!\!\!\!\!{\widetilde{\mathcal{N}}_i }  V^{I i} h^I  \widetilde{\nu} d S
=\!\!\int_{\pa\Omega}\!\!\!\!\!\widetilde{\mathcal{N}}_j  T^{I\!} {x}_\varepsilon^j D_t \big( \widetilde{\mathcal{N}}_i   T^{I\!} {x}_\varepsilon^{i} \big) |\widetilde{\pa} h| \widetilde{\nu} d S
-\!\int_{\pa\Omega}\!\!\!\!\!
\widetilde{\mathcal{N}}_j   T^{I\!} {x}_\varepsilon^{\,j}\,\widetilde{\mathcal{N}}_i  T^{I\!}  {x}_\varepsilon^{i}\, \eta|\widetilde{\pa} h| \widetilde{\nu} d S\\
+\!\int_{\pa\Omega}\!\!\!\!\! \widetilde{\mathcal{N}}_j   T^{I\!}  {x}_\varepsilon^{\,j}\,
\bigtwo(\widetilde{\mathcal{N}}_i\widetilde{\pa}_k \widetilde{V}^i\,  T^{I\!} {x}_\varepsilon^{k}\!
-\widetilde{\mathcal{N}}_i\widetilde{\pa}_k {V}^i\, \sm^2 T^{I\!} {x}_\varepsilon^{k} \bigtwo) |\widetilde{\pa} h| \widetilde{\nu} d S
+\int_{\pa\Omega}\!\!\!\!\!   T^{I\!} {x}_\varepsilon^{\,j} C_{\varepsilon j}^{\prime I} |\widetilde{\pa} h| \widetilde{\nu} d S ,
\end{multline}
where $C_{\varepsilon j}^{\prime I}$ satisfy \eqref{eq:smoothingcommutator}.
Here the terms on the first row are as before but the terms on the second row can only be controlled by all the components of $T^I x_\varepsilon$ which are not directly
controlled by the energy. With
\begin{equation}\label{eq:energydef}
\mathcal{E}^{I\!}(t)
=\!\!
 \int_{\Omega}\!\! |V^I|^2 \kappa dy +\!\! \int_{\Omega}\!\! e_1 (h^I)^2 \widetilde{\kappa} dy+
\!\! \int_\Omega\!\! |T^I\! x|^2\, \widetilde{\kappa} dy
 + \!\!\int_{\pa\Omega}\!\!\!\! (\mathcal{N}_i T^I \! x_{\varepsilon }^i)^2|\pa h| \nu dS,
 \end{equation}
 and
 \begin{equation}\label{eq:energyboundarydef}
\mathcal{B}^{I\!}(t)
=\!\!\int_{\pa\Omega}\!\!\!\!\! | T^{I\!} x_{\varepsilon }|^2  dS,\qquad
\mathcal{B}_{\widetilde{\mathcal{N}}}^{\,I}(t)
=\!\!\int_{\pa\Omega}\!\!\!\!\! |\, \widetilde{\mathcal{N}}\!
\bigtwo\cdot T^{I\!} x_{\varepsilon }|^2  dS,
\end{equation}
we therefore only have
\begin{equation}\label{eq:energyestimate}
\frac{d}{dt}\mathcal{E}^{I\!}(t)\lesssim C_0\,\mathcal{E}^{I\!}(t)
+C_0\mathcal{B}^{I\!}(t)
+C_I \!{\sum}_{|J|\leq |I|-1}\int_\Omega \!\!
|\widetilde{\pa}T^{J\!} x|^2\!
+|\widetilde{\pa}T^J V|^2\, d y,
\end{equation}
while the energy only bounds the normal component of $T^I x_\ve$ at the boundary,
\begin{equation}\label{eq:byenergyestimated}
\|T^I V(t,\cdot)\|_{L^2(\Omega)}^2+\mathcal{B}_{\mathcal{N}}^{\,I}(t)\lesssim \mathcal{E}^{I\!}(t).
\end{equation}
As we shall see, this together with elliptic estimates will give us control of
another half derivative of $T^I x_{\varepsilon }$ in the interior and at the same time bounds for all components of $T^I  x_{\varepsilon }$ at the boundary.

\subsubsection{The apriori energy bounds for the smoothed linear system}\label{sec:apriorilinear}
We will solve the smoothed problem by an iteration. Given $U=V_{(k)}$ define $z=x_{(k)}$ such that $dz/dt=U(t,z)$ define
$\widetilde{V}=S_\varepsilon^* S_\varepsilon U$ and
$\widetilde{x}=S_\varepsilon^* S_\varepsilon z$.
In Section \ref{existence} we prove that the linear system \eqref{eq:eulerlagrangiancoordsmoothed}
-\eqref{eq:waveequationsmoothed} is well-posed in the energy space,
and given $\widetilde{V}$ and $\widetilde{x}$ tangentially smooth define the new $V_{(k+1)}=V$ by solving the linear system \eqref{eq:eulerlagrangiancoordsmoothed}- \eqref{eq:waveequationsmoothed}, and $x_{(k+1)}=x$ by $dx/dt=V(t,x)$.
 The argument from the previous section
gives apriori bounds for the iterates $V_{(k+1)}$
and the only term that has to be estimated differently is the boundary term where we used that $V=D_t x$, where $x$ was related to $\widetilde{x}$ by $\widetilde{x}=S_\varepsilon^* S_\varepsilon x$, because now $\widetilde{x}=S_\varepsilon^* S_\varepsilon z$ is related to the previous iterate.
More precisely we can no longer estimate the boundary term in
\eqref{eq:boundaryproblems}
\begin{equation}\label{eq:boundaryproblems2}
\!\!\int_{\pa\Omega}\!\!\!\!\!\widetilde{\mathcal{N}}_j T^{I\!} \widetilde{x}^j D_t \big( \widetilde{\mathcal{N}}_i   T^{I\!} {x}^{i} \big)  |\widetilde{\pa} h| \widetilde{\nu} d S
=\!\!\int_{\pa\Omega}\!\!\!\!\!\widetilde{\mathcal{N}}_j T^{I\!}  S_\varepsilon z^j D_t \big( \widetilde{\mathcal{N}}_i   T^{I\!} S_\varepsilon  {x}^{i} \big)  |\widetilde{\pa} h| \widetilde{\nu} d S+\text{Lower order} ,
\end{equation}
by moving $S_\varepsilon$ to the other factor
since when $z\!\neq \! x$ the integrand can no longer be written as a time derivative plus lower order.
However as long as at least one of the vector fields in $T^I$ is a space tangential vector field we can use the smoothing to trade a tangential derivative for a power of $1/\varepsilon$. With one less derivative it can be estimated from the interior norm of $x$ using the restriction theorem.
On the other hand if one of the vector fields in  $T^I\!$ is a time derivative
then we can estimate it by one less derivative of $V\!\!$ on the boundary and hence in the interior.  For the iterates
\begin{equation}
\mathcal{E}_k^{I\!}(t)=
 \int_{\Omega} |V_{(k)}^I|^2 \kappa dy + \int_{\Omega} e_1 (h_{(k)}^I)^2 \kappa dy+\int_\Omega |T^I\! x_{(k)}|^2\, \kappa dy,
 \label{eq:eniteratedef}
\end{equation}
we therefore only have
\begin{multline}
\frac{d}{dt}\mathcal{E}_{k+1}^{I\!}(t)\lesssim \frac{c_0}{\varepsilon}\,\mathcal{E}_{k+1}^{I\!}(t)
+\frac{C_J^{(k)}}{\varepsilon}  \!\!\!\!\!  \!\!\!\!\!  \sum_{\,\,\,\,\,\,\,\,\,|J|\leq |I|-1}\!\!\!\!\!\!\!\!\!
\|\widetilde{\pa}T^J x_{(k+1)}\|_{L^2(\Omega)}^2
+\|\widetilde{\pa}T^J V_{(k+1)}\|_{L^2(\Omega)}^2\\
+\frac{C_J^{(k+1)}}{\ve}\sum_{|J| \leq |I|-1}
\|\widetilde{\pa}T^J x_{(k)}\|_{L^2(\Omega)}^2
+\|\widetilde{\pa}T^J V_{(k)}\|_{L^2(\Omega)}^2,
\label{dtEmainnonreliter}
\end{multline}
where $C_J^{(\ell)}$ denotes a constant as in
\eqref{capitalCdef} but with $V, \xve$ replaced with
$V_{(\ell)}$ and $x_{(\ell)}$.
This only gives a uniform energy bound up to a time $t\leq T=O(\varepsilon)$.

\subsubsection{Estimates for time derivatives of the velocity}
We are also going to need estimates or time derivatives but that is easier. The apriori estimate above for a solution of Euler's equations works if the vector fields $T^I$ are any combination of space derivatives $T=T^a(y)\pa/\pa y^a$ and time derivatives $T=D_t$. However for the smoothing estimates one needs at least one space derivative $T^I=T^J T$, where $T=T^a(y)\pa/\pa y^a$.
On the other hand if $T^I=T^J D_t$, where $|J|=r-1$ then
 \begin{equation}
D_t  T^J  V_i
=-T^J \widetilde{\pa}_i  h,
\end{equation}
which  we shall see  is controlled by the energy for the wave equation.
We remark that the additional boundary estimate is only needed for all space tangential derivatives since $D_t x_\varepsilon\!=\!S_\varepsilon V$.

\subsection{Higher order wave and elliptic estimates for the enthalpy}\label{sec:enthalpywave}
We have
\begin{equation}%\label{eq:thewaveeq}
   e_1  D_t^2 h  - \widetilde{\Delta} h = \widetilde{\pa}_i \widetilde{V}^j\,\widetilde{\pa}_j V^i.
\end{equation}
Hence
\begin{equation}\label{eq:thehigherwaveequationerror}
   e_1  D_t^2 T^{J\!} h
   -\! \widetilde{\pa}_i \big( T^J{\widetilde{\pa}}^i  h\big)
   = P^J\!+Q^J,
   \end{equation}
   where $ P^J\!=\big[\widetilde{\pa}_i,T^J]\widetilde{\pa}^i h$ and
   $ Q^J\!\!=T^J\big( \widetilde{\pa}_i \widetilde{V}^j\,\widetilde{\pa}_j V^i\big)$.
   Here
   \begin{equation}\label{eq:PJeq}
   P^J\!= {\sum}_{J_1+\dots+J_k=J,\,|J_k|<J|}\,\, p_{J_1\dots J_k}^J \widetilde{\pa}_i T^{J_1} \widetilde{x}\cdots  \! \widetilde{\pa}T^{J_{k-1}} \widetilde{x}\cdot \widetilde{\pa} T^{J_k} {\widetilde{\pa}}^{i}  h,
   \end{equation}
    \begin{equation}\label{eq:QJeq}
   Q^J\!=\!{\sum}_{J_1+\dots+J_k=J,\, 1\leq \ell\leq k}\, \, q_{J_1\dots J_k}^{J\ell}\widetilde{\pa}_i T^{J_1} \widetilde{x}\cdots \! \widetilde{\pa}T^{J_{\ell-1}} \widetilde{x}\cdot \widetilde{\pa} T^{J_\ell} \widetilde{V}\!\cdot\widetilde{\pa} T^{J_{\ell+1}} \widetilde{x}\cdots  \widetilde{\pa}T^{J_{k-1}} \widetilde{x}\cdot\widetilde{\pa} T^{J_k}V^i\!\! ,
   \end{equation}
   for some constants $p_{J_1\dots J_k}^J$ and $q_{J_1\dots J_k}^{J\ell}$.
    $P^J$ and $Q^J$ are hence are lower order:
 \begin{equation}\label{eq:PJ}
 |P^J|\lesssim c_J {\sum}_{|K|\leq |J|}
 |\widetilde{\pa} T^K \widetilde{x}|
 +c_J {\sum}_{|K|\leq |J|-1}| \widetilde{\pa} T^{K} {\widetilde{\pa}}  h|,
 \end{equation}
 \begin{equation}\label{eq:QJ}
 |Q^J|\lesssim c_J {\sum}_{|K|\leq |J|}
 |\widetilde{\pa} T^K V|+|\widetilde{\pa} T^K \widetilde{V}|+
 |\widetilde{\pa} T^K \widetilde{x}|,
 \end{equation}
 and $c_J$ stands for a constant that depends on
$|\widetilde{\pa} T^L \widetilde{x}|$, $|\widetilde{\pa} T^L V|$,
$|\widetilde{\pa} T^L \widetilde{V}|$
and $|\widetilde{\pa} T^{L} {\widetilde{\pa}} h|$, for
$|L|\leq |J|/2 $.

\subsubsection{Higher order elliptic equations for the enthalpy}
\label{sec:elliptcenthalpy}
 To deal with the lower order terms $\widetilde{\pa} T^K {\widetilde{\pa}}  h$ on the right of \eqref{eq:thehigherwaveequationerror} we need the pointwise elliptic estimate in terms
of the divergence and the curl and tangential components of $T^K \widetilde{\pa} h$:
\begin{equation}
|\widetilde{\pa} T^K \widetilde{\pa} h|
\lesssim |_{\,}\widetilde{\div}\,T^K \widetilde{\pa} h|+|_{\,}\widetilde{\curl}\,T^K \widetilde{\pa} h|
+ {\sum}_{S\in{\mathcal S}}|S T^K \widetilde{\pa} h|,
\end{equation}
by \eqref{pwnonrel}.
At a lower order we can think of \eqref{eq:thehigherwaveequationerror}
as an elliptic equation
\begin{equation}\label{eq:thehigherwaveequationerrorelliptic}
  \widetilde{\div}(T^K \pave h)
   =e_1  D_t^2 T^{K\!} h- P^K-Q^K,
   \end{equation}
   where
   $P^K$ and $Q^K$ satisfy \eqref{eq:PJ}-\eqref{eq:QJ}.
  Moreover, the antisymmetric part satisfy
  \begin{equation}\label{eq:thecurltangentialgradhA}
\widetilde{\pa}_i T^K \widetilde{\pa}_j h
-\widetilde{\pa}_j T^K \widetilde{\pa}_i h
=A_{ij}^K,
\end{equation}
where $A_{ij}^K=[\widetilde{\pa}_i, T^K ]\widetilde{\pa}_j h
-[\widetilde{\pa}_j , T^K ]\widetilde{\pa}_i h$. We have
\begin{equation}\label{eq:AKdef}
A^K_{ij}={\sum}_{K_1+\dots+K_k=K,\, |K_k|<|K|}\,\, a^K_{K_1\dots K_k}
\widetilde{\pa}_i T^{K_1} \widetilde{x}\cdots  \! \widetilde{\pa}T^{K_{k-1}} \widetilde{x}\cdot \!\widetilde{\pa} T^{K_k} {\widetilde{\pa}}_{j}  h,
\end{equation}
for some constants $a^K_{K_1\dots K_k}$.
This is hence is lower order:
\begin{equation}\label{eq:anitsymmetricestimate}
|A^K|\lesssim c_0^\prime |\widetilde{\pa} T^K \widetilde{x}|+ c_K{\sum}_{|L|\leq |K|-1} |\widetilde{\pa} T^{L} {\widetilde{\pa}} h|+ |\widetilde{\pa}T^{L}\widetilde{x}|,
\end{equation}
where $c_0^\prime=|\widetilde{\pa} \widetilde{\pa} h|$ and $c_K$ is as in \eqref{lowercasecdef}.

Using the equations
\eqref{eq:thehigherwaveequationerrorelliptic} and \eqref{eq:thecurltangentialgradhA}
for the divergence and the curl of $ T^K \widetilde{\pa} h$ we therefore have
\begin{equation}%\label{eq:paTpah}
| \widetilde{\pa} T^{K} {\widetilde{\pa}}  h|
\lesssim |D_t^2 T^K h|+ \sum_{S\in \S} |S T^K \widetilde{\pa}h|
+c_K\!\!\!\!\!\sum_{|L|\leq |K|}\!\!\!
 |\widetilde{\pa} T^L V|+|\widetilde{\pa} T^L \widetilde{V}|+|\widetilde{\pa} T^L \widetilde{x}|
 +c_K \!\!\!\!\!\!\!\!\sum_{|L|\leq |K|-1}
 \!\!\!\!\! \!\! | \widetilde{\pa} T^{L} {\widetilde{\pa}}  h|,
\end{equation}
Repeated use of this gives
\begin{equation}\label{eq:paTpah2}
| \widetilde{\pa} T^{K} {\widetilde{\pa}}  h|
\lesssim c_r{\sum}_{|K'|\leq |K|}\Big(|D_t^2 T^{K'} \! h|+ \!{\sum}_{S\in\mathcal{S}} |S T^{K'\!} \widetilde{\pa}h|
+\!|\widetilde{\pa} T^{K'\!} V|+|\widetilde{\pa} T^{K'\!} \widetilde{V}|+|\widetilde{\pa} T^{K'\!} \widetilde{x}|\Big).
\end{equation}

\subsubsection{Higher order wave equation estimates for the enthalpy}
\label{enthalpyestimates}
Multiplying \eqref{eq:thehigherwaveequationerror} by $D_t T^J h$ and integrating we get
\begin{equation}
\int_\Omega e_1 D_t T^{J{}_{\!}}  h \, \,  D_t^2 T^{J{}_{\!}} h\,
  -  D_t T^{J{}_{\!}}  h \, \widetilde{\pa}_i \big( T^{J{}_{\!}}  \,{\widetilde{\pa}}^i  h\big) \,  d\widetilde{x}
  =\int_\Omega D_t T^{J{}_{\!}}  h \,\,  (P^{J}\! +Q^J)\, d\widetilde{x} .
\end{equation}
Integrating by parts and commuting we get
\begin{equation}
\int_\Omega e_1\frac{1}{2}D_t \big( D_t T^{J{}_{\!}}  h \big)^2 \,
   +\widetilde{\pa}_i D_t T^{J{}_{\!}}  h \,  \,  T^{J{}_{\!}}  \,{\widetilde{\pa}}^i  h \,  d\widetilde{x}
  =\int_\Omega D_t T^{J{}_{\!}}  h \,\,  (P^{J}\!+Q^J) \, d\widetilde{x}.
\end{equation}
Here \begin{equation}
 \widetilde{\pa}_i D_t T^{J{}_{\!}}  h
=  D_t T^{J}  \widetilde{\pa}_i h
+{R}_i^{J,1},
\end{equation}
where $R^{J,s}_i=R_i^{I}$, with $I=\{J,D_t^s\}$, where
\begin{equation}
R_i^{I}={\sum}_{I_1+\dots +I_k=I,\, |I_k|< |I|}\,\, r^I_{I_1\dots I_k} \, \widetilde{\pa} T^{I_1}\widetilde{x}\cdots \widetilde{\pa} T^{I_{k-1}} \widetilde{x}\cdot T^{I_{k}}\widetilde{\pa} h,
\end{equation}
for some constants $ r^I_{I_1\dots I_k}$.
This is lower order:
\begin{equation}\label{eq:Best}
|R^{J,1}|\lesssim c_J {\sum}_{|J^\prime|\leq |J|}\bigtwo(
|\widetilde{\pa}  T^{J^\prime}\widetilde{V}|
+|\widetilde{\pa}  T^{J^\prime}\widetilde{x}|
+|T^{J^\prime} \widetilde{\pa} h|\bigtwo),
\end{equation}
where $c_J$ depends on the above quantities for $|K|\leq |J|/2$.
We get
\begin{equation}
\frac{1}{2}\int_\Omega D_t \Big( e_1 \big( D_t T^{J{}_{\!}}  h \big)^2 \,
   + | T^{J{}_{\!}}  \,\widetilde{\pa}^i  h |^2 \Big) \,  d\widetilde{x}
  =\int_\Omega D_t T^{J{}_{\!}}  h \,\,  (P^{J}\!+Q^J)+ {R}_i^{J,1}T^{J}  \widetilde{\pa}^i h \, d\widetilde{x}.
\end{equation}
Hence with
\begin{equation}
\mathcal{W}^{J\!}(t)=\int_\Omega e_1 \big( D_t T^{J{}_{\!}}  h \big)^2 \,
   + | T^{J{}_{\!}}  \,{\widetilde{\pa}}  h |^2  \,  d\widetilde{x},
\end{equation}
it follows from \eqref{eq:PJ}-\eqref{eq:QJ}, \eqref{eq:paTpah2} and \eqref{eq:Best} that
\begin{equation}\label{eq:waveequationenergy}
\frac{d}{dt}\mathcal{W}^{J\!}(t)\lesssim c_0\,\mathcal{W}^{J\!}(t)
+c_J {\sum}_{|{J^{\prime}}|\leq |J|}\int_{\Omega}
 |\widetilde{\pa} T^{J{}^{\prime}} \!V|^2\!
 +|\widetilde{\pa} T^{J^{\prime}\!} \widetilde{V}|^2\!
 +|\widetilde{\pa} T
 ^{J^{\prime}\!} \widetilde{x}|^2\!+|D_t T^{J^{\prime}}\! h|^2\!+| T^{J^{\prime}} \widetilde{\pa} h|^2\, d y.
\end{equation}

\begin{remark} The estimate for the enthalpy above from the wave equation at order $|J|\leq |I|-1$, could also be obtained from the estimate for Euler's equation for the velocity at order $|I|$ since $T^J \pa h=T^J D_t V$.
\end{remark}

 To close the apriori energy bounds for Euler's equations we only need estimates for the wave equation with $|J|\leq |I|-1$ tangential derivatives. However, one can obtain estimates for the wave equation with $|I|$ derivatives at the same time, and this is needed for the additional  bound for the smoothed coordinate.

\subsubsection{Wave equation estimates for the enthalpy with an additional time derivative}\label{sec:extradt}

We will in fact estimate $\|D_t^{3} T^K h\|_{L^2(\Omega)}$, $\|  D_t^2 T^{K\!} \widetilde{\pa}h\|_{L^2(\Omega)}$ and
$\|\widetilde{\pa}  D_t T^K\widetilde{\pa} h\|_{L^2(\Omega)}$,
for $|K|\leq |I|-2$, and as a consequence also $\|D_t^2 T^J h\|_{L^2(\Omega)}$
and $\|D_t T^J \widetilde{\pa} h\|_{L^2(\Omega)}$, for $|J|\leq |I|-1$.
Let $|K|\leq r-2$, where $r=|I|$, and let $s\leq 2$. Then
\begin{equation}\label{eq:wavestime}
   e_1   D_t^2 D_t^s T^K\! h
   -\! \widetilde{\pa}_i \big( D_t^s T^K {\widetilde{\pa}}^i  h\big)
   = P^{K,s}\!+Q^{K,s},
   \end{equation}
   where $P^{K,s}=[\widetilde{\pa}_i ,D_t^s T^K]\widetilde{\pa}^i h$ and
   $Q^{K,s}=D_t^s T^K\big( \widetilde{\pa}_i \widetilde{V}^j\,\widetilde{\pa}_j V^i\big)$ are given by \eqref{eq:PJeq} respectively \eqref{eq:QJeq} with $J=D_t^s K$:
   We have
   \begin{equation}%\label{eq:QJprimeeq}
   Q^{K,2}\!=\! \widetilde{\pa}_i T^{K}\! D_t^2 \widetilde{V}^j\,\, \widetilde{\pa}_j V^i\!
   +\widetilde{\pa}_i \widetilde{V}^j\,\, \widetilde{\pa}_j  T^{K}\!D_t^2 V^i\!
   +Q^{\,\prime K,2}\!\!,
   \end{equation}
   where $Q^{\,\prime K,2}$ consist of the terms of the form \eqref{eq:QJeq} with $J\!=\!D_t^2 K$, with $|J_i|\leq |I|$ and   $|J_\ell|<|I|$, $|J_k|\!<\!|I|$. The terms in $Q^{\,\prime K,2}$ are already in the lower order energy estimate \eqref{eq:waveequationenergy}, since
    $\widetilde{\pa} T^K\! D_t^2\widetilde{x}=
    \widetilde{\pa} T^K\! D_t\widetilde{V}$.
 Similarly
   \begin{equation}%\label{eq:PJprimeeq}
   P^{K,2}
   = {\sum}_{D_t+T^{K'}\!\!+T=D_t^2+T^K}\widetilde{\pa}_i  T\widetilde{x}^k \,\,
   \widetilde{\pa}_k T^{K^\prime\!}\! D_t \widetilde{\pa}^{i{}_{\!}} h
   +P^{\, \prime K,2},
   \end{equation}
   where $P^{\, \prime K,2}$ consist of terms in \eqref{eq:PJeq} with $|J_i|\leq |I|$ and $|J_k|\leq |I|-2$,
   that  are already in the lower order energy estimate \eqref{eq:waveequationenergy}.
Since $D_t V =-\widetilde{\pa} h $ we see that to estimate $Q^{K,2}$ and $P^{K,2}$ it only remains to estimate
$\widetilde{\pa}_k T^{K^\prime\!}\! D_t \widetilde{\pa}^{i{}_{\!}} h$ for $|K^\prime|\leq |K|\leq r-2$. By \eqref{eq:thewholecase}
\begin{multline}
\|\widetilde{\pa}  T^K\! D_t \widetilde{\pa}h\|_{L^2(\Omega)}\lesssim
C_0{\sum}_{S\in\mathcal{S}}\| \widetilde{\pa} {S} T^{K}\! D_t \widetilde{x}\|_{L^2(\Omega)}
+c_0\| \widetilde{\div} \big( T^{K} D_t \widetilde{\pa}h\big)\|_{L^2(\Omega)}\\
 +C_K{\sum}_{|K^\prime|\leq |K|}\| \widetilde{\div} \big( T^{K^\prime} \widetilde{\pa} h\big)\|_{L^2(\Omega)}
+C_K{\sum}_{|J^\prime|\leq |K|+1}\| \widetilde{\pa} T^{J'} \widetilde{x}\|_{L^2(\Omega)}.
\end{multline}
Using \eqref{eq:wavestime} for $s=1$ to substitute the divergence gives
\begin{multline}\label{eq:paTkDtpa}
\|\widetilde{\pa}  T^K\! D_t \widetilde{\pa}h\|_{L^2(\Omega)}\lesssim
C_0\|  T^{K\!} D_t^3 h\|_{L^2(\Omega)}\\
 +C_K{\sum}_{|K^\prime|\leq |K|}\| \widetilde{\pa}  T^{K^\prime} \widetilde{\pa} h\|_{L^2(\Omega)}
+C_K{\sum}_{|J^\prime|\leq |K|+1}\| \widetilde{\pa} T^{J'}\widetilde{V}\|_{L^2(\Omega)}
+\| \widetilde{\pa} T^{J'} \widetilde{x}\|_{L^2(\Omega)},
\end{multline}
where the terms on the second row are already controlled by the terms in the lower order energy estimate \eqref{eq:waveequationenergy}, using
\eqref{eq:paTpah2}, and $\|  T^{K\!} D_t^3 h\|_{L^2(\Omega)}$ will be controlled by the higher order energy.

Multiplying \eqref{eq:wavestime} with $s=2$ by $D_t^3 T^K h$ and integrating by parts as in the previous section we see that we must estimate
\begin{equation}
R_i^{K,3}={\sum}_{T+D_t^2+T^{K^\prime}\!=T^K\!+D_t^3} \widetilde{\pa}_i T \widetilde{x}^k\,\,T^{K^\prime\!}\!D_t^2\widetilde{\pa}_{k} h
+\widetilde{\pa}_i T^K\! D_t^3 \widetilde{x}^k\,\,\widetilde{\pa}_{k} h+R_i^{\prime K,3},
\end{equation}
where $R_i^{\prime K,3}$ contain terms that are  controlled by the terms in the lower order energy estimate \eqref{eq:waveequationenergy}.
Here the sum is bounded by $c_0$ times $\|T^{K^\prime\!}\!D_t^2\widetilde{\pa}^{k{}_{\!}} h\|_{L^2(\Omega)}$,
for $|K^\prime|\leq |K|$ which will be part of the new energy and the second term is bounded \eqref{eq:paTkDtpa} which is also bounded by the new energy.

Summing up, with
\begin{equation}\label{eq:timederivativeswavenergydef}
\mathcal{W}^{K,s{}_{\!}}(t)=\int_\Omega e_1 \big( D_t^{1+s} T^{K{}_{\!}}  h \big)^2
   + | D_t^s T^{K{}_{\!}} {\widetilde{\pa}}  h |^2  \,  d\widetilde{x},
\end{equation}
we have
\begin{equation}\label{eq:waveequationenergytime}
\frac{d}{dt}\mathcal{W}^{K,2{}_{\!}}(t)\lesssim C_0\nquad\,\sum_{|K^\prime|\leq |K|}\nquad\mathcal{W}^{K^\prime\!{}_{\!},2{}_{\!}}(t)
+C_K\nquad\!\! \sum_{|{J^{\prime}}|\leq |K|+1}\int_{\Omega}
 |\widetilde{\pa} T^{J{}^{\prime}\!} V|^2\!
 +|\widetilde{\pa} T^{J^{\prime}\!} \widetilde{V}|^2\!
 +|\widetilde{\pa} T
 ^{J^{\prime}}\! \widetilde{x}|^2\!+|D_t T^{J^{\prime}}\! h|^2\!+| T^{J^{\prime}} \widetilde{\pa} h|^2\, d y.
\end{equation}
Moreover, because of \eqref{eq:paTkDtpa} we also have for $|J|=r-1$
\begin{equation}\label{eq:waveequationenergytimeJ}
\mathcal{W}^{J,1{}_{\!}}(t)\lesssim C_0\nquad\! \sum_{|K^\prime|\leq |J|-1}\nquad\! \mathcal{W}^{K^\prime\!{}_{\!},2{}_{\!}}(t)
+C_K\nquad\,\,\sum_{|{J^{\prime}}|\leq |J|}\int_{\Omega}
 |\widetilde{\pa} T^{J{}^{\prime}\!} V|^2\!
 +|\widetilde{\pa} T^{J^{\prime}\!} \widetilde{V}|^2\!
 +|\widetilde{\pa} T
 ^{J^{\prime}}\! \widetilde{x}|^2\!+|D_t T^{J^{\prime}}\! h|^2\!+| T^{J^{\prime}} \widetilde{\pa} h|^2\, d y.
\end{equation}

\subsubsection{Elliptic estimates for the enthalpy with a half derivative additional tangential regularity}\label{sec:extrahalf} Applying $\fdh T^K$ to the equation
\begin{equation}
   \widetilde{\Delta} h
   =  e_1  D_t^2 h -\widetilde{\pa}_i \widetilde{V}^j\,\widetilde{\pa}_j V^i,
\end{equation}
gives
\begin{equation}
  \fdh T^K\! \widetilde{\Delta} h
   =  e_1  D_t^2\fdh T^K\! h -\fdh Q^{K},
\end{equation}
where $Q^K$ satisfy \eqref{eq:QJeq}. At this point there is a lot of room in estimating
$\fdh Q^K$ so we just crudely estimate
$\|\fdh Q^K\|_{L^2(\Omega)}\lesssim \|\mathcal{S} Q^K\|_{L^2(\Omega)}$
with notation as in \eqref{eq:simplifiedtangentialnotation} and hence by \eqref{eq:QJ}
 \begin{equation}
 \|\fdh Q^K\|_{L^2(\Omega)}\lesssim {\sum}_{|J|\leq |K|+1} \|Q^J\|_{L^2(\Omega)}\lesssim C_K {\sum}_{|J|\leq |K|+1}
 \|\widetilde{\pa} T^{J} V\|_{L^2(\Omega)}+\|\widetilde{\pa} T^{J} \widetilde{V}\|_{L^2(\Omega)}+
 \|\widetilde{\pa} T^{J} \widetilde{x}\|_{L^2(\Omega)},
 \end{equation}
 where $C_K$ stand for a constant that depends on
$\|\widetilde{\pa} T^N \widetilde{x}\|_{L^\infty}$, $\|\widetilde{\pa} T^N V\|_{L^\infty}$,
$\|\widetilde{\pa} T^N \widetilde{V}\|_{L^\infty}$
and $\|\widetilde{\pa} T^{N} {\widetilde{\pa}}  h\|_{L^\infty}$ for
$|N|\leq |K|/2 $, $L^\infty = L^\infty(\Omega)$.
By Proposition \ref{prop:dirichlet} we have
\begin{equation}\label{eq:thehalfcasedirichlet}
\|\widetilde{\pa} \fdh  T^K \widetilde{\pa} h\|_{L^2(\Omega)}\lesssim C_K\!{\sum}_{|K'|\leq |K|,\, k=0,1}\bigtwo( \| \fd^{\!\nicefrac{k}{2}} T^{K'\!}\widetilde{\triangle} h \|_{L^2(\Omega)}
+\| \widetilde{\pa}\fd^{\!\nicefrac{k}{2}} \mathcal{S}^1 T^{K'\!} \widetilde{x}\|_{L^2(\Omega)} \bigtwo),
\end{equation}
where $C_K$ depends on $\| \widetilde{\pa} T^{N}\widetilde{x}\|_{L^\infty}$ and
$\|T^N \widetilde{\pa} h\|_{L^\infty}$ for $|N|\leq |K|/2+3$.
It follows that
\begin{multline}
\|\widetilde{\pa} \fdh  T^K \widetilde{\pa} h\|_{L^2(\Omega)}
\lesssim C_K \| D_t^2\fdh T^K h\|_{L^2(\Omega)}
+ C_K{\sum}_{|J|\leq |K|+1}\|\widetilde{\pa}\fdh  T^{J} \widetilde{x}\|_{L^2(\Omega)}\\
+ C_K {\sum}_{|J|\leq |K|+1}
 \|\widetilde{\pa} T^{J} V\|_{L^2(\Omega)}+\|\widetilde{\pa} T^{J} \widetilde{V}\|_{L^2(\Omega)}+
 \|\widetilde{\pa} T^{J} \widetilde{x}\|_{L^2(\Omega)},
\end{multline}
and hence
\begin{multline}\label{eq:pahalfTKpa}
\|\widetilde{\pa} \fdh  T^K \widetilde{\pa} h\|_{L^2(\Omega)}
\lesssim C_K  {\sum}_{|J|\leq |K|+1}\mathcal{W}^{J,1{}_{\!}}(t)
+ C_K{\sum}_{|J|\leq |K|+1}\|\widetilde{\pa}\fdh  T^{J} \widetilde{x}\|_{L^2(\Omega)}\\
+ C_K {\sum}_{|J|\leq |K|+1}
 \|\widetilde{\pa} T^{J} V\|_{L^2(\Omega)}+\|\widetilde{\pa} T^{J} \widetilde{V}\|_{L^2(\Omega)}+
 \|\widetilde{\pa} T^{J} \widetilde{x}\|_{L^2(\Omega)},
\end{multline}

\subsection{The divergence estimates for the velocity and coordinates}

\subsubsection{The divergence estimates used to estimate $V$}
 By \eqref{eq:GIequation}
\begin{equation}\label{eq:divergence}
D^J=\widetilde{\div} (T^J V)+e_1 D_t T^J h-\widetilde{\pa}_i T^{J\!} \widetilde{x}^k \,\,\widetilde{\pa}_k V^i=G^J,
\end{equation}
where by \eqref{eq:GIestimate}  $G^J$
is lower order:
\begin{equation}
|G^J| \lesssim c_J {\sum}_{|K|\leq |J|-1}
 |\widetilde{\pa} T^K V|+|\widetilde{\pa} T^K x|.
 \end{equation}
\subsubsection{The improved half derivative divergence estimates used to estimate the coordinates}
\label{sec:improvediv} We only need to prove an additional estimate for all space tangential derivatives of the coordinate since if we have one time derivative it follows from the estimates for $V$.
 We have
\begin{equation}
 D_t \bigtwo( e_1  T^J h+  \widetilde{\div} (T^J \!x)\bigtwo)
 =\widetilde{\pa}_i T^{J\!} \widetilde{x}^k \,\,\widetilde{\pa}_k V^i -\widetilde{\pa}_i T^{J\!} {x}^k \,\,\widetilde{\pa}_k  \widetilde{V}^i+G^J,
 \label{divxeqn}
\end{equation}
where $G^J$ is lower order. We need to commute this with $\fdh\sm$. Note first that
\begin{equation}
\| \fdh\sm G^J \|_{L^2(\Omega)}
\lesssim{\sum}_{|N|\leq 1} \| S^N G^J \|_{L^2(\Omega)}
\lesssim C_J {\sum}_{|J'|\leq |J|}
 \|\widetilde{\pa} T^{J'} V\|_{L^2(\Omega)}+\|\widetilde{\pa} T^{J'}\! x\|_{L^2(\Omega)}.
\end{equation}
By Lemma \ref{lem:halfderleibnitzandsmoothing} and  Lemma \ref{lem:gradientfractionalsmoothingandfractionalderivativecommute} we have
\begin{equation}
\bigtwo\|\fdh \sm\big(\widetilde{\pa}_i T^{J\!} {x}^k \,\,\widetilde{\pa}_k  \widetilde{V}^i\big)
-\widetilde{\pa}_i T^{J\!}\fdh \sm {x}^k \,\,\widetilde{\pa}_k  \widetilde{V}^i\bigtwo\|_{L^2(\Omega)}
\lesssim C_2 \| \widetilde{\pa} T^{J\!} {x}\|_{L^2(\Omega)},
\end{equation}
and the same inequality holds with $x$ replaced by $\widetilde{x}$ and $\widetilde{V}$ replaced by $V$. Hence
\begin{equation}\label{eq:divhalf}
D^{J,\nicefrac{1}{2}}_\varepsilon= e_1\fdh \sm T^J h+ \fdh \sm\,  \widetilde{\div} (T^J\! x),
\end{equation}
satisfy
\begin{equation}\label{eq:Divhalfder}
\|D_t D^{J,\nicefrac{1}{2}}_\varepsilon\|_{L^2(\Omega)}
\lesssim C_2 \| \widetilde{\pa} T^{J\!}\fdh \sm {x}\|_{L^2(\Omega)}
+C_J {\sum}_{|J'|\leq |J|}
 \|\widetilde{\pa} T^{J'\!} V\|_{L^2(\Omega)}+\|\widetilde{\pa} T^{J'}\! x\|_{L^2(\Omega)}.
\end{equation}
We have
\begin{equation}
\| \fdh \sm\, T^J\! h\|_{L^2(\Omega)}
\lesssim {\sum}_{|N|\leq 1} \| S^N T^J\! h\|_{L^2(\Omega)}
\lesssim C_J {\sum}_{|J'|\leq |J|} \| T^J\widetilde{\pa} h\|_{L^2(\Omega)} +\|\widetilde{\pa}  T^J\! x\|_{L^2(\Omega)}.
\end{equation}
By
Lemma \ref{lem:mutiplicationsmoothingcommute}, Lemma \ref{lem:gradientsmoothingcommute} and Lemma \ref{lem:halfderleibnitz}
\begin{equation}
\bigtwo\| \widetilde{\div} (T^J\fdh \sm x)-\fdh \sm \widetilde{\div} (T^J x)\bigtwo\|_{L^2(\Omega)}
\lesssim  C_0\| \widetilde{\pa} T^{J}V\|_{L^2(\Omega)}.
\end{equation}
Hence
\begin{equation}\label{eq:divhalfest}
\big\| \widetilde{\div} (T^J\fdh \sm x)-D_\varepsilon^{J,\nicefrac{1}{2}}\big\|_{L^2(\Omega)}
\lesssim  C_J {\sum}_{|J'|\leq |J|} \| T^{J'}\widetilde{\pa} h\|_{L^2(\Omega)} +\|\widetilde{\pa}  T^{J'}\! x\|_{L^2(\Omega)}
+\|\widetilde{\pa} T^{J'\!} V\|_{L^2(\Omega)}.
\end{equation}

\subsection{The curl estimates for the velocity and coordinates}
\subsubsection{The curl estimates used to estimate $V$}
\label{sec:curlestim}
By \eqref{eq:symmetricequation}
\begin{equation}
D_{t\,} T^J  V_i= - T^J \widetilde{\pa}_i h,
\end{equation}
and hence
\begin{equation}
 \widetilde{\curl}\,  D_t  T^J  V_{ij}= - A_{ij}^J,
\end{equation}
where $ A_{ij}^J$ is given by \eqref{eq:thecurltangentialgradhA}.
We note that
\begin{equation*}
D_t \bigtwo( \widetilde{\pa}_i D_t-[D_t,\widetilde{\pa}_i]\bigtwo)
=\widetilde{\pa}_i D_t^2 -\big[D_t,[D_t,\widetilde{\pa}_i]\big],
\end{equation*}
where $[D_t,\widetilde{\pa}_i]=-\widetilde{\pa}_i \widetilde{V}^k\, \widetilde{\pa}_k $ and
$\big[D_t,[D_t,\widetilde{\pa}_i]\big]= \big[\widetilde{\pa}_i
D_t\widetilde{V}^{k\!}\!
-2\widetilde{\pa}_i \widetilde{V}^n
\,\widetilde{\pa}_n v^k\big]\widetilde{\pa}_k\,$. Applying this to $T^J\! x_j$ gives
\begin{equation*}
D_t \bigtwo( \widetilde{\pa}_i T^J V_j-[D_t,\widetilde{\pa}_i]T^J\! x_j\bigtwo)
=\widetilde{\pa}_i D_t T^J V_j -\big[D_t,[D_t,\widetilde{\pa}_i]\big]T^J\! x_j,
\end{equation*}
Hence, there are linear forms $L_{ij}^1[\widetilde{\pa}T^J\!  x]$ and $L_{ij}^2[\widetilde{\pa} T^J\! x]$ such that with
\begin{equation}\label{eq:modifiedcurl}
K_{ij}^J=\widetilde{\curl}\,  T^J V_{ij}+L^1_{ij}[\widetilde{\pa}T^J\!  x],
\end{equation}
we have
\begin{equation}\label{eq:curlequation}
D_t K_{ij}^J
=L^2_{ij}[\widetilde{\pa} T^J\! x]- A_{ij}^J ,
\end{equation}
where $ A_{ij}^J$, the antisymmetric part of $\widetilde{\pa}_i T^J\widetilde{\pa}_j h$, is lower order
by \eqref{eq:anitsymmetricestimate} and \eqref{eq:paTpah2}:
\begin{equation}\label{eq:anitsymmetricestimate2}
|A^J|
\lesssim c_0 |\widetilde{\pa} T^{J\!} \widetilde{x}|
+c_{r}{\sum}_{|K|\leq |J|-1}\Big(|D_t^2 T^K\! h|+ \!{\sum}_{S\in \S} |S T^K \widetilde{\pa}h|
+\! |\widetilde{\pa} T^K V|+|\widetilde{\pa} T^K \widetilde{V}|+|\widetilde{\pa} T^K \widetilde{x}|\Big).
\end{equation}
We further note that there is a linear form $L^3_{ij}[\widetilde{\pa}T^J x]$ such that
\begin{equation}
D_{t\,} \widetilde{\curl} \big(T^J \!x\big)_{ij}
\!=K_{ij}^J+L^3_{ij}[\widetilde{\pa}T^J\! x].
\end{equation}

\subsubsection{The improved half derivative curl estimates used to estimate the coordinates}
\label{sec:improvedcurl}
We need to commute \eqref{eq:curlequation} with $\sm$ and with $\fdh$. We have
\begin{equation}
D_{t\,}\fdh \sm  T^J  V_i=-\fdh \sm  T^J \widetilde{\pa}_i h,
\end{equation}
and hence
\begin{equation}
 \widetilde{\curl}\bigtwo(\!  D_t \fdh \sm  T^J  V{}_{\!}\bigtwo)_{ij}= -A_{ij,\ve}^{J, \nicefrac{1}{2}},
\end{equation}
where
\begin{equation}
A_{ij,\ve}^{J, \nicefrac{1}{2}}
=\widetilde{\pa}_i \fdh \sm  T^J \widetilde{\pa}_j h
-\widetilde{\pa}_j \fdh \sm  T^J \widetilde{\pa}_i h.
\end{equation}
With
\begin{equation}\label{eq:curlhalf}
K_{ij,\ve}^{J,\nicefrac{1}{2}}=\widetilde{\curl}\big(T^J \fdh \sm \, V\big)_{\!ij}\!+L_{ij}^1\big[\widetilde{\pa}\fdh T^J\!\sm   x\big]
\end{equation}
we have
\begin{equation}\label{eq:curlhalfest}
D_t K_{ij,\ve}^{J,\nicefrac{1}{2}}
=L_{ij}^2\big[\widetilde{\pa} \fdh T^J\! \sm x\big]-A_{ij,\varepsilon}^{J,\nicefrac{1}{2}}.
\end{equation}
Here
$ A_{ij,\varepsilon}^{J,\nicefrac{1}{2}}$ is lower order. We have
\begin{equation}
 A_{ij,\varepsilon}^{J,\nicefrac{1}{2}}=\fdh \sm  A_{ij}^{J}+\big[\widetilde{\pa}_i,\fdh \sm\big]T^J\widetilde{\pa}_j h-\big[\widetilde{\pa}_j,\fdh \sm\big]T^J\widetilde{\pa}_i h .
\end{equation}

 We may assume that at least one of the vector fields in $T^J$ is space tangential since if one is a time derivative $D_t$ we already have stronger estimate at a lower order using that $D_t\widetilde{x}=\widetilde{V}$.
Here using that $T^J=ST^K$, where $S$ is space tangential we can use Lemma \ref{lem:gradientfractionalsmoothingandfractionalderivativecommute} to estimate
\begin{equation}
\|\big[\widetilde{\pa},\fdh \sm\big]T^J\widetilde{\pa} h \|_{L^2(\Omega)}
\lesssim \|\widetilde{\pa}\fdh T^K\widetilde{\pa} h \|_{L^2(\Omega)},
\end{equation}
which is under control by \eqref{eq:pahalfTKpa}. To estimate
$\fdh \sm  A_{ij}^{J}$ we apply $\fdh \sm$ to \eqref{eq:AKdef} using
Lemma \ref{lem:halfderleibnitzandsmoothing} and Lemma \ref{lem:gradientfractionalsmoothingandfractionalderivativecommute}
\begin{equation*}
\|\fdh \sm A^J\|_{L^2(\Omega)}\lesssim C_0 \|\widetilde{\pa}\fdh \sm T^J \widetilde{x}\|_{L^2(\Omega)}+ C_J\!\!\!\!\!\sum_{|K|\leq |J|-1}\!\!\!\!\!\|\widetilde{\pa}\fdh T^{K} {\widetilde{\pa}} h\|_{L^2(\Omega)}+ \|\widetilde{\pa}\fdh \sm T^K\widetilde{x}\|_{L^2(\Omega)}.
\end{equation*}
We conclude that the same is true for $ \! A_{ij,\varepsilon}^{J,\nicefrac{1}{2}}\!$
as long as there is a space tangential derivative in $T^J$:
\begin{equation}\label{eq:anitsymmetricestimatehalf}
\|A_{ij,\varepsilon}^{J,\nicefrac{1}{2}}\|_{L^2(\Omega)}\lesssim C_0 \|\widetilde{\pa}\fdh \sm T^J \widetilde{x}\|_{L^2(\Omega)}+ C_{\!J}\!{\sum}_{|K|\leq |J|\shortminus 1} \|\widetilde{\pa}\fdh T^{K}\! {\widetilde{\pa}} h\|_{L^2(\Omega)}+ \|\widetilde{\pa}\fdh \sm T^K\!\widetilde{x}\|_{L^2(\Omega)}.
\end{equation}

Moreover
\begin{equation}\label{eq:curlequation2}
D_t{}_{\,}\widetilde{\curl}\big( {}_{\!} \fdh T^J \!\sm x\big)_{\!ij}\!
=K_{ij,\ve}^{J,\nicefrac{1}{2}}+L^3_{ij}[\widetilde{\pa}\fdh T^J \!\sm x].
\end{equation}

Note also that by
Lemma \ref{lem:mutiplicationsmoothingcommute}, Lemma \ref{lem:gradientsmoothingcommute} and Lemma \ref{lem:halfderleibnitz}
\begin{equation}
\bigtwo\| \widetilde{\curl}\big(  T^J \!\fdh\sm V\big)-\fdh\sm{}_{\,}\widetilde{\curl}\big(  T^J  V\big)\bigtwo\|_{L^2(\Omega)}
\lesssim C_0\| \widetilde{\pa} T^{J}V\|_{L^2(\Omega)}.
\end{equation}

\subsection{The elliptic estimates}\label{sec:thenormevolution}

\subsubsection{The elliptic estimate for the velocity}
Using Lemma \ref{app:pwdiff} we have
\begin{equation}
|\widetilde{\pa} T^J V|
\lesssim |_{\,}\widetilde{\div}\,T^J V|+|_{\,}\widetilde{\curl}\,T^J V|
+\!\! {\sum}_{S\in{\mathcal S}}|S T^J V|.
\end{equation}
and hence with $D^J$ as in \eqref{eq:divergence} and $K^J$ as in \eqref{eq:modifiedcurl} we have
\begin{equation}\label{eq:ellipticV}
|\widetilde{\pa} T^J V|
\lesssim |D^J|+|K^J|
+ c_0( |\widetilde{\pa}T^J\widetilde{x}|+ |\widetilde{\pa}T^J\!{x}|)
+|D_t T^{J\!} h|+ \!\!{\sum}_{S\in{\mathcal S}}|S T^J V|,
\end{equation}
where $c_0=c_0( |\widetilde{\pa}\widetilde{V}|,|\widetilde{\pa}{V}|)$.
Here $D^J$ is lower order:
\begin{equation}
|D^J|\lesssim c_J {\sum}_{|K|\leq |J|-1} |\widetilde{\pa} T^K \widetilde{x}| +  |\widetilde{\pa} T^K V|,
\end{equation}
where $c_J$ stands for a constant that depends on
$|\widetilde{\pa} T^L \widetilde{x}|$ and $|\widetilde{\pa} T^L V|$ for
$|L|\leq |J|/2 $. Hence
\begin{equation}\label{eq:ellipticV0}
|\widetilde{\pa} T^J V|
\lesssim  |K^J|
+c_0\big(|\widetilde{\pa}T^J\widetilde{x}|+ |\widetilde{\pa}T^J{x}|\big)
+|D_t T^J h|+\!{\sum}_{S\in{\mathcal S}}|S T^J V|
+c_J {\sum}_{|K|\leq |J|-1}|\widetilde{\pa} T^K \widetilde{x}| +  |\widetilde{\pa} T^K V|.
\end{equation}
Hence by repeated use of this we get, with a constant $c_r$ depending on
$\sum_{|J| \leq r} c_J$,
\begin{equation}\label{eq:ellipticV1}
{\sum}_{|J|\leq r} |\widetilde{\pa} T^J V|
\lesssim c_r{\sum}_{|J|\leq r} \Big(|K^J|
+|\widetilde{\pa}T^J\widetilde{x}|+ |\widetilde{\pa}T^{J\!}{x}|
+|D_t T^{J\!} h|+\!{\sum}_{S\in{\mathcal S}}|S T^J V|+|T^J V|\Big).
\end{equation}

\subsubsection{The elliptic estimate for the enthalpy}
 To deal with lower order terms with $\!\widetilde{\pa} T^{\!J} {\widetilde{\pa}}  h\!$ we have
\begin{equation}\label{eq:paTpah3}
{\sum}_{|K|\leq r}| \widetilde{\pa} T^{K} {\widetilde{\pa}}  h|
\lesssim c_r{\sum}_{|K|\leq r}\Big(|D_t^2 T^{K\!} h|+ \!{\sum}_{S\in \S} |S T^K \widetilde{\pa}h|
+\! |\widetilde{\pa} T^K V|+|\widetilde{\pa} T^K \widetilde{V}|+|\widetilde{\pa} T^K \widetilde{x}|\Big),
\end{equation}
 from \eqref{eq:paTpah2}.
Note that \eqref{eq:paTpah3} can be seen as a special case of
\eqref{eq:ellipticV} with $T^J\!\!=T^K\! D_t$ and $K^J$ replaced by $\curl{D_t T^K V}$.

We also note that
\begin{equation}\label{eq:normequivalence}
|\widetilde{\pa}T^J h|
\lesssim c_J {\sum}_{|K|\leq |J|}\big( |T^K\widetilde{\pa} h|+|\widetilde{\pa} T^K \widetilde{x}|\big),
\end{equation}
where $c_J$ depends on $|T^L\widetilde{\pa} h|$ and $|\widetilde{\pa} T^L \widetilde{x}|$ for $|L|\leq |J|/2$.

\subsubsection{The additional elliptic estimate for the smoothed coordinate $S_\varepsilon x$}
This one will be control from the boundary term with normal components only
 using the estimates in Section \ref{sec:divcurlL2}:
 \begin{multline}\label{eq:coordinatehalfest}
 {\sum}_{|J|\leq r-1}||\widetilde{\pa} T^J\! \fdh\sm x||_{L^2(\Omega)}^2\!
 +\!\!{\sum}_{|I|\leq r}\| T^I\! S_\varepsilon x\|_{L^2(\pa\Omega)}^2
 \leq C_{{}_{\!}1\!}{\sum}_{|I|\leq r}\| \mathcal{N}\!\cdot\! T^I\! S_\varepsilon x\|_{L^2(\pa\Omega)}^2\\
 +C_{{}_{\!}1}{\sum}_{|J|\leq r-1}\!||\widetilde{\div} T^J\! \fdh\sm x||_{L^2(\Omega)}^2\!
 + ||\widetilde{\curl}_{\,} T^J\! \fdh\sm x||_{L^2(\Omega)}^2
 +\|\widetilde{\pa}  T^J \sm x\|_{L^2(\Omega)}^2.
\end{multline}

\subsection{The combined div-curl evolution system}
\label{divcurlsection}
We now want to control in particular $|\widetilde{\pa} T^J V|$.
Although we do not have evolution equation for $|\widetilde{\pa} T^J V|$,
 it is by \eqref{eq:ellipticV} bounded by quantities for which we have evolution equations, plus lower order terms that can be bounded recursively.
 For the first term in \eqref{eq:ellipticV}, $K^J$, we have \eqref{eq:curlequation}, for $\widetilde{\pa} T^J x$ and $\widetilde{\pa} T^J \widetilde{x}$, we get an evolution equation from $D_t x=V$, see below, for the next two terms we have the energies for the wave equation and for Euler's equations, and the last two terms are lower order.

\subsubsection{The lowest order curl-divergence system}
For the lowest $r$ we have
\begin{equation}\label{eq:lowestcurldivcoord}
|D_t \widetilde{\curl}_{\,} V|\lesssim |\widetilde{\pa}V|\, |\widetilde{\pa} \widetilde{V}| ,\qquad |D_t \pa_y x|\lesssim |\widetilde{\pa}  V|, \qquad |\widetilde{\div} V|\lesssim |D_t h|,
\end{equation}
together with
\begin{equation}\label{eq:lowestellipticV}
|\widetilde{\pa}  V|
\lesssim |\widetilde{\curl}_{\,} V|+|\widetilde{\div} V|+ {\sum}_{S\in{\mathcal S}}|SV|.
\end{equation}
Since $\widetilde{\div} \widetilde{\pa} h=\widetilde{\triangle} h=e_1 D_t^2 h +\widetilde{\pa}_i\widetilde{V}^j\widetilde{\pa}_j V^i$ and $\widetilde{\curl}\widetilde{\pa} h=0$
\begin{equation}\label{eq:lowestelliptich}
|\widetilde{\pa}^2 h|
\lesssim | D_t^2 h| + {\sum}_{S\in{\mathcal S}}|S\widetilde{\pa} h|
+ |\widetilde{\pa}  V|\, |\widetilde{\pa}  \widetilde{V}|.
\end{equation}
These equations together with the energy estimates for tangential
vector fields $T$ applied to $V$ and to $(D_t h,\widetilde{\pa} h)$ form a closed system. $L^2$ estimates of higher order versions of the above equations for tangential vector fields applied to these quantities together with the energy estimates for tangential vector fields applied to $V$ and to  $(D_t h,\widetilde{\pa} h)$ gives a closed system in $L^2$
assuming that we have bounds in $L^\infty$ for fewer tangential derivatives of these quantities. On the other hand $L^2$ control of tangential derivatives of
\eqref{eq:lowestellipticV} and \eqref{eq:lowestelliptich} gives $L^\infty$ of fewer
tangential
vector fields applied to $V$ and to $(D_t h,\widetilde{\pa} h)$ and given this control one can use \eqref{eq:lowestellipticV} and \eqref{eq:lowestelliptich} to estimate also the $L^\infty$ norm of these quantities and then together with
higher order version of \eqref{eq:lowestcurldivcoord} they form a closed system also in $L^\infty$.

\subsubsection{The point wise evolution equation for the coordinate}
\label{pwcoordeqn}
Note that $\widetilde{\pa} T^J\! x$ is equivalent to
\begin{equation}
  %\label{widetildeX1J}
X^{1,J}_{ai}=\pa_{y^a}T^J x_i,\qquad \widetilde{X}^{1,J}_{ai}=\pa_{y^a}T^J \widetilde{x}_i,\qquad
\end{equation}
Moreover we also express $\partial_{y^a}$ in spherical coordinates then it commutes with
the smoothing in the tangential directions and so in these coordinates for any function $\pa \sm f= \sm\pa f$ and so
 $\|\pa \sm f\|_{L^p(\mathbf{S}^2)}\lesssim \| \pa f\|_{L^p(\mathbf{S}^2)}$,
 for $p=2,\infty$.  Moreover
 $\|\pa [\sm,T^J] f \|_{L^p(\mathbf{S}^2)}\lesssim \sum_{|K|\leq |J|-1} \|\pa Z^K f\|_{L^p(\mathbf{S}^2)}$,
 for $p=2,\infty$.
 It follows that
 \begin{equation}
 \| \widetilde{X}^{1,J}\|_{L^p(\mathbf{S}^2)}
 \lesssim {\sum}_{|J'|\leq |J|}\|X^{1,J'}\|_{L^p(\mathbf{S}^2)},\qquad p=2,\infty.
 \end{equation}
 We have the simple evolution equation
 \begin{equation}
|D_t X^{1,J}|\lesssim |\widetilde{\pa} T^J V|.
\end{equation}
Hence using \eqref{eq:ellipticV}
we have the simple evolution equation
\begin{equation}\label{eq:coordinateevolution}
|D_t X^{1,J}|
\lesssim c_{J}{\sum}_{|J'|\leq |J|} \Big(|K^{J'\!}|
+|\widetilde{\pa}T^{J'\!}\widetilde{x}|+ |\widetilde{\pa}T^{J'}\!{x}|
+|D_t T^{J'} \! h|+\!{\sum}_{S\in{\mathcal S}}|S T^{J'\!} V|+|T^{J'\!} V|\Big),
\end{equation}
where $K^J$ given by \eqref{eq:modifiedcurl} is a lower order modification of $\widetilde{\curl}\, T^J V$.

\subsubsection{The point wise evolution equation for the curl}
By \eqref{eq:curlequation} and \eqref{eq:anitsymmetricestimate2}
\begin{equation}%\label{eq:curlevolution}
 |D_t K^J|
\lesssim c_0\big( |\widetilde{\pa} T^J \widetilde{x}|+|\widetilde{\pa} T^J{x}|\big)
+c_{J}\!\!\!\!\!\!\!\!\sum_{|K|\leq |J|-\!1}\!\!\!\!\!\bigtwo(|D_t^2 T^K h|+ \!\sum_{S\in \S} |S T^K \widetilde{\pa}h|
+\! |\widetilde{\pa} T^K V|+|\widetilde{\pa} T^K \widetilde{V}|+|\widetilde{\pa} T^K \widetilde{x}|\bigtwo).
\end{equation}

\subsubsection{The combined curl-divergence system}
Let us introduce some notation:
\begin{equation}
V^{1,r}\!=\!{\sum}_{|I|\leq r}|\widetilde{\pa} T^{I\!} V|,\qquad
X^{1,r}\!=\!{\sum}_{|I|\leq r}|\widetilde{\pa} T^I \!X|,\qquad
 K^r\!=\!{\sum}_{|I|\leq r}|K^{I\!}|,
\end{equation}
and
\begin{equation}
 V^{r}\!=\!{\sum}_{|I|\leq r}|T^{I\!} V|,\qquad
W^{r}\!=\!{\sum}_{|I|\leq r}|T^I\widetilde{\pa}h |+|D_t T^I h|,\qquad H^r\!=\!{\sum}_{|I|\leq r}|\widetilde{\pa} T^I\widetilde{\pa} h|.
\end{equation}
By \eqref{eq:curlequation} and substituting \eqref{eq:ellipticV} in the right of \eqref{eq:coordinateevolution}
\begin{equation}
|D_t K^r|\lesssim c_r\big( X^{1,r}+\widetilde{X}^{1,r}+ V^{1,r-1}+\widetilde{V}^{1,r-1}+W^r\big),
\end{equation}
\begin{equation}
|D_t X^{1,r}|\lesssim c_r\big(K^r+ X^{1,r}+\widetilde{X}^{1,r}+V^{1+r}+W^{r}\big),
\end{equation}
and
\begin{equation}
V^{1,r}\lesssim c_r\big(K^r+ X^{1,r}+\widetilde{X}^{1,r}+V^{1+r}+W^{r}\big),
\end{equation}
where $c_r$ depends on bounds these quantities with $r$ replaced by $r/2$.
Moreover
\begin{equation}
H^{r-1}\lesssim c_r\big(\widetilde{X}^{1,r-1}+V^{1,r-1}+\widetilde{V}^{1,r-1}+W^{r}\big).
\end{equation}

Let
\begin{align}
V^{1,r}_p(t)&=\|V^{1,r}(t,\cdot)\|_{L^p},\quad
&K^r_p(t)=\|K^r(t,\cdot)\|_{L^p},\qquad X^{1,r}_p(t)&=\|X^{1,r}(t,\cdot)\|_{L^p},\label{normsdef1}\\
V^r_p(t)&=\|V^r(t,\cdot)\|_{L^p},\quad
&W^r_p(t)=\|W^r(t,\cdot)\|_{L^p},\qquad
H^r_p(t)&=\|H^r(t,\cdot)\|_{L^p}.
\label{normsdef2}
\end{align}
where $L^p=L^p(\Omega)$, $p=2,\infty$.  Then
\begin{align}
| K^{r\, \prime}_p(t)|&\lesssim C_r\big( X^{1,r}_p(t)+ V_p^{1,r-1}(t)+W^r_p(t)\big),
\label{eq:evolutionsystem.a}\\
|{X}^{1,r\, \prime}_p(t)|&\lesssim C_r\big(K^r_p(t)+ X^{1,r}_p(t)+V^{1+r}_p(t)+W^{r}_p(t)\big),\label{eq:evolutionsystem.b}
\end{align}
and
\begin{align}
V^{1,r}_p(t)&\lesssim C_r\big(K^r_p(t)+ X^{1,r}_p(t)+V^{1+r}_p(t)+W^{r}_p(t)\big),\label{eq:ellipticsystem.a}\\
H^{r-1}_p(t)&\lesssim C_r\big({X}^{1,r-1}_p(t)+{V}_p^{1,r-1}(t)+W^{r}_p(t)\big),
\label{eq:ellipticsystem.b}
\end{align}
where $C_r$ depends on bounds for $X^{1,s}_\infty$, $V^{1,s}_\infty$, $H^s_\infty$,
for $s\leq r/2$. To close this system we also need bounds for
${V}_p^{1,r}(t)$ and for $W^{r+1}_p(t)$.
The above curl-divergence evolution system will be used both for $p=2$ for large $r$ and for $p=\infty$ for small $r$. However, we also need the estimates for tangential derivatives of $V$ and $(D_t h,\widetilde{\pa} h)$. For $p=2$ these are given by the energy estimates and for $p=\infty$ these are obtained from using Sobolev's Lemma and the $L^2$ estimates of $H^r_2$ and $V^{1,r}_2$ above.

\subsubsection{The additional control of half a derivative of the coordinate}
Let
\begin{equation}
X^{J,\nicefrac{1}{2}}_\varepsilon=\fdh T^J\!\sm   x, \quad\text{and}\quad
V^{J,\nicefrac{1}{2}}_\varepsilon=\fdh T^J\!\sm  {}_{\!} V,
\end{equation}
and let
\begin{equation}
K^{r,\nicefrac{1}{2}}_{\varepsilon,2}(t)
={\sum}_{|J|\leq r}\|K^{J,\nicefrac{1}{2}}_\varepsilon(t,\cdot)\|_{L^2(\Omega)},\quad\text{and}\quad
D^{r,\nicefrac{1}{2}}_{\varepsilon,2}(t)={\sum}_{|J|\leq r}\| D^{J,\nicefrac{1}{2}}_\varepsilon(t,\cdot)\|_{L^2(\Omega)},
\end{equation}
where $K^{J,\nicefrac{1}{2}}_\varepsilon\!
=\widetilde{\curl} \,V^{J,\nicefrac{1}{2}}_\varepsilon\!
+L^1[\widetilde{\pa} X^{J,\nicefrac{1}{2}}_\varepsilon]$ is given by
\eqref{eq:curlhalf}, $D^{J,\nicefrac{1}{2}}_\varepsilon\!
=e_1\fdh \sm T^J h+ \fdh \sm\,  \widetilde{\div} (T^J\! x)$ is given by
\eqref{eq:divhalf}.
Further, let
\begin{equation}
  \label{halfextranormcontrol}
X^{1,r,\nicefrac{1}{2}}_{\varepsilon,2}(t)
={\sum}_{|J|\leq r}\|\widetilde{\pa} X^{J,\nicefrac{1}{2}}_\varepsilon(t,\cdot)\|_{L^2(\Omega)},\quad\text{and}\quad
H^{r,\nicefrac{1}{2}}_{2}(t)\!=\!{\sum}_{|K|\leq r}\|\widetilde{\pa} \fdh T^K\widetilde{\pa} h(t,\cdot)\|_{L^2(\Omega)},
\end{equation}
and
\begin{equation}
X^{\boldsymbol{\times},r,\nicefrac{1}{2}}_{\varepsilon,2}(t)
={\sum}_{|J|\leq r}\|\widetilde{\curl} \, X^{J,\nicefrac{1}{2}}_\varepsilon(t,\cdot)\|_{L^2(\Omega)},\quad\text{and}\quad
X^{\bigfour\cdot,r,\nicefrac{1}{2}}_{\varepsilon,2}(t)
={\sum}_{|J|\leq r}\|\widetilde{\div} X^{J,\nicefrac{1}{2}}_\varepsilon(t,\cdot)\|_{L^2(\Omega)}.
\end{equation}
By \eqref{eq:curlhalfest}, \eqref{eq:curlequation2} and \eqref{eq:Divhalfder} we have
\begin{align}
 K^{r,\nicefrac{1}{2}\,\, \prime}_{\varepsilon,2}(t)
 &\lesssim C_r\big(H^{r-1,\nicefrac{1}{2}}_{2}(t)+X^{1,r,\nicefrac{1}{2}}_{\varepsilon,2}(t)\big),
 \label{eq:evolutionhalf1}\\
 X^{\boldsymbol{\times},r,\nicefrac{1}{2}\,\, \prime}_{\varepsilon,2}(t)
 &\lesssim C_r\big(K^{r,\nicefrac{1}{2}}_{\varepsilon,2}(t)+X^{1,r,\nicefrac{1}{2}}_{\varepsilon,2}(t)\big),
 \label{eq:evolutionhalf2}\\
 D^{r,\nicefrac{1}{2}\,\, \prime}_{\varepsilon,2}(t)
 &\lesssim C_r\big(X^{1,r,\nicefrac{1}{2}}_{\varepsilon,2}(t)+V^{1,r}_{2}(t)
 +X^{1,r}_{2}(t)\big).\label{eq:evolutionhalf3}
\end{align}
By \eqref{eq:divhalfest}, \eqref{eq:coordinatehalfest} and \eqref{eq:byenergyestimated} we have
\begin{align}
 X^{\bigfour\cdot,r,\nicefrac{1}{2}}_{\varepsilon,2}(t)
 &\lesssim C_r\big(D^{r,\nicefrac{1}{2}}_{\varepsilon,2}(t)+W^{r}_{2}(t)+V^{1,r}_{2}(t)
 +X^{1,r}_{2}(t)\big),\label{eq:boundshalf1}\\
X^{1,r,\nicefrac{1}{2}}_{\varepsilon,2}(t)+B^{\,r+1}_2(t)
&\lesssim C_r\big( X^{\boldsymbol{\times},r,\nicefrac{1}{2}}_{\varepsilon,2}(t)
+X^{\bigfour\cdot,r,\nicefrac{1}{2}}_{\varepsilon,2}(t)+B^{\,r+1}_{\mathcal{N},2}(t)
+X^{1,r}_{\varepsilon,2}(t)\big),\label{eq:boundshalf2}\\
  V^{\,r+1}_{2}(t)+B^{\,r+1}_{\mathcal{N},2}(t)&\lesssim C_0 E^{\,r+1}_2(t), \label{eq:boundshalf3}
\end{align}
where
\begin{equation}\label{eq:energyrdef}
E^{\,r}_2(t)={\sum}_{|I|\leq r}\sqrt{\mathcal{E}^I(t)},
\qquad
B^{\,r}_2(t)={\sum}_{|I|\leq r}\sqrt{\mathcal{B}^I(t)},
\qquad
B^{\,r}_{\mathcal{N},2}(t)={\sum}_{|I|\leq r}\sqrt{\mathcal{B}_{\mathcal{N}}^I(t)},
\end{equation}
and
 $\mathcal{E}^I(t)$ given by \eqref{eq:energydef} and $\mathcal{B}^I(t)$, $\mathcal{B}_{\mathcal{N}}^I(t)$
 are given by \eqref{eq:energyboundarydef}.
 The evolution equations \eqref{eq:evolutionhalf1},  \eqref{eq:evolutionhalf2}
 and  \eqref{eq:evolutionhalf3} with the bounds
  \eqref{eq:boundshalf1}, \eqref{eq:boundshalf2} and \eqref{eq:boundshalf3}
  together with the energy estimates for $E^{1+r}_2$, $W^{r}_2$ and $W^{r,1}_2$
  form a closed system.

\subsection{The $L^\infty$ estimates for lower derivatives}
\label{linftylowersec}
In the above we have assumed that we have control of the $L^\infty$ norms of lower derivatives that we will now prove assuming control of the $L^2$ norms for $0\leq t\leq T$.
 First by Sobolev's Lemma on the sphere and in the radial direction
\begin{align}
\|T^I V(t,\cdot)\|_{L^\infty}&\lesssim {\sum}_{|L|\leq 2}\| \widetilde{\pa} T^{I+L} V(t,\cdot)\|_{L^2}+\| T^{I+L} V(t,\cdot)\|_{L^2},\\
\|T^J \widetilde{\pa} h(t,\cdot)\|_{L^\infty}&\lesssim {\sum}_{|L|\leq 2}\| \widetilde{\pa} T^{J+L}  \widetilde{\pa} h(t,\cdot)\|_{L^2}+\| T^{J+L} \widetilde{\pa} h(t,\cdot)\|_{L^2},\\
\|T^J h(t,\cdot)\|_{L^\infty}&\lesssim {\sum}_{|L|\leq 2}\| \widetilde{\pa} T^{J+L}  h(t,\cdot)\|_{L^2}+\| T^{J+L}  h(t,\cdot)\|_{L^2}.
\end{align}
We now want to have bounds also for the $L^\infty$ norm of
$\widetilde{\pa} T^J V$.
The idea is now that in addition to the above bounds of the tangential derivatives, we have point wise equations for the divergence and the curl of $T^J V$ and $T^K\widetilde{\pa} h$ , so we can use the point wise elliptic estimate to get bounds for $\widetilde{\pa}T^J V$. These point wise bounds depends on point wise bounds on $K^J$, the modified curl of $T^J V$ and $\widetilde{\pa}T^J x$, for which we have point wise evolution equations,
and lower order terms that can be controlled inductively. More precisely,
by the estimates above we control $W^s_\infty(t)$ and $E^{1+s}_\infty(t)$,
for $s\leq r-3$. Moreover by \eqref{eq:lowestcurldivcoord}-\eqref{eq:lowestelliptich}
we see that there is a time $0<T_0\leq T$ depending only on $\|\curl{V}(0,\cdot)\|_{L^\infty(\Omega)}$, $\| \pa_y x(0,\cdot)\|_{L^\infty(\Omega)}$,
and a bound for $\|D_t h(t,\cdot)\|_{L^\infty(\Omega)}$
and $\sum_{T\in {\mathcal S}}\|T V(t,\cdot)\|_{L^\infty(\Omega)}$, for $0\leq t\leq T_0$, such that
\begin{equation}\label{lowbounds}
\|{}_{\!}\curl V(t,\cdot)\|_{L^\infty(\Omega)}\!\lesssim 2 \|{}_{\!}\curl V(0,\cdot)\|_{L^\infty(\Omega)},\qquad
\|\pa_y x(t,\cdot)\|_{L^\infty(\Omega)\!}\lesssim 2 \| \pa_y x(0,\cdot)\|_{L^\infty(\Omega)},\quad 0\!\leq \! t\!\leq \! T_0.
\end{equation}
Moreover for $t\leq T_0$ we have
\begin{equation}
\|\pa V(t,\cdot)\|_{L^\infty(\Omega)}\!
\lesssim \|\curl_{\!} V(0,\cdot)\|_{L^\infty(\Omega)}+\|\pa_y x(0,\cdot)\|_{L^\infty(\Omega)}+{\sum}_{S\in {\mathcal S}}\!\|S V(t,\cdot)\|_{L^\infty(\Omega)}
+\|D_t h(t,\cdot)\|_{L^\infty(\Omega)}.
\end{equation}
In other words we have a bound for $V_\infty^{1,0}$. Inductively, assuming that we have a bound for $V_\infty^{1,s-1}(t)$ and $E^{1+s}_\infty(t)$, $W^{s}_\infty(t)$ for $0\leq t\leq T_{s-1}$, we can therefore solve the system
\begin{align}
| K^{s\, \,\prime}_\infty(t)|&\lesssim c_s\big( X^{1,s}_\infty(t)+ V_\infty^{1,s-1}(t)+W^s_\infty(t)\big),
\label{eq:evolutionsystem.aa}\\
|{X}^{1,s\,\, \prime}_\infty(t)|&\lesssim c_s\big(K^s_\infty(t)+ X^{1,s}_\infty(t)+E^{1+s}_\infty(t)+W^{s}_\infty(t)\big),\label{eq:evolutionsystem.bb}
\end{align}
where $C_s$ depends on bounds of these quantities for smaller $s$, to get
that there is a $0<T_s\leq T_{s-1}$, depending only on a bound for $C_s$ and
for $V_\infty^{1,s-1}(t)$, $E^{1+s}_\infty(t)$, $W^{s}_\infty(t)$ for $0\!\leq\! t\!\leq \!T_{s-1}$,
 such that
\begin{equation}
K^s_\infty(t)\leq 2 K^s_\infty(0),\qquad X^{1,s}_\infty(t)\leq 2X^{1,s}_\infty(0),\qquad
0\leq t\leq T_s.
\end{equation}
Hence, we now get a bound also for
\begin{align}
V^{1,s}_\infty(t)&\lesssim C_s\big(K^s_\infty(0)+ X^{1,s}_\infty(0)+E^{1+s}_\infty(t)+W^{s}_\infty(t)\big),\label{eq:ellipticsystem.a2}\\
H^{s-1}_\infty(t)&\lesssim C_s\big({X}^{1,s-1}_\infty(0)+{V}_\infty^{1,s-1}(0)+W^{s}_\infty(t)\big),
\label{eq:ellipticsystem.b2}
\end{align}
which concludes the induction, and the $L^\infty$ bounds for lower derivatives.

\subsubsection{Lowest order $L^\infty$ estimates for a normal derivative of the divergence}
The lower order term introduced in $F_i^{\prime I}$ in \eqref{eq:symmetricequation} require a bound for $\widetilde{\pa}^2 h$, which we have above and the
lower order term introduced in $G^{\prime I}$ in \eqref{eq:highercontinuity} requires a bound for $\widetilde{\pa}\widetilde{\div} V.$
Since $\widetilde{\pa} \widetilde{\div} V=e_1 \widetilde{\pa} D_t h $ and we have
bounds for $\pave D_t h$, we have bounds for this quantity
as well.

\subsection{Control of the $L^2$ norms}\label{controlL2}
 In addition to the evolution equation for the
$L^2$ norms of the curl of the velocity and of the coordinate
\begin{align}
| K^{r\, \prime}_2(t)|&\lesssim C_r\big( X^{1,r}_2(t)+ V_2^{1,r-1}(t)+W^r_2(t)\big),
\label{eq:evolutionsystem2.a}\\
|{X}^{1,r\, \prime}_2(t)|&\lesssim C_r\big(K^r_2(t)+ X^{1,r}_2(t)+V^{1+r}_2(t)+W^{r}_2(t)\big),\label{eq:evolutionsystem2.b}
\end{align}
together with
\begin{align}
V^{1,r}_2(t)&\lesssim C_r\big(K^r_2(t)+ X^{1,r}_2(t)+V^{1+r}_2(t)+W^{r}_2(t)\big),\label{eq:ellipticsystem2.a}\\
H^{r-1}_2(t)&\lesssim C_r\big({X}^{1,r-1}_2(t)+{V}_2^{1,r-1}(t)+W^{r}_2(t)\big),
\label{eq:ellipticsystem2.b}
\end{align}
we also need evolution equations for $W_2^r$ and $V^{r+1}_2$.
Moreover by
\eqref{eq:waveequationenergy}
\begin{equation}\label{eq:energyestiamtewaver}
|W^{r\, \prime}_2(t)|\lesssim C_r \big(K^r_2(t)+ X^{1,r}_2(t)+V^{r+1}_2(t)+W^{r}_2(t) \big).
\end{equation}
With notation as in \eqref{eq:energyrdef} we have
\begin{equation}\label{eq:velocitybyenergy}
V_2^{\,r+1}(t)+B_{\mathcal{N},2}^{\,r+1}(t)\lesssim C_0 {E}_2^{\,r+1}(t),
\end{equation}
so it only remains to get an evolution equation for the energy $E_2^{\, r}(t)$.
This is much easier for Euler's equations than for the smoothed Euler's equation so we will start with the simple case:

\subsubsection{Control of the $L^2$ norms for Euler's equations}
By \eqref{eq:energyestiamteeuler} we have with notation as in \eqref{eq:energyrdef}
\begin{equation}\label{eq:energyestiamteeulerr}
|E_2^{\,r+1\,{}_{\,}\prime}(t)|
\lesssim  C_0 {E}_2^{\,1+r\!}(t)+C_r\bigtwo(K_2^{r}(t)+X_2^{1,r\!}(t)+W_2^r(t) \bigtwo),\quad\text{if}\quad \varepsilon=0,
\end{equation}
which provided the missing equation. Using the bounds \eqref{eq:ellipticsystem2.a}, \eqref{eq:ellipticsystem2.b} and \eqref{eq:velocitybyenergy}, the evolution equations
\eqref{eq:evolutionsystem2.a}, \eqref{eq:evolutionsystem2.b},
\eqref{eq:energyestiamtewaver} and \eqref{eq:energyestiamteeulerr} form a closed system so we conclude that there is a $T_r>0$ such that for $0\leq t\leq T_r$ we have
\begin{equation}
K_2^r(t)\leq 2K_2^r(0),\qquad X_2^{1,r}(t)\leq 2X_2^{1,r}(0),\qquad
W_2^r(t)\leq 2W_2^r(0),\qquad E_2^{r+1}(t)\leq 2E_2^{r+1}(0).
\end{equation}
Since a bound for $V_2^{1,r}$ and $H_2^{r-1}$ follow from these this concludes the proof of the apriori bound for the compressible Euler's equations.

\subsubsection{Control of the $L^2$ norms for the smoothed Euler's equations}
\label{controll2}
By \eqref{eq:energyestimate}
\begin{equation}
|{E}_2^{\,r+1\,{}_{\,}\prime}(t)|
\lesssim  C_0 {E}_2^{\,r+1}(t)+C_0{B}_2^{\,r+1}(t)+C_r\bigtwo(K_2^{r}(t)+X_2^{1,r\!}(t)+W_2^r(t) \bigtwo) .
\label{E2smooth}
\end{equation}
We are missing an estimate for ${B}_2^{\,r+1}(t)$ that we will get from the extra half derivative estimates for the coordinates using that the normal component $B_{\mathcal{N},2}^{\,r+1}(t)$ is bounded by the energy ${E}_2^{\,r+1}(t)$.
To get this to form a closed system we have to add the evolution equations \eqref{eq:evolutionhalf1},  \eqref{eq:evolutionhalf2}
 and  \eqref{eq:evolutionhalf3} with the bounds
  \eqref{eq:boundshalf1}, \eqref{eq:boundshalf2} and \eqref{eq:boundshalf3}
  together with the energy estimate for $E^{r+1}_2$ above and a bound for
 $H^{r-1,\nicefrac{1}{2}}_{2}$ that is needed in \eqref{eq:evolutionhalf1}.
That bound however requires a higher order energy time derivative estimate for the wave equation. With $\mathcal{W}^{J,s{}_{\!}}$ as in \eqref{eq:timederivativeswavenergydef} let
\begin{equation}
  %\label{higherorderwaveen}
 W^{r,s}_2(t)={\sum}_{|J|\leq r}\mathcal{W}^{J,s{}_{\!}}(t).
\end{equation}
By \eqref{eq:waveequationenergytime} and \eqref{eq:waveequationenergytimeJ} we have
\begin{align}
|W_2^{r-1,2\,\prime}(t)|
&\lesssim C_0 W_2^{r-1,2}(t)+C_r\big(V_2^{1,r}+X_2^{1,r}+W_2^r\big),\\
|W_2^{r,1}(t)|
&\lesssim C_0 W_2^{r-1,2}(t)+C_r\big(V_2^{1,r}+X_2^{1,r}+W_2^r\big),
%\label{highestorderwave}
\end{align}
and by \eqref{eq:pahalfTKpa} we have
\begin{equation}
H^{r-1,\nicefrac{1}{2}}_{2}(t)
\lesssim C_r W^{r,1}_2(t)
+  \widetilde{X}^{1,r,\nicefrac{1}{2}}_2
+ C_r\big( V^{1,r}_2(t)+ X^{1,r}_2(t)\big).
\end{equation}
The evolution equations for the quantities $K_2^r$, $X_2^{1,r}\!\!$, $W_2^r$,
$E_2^{r+1}\!\!$, together with those for $ K^{r,\nicefrac{1}{2}}_{\varepsilon,2}\!\! $, $X^{\boldsymbol{\times},r,\nicefrac{1}{2}}_{\varepsilon,2}\!\!$, $D^{r,\nicefrac{1}{2}}_{\varepsilon,2}\!$ and
$W_2^{r\shortminus 1,2}$ form a closed system if we also use the bounds for $V_2^{1,r}\!\!$, $V_2^{r+1}\!\!$, $B_{\mathcal{N},2}^{r+1}$, $X^{1,r,\nicefrac{1}{2}}_{\varepsilon,2}\!\!$, $W_2^{r,1}$ and $H_2^{r\shortminus 1,\nicefrac{1}{2}}$ in terms of these quantities.
We conclude that there is a $T_r\!>\!0$ such that for $0\!\leq \!t\!\leq\! T_r$
\begin{equation}
K_2^r(t)\leq 2K_2^r(0),\qquad X_2^{1,r}(t)\leq 2X_2^{1,r}(0),\qquad
W_2^r(t)\leq 2W_2^r(0),\qquad E_2^{r+1}(t)\leq 2E_2^{r+1}(0),
\label{l2uniformbd}
\end{equation}
and
\begin{equation}
K^{r,\nicefrac{1}{2}}_{\varepsilon,2}(t)\!\leq \! 2K^{r,\nicefrac{1}{2}}_{\varepsilon,2}(0),\!\quad X^{\boldsymbol{\times},r,\nicefrac{1}{2}}_{\varepsilon,2}(t)\!\leq \! 2X^{\boldsymbol{\times},r,\nicefrac{1}{2}}_{\varepsilon,2}(0),\!\quad
D^{r,\nicefrac{1}{2}}_{\varepsilon,2}(t)\!\leq \! 2D^{r,\nicefrac{1}{2}}_{\varepsilon,2}(0),\!\quad W_2^{r\shortminus 1,2}(t)\!\leq\! 2W_2^{r\shortminus 1,2}(0),
%\label{l2uniformbd2}
\end{equation}
and the other quantities can be bound in terms of these.
This concludes the proof of the uniform apriori bounds for the smoothed Euler's equations.

\section{Uniform apriori bounds for the smoothed problem in
the relativistic case}
\label{rescaledreleulex}

We now return to the relativistic Euler equations \eqref{rescaledreleul}-
\eqref{rescaledrelcont}.
The proof of the energy estimates for this system uses the same
strategy as the proof of Theorem \ref{nonrelexist}.
The basic ingredients are energy estimates for an appropriate
smoothed-out version of the Euler equations which
control tangential derivatives, elliptic estimates which allow one to control all derivatives
in terms of the divergence, curl, and tangential derivatives, and
estimates for the wave equation satisfied by the enthalpy.

\subsubsection{Lagrangian coordinates}
\label{rellagrange}
Let $\D$ denote the closure of the set $\{ \rho(t,x) > 0\}$.
The Lagrangian coordinates are maps $x^\mu: [0, S]\times \Omega \to \D,
\mu = 0,1,2,3$ where $x^0=t$, defined by
\begin{equation}
 \frac{d}{ds} x^\mu(s, y) = v^\mu(x(s, y)), \qquad \mu = 0,1,2,3,
 \qquad
 x^0(0, y) = 0, \quad
 x^i(0, y) = y^i, \quad i = 1,2,3.
 \label{lagcoordsdef}
\end{equation}
We will write $\D_s = x(s, \Omega)$.
We also introduce the material derivative
$$
\hD_s = \frac{d}{ds}\big|_{y = const} =
 v^\mu \pa_\mu,
 $$
 and write $V(s,y)=v\big(x(s,y)\big)$. The relativistic Euler equations
\eqref{rescaledreleul} become
\begin{equation}
 \hD_s V^\mu  + \frac{1}{2}g^{\mu\nu}\pa_\nu \sigma =
 \Gamma_{\alpha\nu}^\mu V^{\alpha} V^{\nu}
 \qquad \text{ in } [0,s_1]\times \Omega, \quad
 \pa_\mu =\frac{\pa y^a}{\pa x^\mu} \frac{\pa }{\pa y^a},
 %\label{lagrescaledreleul}
\end{equation}
where we think of $\Gamma_{\mu\nu}^\alpha(x(s,y))$ as given functions of $y$.
Here we are summing over $a = 0,1,2,3$ and writing $y^0 = s$.
The continuity equation is
\begin{equation}
 \hD_s e(\sigma) + \nabla_\mu V^\mu = 0,
 \qquad \text{ where } e(\sigma) = \log(\rho(\sigma)/\sqrt{\sigma}).
% \label{lagrescaledrelcont}
\end{equation}

We are going to prove a local existence theorem in Lagrangian coordinates which
is analogous to Theorem \ref{lagthmnonrel}.
Let $\Omega\subset \M_0$ denote the unit ball.
We will assume that the metric $g$ satisfies the bound \eqref{metricbdsintro0}.

\begin{theorem}[Local existence for the relativistic problem
  in Lagrangian coordinates]
  \label{lwprellag}
  Fix $r \geq 10$ and a globally hyperbolic metric $g$ satisfying \eqref{metricbdsintro0}
  for some $G > 0$. Let
  $\mathring{V}, \sigma_0$ be initial data satisfying the compatibility conditions
  \eqref{relcompat} to order $r$, where $\mathring{V}$ is a
  timelike vector field satisfying $g(\mathring{V}, \mathring{V}) = - \mathring{\sigma}
  \leq -c_1 < 0$ for some constant $c_1$,
  and so that $E_0^r\! =\! \|\mathring{V}\|_{H^r(\Omega)}^2\!
  +\! \|\sigma\|_{H^r(\Omega)}^2 \!<\!\infty$. Suppose additionally that the
  Taylor sign condition $|\nabla \mathring{\sigma}|\! \geq c\! >\! 0$ holds
  on $\pa \Omega$ for some $c$ and that the sound speed
  \eqref{soundspeed} is such that \eqref{soundspeedbd}
  -\eqref{largesoundspeed} hold for $\delta$ sufficiently small.
  is sufficiently large.
  Then there is a continuous function $S \!= \! S(\mathcal{E}_0, G, 1/c) \!>\! 0$
 so that the following hold.

  For any $S' < S$,
  there are Lagrangian coordinates $x:[0,S']\times \Omega\to \mathcal{M}$
  and an enthalpy $\sigma : [0,S']\times \Omega \to \mathcal{M}$ so that with
  $\mathcal{D}_s \!=\! x(s, \Omega)$ and
  $V(s, y)\! =\! \frac{d}{ds} x(s, y)$, and $v(x(s,y))\! =\! V(s,y)$,
  the surfaces $\mathcal{D}_s$
  are spacelike and the equations \eqref{rescaledreleul}-\eqref{rescaledrelcont}
  hold on the domain $\mathcal{D} \!=\!
  \cup_{0 \leq s \leq S'} \{s\}\!\times\!\mathcal{D}_s$. Moreover,
  the following bounds hold
  \begin{equation}
    \sup_{0 \leq s \leq S'}
    \sum_{k\leq r}\!
    \int_\Omega\!
  |\pa^k  V(s)|^2\!+
    |\pa^k \!\sigma(s)|^2\!
    + |\fdh \pa T^J \! x(s)|^2 \kappa dy
    + \!\sum_{k \leq r}\!\int_{\pa \Omega}\!\!\!
    |\pa^k x(s)|^2dS
    \leq
    C(\mathcal{E}_0, S'\!\!, \sigma_1, c, \mathcal{G}_{r+2}).
   %\label{}
  \end{equation}
\end{theorem}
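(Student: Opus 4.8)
The strategy is to prove Theorem~\ref{lwprellag} by mimicking the scheme already carried out in the Newtonian case in Section~\ref{sec:nonrel}: first introduce a tangentially-smoothed version of the relativistic system \eqref{rescaledreleulcoord}, establish $\varepsilon$-dependent existence for the smoothed problem and its iteration scheme, prove a priori energy bounds which are \emph{uniform} in $\varepsilon$ on a time interval $[0,S']$ depending only on $\mathcal{E}_0$, $G$ and $1/c$, and finally pass to the limit $\varepsilon\to 0$ by a compactness argument to obtain a solution of the unsmoothed equations with the stated regularity. The smoothing $S_\varepsilon$ is the same tangential regularization from Section~\ref{smoothingsec}, now applied in the spatial $y$-variables on $\Omega$; the smoothed coordinates $\widetilde{x}$ are defined as in \eqref{eq:eulerlagrangiancoordxsmoothed}, and the smoothed Euler and wave equations are the obvious relativistic analogues of \eqref{eq:eulerlagrangiancoordsmoothed}--\eqref{eq:waveequationsmoothed}, where $\widetilde{\Delta}$ is replaced by the smoothed wave operator $\widetilde{g}^{\mu\nu}\widetilde{\nabla}_\mu\widetilde{\nabla}_\nu$ built from the metric and the regularized coordinates, and the continuity equation is recovered, as before, from $\hD_s$ applied to the smoothed momentum equation.

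The heart of the argument is the uniform a priori estimate, and here one reuses the full apparatus already assembled. One commutes $T^I$ through the smoothed equations, introduces the good unknowns $V_I^\mu = T^I V^\mu - T^I\widetilde{x}^\nu\,\widetilde{\nabla}_\nu V^\mu$ and $\sigma_I = T^I\sigma - T^I\widetilde{x}^\nu\,\widetilde{\nabla}_\nu\sigma$ exactly as in \eqref{diffmom}, and multiplies by $g_{\mu\nu}V_I^\nu$ to obtain, via the positivity Lemma~\ref{emtensorpositivity} of the higher-order energy-momentum tensor $Q[v_I]$, the energy identity \eqref{higherorderenergy} whose boundary term $B_I[\Lambda_q] = \int_{\Lambda_q} g(T^Ix,\mathcal{N})^2\nabla_{\mathcal{N}}\sigma$ is positive precisely because of the Taylor sign condition $|\nabla\mathring\sigma|\geq c>0$. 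Simultaneously one runs the wave-equation energy \eqref{identintro} for $Q[\sigma^\prime_J]$ (whose $\Lambda$-term vanishes since $V$ is tangent to the boundary and $\sigma$ is constant there) to control $D_s^2 T^J\sigma$ and $\nabla T^J\sigma$, the curl estimate coming from the antisymmetric part of the first equation in \eqref{diffmom}, and the divergence estimate coming from the second. The pointwise elliptic estimate $|\widetilde{\pa}T^JV|\lesssim|\widetilde{\div}T^JV|+|\widetilde{\curl}T^JV|+\sum_{S\in\mathcal S}|ST^JV|$ then upgrades tangential control to full control of the gradient; the elliptic estimate for the enthalpy from \eqref{coordwavedecomp1}--\eqref{coordwavedecomp2} supplies the extra normal derivative of $\sigma$; and the half-derivative div-curl system of Section~\ref{divcurlsection} delivers the extra interior regularity $\|\fdh\widetilde{\pa}T^J x\|_{L^2}$ and the full boundary control of $T^I S_\varepsilon x$ from its normal component. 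All the $L^\infty$ inputs needed to close the $L^2$ bounds are controlled on a possibly shorter interval by the low-norm continuation argument of Section~\ref{linftylowersec}, using the a priori assumption \eqref{lownorms} and the hypothesis \eqref{metricbdsintro0} on $g$. Combining all of these into one coupled Gronwall system, exactly as in Section~\ref{controlL2}, gives the uniform bound on $[0,S']$.

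The remaining routine points are: existence for the smoothed linear system (the Galerkin method, as noted, since for fixed $\varepsilon>0$ it is an ODE in an energy space, proven in the appendix), the $\varepsilon$-dependent iteration estimate \eqref{dtEmainnonreliter} giving existence of the smoothed nonlinear solution which can then be continued as long as the $\varepsilon$-uniform bounds hold, and finally weak-$*$ compactness plus the uniform bounds to extract a limit solving \eqref{rescaledreleul}--\eqref{rescaledrelcont} with boundary conditions \eqref{vac}--\eqref{freebdy}, together with lower semicontinuity of the norms to obtain the stated estimate and the persistence of the Taylor sign condition with $c$ halved on $[0,S']$.

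\emph{Main obstacle.} The genuinely new work relative to the Newtonian case is the appearance of the metric $g$ and its Christoffel symbols to top order in all commutator and error terms: every application of $T^I$ produces terms like $T^I\Gamma_{\mu\nu}^\gamma$ and $T^I g_{\mu\nu}$, and one must check that these are controlled by the right-hand side of \eqref{metricbdsintro0}, i.e.\ that only \emph{tangential} (plus one normal) derivatives of $g$ are ever needed at top order. This is the point of working with the good unknowns $v_I,\sigma_I$ and of the structure of \eqref{diffmom}, but verifying it for the relativistic wave operator and the smoothed metric---in particular that the smoothing commutators involving $g$ do not cost normal regularity---is where the bulk of the careful bookkeeping lies. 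A secondary difficulty is ensuring the surfaces $\mathcal D_s$ remain spacelike and $V$ remains timelike with $g(V,V)\leq -c_1<0$ throughout the existence interval, which one maintains by a continuity argument using the $L^\infty$ control of $\widetilde{\pa}T^K x$ from \eqref{lownorms} and the assumed lower bound on $\mathring\sigma$.
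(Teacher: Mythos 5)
Your proposal follows essentially the same approach as the paper: tangentially smooth the system, introduce the good unknowns $V^I_\mu, \sigma^I$ and use positivity of the higher-order energy-momentum tensor (Lemma \ref{emtensorpositivity}) plus the Taylor sign condition to close the top-order energy; couple this with the wave-equation energy for $\sigma$, the curl/divergence transport estimates, the pointwise div-curl elliptic inequality, the extra half-derivative estimate for $S_\ve x$, and the combined Gronwall system of Section \ref{controlL2}/\ref{eulconclusionsec}; then prove $\ve$-dependent existence for the iterates via the Galerkin-solved Dirichlet problem for $\sigma$, continue by the $\ve$-uniform bounds to a time independent of $\ve$, and pass to the limit. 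You have also correctly isolated the two genuinely new difficulties relative to the Newtonian case — the top-order appearance of $T^I g$ and $T^I\Gamma$ and the need to show only tangential metric regularity enters, and the persistence of the spacelike/timelike conditions — which is precisely where the paper spends its extra effort (e.g.\ equation \eqref{conversion}, the bounds \eqref{timelike}--\eqref{futuredirected}, and the $G^{1,r}$ bookkeeping).
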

In the above, the fractional tangential derivative $\fdh$ is defined
in section \ref{def T and FD}.
This does not quite imply our main result Theorem \ref{mainrelthm}, because this
result only gives a solution up to a surface of constant $s$
but the main theorem is stated in terms of a surface of
constant time $t$. This is because we construct our solution in Lagrangian
coordinates where it is more natural to work with the surfaces of constant
$s$. Turning this into a result which follows solutions up to a surface of
constant $t$ requires only minor modifications, see
section \ref{spacelikesec}.

\subsubsection{The set up for the proof in the relativistic case}
We proceed as in the previous section by first writing \eqref{rescaledreleul}
-\eqref{rescaledrelcont} as a wave equation for the enthalpy $\sigma$ coupled
to Euler's equations. We repeat the equations here for the convenience of the reader,
\begin{alignat}{2}
 V^\nu\nabla_\nu V^\mu + \frac{1}{2} \nabla^\mu \sigma &=
 0,
 &&\qquad \text{ in } \D_s, \label{rescaledreleul2}\\
 V^\nu\nabla_\nu e(\sigma) + \nabla_\mu V^\mu &= 0,
 &&\qquad
 \text{ in } \D_s.
 \label{rescaledrelcont2}
\end{alignat}

To get the wave equation for $\sigma$ we apply $\nabla^\mu =
g^{\mu\nu}\nabla_\nu$
to \eqref{rescaledreleul2} and use $\nabla g = 0$, which gives
\begin{equation}
 V^\nu \nabla_\nu \nabla_\mu V^\mu + \frac{1}{2}
 \nabla_\nu(g^{\mu\nu}\nabla_\mu
 \sigma)
 = -\nabla_\mu V^\nu \nabla_\nu V^\mu
 -R_{\mu\nu\alpha}^\mu V^\nu V^\alpha,
 \label{div1}
\end{equation}
where $R_{\mu\nu\alpha}^{\mu'}$ denotes the Riemann curvature
tensor, i.e.
\begin{equation}
 R_{\mu\nu\alpha}^{\mu'}V^{\mu}
 =\big[\na_\nu \na_\alpha-\na_\alpha\na_\nu\big] V^{\mu^\prime},\quad\text{where}\quad   R_{\mu\nu\alpha}^{\mu'}=\pa_\nu\Gamma_{\alpha \mu}^{\mu'}
 - \pa_\alpha\Gamma_{\mu \nu}^{\mu'}
 + \Gamma^{\mu'}_{\nu \beta}\Gamma^\beta_{\alpha \mu}
 - \Gamma^{\mu'}_{ \alpha \beta}\Gamma^\beta_{\nu\mu}.
 \label{}
\end{equation}

 Subtracting \eqref{div1} from $\hD_s = V^\nu\pa_\nu$ applied to \eqref{rescaledrelcont2} we find
\begin{equation}
 e'(\sigma) \hD_s^2 \sigma - \frac{1}{2} \nabla_\nu
 (g^{\mu\nu}\nabla_\mu
 \sigma)
 = \nabla_\mu V^\nu \nabla_\nu V^\mu + Q,
 \label{sigmawavesetup1sec}
\end{equation}
where
\begin{equation}
 Q = R_{\mu\nu\alpha}^\mu V^\nu V^\alpha
 - e''(\sigma) (\hD_s\sigma)^2.
 \label{Qdef}
\end{equation}
When $e'(\sigma) \equiv 0$ then this is just a wave equation
with respect to the metric $g$.

\subsubsection{The smoothed problem in the relativistic case}
\label{sec:smreldef}
Let $\!S_\varepsilon^* S_\varepsilon$ be
a regularization as in section \ref{smoothingsec}.
Given a velocity vector field $V(s,y)\!$, we
define the tangentially regularized velocity
\begin{equation}
\widetilde{V}^\mu = S_\ve^* S_\ve V^\mu,
\end{equation}
and coordinates $\xve$ by
\begin{equation}\label{eq:eulerlagrangiancoordxsmoothedrel}
\frac{d \widetilde{x}^\mu(s, y)}{ds}=
\widetilde{V}^\mu(s, y),\qquad \xve^0(0,y) = 0, \quad
 \xve^i(0,y)=x_0^i(y),\qquad y\in \Omega.
\end{equation}
We want $\sigma$ and $V_\mu$ to be
functions of $(s,y)\in [0,S]\times \Omega$, because we need to be in a fixed domain in order to construct a solution by iteration. However, we also like to be able to think of them as functions of $(t,\widetilde{x})$ because the formulation of the equations becomes simpler that way.
We define operators $\widetilde{\pa}, \hD_s$ on $[0,S]\times \Omega$ by
\begin{equation}
  \widetilde{\pa}_\mu
  = \frac{\pa y^\alpha}{\pa \widetilde{x}^\mu} \frac{\pa}{\pa y^\alpha},
  \qquad \hD_s = \frac{\pa}{\pa s}\Big|_{y = \text{const}}
  = \widetilde{V}^\mu \widetilde{\pa}_\mu.
% \label{}
\end{equation}
Note the operators $\widetilde{\pa}_\mu$ in the $y$ coordinates correspond to partial differentiation $\partial /\partial\widetilde{x}^\mu$ in the $\widetilde{x}$ coordinates.
 For a vector field $X$ we introduce the smoothed-out covariant
derivative
\begin{equation}
 \widetilde{\nabla}_\mu X^\nu = \pave_\mu X^\nu
 + \widetilde{\Gamma}_{\mu\gamma}^\nu X^\gamma, \qquad \text{where} \quad
 \widetilde{\Gamma}_{\mu\gamma}^\nu (s,y)=\Gamma_{\mu\gamma}^\nu\big(\widetilde{x}(s,y)\big)
 ,
 \label{smcov}
\end{equation}
whereas for functions $\widetilde{\nabla}_\mu f=\widetilde{\partial}_\mu f$. Note the operators $\widetilde{\na}_\mu$ in the $y$ coordinates correspond to covariant differentiation in the $\widetilde{x}$ coordinates with respect to the metric $ g(\xve)$. Hence
\begin{equation}
 \big[\widetilde{\nabla}_\mu \widetilde{\nabla}_\nu-\widetilde{\nabla}_\nu\widetilde{\nabla}_\mu\big] V^{\mu^\prime}= \widetilde{R}_{\mu\nu\alpha}^{\mu'}V_{\mu^\prime}
 ,\quad\text{where}\quad   \widetilde{R}_{\mu\nu\alpha}^{\mu'}(s,y)
 = {R}_{\mu\nu\alpha}^{\mu'}\big(\widetilde{x}(s,y)\big).
 %\label{}
\end{equation}
With $\widetilde{g}(s,y)=g\big(\widetilde{x}(s,y)\big)$ we also
let $\widetilde{\nabla}^\mu =
\widetilde{g}^{\mu\nu}\widetilde{\nabla}_\nu$.

The smoothed-out equations that we consider are
\begin{alignat}{2}
 \widetilde{V}^\nu\widetilde{\na}_\nu V^\mu + \frac{1}{2}
 \widetilde{\nabla}^\mu\sigma &=
 0,
 &&\qquad \text{ in } \Omega, \label{rescaledreleul2smoothed}\\
 \widetilde{V}^\nu\widetilde{\nabla}_\nu e(\sigma) + \widetilde{\na}_\mu V^\mu &= 0,
 &&\qquad
 \text{ in } \Omega.
 \label{rescaledrelcont2smoothed}
\end{alignat}
As in the previous section if we apply $\widetilde{\nabla}^\mu $ to
\eqref{rescaledreleul2smoothed} and subtract the result
 from $\hD_s = \widetilde{V}^\nu\pa_\nu$ applied to \eqref{rescaledrelcont2smoothed} we find
\begin{equation}
 e'(\sigma) \hD_s^2 \sigma - \frac{1}{2} \widetilde{\nabla}_\nu
 (\widetilde{g}^{\mu\nu}\widetilde{\nabla}_\mu
 \sigma)
 = \widetilde{\nabla}_\mu \widetilde{V}^\nu \widetilde{\nabla}_\nu V^\mu + \widetilde{R}_{\mu\nu\alpha}^\mu \widetilde{V}^\nu V^\alpha
 - e''(\sigma) (\hD_s\sigma)^2.
 %\label{smsigmawavesetup1}
\end{equation}
 depending linearly on $V$ and
subject to the boundary and initial conditions
 \begin{alignat}{2}
 \sigma &= \overline{\sigma}, &&\qquad \text{ on } [0,s_1]\times \pa\Omega,\label{smoothwavebc}\\
 \sigma|_{s = 0} &= \sigma_0,&&\qquad \text{ on }
 \Omega,\\
 \hD_s \sigma|_{s = 0} &= \sigma_1,&&\qquad \text{ on }
 \Omega.
 \label{smoothedwaveic}
\end{alignat}
Here, $\overline{\sigma} = \sigma|_{p = 0}$ is a constant.

\subsection{Norms and basic geometric constructions}
\label{spatialnorms}
We now introduce
some basic geometric quantities which we will use to
 control the solution.
 \subsubsection{Norms of spacetime quantities}
 \label{normsdefs}
It is convenient to introduce the following norms of spacetime
quantities. We let $H$ be the Riemannian metric
\begin{equation}
 H_{\mu\nu} = g_{\mu\nu} + 2\Tau_\mu\Tau_\nu,
 \label{Hdef}
\end{equation}
where $\Tau$ denotes the future-directed timelike covector
determining the time axis of the background metric. Explicitly if
$\tau$ denotes the time function of the background
metric then $\Tau_\mu
=\pa_\mu \tau/(-g(\nabla \tau, \nabla \tau))^{1/2}$.
The fact that \eqref{Hdef}
is positive-definite follows after decomposing into the directions
parallel to ${\Tau}$ and orthogonal to ${\Tau}$ and
noting that ${\Tau}$ is timelike so its orthogonal complement is
spacelike.

For a tensor field $\beta \! =\! \beta_{\mu_1\cdots \mu_k}
dx^{\mu_1}\!\cdots dx^{\mu_k}$\!\!,
 we write $|\beta|$ for the pointwise norm with respect to $H$\!,
 \begin{equation}
  |\beta|^2 = H^{\mu_1\nu_1}\cdots
  H^{\mu_k \nu_k}
  \beta_{\mu_1\cdots \mu_k}
  \beta_{\nu_1\cdots \nu_k}.
  \label{Hnormdef}
 \end{equation}
 For $1 \leq p < \infty$, thinking of the coefficients of $\beta$
 as depending on $(s,y)$, we write
 \begin{equation}
  \| \beta\|_{L^p(\Omega)}^p
  = \int_{\Omega} |\beta|^p\, \kappa dy,
  \qquad
  \|\beta\|_{L^\infty(\Omega)} =
  {\sup}_{y \in \Omega} |\beta(y)|.
  \label{HLpnormdef}
 \end{equation}

In later sections we will abuse this notation slightly and apply it to quantities
of the type $T^I V^\mu$
 or $T^I\wGamma_{\mu\nu}^\gamma$ which are not tensor fields
 since they do not transform the correct way under changes of
 coordinates. For terms of this
 type we will abuse notation and write e.g.
 \begin{equation}
  |T^I V|^2 = H_{\mu\nu} T^I V^\mu T^I V^\nu,
  \qquad
  |T^I\Gamma|^2=
  H^{\mu \mu'}H^{\nu\nu'} H_{\alpha\alpha'}
  T^I\Gamma^{\alpha'}_{\mu'\nu'}.
  \label{}
 \end{equation}
 Then these quantities are not invariant under coordinate changes but changing
 coordinates just generates lower-order terms.

\subsubsection{The Riemannian metric on $\Omega$}

We now introduce a Riemannian metric $G$ on the surfaces $\Omega_s =
\xve(s,\Omega)$ which
plays an important role in what follows. The idea is that we want to
write the wave operator $\wg^{\mu\nu}\pave_\mu \pave_\nu$ as
the sum of a second-order operator which is elliptic on $\Omega_s$ and two material
derivatives $D_s$, which by \eqref{freebdy} is tangent to the boundary.
The following
construction works on an arbitrary spacelike surface $\Sigma$ and has
nothing to do with Lagrangian coordinates so we will do it abstractly.
Let $n^\Sigma$ denote the timelike future-directed normal vector field
 to $\Sigma$.

Decompose the tangent space into a component along $n^\Sigma$
and a part orthogonal to $n^\Sigma$,
\begin{equation}
 g(X, Y) = -g(n^\Sigma, X) g(n^\Sigma, Y) + \overline{g}(X,Y),
 \label{ogdef}
\end{equation}
where $\overline{g}$ is the projection of the metric away from
$n^{\Sigma}$. It is non-negative and in fact positive-definite on
the tangent space of the spacelike surface $\Sigma$.
Decomposing $\widetilde{V}$ in the same way we find
% and rearranging,
\begin{equation}
 n^\Sigma = -\frac{1}{g(n^\Sigma, \widetilde{V})} (\widetilde{V} - \overline{\widetilde{V}}).
 \label{nSigma}
\end{equation}
Note that since $n^\Sigma\!\!, \widetilde{V}$ are both timelike,
$g(n^\Sigma\!\!, \widetilde{V})\! \not=\!0$\footnote{Otherwise we would have two orthogonal
timelike directions which is impossible since our spacetime is hyperbolic}.
Combining these formulas we have the following decomposition of $g$,
\begin{equation}
 g(X, Y) =G(X,Y) - \frac{1}{g(n^\Sigma, \widetilde{V})^2} g(\widetilde{V}, X) g(\widetilde{V}, Y)
 +\frac{1}{g(n^\Sigma, \widetilde{V})^2} \left(g(\widetilde{V}, X) g(\overline{\widetilde{V}}, Y)
 + g(\widetilde{V}, Y) g(\overline{\widetilde{V}}, X)\right),
 \label{Gdecomp}
\end{equation}
with
\begin{equation}
  G(X, Y) =  \overline{g}(X, Y) -\frac{1}{g(n^\Sigma, \widetilde{V})^2} g(\overline{\widetilde{V}}, X)
  g(\overline{\widetilde{V}}, Y),
  \label{Gdef}
\end{equation}
which is positive-definite when restricted to $T\Sigma$;
since $g(\overline{Y}, X) = g(\overline{Y}, \overline{X}) =
\overline{g}(Y, X)$,
\begin{equation}
 G(X, X) = \overline{g}(X, X) - \frac{g(\overline{\widetilde{V}}, X)}{g(n^\Sigma, \widetilde{V})^2}
 \geq \left(1 - \frac{g(\overline{\widetilde{V}}, \overline{\widetilde{V}})}{g(n^\Sigma, \widetilde{V})^2}\right)
 \overline{g}(X,X)
 = -\frac{g(\widetilde{V}, \widetilde{V})}{g(n^\Sigma, \widetilde{V})^2}
 \overline{g}(X,X).
 \label{Gpositivity0}
\end{equation}
Since $\widetilde{V}$ is timelike, the coefficient here is positive.

From the formula \eqref{Gdecomp} one sees that the
principal part of the wave operator $\wg^{\mu\nu}\pave_\mu\pave_\nu$ is
\begin{equation}
  \frac{1}{g(n^\Sigma, \widetilde{V})^2} D_s^2 + G^{\mu\nu}\pave_\mu \pave_\nu
  +\frac{2}{g(n^\Sigma, \widetilde{V})^2}  \overline{\widetilde{V}}^\nu \pave_\nu D_s,
  \label{waveexpression}
\end{equation}
where $G^{\mu\nu}\pave_\mu\pave_\nu$ is elliptic, thought of as an operator
on $\Sigma$. The important point in this
decomposition is that $D_s$ is tranverse to $\Sigma$ and will be tangent
to $\pa\Sigma$ in our applications.

The above decomposition also gives the following formula for the principal part of the divergence
\begin{equation}
 \wg^{\mu\nu}\pave_\mu X_\nu =
 G^{\mu\nu} \pave_\mu X_\nu
 +\frac{1}{g(n^\Sigma,\widetilde{V})^2}\Big(\sg^{\mu\nu} \widetilde{{V}}_\nu
 -\widetilde{{V}}^\mu\Big)
 \hD_s X_\mu
 + \Omega^{\mu\nu}\widetilde{\curl} X_{\mu\nu},
 \label{divexpression}
\end{equation}
where
 $ \Omega^{\mu\nu} =
 \frac{1}{g(n^\Sigma,\widetilde{V})^2}
\left(\widetilde{{V}}^\mu \sg^{\mu'\nu} \widetilde{{V}}_{\mu'}\!\!
+ \widetilde{{V}}^\nu \sg^{\nu'\mu} \widetilde{{V}}_{\nu'}\right)$

\subsubsection{The wave operator expressed in Lagragian coordinates}
\label{lagrangianwavecoords}

We record here an alternate expression for \eqref{waveexpression}
in Lagrangian coordinates.
With $n_\alpha=\pa_\alpha s$ the conormal to the surfaces $s=-const$ we can write
$\pa_\alpha=n_\alpha\,\pa_s +\overline{\pa}_\alpha $,
where $\overline{\pa}_\alpha$ differentiates along the surfaces $s=const$.
Since $\widetilde{V}^\alpha\na_\alpha =\widetilde{V}^\alpha\pa_\alpha s=1$ we have $\overline{\pa}_\alpha =\gamma_\alpha^{\alpha^\prime}\pa_{\alpha^\prime}$,
where $\gamma_\alpha^{\alpha^\prime}=\delta_\alpha^{\alpha^\prime}-n_\alpha \, \widetilde{V}^{\alpha^\prime}$. With $\xi_s=\widetilde{V}^\alpha \xi_\alpha$ and $\overline{\xi}_\alpha=\gamma_\alpha^{\alpha^\prime}\xi_{\alpha^\prime}$
the symbol for the wave operator can hence be decomposed
\begin{equation}
  \label{coordwavedecomp3}
\wg^{\alpha\beta}\xi_\alpha \xi_\beta =
\wg^{\alpha\beta}n_{\alpha} \, n_{\beta}\,\xi_s \xi_s +
2 \wg^{\alpha\beta}n_{\alpha}\, \xi_s\overline{\xi}_{\beta}
+\wg^{\alpha\beta}\overline{\xi}_{\alpha}
\overline{\xi}_{\beta}.
\end{equation}
The principal part that only differentiates along the surface $s=const$ is
\begin{equation}
  \label{coordwavedecomp2a}
\wg^{\alpha\beta}\overline{\xi}_{\alpha}
\overline{\xi}_{\beta}=G_1^{\alpha\beta}{\xi}_{\alpha}
{\xi}_{\beta},\qquad\text{where}\qquad
G_1^{\alpha\beta}=\wg^{\alpha'\beta'}\gamma_{\alpha'}^{\alpha}\gamma_{\beta'}^{\beta},
\end{equation}
We claim that this gives an elliptic operator restricted to the surfaces $s=const$.
i.e. $\wg^{\alpha\beta}\overline{\xi}_{\alpha}
\overline{\xi}_{\beta}>c\delta^{\alpha\beta}\overline{\xi}_{\alpha}
\overline{\xi}_{\beta}$, for some $c>0$.
To see this note that
 $\overline{\xi}^\alpha=\wg^{\alpha\beta}\overline{\xi}_\beta$ is in the orthogonal complement of ${\widetilde{V}}^\beta$, since $\wg_{\alpha\beta}\overline{\xi}^\alpha{\widetilde{V}}^\beta
=\overline{\xi}_\beta{\widetilde{V}}^\beta=0$, since $\widetilde{V}^\alpha\pa_\alpha s=1$.  Since
$\widetilde{V}$ is timelike
$\wg_{\alpha\beta}{\widetilde{V}}^\alpha{\widetilde{V}}^\beta<0$ it follows that
$\overline{\xi}$ is spacelike $\wg_{\alpha\beta}\overline{\xi}^\alpha\overline{\xi}^\beta>0$.

\subsubsection{The divergence theorem}
The following identities are straightforward consequences
of the usual divergence theorem (see Section \ref{divergencetheoremproofs}).
We record them explicitly here for the convenience of the reader.
\begin{lemma}
  \label{divergencelemma}
  Let $\D$ be a region bounded between two spacelike surfaces $\Sigma_{0},
  \Sigma_{1}$ with $\Sigma_{1}$
  lying to the future of $\Sigma_0$, and a timelike surface
  $\Lambda$.
  Let $dS^{\Sigma_j}$ denote the  measure induced by $\wg$
  on $\Sigma_{j}$
  and $d\Lambda$ the induced measure on $\Lambda$.
  Let $n^{\Sigma_j}$ denote the future-oriented normal
  to $\Sigma_{j}$. Then we have
  \begin{equation}
\int_{\D} \div X\, dV
 =
 \int_{\Sigma_{1}} \wg(n^{\Sigma_1}, X) dS^{\Sigma_1}
 -\int_{\Sigma_{0}}  \wg(n^{\Sigma_0}, X) dS^{\Sigma_0}
 + \int_{\Lambda} \wg(\widetilde{\N}, X) dS^\Lambda.
 \label{divtheorem1}
  \end{equation}

 If $\widetilde{V}$ is tangent to $\Lambda$ and
 $\Lambda_{\Sigma_0}^{\Sigma_1}$ denotes the portion of
 $\Lambda$ lying between $\Sigma_0, \Sigma_1$ then
 \begin{equation}
   \int_{\Lambda_{\Sigma_0}^{\Sigma_1}} \hD_s \phi \,dS^\Lambda
   =\int_{\Lambda \cap \Sigma_1}\phi \, \wg(n^{\Sigma_{1}}, \widetilde{V})
   dS'
   -
   \int_{\Lambda \cap \Sigma_0} \phi \,\wg(n^{\Sigma_{0}}, \widetilde{V})
   dS'
   +\int_{\Lambda} \phi\, \div_{\Lambda} \widetilde{V} dS,
  \label{transportbdy}
 \end{equation}
 where $\div_{\Lambda}$ denotes the divergence on $\Lambda$
 and $dS'$ is the measure on $\Lambda_{\Sigma_j}$
 induced by $dS^\Lambda$.
\end{lemma}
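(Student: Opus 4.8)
The plan is to derive both identities from Stokes' theorem applied to the $3$-form $\iota_X\,dV$, where $dV$ is the volume element of $\wg$ on $\D$. Since $d(\iota_X\,dV)=(\div X)\,dV$, Stokes' theorem gives $\int_\D \div X\,dV=\int_{\pa\D}\iota_X\,dV$, and $\pa\D=\Sigma_1\cup\Sigma_0\cup\Lambda$, so the only real work is to identify the pullback of $\iota_X\,dV$ to each face. On a non-null boundary hypersurface $P$ with unit normal $\nu$ and induced volume element $dS^P$ one decomposes $X$ into its $\nu$-component and its part tangent to $P$; the tangential part contributes nothing upon restriction, and $\iota_X\,dV$ pulls back to $\wg(\nu,X)\,dS^P$ up to the sign dictated by the causal character of $\nu$ and by the boundary orientation. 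Performing this on the three faces, and using that the outward codirection along the future face $\Sigma_1$ is opposite to that along the past face $\Sigma_0$ (this is what produces the relative minus sign) while it is spacelike along $\Lambda$, gives \eqref{divtheorem1}. This is entirely standard and is what is carried out in Section \ref{divergencetheoremproofs}.

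For \eqref{transportbdy} the key observation is that $\Lambda$, being timelike, is itself a three-dimensional Lorentzian manifold with metric $\wg|_{T\Lambda}$ and volume element $dS^\Lambda$, and that by hypothesis $\widetilde V$ is tangent to $\Lambda$, so $\phi\widetilde V$ is a genuine vector field on $\Lambda$ with $\div_\Lambda(\phi\widetilde V)=\widetilde V(\phi)+\phi\,\div_\Lambda\widetilde V=\hD_s\phi+\phi\,\div_\Lambda\widetilde V$. Applying \eqref{divtheorem1} on the Lorentzian $3$-manifold $\Lambda_{\Sigma_0}^{\Sigma_1}$, whose boundary is precisely the two spacelike $2$-surfaces $\Lambda\cap\Sigma_1$ and $\Lambda\cap\Sigma_0$ (there is no lateral piece), to the vector field $Y=\phi\widetilde V$ and rearranging gives \eqref{transportbdy}, \emph{provided} one checks that the flux of $\phi\widetilde V$ through $\Lambda\cap\Sigma_j$ computed inside $\Lambda$ equals $\int_{\Lambda\cap\Sigma_j}\phi\,\wg(n^{\Sigma_j},\widetilde V)\,dS'$. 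This is a short linear-algebra computation: write $T_p\D=\mathrm{span}(n^{\Sigma_j})\oplus\mathrm{span}(m)\oplus T_p(\Lambda\cap\Sigma_j)$ with $m$ the unit normal of $\Lambda\cap\Sigma_j$ inside $\Sigma_j$, expand the unit normal of $\Lambda\cap\Sigma_j$ inside $\Lambda$ in the timelike plane $\mathrm{span}(n^{\Sigma_j},m)$, and use that $\widetilde V$ is orthogonal to the (spacelike) normal of $\Lambda$ in $\D$, which also lies in that plane; the hyperbolic-rotation factor that appears is exactly cancelled by the corresponding factor relating $dS'$ to the $2$-area measure that $dS^\Lambda$ induces on $\Lambda\cap\Sigma_j$.

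Alternatively, and perhaps more transparently, \eqref{transportbdy} can be obtained by foliating $\Lambda$ by the surfaces $\Lambda_s:=\Lambda\cap\Sigma_s$. Since $\widetilde V^\alpha\pa_\alpha s=1$, the parameter $s$ is affine along the flow of $\widetilde V$ within $\Lambda$, so this flow carries $\Lambda_{s_0}$ onto $\Lambda_s$; writing $dS^\Lambda$ in adapted coordinates $(s,\cdot)$, the induced $2$-area element transported along the flow satisfies $\hD_s(\mathrm{area})=(\div_\Lambda\widetilde V)\,(\mathrm{area})$, exactly as $D_s\kappa=\kappa\,\div V$ is used elsewhere. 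Hence $\tfrac{d}{ds}\int_{\Lambda_s}\phi\,(\mathrm{area})=\int_{\Lambda_s}(\hD_s\phi+\phi\,\div_\Lambda\widetilde V)\,(\mathrm{area})$, and integrating in $s$ from $s_0$ to $s_1$ gives \eqref{transportbdy} once one identifies $\int_{\Lambda_s}\phi\,(\mathrm{area})$ with $\int_{\Lambda\cap\Sigma_s}\phi\,\wg(n^{\Sigma_s},\widetilde V)\,dS'$ by the coarea formula, the factor $\wg(n^{\Sigma_s},\widetilde V)$ being the lapse-type factor relating the $dS^\Lambda$-slicing of $\Lambda$ to the intrinsic area $dS'$ on $\Lambda_s$.

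I expect the only real difficulty to be convention bookkeeping: choosing compatible orientations on $\D$, on each face, and on $\Lambda$, and tracking the causal-character signs (timelike normals on the spacelike faces versus the spacelike normal on $\Lambda$) so that the signs in \eqref{divtheorem1}--\eqref{transportbdy} come out as stated. Once the orientations and normalizations are fixed the geometric content is elementary, which is why it is reasonable to record these identities here simply "for the convenience of the reader" and relegate the detailed verification to Section \ref{divergencetheoremproofs}.
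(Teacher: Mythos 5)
Your primary route is essentially identical to the paper's argument in Section \ref{divergencetheoremproofs}: \eqref{divtheorem1} is the standard Lorentzian divergence theorem (the paper simply cites \cite{CB08}), and \eqref{transportbdy} follows by regarding $\Lambda$ as its own Lorentzian $3$-manifold, writing $\hD_s\phi = \div_\Lambda(\phi\widetilde V) - \phi\,\div_\Lambda\widetilde V$, and applying the intrinsic divergence theorem on $\Lambda_{\Sigma_0}^{\Sigma_1}$ to the vector field $\phi\widetilde V$. You are somewhat more scrupulous than the paper about the flux-identification step (matching the normal to $\Lambda\cap\Sigma_j$ inside $\Lambda$ against $n^{\Sigma_j}$ and the induced area against $dS'$), which the paper asserts without comment; this is the right instinct, since that is where the hyperbolic boost/lapse factor must be seen to cancel.

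Your second, flow-based derivation of \eqref{transportbdy} — foliate $\Lambda$ by $\Lambda_s=\Lambda\cap\Sigma_s$, use that $\widetilde V^\alpha\pa_\alpha s=1$ so the flow of $\widetilde V$ carries leaves to leaves, and transport the area element via $\hD_s(\text{area})=(\div_\Lambda\widetilde V)(\text{area})$ — is a genuinely different route from the paper's and is, if anything, closer in spirit to how these identities are actually used (pulling back to Lagrangian coordinates on $[0,S]\times\pa\Omega$ with $\hD_s$ the time derivative at fixed label). What the paper's intrinsic-divergence argument buys is brevity and a single citation; what your transport argument buys is that the lapse-type factor $\wg(n^{\Sigma_s},\widetilde V)$ appears naturally as the coarea Jacobian rather than needing a separate reconciliation of normals, and the orientation/sign bookkeeping reduces to a scalar ODE for the leaf area. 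Either route is acceptable; yours makes the geometric content of \eqref{transportbdy} more transparent at the cost of setting up the adapted coordinates.
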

We note for later use that
$-\wg(n^{\Sigma_{j}}, \widetilde{V}) > g_0$ for a constant $g_0$
which follows since $\widetilde{V}$ and $n^{\Sigma_j}$ are timelike
and future-directed.

\subsection{Control of the norms from the energies, the divergence and the curl using elliptic estimates}
Let $\mathcal{T}$ denote the set of spacetime vector fields which
are tangential at the space boundary constructed as in
Section \ref{def T and FD}.
As in the non-relativistic case we will derive higher order energies for any combination of tangential vector fields $T^I$ and in
order to control the full gradient of the solution we will
need separate estimates for the antisymmetric part of the gradient
along with the trace.
Since $\sigma$ is constant on $\pa \Omega$ and hence $\widetilde{V}^\mu\widetilde{\pa}_\mu \sigma=0$ on the boundary we use the fact that $\sigma$ satisfies a wave equation
in the interior to get estimates.

In this section we let $c_J, c_r$ denote constants
depending on pointwise norms of lower-order terms.
With notation as in \eqref{Hnormdef}-\eqref{HLpnormdef},
\begin{equation}
 c_J = c_J\bigfour( {\sum}_{|K| \leq |J|/2} |\pave T^K \xve|+
 |\pave T^L V| + |\pave T^L \widetilde{V}|
 +|\pave T^L \pave \sigma|
 + |\pave T^L \wGamma| + |T^L \wGamma| \bigfour),
 \qquad
 c_r = {\sum}_{|J| \leq r} c_J,
 \label{lowercasecdefrel}
\end{equation}
and similarly $C_J$ depends on $L^\infty$ norms of lower-order
terms,
\begin{equation}
 C_{\!J\!} = C_{\!J}\bigfour( {\sum}_{|K| \leq |J|/2} \|\pave T^K \xve\|_{L^\infty}\!+
 \|\pave T^L V\|_{L^\infty}\!
 +\|\pave T^L \widetilde{V}\|_{L^\infty}\!
 +\|\pave T^L \pave \sigma\|_{L^\infty}\!
 + \|\pave T^L \wGamma\|_{L^\infty}\! + \|T^L \wGamma\|_{L^\infty}\!\! \bigfour),
 \label{capitalCdefrel}
\end{equation}
and $C_r\! =\! {\sum}_{|J| \leq r} C_J$, where $L^\infty = L^\infty(\Omega)$.

It is convenient to use slightly different notation for $c_1, C_1$ which is that
they denote constants depending on a fixed number of derivatives of these
quantities,
\begin{equation}
 c_1 = c_1\bigfour( {\sum}_{|K| \leq 4} |\pave T^K \xve|+
 |\pave T^L V| + |\pave T^L \widetilde{V}|
 +|\pave T^L \pave \sigma|
 + |\pave T^L \wGamma| + |T^L \wGamma| \bigfour),
 \label{}
\end{equation}
\begin{equation}
  C_{\!1\!} = C_{\!1}\bigfour( {\sum}_{|K| \leq 4} \|\pave T^K \xve\|_{L^\infty}\!+
  \|\pave T^L V\|_{L^\infty}\!
  +\|\pave T^L \widetilde{V}\|_{L^\infty}\!
  +\|\pave T^L \pave \sigma\|_{L^\infty}\!
  + \|\pave T^L \wGamma\|_{L^\infty}\! + \|T^L \wGamma\|_{L^\infty}\!\! \bigfour).
 \label{C1def}
\end{equation}

 \subsubsection{Control of the $L^2$ norms of the velocity and enthalpy}

By the pointwise estimate \eqref{pwH} we have a bound
of the form
\begin{equation}
{ \sum}_{|J| \leq r\!-\!1 }\! \|\pave T^J V\|_{L^2(\Omega)}^2
 \lesssim
 {\sum}_{|J| \leq r\!-\!1} \Big({\sum}_{T \in \T}
 \|T T^J V\|_{L^2(\Omega)}^2\!+
\!
\|\widetilde{\div}\, T^J \pave V\|_{L^2(\Omega)}\! +
 \!
 \|\widetilde{\curl}\, T^J V\|_{L^2(\Omega)}^2\!\Big),
 \label{usepwacou}
\end{equation}
with notation as \eqref{Hnormdef}-\eqref{HLpnormdef}, and
where we are writing
$$
 \widetilde{\div}\,T^J  V = \widetilde{\nabla}_\mu T^J V^\mu
= |\widetilde{g}|^{-1/2} \pave_\mu \big( | \widetilde{g}|^{1/2} T^J  V^\mu\big),\qquad\text{where}\quad  |\widetilde{g}|=-\operatorname{det}{\widetilde{g}},
 $$
 as well as
 $$
 \widetilde{\curl}\, T^J V_{\mu\nu}
  = \widetilde{\nabla}_\mu T^J V_\nu - \widetilde{\nabla}_\nu T^J V_\mu
   =\pave_\mu T^J V_\nu - \pave_\nu T^J V_\mu.
  $$
The first term on the right-hand side of \eqref{usepwacou}
will be controlled by the energy for the Euler equations, the second term
will be controlled from the continuity equation \eqref{rescaledrelcont2smoothed} and the third
will be controlled because we have an evolution equation for the curl.

We also have a pointwise estimate,
\begin{equation}
 |\pave T^J V| \lesssim
 | \widetilde{\div}\,T^J  V| +
 |\widetilde{\curl} T^J V| + {\sum}_{T \in\T} |T T^J V|,
 %\label{useHbd}
\end{equation}
which is used to control various lower-order terms that arise in
the upcoming calculations.
The $L^\infty$ norms of the velocity and enthalpy can also be controlled
using the pointwise estimate \eqref{pwH} and this
strategy.

\subsubsection{The additional norm control of the smoothed coordinate $S_\varepsilon x$}
\label{additionalxve}
As in the non-relativistic case, the higher-order energies come
with an additional positive term on the boundary which is
is equivalent to
 $\sum_{|I|\leq r} \| \mathcal{N}\!\!\cdot\! T^I\! S_\varepsilon x\|_{L^2(\pa\Omega)}^2$ when the Taylor sign condition $|\nabla \sigma| > 0$
 holds on $\pa \Omega$. For the smoothed-out problem one also needs to control
 certain error terms and for this we need the following
 modification of the estimate
 in Section \ref{sec:divcurlL2}.

 With notation as in Section \ref{spatialnorms},
 In appendix \ref{relelliptic} we prove the following elliptic estimate,
 \begin{multline}
 {\sum}_{|J|\leq r-1}\vertiii{\pa T^J\! \fdh\sm x}_{L^2(\Omega)}^2\!
 +\!\!{\sum}_{|I|\leq r} \vertiii{ T^I\! S_\varepsilon x}_{L^2(\pa\Omega)}^2
 \leq C_{{}_{\!}1}{\sum}_{|I|\leq r}\vertiii{ n
 \!\cdot_{\oH} T^I\! S_\varepsilon x}_{L^2(\pa\Omega)}^2\\
 +C_{{}_{\!}1}{\sum}_{|J|\leq r-1}\!
 \|\div_{\oH} T^J\! \fdh\sm x\|_{L^2(\Omega)}^2\!
 + \|\curl T^J\! \fdh\sm x\|_{L^2(\Omega)}^2
 +\vertiii{\pa T^J \sm x}_{L^2(\Omega)}^2.
\end{multline}
 Here, the norm $\vertiii{\cdot}$ is taken over just the spatial components,
 see \eqref{appendixnormsdef}. We are also writing
$\n_{\oH} \cdot T^I S_\ve x =
\oH_{\mu\nu} \n_\mu \wg_{\mu\mu'} T^I S_\ve x^{\mu'}$ where
$\n$ denotes the unit conormal to $\pa\Omega$ at constant
$s$, normalized with respect
to the metric $\oH$, and $\div_{\oH}$ denotes the divergence
with respect to $\oH$ (see \eqref{divgdef}). The term involving the divergence
will be under control because it can be written in terms
of the divergence with respect to $\wg$ up to terms involving
the material derivative which are easier to deal with. We can
control the curl time since we have an evolution
equation for all components of the curl.
Using the boundary condition $\widetilde{V}^\mu
\widetilde{\N}_\mu = 0$ the boundary term
here will also be controlled by the energy, see
Section \ref{additionalrelxve}.

\subsection{Higher order equations for the velocity vector field}
\subsubsection{Higher order relativistic Euler's equations}

For any tangential field $T = T^a(y)\pa_{y^a}$ we have
\begin{equation}
 T\widetilde{\pa}_\mu \sigma = \widetilde{\pa}_\mu T\sigma -
 \widetilde{\pa}_\mu T \widetilde{x}^\nu\widetilde{\pa}_\nu \sigma,
 \qquad
 \mu,\nu = 0,1,2,3.
 \label{smcomm}
\end{equation}
If $T \in  \mathcal{S}$, the collection of spacetime tangential vector fields
given in \eqref{Sdef}, then $[T, \hD_s] = 0$ and
from \eqref{smcomm} and \eqref{rescaledreleul2smoothed}, we then have
\begin{equation*}
 \hD_s T V^\mu-\wg^{\mu\nu}\frac{1}{2}\widetilde{\pa}_\alpha \sigma \,\widetilde{\pa}_\nu T\widetilde{x}^\alpha +\frac{1}{2}\wg^{\mu\nu}\widetilde{\pa}_\nu T\sigma=
 -T(\Gamma_{\alpha\nu}^\mu V^{\alpha} \widetilde{V}^\nu).
\end{equation*}

Similarly applying $T^I=T^{I_1}\cdots T^{I_r}$  we get
 \begin{equation}%\label{eq:TIVrel}
 \hD_s  T^I  V^\mu
-\frac{1}{2}\wg^{\mu\nu}\widetilde{\pa}_\alpha \sigma \,\widetilde{\pa}_\nu T^{I\!}
 \widetilde{x}^\alpha
+ \frac{1}{2}\wg^{\mu\nu}\widetilde{\pa}_\nu T^{I\!}\sigma
=F^{I \mu} ,
\end{equation}
where $F^I$ is a sum of terms of the form
\begin{itemize}
 \item $T^{I'}\wg \cdot \widetilde{\pa}  T^{I_1} \widetilde{x}\cdots \widetilde{\pa} T^{I_{k-1}} \widetilde{x}\cdot T^{I_{k}} \widetilde{\pa} \sigma$, for $I'+ I_1+\dots+I_k=I$
 with $|I_i| \leq |I|-1$
  and
 \item $T^{I_1} \Gamma \cdot T^{I_2} V \cdot T^{I_3} \widetilde{V}$ for
 $I_1 + I_2 + I_3 = I$,
\end{itemize}
and hence is lower order
\begin{equation}
|F^I|\lesssim c_I{\sum}_{|J|\leq |I|-1} |\widetilde{\pa} T^J \widetilde{x}|
+ |\widetilde{\pa} T^J \sigma|  + |T^J V| + |T^J  \widetilde{V}| + |T^J \wGamma|
+|T^JT \wg|.
\label{Frelest}
\end{equation}

We re-write this as
\begin{equation}\label{eq:symmetricequationrel}
 \hD_s T^I  V^\mu
-\frac{1}{2}\wg^{\mu\nu}
\widetilde{\pa}_\nu\big(\widetilde{\pa}_\alpha \sigma \, T^{I\!} \widetilde{x}^\alpha -  T^{I\!}\sigma\big)=F^{\prime I\mu}\!,
\end{equation}
where $F^{\prime I}=F^{I}-\widetilde{\na} \widetilde{\pa}_\alpha \sigma \,  \,T^{I\!} \widetilde{x}^\alpha $ is lower order.

\subsubsection{Higher order continuity equations}
Similarly, we have
\begin{equation}\label{eq:GIequationrel}
 e'(\sigma) \hD_s T^I \sigma + \widetilde{\pa}_\mu (T^I V^\mu)-\widetilde{\pa}_\mu T^{I\!} \widetilde{x}^\nu \,\,\widetilde{\pa}_\nu V^\mu =G^I,
\end{equation}
where $G^I$ is a sum of terms of the form
\begin{itemize}
 \item $\widetilde{\pa} T^{I_1} \widetilde{x}\cdots \widetilde{\pa} T^{I_{k-1}} \widetilde{x}\cdot \widetilde{\pa} T^{I_{k}}V$,
for $I_1+\dots+I_k=I$ and $|I_i|<|I|$, and
\item $T^{I_1} \Gamma \cdot T^{I_2} V$, for $I_1 + I_2 = I$,
and
\item $e^{(k+1)}(\sigma) T^{I_1} \sigma \cdots
T^{I_{k-1}} \sigma T^{I_{k}} \hD_s \sigma$, for $I_1 + \cdots I_k = I$,
\end{itemize}
and hence is lower order
\begin{equation}\label{eq:GIestimaterel}
 |G^I|\lesssim c_I {\sum}_{|J|\leq |I|-1}
 |\widetilde{\pa} T^J V|+|\widetilde{\pa} T^J\! x|
 + |T^J \wGamma| + |T^J V|,
 \end{equation}
 where $c_I$ is a constant that depends on
$|\widetilde{\pa} T^L \widetilde{x}|$, $|\widetilde{\pa} T^L V|$,
$|T^L \wGamma|$, $|T^L V|$, $|\widetilde{\pa} T^L V|$
 for
$|L|\leq |I|/2 $.
 Hence
\begin{equation}\label{eq:highercontinuityrel}
 e'(\sigma) \hD_s T^I \sigma+\widetilde{\pa}_\mu \bigtwo(T^I V^\mu-T^{I\!} \widetilde{x}^\nu \,\,\widetilde{\pa}_\nu V^\mu\bigtwo) =G^{{}^{\,}\prime I},
\end{equation}
where $G^{{}^{\,}\prime I}=G^I-T^{I\!} \widetilde{x}^\nu \,\,\widetilde{\pa}_\nu \,
 \widetilde{\pa}_\mu V^\mu$ is lower order.

\subsubsection{New unknowns}
We introduce
\begin{equation}%\label{eq:eulerianvariablesre}
V^{I}_\mu = \wg_{\mu\nu} T^I V^\mu- \wg_{\mu\nu} \widetilde{\pa}_\alpha V^\nu\,T^{I\!} \widetilde{x}^\alpha,\qquad
\text{and}\qquad
\sigma^I=  T^{I\!}\sigma-\widetilde{\pa}_\nu \sigma \,\, T^{I\!} \widetilde{x}^\nu,
\end{equation}
and \eqref{eq:symmetricequationrel} and \eqref{eq:highercontinuityrel} take the form
\begin{align}\label{eq:highereulermodifiedrel}
\hD_s V^{I}_{\mu}+\frac{1}{2}  \widetilde{\pa}_\mu \sigma^I&=F_\mu^{\prime\prime I},\\
\label{eq:highercontinuitymodifiedrel}
e' \hD_s \sigma^{I}+\widetilde{\pa}_\mu(\wg^{\mu\nu} V^{I}_\nu)&=G^{\prime\prime I},
\end{align}
where $F_\mu^{\prime\prime I}\!\!= \!F_\mu^{\prime I}\!-\hD_s\big(\wg_{\mu\nu} \widetilde{\pa}_\alpha V^\nu\,T^{I\!} \widetilde{x}^\alpha\big) + (\hD_s \wg_{\mu\nu} )T^I V^\nu\!$ and
 $G^{\prime\prime I}\!\!=\!G^{\prime I}\!-\hD_s\big(\widetilde{\pa}_\nu \sigma \,\, T^{I\!} \widetilde{x}^\nu\big)$ are lower order.

 \subsubsection{The evolution equation for $\wg(V,V)$}
 \label{lengthsec}
 Recall that for the non-smoothed problem we have defined $V$ so that
 $V_\mu V^\mu = -\sigma$. Multiplying both sides of
 the Euler equations \eqref{rescaledreleul2} by $V^\mu$ we see
 that this condition is propagated for the non-smoothed problem
 and it is approximately propagated for the smoothed equation
 \eqref{rescaledreleul2smoothed} as well.
 We will need a higher-order version of this propagation equation.
 Multiplying both sides of \eqref{eq:highereulermodifiedrel}
 by $\widetilde{V}^\nu$ and using
 $\widetilde{V}^\mu \pave_\mu = \hD_s$ we find that
 \begin{equation}
  \hD_s ( \sigma^I + 2\widetilde{g}^{\mu\nu} V_\nu V^I_\mu) = (\widetilde{V}^\mu - V^\mu)\pave_\mu \sigma^I
  + F_{\mu}^{\prime\prime\prime I} V^\mu,
  \label{normerroreqn}
 \end{equation}
 where $F^{\prime\prime\prime I}_\mu = F^{\prime\prime I}_\mu +
 2\hD_s \widetilde{g}^{\mu\nu} V^I_\nu$.
 Writing $\pave_\mu \sigma^I = -2D_s V_\mu^I + 2F_{\mu}^{\prime\prime I}$
 and defining $L_1^I = \sigma^I + 2V^\mu V^I_\mu + 2(\widetilde{V}^\mu - V^\mu) V^I_\mu$, the
 above becomes
 \begin{equation}
  \hD_s L^I_1 = \hD_s(\widetilde{V}^\mu - V^\mu) V_\mu^I +
  2F_{\mu}^{I\prime\prime \prime} V^\mu + 2(\widetilde{V}^\mu - V^\mu)F_{\mu}^{\prime\prime I}.
 % \label{normerroreqn2}
 \end{equation}

 When $\ve = 0$, $L^I_1 = L^I = \sigma^I + 2 V^\mu V_\mu^I$
 and integrating \eqref{normerroreqn} we find
 the pointwise bound
 \begin{equation}
  |L^I(s)| \lesssim
  |L^I(s_0)| +
  s \sup_{s_0\leq s' \leq s}\left( |F^{\prime\prime I}(s)|
  |V(s)| + |\hD_s \wg(s)| |V(s)| |V^I(s)|\right),\qquad \text{ if } \ve = 0,
  \label{LIboundve0}
 \end{equation}
  For $\ve > 0$
 we bound
 $\widetilde{V}^\mu - V^\mu$ using \eqref{hnk}
 which gives
 \begin{equation}
  |L_1^I(s)| \lesssim |L_1^I(s_0)| +
  s\!\!\!\sup_{s_0\leq s' \leq s}\!\!\!\left( |F^{\prime\prime I\!}(s)|
  |V(s)|\! + |\hD_s \wg(s)| |V(s)| |V^{I\!}(s)|\!
  + C_0 \ve |V^{I\!}|\! + C_0 \ve  |F^{\prime\prime I\!}|\right),
  \label{LIboundve}
 \end{equation}
 with $C_1$ as in \eqref{C1def}.
  We also have
 \begin{equation}
   |\sigma^I + 2 V^\mu V_\mu^I - L_1^I|
   \lesssim C_0 \ve |V^I|,
  \label{LIboundve2}
 \end{equation}
 which gives a bound for $\sigma^I + 2V^\mu V_\mu^I$
 when $\ve > 0$.

\subsection{Higher-order energies for the velocity vector field}
\label{highorderVrel}
With $D_s \!= \!\widetilde{V}^\nu\pave_\nu$, in an arbitrary
coordinate system we have
\begin{align}
  \widetilde{g}^{\mu\nu}
  (D_s V_\mu^I \! + \frac{1}{2}  \pave_\mu \sigma^I)  V_\nu^I\!
  &= \frac{1}{2} \pave_\alpha \left(  \wg^{\mu\nu} V_\mu^I V_\nu^I \widetilde{V}^\alpha\!
   + \sigma^I \wg^{\alpha\beta} V_\beta^I
  \right)-\frac{1}{2}\sigma^I  \pave_\alpha
  (\wg^{\alpha\beta} V^I_\beta)
  -\frac{1}{2} \pave_\alpha ( \wg^{\mu\nu} \widetilde{V}^\alpha )
  V_\mu^I V_\nu^I\\
  &=
  \frac{1}{2} \pave_\alpha \left(  \wg^{\mu\nu} V_\mu^I V_\nu^I \widetilde{V}^\alpha\!
   + \sigma^I \wg^{\alpha\beta} V_\beta^I
  \right)+\frac{1}{4}\pave_\alpha
  \left( \widetilde{V}^\alpha e'\,(\sigma^I)^2\right)\\
  &\qquad\qquad-\frac{1}{2} \pave_\alpha ( \wg^{\mu\nu} \widetilde{V}^\alpha )
  V_\mu^I V_\nu^I
  -\frac{1}{2} \pave_\alpha(\widetilde{V}^\alpha e')
  (\sigma^I)^2
  -\frac{1}{2}\sigma^I  G^{\prime\prime I}.
  \label{multiplier}
\end{align}
Let $\wg$ denote the determinant of the matrix
$\wg_{\mu\nu}$. Then for a vector field $X = X^\alpha
\pave_\alpha$ we have the identity
\begin{equation}
 \pave_\alpha X^\alpha = \div X^\alpha - \frac{1}{2}X^\alpha \pave_\alpha
 \log |\wg|
 \label{divident}
\end{equation}

We now fix two spacelike surfaces $\Sigma_1, \Sigma_0
\subset \D$
with $\Sigma_1$ lying to the future of $\Sigma_0$ and which
are both bounded by the timelike surface $\Lambda$.
Let $n^{\Sigma_i}$ denote the future-directed normal
vector field to $\Sigma_i$.
Let $\Lambda_{\Sigma_i} = \Lambda \cap \Sigma_i$ and let
$\Lambda_{\Sigma_0}^{\Sigma_1}$ denote the portion of the timelike
surface $\Lambda$ lying between $\Sigma_0$ and $\Sigma_1$, and let
$D_{\Sigma_0}^{\Sigma_1}$ denote the region bounded between
$\Sigma_0, \Sigma_1$ and $\Lambda_{\Sigma_0}^{\Sigma_1}$.
Let $d\mu_{\widetilde{g}} = \sqrt{-\det\wg} dx dt$ be the measure
on $D_{\Sigma_0}^{\Sigma_1}$ induced
by the metric $\wg$. For a hypersurface $U$ let $dS^U$ denote the
corresponding surface measure.
Integrating the identity \eqref{multiplier}
over $D_{\Sigma_0}^{\Sigma_1}$ with respect
to $d\mu_{\widetilde{g}}$ and using
 the divergence theorem \ref{divergencelemma},
 the identity \eqref{divident} and the
 fact that $\wg(\widetilde{V}, \widetilde{N}) = 0$ on
 $\Lambda$, we find
\begin{multline}
 \int_{\Sigma_1}\left(
 \wg(V^I, V^I)\wg(\widetilde{V}, \widetilde{n}^{\Sigma_1})
 +\sigma^I \wg(V^I, \widetilde{n}^{\Sigma_1})
 +\frac{1}{2} e'(\sigma) (\sigma^I)^2 \wg(\widetilde{V}, \widetilde{n}^{\Sigma_1})
 \right)
 dS^{\Sigma_1}
 +\int_{\Lambda_{\Sigma_0}^{\Sigma_1}}\sigma^I \wg(V^I, \widetilde{\N})
 dS^{\Lambda} \\
 =
 \int_{\Sigma_0} \left(\wg(V^I, V^I)\wg(\widetilde{V}, \widetilde{n}^{\Sigma_0})
 +\sigma^I \wg(V^I, \widetilde{n}^{\Sigma_0})
 +\frac{1}{2} e' (\sigma^I)^2 \wg(\widetilde{V}, \widetilde{n}^{\Sigma_0})\right)
 dS^{\Sigma_0}\\
 + \int_{\D_{\Sigma_0}^{\Sigma_1}}\left(
 F^{\prime\prime I}_\mu V^{\mu I}
 +\sigma^I  G^{\prime\prime I}
 +
 \pave_\alpha ( \wg^{\mu\nu} \widetilde{V}^\alpha )
 V_\mu^I V_\nu^I + \pave_\alpha(\widetilde{V}^\alpha e'(\sigma))
   (\sigma^I)^2 \right)d\mu_{\widetilde{g}}
\\
+ \int_{\D_{\Sigma_0}^{\Sigma_1}}\left(
\wg^{\mu\nu} V_\mu^I V_\nu^I
+ \sigma^I \wg^{\alpha\beta} V_\beta^I+\frac{1}{2}
 e'(\sigma^I)^2\right)
 D_s \log |\wg|\,d\mu_{\widetilde{g}}.
 \label{intmult}
\end{multline}
Let $\Sigma$ denote either of $\Sigma_0, \Sigma_1$.
Recalling the definition $L^I_1 = \sigma^I + 2\widetilde{V}^\mu
V_\mu^I$ from section \ref{lengthsec},
 write
\begin{equation}
 -\sigma^I \wg(V^I, \widetilde{n}^{\Sigma})
 - \wg(V^I, V^I) \wg(\widetilde{V}, \widetilde{n}^{\Sigma})
 = Q[V^I](\widetilde{V}, \widetilde{n}^\Sigma)
 - L^I_1 \wg(V^I, \widetilde{n}^\Sigma),
 \label{}
\end{equation}
where $Q[V^I](\widetilde{V}, \widetilde{n}^\Sigma)$
denotes the energy-momentum tensor
\begin{equation}
 Q[V^I](X, Y)
 = 2 \wg(V^I, X)\wg(V^I, Y)
 -\wg(X, Y) \wg(V^I, V^I).
 \label{}
\end{equation}
If we set $Q^I= Q^{I}_1 + Q^I_2$ with
$Q^I_1 = Q[V^I]$ and $Q^I_2 = e' (\sigma^I)^2$, the identity
\eqref{intmult} reads
\begin{multline}
 \int_{\Sigma_1} \left(Q^I(\widetilde{V}, \widetilde{n}^{\Sigma_1})
 -L^I_1 \wg(V^I, \widetilde{n}^{\Sigma_1})\right) dS^{\Sigma_1}
 -\int_{\Lambda_{\Sigma_0}^{\Sigma_1}}\sigma^I \wg(V^I, \widetilde{\N}) ds^{\Lambda} \\
 =
 \int_{\Sigma_0} \left(Q^I(\widetilde{V}, \widetilde{n}^{\Sigma_0})
 -L^I_1 \wg(V^I, \widetilde{n}^{\Sigma_0}) \right)dS^{\Sigma_1}\\
 - \int_{\D_{\Sigma_0}^{\Sigma_1}}\left(
 F^{\prime\prime I}_\mu V^{\mu I}
 +\sigma^I  G^{\prime\prime I}
 +
 \pave_\alpha ( \wg^{\mu\nu} \widetilde{V}^\alpha )
 V_\mu^I V_\nu^I + \pave_\alpha(\widetilde{V}^\alpha e'(\sigma))
   (\sigma^I)^2 \right)d\mu_{\widetilde{g}}
\\
- \int_{\D_{\Sigma_0}^{\Sigma_1}}\left(
\wg^{\mu\nu} V_\mu^I V_\nu^I
+ \sigma^I \wg^{\alpha\beta} V_\beta^I+\frac{1}{2}
 e'(\sigma^I)^2\right)
 D_s \log |\wg|\,d\mu_{\widetilde{g}}.
 \label{enidentrel}
\end{multline}
This identity is the analogue of
the time-integral of the identity \eqref{dtEnewt}.
From the upcoming bound \eqref{emposlemint},
since $\widetilde{V}, n^{\Sigma_1}$
are timelike and future-directed, $Q^I(\widetilde{V}, n^{\Sigma_1})$
is positive-definite.

We now consider
 the integral over the timelike surface
$\Lambda_{\Sigma_0}^{\Sigma_1}$ .
Since $\sigma$ is constant on $\Lambda$
and by assumption $\widetilde{\N}^\mu \pave_\mu \sigma < 0$,
we have $T^I \sigma = 0$ and
$\widetilde{\pa}_\mu \sigma = -\widetilde{\N}_\mu
|\widetilde{\pa} \sigma|$. Therefore
\begin{equation}
  \widetilde{\mathcal{N}}^\mu V^{I}_\mu  =\widetilde{\mathcal{N}}_\mu
  T^I V^\mu - \widetilde{\mathcal{N}}_\mu
  \widetilde{\pa}_\alpha V^\mu T^I \widetilde{x}^\alpha, \qquad
  \sigma^I = \widetilde{\N}_\nu T^I \xve^\nu |\widetilde{\pa} \sigma|,
  \qquad \text{ on } \Lambda.
 %\label{}
\end{equation}
Arguing as in section \ref{sec:nonrelbdy}, we find
\begin{equation}
  \widetilde{\N}_\mu T^I V^\mu = \hD_s \big(\widetilde{\N}_\mu T^I
  x^\mu\big)
  -\eta\,\widetilde{\mathcal{N}}_\mu \,  T^I {x}^{\mu}+
  \widetilde{\mathcal{N}}_\mu\widetilde{\pa}_\alpha \widetilde{V}^\mu\,  T^I {x}^{\alpha}
  -\widetilde{\mathcal{N}}_\mu \widetilde{\pa}_\alpha {V}^\mu\,  T^I \widetilde{x}^{\alpha},
 %\label{}
\end{equation}
 with $\eta = \pave_\alpha \widetilde{V}^\beta \widetilde{\N}^\alpha
\widetilde{\N}_\beta$, so
\begin{multline}
 \int_{\Lambda_{\Sigma_0}^{\Sigma_1}} \sigma^I \wg(V^I, \widetilde{\N}) dS^\Lambda
 = \int_{\Lambda_{\Sigma_0}^{\Sigma_1}} \widetilde{\N}_\nu T^I \xve^\nu \hD_s \big(
 \widetilde{\N}_\mu T^I x^\mu\big) |\pave \sigma|   dS^\Lambda
 - \int_{\Lambda_{\Sigma_0}^{\Sigma_1}}
 \widetilde{\N}_{\nu} T^I \xve^\nu \widetilde{\N}_\mu T^I x^\mu
 \eta|\pave \sigma|  \,  dS^\Lambda \\
 + \int_{\Lambda_{\Sigma_0}^{\Sigma_1}}
 \widetilde{\N}_\nu T^I \xve^\nu
 \big( \widetilde{\N}_\mu \pave_\alpha \widetilde{V}^\mu T^I x^\alpha
 - \widetilde{\N}_\mu \pave_\alpha \widetilde{V}^\mu T^I \xve^\alpha
 \big)
 |\pave \sigma|  dS^\Lambda,
 \label{eq:boundaryproblemsrel2}
\end{multline}
compare with \eqref{eq:boundaryproblems}.

\subsubsection{Positivity of the energy-momentum tensor}
\label{empossec}
Recall that a vector field $Z$ is timelike if
$\wg(Z, Z) < 0$ and it is future-directed provided $\wg(Z,\tau) < 0$
with $\tau$ the generator of the time axis.
\begin{lemma}
  \label{emtensorpositivity}
  Let $X, N$ be future-directed timelike vector fields and
  let $Q[Z](X,N) = 2\wg(Z, X)\wg(Z, N) - \wg(X, N)
  \wg(Z, Z)$.
  There is a constant $d_0$ depending on $X, N$ so that
  with notation as in \eqref{Hnormdef},
  \begin{equation}
   Q[Z](X, N) \geq d_0 |Z|^2
 \end{equation}
\end{lemma}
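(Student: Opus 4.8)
The plan is to reduce the bilinear quantity $Q[Z](X,N)$ to a manifestly positive expression by decomposing $Z$ relative to one of the timelike vectors, say $N$. First I would normalize: since $N$ is timelike, set $e_0 = N/\sqrt{-\wg(N,N)}$, so $\wg(e_0,e_0) = -1$, and write $Z = -\wg(Z,e_0) e_0 + \overline{Z}$, where $\overline{Z}$ is the part of $Z$ orthogonal to $e_0$. Because $e_0^\perp$ is a spacelike hyperplane (the spacetime is Lorentzian of signature $(-,+,+,+)$), $\wg(\overline{Z},\overline{Z}) \geq 0$, and moreover $\wg(Z,Z) = -\wg(Z,e_0)^2 + \wg(\overline{Z},\overline{Z})$. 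Substituting into $Q[Z](X,N) = 2\wg(Z,X)\wg(Z,N) - \wg(X,N)\wg(Z,Z)$ and using $\wg(Z,N) = -\sqrt{-\wg(N,N)}\,\wg(Z,e_0)$, a short computation of the type already carried out around \eqref{alphanotation} should yield a lower bound of the form $Q[Z](X,N) \geq \left(\wg(Z,e_0)^2 + \wg(\overline{Z},\overline{Z})\right)\beta$ for a coefficient $\beta = \beta(X,N) > 0$; the positivity of $\beta$ follows exactly as in the discussion after \eqref{alphanotation}, since $X$ is timelike and future-directed so $\wg(X,e_0) < 0$, while the cross term $\wg(\overline{Z},X)$ is controlled by Cauchy–Schwarz on the spacelike hyperplane $e_0^\perp$ against $\overline{X}$, the spacelike part of $X$.

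The second step is to convert the intrinsic bound $\wg(Z,e_0)^2 + \wg(\overline{Z},\overline{Z}) \geq c_1 |Z|^2$ into the $H$-norm appearing in the statement, where $|Z|^2 = H_{\mu\nu}Z^\mu Z^\nu$ with $H$ as in \eqref{Hdef}. Both quantities are positive-definite quadratic forms in $Z$ evaluated at a fixed point (for fixed $X,N$, equivalently fixed $e_0$); on a finite-dimensional space any two positive-definite forms are comparable, so there is a constant $d_0 = d_0(X,N)$ with $\left(\wg(Z,e_0)^2 + \wg(\overline{Z},\overline{Z})\right)\beta \geq d_0 |Z|^2$. Concretely one can check that $\wg(Z,e_0)^2 + \wg(\overline{Z},\overline{Z})$ is itself a Riemannian inner product on $Z$ (it is the metric obtained from $\wg$ by flipping the sign in the $e_0$ direction, which is precisely the construction behind $H$), so the comparison with $H$ is just the statement that two such Riemannian metrics built from nearby timelike directions are uniformly equivalent, with the equivalence constant depending continuously on $X$ and $N$.

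The only mild obstacle is bookkeeping the cross term: one must verify that after completing the square the coefficient multiplying $\wg(\overline Z,\overline Z)$ stays strictly positive, which is where the hyperbolicity of the metric (no two orthogonal timelike directions, cf.\ the footnote after \eqref{nSigma}) enters, guaranteeing $\wg(X,N) < 0$ and hence that the product $\wg(X,N)\wg(Z,Z)$ contributes with the favorable sign on the spacelike part. I expect this to be a couple of lines of elementary algebra mirroring \eqref{alphanotation}, with no genuine difficulty. The dependence of $d_0$ on $X,N$ being continuous is automatic from the explicit formulas.
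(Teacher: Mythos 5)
Your approach is essentially identical to the paper's: you normalize $N$, decompose $Z$ into its component along $N$ and its spacelike orthogonal part, control the cross term by Cauchy--Schwarz on the spacelike hyperplane (the paper uses the equivalent AM--GM bound $|Z^N\wg(P_NX,P_NZ)|\le\tfrac12|P_NX|((Z^N)^2+\wg(P_NZ,P_NZ))$), and use the timelike/future-directed conditions on $X$ to show the resulting coefficient $X^N-|P_NX|$ is strictly positive, then pass to the $H$-norm by equivalence of positive-definite forms. The only cosmetic difference is your framing as ``completing the square'' versus the paper's direct elementary inequality; the content is the same.
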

\begin{proof}
  Replacing $N$ with $N/(-\wg(N, N))^{1/2}$ we can assume that
  $\wg(N, N) = -1$. We now write
  $
   X = X^N N + P_N X,$ $Z = Z^N N + P_NZ,
   $
  where $Y^N = -g(Y, N)$ and where $P_N$ denotes the orthogonal projection
  away from $N$. This decomposition gives
  \begin{equation}
   Q[Z](X, N) = X^N \left((Z^N)^2 + \wg(P_NZ, P_NZ)\right)
   - 2 Z^N \wg(P_NX, P_NZ).
   \label{}
  \end{equation}
  Since
  \begin{equation}
   |Z^N \wg(P_NX, P_NZ)|
   \leq \frac{1}{2} |P_NX| \left( (Z^N)^2 + \wg(P_NZ, P_NZ)\right),
   \label{}
  \end{equation}
  the above formula gives the lower bound
  \begin{equation}
   Q[Z](X, N) \geq \left(X^N - |P_NX|\right) \left((Z^N)^2 + \wg(P_NZ, P_NZ)\right).
   \label{}
  \end{equation}

  Since $X, N$ are future-directed we have $X^N = -\wg(X, N) > 0$.
  Abusing notation slightly and writing $|P_NX| = (\wg(P_NX, P_NX))^{1/2}$,
  we have
    \begin{equation}
     0 > \wg(X, X) = -(X^N)^2 + \wg(P_NX, P_NX ) =
    -\left( X^N + |P_NX|\right)\left(X^N - |P_NX|\right),
     \label{}
    \end{equation}
    so there is a constant $d_0' = d_0'(X, N)$ with
    \begin{equation}
     Q[Z](X, N) \geq d_0' \left((Z^N)^2 + \wg(P_NZ, P_NZ)\right).
     \label{QZXN}
    \end{equation}
    The result follows since the norm
    on the right-hand side of \eqref{QZXN} is equivalent to
    the norm \eqref{Hnormdef}.

\end{proof}

\subsubsection{The a priori bounds for the relativistic Euler equations}
When $\ve = 0$, we have $x = \xve$ so the last term in \eqref{eq:boundaryproblemsrel2}
vanishes. The first term is symmetric and since $D_s$ is tangent to $\Lambda$, by
\eqref{transportbdy}, we have
\begin{multline}
  \int_{\Lambda_{\Sigma_0}^{\Sigma_1}} \widetilde{\N}_\nu T^I \xve^\nu \hD_s \big(
  {\N}_\mu T^I x^\mu\big) |\pave \sigma|  dS^\Lambda
  =
  \int_{\Lambda_{\Sigma_0}^{\Sigma_1}} {\N}_\nu T^I x^\nu \hD_s \big(
  {\N}_\mu T^I x^\mu\big) |\pa \sigma| dS^\Lambda\\
  =
  \frac{1}{2}\int_{\Lambda_{\Sigma_1}} \left((T^I x^\mu){\N}_\mu\right)^2
  g({V}, n^{\Sigma_1})
  |\pa \sigma|\,dS^{\Lambda_{\Sigma_0}}- \frac{1}{2}\int_{\Lambda_{\Sigma_0}}\left((T^I x^\mu){\N}_\mu\right)^2g({V}, n^{\Sigma_0})
  |\pa \sigma|\, dS^{\Lambda_{\Sigma_0}}\\
  - \frac{1}{2}\int_{\Lambda_{\Sigma_0}^{\Sigma_1}}\left((T^I x^\mu){N}_\mu\right)^2
  \widetilde{D}_\mu({V}^\mu |\pa \sigma|)\, dS^\Lambda,
 \label{eq:boundaryproblemsrel3}
\end{multline}
where here $\Lambda_{\Sigma_j} = \Lambda \cap \Sigma_{j}$,
$dS^{\Lambda_{\Sigma_j}}$ denotes surface measure on $\Lambda_{\Sigma_j}$
and $\widetilde{D}_\mu$ denotes covariant differentiation on
$\Lambda$.

The above computations were done with respect to arbitrary spacelike
surfaces $\Sigma_1, \Sigma_0$ but for the sake of concreteness
we now foliate the domain $\D$ into spacelike surfaces
$\Sigma_s = x(s, \Omega)$ for $s \in [s_0, s_1]$ for some $s_0, s_1$
where the Lagrangian coordinates $x(s,\cdot)$ are defined in
\eqref{lagcoordsdef}.
Expressing the integrals in Lagrangian coordinates,
the energies are
\begin{equation}
 \mathcal{E}^I(s)
 = \int_{\Omega} |V^I(s)|^2\, {\kappa} dy
 + \int_{\Omega} e'(\sigma^I(s))^2\, {\kappa} dy
 + \int_{\Omega} |T^I x(s)|^2 \, {\kappa} dy
 + \int_{\pa \Omega}\left((T^I x^\mu(s)){\N}_\mu\right)^2 |\pa \sigma| {\nu} dS.
 \label{}
\end{equation}
Here, ${\kappa} dy$ is the surface measure on $\Sigma_s$
expressed in Lagrangian coordinates and ${\nu} dS$
is the surface measure on $\Lambda_{\Sigma_s}$ expressed
in Lagrangian coordinates.
In what follows we will drop the measures from our notation.

Since $V$ is timelike and future-directed, it follows
from Lemma \ref{emtensorpositivity} that
there are constants $D_1, D_2> 0$ depending on
$V$ and $\Sigma_s$
so that with notation as in
\eqref{Hnormdef} and $Q^I$ as in \eqref{enidentrel},
  \begin{equation}
   D_1 \int_{\Omega} (|V^I(s)|^2 +e'(\sigma)|\sigma^I(s)|^2)
   {\kappa} dy
   \leq \int_{\Sigma_s} Q^I({V}, n^{\Sigma_s})
   dS^{\Sigma_s}
   \leq D_2 \int_{\Omega} (|V^I(s)|^2 + e'(\sigma)|\sigma^I(s)|^2).
   \label{emposlemint}
  \end{equation}

Since ${V}, n^{\Sigma_s}$ are both timelike and future-directed
it follows that $-{g}({V}, n^{\Sigma_s}) > g_0 >0$
for a constant $g_0$
(see the comment below \eqref{nSigma}),
so combining the identity \eqref{enidentrel}
with $\Sigma_{j} = \Sigma_{s_j}$, \eqref{eq:boundaryproblemsrel3}
and using the lower bound \eqref{emposlemint} we have
\begin{equation}
 \mathcal{E}^I(s_1)- \mathcal{E}^I(s_0)
 \lesssim c_0 \int_{s_0}^{s_1} \mathcal{E}^I(s)
 + \int_{s_0}^{s_1} \int_{\Omega}
 (|F^{\prime \prime I}(s)|^2 + (G^{\prime\prime I}(s))^2)
 \,{\kappa} dy ds
 + \int_{\Omega} |L_1^I(s_1)|^2\, ds + \int_{\Omega} |L_1^I(s_0)|^2
 \, ds.
 \label{}
\end{equation}

Using the evolution equation \eqref{LIboundve0} to handle the terms involving
$L^I_1$ we have
\begin{multline}
  \mathcal{E}^I(s_1)- \mathcal{E}^I(s_0)
  \lesssim c_0 \int_{s_0}^{s_1} \mathcal{E}^I(s)\, ds
  + \int_{s_0}^{s_1} \int_{\Omega}
  (|F^{\prime \prime I}(s)|^2 + (G^{\prime\prime I}(s))^2)
  {\kappa} dy ds\\
  +
  (s_1-s_0)\sup_{s_0\leq s \leq s_1}\Big(
  C_I{\sum}_{|J| \leq |I|}  \mathcal{E}^J(s)
  + \int_{\Sigma_s} |F^{\prime\prime I}(s)| |V(s)|\Big) \kappa dy.
 \label{mainrelenbdnonsmooth2}
\end{multline}
The error terms on the right-hand side involving
$F^{\prime\prime I}, G^{\prime\prime I}$ can be bounded
in terms of lower order norms using
\eqref{Frelest}, \eqref{eq:GIestimaterel} and the elliptic
estimate \eqref{usepwacou}. If we take $s_1-s_0$ sufficiently
small and take the supremum over $s$ on both sides, the
highest-order term on the right-hand side $\sup_{s_0 \leq
s \leq s_1} \mathcal{E}^I(s)$ can be absorbed into the
left-hand side.
The energy also satisfies
\begin{equation}
 \int_{\Sigma_s} |T^I V|^2 + e'(\sigma) (T^I \sigma)^2
 + |T^I x|^2 \kappa dy\lesssim c_0 \mathcal{E}^I(s).
% \label{}
\end{equation}

\subsubsection{The a priori bounds for the smoothed relativistic Euler equations}

When $\ve > 0$, to handle the boundary term
from \eqref{eq:boundaryproblemsrel2} we instead argue as in the proof of
\eqref{eq:boundaryproblemssmoothedsymmetric} to move one of the smoothing operators
to the other factor which gives the following replacement for
\eqref{eq:boundaryproblemsrel2},
\begin{multline}\label{eq:boundaryproblemssmoothedsymmetricrel}
\int_{\Lambda_{\Sigma_0}^{\Sigma_1}} \sigma^I \wg(V^I, \widetilde{\N}) dS^\Lambda
=\!\!\int_{\Lambda_{\Sigma_0}^{\Sigma_1}}\widetilde{\mathcal{N}}_\nu  T^{I\!} {x}_\varepsilon^\nu D_s \big( \widetilde{\mathcal{N}}_\mu   T^{I\!} {x}_\varepsilon^{\mu} \big) |\widetilde{\pa}\sigma|
 d S^\Lambda
-\!\int_{\Lambda_{\Sigma_0}^{\Sigma_1}}
\widetilde{\mathcal{N}}_\nu T^{I\!} {x}_\varepsilon^{\,\nu}\,\widetilde{\mathcal{N}}_\mu  T^{I\!}  {x}_\varepsilon^{\mu}\, \eta|\widetilde{\pa} \sigma| d S^\Lambda\\
+\!\int_{\Lambda_{\Sigma_0}^{\Sigma_1}} \widetilde{\mathcal{N}}_\nu   T^{I\!}  {x}_\varepsilon^{\,\nu}\,
\bigtwo(\widetilde{\mathcal{N}}_\mu\widetilde{\pa}_\alpha
 \widetilde{V}^\mu\,  T^{I\!} {x}_\varepsilon^{\alpha}\!
-\widetilde{\mathcal{N}}_\mu\widetilde{\pa}_\alpha {V}^\mu\, \sm^2 T^{I\!} {x}_\varepsilon^{\alpha} \bigtwo) |\widetilde{\pa} \sigma|  d S^\Lambda
+\int_{\Lambda_{\Sigma_0}^{\Sigma_1}} T^{I\!} {x}_\varepsilon^{\,\nu} C_{\varepsilon \nu}^{\prime I} |\widetilde{\pa} \sigma|  d S^\Lambda,
\end{multline}
where $C_{\varepsilon j}^{\prime I}$ satisfy \eqref{eq:smoothingcommutator}.
The first term on the right-hand side is symmetric and so just
as in \eqref{eq:boundaryproblemsrel3}
\begin{multline}
  \int_{\Lambda_{\Sigma_0}^{\Sigma_1}} \widetilde{\N}_\nu T^I \xve^\nu \hD_s \big(
  \widetilde{\N}_\mu T^I x^\mu\big) |\pave \sigma|  dS^\Lambda
  \\
  =
  \frac{1}{2}\int_{\Lambda_{\Sigma_1}}
  \left((T^I x_\ve^\mu)\widetilde{\N}_\mu\right)^2
  \wg(\widetilde{V}, n^{\Sigma_1})
  |\pave \sigma|\,dS^{\Lambda_{\Sigma_0}}- \frac{1}{2}\int_{\Lambda_{\Sigma_0}}\left((T^I x_\ve^\mu)\widetilde{\N}_\mu\right)^2\wg(\widetilde{V}, n^{\Sigma_0})
  |\pave \sigma|\, dS^{\Lambda_{\Sigma_0}}\\
  - \frac{1}{2}\int_{\Lambda_{\Sigma_0}^{\Sigma_1}}\left((T^I x_\ve^\mu)\widetilde{\N}_\mu\right)^2
  \pave_\mu(\widetilde{V}^\mu |\pa \sigma| \log \widetilde{\kappa}_{\Lambda})\, dS^\Lambda.
 \label{eq:boundaryproblemsrel4}
\end{multline}
We now foliate by the spacelike surfaces
$\Sigma_s = \xve(s, \Omega)$ with $\xve$ as in
\eqref{eq:eulerlagrangiancoordxsmoothedrel}. With
\begin{equation}
 \mathcal{E}^I(s)
 = \int_{\Omega} |V^I|^2 \widetilde{\kappa} dy
 + \int_{\Omega} e'(\sigma^I(s))^2\, {\kappa} dy
 + \int_{\Omega} |T^I x(s)|^2 \, {\kappa} dy
 + \int_{\pa \Omega}\left((T^I x_\ve^\mu(s))\widetilde{\N}_\mu\right)^2 |\pa \sigma| {\nu} dS,
 \label{eq:energydefrel}
\end{equation}
and
\begin{equation}\label{eq:energyboundarydefrel}
\mathcal{B}^{I\!}(s)
=\!\!\int_{\pa\Omega}\!\!\! |T^I x_\ve(s)|^2 dS,\qquad
\mathcal{B}_{\mathcal{N}}^{\,I}(s)
=\!\!\int_{\pa\Omega}\!\!\! (\, \widetilde{\mathcal{N}}_\mu\!
T^{I\!} x_{\varepsilon }^\mu(s))^2  dS.
\end{equation}
Arguing as in section \ref{smoothennonrel}, we find
\begin{multline}
\mathcal{E}^I(s_1) \lesssim
\mathcal{E}^I(s_0) + \int_{s_0}^{s_1}\!\! \int_{\Omega}
|F^{\prime\prime I}(s) |^2 + (G^{\prime\prime I})^2\, \kappa
dy ds\\
+ c_0 \!\int_{s_0}^{s_1}\!\! \mathcal{B}^I(s)\, ds
+\! \int_{\Omega} |L^I(s_1)|^2\, \kappa dy
+\! \int_{\Omega} |L^I(s_2)|^2\, \kappa dy+
c_0 \!\!\int_{s_0}^{s_1}\!\! \mathcal{E}^I(s)\, ds.
%\label{}
\end{multline}
To deal with the contribution from $L^I$ we use
\eqref{LIboundve}-\eqref{LIboundve2} which gives
\begin{multline}
\mathcal{E}^I(s_1) \lesssim
\mathcal{E}^I(s_0) + \int_{s_0}^{s_1} \int_{\Omega}
|F^{\prime\prime I} |^2 + (G^{\prime\prime I})^2\, \widetilde{\kappa}
dy ds
+ c_0 \int_{s_0}^{s_1} \mathcal{B}^I(s)\, ds
\\
+  (s_1-s_0) \sup_{s_0 \leq s \leq s_1}
\Big( C_I {\sum}_{|J| \leq |I|}
\mathcal{E}^J(s') +
\int_{\Omega} |F^{\prime \prime I}(s)|
|V(s)|\widetilde{\kappa}\, dy\Big)\\
+ C_0\ve\int_{\Omega} |V^I(s_1)|^2\, \widetilde{\kappa}
dy
+
c_0 \int_{s_0}^{s_1} \mathcal{E}^I(s)\, ds.
\label{mainveposestimate}
\end{multline}
For $\ve$ and $s_1-s_0$ sufficiently small
the highest-order terms $\mathcal{E}^I$ on
the right-hand side can be handled by absorbing
as in the last section.

As in the Newtonian case
the energy only controls the normal component
of $x_\ve$ at the boundary,
\begin{equation}
  \mathcal{B}_{\N}^I(s)
 \lesssim
 \mathcal{E}^I(s),
\label{eq:byenergyestimatedrel}
\end{equation}
and so we need
additional argument to control all components of $x_\ve$
at the boundary. In the Newtonian
case the key ingredient was the elliptic estimate
\eqref{eq:coordinatehalfest} and in this case we will
instead use
the elliptic estimate from Lemma \ref{coordhalfestrel} which has
the same basic content but is in terms of the metric $\oH$ introduced
in \eqref{Gdef}, whereas in the Newtonian case all our estimates
were in terms of the Euclidean metric.

\subsubsection{The apriori energy bounds for the smoothed linear system} Given $U^\mu=V^\mu_{(k)}$ define $z^\mu=x^\mu_{(k)}$
for $\mu = 0,1,2,3$ by $dz/ds=
\overline{U}(z(s,y))$ and define
$\widetilde{V}=S_\varepsilon^* S_\varepsilon U$ and
$\widetilde{x}=S_\varepsilon^* S_\varepsilon z$.
Next, given $\widetilde{V}$ and $\widetilde{x}$ tangentially smooth define the new $V_{(k+1)}=V$ by solving the linear system \eqref{rescaledreleul2smoothed}- \eqref{smoothedwaveic}, and $x_{(k+1)}=x$ by $dx/ds=\sV(x(s,y))$. In Section \ref{existencerel}
we prove that the linear system \eqref{rescaledreleul2smoothed}
-\eqref{smoothedwaveic} is well-posed in an appropriate
energy space.
By the energy estimates from
the previous sections, after arguing as in
 Section \ref{sec:apriorilinear}, if we define
\begin{equation}
\mathcal{E}_k^{I\!}(s_1)=
 \int_{\Omega} |V_{(k)}^I(s_1)|^2\, \kappa dy +
 \int_{\Omega} e' (\sigma_{(k)}^I(s_1))^2 \kappa dy
 +\int_\Omega |T^I\! x_{(k)}(s_1)|^2\, \kappa dy,
\end{equation}
then
\begin{multline}
\mathcal{E}_{k+1}^{I\!}(s_1)\lesssim
\mathcal{E}_{k+1}^{I\!}(s_1)+  \frac{c_0}{\!\varepsilon\,}\!\!\!\int_{s_0}^{s_1}
\!\!\mathcal{E}_{k+1}^{I\!}(s) \, ds
+\frac{C_{\!J}^{(k)}\!\!}{\!\!\varepsilon} \!\!\!\!\!\!  \sum_{|J|\leq |I|-1}\!\int_{s_0}^{s_1}\!\!\!
|\widetilde{\pa}T^J\! x_{(k+1)}|_{L^2(\Omega)}^2\!
+|\widetilde{\pa}T^J V_{(k+1)}|_{L^2(\Omega)}^2\!
+ |T^J \wGamma|^2  ds\\
+\frac{C_J^{(k+1)}}{\ve}\!\!\sum_{|J| \leq |I|\shortminus 1}\!
\int_{s_0}^{s_1}
|\widetilde{\pa}T^J x_{(k)}|_{L^2(\Omega)}^2
+|\widetilde{\pa}T^J V_{(k)}|_{L^2(\Omega)}^2\,
ds,
%label{dtEmainreliter}
\end{multline}
so again we only have an energy bound up to a time
$t = O(\ve)$.
\subsection{Higher-order wave and elliptic estimates for the enthalpy}
\label{enthalpyestrel}

From the wave equation \eqref{sigmawavesetup1}, we have
\begin{equation}
   e'(\sigma) \hD_s^2 T^{J\!} \sigma
   -\! \widetilde{\nabla}^\mu\big( T^J{\widetilde{\nabla}}_\mu \sigma\big)
   = P^J\!+Q^J,
   \label{higherorderwave}
   \end{equation}
   where $ P^J\!\!=\!
   \big[\widetilde{\na}_\mu,T^J]\widetilde{\pa}_\nu \sigma$ and
   $ Q^J\!\! = 2T^J Q
   +2 T^J (\pave_\mu \widetilde{V}^\nu \pave_\nu V^\mu)$
   with $Q$ as in \eqref{Qdef}. These
   are lower order:
 \begin{equation}%\label{eq:PJrel}
 |P^J|\lesssim c_J {\sum}_{|K|\leq |J|}
 |\widetilde{\pa} T^K \widetilde{x}|
 + |\widetilde{\pa} T^K \widetilde{V}| + |T^K \wg|
 +c_J {\sum}_{|K|\leq |J|-1}| \widetilde{\pa} T^{K}
 {\widetilde{\pa}} \sigma|,\label{QJbound}
 \,
 \end{equation}
 \begin{equation}
 |Q^J|\lesssim c_J {\sum}_{|K|\leq |J|}
 |\widetilde{\pa} T^K V|+|\widetilde{\pa} T^K \widetilde{V}|
 + |\widetilde{\pa}T^K \sigma|
 +
 |\widetilde{\pa} T^K \widetilde{x}|
 + |\pave T^K \wGamma|+ |T^K \wGamma|.
 \label{PJbound}
 \end{equation}

 \subsubsection{Higher order elliptic equations for the enthalpy}
 \label{higherorderellipticrel}

 To close our estimates we will use
 the pointwise elliptic estimate \eqref{pwH},
 \begin{equation}
 |\widetilde{\pa} T^K \widetilde{\pa} \sigma|
 \lesssim |\widetilde{\div} \,T^K \widetilde{\pa} \sigma|+|_{\,}\widetilde{\curl}\,
 T^K \widetilde{\pa} \sigma|
 + {\sum}_{S\in{\mathcal S}}|S T^K \widetilde{\pa} \sigma|.
 \label{usefulpwbd}
 \end{equation}
 To control the divergence we write \eqref{higherorderwave} in the form
 \begin{equation}
 \widetilde{\nabla}^\mu\big( T^K \widetilde{\nabla}_\mu \sigma\big) =
 e'(\sigma)\hD_s^2 T^K\sigma
  -P^K - Q^K -(\pave_\mu g^{\mu\nu})T^K\pave_\nu\sigma
  %\label{}
 \end{equation}
 and so
 \begin{equation}
  |\widetilde{\div} \,T^K \pave \sigma|
  \lesssim
  |\hD_s^2 T^K \sigma|
  + c_0| \hD_s T^K \pave \sigma|
  + |T^K \pave \sigma|
  + |P^K| + |Q^K|
  \label{}
 \end{equation}

 Arguing as in section \ref{sec:elliptcenthalpy} to control
 $\curl T^K \widetilde{\pa} \sigma$
 from \eqref{usefulpwbd} we arrive at
 \begin{multline}\label{eq:paTpah2rel}
 | \widetilde{\pa} T^{K} {\widetilde{\pa}}  \sigma|
 \\
 \lesssim c_r\!\!\!\!\!\!\sum_{|K'|\leq |K|}\!\!\!\!\Big(
 |\widehat{D}_s^2 T^{K'} \!\sigma|
 + |\pave \hD_s T^{K'}\!\sigma|
 + \!\sum_{S\in\mathcal{S}} |S T^{K'\!} \widetilde{\pa}\sigma|
 +\!|\widetilde{\pa} T^{K'\!} V|+|\widetilde{\pa} T^{K'\!} \widetilde{V}|+|\widetilde{\pa} T^{K'\!} \widetilde{x}|
 + |\pave T^{K'} \wg| + |\pave T^{K'} \wGamma| \Big).
 \end{multline}

 \subsubsection{Higher order wave equation equations for the enthalpy}
 \label{enthalpyhigherorderrel}
 Multiply \eqref{higherorderwave} by $D_s T^J \sigma$ and write
 \begin{multline}
  \widetilde{\nabla}^\mu\big( T^J{\widetilde{\nabla}}_\mu \sigma\big)
  D_s T^J \sigma
  = \widetilde{\nabla}_\nu
  \big(\wg^{\mu\nu} T^J \widetilde{\nabla}_\mu \sigma
  D_s T^J\sigma - \tfrac{1}{2}
  \widetilde{V}^\nu \widetilde{g}^{\alpha\beta}T^J\widetilde{\nabla}_\alpha
  \sigma T^J \widetilde{\nabla}_\beta \sigma\big)
  \\
  - \widetilde{g}^{\mu\nu} T^J \widetilde{\nabla}_\mu \sigma R_\nu^J
  +\tfrac{1}{2} \widetilde{\nabla}_\nu \widetilde{V}^\nu
  \widetilde{g}^{\alpha\beta}
  T^J\widetilde{\nabla}_\alpha \sigma
  T^J\widetilde{\nabla}_\beta \sigma
  \label{enthem1}
\end{multline}
where here
\begin{equation}
 R_\nu^J =\pave_\nu T^J\Dt \sigma -  T^J\pave_\nu
 \Dt \sigma
 = {\sum}_{J_1 +\cdots J_k = J, |J_k| < J}
 r^J_{J_1\cdots J_k}
 \pave T^{J_1}\xve \cdots \pave T^{J_{k-1}} \xve
 \cdot T^{J_k}\pave \sigma
 %\label{}
\end{equation}
for constants $r^{J}_{J_1\cdots J_k}$, which is lower order
\begin{equation}
 R^J \lesssim
 c_J{\sum}_{|J'| \leq |J|}
 |\pave T^{J'}\xve| + |T^{J'}\pave \sigma|,
 \label{commutatorsigmabound}
\end{equation}
where $c_J$ depends on the above quantities for $|K| \leq |J|/2$.

We also have
\begin{equation}
 e'(\sigma) D_s^2 T^I\sigma D_s T^I\sigma =
 \frac{1}{2} \widetilde{\nabla}_\mu ( \widetilde{V}^\mu
 e'(\sigma) (D_s T^I\sigma)^2 )
 - \frac{1}{2}\widetilde{\nabla}_\mu( \widetilde{V}^\mu e'(\sigma))
 (D_s T^I\sigma)^2.
 \label{enthem2}
\end{equation}
Define the modified energy-momentum tensor
$q^I_{\mu\nu}= q^{I,1}_{\mu\nu}
+ q^{I,2}_{\mu\nu} $ where
\begin{equation}
q_{\mu\nu}^{I,1} = \pave_\mu T^I\sigma
T^I \pave_\nu \sigma
 -\frac{1}{2}\wg_{\mu\nu} \wg^{\alpha\beta} T^I\pave_\alpha\sigma
 T^I\pave_\beta\sigma,
\qquad
q_{\mu\nu}^{I,2} = e'(\sigma)\wg_{\mu\nu}|\hD_s \sigma^I|^2.
\label{sigmaemtensor}
\end{equation}
Note the positions of the derivatives $\pave_\mu, \pave_\nu$ in
the first term.
Adding \eqref{enthem1}-\eqref{enthem2}, integrating over the
region $\D^{\Sigma_1}_{\Sigma_0}$ as in the previous section
and using the divergence theorem, we find
\begin{multline}
 \int_{\Sigma_1} q^I(\widetilde{V}, n^{\Sigma_1})
 dS^{\Sigma_1}
 =
 \int_{\Sigma_0} q^I(\widetilde{V}, n^{\Sigma_0})
 dS^{\Sigma_0}
 \\
 +
 \int_{\D^{\Sigma_1}_{\Sigma_0}}
 \left(\widetilde{g}^{\mu\nu} T^J \widetilde{\nabla}_\mu \sigma R_\nu^J
 + \frac{1}{2} \widetilde{\nabla}_\nu \widetilde{V}^\nu
  \wg^{\alpha\beta}T^I \pave_\alpha \sigma T^I\pave_\beta \sigma
 - \frac{1}{2}\widetilde{\nabla}_\mu ( \widetilde{V}^\mu e'(\sigma))
 (D_s T^I\sigma)^2\right)
 d\mu_{\widetilde{g}} \\
 +
 \int_{\D^{\Sigma_1}_{\Sigma_0}} (|P^J| + |Q^J|)|D_s T^J \sigma|d\mu_{\widetilde{g}}
 \label{sigmawaveidentity}
\end{multline}
Here we have used that the boundary term on $\Lambda$ drops out, which follows since $\sigma$ is constant there and
 $\wg(\widetilde{V},\widetilde{\mathcal{N}}) = 0$.

Considering just the incompressible case $e'(\sigma) = 0$ for the moment,
the standard energy-momentum tensor associated to the wave equation for
$\sigma$ is
\begin{equation}
 Q_{\alpha\beta}[T^I\pa\sigma]
 = T^I \pave_\alpha \sigma T^I \pave_\beta \sigma
 - \frac{1}{2} \wg_{\alpha\beta} \wg^{\mu\nu}
 T^I\pave_\mu\sigma T^I\pave_\nu \sigma,
\end{equation}
and the difference $q^I(X, Y) - Q[T^I\pa\sigma](X, Y)$ is lower-order,
\begin{equation}
 |q^{I,1}(X,Y) - Q[T^I\pa\sigma](X, Y)|
 \lesssim
 |[\pave, T^I]\sigma| |T^I \pave \sigma|
 \lesssim
 c_I
 {\sum}_{|J|+|K| \leq |I|} |\pave T^J \xve| |T^K\pave \sigma|,
\end{equation}
and so in particular, since $\widetilde{V}$ is timelike and future-directed,
 by the positivity of the energy-momentum tensor
$Q$ from Lemma \ref{emtensorpositivity}, there are constants
$D_3. D_4 > 0$ depending on $\widetilde{V}$ and the spacelike surface
$\Sigma$ so that
\begin{equation}
   D_3 \int_{\Sigma}(1 + e'(\sigma) ) |T^I \hD_s\sigma|^2 + |T^I \pave \sigma|^2
   \leq \int_{\Sigma} Q[T^I \pave \sigma](\widetilde{V}, n^\Sigma)
   \leq
   D_4 \int_{\Sigma}(1 + e'(\sigma) ) |T^I \hD_s\sigma|^2 + |T^I \pave \sigma|^2.
   \label{Qcoersigma}
  \end{equation}
  and so $q^I(\widetilde{V}, n^{\Sigma})$ is positive-definite to
  highest-order,
\begin{equation}
  \int_{\Sigma} |T^I\pave \sigma|^2 + |\hD_s T^I\sigma|^2
  \lesssim
  \int_\Sigma |q^I(\widetilde{V},n^{\Sigma}) |
  + c_I
  {\sum}_{|J| \leq |I|} |\pave T^J \xve|^2
  + {\sum}_{|J| \leq |I|-1} |\pave T^J \sigma|^2.
  \label{highestordercoer}
\end{equation}

As in the estimates for the Euler equations it is convenient
to foliate the domain $\D$ into the spacelike surfaces
$\Sigma_s = x(s, \Omega)$ determined by the Lagrangian coordinates.
Expressing the integrals in Lagrangian coordinates, the energies are
\begin{equation}
 \W^J(s)
 = \int_{\Omega} |D_s T^J \sigma|^2 + |T^J \pave \sigma|^2\, \widetilde{\kappa} dy
 + \int_\Omega e'(\sigma)(\hD_s T^J\sigma)^2\,  \widetilde{\kappa} dy.
 %\label{}
\end{equation}
Using \eqref{sigmawaveidentity}, the bounds \eqref{Qcoersigma},
\eqref{highestordercoer} for $\W^J$ along with the bounds
\eqref{commutatorsigmabound}, \eqref{PJbound}, \eqref{QJbound} for the terms on
the right-hand side of \eqref{sigmawaveidentity}, we have
 \begin{equation}
   \label{mainrelwaveest}
  \W^{J\!}(s_1) - \W^{J\!}(s_0)
  \lesssim
  C_J\!\! \!\sum_{|J'| \leq |J|}\int_{s_0}^{s_1}\!\!\! \int_{\Omega}
  |\pave T^{J'}V|^2\! + |\pave T^{J'}\widetilde{V}|^2\!
  + |\pave T^{J'}\xve|^2\! + |\hD_s T^{J'} \!\sigma|^2\!
  + |\pave T^{J'} \wg|^2 +  |\pave T^{J'} \wGamma|^2\!,
 \end{equation}
where
\begin{equation}
 \W^J(s)- C_J {\sum}_{|J'| \leq |J|-1}
 \W^{J'}(s) \gtrsim c_0 \int_\Omega
\left( (\widetilde{\Tau}^\mu T^J \pave_\mu\sigma)^2
 + \overline{g}^{\mu\nu} T^J\pave_\mu \sigma T^J\pave_\nu \sigma
 \right)
 +
 e'(\sigma) (\hD_s T^J\sigma)^2 \, \kappa dy.
% \label{}
\end{equation}

\subsubsection{Estimates for the enthalpy with an additional
time derivative and an additional fractional derivative}
The arguments in sections \ref{sec:extradt}-\ref{sec:extrahalf}
go through with very minor modifications and the result is that
if we define
\begin{equation}
 \W^{K,j}(s) =
 \int_{\Omega}\left( (\widetilde{\Tau}^\mu \hD_s^j T^K \pave_\mu\sigma)^2
 + \overline{g}^{\mu\nu} \hD_s^j T^K\pave_\mu \sigma \hD_s^j T^K\pave_\nu \sigma
 \right) \widehat{\kappa} dy
 + \int_\Omega e'(\sigma)(\hD_s \hD_s^j T^K\sigma)^2\,
 \widetilde{V}^\alpha \widetilde{\Tau}_\alpha\kappa  dy,
 %\label{}
\end{equation}
then for $|K| = r-2$ we have
\begin{multline}%\label{eq:waveequationenergytimerel}
\mathcal{W}^{K,2{}_{\!}}(s)
\lesssim
\mathcal{W}^{K,2{}_{\!}}(0) +
C_0{\sum}_{|K^\prime|\leq |K|} \mathcal{W}^{K^\prime\!{}_{\!},2{}_{\!}}(s)
\\
+C_K\!\!\! \sum_{|{J^{\prime}}|\leq |K|+1}\int_0^s\!\!\int_{\D_{s'}}
 |\widetilde{\pa} T^{J{}^{\prime}\!} V|^2\!
 +|\widetilde{\pa} T^{J^{\prime}\!} \widetilde{V}|^2\!
 +|\widetilde{\pa} T
 ^{J^{\prime}}\! \widetilde{x}|^2\!+|D_t T^{J^{\prime}}\! \sigma|^2\!+| T^{J^{\prime}} \widetilde{\pa} \sigma|^2\!
 + |\pave T^{J'} g|^2 \!+ |\pave T^{J'} \Gamma|^2\, \kappa dy
 ds',
\end{multline}
and for $|J| = r-1$,
\begin{multline}\label{eq:waveequationenergytimeJrel}
\mathcal{W}^{J,1{}_{\!}}(s)\lesssim C_0{\sum}_{|K^\prime|\leq |J|-1} \mathcal{W}^{K^\prime\!{}_{\!},2{}_{\!}}(s)
\\
+C_K\!\!\!\sum_{|{J^{\prime}}|\leq |J|}\int_0^s\!\!\int_{\D_{s'}}
 |\widetilde{\pa} T^{J{}^{\prime}\!} V|^2\!
 +|\widetilde{\pa} T^{J^{\prime}\!} \widetilde{V}|^2\!
 +|\widetilde{\pa} T
 ^{J^{\prime}}\! \widetilde{x}|^2\!+|D_t T^{J^{\prime}}\! \sigma|^2\!+| T^{J^{\prime}} \widetilde{\pa} \sigma|^2\!
 + |\pave T^{J'} \wg|^2 \!+ |\pave T^{J'} \wGamma|^2\, \kappa dy ds'.
\end{multline}
The estimate with an additional half-derivative follows from
Proposition \ref{eq:thewholecasedirichletrel} and reads
\begin{multline}\label{eq:pahalfTKparel0}
\vertiii{\widetilde{\pa} \fdh  T^K \widetilde{\pa} \sigma(s)}_{L^2(\Omega)}
\\
\lesssim C_K  {\sum}_{|J|\leq |K|+1}\|T^J \tr_{\oH}\pave^2 \sigma\|_{L^2(\Omega)}
+\|T^J \hD_s \pave \sigma\|_{L^2(\Omega)}
+ C_K{\sum}_{|J|\leq |K|+1}
\|\widetilde{\pa}\fdh  T^{J} \widetilde{x}(s)\|_{L^2(\Omega)}.
\end{multline}

We now want to control the term $T^J \tr_{\oH}\pave^2 \sigma$ on the
right-hand side and the idea is to relate $\tr_{\oH} \pave^2$
to $g^{\mu\nu}\pave_\mu\pave_\nu$ since we have an
equation for $g^{\mu\nu}\pave_\mu\pave_\nu\sigma$ involving lower-order
terms and material derivatives.
To get simpler notation we let $\wTau$ denote the unit future-directed
timelike normal to the surfaces of constant $s$ defined relative
to the metric $\wg$,
\begin{equation}
 \wTau^\mu = \wg^{\mu\nu} \pave_\nu s/ (-\wg(\widetilde{\nabla} s,\widetilde{\nabla} s))^{1/2}
 \qquad
 \widetilde{V}_{\wsubTau} = \wg(\widetilde{V}, \wsubTau).
 \label{}
\end{equation}

We recall the decomposition from \eqref{waveexpression}
which in this setting reads
\begin{equation}
  \wg^{\mu\nu}\pave_\mu\pave_\nu\sigma =
  -\frac{1}{(\widetilde{V}_{\wsubTau})^2}D_s^2\sigma + \oH^{\mu\nu}\pave_\mu\pave_\nu \sigma
  +2D_s\pave_{W} \sigma + L^\mu\pave_\mu \sigma, \qquad
  \text{ where }
  L^\mu =
 -\frac{1}{(\widetilde{V}_{\wsubTau})^2}
 +
 (\hD_s \widetilde{V}^\mu)
 \hD_s W^\mu,
\end{equation}
where here
\begin{equation}
 W^\mu = \frac{1}{\widetilde{V}_{\wsubTau}} \sg^{\mu\nu} \widetilde{V}_\nu,
 \qquad
 \pave_W = W^\mu \pave_\mu,
 \label{}
\end{equation}

It follows that
\begin{equation}
 T^J( \oH^{\mu\nu} \pave_\mu\pave_\nu \sigma)
 =T^J \widetilde{\nabla}_\mu \widetilde{\nabla}^\mu \sigma
 +
 \frac{1}{(\wTau_\mu \widetilde{V}^\mu)^2} T^J
 \hD_s^2 \sigma
 - 2D_s T^J\pave_{W} \sigma
 + B^J,
 %\label{}
\end{equation}
where $B^J$ is lower order,
\begin{multline}
 B^J =
 \sum_{J_1 + J_2 = J, |J_2| \leq |J|}
 T^{J_1}\Gamma^\mu_{\mu\alpha} T^{J_2} \widetilde{\nabla}_\alpha \sigma\\
 -\sum_{J_1 + J_2 = J, |J_2| \leq |J|-1}
 T^{J_1} \left((\widetilde{V}_{\wsubTau})^{-2}\right)
T^{J_2}\hD_s \sigma
+\sum_{J_1 + J_2 = J, |J_2| < |J|}
(T^{J_1} L^\mu) T^{J_1}\pave_\mu \sigma
- L^\mu T^J\pave_\mu \sigma.
 %label{}
\end{multline}
Writing \eqref{sigmawavesetup1} in the form
\begin{equation}
 \widetilde{\nabla}_\mu\widetilde{\nabla}^\mu \sigma
 = 2 e'(\sigma) \hD_s^2\sigma - \widetilde{\nabla}_\mu \widetilde{V}^\nu
 \widetilde{\nabla}_\nu \widetilde{V}^\mu
 -
2\widetilde{R}_{\mu\nu\alpha}^\mu \widetilde{V}^\nu V^\alpha
 + e''(\sigma) (\hD_s\sigma)^2,
\end{equation}
we find the identity
\begin{equation}
 T^J (\oH^{\mu\nu} \pave_\mu\pave_\nu \sigma)
 = \! 2 T^J (e'(\sigma) \hD_s^2 \sigma)
\!-\!
 \frac{1}{(\wTau_\mu \widetilde{V}^\mu)^2} T^J
 \hD_s^2 \sigma
 - 2D_s T^J\pave_{W} \sigma
 + B^{\prime J},
 % \label{}
\end{equation}
where $B^{\prime J}$ is lower order,
\begin{equation}
 B^{\prime J} = B^J
 - T^J\left(\widetilde{\nabla}_\mu \widetilde{V}^\nu
 \widetilde{\nabla}_\nu \widetilde{V}^\mu\right)
 -
2T^J\left(\widetilde{R}_{\mu\nu\alpha}^\mu \widetilde{V}^\nu V^\alpha\right)
 + T^J\left(e''(\sigma) (\hD_s\sigma)^2\right).
 %\label{}
\end{equation}
and using that $\mathcal{W}^{J,1}$ controls material derivatives, we therefore
have
\begin{multline}\label{eq:pahalfTKparel}
\vertiii{\widetilde{\pa} \fdh  T^K \widetilde{\pa} \sigma(s)}_{L^2(\Omega)}
\lesssim C_K  {\sum}_{|J|\leq |K|+1}\mathcal{W}^{J,1{}_{\!}}(s)
+ C_K{\sum}_{|J|\leq |K|+1}
\|\widetilde{\pa}\fdh  T^{J} \widetilde{x}(s)\|_{L^2(\Omega)}\\
+ C_K {\sum}_{|J|\leq |K|+1}
 \|\widetilde{\pa} T^{J} V(s)\|_{L^2(\Omega)}
 +\|\widetilde{\pa} T^{J} \widetilde{V}(s)\|_{L^2(\Omega)}
 +\|\widetilde{\pa} T^{J} \widetilde{x}(s)\|_{L^2(\Omega)}\\
 + C_K {\sum}_{|J| \leq |K|} \|\fdh T^K \pave g(s)\|_{L^2(\Omega)} + \|\fdh T^K \pave \Gamma(s)\|_{L^2(\Omega)}.
\end{multline}

\subsection{The divergence estimates for the relativistic velocity and coordinate}
\label{reldivergeneestimatesection}

From \eqref{eq:GIequationrel} we have
\begin{equation}
 D^J = \pave_\mu T^J V^\mu
 +e'(\sigma) \hD_s T^J \sigma
 - \pave_\mu T^J \xve^\nu \pave_\nu V^\mu = G^J.
 \label{DJident}
\end{equation}
where $G^J$ is lower order, see \eqref{eq:GIestimaterel}.

\subsubsection{The improved half derivative divergence estimates used to estimate the coordinates}

In order to get the improved half-derivative estimate for the coordinate
the idea is to use the elliptic estimate from Lemma \ref{prop:divcurlL2rel}.
This is slightly different from what we encountered in the Newtonian case
since we need to write the spacetime divergence in terms of the metric
$\oH$ defined in \eqref{Gdef}. By the decomposition formula \eqref{Gdecomp}
there is a simple relationship between the spacetime divergence, the
divergence with respect to the Riemannian metric $\oH$ and the material
derivatives $\hD_s$ which are easier to control.

We first write \eqref{DJident} in the form
\begin{equation}
 \hD_s \big( e'(\sigma) T^J \sigma + \pave_\mu
 T^J x^\mu\big)
 = \pave_\mu T^J \xve^\nu \pave_\nu V^\mu
 - \pave_\mu T^J x^\nu \pave_\nu \widetilde{V}^\mu + G^J.
 %\label{DJident1}
\end{equation}
In terms of the quantity
\begin{equation}
 X^J_\mu = \wg_{\mu\nu} T^J x^\nu,
% \label{}
\end{equation}
this says
\begin{equation}
  \hD_s \big( e'(\sigma) T^J \sigma + \wg^{\mu\nu}\pave_\mu X^J_\nu\big)
  = \pave_\mu T^J \xve^\nu \pave_\nu V^\mu
  - \pave_\mu T^J x^\nu \pave_\nu \widetilde{V}^\mu + G^{\prime J},
 %\label{DJident2}
\end{equation}
where
\begin{equation}
 G^{\prime J} = G^J  + (\hD_s \wg^{\mu\nu}) \pave_\mu X^J_\nu
 + \hD_s\big( (\pave_\mu \wg^{\mu\nu}) X_\nu^J\big).
 %\label{}
\end{equation}
Recall the decomposition of the divergence in terms of $\oH$ and components
parallel to $\widetilde{V}$ from \eqref{divexpression}
\begin{equation}
  \wg^{\mu\nu}\pave_\mu X^J_\nu =
  \oH^{\mu\nu}\pave_\mu X^J_\nu
  +\Big(
  W^\mu -\frac{\widetilde{V}^\mu}{{\widetilde{V}_{\wsubTau}\,}^{\!\!\!2}}\Big)
  \hD_s X_\mu^J
  + \Omega^{\mu\nu}\widetilde{\curl} X^J_{\mu\nu},
  \quad \text{ where } \Omega^{\mu\nu} =
 \frac{1}{2\widetilde{V}_{\wsubTau}}
  \left(\widetilde{V}^\mu W^\nu + \widetilde{V}^\nu W^\mu\right),
% \label{}
\end{equation}
and where $W^\mu = \frac{1}{\widetilde{V}_{\wsubTau}} \sg^{\mu\nu}\widetilde{V}_\nu$.
Using that $D_s X^J_\mu = \wg_{\mu\nu} T^J V^\nu + \hD_s \wg_{\mu\nu}T^J x^\nu$
we find
\begin{multline}
 \hD_s \left( e'(\sigma) T^J\! \sigma +
\oH^{\mu\nu}\pave_\mu X^J_\nu\! + A^\mu T^J V_\mu +(\hD_s \wg_{\mu\nu})A^\mu T^J\! x^\nu\!
+
\Omega^{\mu\nu} \widetilde{\curl} X^J_{\mu\nu}\right)\\
  = \pave_\mu T^J \xve^\nu \pave_\nu V^\mu\!
  - \pave_\mu T^J x^\nu \pave_\nu \widetilde{V}^\mu + G^{\prime J}\!\!,
% \label{}
\end{multline}
with $A^\mu =
W^\mu -{\widetilde{V}^\mu}/{{\widetilde{V}_{\wsubTau}\,}^{\!\!\!2}}$.

If we define
\begin{equation}
  D^{J, \nicefrac{1}{2}}_\ve\!\!
  = \fdh S_\ve (\oH^{\mu\nu}\pave_\mu X^J_\nu)+
  e'(\sigma) \fdh S_\ve T^J\! \sigma
   + \fdh S_\ve\!\big( A^\mu T^J V_\mu\!+\hD_s \wg_{\mu\nu}\,A^\mu T^J\! x^\nu\!\!
   +
   \Omega^{\mu\nu} \widetilde{\curl} X^J_{\mu\nu}\big),
  \label{DJepsdefrel}
\end{equation}
then arguing as in section \ref{sec:improvediv} to control the
error terms, we have
\begin{multline}
 \|\hD_s D^{J,\nicefrac{1}{2}}_\ve\|_{L^2}\\
 \lesssim
 C_2 \|\pave T^J \fdh S_\ve x\|_{L^2}
 + C_J {\sum}_{|J'| \leq |J|}
 \|\pave T^{J'}V\|_{L^2} +
 \|\pave T^{J'}x\|_{L^2}
 + C_J {\sum}_{|J'|\leq |J|}
 \|\fdh T^{J'}\wGamma\|_{L^2}
 %\label{}
\end{multline}
with $L^2 = L^2(\Omega)$,
and
\begin{multline}
 \|\oH^{\mu\nu} \pave_\mu (\fdh S_\ve X^J_\nu)
 -\Omega^{\mu\nu} \fdh  \widetilde{\curl} S_\ve X^J_{\mu\nu}
 - D^{J,\nicefrac{1}{2}}_{\ve} \|_{L^2}\\
 \lesssim
 C_J\!\!\!\!\! \sum_{|J'| \leq |J|}\!\!\!
 \|T^{J'}\!\pave \sigma\|_{L^2}
 +
 \|T^{J'}\!\pave x\|_{L^2}
 +
 \|T^{J'}\!\pave V\|_{L^2}
 +
 C_J \!\!\!\!\!\sum_{|J'|\leq |J|}\!\!\!
 \|\fdh T^{J'}\wGamma\|_{L^2}
 + \|\fdh T^{J'}\! \wg\|_{L^2}.
 \label{Dvediffrelbd}
\end{multline}
The point of this estimate is that the quantity
$\oH^{\mu\nu} \pave_\mu (\fdh S_\ve X^J_\nu)$ appears on the
right-hand side of the elliptic estimate \eqref{ftang1G} applied to $X =
\fdh S_\ve X^J$. We will also separately control the term $\widetilde{\curl}
S_\ve X^J$ in the next section so the previous two bounds control
$\oH^{\mu\nu} \pave_\mu (\fdh S_\ve X^J_\nu)$.

\subsection{The curl estimates for the relativistic
velocity and coordinates}
\subsubsection{The curl estimates used to estimate $V$}

Multiplying both sides of \eqref{rescaledreleul2smoothed} by $\wg_{\mu\nu}$
and then applying $T^J$, we have
\begin{equation}
 \wg_{\mu\nu} \hD_s  T^J V^\nu  = -\frac{1}{2}T^J\pave_\mu \sigma
- \wg_{\mu\nu} (T^J \wGamma^\nu_{\alpha\beta}) \widetilde{V}^\alpha V^\beta - R_\mu^J,
% \label{}
\end{equation}
where $R_\mu^J$ is given by
\begin{equation}
 R_\mu^J =
\sum_{\substack{J' + J_1 + J_2 + J_3 = J,\\ |J'| < |J|}}
 (T^{J'} \wGamma^\nu_{\alpha\beta})
 (T^{J_1}\wg_{\mu\nu})
 (T^{J_1}\widetilde{V}^\alpha)
 (T^{J_2} V^\beta)
 + \sum_{\substack{J_1 + J_2 = J,\\ |J_2| < |J|}} (T^{J_1} \wg_{\mu\nu})( \hD_s T^{J_2}V^\nu).
% \label{}
\end{equation}
By the symmetry of the Christoffel symbols we have that
$\wg_{\mu\mu'} \pave_\nu T^J\wGamma^{\mu'}_{\alpha\beta}
 -
 \wg_{\nu\nu'} \pave_\mu T^J\wGamma^{\nu'}_{\alpha\beta}$ is lower-order
and
it follows that
 \begin{equation}
  \pave_{\mu} \hD_s( \wg_{\nu\nu'} T^J V^{\nu'}) -
  \pave_\nu \hD_s (\wg_{\mu\mu'}T^J V^{\mu'})
  = A_{\mu\nu}^J
  %\label{}
 \end{equation}
 where $A_{\mu\nu}^J =
 \pave_\mu T^J \pave_\nu\sigma -
 \pave_\nu T^J \pave_\mu \sigma + \pave_\mu R^J_\nu - \pave_\nu R^J_\nu
 +\wg_{\mu\mu'} \pave_\nu T^J\wGamma^{\mu'}_{\alpha\beta}
  -
  \wg_{\nu\nu'} \pave_\mu T^J\wGamma^{\nu'}_{\alpha\beta}$ is lower order,
 \begin{equation}
  % \label{antisymmest2rel}
  |A^J_{\mu\nu}|
  \lesssim
  c_0 |\pave T^J \xve| + c_J\!\!\!\!\!\!
  \sum_{|K| \leq |J|-1}\!\!\!\!\!
  |\pave T^K\pave \sigma| +
  \sum_{S \in \S}
  |ST^K \pave \sigma| + |\pave ST^K \sg|+ |\pave T^K V| + |\pave T^K \widetilde{V}| + |\pave T^K \xve|
  +|\pave T^K \wGamma|.
 \end{equation}
 Following the same steps as in section
 \ref{sec:curlestim} we find that there are
 linear forms $L^1_{\mu\nu}[\pave T^J \xve],
 L^2_{\mu\nu}[\pave T^J \xve]$ so that defining
 \begin{equation}
  K_{\mu\nu}^J = \pave_\mu (g_{\nu\nu'} T^J V^{\nu'}) - \pave_\nu
  (g_{\mu\mu'} T^J V^{\mu'})
  +L_{\mu\nu}^1[\pave T^J x],
  \label{eq:modifiedcurlrel}
 \end{equation}
 we have
 \begin{equation}
  \hD_s^2 K^J_{\mu\nu} = L_{\mu\nu}^2[\pave T^J x] -
  A^J_{\mu\nu}.
  %\label{}
 \end{equation}
 Further, there is a linear form $L^3_{\mu\nu}[\pave T^J x]$
 so that
 \begin{equation}
  \hD_s (\widetilde{\curl} T^Jx)_{\mu\nu}
  = K_{\mu\nu}^J + L_{\mu\nu}^3[\pave T^J x].
 % \label{}
 \end{equation}
 Here,
 \begin{equation}
  (\widetilde{\curl} T^J x)_{\mu\nu}
  = \pave_\mu (g_{\nu\nu'} T^J x^{\nu'})
  - \pave_\mu (g_{\mu\mu'} T^J x^{\mu'}).
  \label{curlTJx}
 \end{equation}

\subsection{The improved half derivative curl
estimates used to estimate the coordinates}
The argument in section \ref{sec:improvedcurl} also
goes through in the relativistic case with only superficial
changes. The result is that with
\begin{equation}
 K_{\mu\nu,\ve}^{J,\nicefrac{1}{2}}
 = \widetilde{\curl} (T^J \fdh S_\ve V)_{\mu\nu}
 + L_{\mu\nu}^1[\pave \fdh T^J S_\ve x],
% \label{}
\end{equation}
we have
\begin{equation}
 \hD_s K_{\mu\nu,\ve}^{J,\nicefrac{1}{2}}
 = L_{\mu\nu}^2[\pave \fdh T^J S_\ve x] - A_{\mu\nu,\ve}^{J, \nicefrac{1}{2}},
 %\label{}
\end{equation}
where
\begin{equation}
 A_{\mu\nu,\ve}^{J,\nicefrac{1}{2}}
 = \pave_\mu \fdh S_\ve T^J \pave_\nu \sigma
 -\pave_\nu \fdh S_\ve T^J \pave_\mu \sigma,
% \label{}
\end{equation}
is lower-order,
\begin{equation}
 \|A_{\mu\nu,\ve}^{J, \nicefrac{1}{2}}\|_{L^2(\Omega)}
 \lesssim
 C_0\|\pave \fdh S_\ve T^J \xve\|_{L^2(\Omega)}
 + C_J\!\!\!\! \sum_{|K| \leq |J|-1}\!\!\!\!
 \|\pave \fdh T^K \pave \sigma\|_{L^2(\Omega)}
 + \|\pave \fdh S_\ve T^K \xve\|_{L^2(\Omega)}.
 %\label{}
\end{equation}
Moreover
\begin{equation}\label{eq:curlequation2rel}
\hD_s{}_{\,}\widetilde{\curl}\big( {}_{\!} \fdh T^J \!\sm x\big)_{\!\mu\nu}\!
=K_{\mu\nu,\ve}^{J,\nicefrac{1}{2}}+L^3_{\mu\nu}
[\widetilde{\pa}\fdh T^J \!\sm x].
\end{equation}
with notation as in \eqref{curlTJx}.

Note also that by
Lemma \ref{lem:mutiplicationsmoothingcommute}, Lemma \ref{lem:gradientsmoothingcommute} and Lemma \ref{lem:halfderleibnitz}
\begin{equation}
\bigtwo\| \widetilde{\curl}\big(  T^J \!\fdh\sm V\big)-\fdh\sm{}_{\,}\widetilde{\curl}\big(  T^J  V\big)\bigtwo\|_{L^2(\Omega)}
\lesssim C_0\| \widetilde{\pa} T^{J}V\|_{L^2(\Omega)}.
\end{equation}

\subsubsection{The improved half derivative curl estimates used to estimate the coordinates}
We need to commute \eqref{rescaledreleul2smoothed} with $\sm$ and with $\fdh$. We have
\begin{equation}
\hD_{s\,}\fdh \sm  T^J  V^\mu =-\fdh \sm  T^J \widetilde{\nabla}^\mu \sigma,
\end{equation}
and hence
\begin{equation}
 \widetilde{\curl}\bigtwo(\!  \hD_s \fdh \sm  T^J  V{}_{\!}\bigtwo)_{\mu\nu}= -A_{\mu\nu,\ve}^{J, \nicefrac{1}{2}},
\end{equation}
where
\begin{equation}
2 A_{\mu\nu,\ve}^{J, \nicefrac{1}{2}}
=\widetilde{\pa}_\mu\left( \wg_{\nu\nu'} \fdh \sm  T^J \widetilde{\nabla}^{\nu'} \sigma\right)
-\widetilde{\pa}_\nu \left( \wg_{\mu\nu'} \fdh \sm  T^J \widetilde{\nabla}^{\mu'} \sigma\right).
\end{equation}
With
\begin{equation}\label{eq:curlhalfrel}
K_{\mu\nu,\ve}^{J,\nicefrac{1}{2}}=\widetilde{\curl}\big(T^J \fdh \sm \, V\big)_{\!\mu\nu}\!+L_{\mu\nu}^1\big[\widetilde{\pa}\fdh T^J\!\sm   x\big],
\end{equation}
we have
\begin{equation}\label{eq:curlhalfestrel}
\hD_s K_{\mu\nu,\ve}^{J,\nicefrac{1}{2}}
=L_{\mu\nu}^2\big[\widetilde{\pa} \fdh T^J\! \sm x\big]-A_{\mu\nu,\varepsilon}^{J,\nicefrac{1}{2}}.
\end{equation}
Here
$ A_{\mu\nu,\varepsilon}^{J,\nicefrac{1}{2}}$ is lower order,
\begin{equation}
 A_{\mu\nu,\varepsilon}^{J,\nicefrac{1}{2}}=\fdh \sm  A_{\mu\nu}^{J}+
 \frac{1}{2}\big[\widetilde{\pa}_\mu,\fdh \sm\big]T^J\wg_{\nu\nu'}\widetilde{\nabla}^{\nu'}\sigma
 -\frac{1}{2}\big[\widetilde{\pa}_\nu,\fdh \sm\big]T^J
 \wg_{\mu\mu'}\widetilde{\nabla}^{\mu'}\sigma .
\end{equation}
Arguing as in section \ref{sec:improvedcurl} we find
\begin{equation}%\label{eq:anitsymmetricestimatehalfrel}
\|A_{\mu\nu,\varepsilon}^{J,\nicefrac{1}{2}}\|_{L^2(\Omega)}\lesssim C_0 \|\widetilde{\pa}\fdh \sm T^J \widetilde{x}\|_{L^2(\Omega)}+ C_J\!\!\!\!\sum_{|K|\leq |J|-1}\!\!\!\!
\|\widetilde{\pa}\fdh T^{K} {\widetilde{\pa}} \sigma\|_{L^2(\Omega)}+
\|\widetilde{\pa}\fdh \sm T^K\widetilde{x}\|_{L^2(\Omega)}.
\end{equation}

Moreover
\begin{equation}\label{eq:curlequation2rel2}
\hD_s{}_{\,}\widetilde{\curl}\big( {}_{\!} \fdh T^J \!\sm x\big)_{\!\mu\nu}\!
=K_{\mu\nu,\ve}^{J,\nicefrac{1}{2}}+L^3_{\mu\nu}[\widetilde{\pa}\fdh T^J \!\sm x].
\end{equation}
Here,
\begin{equation}
 \widetilde{\curl}\big( {}_{\!} \fdh T^J \!\sm x\big)_{\!\mu\nu}
= \pave_\mu (\wg_{\nu\nu'}\fdh T^J \!\sm x^{\nu'})-
 \pave_\nu (\wg_{\mu\mu'}\fdh T^J \!\sm x^{\mu'}).
% \label{}
\end{equation}

Note also that by
Lemma \ref{lem:mutiplicationsmoothingcommute}, Lemma \ref{lem:gradientsmoothingcommute} and Lemma \ref{lem:halfderleibnitz}
\begin{equation}
\bigtwo\| \widetilde{\curl}\big(  T^J \!\fdh\sm V\big)-\fdh\sm{}_{\,}\widetilde{\curl}\big(  T^J  V\big)\bigtwo\|_{L^2(\Omega)}
\lesssim C_0\| \widetilde{\pa} T^{J}V \|_{L^2(\Omega)}.
\end{equation}

\subsection{The elliptic estimates}
\label{ellipticestsecrel}
\subsubsection{The elliptic estimate for the velocity}
By Lemma \ref{pwHlemma},
\begin{equation}
 |\pave T^J V| \lesssim
 |\widetilde{\div} T^J V| + |\widetilde{\curl} T^J V|
 + {\sum}_{S \in \S} |S T^J V|,
 \label{}
\end{equation}
and so with $D^J$ defined as in
\eqref{DJident}
and $K^J$ as in \eqref{eq:modifiedcurlrel}, we
have
\begin{equation}
 |\pave T^J V| \lesssim |D^J| + |K^J|
 + c_0 \big( |\pave T^J \xve| + |\pave T^J x|\big)
 + |\hD_s T^J \sigma|
 + {\sum}_{S \in \S}
 |ST^J V|.
% \label{}
\end{equation}
From the formula \eqref{DJident} $D^J$ is lower order,
\begin{equation}
 |D^J| \lesssim
 c_J {\sum}_{|K| \leq |J|-1} |\pave T^K\xve|
 + |\pave T^K V|
 + |T^K \wGamma|
% \label{}
\end{equation}
where $c_J$ is a constant depending on
$|\pave T^L\xve| + |\pave T^L V| + |T^K\wGamma|$ for $|L| \leq |J|/2$.
Therefore
\begin{equation}
 |\pave T^J V| \lesssim |K^J|
 +c_0\big(|\pave T^J \xve| + |\pave T^J x|\big)
 + |\hD_s T^J \sigma|
 + \sum_{S \in \S}
 |ST^J V|
 + c_J\sum_{|K| \leq |J|-1} |\pave T^K\xve|
 + |\pave T^K V|
 +
 |T^K\wGamma|.
 %\label{}
\end{equation}

\subsection{The elliptic estimate for the enthalpy}
From \eqref{eq:paTpah2rel} we have
\begin{equation}
 \sum_{|K| \leq r}
 |\pave T^K \pave \sigma| \lesssim c_r
 \sum_{|K| \leq r}
 \bigg(|\hD_s^2 T^K \sigma|
 + |\pave T^{K}\wg| + |\pave T^K \wGamma|
  + \sum_{S \in \S}
  |S T^K \pave \sigma|
  + |\pave T^K V| + |\pave T^K \widetilde{V}|
  + |\pave T^K \xve|\bigg).
 %\label{}
\end{equation}

\subsection{The additional elliptic estimate for the
smoothed coordinate $S_\ve x$}
\label{additionalrelxve}
Applying the elliptic estimate from Proposition \ref{prop:divcurlL2rel}, to
$X^{J,\nicefrac{1}{2}}_{\ve,\nu} =
 \wg_{\mu \nu}(T^J \fdh S_\ve x^\mu)$
 and writing
 $X^{J}_{\ve,\nu} = \wg_{\mu\nu} T^J S_\ve x^\nu$,
we find
\begin{multline}
  \sum_{|J| \leq r-1}
  \vertiii{\pave X^{J,\nicefrac{1}{2}}_\ve }_{L^2(\Omega)}^2
  + \sum_{|I| \leq r}
  \vertiii{X^I}_{L^2(\pa \Omega)}^2
  \\
  \leq C_1\!\!
  \sum_{|I| \leq r}\!
   \vertiii{\widetilde{\n} \cdot_{\oH} X^I_\ve}_{L^2(\pa\Omega)}^2\!
   + C_1\!\!\!\!\!
   \sum_{|J| \leq r-1}\!\!\!
   \|\div_{\oH} X^{J,\nicefrac{1}{2}}_\ve\|_{L^2(\Omega)}^2\!
   +\|\curl X^{J,\nicefrac{1}{2}}_\ve\|_{L^2(\Omega)}^2\!
   + \|\pave X^J\|_{L^2(\Omega)}^2.
 \label{usefracestirel}
\end{multline}
Recall that $\widetilde{n}$ denotes the spacelike unit conormal to $\Omega$
at constant $s$ and $\widetilde{\n} \cdot_{\oH} X^I_\ve
= \oH^{\mu\nu} \widetilde{n}_\mu X^I_{\ve,\nu}$. We are also writing
$\div_{\oH}$ for the divergence with respect
to the Riemannian metric $\oH$ (see \eqref{divgdef}). In the upcoming sections we will
use evolution equations
to control the term involving the divergence and the curl on the right-hand side
of \eqref{usefracestirel}.
In the Newtonian case (see section \ref{additionalxve}) the boundary
term we encountered when using the corresponding estimate was already
directly controlled by the energy. However in this case the
two boundary terms are different and so we must show
that the boundary term in \eqref{usefracestirel}
is related to the boundary term in the definition of the energy.

Before doing this we note that \eqref{usefracestirel} in fact implies
a bound for all components of all derivatives of $X^{J,\nicefrac{1}{2}}$ in the interior
provided we also control $\fdh T^J V$,
\begin{multline}
\sum_{|J| \leq r-1}
  \|\pave X^{J,\nicefrac{1}{2}}_\ve\|_{L^2(\Omega)}^2
  \leq C_1
  \sum_{|I| \leq r}
   \vertiii{\widetilde{\n} \cdot_{\oH} X^I_\ve}_{L^2(\pa\Omega)}^2
   \\
   +C_1\!\!\!\!\!
   \sum_{|J| \leq r-1}\!\!\!
   \|\div_{\oH}X^{J,\nicefrac{1}{2}}_\ve\|_{L^2(\Omega)}^2
   +\|\curl X^{J,\nicefrac{1}{2}}_\ve\|_{L^2(\Omega)}^2
   + \|\pave X^J_\ve\|_{L^2(\Omega)}^2
   +\|\fdh T^JS_\ve V\|_{L^2(\Omega)}^2\\
   + C_{r-1}\!\! \sum_{|I| \leq r}
   \|\fdh T^I \wg \|_{L^2(\Omega)}^2.
 \label{usefracestirel2}
\end{multline}
This follows because the only terms missing from the left-hand side of
\eqref{usefracestirel} can be controlled if we control the components along
$\widetilde{V}$ and the curl. Using the decomposition \eqref{Gdecomp} to decompose
$H$ into components parallel to $\widetilde{V}$
and components in the image of
$\oH^{\mu}_\nu = \wg_{\mu\nu} \oH^{\nu'\nu}$,
we find
\begin{multline}
  \int_\Omega H^{\mu\nu} H^{\alpha\beta}
  \pave_\mu X^{J,\nicefrac{1}{2}}_{\ve, \alpha}
  \pave_\nu X^{J,\nicefrac{1}{2}}_{\ve, \beta}\, \kappa_G dy
  \\
 \lesssim
 \int_\Omega \widetilde{V}^\mu \widetilde{V}^\nu \widetilde{V}^\alpha
 \widetilde{V}^{\beta}
 \pave_\mu X^{J,\nicefrac{1}{2}}_{\ve, \alpha}
 \pave_\nu X^{J,\nicefrac{1}{2}}_{\ve,\beta}\, \kappa_G dy
 +\int_\Omega \oH^{\mu\nu} \oH^{\alpha\beta}
 \pave_\mu X^{J,\nicefrac{1}{2}}_{\ve,\alpha}
 \pave_\nu X^{J,\nicefrac{1}{2}}_{\ve,\beta}\, \kappa_G dy\\
 +\int_\Omega H^{\mu\nu}H^{\alpha\beta}
 \curl X^{J,\nicefrac{1}{2}}_{\ve, \mu \alpha}
 \curl X^{J,\nicefrac{1}{2}}_{\ve, \nu \beta}
 \, \kappa_G dy,
% \label{}
\end{multline}
and since $\widetilde{V}^\mu\pave_\mu X^{J,\nicefrac{1}{2}}_{\ve,\nu} =
\hD_s X^{J,\nicefrac{1}{2}}_{\ve,\nu} = \hD_s( \wg_{\mu\nu} T^JS_\ve x^\mu)$
and $\hD_S S_\ve x = S_\ve V$ we control the first term on the
right-hand side here by \eqref{usefracestirel2}.

To control the boundary term from \eqref{usefracestirel},
we start by writing it in terms of
boundary term appearing in the energy estimate \eqref{eq:energydefrel}
Let $q:[0,S]\times \Omega \to \R$ be any function satisfying
$q(s,y) < 0$ whenever $y$ is close to $\pa \Omega$ and with
$q(s,y) = 0$ whenever $y \in \pa\Omega$. Then
the conormal to the spacetime surface $[0,S]\times \pa\Omega$ is parallel
to $\pave_\mu q$. Also, for each
fixed value of $s = s'$, the conormal $\widetilde{\n}$ to the surface $
\{s = s'\}\cap \pa \Omega$ is parallel to $P^\nu_\mu \pave_\nu q$
where $P$ is the projection to the tangent space of $\{s = s'\}\cap
\pa \Omega$, given by $P_\mu^\nu = \delta_\mu^\nu +
\wTau^\nu \wTau_\mu$. In particular, we note that
\begin{equation}
 G^{\mu\nu} \pave_\mu q
  = \sg^{\mu\nu}\pa_\mu q -
  \frac{ \sg^{\mu'\nu} \widetilde{V}_{\mu'}}{(\widetilde{V}^\mu \wTau_\mu)^2}
  \sg^{\mu\nu'} \widetilde{V}_{\nu'}
  \pa_\mu q
  = G^{\mu\nu} P_{\mu}^{\mu'} \pave_{\mu'}q,
% \label{}
\end{equation}
so the component of $\pave q$ parallel to $\wTau$ drops out of
$G^{\mu\nu}\pave_\mu q$ and it follows that
\begin{equation}
  G^{\mu\nu} \widetilde{\n}_\mu X^I_{\ve,\nu} = \lambda G^{\mu\nu} \widetilde{\N}_\mu
  X^I_{\ve,\nu},
 %\label{}
\end{equation}
for a function $\lambda$. Now we decompose $G$ in terms of $g$ and $V$ using
the formula \eqref{Gdecomp}, which gives
\begin{multline}
 \oH^{\mu\nu} \widetilde{\N}_\mu X^I_{\ve,\nu}\\
 = \widetilde{\N}_\mu T^I S_\ve x^\mu
 +\frac{1}{(\wTau_\mu \widetilde{V}^\mu)^2}\widetilde{V}^\mu \widetilde{V}_\nu
 \widetilde{\N}_\mu T^I S_\ve x^\nu
 - \frac{1}{\wTau_\mu \widetilde{V}^\mu}
 \left( (\sg_{\nu'\nu} \widetilde{V}^{\nu'})\widetilde{V}^\mu
 + (P_{\mu'}^\mu \widetilde{V}^{\mu'})\widetilde{V}_\nu \right)\widetilde{\N}_\mu T^I S_\ve x^\nu.
 \\
 =\widetilde{\N}_\mu T^I S_\ve x^\mu
 - \frac{P_{\mu'}^\mu \widetilde{V}^{\mu'}\widetilde{\N}_\mu}{\wTau_\mu \widetilde{V}^\mu}
 \left(\widetilde{V}_\nu T^I S_\ve x^\nu\right),
 %\label{}
\end{multline}
where we used the boundary condition $\widetilde{\N}_\mu \widetilde{V}^\mu =0$.
The first term is what appears in the boundary term in the definition of the energy.
To control the second term the idea is to first control it by an interior term
and then to use that we control all components of the curl.
In what follows we can assume at least one of the vector fields $T^I$ appearing
in the definition of $X^J_{\ve,\nu}$ is spatial, $T^I = S T^{J}$ since
otherwise we can just use that $\hD_s x = V$ and we get a simpler estimate.
By Stokes' theorem and
$G^{\mu\nu}\widetilde{n}_\mu\widetilde{n}_\nu = 1$ on $\pa\Omega$,
\begin{multline}
 \int_{\pa \Omega}\left(S \widetilde{V}_\nu T^J S_\ve x^\nu\right)^2\, dS\\
 = 2\int_\Omega  (S \widetilde{V}_\nu  T^J S_\ve x^\nu)G^{\mu\alpha}\widetilde{n}^e_\alpha \pave_\mu
 (S \widetilde{V}_\nu  T^J S_\ve x^\nu)\, \kappa_G dy
 + \int_\Omega (S \widetilde{V}_\nu T^JS_\ve x^\nu)^2
 \pave_\mu (G^{\mu\alpha} \widetilde{\n}^e_\alpha \kappa_G)\, dy.
%\label{}
\end{multline}
Here, $\widetilde{n}^e$ denotes an arbitrary extension of $\widetilde{n}$
to a neighborhood of $\pa \Omega$. The second term is lower-order and for the
first we write
\begin{equation}
  G^{\mu\alpha}\widetilde{n}^e_\alpha \pave_\mu
  (S \widetilde{V}_\nu  T^J S_\ve x^\nu)
  =S \left( G^{\mu\alpha} \widetilde{n}^e_\alpha \pave_\mu (
  \widetilde{V}_\nu T^J S_\ve x^\nu)\right)
  + R^J.
% \label{}
\end{equation}
where $R^J = [G^{\mu\alpha}\widetilde{n}^e_\alpha \pave_\mu, S]
(\widetilde{V}_\nu T^J S_\ve x^\nu)$  is lower order,
\begin{equation}
 |R^J| \lesssim c_1{\sum}_{|K| \leq |J|}
 |\pave T^K S_\ve x|.
% \label{}
\end{equation}

Now we note that from \eqref{fracalg} and the Leibniz rule \eqref{leib3}
\begin{equation}
 \left| \int_\Omega f S g\, \kappa_G dy\right|
 \lesssim
 C_1
 \|\fdh f\|_{L^2(\Omega)}
 \|\fdh g\|_{L^2(\Omega)},
 %\label{}
\end{equation}
for any functions $f,g$ so writing
$\pave_{\widetilde{\n}_G} = G^{\alpha\beta}\widetilde{\n}_\alpha \pave_\beta$
we have
\begin{multline}
 \left|\int_\Omega
 (T \widetilde{V}_\nu  T^J S_\ve x^\nu)
 T \left( \pave_{\widetilde{\n}_G}(
 \widetilde{V}_\nu T^J S_\ve x^\nu)\right) \, \kappa_G dy\right|\\
 \lesssim
 C_1
{ \sum}_{|K| \leq |J|+1} \|\fdh T^K S_\ve x\|_{L^2(\Omega)}
 \left\|\fdh \left(\pave_{\widetilde{\n}_G} (\widetilde{V}_\nu T^JS_\ve x^\nu)\right)\right\|_{L^2(\Omega)}.
 %\label{}
\end{multline}
To deal with the second factor, we write
\begin{equation}
 \pave_\alpha (\widetilde{V}^\nu T^JS_\ve x^\nu)
 = \pave_\alpha (\widetilde{V}^\nu X^J_{\ve,\nu})
 =\widetilde{V}^\nu \pave_\alpha X^J_{\ve,\nu} + (\pave_\alpha \widetilde{V}^\nu)
 X^J_\nu
 = \hD_s X^J_\alpha + \widetilde{V}^\nu \curl X^J_{\ve, \alpha\nu}
 +(\pave_\alpha \widetilde{V}^\nu)
 X^J_{\ve,\nu}.
 \label{}
\end{equation}
We also have that $\fdh \hD_s X^J_\ve$ is lower-order
since $\hD_s x = V$,
\begin{equation}
 \|\fdh \hD_s X^J_\ve\|_{L^2(\Omega)}
 \lesssim C_1
 {\sum}_{|K| \leq |J|}
 \| \fdh T^K S_\ve V\|_{L^2(\Omega)}.
% \label{}
\end{equation}
Combining the above we have shown that
\begin{multline}
 \left| \int_{\pa\Omega} (\widetilde{V}_\nu T^I S_\ve x^\nu)^2\, dS_G\right|
 \lesssim
 \\
 \Big(
 \sum_{|K| \leq |J|+1}\|\fdh T^K S_\ve x\|_{L^2(\Omega)}
 \Big)
 \Big(\sum_{|J| \leq |I|-1}
 \|\fdh \curl T^J S_\ve x\|_{L^2(\Omega)}
 +
 \|\fdh T^J V\|_{L^2(\Omega)}\Big).
 \label{halfderivativebdy}
\end{multline}
Inserting this bound into \eqref{usefracestirel2} and absorbing the first factor in
\eqref{halfderivativebdy} into the left we find
\begin{multline}
 {\sum}_{|J| \leq r-1}\|\pave X^{J,\nicefrac{1}{2}}_\ve\|_{L^2(\Omega)}^2
 +
 {\sum}_{|J| \leq r-1}\|\pave X^{J}_\ve\|_{L^2(\pa \Omega)}^2
 \lesssim
 C_1 {\sum}_{|I| \leq r} \|\widetilde{\N}_\mu T^I S_\ve x^\mu\|_{L^2(\Omega)}\\
 + C_1
   {\sum}_{|J| \leq r\shortminus 1}
   \vertiii{\tr_{\oH}\pave  T^J \!\fdh S_\ve x}_{L^2(\Omega)}^2
   +\| \curl T^J\!\fdh S_\ve x\|_{L^2(\Omega)}^2
   + \|\pave T^J\! S_\ve x\|_{L^2(\Omega)}^2
   + \|T^J\! S_\ve V\|_{L^2(\Omega)}^2.
% \label{}
\end{multline}

\subsection{The combined div-curl evolution system}

The arguments from sections \ref{divcurlsection}-\ref{controll2} now go through
almost exactly as written.
With notation as in \eqref{Hnormdef}
we define
\begin{equation}
 X_{\alpha \mu}^{1,J} =
 \pa_{y^{\alpha}} (g_{\mu\nu} T^J x^\mu), \qquad
 \widetilde{X}_{\alpha \mu}^{1,J} =
 \pa_{y^{\alpha}} (g_{\mu\nu}T^J \xve^\mu).
 %\label{}
\end{equation}
as well as the quantities
\begin{equation}
V^{1,r}\!=\!{\sum}_{|I|\leq r}|\widetilde{\pa} T^{I\!} V|,\qquad
X^{1,r}\!=\!{\sum}_{|I|\leq r}|\widetilde{\pa} T^I \!X|,\qquad
 K^r\!=\!{\sum}_{|I|\leq r}|K^{I\!}|,
\end{equation}
and
\begin{equation}
 V^{r}\!=\!{\sum}_{|I|\leq r}|T^{I\!} V|,\qquad
W^{r}\!=\!{\sum}_{|I|\leq r}|T^I\widetilde{\pa}\sigma |+
|\hD_s T^I \sigma|,\qquad \Sigma^r\!=\!{\sum}_{|I|\leq r}|\widetilde{\pa} T^I\widetilde{\pa} \sigma|.
\end{equation}
For the proof of our energy estimates it is natural
to prove bounds involving Lagrangian tangential derivatives
of the components of the metric and the Christoffel symbols,
but in the proof of existence we will have to consider
these quantities evaluated at different iterates
and for that purpose it is more natural to express
things in terms of Eulerian derivatives of the metric.
In other words for the energy estimates we will have
error terms which involve the quantities
\begin{equation}
 G^{1,r} = \sum_{|I| \leq r}
 \sum_{\mu,\nu,\gamma = 0}^3
 |\pave T^I \wGamma_{\mu\nu}^\gamma|
 + |\pave T^I \wg_{\mu\nu}|
 + G^r,
 \qquad
  G^{r} = \sum_{|I| \leq r}
  \sum_{\mu,\nu,\gamma = 0}^3
  |T^I \wGamma_{\mu\nu}^\gamma|
  + \sum_{\mu,\mu = 0}^3
  +|T^I \wg_{\mu\nu}|.
  \label{Grenergy}
\end{equation}
For the proof of existence it is better to define
\begin{equation}
 \wGamma_{\mu\nu}^{\gamma, I}(s,y)
 = (\pave^I \Gamma_{\mu\nu}^\gamma)(\xve(s,y)),
 \qquad
 \wg_{\mu\nu}^I(s,y) = (\pave^I g_{\mu\nu})(\xve(s,y)),
\end{equation}
and
\begin{equation}
 \widetilde{G}^{1,r} = \sum_{|I| \leq r}
 \sum_{\mu,\nu,\gamma = 0}^3
 |\pave \wGamma_{\mu\nu}^{\gamma, I}|
 + |\pave \wg_{\mu\nu}^I|
 + G^r,
 \qquad
  \widetilde{G}^{r} = \sum_{|I| \leq r}
  \sum_{\mu,\nu,\gamma = 0}^3
  | \wGamma_{\mu\nu}^{\gamma, I}|
  + \sum_{\mu,\mu = 0}^3
  +|\wg_{\mu\nu}^I|.
 \label{}
\end{equation}
By the chain rule we have the following bound, which is needed
for the proof of existence,
\begin{equation}
 G^{1, r}
 \lesssim c_r \widetilde{G}^{1,r} + c_rX^{1,r-1}.
 \label{conversion}
\end{equation}

With $L^p = L^p(\Omega)$, we introduce the quantities
\begin{align}
V^{1,r}_p(s)&=\|V^{1,r}(s,\cdot)\|_{L^p},\quad
&K^r_p(s)=\|K^r(s,\cdot)\|_{L^p},\qquad X^{1,r}_p(s)&=\|X^{1,r}(s,\cdot)\|_{L^p},\\
V^r_p(s)&=\|V^r(s,\cdot)\|_{L^p},\quad
&W^r_p(s)=\|W^r(s,\cdot)\|_{L^p},\qquad
\Sigma^r_p(s)&=\|\Sigma^r(s,\cdot)\|_{L^p}.
\label{normsdef1rel}
\end{align}
and
\begin{equation}
 G_p^{1,r}(s) = \|G^{1,r}(s,\cdot)\|_{L^p},
 \qquad
 G_p^{r}(s) = \|G^r(s,\cdot)\|_{L^p}.
 \label{gnormsbound}
\end{equation}

Following the steps in section \ref{divcurlsection}
and using the results of sections \ref{reldivergeneestimatesection}-\ref{ellipticestsecrel},
we arrive at
\begin{align}
| \hD_s K^{r}_p(s)|&\lesssim C_r\big( X^{1,r}_p(s)+ V_p^{1,r-1}(s)+W^r_p(s)\big),
\label{eq:evolutionsystem.arel}\\
|\hD_s {X}^{1,r}_p(s)|&\lesssim C_r\big(K^r_p(s)+ X^{1,r}_p(s)+V^{1+r}_p(s)+W^{r}_p(s)
+ G^{1+r}_p(s)\big),\label{eq:evolutionsystem.brel}
\end{align}
and
\begin{align}
V^{1,r}_p(s)&\lesssim C_r\big(K^r_p(s)+ X^{1,r}_p(s)+V^{1+r}_p(s)+W^{r}_p(s) + G^{1+r}_p(s)\big),\label{eq:ellipticsystem.arel}\\
\Sigma^{r-1}_p(s)&\lesssim C_r\big({X}^{1,r-1}_p(s)+{V}_p^{1,r-1}(t)+W^{r}_p(s)+ G^{1,r-1}_p(s)\big),
\label{eq:ellipticsystem.brel}
\end{align}
where $c_r$ depends on bounds for $X^{1,q}_\infty$, $V^{1,q}_\infty$, $\Sigma^q_\infty$ and $G^{1,q}_{\infty}$
for $q\leq r/2$.

Similarly, we introduce
\begin{equation}
X^{J,\nicefrac{1}{2}}_{\varepsilon,\nu}=\wg_{\mu\nu}\fdh T^J\!\sm x^\mu , \quad\text{and}\quad
V^{J,\nicefrac{1}{2}}_{\varepsilon,\nu}=\wg_{\mu\nu}\fdh T^J\!\sm  {}_{\!} V^\mu,
\end{equation}
and
\begin{equation}
K^{r,\nicefrac{1}{2}}_{\varepsilon,2}(s)
={\sum}_{|J|\leq r}\|K^{J,\nicefrac{1}{2}}_\varepsilon(s,\cdot)\|_{L^2(\Omega)},\quad\text{and}\quad
D^{r,\nicefrac{1}{2}}_{\varepsilon,2}(s)={\sum}_{|J|\leq r}
\| D^{J,\nicefrac{1}{2}}_\varepsilon(s,\cdot)\|_{L^2(\Omega)},
\end{equation}
where $K^{J,\nicefrac{1}{2}}_\varepsilon$ is given by
\eqref{eq:curlhalfrel} and $D^{J,\nicefrac{1}{2}}_\varepsilon$
is given by \eqref{DJepsdefrel},
as well as the quantities
\begin{equation}
X^{1,r,\nicefrac{1}{2}}_{\varepsilon,2}(s)
={\sum}_{|J|\leq r}\|\widetilde{\pa} X^{J,\nicefrac{1}{2}}_\varepsilon(s,\cdot)\|_{L^2(\Omega)},\quad\text{and}\quad
\Sigma^{r,\nicefrac{1}{2}}_{2}(s)\!=\!{\sum}_{|K|\leq r}\|\widetilde{\pa} \fdh T^K\widetilde{\pa} \sigma(s,\cdot)\|_{L^2(\Omega)},
\label{halfextranormcontrolrel}
\end{equation}
and
\begin{equation}
X^{\boldsymbol{\times},r,\nicefrac{1}{2}}_{\varepsilon,2}(s)
={\sum}_{|J|\leq r}\|\widetilde{\curl} \, X^{J,\nicefrac{1}{2}}_\varepsilon(s,\cdot)\|_{L^2(\Omega)},\quad\text{and}\quad
X^{\bigfour\cdot,r,\nicefrac{1}{2}}_{\varepsilon,2}(s)
={\sum}_{|J|\leq r}\|\widetilde{\div} X^{J,\nicefrac{1}{2}}_\varepsilon(s,\cdot)\|_{L^2(\Omega)},
\end{equation}
as well as the geometric quantities
\begin{equation}
  G^{r,\nicefrac{1}{2}}_p(s) =
  \| G^{r,\nicefrac{1}{2}}(s,\cdot)\|_{L^p(\Omega)},
  \quad \!\!\!\text{where}\!\!\!\quad
 G^{r,\nicefrac{1}{2}}\!\! =\!\!\! \sum_{|I| \leq r}
 \sum_{\mu,\nu,\gamma = 0}^3\!\!
 \|\fdh T^I \wGamma_{\mu\nu}^\gamma\|_{L^2(\Omega)}
 +\!\! \sum_{\mu,\mu = 0}^3 \!\!\|\fdh T^I \wg_{\mu\nu}\|_{L^2(\Omega)}.
 \label{geomnorms}
\end{equation}
From \eqref{eq:curlequation2rel}, \eqref{eq:curlequation2rel2}, and \eqref{Dvediffrelbd}
we have
\begin{align}
 \hD_s K^{r,\nicefrac{1}{2}}_{\varepsilon,2}(s)
 &\lesssim C_r\big(\Sigma^{r-1,\nicefrac{1}{2}}_{2}(s)+X^{1,r,\nicefrac{1}{2}}_{\varepsilon,2}(s)
 \big),
 \label{eq:evolutionhalf1rel}\\
 \hD_s
 X^{\boldsymbol{\times},r,\nicefrac{1}{2}}_{\varepsilon,2}(s)
 &\lesssim C_r\big(K^{r,\nicefrac{1}{2}}_{\varepsilon,2}(s)+X^{1,r,\nicefrac{1}{2}}_{\varepsilon,2}(s)\big),
 \label{eq:evolutionhalf2rel}\\
 \hD_s D^{r,\nicefrac{1}{2}}_{\varepsilon,2}(s)
 &\lesssim C_r\big(X^{1,r,\nicefrac{1}{2}}_{\varepsilon,2}(t)+V^{1,r}_{2}(s)
 +X^{1,r}_{2}(s)+ G^{r,\nicefrac{1}{2}}_2(s) \big).\label{eq:evolutionhalf3rel}
\end{align}
By \eqref{Dvediffrelbd}, \eqref{usefracestirel2} and \eqref{eq:byenergyestimatedrel} we have
\begin{align}
 X^{\bigfour\cdot,r,\nicefrac{1}{2}}_{\varepsilon,2}(s)
 &\lesssim C_r\big(D^{r,\nicefrac{1}{2}}_{\varepsilon,2}(s)+W^{r}_{2}(s)+V^{1,r}_{2}(s)
 +X^{1,r}_{2}(s) + G^{r,\nicefrac{1}{2}}(s)\big),\label{eq:boundshalf1rel}\\
X^{1,r,\nicefrac{1}{2}}_{\varepsilon,2}(s)+B^{\,r+1}_2(s)
&\lesssim C_r\big( X^{\boldsymbol{\times},r,\nicefrac{1}{2}}_{\varepsilon,2}(s)
+X^{\bigfour\cdot,r,\nicefrac{1}{2}}_{\varepsilon,2}(s)+B^{\,r+1}_{\mathcal{N},2}(s)
+X^{1,r}_{\varepsilon,2}(s)+ G^{r,\nicefrac{1}{2}}(s)\big),\label{eq:boundshalf2rel}\\
  V^{\,r+1}_{2}(s)+B^{\,r+1}_{\mathcal{N},2}(s)&\lesssim C_0 E^{\,r+1}_2(s), %\label{eq:boundshalf3rel}
\end{align}
where
\begin{equation}\label{eq:energyrdefrel}
E^{\,r}_2(s)={\sum}_{|I|\leq r}\sqrt{\mathcal{E}^I(s)},
\qquad
B^{\,r}_2(s)={\sum}_{|I|\leq r}\sqrt{\mathcal{B}^I(s)},
\qquad
B^{\,r}_{\mathcal{N},2}(s)={\sum}_{|I|\leq r}\sqrt{\mathcal{B}_{\mathcal{N}}^I(s)},
\end{equation}
and
 $\mathcal{E}^I(s)$ given by \eqref{eq:energydefrel} and $\mathcal{B}^I(s)$, $\mathcal{B}_{\mathcal{N}}^I(s)$
 are given by \eqref{eq:energyboundarydefrel}.

\subsubsection{The $L^\infty$ estimates for lower derivatives}
\label{lowderivatives}
The arguments from section \ref{linftylowersec} go through without
change and give that there are $S_k >0$
depending on bounds for $G_{\infty}^{1,k}$ so that for
$0 \leq s \leq S_{k-1}$ we have
\begin{equation}
  %\label{rellinfty1}
 K_{\infty}^k(s) \leq 2 K_{\infty}^k(0), \qquad
 X_{\infty}^{1,k}(s) \leq 2 X_{\infty}^{1,k}(0),
 \label{rellinftyfirst}
\end{equation}
as well as
\begin{align}
 V_{\infty}^{1,k}(s)
 &\lesssim C_k \big( K_{\infty}(0) + X^{1,k}_{\infty}(0)
 + E^{1+k}_{\infty}(s) + W_{\infty}^s(s)
 + G_{\infty}^{1+k}(s)\big),\\
 \Sigma_{\infty}^{k-1}(s)
 &\lesssim
 C_k \big( X^{1,k-1}_{\infty}(0) + V_{\infty}^{1,k-1}(0)
 + W_{\infty}^k(s) + G_{\infty}^{1+k}(s)\big).
 \label{rellinftylast}
\end{align}
We also need to know that $V$ is timelike and future-directed
in order to use Lemma \ref{emtensorpositivity}. Integrating
in time and taking $S_{k-1}$ smaller if needed,
from the above bounds we have that for $0 \leq s \leq
S_{k-1}$,
\begin{align}
 4|\wg(V(s), V(s)) - \wg(V(0), V(0))| &\leq -\wg(V(0), V(0)), \label{timelike}
 \\
4|\wg(V(s), \widetilde{\Tau}(s))
- \wg(V(0), \widetilde{\Tau}(0))|
&\leq -\wg(V(0), \widetilde{\Tau}(0)),
 \label{futuredirected}
\end{align}
and in particular this implies $V(s)$ is timelike
and future-directed for $s \leq S_{k-1}$.

\subsection{Control of the $L^2$ norms}
Just as in section \ref{controlL2},
we have an evolution equation for $L^2$ norms
of the curl of the velocity and of the coordinate
\begin{align}
 |\hD_s K^r_2(s)|
 &\lesssim C_r \big( X_2^{1,r}(s) + V_2^{1,r-1}(s)
 + W_2^r(s)\big),\label{K2rel}\\
 |\hD_s X_{2}^{1,r}(s)|
 &\lesssim
 C_r \big(K_2^r(s) + X_2^{1,r}(s) + V_2^{1+r}(s)
 + W_2^r(s) + G^{1+r}_2(s)\big),
 \label{X2bdrel}
\end{align}
and from the elliptic estimates we have
\begin{align}
  V_2^{1,r}(s)
  &\lesssim
  C_r \big( K_2^r(s) + X_2^{1,r}(s)
  + V_2^{1+r}(s) + W_2^r(s) + G^{1+r}_2(s)\big),\label{V2bdrel}\\
  \Sigma_2^{r-1}(s)
  &\lesssim
  C_r \big( X_2^{1,r-1}(s) + V_2^{1,r-1}(s)
  + W_2^r(s) + G^{1,r-1}_2(s)\big).
 \label{Sigma2bd}
\end{align}
From \eqref{mainrelwaveest} we have
\begin{equation}
 |W_2^r(s)|
 \lesssim
 C_r' \big( W_2^r(0) +
 {\sup}_{0 \leq s' \leq s } K_2^r(s') + X_2^{1,r}(s') + V_2^{r+1}(s') +
 W^r_2(s')\big),
 \label{W2relbd}
\end{equation}
where here $C_r'$ denotes a constant depending on the supremum over
$0 \leq s' \leq s$ of the above quantities with $r$ replaced by $r/2$,
and since
\begin{equation}
 V_2^{r+1}(s) + B_{\N,2}^{r+1}(s) \lesssim
 C_0 E_2^{r+1}(s),
% \label{}
\end{equation}
it just remains to get a bound for the
energy $E_2^{r+1}(s)$.
\subsubsection{Control of the $L^2$ norms for
Euler's equations}
\label{eulconclusionsec}
By the bounds \eqref{timelike}-\eqref{futuredirected}, $V$ is timelike
and future-directed provided we take
$s \leq S_{1}$ with $S_k$ defined
as in section \ref{lowderivatives}.
From \eqref{mainrelenbdnonsmooth2}, the bounds
\eqref{Frelest}, \eqref{eq:GIestimaterel} and \eqref{mainrelenbdnonsmooth2} for the quantities $F^I, G^I, H^I$,
and the results of the previous sections we have
\begin{equation}
 E^{r+1}_2(s)
 \lesssim
 E^{r+1}_2(0)
 +
 C_0's E^{1+r}_2(s) + C_r'
 s \sup_{0\leq s' \leq s}
 \big( K_2^r(s') +
 X^{1,r}_2(s')
 + W_2^r(s') + G_2^{r+1}(s')\big), \quad \ve = 0,
 \label{enstep1}
\end{equation}
and so combining this with the evolution equations \eqref{K2rel},\eqref{X2bdrel},
and the estimates \eqref{V2bdrel}-\eqref{W2relbd},
we see that there is $S_r > 0$ so that for $0 \leq s \leq S_r$,
\begin{equation}
 K_2^r(s) \leq 2K_2^r(0),
 \quad
 X_2^{1,r}(s) \leq 2 X_2^{1,r}(0),
 \quad
 W^r_2(s) \leq 2 W_2^r(0),
 \quad
 E_2^{r+1}(s) \leq 2 E_2^{r+1}(0),
 \label{relenestend}
\end{equation}
and this concludes the proof of the apriori bounds for the
relativistic Euler equations. The bound \eqref{highnormbd} follows directly
from \eqref{relenestend}, and the bound \eqref{lownormbd} follows after integrating
the bounds \eqref{rellinftyfirst}-\eqref{rellinftyfirst}
in time.

\subsubsection{Control of the $L^2$ norms for the smoothed
Euler's equations}
We argue almost exactly as in section \ref{controll2},
the only difference being that we use
 \eqref{mainveposestimate} in place of
 \eqref{mainrelenbdnonsmooth2} and so
\eqref{enstep1} needs to be replaced with the
bound
\begin{equation}
 E^{r+1}(s) \lesssim
 E_2^{r+1}(0)
 + C_0' (s + \ve) E_2^{1+r}(s) + C_r's
 \sup_{0\leq s' \leq s}\!\!
 \big(K_2^r(s') + X_2^{1,r}(s') + W_2^r(s')
 + G_2^{r+1}(s')\big), \!\quad
 \ve\! >\! 0,
 %\label{}
\end{equation}
and taking $\ve$ sufficiently small
  we conclude that there is $S_r > 0$ such that
 for $0 \leq s \leq S_r$,
 \begin{equation}
   K_2^r(s) \leq 2K_2^r(0),
   \quad
   X_2^{1,r}(s) \leq 2 X_2^{1,r}(0),
   \quad
   W^r_2(s) \leq 2 W_2^r(0),
   \quad
   E_2^{r+1}(s) \leq 2 E_2^{r+1}(0),
  %\label{}
 \end{equation}
 and
\begin{equation}
K^{r,\nicefrac{1}{2}}_{\varepsilon,2}(s)\!\leq \! 2K^{r,\nicefrac{1}{2}}_{\varepsilon,2}(0),\!\quad \! X^{\boldsymbol{\times},r,\nicefrac{1}{2}}_{\varepsilon,2}(s)\!\leq \! 2X^{\boldsymbol{\times},r,\nicefrac{1}{2}}_{\varepsilon,2}(0),\!\quad\!
D^{r,\nicefrac{1}{2}}_{\varepsilon,2}(s)\!\leq \! 2D^{r,\nicefrac{1}{2}}_{\varepsilon,2}(0),\!\quad\! W_2^{r\shortminus 1,2}(s)\!\leq \!2W_2^{r\shortminus 1,2}(0),
\end{equation}
which concludes the proof of the uniform apriori bounds
for the smoothed relativistic case.

\subsection{Estimates up to surfaces of constant $t$}
\label{spacelikesec}
The above argument relied on energy estimates up to surfaces of constant $s$, pointwise estimates up to some fixed $s$ and also that the wave operator expressed in the Lagragian coordinates and restricted to surfaces of constant $s$ was elliptic, which is also needed for the upcoming proof of existence.

The results of sections \ref{highorderVrel}-\ref{enthalpyestrel}
and the pointwise estimates hold for an arbitrary spacelike surface
so it only remains to check the ellipticity. Let
$x= \widehat{x}^\mu(t, y)$ be the Lagragian coordinate expressed with $t$ as a parameter,
\begin{equation}\label{eq:eulerlagrangiancoordxsmoothedrelhat}
\frac{d \widehat{x}^{\,\mu}(t, y)}{dt}=
\widehat{V}^\mu(t, y),\qquad\widehat{x}^{\,0}(0,y) = 0, \quad
\widehat{x}^{\,i}(0,y)=x_0^i(y),\qquad y\in \Omega,
\end{equation}
where
\begin{equation}
\widehat{V}^\mu(t, y)=\widetilde{V}^\mu(s, y)/\widetilde{V}^0(s, y).
\end{equation}

We can write
$\pa_\alpha=\pa_\alpha t\,D_t +\widehat{\pa}_\alpha $,
where $\widehat{\pa}_\alpha$ differentiate along the surfaces $t=const$
and $\pa_\alpha t=\delta_{\alpha 0}$.
We have $\widehat{\pa}_\alpha =\widehat{\gamma}_\alpha^{\alpha^\prime}\pa_{\alpha^\prime}$,
where $\widehat{\gamma}_\alpha^{\alpha^\prime}=\delta_\alpha^{\alpha^\prime}-\delta_{\alpha 0}\, \widehat{V}^{\alpha^\prime}$. We have $\widehat{\gamma}_i^{\alpha^\prime}=\delta_i^{\alpha^\prime}$ and
$\widehat{\gamma}_0^{0}=0$, $\widehat{\gamma}_0^{j}=-\widehat{V}^j$.
With $\xi_t=\widehat{V}^\alpha \xi_\alpha$ and $\widehat{\xi}_\alpha=\widehat{\gamma}_\alpha^{\alpha^\prime}\xi_{\alpha^\prime}$
we have $\widehat{\xi}_0=-\widehat{V}^j \xi_j$ and  $\widehat{\xi}_i= \xi_i$.
The symbol for the wave operator can hence be decomposed
\begin{equation}
g^{\alpha\beta}\xi_\alpha \xi_\beta =g^{00}\xi_t^2+2 g^{0\beta}\xi_t\,\widehat{\xi}_{\beta}
+g^{\alpha\beta}\widehat{\xi}_{\alpha}
\widehat{\xi}_{\beta}.
\end{equation}
The principal part that only differentiates along the surface $t=const$ is
\begin{equation}
g^{\alpha\beta}\widehat{\xi}_{\alpha}
\widehat{\xi}_{\beta}=\widehat{G}^{\alpha\beta}{\xi}_{\alpha}
{\xi}_{\beta},\qquad\text{where}\qquad
\widehat{G}^{\alpha\beta}=g^{\alpha'\beta'}
\widehat{\gamma}_{\alpha'}^{\alpha}\widehat{\gamma}_{\beta'}^\beta
=\big(g^{ij}+g^{00}\widehat{V}^i \widehat{V}^j-2g^{i0} \widehat{V}^j\big)\xi_i\xi_j.
\end{equation}
We claim that this gives an elliptic operator restricted to the surfaces $t=const$.
i.e. $\widehat{G}^{ij}{\xi}_{i}
{\xi}_{j}>c\delta^{ij}{\xi}_{i}
{\xi}_{j}$, for some $c>0$.
In fact with $\widehat{\xi}^\alpha=g^{\alpha\beta}\widehat{\xi}_\beta$ is in the orthogonal complement of $\widehat{V}^\beta$, since $g_{\alpha\beta}\widehat{\xi}^\alpha\widehat{V}^\beta
=\widehat{\xi}_\beta\widehat{V}^\beta=0$, and since $\widehat{V}$ is timelike
$g_{\alpha\beta}\widehat{V}^\alpha\widehat{V}^\beta<0$ it follows that
$\widehat{\xi}$ is spacelike $g_{\alpha\beta}\widehat{\xi}^\alpha\widehat{\xi}^\beta>0$.
Therefore the results from the previous section hold up to arbtrary spacelike
surface.

\section{Existence for the smoothed and nonsmoothed problems}
\label{nonlinexistence}

We now use the bounds
from the previous two sections
 to prove existence for both
 the Newtonian and the relativistic
 problems. As in the earlier
sections, the argument in the relativistic and Newtonian cases are nearly identical so we start with the
simpler Newtonian case.

\subsection{Existence for the compressible problem}

In section \ref{existence} we prove that the linear problem \eqref{eq:eulerlagrangiancoordxsmoothed}-\eqref{eq:waveequationsmoothed} has a solution
in an appropriate function space, and the next step is to use this in an iteration scheme to find a solution for the smoothed problem. This is however greatly simplified because the continuity equation holds for the linear system,
which means that the estimates given above for the smoothed problem also will hold for the
iterates, with one exception, which is that we don't have the symmetry of the boundary term
in the basic Euler energy estimate for the iterates.
Because of the smoothing we can still estimate it, but at the cost of introducing a power of $1/\varepsilon$.
This just means that we have to choose the time interval of existence small depending on $\varepsilon$, which as we shall see in the next section is not a problem because one can repeat this local existence result to prove existence for as long as we have apriori bounds.

Let us now write up the iteration scheme to solve the nonlinear smoothed problem. Let $V^{(0)}$ and $x^{(0)}$ be given by the approximate solution satisfying the compatibility conditions for initial data in section \ref{compatsec}. Now given $U=V^{(k)}$ define $z=x^{(k)}$ by
\begin{equation}
\frac{d z}{dt}=U(t,z),\qquad z(0,y)=x_0(y),\qquad y\in \Omega,
\end{equation}
and define $\widetilde{V}$ and $\widetilde{x}$ by
\begin{equation}
\widetilde{V}=S_\varepsilon^* S_\varepsilon U,\qquad
\widetilde{x}=S_\varepsilon^* S_\varepsilon z .
\label{iteratesm}
\end{equation}

Next, given $\widetilde{V}$ and $\widetilde{x}$ tangentially smooth define the new $V^{(k+1)}=V$ by solving the linear system
\begin{equation}\label{eq:eulerlagrangiancoordsmoothed2}
D_t V^i=-\delta^{ij}\widetilde{\pa}_j h,\qquad \text{where}\quad
D_t=\partial_t\big|_{y =const},\qquad \widetilde{\pa}_i =\frac{\pa y^a}{\pa \widetilde{x}^i} \frac{\pa}{\pa y^a},
\end{equation}
where $h$ is given by
\begin{equation}\label{eq:waveequationsmoothed2}
  D_t\big( e_1 D_t h\big) - \widetilde{\Delta} h = \widetilde{\pa}_i \widetilde{V}^j\, \widetilde{\pa}_j V^i, \quad \quad\text{with}\quad
 h\big|_{\pa \Omega}=0,\quad\text{where}\quad \widetilde{\Delta}\!=\delta^{ij}\nave_i\nave_j.\!
\end{equation}
(If $e_1$ in \eqref{eq:e1cond} is not constant then we
 evaluate it at the previous iterate of $h$ to get a linear system.)
In Section \ref{existence} we prove that this linear system has a solution $V$ in the energy space i.e. so that
the quantities $E_2^{r+1}(t)$
defined in \eqref{eq:energyrdef} are finite for $t \leq T'(\ve)$.
Taking the divergence of \eqref{eq:eulerlagrangiancoordsmoothed} and subtracting
it from \eqref{eq:waveequationsmoothed} shows that
\begin{equation}
  \label{eq:continuityequationsmoothed}
 e_1D_t h=-\widetilde{\div} V,
\end{equation}
if this holds initially. Then define the new $x=x^{(k+1)}$ by
\begin{equation}
\frac{d x}{dt}=V(t,x),\qquad x(0,y)=x_0(y),\qquad y\in \Omega.
\end{equation}

All the apriori estimates for the smoothed problem given in the previous section hold,
except that the boundary term in the energy estimate for Euler's equation needs to
be handled differently, as explained in Section \ref{sec:apriorilinear}.
This gives a $\varepsilon$ dependent bound of the right hand sides of the energy
estimates but we obtain a uniform constant by integrating over a small time.
This gives uniform energy bounds for the sequence of iterates independent of $\varepsilon$ up to a time dependent on $\varepsilon>0$.

Note that even though the existence for the linear system is in norms with
integer numbers of derivatives and does not give any extra half tangential regularity,
 all the estimates for an extra half derivative for the coordinate has a smoothing in them so there is no problem with regularity in the above iteration scheme.

\begin{prop}
  \label{nlsmoothexist}
  Fix $r \geq 9$, $\ve > 0$ sufficiently small and initial data $(V_0, h_0)$ satisfying the compatibility conditions
  \eqref{eq:compatibilityconditiondef} to order $r$ as well as
  the Taylor sign condition \eqref{tsc}.
  Let $E_0 =  \|V_0\|_{H^{r+1}(\Omega)}^2
  + \|\pa h_{0}\|_{H^{r}(\Omega)}^2$.
  Then there is a continuous function
$T_\ve= T_\ve(E_0, c) > 0$ so that the nonlinear smoothed
problem \eqref{eq:eulerlagrangiancoordsmoothed}-\eqref{eq:waveequationsmoothed}
 has a solution $(V, h)$ defined for $[0, T_\ve]$
 so that with
$W_2^{r}$,
 $\!\!E_2^{1+r}$,
  $\!\!  V^{1,r}_2$,
$\!\! X^{1,r,\nicefrac{1}{2}}_{\ve,2}$ and
$H_2^{r-1}$ defined as in
 \eqref{normsdef1},
 \eqref{eq:energyrdef},
 \eqref{higherorderwave} and \eqref{halfextranormcontrol},
 for $0 \leq t \leq T_\ve$
\begin{equation}
 {\sup}_{0 \leq t \leq T_{\ve}} E_2^{r+1}(t)
 + W_2^{r,1}(t) + W_2^{r-1,2}(t) + V_2^{1,r}(t)
 + X_{\ve,2}^{1,r,\nicefrac{1}{2}}(t) + H_2^{r-1}(t)
 < C(E_0, c).
 \label{nonlinbd}
\end{equation}
In fact we control normal derivatives of $V$ and $h$ to highest order,
\begin{equation}
 {\sup}_{0 \leq t \leq T_{\ve}}
 {\sum}_{k+\ell \leq r}\|D_t D_t^k V(t)\|_{H^{\ell}(\Omega)}
 + {\sum}_{k+\ell \leq r}\|\pave D_t^k V(t)\|_{H^{\ell}(\Omega)}
 < C(E_0, c).
 \label{fullnonlinbd}
\end{equation}
\end{prop}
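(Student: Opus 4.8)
The plan is to prove Proposition \ref{nlsmoothexist} by combining the uniform a priori bounds for the smoothed problem established in Section \ref{sec:nonrel} with the $\varepsilon$-dependent local existence for the linear system proven in Section \ref{existence}, glued together by a standard continuation argument. First I would set up the iteration scheme exactly as described above: starting from an approximate solution $(V^{(0)}, x^{(0)})$ satisfying the compatibility conditions from Section \ref{compatsec}, one solves at each step the flow ODE for $z = x^{(k)}$, defines the tangentially smoothed $\widetilde{V}, \widetilde{x}$ by \eqref{iteratesm}, and then solves the linear wave-transport system \eqref{eq:eulerlagrangiancoordsmoothed2}-\eqref{eq:waveequationsmoothed2} for $h$ and hence the new $V^{(k+1)}$, finally defining $x^{(k+1)}$ by its flow. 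The key structural point, already noted in the text, is that taking the divergence of \eqref{eq:eulerlagrangiancoordsmoothed2} and subtracting \eqref{eq:waveequationsmoothed2} gives $D_t(e_1 D_t h + \widetilde{\div} V) = 0$, so the continuity equation \eqref{eq:continuityequationsmoothed} is propagated along the iteration provided it holds initially; this is what allows all the a priori estimates of Section \ref{sec:nonrel} (which used only the continuity equation and the two evolution equations, never the full nonlinear coupling) to apply verbatim to the iterates.

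The heart of the argument is a two-scale bootstrap. On the one hand, Section \ref{existence} gives existence of $V^{(k+1)}$ in the energy space on a time interval $[0, T'(\varepsilon)]$, with the only caveat flagged in Section \ref{sec:apriorilinear}: the boundary term in the basic Euler energy estimate is no longer exactly a time derivative for the iterates (because $\widetilde{x} = S_\varepsilon^* S_\varepsilon z$ refers to the previous iterate $z$, not to $x$), so estimating it costs a factor $1/\varepsilon$, as in \eqref{dtEmainnonreliter}. This yields, for fixed $\varepsilon$, uniform-in-$k$ bounds for the iterates $(V^{(k)}, h^{(k)}, x^{(k)})$ on a possibly short interval $[0, T_\varepsilon^{(1)}]$ with $T_\varepsilon^{(1)} = O(\varepsilon)$; one then passes to the limit $k \to \infty$ along a subsequence (the maps $x^{(k)}$ and their first several derivatives are equicontinuous, and the smoothing provides the compactness needed to take limits in the nonlinear terms $\widetilde{\pa}_i \widetilde{V}^j \widetilde{\pa}_j V^i$ etc.) to obtain a solution $(V, h, x)$ of the nonlinear smoothed problem \eqref{eq:eulerlagrangiancoordsmoothed}-\eqref{eq:waveequationsmoothed} on $[0, T_\varepsilon^{(1)}]$. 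On the other hand, once we have an actual solution of the \emph{nonlinear} smoothed system, $\widetilde{x} = S_\varepsilon^* S_\varepsilon x$ and the boundary term regains its symmetric structure \eqref{eq:boundaryproblemssmoothedsymmetric}, so the genuinely uniform (i.e. $\varepsilon$-independent) a priori estimates of Section \ref{controll2} — culminating in \eqref{l2uniformbd} and the companion bounds for $K^{r,\nicefrac{1}{2}}_{\varepsilon,2}$, $X^{\boldsymbol{\times},r,\nicefrac{1}{2}}_{\varepsilon,2}$, $D^{r,\nicefrac{1}{2}}_{\varepsilon,2}$, $W_2^{r-1,2}$ — apply to this solution on its maximal interval of existence. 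These give precisely the quantities appearing in \eqref{nonlinbd}: $E_2^{r+1}$, $W_2^{r,1}$, $W_2^{r-1,2}$, $V_2^{1,r}$, $X_{\varepsilon,2}^{1,r,\nicefrac{1}{2}}$ and $H_2^{r-1}$ are all controlled by $C(E_0, c)$ on the time interval $[0, T_r]$ determined purely by the initial energy and the Taylor sign constant.

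The continuation argument then bridges the two scales. Suppose the nonlinear smoothed solution exists on $[0, T^*)$ with $T^* < \min(T_r, T')$ where $T'$ is the uniform a priori time of Section \ref{controll2} (and $T_r$ the analogous threshold for the $L^\infty$ bounds \eqref{lowbounds}); then by the uniform bounds the relevant norms stay below $C(E_0,c)$, in particular the a priori assumption \eqref{lownorms} is not saturated and the Taylor sign condition persists with $c/2$, so the $\varepsilon$-dependent local existence result can be restarted from time $T^* - \delta$ to extend the solution past $T^*$ — contradiction. Hence the solution exists on the full uniform interval, which we call $T_\varepsilon$ (it is in fact $\geq T_r \wedge T'$, independent of $\varepsilon$, but we only claim it depends on $(E_0, c)$). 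The bound \eqref{fullnonlinbd} on normal derivatives $\pave D_t^k V$ and $D_t D_t^k V$ follows from \eqref{nonlinbd} together with the elliptic estimates: the div-curl estimate \eqref{eq:ellipticV1} converts tangential plus div plus curl control into control of $\widetilde{\pa} T^J V$, and the wave equation \eqref{eq:thehigherwaveequationerror} together with \eqref{eq:paTpah2} converts $W$-type norms into control of $\widetilde{\pa}^2 T^J h = \widetilde{\pa} T^J \widetilde{\pa} h$; since $D_t V = -\widetilde{\pa} h$, each $D_t$ on $V$ trades for a $\widetilde{\pa}$ on $h$, and iterating this exchange yields all mixed norms $\|\pave D_t^k V\|_{H^\ell}$, $\|D_t^{k+1} V\|_{H^\ell}$ for $k + \ell \leq r$.

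I expect the main obstacle to be making the limit $k \to \infty$ in the iteration completely rigorous at the level of regularity needed to apply the a priori bounds — in particular, checking that the subsequential limit of the iterates genuinely solves the \emph{nonlinear} smoothed system (so that $\widetilde{x}$ and $x$ become consistent and the symmetric boundary structure is recovered) and not merely a linearization, and verifying that the compatibility conditions are preserved through the limit so that the solution has the claimed initial data and the continuity equation \eqref{eq:continuityequationsmoothed} holds at $t=0$. The $1/\varepsilon$ loss in the iterate estimates is harmless for this step since $\varepsilon$ is fixed, but one must be careful that the function spaces in which Section \ref{existence} produces $V^{(k+1)}$ are strong enough to pass to the limit while still weak enough that the half-derivative coordinate bounds (which always carry a smoothing) close — a point the text explicitly reassures us about but which requires bookkeeping.
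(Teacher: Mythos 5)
Your overall structure---iteration, propagation of the continuity equation, $\ve$-dependent local existence for the iterates with a $1/\ve$ loss in the boundary term, passage to the limit, then a continuation argument against the $\ve$-uniform a priori estimates---is essentially the paper's strategy, though the paper separates the continuation step off as a subsequent proposition (Proposition \ref{uniformtime}) and proves Proposition \ref{nlsmoothexist} only up to a time depending on $\ve$. Your replacement of the paper's Cauchy-in-lower-norm argument for the limit $k\to\infty$ by a compactness/equicontinuity argument is a legitimate alternative and does not constitute a gap.

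There is, however, a real gap in your argument for \eqref{fullnonlinbd}. You propose to obtain control of $\|\pave D_t^k V\|_{H^\ell(\Omega)}$ by combining the tangential div-curl estimate \eqref{eq:ellipticV1} with the exchange $D_t V = -\pave h$. But \eqref{eq:ellipticV1} controls $\pave T^J V$ with $T^J$ a string of \emph{tangential} vector fields, while $\|\cdot\|_{H^\ell(\Omega)}$ in \eqref{fullnonlinbd} involves full partial derivatives $\pa_y^\ell$. For $k=0$ you need $\|\pave V\|_{H^r(\Omega)}$, i.e.\ $r$ arbitrary space derivatives of $\pave V$, and there is no time derivative to trade away; the exchange $D_t V = -\pave h$ only reduces the number of $D_t$'s, it does not manufacture normal space derivatives. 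The paper closes this by observing that the pointwise div-curl estimate from Lemma \ref{app:pwdiff} and the transport equations for $D_t(e_1\,\pa^J h + \widetilde{\div}\,\pa^J x)$ and for $D_t K^{\prime J}$ hold with $T^J$ replaced by arbitrary $\pa_y^J$---the tangentiality of $T$ was only ever needed where one commuted with the fractional derivative $\fdh$ and the smoothing $S_\ve$ near the boundary, which does not occur here. One then bootstraps: the full-derivative curl and divergence are controlled pointwise-in-time by transport, the pointwise elliptic estimate converts this to control of $\pave \pa^J V$, and the wave-equation/elliptic estimates do the analogous conversion for $h$. Your sketch does not articulate this, and as written it would not yield the $k=0$, $\ell = r$ case of \eqref{fullnonlinbd}.
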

\begin{proof}
  We construct $V$ using the iteration described above. Specifically,
  with $V_0, h_0$ as in the statement of the theorem, let $V^{(0)}(t,y) = \overline{V}(t,y)$
  denote a power series which solves the equation to order $r$ at $t = 0$
  as described in appendix \ref{compatsec}, defined on an arbitrary
  time interval $[0, T_0]$. It is only for this step that we need
  the initial data to be more regular than the solution we expect to get back.
   Now, given
  $U=V_{(k)}$ define $z=x_{(k)}$ such that $dz/dt=U(t,z)$ and then define
$\widetilde{V}=S_\varepsilon^* S_\varepsilon U$ and
$\widetilde{x}=S_\varepsilon^* S_\varepsilon z$.
We are going to prove that this sequence is bounded with respect to the norms
\begin{equation}
 \|u\|_{r+1, T_0} = {\sup}_{0 \leq t \leq T_0}
 {\sum}_{k+\ell \leq r+1}\|D_t D_t^k u(t)\|_{H^{\ell}(\Omega)}
 + {\sum}_{k+\ell \leq r+1}\|D_t^k u(t)\|_{H^{\ell}(\Omega)}
 %\label{simplenormdef2}.
\end{equation}
The reason we control an additional time derivative of the solution compared
to the number of space derivatives is explained
in section \ref{compatsec}.

If
$\|\widetilde{V}\|_{r+1, T_0} + \|\S \xve\|_{r+1, T_0} < \infty$
for some $T_0 > 0$
then by Proposition \ref{linexist}, the linear problem
 \eqref{eq:eulerlagrangiancoordsmoothed2}-\eqref{eq:waveequationsmoothed2} has a solution $V = V_{(k+1)}$
on the time interval $[0, T_0]$
satisfying $\|V\|_{r+1, T_0} < \infty$. Let us note at this point
that the reason we need a bound for $\|\S \xve\|_{r+1, T_0}$ is that
in the proof of Proposition \ref{linexist} we need to use the elliptic
estimate from Proposition \ref{prop:dirichlet}.

To construct
the next iterate, we need to know that
$\|S_\ve^* S_\ve V\|_{r+1, T'}
+ \|\S S_\ve^* S_\ve x\|_{r+1, T'} < \infty$.
This follows from the bounds
\eqref{existencebd}-\eqref{existencebd2}
and the smoothing estimate $\|\S S_\ve f\|_{L^2(\Omega)}
\lesssim \ve^{-1} \|f\|_{L^2(\Omega)}$.
Having constructed the sequence $V_{(k+1)}$,
 we now prove a uniform bound for the iterates. As mentioned these
uniform bounds
follow in nearly the same way that we proved
the apriori bounds for the nonlinear problem,
except that the evolution equation \eqref{E2smooth} needs to be replaced
by
\begin{multline}
|{E}_{2,(k+1)}^{\,r+1\,{}_{\,}\prime}(t)|
\lesssim  \frac{C_0^{(k)}}{\ve} {E}_{2,(k+1)}^{\,r+1}(t)+
\frac{C_0^{(k)}}{\ve}{B}_{2,(k+1)}^{\,r+1}(t)+
\frac{C_r^{(k)}}{\ve}\bigtwo(K_{2,(k+1)}^{r}(t)+X_{2,(k+1)}^{1,r\!}(t)+
W_{2,(k+1)}^r(t)
\bigtwo)
\\
\frac{C_0^{(k+1)}}{\ve} {E}_{2,(k)}^{\,r+1}(t)+
\frac{C_0^{(k+1)}}{\ve}{B}_{2,(k)}^{\,r+1}(t)+
\frac{C_r^{(k+1)}}{\ve}\bigtwo(K_{2,(k)}^{r}(t)+X_{2,(k)}^{1,r\!}(t)+
W_{2,(k)}^r(t)
\bigtwo),
 \label{}
\end{multline}
which follows from \eqref{dtEmainnonreliter}.
Here the constants $C_0^{(k)}, C_r^{(k)}$ are as in
\eqref{capitalCdef} but with $V$ replaced by the previous
iterate $V_{(k)}$ and with $\xve$ replaced by $\xve_{(k)}$
and similarly the quantities $E_{2,(\ell)}$, $B_{2,(\ell)}$,
$K_{2,(\ell)}$ and $X_{2,(\ell)}$ are defined as in
\eqref{normsdef1}-\eqref{normsdef2}.

By induction we find that there is
$T_r^\ve>0$ depending only on the initial data and on $\ve>0$
so that the bounds \eqref{l2uniformbd} hold for $V_{(k+1)}$
for $0 \leq t \leq T_r^\ve$.
 Arguing in almost exactly the same way one can
 prove that $V_{(k)}$ is a Cauchy sequence in a lower norm, i.e. that
 $|V_{2,(k_1)}^{r}(t) - V_{2,(k_2)}(t)| \to 0$ as $k_1, k_2 \to \infty$.
 From the uniform bounds we see that the sequence
 $V_{(k)}$ converges weakly to a limit $V$ satisfying
 the bound \eqref{nonlinbd} and from
 the Cauchy estimates it follows that this convergence
 is strong and so $V$ solves the nonlinear smoothed problem.

 It remains to prove the bound \eqref{fullnonlinbd} for the
 full derivatives of the solution which will be needed
 to extend the solution to a uniform time interval
 in the next result. This bound follows from our energy estimate using
  elliptic estimates, estimates for the wave equation, and estimates
  for the transport equation for the curl using a minor
  modification of the argument we used to prove the energy estimates.
  We just give a sketch of how
 to control $\|V(t)\|_{H^r(\Omega)}$ and $\|\pave V(t)\|_{H^{r-1}(\Omega)}$,
 since bounds for the other terms appearing in
 \eqref{fullnonlinbd} follow in a similar and simpler way. To control $V$,
 the strategy
 is to use the pointwise bound \eqref{app:pwdiff}
 to control full derivatives of $V$ in terms of full derivatives of the curl, divergence,
 and full derivatives of $x$. These can be bounded in nearly the same way as we bounded
 the tangential derivatives of these terms since that part of the argument only relied on
 differentiating transport equations and using pointwise inequalities.
 It was only when we commuted these equations with additional fractional tangential
 derivatives that it was important to only commute with tangential derivatives.
 Once we have
 bounds for $V$ the bound for $h$ follows from the pointwise elliptic estimate
 as in \eqref{eq:paTpah2} and estimates for the wave equation which we already encountered
 in section \ref{enthalpyestimates}. We therefore just discuss how to control $V$.

 As in the proof of \eqref{eq:ellipticV1}, if we define
$
 K^{\prime J}_{ij} = \widetilde{\curl} \pa^J V_{ij}
 +L_{ij}^1[\pave T^J x],
 $ after applying the pointwise estimate \eqref{app:pwdiff}
 and taking the $L^2$ norm we find that
 \begin{equation}
  \sum_{|J| \leq r }\|\pave \pa^J V\|_{L^2}
  \lesssim C_r\!\!
  \sum_{|J| \leq r} \|K^{\prime, J}\|_{L^2} +
  \| \pave \pa^{J} \xve\|_{L^2} +
  \|\pave \pa^J x\|_{L^2}
  +\|D_t \pa^J h\|_{L^2} +
  \sum_{S \in \mathcal{S}}
  \|S T^JV\|_{L^2} + \|\pa^J V\|_{L^2},
  \label{topordernormal}
\end{equation}
where here $C_r$ is defined as in \eqref{capitalCdef} but where the norms now depend
on full derivatives, and where $L^2 = L^2(\Omega)$. By induction, bounds for
the energies, and the estimates for the wave equation
 it remains to prove bounds for $\|\pave \pa^J\xve\|_{L^2},
\|\pave \pa^J x\|_{L^2}$ and for the curl term $\|K^{\prime, J}\|_{L^2}$.
To control the norms of $x$ and $\xve$ we note that it is enough to control
$\|\pave \pa^J x\|_{L^2}$ since  the smoothing is a bounded
operator and that the commutator with the derivatives
$\pave \pa^J$ is lower-order by
Lemma \ref{lem:gradientsmoothingcommute}. A bound for this term and for
$K^{\prime, J}$ follows
easily since in the same way we proved \eqref{divxeqn} and \eqref{eq:modifiedcurl}
we have evolution equations
\begin{equation}
  D_t \bigtwo( e_1  \pa^J h+  \widetilde{\div} (\pa^J \!x)\bigtwo)
  =\widetilde{\pa}_i \pa^{J\!} \widetilde{x}^k \,\,\widetilde{\pa}_k V^i -\widetilde{\pa}_i \pa^{J\!} {x}^k \,\,\widetilde{\pa}_k  \widetilde{V}^i+G^{\prime \prime J},
  \qquad
  D_t K^{\prime J}_{ij}
= L_{ij}^2[\pave \pa^J x] - A_{ij}^{\prime J},
\end{equation}
where $G^{\prime \prime J}\!$ is lower order and
 $A_{ij}^{\prime J}\!$ is the antisymmetric part of $\pave_i \pa^J \pave_j h$
which is lower-order.  \qedhere
\end{proof}

\begin{prop}\label{uniformtime} Fix $r \geq 10$ and initial data $(V_0, h_0)$
  satisfying the compatibility conditions \eqref{eq:compatibilityconditiondef}
  to order $r$ and define $E_0$ as in the previous
  Proposition. Then there is
   $T_1 = T_1(E_0) > 0$ so that
   for any $\ve > 0$ the nonlinear smoothed problem
   \eqref{eq:eulerlagrangiancoordsmoothed}-\eqref{eq:waveequationsmoothed} has a solution defined for
   $[0, T_1]$ so that the bounds
   \eqref{nonlinbd}- \eqref{fullnonlinbd} hold for $0 \leq t \leq T_1$.
\end{prop}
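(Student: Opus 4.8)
The plan is to upgrade the $\varepsilon$-dependent existence time $T_\varepsilon$ from Proposition \ref{nlsmoothexist} to a uniform time $T_1$ by a continuation argument, using the \emph{uniform} a priori bounds for the smoothed problem established in the previous section (culminating in \eqref{l2uniformbd}). The key point is that the a priori estimates of Section \ref{sec:nonrel} were proven for the smoothed nonlinear system \eqref{eq:eulerlagrangiancoordsmoothed}-\eqref{eq:waveequationsmoothed} (not merely the linear iteration scheme), and those bounds hold on an interval depending only on the initial data, not on $\varepsilon$. So first I would fix initial data $(V_0, h_0)$ satisfying the compatibility conditions to order $r$ and set $E_0$ as in Proposition \ref{nlsmoothexist}, but working at regularity $r+1$ so that the solution furnished by that proposition carries the full-derivative control \eqref{fullnonlinbd}.

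Next, for fixed $\varepsilon > 0$, I would run a bootstrap/continuation argument. Let $T^*_\varepsilon$ be the supremum of times $T$ for which a solution exists on $[0,T]$ with $E_2^{r+1}(t) + W_2^{r,1}(t) + W_2^{r-1,2}(t) + V_2^{1,r}(t) + X_{\varepsilon,2}^{1,r,\nicefrac{1}{2}}(t) + H_2^{r-1}(t) \leq 2\,C(E_0,c)$, where $C(E_0,c)$ is the constant from \eqref{nonlinbd}. Proposition \ref{nlsmoothexist} gives $T^*_\varepsilon \geq T_\varepsilon > 0$. On $[0, T^*_\varepsilon)$, the quantities that control the constants $C_r, c_r$ in \eqref{lowercasecdef}, \eqref{capitalCdef} --- namely $M_{(r/2)}$-type pointwise norms --- are controlled by the $L^2$ norms via Sobolev and the pointwise estimates of Section \ref{linftylowersec}, so the a priori estimates \eqref{l2uniformbd} apply with a constant depending only on $E_0$ and $c$. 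This means that, on $[0, T^*_\varepsilon)$, we actually have $E_2^{r+1}(t) + \cdots \leq C(E_0,c)$, strictly better than the bootstrap assumption $2\,C(E_0,c)$, \emph{provided} $T^*_\varepsilon$ is smaller than the time $T_r$ appearing in \eqref{l2uniformbd}, which itself depends only on $E_0, c$. Standard continuation --- if $T^*_\varepsilon < \min(T_r, \text{anything})$, then the solution extends slightly past $T^*_\varepsilon$ while retaining the improved bound, using Proposition \ref{nlsmoothexist} again with data at time $T^*_\varepsilon - \delta$ (whose norms are bounded by $C(E_0,c)$, uniformly in $\varepsilon$) --- forces $T^*_\varepsilon \geq T_1$ for a $T_1 = T_1(E_0)$ independent of $\varepsilon$. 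Hence the solution exists on $[0, T_1]$ with the stated bounds.

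The main obstacle I anticipate is making the continuation step rigorous given that the local existence time $T_\varepsilon$ from Proposition \ref{nlsmoothexist} itself degenerates as $\varepsilon \to 0$: one must be careful that when re-applying local existence from time $T^*_\varepsilon - \delta$ to push past $T^*_\varepsilon$, the new local existence interval $T_\varepsilon(E(T^*_\varepsilon - \delta), c)$ does not shrink to zero as we approach $T^*_\varepsilon$. This is handled because $T_\varepsilon$ is a continuous function of $E_0$, and along $[0, T^*_\varepsilon)$ the relevant norm $E(t)$ stays bounded by $2\,C(E_0,c)$, so $T_\varepsilon(2C(E_0,c), c) > 0$ is a fixed positive length independent of how close $T^*_\varepsilon - \delta$ is to $T^*_\varepsilon$ (for fixed $\varepsilon$); combined with the fact that the a priori bound $T_r$ is $\varepsilon$-independent, no collapse occurs before $T_1 := \min(T_r, \text{time from a priori argument})$. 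A secondary point requiring care is that the bound \eqref{fullnonlinbd} on full derivatives (not just tangential ones) must be propagated through the continuation, which follows from the sketch at the end of the proof of Proposition \ref{nlsmoothexist}, since that argument relies only on transport equations, pointwise div-curl estimates, and the elliptic/wave estimates, all of which are valid on the uniform interval.
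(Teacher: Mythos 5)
Your proposal is correct and takes essentially the same continuation argument as the paper: define the maximal existence time, use the $\ve$-independent a priori bounds to show the energy stays bounded, and use local existence (Proposition \ref{nlsmoothexist}) to contradict maximality if that time is too short. The one ingredient you gloss over, which the paper explicitly invokes, is that restarting the local existence from a later time requires the compatibility conditions \eqref{eq:compatibilityconditiondef} to hold for the data $(V,h)|_{t=T_0}$; this is supplied by Proposition \ref{linexist}, which guarantees the compatibility conditions are propagated along the flow.
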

\begin{proof}
  Let $T_0$ denote the largest time so that
  the nonlinear smoothed problem has a solution $V$ with $\sup_{0 \leq t \leq T'}
  V_2^{1,r}(t) + H^{r-1}_2(t) < \infty$ whenever $T' < T_0$.
  By Proposition \ref{nlsmoothexist}, $T_0 > 0$.

  By the energy estimates
  in section \ref{sec:nonrel} there is $T_E > 0$ depending only on $E_0$
  and a lower bound for $\nabla h$ at the boundary
   so that for any $\ve > 0$, any
  solution defined on $[0, T_E]$ with finite energy
  satisfies the energy estimate
  \eqref{nonlinbd} for $0 \leq t \leq T_E$.

  The result now follows since $T_0 \geq T_E$.
  Indeed, if $T_0 < T_E$ then we note that
  by Proposition \ref{linexist}, the compatibility conditions hold at $t = T_0$ and
  so replacing $t$ with $t - T_0$ and replacing
  the initial data $(V_0, h_0)$ with $(V_{T_0}, h_{T_0}) = (V, h)|_{t = T_0}$,
  by Proposition \ref{nlsmoothexist} we could extend the solution to a slightly
  larger time interval $[0, T_0 + \delta)$ for $\delta > 0$, which contradicts
  maximality of $T_0$.
\end{proof}

We can now provide the existence result in the Newtonian case.
\begin{theorem}
  \label{nonrelexist} Fix $r \geq 10$ and initial data
  $(V_0, h_0)$ with $
  E_0 = \| V_0\|_{H^r(\Omega)}^2 + \| h_0\|_{H^r(\Omega)}^2 < \infty$
  satisfying the compatibility conditions
  \eqref{eq:compatibilityconditiondef} as well as the Taylor sign condition
  \eqref{tsc}, with sufficiently large sound speed \eqref{soundspeed}. Then there is a
  continuous function $\mathscr{T} = \mathscr{T}(E_0, c_0, c_s) > 0$ so
  that Newtonian Euler equations
  \eqref{nonrelcont}-\eqref{nonrelcont} with $f = g = 0$
  have a solution $(V, h)$ defined on a time interval
  $0 \leq t \leq T' \leq \mathscr{T}$ and so that
  the bounds \eqref{fullnonlinbd}-\eqref{nonlinbd} hold
  with $\ve = 0$ for $t \leq T'$.
\end{theorem}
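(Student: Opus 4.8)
The plan is to obtain $(V,h)$ as a limit of solutions of the smoothed problem as the regularization parameter $\ve \to 0$. By Proposition \ref{uniformtime} there is a time $T_1 = T_1(E_0) > 0$, independent of $\ve$, so that for every $\ve > 0$ the nonlinear smoothed problem \eqref{eq:eulerlagrangiancoordsmoothed}-\eqref{eq:waveequationsmoothed} has a solution $(V^{\ve}, h^{\ve})$ on $[0,T_1]$ satisfying the bounds \eqref{nonlinbd}-\eqref{fullnonlinbd} uniformly in $\ve$. I would take $\mathscr{T} = T_1$ and produce the solution on $T' = T_1$.

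First I would record what the uniform bounds give for the Lagrangian maps. Integrating the ODEs $\tfrac{d}{dt}x^{\ve} = V^{\ve}$ and $\tfrac{d}{dt}\xve^{\ve} = S_\ve^{*}S_\ve V^{\ve}$, which share the initial value $x^{\ve}(0,\cdot) = \xve^{\ve}(0,\cdot) = x_0$, and using \eqref{fullnonlinbd} together with the uniform boundedness of $S_\ve^{*}S_\ve$ on the relevant spaces, one gets uniform bounds for $x^{\ve}, \xve^{\ve}$ in $C^{1}([0,T_1];H^{r}(\Omega))$ and, after shrinking $T_1$ if needed, a uniform positive lower bound on the Jacobians $\det(\pa x^{\ve}/\pa y)$ and $\det(\pa \xve^{\ve}/\pa y)$, so that $\pave^{\ve}_{i}$ is well defined with coefficients bounded uniformly in $\ve$.

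Next I would run a compactness argument. By Banach--Alaoglu I would pass to a subsequence along which $V^{\ve}, h^{\ve}, \pave h^{\ve}, x^{\ve}, \xve^{\ve}$, and the finitely many $D_{t}$-derivatives of these appearing in \eqref{nonlinbd}-\eqref{fullnonlinbd}, converge weakly-$*$ in the spaces in which those bounds are uniform. Since the smoothed equations together with the same bounds control one further $D_{t}$-derivative of each of these quantities, the Aubin--Lions--Simon lemma upgrades this to strong convergence in $C([0,T_1];H^{r'}(\Omega))$ for every $r' < r$. I would also use that $S_\ve^{*}S_\ve \to I$ strongly on $L^{2}(\Omega)$ (Section \ref{smoothingsec}), which with the uniform bounds gives $\xve^{\ve} - x^{\ve} \to 0$ and $\widetilde V^{\ve} - V^{\ve} \to 0$, so that $\pave^{\ve}$ converges to the unsmoothed $\pa_{i} = (\pa y^{a}/\pa x^{i})\pa_{y^{a}}$. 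Passing to the limit in \eqref{eq:eulerlagrangiancoordsmoothed}-\eqref{eq:waveequationsmoothed} then yields $\tfrac{d}{dt}x = V$, $x(0) = x_0$, $D_{t}V = -\pa h$, the wave equation $D_{t}(e_{1}D_{t}h) - \Delta h = (\pa_{i}V^{j})(\pa_{j}V^{i})$ with $h|_{\pa\Omega} = 0$, and --- from $e_{1}D_{t}h = -\widetilde{\div}\, V^{\ve}$ for each $\ve$ --- the continuity equation $e_{1}D_{t}h = -\div V$; that is, $(V,h)$ solves \eqref{eq:eulerlagrangiancoordintro}-\eqref{eq:waveequationintro}, and $v(t,x(t,y)) = V(t,y)$ then gives a solution of \eqref{nonrelcont} with $f = g = 0$ on $[0,T']$. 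Weak lower semicontinuity of the norms gives the bounds \eqref{nonlinbd}-\eqref{fullnonlinbd} with $\ve = 0$ for the limit, and the uniform $L^{\infty}$ control of $\pa h$ from \eqref{fullnonlinbd} propagates the Taylor sign condition \eqref{tsc} with $c$ replaced by $c/2$, after shrinking $T'$ if necessary.

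The hard part will be the compactness step. The uniform bounds \eqref{nonlinbd}-\eqref{fullnonlinbd} control only tangential derivatives of $V$ and $\pa h$ to top order, together with one normal derivative and an extra fractional tangential derivative of the coordinate, so one must verify carefully --- using the transport equations for $x, \xve$ and for the curl, the wave and elliptic estimates for $h$, and the $L^{\infty}$ estimates of section \ref{linftylowersec} --- that this regularity, together with the time-derivative control, is enough for Aubin--Lions to apply and for every nonlinear term (in particular the boundary terms and the terms built out of $\pa y/\pa\xve$) to pass to the limit; this is exactly the place where it matters that the a priori estimates of the previous sections close at a level strictly below the regularity of the initial data.
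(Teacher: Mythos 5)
Your overall strategy matches the paper's: run the $\ve$-smoothed problem, use Proposition \ref{uniformtime} to get a solution on a fixed $\ve$-independent time interval with uniform bounds, then pass to the limit $\ve\to 0$. There is one genuine gap at the very start, however. Proposition \ref{uniformtime} (through Proposition \ref{nlsmoothexist}) requires initial data satisfying the compatibility conditions \eqref{eq:compatibilityconditiondef} for the \emph{smoothed} problem, and those conditions are $\ve$-dependent: the operators $\pave$, $\widetilde{\Delta}$ that enter \eqref{approxsln}--\eqref{determinephik} are built from $\xve=S_\ve^*S_\ve x$, so the higher-order Taylor coefficients $h_k^\ve$ and hence the constraint $h_k^\ve\in H_0^1(\Omega)$ depend on $\ve$. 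Data satisfying the $\ve=0$ compatibility conditions assumed in the theorem will not in general satisfy the conditions for $\ve>0$, so you cannot feed $(V_0,h_0)$ directly into Proposition \ref{uniformtime}. Before the compactness argument can even begin you need to construct, for each small $\ve>0$, modified data $(V_0^\ve,h_0^\ve)$ satisfying the $\ve$-dependent compatibility conditions to order $r$ with $(V_0^\ve,h_0^\ve)\to(V_0,h_0)$ as $\ve\to 0$; this is precisely what Section \ref{compatconstruction} supplies, by solving a nonlinear elliptic system perturbatively in $\ve$, and your write-up omits the step entirely.

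On compactness, you correctly identify the danger (the top-order bounds are only tangential plus one normal derivative) but defer the resolution. The paper's argument is more concrete than an Aubin--Lions verification and is worth recording: the only problematic term in passing to the limit is the product $\pave_i V_\ve^i = (\pa y^a/\pa\xve^i_\ve)(\pa_a V^i_\ve)$, where one only has weak convergence of $\pa_y V_\ve$. The paper establishes a uniform $L^\infty([0,T_1]\times\Omega)$ bound on $\pa_y^2 V_\ve$ directly from the pointwise inequality \eqref{pwnonrel}, the continuity equation $\widetilde{\div}_\ve V_\ve = e_1 D_t h_\ve$ (which bounds $\pa\,\widetilde{\div}_\ve V_\ve$ through $\pa D_t h_\ve$, controlled by $H^{1,r/2,*}_{\infty,\ve}$), and a transport estimate for $D_t\pa\curl V_\ve$; with this equicontinuity Arzela--Ascoli gives pointwise convergence of $\pa_y V_\ve$, and dominated convergence upgrades it to strong $L^2$ convergence, which is exactly what is needed for the product to converge. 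Your Aubin--Lions route using \eqref{fullnonlinbd} could be made to work because that estimate controls one extra time derivative of $V$ and $h$, but you should actually carry this out (specify the three Banach spaces and check the hypotheses), rather than leave it as the ``hard part''. You also omit the verification that the limiting $V$ remains a diffeomorphism and that the Taylor sign condition persists, both of which require possibly shrinking $T'$, which is why the theorem statement allows $T'\leq\mathscr{T}$ rather than taking $T'=T_1$ outright.
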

\begin{proof}
  From the results in Section \ref{compatconstruction}, given initial
  data $(V_0, h_0)$ satisfying the compatibility conditions
  \eqref{eq:compatibilityconditiondef} to
  order $r$ when $\ve = 0$,
  one can construct data $(V_0^\ve, h_0^\ve)$ satisfying the corresponding conditions
  to the same order for $\ve > 0$ sufficiently small and so
   that $V_0^\ve \to V_0, h_0^\ve \to h_0$ as $\ve \to 0$.
  For $\ve > 0$ sufficiently small, let $V_{\ve}$ denote the
   solution to the nonlinear smoothed problem
   constructed in Proposition \ref{nlsmoothexist}
   with initial data $(V_0^\ve, h_0^\ve)$,
   let $h_\ve$ denote the corresponding enthalpy
   and $x_\ve$ the corresponding Lagrangian coordinate.
   By
   Proposition \ref{uniformtime} this solution
   can be extended to a time interval $[0, T_1]$
   with $T_1$ independent of $\ve$.

  Writing $\pave_\ve = \pa/\pa \xve_{\ve}$ with $\xve_\ve = S_\ve^* S_\ve x_\ve$,
  define
  \begin{equation}
   V_{p,\ve}^{1,s, *} = {\sum}_{|I| \leq s}
   \|\pa_y T^I V_\ve\|_{L^p}
   +
   \| T^I V_\ve\|_{L^p},
   \quad
   H_{p,\ve}^{1,s, *}= {\sum}_{|I| \leq s}\|\pa_y T^I \pave_\ve h_\ve\|_{L^p}
   +\| T^I \pave_\ve h_\ve\|_{L^p},
   \label{starnorm1}
 \end{equation}
 \begin{equation}
   X_{p,\ve}^{1,s, *}={\sum}_{|I| \leq s} \|\pa_y T^I x_\ve\|_{L^p}
   +\| T^I x_\ve\|_{L^p}.
   \label{starnorm2}
  \end{equation}
  From \eqref{l2uniformbd} and \eqref{lowbounds} we have a uniform bound for
  $V_{2, \ve}^{1,r,*}\!\!, H_{2,\ve}^{1,r,*}\!\!, X_{2,\ve}^{1,r,*}\!\!$
  on the time interval $[0, T_1]$ as well as for
  $V_{\infty, \ve}^{1,r/2,*}\!\!,
  H_{\infty, \ve}^{1,r/2,*}\!\!, X_{\infty, \ve}^{1,r/2,*}\!\!$.
  Therefore there are
  $V, h, x$ with $V_{2}^{1,r,*}\!\!, H_{2}^{1,r,*}\!\!,
  X_{2}^{1,r,*}\!\!,V_{\infty, \ve}^{1,r/2,*} \!< \!\infty$ so that after passing
  to a subsequence $(V_\ve, \pave_\ve h_\ve, x_\ve)
  \to (V, \pa_x h, x)$ weakly. Here the quantities
  $A_p^{1,r,*}$ are defined as in \eqref{starnorm1},\eqref{starnorm2}
  but with $(V_\ve, x_\ve, h_\ve)$ replaced with $(V,x, h)$.
  At this point one can use that we also have uniform bounds for the full
  norms \eqref{fullnonlinbd} to conclude that the limit satisfies the nonlinear equation but
  in fact one just needs bounds for tangential and time derivatives, as follows.

  From the above bounds and
  the compactness of $H^1$ in $L^2$,
  $D_t V_\ve \to D_t V$ and $D_t h_\ve \to D_t h$ strongly
  since the $T^I$ involve
  time derivatives. It remains to prove
   that $\div_\ve V_\ve \to \div V$ which is not immediate because it is nonlinear and we only have weak convergence
   (the bound for $H_{\infty,\ve}^{1,s,*}$ gives a uniform
   bound for the other nonlinear term $\pave h_\ve$). To get this convergence we claim that
   we have a uniform bound for $|\pa^2_y V_\ve|$. Assuming the claim,
   by the Arzela-Ascoli theorem, passing to another
  subsequence we then find that $\pa_y V_\ve \to \pa_y V$ pointwise and by the dominated convergence
  theorem it then converges strongly in $L^2$. Therefore the product
  $\pave_i V^i_\ve = \pa y^a/\pa \xve_\ve^i \pa_a V^i_\ve$ converges weakly,
  as required.

  To prove the bound for $\pa_y^2 V_\ve$, we start by
  using the pointwise inequality \eqref{pwnonrel},
  \begin{equation}
   |\pa^2 V_\ve| \lesssim
   |\pa \div V_\ve| + |\pa \curl V_\ve| + |\pa \S V_\ve| + |\pa V_\ve|.
   \label{2derivpw}
  \end{equation}
  The last two terms are uniformly bounded by \eqref{eq:ellipticsystem.a2}.
  Differentiating $\widetilde{\div}_\ve V_\ve =
  e_1D_t h_\ve$ and using the uniform bounds for
  $\pa D_t h_\ve$ from $H_{\infty,\ve}^{1,r/2,*}$ we get a uniform bound for
  $|\pa \widetilde{\div}_\ve V_\ve|$ as well. It remains to get a
  bound for the derivative of the curl but we have the evolution equation
  $
   |D_t \pa \curl V_\ve(t)|
   \lesssim
   |\pa^2 \widetilde{V}_\ve| |\pa V_\ve|
   +
   |\pa \widetilde{V}_\ve| |\pa^2 V_\ve|
   $
  so shrinking the time interval if needed and combining this with
  \eqref{2derivpw} we get the uniform bound for $\pa_y^2 V_\ve$.
\end{proof}

\subsection{Existence in the relativistic case}
\label{nonrelsmexist}
  Existence for the relativistic problem now follows by following exactly
  the same strategy. First, we solve the nonlinear smoothed problem
  \eqref{rescaledreleul2smoothed}
  -\eqref{rescaledrelcont2smoothed} for $\ve > 0$ on a time interval
  which depends on $\ve$. By the energy
estimates from Section \ref{rescaledreleulex} we can extend this solution to a
uniform time interval and then take $\ve \to 0$.

We recall here the assumptions we are making about
the background metric quantities. We define, at any
point $x \in \mathcal{M}$ in the Eulerian frame
\begin{equation}
 \mathcal{G}^{r} ={\sum}_{|I| \leq r}
 {\sum}_{\mu,\nu,\gamma = 0}^3
 |\pa_x^{I}\wGamma_{\mu\nu}^\gamma|
 + |\pa_x^I \wg_{\mu\nu}|.
 \qquad
 \mathcal{G}^r_p = \|\mathcal{G}^r\|_{L^p(\mathcal{M})}.
 %\label{metricdefs}
\end{equation}
Then we will assume that we have
\begin{equation}
 \mathcal{G}^r_2 \leq G,
 \label{metricbds}
\end{equation}
for some $G < \infty$. We also need to assume
that the initial rescaled velocity field is timelike
and that the enthalpy does not degenerate in the domain,
\begin{equation}
  g(\mathring{V}, \mathring{V}) = -\mathring{\sigma}
  \leq -c_1 < 0.
 \label{databounds}
\end{equation}

The existence result for the nonlinear smoothed problem is
the following, which follows in the same way that
Theorem \eqref{nlsmoothexist}
did but using the linear existence theory from section \ref{existencerel}
and the estimates from section \ref{rescaledreleulex}, using the following iteration.
Given $U = V^{(k)}$ define $z = x^{(k)}$ by
\begin{equation}
 \frac{dz}{ds} = U(z), \qquad
 z_0(0,y) = 0, z_i(0, y) = y_i, y \in \Omega,
 \label{}
\end{equation}
and define the smoothing of $z$ as in \eqref{iteratesm}. Define also
$\widetilde{g}(s,y) = g(\xve(s,y))$ and
$\widetilde{\Gamma}^\nu_{\mu\gamma}(s,y) = \Gamma^\nu_{\mu\gamma}(\widetilde{z}(s,y))$
and for a vector field $X$ define the smoothed-out covariant
derivative $\widetilde{\nabla}X$ as in \eqref{smcov}.
Now define $V^{(k+1)} = V$ by solving
\begin{equation}
  \widetilde{V}^\nu \widetilde{\nabla}_\nu V^\mu + \frac{1}{2} \widetilde{\nabla}^\mu
  \sigma = 0,
  %\label{liniteraterel}
\end{equation}
where $\sigma$ is given by solving
\begin{equation}
 e'(\sigma) \hD_s^2 \sigma - \frac{1}{2} \widetilde{\nabla}_\nu
 (\widetilde{g}^{\mu\nu}\widetilde{\nabla}_\mu
 \sigma)
 = \widetilde{\nabla}_\mu \widetilde{V}^\nu \widetilde{\nabla}_\nu V^\mu + \widetilde{R}_{\mu\nu\alpha}^\mu \widetilde{V}^\nu V^\alpha
 - e''(\sigma) (\hD_s\sigma)^2,
 %\label{sigmaiteraterel}
\end{equation}
with $\sigma = \overline{\sigma}$ on the boundary, where recall $\overline{\sigma} = \sigma|_{p = 0}$ is constant. Given the above $V$, define the new $x = x^{(k+1)}$ by solving
\begin{equation}
 \frac{d x^\mu }{ds} = V^\mu(x(s,y)), \qquad
 x^0(0, y) = 0, x^i(0, y) = y^i.
\end{equation}
Then the a priori estimates from the previous section hold and we arrive at
the basic existence result for the nonlinear smoothed problem.

\begin{prop}
  \label{nlsmoothexistrel}
  Fix $r \geq 9$, $\ve > 0$ sufficiently small and initial data $(\mathring{V}, \mathring{\sigma})$ satisfying the compatibility conditions
  \eqref{relcompat} to order $r$ as well as
  the Taylor sign condition \eqref{tsc} and the
  condition \eqref{databounds}.
  Suppose that for some $T > 0$, there is a coordinate
  system $x^\mu$ so that the coefficients of the metric
  $g = g_{\mu\nu} dx^\mu dx^\nu$ satisfy \eqref{metricbds}.
  Let $E_0 = \|\mathring{V}\|_{H^{r+1}(\Omega)}^2
  + \|\pa \mathring{\sigma}\|_{H^{r}(\Omega)}^2$
  Then there is a continuous function
$S_\ve= S_\ve(E_0, c , c_1, G) > 0$
 so that the nonlinear smoothed
problem \eqref{eq:eulerlagrangiancoordsmoothed}-\eqref{eq:waveequationsmoothed}
 has a solution $(V, h)$ defined for $[0, S_\ve]$
 so that with
$W_2^{r}$,
 $\!\!E_2^{1+r}$,
  $\!\!  V^{1,r}_2$,
$\!\! X^{1,r,\nicefrac{1}{2}}_{\ve,2}$ and
$\Sigma_2^{r-1}$ defined as in
 \eqref{normsdef1rel},
 \eqref{eq:energyrdefrel},
and \eqref{halfextranormcontrolrel},
 for $0 \leq s \leq S_\ve$
\begin{equation}
 {\sup}_{0 \leq s \leq S_{\ve}} E_2^{r+1}(s)
 + W_2^{r,1}(s) + W_2^{r-1,2}(s) + V_2^{1,r}(s)
 + X_{\ve,2}^{1,r,\nicefrac{1}{2}}(t) + H_2^{r-1}(s)
 < C(E_0, c, c_1, G),
 \label{nonlinbdtangnormrel}
\end{equation}
In fact we control normal derivatives of $V$ and $\sigma$ to highest order,
\begin{equation}
 {\sup}_{0 \leq s \leq S_{\ve}}
 {\sum}_{k+\ell \leq r}\|D_s D_s^k V(s)\|_{H^{\ell}(\Omega)}
 + {\sum}_{k+\ell \leq r}\|\pave D_s^k V(s)\|_{H^{\ell}(\Omega)}
 < C(E_0, c, c_1, G).
 \label{nonlinbdfullnormrel}
\end{equation}
\end{prop}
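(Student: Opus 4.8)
\textbf{Proof proposal for Proposition \ref{nlsmoothexistrel}.} The plan is to mirror the proof of Proposition \ref{nlsmoothexist} in the Newtonian case, using the linear existence theory from Section \ref{existencerel} and the a priori estimates from Section \ref{rescaledreleulex}. First I would set up the iteration exactly as described just before the statement: starting from an approximate solution $(V^{(0)},x^{(0)},\sigma^{(0)})$ constructed from the initial data $(\mathring V,\mathring\sigma)$ to satisfy the compatibility conditions \eqref{relcompat} to order $r$ (this is the only place where the extra regularity $H^{r+1}$ on the data, versus $H^r$ on the solution, is used), define $z=x^{(k)}$ by $dz/ds=\overline{U}(z(s,y))$ with $U=V^{(k)}$, form $\widetilde V=S_\ve^*S_\ve U$ and $\widetilde x=S_\ve^*S_\ve z$ as in \eqref{iteratesm}, set $\widetilde g(s,y)=g(\xve(s,y))$, $\widetilde\Gamma^\nu_{\mu\gamma}(s,y)=\Gamma^\nu_{\mu\gamma}(\widetilde x(s,y))$ and then solve the linear system \eqref{rescaledreleul2smoothed}--\eqref{smoothedwaveic} for $(V^{(k+1)},\sigma)$, and finally define $x^{(k+1)}$ by $dx^\mu/ds=V^\mu(x(s,y))$. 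The continuity equation \eqref{rescaledrelcont2smoothed} propagates for the linear system just as in the Newtonian case, so the a priori estimates from Section \ref{rescaledreleulex} apply verbatim to the iterates, with the single exception that the symmetry of the boundary term in the basic Euler energy identity is lost for the iterates (since $\widetilde x=S_\ve^*S_\ve z$ involves the previous iterate $z$ rather than the current $x$), and this forces the $\ve$-dependent bound as in \eqref{dtEmainnonreliter}.

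Next I would run the bootstrap. By Proposition \ref{linexist} (the relativistic linear existence result in Section \ref{existencerel}) the linear system has a solution $V^{(k+1)}$ on a short time interval with $\|V^{(k+1)}\|_{r+1,T_0}<\infty$ provided $\|\widetilde V\|_{r+1,T_0}+\|\S\xve\|_{r+1,T_0}<\infty$; the bound for $\|\S\xve\|_{r+1,T_0}$ is needed because the proof of linear existence invokes the elliptic estimate from Proposition \ref{eq:thewholecasedirichletrel} (the relativistic analogue of Proposition \ref{prop:dirichlet}). One then checks the iterate $S_\ve^*S_\ve V^{(k+1)}$ and $\S S_\ve^*S_\ve x^{(k+1)}$ have finite $r{+}1$ norm using the smoothing bound $\|\S S_\ve f\|_{L^2(\Omega)}\lesssim\ve^{-1}\|f\|_{L^2(\Omega)}$ together with the elliptic/wave estimates of Section \ref{rescaledreleulex}. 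The uniform-in-$k$ bounds \eqref{nonlinbdtangnormrel} then follow by induction on $k$ exactly as in the Newtonian argument: the closed system of evolution inequalities for $K_2^r$, $X_2^{1,r}$, $W_2^r$, $E_2^{r+1}$, $K^{r,\nicefrac12}_{\ve,2}$, $X^{\boldsymbol\times,r,\nicefrac12}_{\ve,2}$, $D^{r,\nicefrac12}_{\ve,2}$, $W_2^{r-1,2}$ assembled at the end of Section \ref{rescaledreleulex} (equations \eqref{eq:evolutionsystem.arel}--\eqref{Sigma2bd}, \eqref{eq:evolutionhalf1rel}--\eqref{eq:boundshalf2rel}, \eqref{mainrelwaveest}, \eqref{eq:pahalfTKparel}) closes, except that the energy estimate \eqref{E2smooth} for the iterates is replaced by the $\ve$-dependent version following from \eqref{dtEmainnonreliter}. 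This only yields a uniform bound on a time interval $[0,S_\ve]$ with $S_\ve=O(\ve)$, which is acceptable. In the relativistic case one additionally needs to track the background-metric quantities $G^r_2$, $G^{r,\nicefrac12}_p$ via \eqref{Grenergy}, \eqref{geomnorms} and the conversion estimate \eqref{conversion}, but these are controlled by the assumption \eqref{metricbds} and the bounds on $X^{1,r}$. One must also ensure $V$ remains timelike and future-directed along the iteration, which follows by integrating the $L^\infty$ bounds \eqref{rellinftyfirst}--\eqref{rellinftylast} in time as in \eqref{timelike}--\eqref{futuredirected}, so that Lemma \ref{emtensorpositivity} remains applicable.

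With uniform bounds in hand, I would show $(V^{(k)},\xve^{(k)},\sigma^{(k)})$ is Cauchy in a lower-order norm — $|V^{r}_{2,(k_1)}(s)-V^{r}_{2,(k_2)}(s)|\to0$ as $k_1,k_2\to\infty$ — by differencing the linear systems for consecutive iterates and running the same energy argument at one fewer derivative; the uniform bounds provide the Lipschitz constants. The weak limit $(V,x,\sigma)$ inherits \eqref{nonlinbdtangnormrel} by lower semicontinuity, and the strong (Cauchy) convergence lets one pass to the limit in the nonlinear terms, so $(V,x,\sigma)$ solves the nonlinear smoothed relativistic problem on $[0,S_\ve]$. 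Finally, the full-derivative bound \eqref{nonlinbdfullnormrel} follows from \eqref{nonlinbdtangnormrel} by the same device used at the end of the proof of Proposition \ref{nlsmoothexist}: apply the pointwise estimate \eqref{pwH} to express $\pave\pa^J V$ in terms of $\pave\pa^J x$, the full divergence, the full curl, material derivatives, and tangential derivatives, then bound the full-derivative curl and $\pave\pa^J x$ via the transport equations \eqref{eq:modifiedcurlrel} and the $x$-evolution equation — these only involve differentiating transport equations and pointwise inequalities, so no fractional tangential commutators intervene — and bound $\pave\sigma$ via the elliptic estimate \eqref{eq:paTpah2rel} plus the wave-equation estimate \eqref{mainrelwaveest}. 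The main obstacle, as in the Newtonian case, is the loss of boundary-term symmetry for the iterates: one must carefully verify that the extra $\ve^{-1}$ factors it introduces are harmless, i.e. that the closed system still produces a bound uniform in $k$ (though not in $\ve$) on $[0,S_\ve]$, and that when the smoothing is later removed in the companion proposition the time interval can be taken uniform in $\ve$.
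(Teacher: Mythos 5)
Your proposal follows essentially the same route as the paper: mirror the Newtonian proof of Proposition \ref{nlsmoothexist}, substitute the linear existence theory of Section \ref{existencerel} and the a priori estimates of Section \ref{rescaledreleulex}, note that only the boundary term loses its exact symmetry (giving the $\ve^{-1}$ factors and the $O(\ve)$ time interval), and handle the one new relativistic wrinkle — that $\widetilde g$, $\widetilde\Gamma$ are evaluated at the previous iterate — by converting $G^{1,r}$ to $\widetilde G^{1,r}$ via \eqref{conversion} and invoking the hypothesis \eqref{metricbds}, which is exactly the point the paper singles out. Your additional observations (propagation of the timelike/future-directed property so Lemma \ref{emtensorpositivity} applies, and the deduction of \eqref{nonlinbdfullnormrel} from \eqref{nonlinbdtangnormrel} by the same div-curl-plus-transport device used at the end of the Newtonian proof) are consistent with the paper's intent; the proposal is correct.
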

\begin{proof}
 The argument proceeds in the same way as the proof of
 Proposition \ref{nlsmoothexist}, using the bounds from
 Section \ref{rescaledreleulex} in place of the bounds from Section \ref{sec:nonrel}.
 There is one additional detail which is that the a priori
 bounds from Section \ref{rescaledreleulex} were written in terms
 of the norms of the geometric data $G^{1,r}$ defined in
 \eqref{Grenergy}. In the iteration,
  $\widetilde{g}$,$\widetilde{\Gamma}$ need to be interpreted
  as being evaluated at the previous iterate and we therefore
 need a uniform bound for the terms involving $G^{1,r}$.
 This follows directly from \eqref{conversion} and \eqref{metricbds}.
\end{proof}

Next, we show that the solution constructed
in the previous proposition can be extended to a time
interval whose length is independent of $\ve$. This
follows in the same way as Proposition \ref{uniformtime}
after
using the energy estimate \eqref{relenestend}.
\begin{prop}\label{uniformtimerel} Fix $r \geq 10$ and initial data $(\mathring{V}, \mathring{h})$
  satisfying the compatibility conditions \eqref{eq:compatibilityconditiondef}
  to order $r$ and define $E_0$ as in the previous
  Proposition. Then there is
   $S_1 = S_1(E_0, c, c_1, G) > 0$ so that
   for any $\ve > 0$ the nonlinear smoothed problem
   \eqref{eq:eulerlagrangiancoordsmoothed}-\eqref{eq:waveequationsmoothed} has a solution defined for
   $[0, S_1]$ so that the bounds
   \eqref{nonlinbdtangnormrel}-\eqref{nonlinbdfullnormrel} hold for
   $0 \leq s \leq S_1$.
\end{prop}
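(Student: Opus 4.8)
The plan is to follow the proof of Proposition~\ref{uniformtime} essentially verbatim, substituting the uniform relativistic a priori bounds of Section~\ref{rescaledreleulex} --- in particular the energy estimate \eqref{relenestend} and its smoothed counterpart --- for the Newtonian bounds of Section~\ref{sec:nonrel}. Fix $\ve > 0$ sufficiently small and let $S_0 = S_0(\ve)$ be the supremum of all $S'$ for which the nonlinear smoothed problem (with smoothing $S_\varepsilon$ built from the Lagrangian coordinate, as in \eqref{rescaledreleul2smoothed}--\eqref{rescaledrelcont2smoothed}) has a solution $(V,\sigma)$ on $[0,S']$ for which the norms appearing in \eqref{nonlinbdtangnormrel}--\eqref{nonlinbdfullnormrel} are finite. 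By Proposition~\ref{nlsmoothexistrel} we have $S_0 > 0$.

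The crux is that the a priori estimates of Section~\ref{rescaledreleulex} provide a time $S_E > 0$ depending only on $E_0$, on the Taylor sign constant $c$ from \eqref{tsc}, on $c_1$ from \eqref{databounds}, and on the bound $G$ for the background metric in \eqref{metricbds}, \emph{but not on $\ve$}, such that any solution of the nonlinear smoothed problem that is defined and has finite energy on $[0,S_E]$ automatically obeys \eqref{nonlinbdtangnormrel} for $0 \le s \le S_E$. This is exactly the content of \eqref{relenestend} together with the closed div--curl evolution system and the wave-equation and half-derivative estimates derived in the preceding subsections; the constants there are $\ve$-independent because the continuity equation is propagated along the iteration (so the energy identities for the smoothed problem genuinely apply to the constructed solution) and because every smoothing commutator in the error terms was absorbed using the self-adjointness of $S_\varepsilon$ on $L^2(\pa\Omega)$ rather than a negative power of $\ve$. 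The full-derivative bound \eqref{nonlinbdfullnormrel} then follows from \eqref{nonlinbdtangnormrel} by the pointwise elliptic estimate of Lemma~\ref{pwHlemma}, the transport equations for the curl, and the enthalpy wave-equation estimates, exactly as in the proof of Proposition~\ref{nlsmoothexistrel}.

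It then remains to show $S_0 \ge S_E$, which is a continuation argument. Suppose toward a contradiction that $S_0 < S_E$. On every $[0,S']$ with $S' < S_0 \le S_E$ the solution satisfies \eqref{nonlinbdtangnormrel}--\eqref{nonlinbdfullnormrel} with a constant independent of $S'$, and these bounds prevent any degeneration as $S' \uparrow S_0$: by \eqref{timelike}--\eqref{futuredirected} the velocity $V(s)$ remains timelike and future-directed, the Taylor sign condition persists (with $c$ replaced by $c/2$), and the Lagrangian map remains a diffeomorphism onto its image. Hence $(V,\sigma)$ extends continuously up to $s = S_0$ with $(V(S_0),\sigma(S_0))$ still verifying the hypotheses of Proposition~\ref{nlsmoothexistrel}, and by the linear existence theory of Section~\ref{existencerel} the compatibility conditions \eqref{relcompat} hold at $s = S_0$. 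Applying Proposition~\ref{nlsmoothexistrel} with $s$ shifted to $s - S_0$ and data $(V(S_0),\sigma(S_0))$ yields a solution on $[S_0, S_0 + \delta]$ for some $\delta > 0$; gluing it to the original gives a solution on $[0, S_0 + \delta]$ with the norms of \eqref{nonlinbdtangnormrel}--\eqref{nonlinbdfullnormrel} finite, contradicting maximality of $S_0$. Thus $S_0 \ge S_E$, and taking $S_1 = S_E$ completes the proof.

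The main obstacle is not a new idea but the careful bookkeeping behind the previous paragraph: one must confirm that \emph{every} constant entering the estimates of Section~\ref{rescaledreleulex} --- including those hidden in the $c_r$, $C_r$ notation of \eqref{lowercasecdefrel}--\eqref{capitalCdefrel}, in the elliptic estimates for the enthalpy and for the smoothed coordinate, and in the extra half-derivative and extra time-derivative estimates --- depends only on $E_0, c, c_1, G$ and is independent of $\ve$, so that the continuation argument truly produces an $\ve$-uniform time $S_1$. This was arranged in the course of deriving \eqref{relenestend} and closing the div--curl system, but it is the point one has to keep checking when transcribing the Newtonian argument.
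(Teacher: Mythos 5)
Your proposal is correct and follows essentially the same route as the paper: define the maximal existence time $S_0$, invoke the $\ve$-independent a priori bounds from Section~\ref{rescaledreleulex} (in particular \eqref{relenestend}) to obtain a uniform $S_E$, and conclude $S_0 \geq S_E$ by a continuation argument using Proposition~\ref{nlsmoothexistrel} at $s = S_0$. The paper presents this as a direct transposition of the proof of Proposition~\ref{uniformtime}; your version spells out the additional relativistic checks (persistence of the timelike/future-directed property via \eqref{timelike}--\eqref{futuredirected}, and the propagation of the Taylor sign condition), which are implicit in the paper's one-line proof and are a useful elaboration rather than a departure.
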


In the same way that Propositions \ref{nlsmoothexist} and \ref{uniformtime}
gave Theorem \ref{nonrelexist}, Propositions
\ref{nlsmoothexistrel} and \ref{uniformtimerel} imply
\begin{theorem}
  \label{relexist} Fix $r\!  \geq\!  10$ and initial data
  $(\mathring{V}, \mathring{\sigma})$ with $
  E_0 =\| V_0\|_{H^{r+1}(\Omega)}^2 + \|\pa h_0\|_{H^r(\Omega)}^2 = \!< \! \infty$ satisfying the compatibility conditions
  \eqref{eq:compatibilityconditiondef}. Then there is a
  continuous function $\mathscr{S}\!  = \! \mathscr{S}(N_0^{r+1}\!\!  ,
  c, c_1)\!  > \! 0$ so
  that relativistic Euler equations
  \eqref{rescaledrelcont}-\eqref{rescaledreleul}
  have a solution $(V, h)$ for
  $0 \leq s \leq S' \leq \mathscr{S}$ and so that
  the bounds \eqref{nonlinbdtangnormrel}-\eqref{nonlinbdfullnormrel} hold
  with $\ve = 0$ for $s \leq S'$.
\end{theorem}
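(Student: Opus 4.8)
The plan is to mirror exactly the argument that gave Theorem \ref{nonrelexist} in the Newtonian case, substituting the relativistic ingredients at each step. The three pillars are already in place: (i) Proposition \ref{nlsmoothexistrel} gives, for each fixed $\ve > 0$ sufficiently small, a solution of the smoothed relativistic problem on an interval $[0, S_\ve]$ whose length a priori degenerates as $\ve \to 0$; (ii) Proposition \ref{uniformtimerel} upgrades this to a solution on an interval $[0, S_1]$ with $S_1 = S_1(E_0, c, c_1, G)$ \emph{independent} of $\ve$, together with the uniform bounds \eqref{nonlinbdtangnormrel}--\eqref{nonlinbdfullnormrel}; and (iii) the a priori energy estimates from Section \ref{rescaledreleulex}, in particular \eqref{relenestend}, which hold for the non-smoothed problem $\ve = 0$ as well. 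So the only genuine task is to pass to the limit $\ve \to 0$.

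\textbf{Key steps.} First I would use the compatibility-condition construction from Section \ref{compatconstruction} to produce, from initial data $(\mathring{V}, \mathring{\sigma})$ satisfying the order-$r$ compatibility conditions for $\ve = 0$, approximating data $(\mathring{V}^\ve, \mathring{\sigma}^\ve)$ satisfying the corresponding order-$r$ conditions for $\ve > 0$, with $(\mathring{V}^\ve, \mathring{\sigma}^\ve) \to (\mathring{V}, \mathring{\sigma})$ in $H^r$. Since $g(\mathring{V},\mathring{V}) \leq -c_1 < 0$ and the Taylor sign condition $|\nabla \mathring{\sigma}| \geq c > 0$ are open conditions and the metric bound \eqref{metricbds} is fixed, these are inherited (with constants $c/2$, $c_1/2$) by the approximations for $\ve$ small. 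Second, apply Proposition \ref{nlsmoothexistrel} to get solutions $(V_\ve, \sigma_\ve, x_\ve)$ of the smoothed problem, and then Proposition \ref{uniformtimerel} to extend them all to a common interval $[0, S_1]$ with the uniform bounds \eqref{nonlinbdtangnormrel}--\eqref{nonlinbdfullnormrel} holding for the quantities $V^{1,r}_{2,\ve}$, $W^r_{2,\ve}$, $E^{r+1}_{2,\ve}$, $X^{1,r,\nicefrac 12}_{\ve,2}$, and the full-derivative norm in \eqref{nonlinbdfullnormrel}, all controlled by $C(E_0, c, c_1, G)$. Third, extract a weakly convergent subsequence $(V_\ve, \pave_\ve \sigma_\ve, x_\ve) \rightharpoonup (V, \pa_x \sigma, x)$; the limit satisfies the uniform bounds by lower semicontinuity. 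Fourth, upgrade to strong convergence: from the uniform control of one extra derivative and the compact embedding $H^1 \hookrightarrow L^2$, the time and tangential derivatives $\hD_s V_\ve \to \hD_s V$ and $\hD_s \sigma_\ve \to \hD_s \sigma$ strongly; for the nonlinear term $\widetilde{\div}_\ve V_\ve = \pa y^a / \pa \xve_\ve^\mu\, \pave_a V_\ve^\mu$ one argues as in the Newtonian proof, using the pointwise estimate \eqref{usefulpwbd} (applied to $V_\ve$), the divergence from the continuity equation \eqref{rescaledrelcont2smoothed}, and the curl evolution equation, together with a uniform bound on $|\pa_y^2 V_\ve|$ coming from \eqref{eq:ellipticsystem.arel} and the transport equation $\hD_s(\pa\,\widetilde{\curl}\,V_\ve) \lesssim |\pa^2 \widetilde{V}_\ve||\pa V_\ve| + |\pa \widetilde{V}_\ve||\pa^2 V_\ve|$; Arzel\`a--Ascoli then gives $\pa_y V_\ve \to \pa_y V$ pointwise, hence strongly in $L^2$ by dominated convergence, so $\widetilde{\div}_\ve V_\ve$ converges weakly to $\widetilde{\div}\, V$. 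Fifth, pass to the limit in the equations \eqref{rescaledreleul2smoothed}--\eqref{rescaledrelcont2smoothed}: since $S_\ve^* S_\ve \to \mathrm{id}$ strongly, $\widetilde{V}_\ve \to V$ and $\xve_\ve \to x$, so $\widetilde{\pa}_\ve \to \pa_x$ and $\widetilde{\Gamma}_\ve \to \Gamma(x)$, yielding a solution $(V, \sigma, x)$ of the non-smoothed relativistic Euler system \eqref{rescaledreleul}--\eqref{rescaledrelcont} in Lagrangian coordinates on $[0, S']$ for some $S' \leq \mathscr{S}$, with the bounds \eqref{nonlinbdtangnormrel}--\eqref{nonlinbdfullnormrel} at $\ve = 0$. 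Finally, uniqueness follows from the a priori estimate applied to the difference of two solutions in a lower-order norm (the Cauchy-sequence argument sketched in the Newtonian proof), and the fact that $\D_s$ are spacelike and the Taylor sign condition persists with constant $c/2$ is read off from \eqref{timelike}--\eqref{futuredirected} and the low-norm bounds \eqref{rellinftyfirst}--\eqref{rellinftylast}.

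\textbf{Main obstacle.} The subtle point, just as in the Newtonian case, is the passage to the limit in the \emph{nonlinear} quantities built from $\pa_y / \pa \xve_\ve$ --- the divergence $\widetilde{\div}_\ve V_\ve$ and the wave operator coefficients $\widetilde{g}^{\mu\nu}_\ve$. Weak convergence of $\pa_y V_\ve$ is not enough to pass to the limit in a product of two such weakly convergent factors, so one genuinely needs the uniform second-derivative bound on $V_\ve$ (and likewise on $\sigma_\ve$ via the pointwise elliptic estimate \eqref{eq:paTpah2rel} and the wave-equation energy) to get strong $L^2$ convergence of $\pa_y V_\ve$ via Arzel\`a--Ascoli. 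Everything else is a bookkeeping translation of the Newtonian argument; the curvature terms $\widetilde{R}_{\mu\nu\alpha}^\mu \widetilde{V}^\nu V^\alpha$ and the Christoffel-symbol contributions are all lower order by \eqref{metricbds} and \eqref{conversion} and cause no new difficulty in the limit.
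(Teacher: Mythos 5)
Your proposal is correct and follows essentially the same route as the paper. The paper's own ``proof'' of Theorem~\ref{relexist} is a single sentence pointing out that it follows from Propositions~\ref{nlsmoothexistrel} and~\ref{uniformtimerel} in exactly the way Theorem~\ref{nonrelexist} followed from Propositions~\ref{nlsmoothexist} and~\ref{uniformtime}; what you have done is correctly unpack that remark, translating each step of the Newtonian compactness argument (approximating the compatible data, uniform-in-$\ve$ bounds from the a priori estimates, weak extraction of limits, and the Arzel\`a--Ascoli upgrade to strong $L^2$ convergence of $\pa_y V_\ve$ in order to pass to the limit in the nonlinear divergence and wave-operator coefficients) into the relativistic setting, with the correct identification of where the extra lower-order curvature and Christoffel terms enter.
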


\appendix

\section{Tangential smoothing, fractional derivatives, vector fields and norms}

\subsubsection{The tangential derivatives and tangential norms}
\label{def T and FD}
Since $\Omega$ is the unit ball, the vector fields
\begin{equation}\label{eq:rotations}
\Omega_{ab} =  y^a \pa_{y^b} - y^b \pa_{y^a} , \qquad a,b = 1,2,3,
\end{equation}
are tangent to $\pa\Omega$ and span the tangent space there.
With $\eta$ the cutoff function defined above, let:
\begin{equation}
  \label{Sdef}
 \S = \cup_{a,b \,= 1,2,3}\{ \eta\, \Omega_{ab}, (1-\eta) \pa_{y^a}\}.
\end{equation}
In analogy with the two dimensional case, when $\S$ is just the derivative with respect to the angle in polar coordinates, we will now introduce some simplified notation for the norms.
Suppose that $f:\Omega\to \mathbf{R}$ is a function and $\S\!=\!\{S_1,\dots,S_{N}\}$ is a family of vector fields that are tangential to the boundary at the boundary that span the tangent space there.
Let $\S f$ stand for the map
$\S f\!\!:\!\Omega\to \!\mathbf{R}^{N}\!\!\!$, whose components are $S_j f$, for $j\!{}_{\!}={}_{\!}\!1,{}_{\!}...,{}_{\!}N\!$.
For $r$ an integer,  let $\S^r\!\!{}_{\!}={}_{\!}\!\S\!\!\times\!{}_{\!}\cdots\!\times\! \S$($r$ times) and let $S^I\!\!\in\! \S^r\!$ stand
for a product of $r$ vector fields in $\S\!$, where $I\!=\!(i_1,{}_{\!}...,{}_{\!}i_{r\!})\!\in\! [1,N]\!\times\!\cdots\!\times\! [1,N]$ is a multiindex of length $|I|\!=\!r$. Let $\S^r f\!$ stand for the map $\S^r\! f\!\!:\!\Omega\!\to \!\mathbf{R}^{N r}\!\!\!$, whose components are $S^I \!f$, for $1\!\leq\! i_j\!\leq\! N\!$, $j\!=\!1,{}_{\!}...{}_{{}_{\!}},r$. The norm of $\S^r\! f\!$ is
\begin{equation}\label{eq:simplifiedtangentialnotation}
|\S^r \!f|^2= \S^r \!f\cdot \S^r \! f,\quad\text{where}\quad
\S^r f\cdot \S^r g= {\sum}_{|I|=r,\,\,S^I\in\S^r} S^I\! f \,\, S^I g.
\end{equation}

Moreover, let
\begin{equation}
 ||W||_{H^{k{}_{\!},r}} ={\sum}_{\ell \leq r}
 ||\S^\ell W||_{H^{k}(\Omega)}.
\end{equation}

We will use similar notation for space time vector fields tangential to the boundary.
Let $\mathcal{T}\!=\!\S\!\cup\! D_t$, and
$\mathcal{T}^r\!\! =\!\mathcal{T}\!{}_{\!}\times \!\!\cdots\!\!\times \!\mathcal{T}$($r{\!}$ times), $\mathcal{T}^{r\!,k}\!{}_{\!}=\!\S^r {}_{\!}\!\!\times \!{}_{\!} D_{{}_{\!}t}^k$\!.
For $\!K\!{}_{\!}=\!(_{\!}I{}_{\!},{}_{\!}k{}_{{}_{\!}})$ a multiindex with $|I|\!{}_{\!}=\!r$, we write
$T^K\!\!=\!S^I \!D_{{}_{\!}t}^k\!$, $S^I\!\!\in\!\S^r$\!\!.

\subsubsection{Global operators defined in terms of local coordinates}\label{sec:localcoord}
There is a family of open sets $V_{{}_{\!}\mu} $, $\mu\!=\!1,\dots,N$ that cover $\pa \Omega$ and  onto diffeomorphisms
$\Phi_\mu\!:\! (\shortminus 1,1)^{2 \!}\!\to\! V_{{}_{\!}\mu} $. \!We fix a collection of cutoff functions $\chi_{\mu}{}_{\!}\!:\!\pa \Omega
\!\to \!\R$ so that $\chi_\mu^2$ form a partition of unity subordinate to the cover $\{V_\mu\}_{\mu = 1}^N$,
 as well as another family of
``fattened'' cutoff functions $\tC_\mu$ so that the support of
$\tC_\mu$ is contained in $V_\mu$ and so that
$\tC_\mu \!\equiv\! 1$ on the support of $\chi_\mu$. Recalling that
$\Omega$ is the unit ball, we set $W_{\!\mu} \!=\! \{r\omega, r\! \in\! (1/2, 1],\, \omega\!
\in \!V_{\!\mu}\}$
for $\mu\! = \! 1,\dots, N$
and let $W_0$ be the ball of radius $3/4$ so that the collection
$\{W_{\!\mu}\}_{\mu = 0}^N$ covers $\Omega$.
Then $y\!=\!\Psi_{\!\mu}(\widehat{z}) \!= z^{3\,}\omega$, where  $\omega\!=\!\Phi_\mu(z)$ and
$\widehat{z}\!=\!(z,z^3)$, is a diffeomorphism $\Psi\!:\!(\shortminus 1,1)^2\! \times\! (1/2,1]\!\to \!W_{\!\mu}$.
Let $\eta\!:\![0,1] \!\to\! \R$ be
a bump function so that $\eta(r)\! =\! 1$ when $1/2\! \leq \!r \!\leq\! 1$ and
$\eta(r) \!= \!0$ when $r\! < \!1/4$.
We define cutoff functions on $\Omega$ by setting
$
\chi_\mu (\widehat{z})\!= \!\chi_\mu(z) \eta(z^3) ,
$ for
$\mu\!\geq\! 1$, and $\chi_0$ so $\sum \chi_\mu^2\!=\!1$.
Let $\Psi^\prime_{\!\mu}\!=\! \pa y/\pa \widehat{z}$ and $\Psi^\prime_{\!\mu}\!=\!\pa \omega/\pa z$.
Then $\det{\Psi^\prime_{\!\mu}}\!=\! r^2 \det{\Phi^\prime_\mu}$.

In the local coordinates the tangential vector fields \eqref{eq:rotations} takes the form
\begin{equation}
S =S^a(z)\, \pa/\pa z^a, \qquad \text{with}\quad S^3(z)=0.
\end{equation}
Moreover we can write
\begin{equation}\label{eq:flatcoord}
\widetilde{\pa}_i = \widehat{J}_i^d \widehat{\pa}_d, \quad \text{where}\quad \widehat{J}_i^d
=\pa \widehat{z}^d\!/\pa\widetilde{x}^i,\quad\text{and}\quad \widehat{\pa}_d=\pa/\pa \widehat{z}^d=(
\Psi^\prime_\mu)_d^a\pa_a,\quad \pa_a=\pa/\pa  y^a.
\end{equation}
For a linear operator ${A}$ defined in local coordinates on the sphere we define a global operator $A$ by
\begin{equation}\label{eq:localtoglobal}
Af\!=\!\tsum A_\mu f,\quad\text{where}\quad
A_\mu f\! =
\chi_\mu m_\mu^{-1} {A}\big[m_\mu f_{\!\mu}\big]_{\!}\circ_{\!}\Psi_{\!\mu}^{-1}\!\!,\qquad f_{\!\mu}(z)\!=\!(\chi_\mu f)_{\!}\circ_{\!} \Psi_{\!\mu}(z,z^3).
\end{equation}
Here $m_\mu=|\det{\Psi_{\!\mu}^\prime}|^{1/2}$ is inserted so that
$A$ is symmetric with the measure $dy$ if it is with the measure $dz$ for fixed $z_3$ since $dS(\omega)=m_\mu^2 dz$. For the smoothing the symmetry in spherical coordinates makes things simpler since it will mean that the global operator defined by \eqref{eq:localtoglobal} is symmetric on the sphere.

However for the fractional derivative in only defined locally in each coordinate system so in that case we will pick $m_\mu=1$. Then we have
\begin{equation}
\widehat{\pa}_d\big( A[f_\mu]\circ \Psi^{-1}\big)\!=\!(\widehat{\pa}_d A[f_\mu])\circ \Psi^{-1} \!
=\![\widehat{\pa}_d, A][f_\mu]\circ \Psi^{-1}\!+A[\widehat{\pa}_d f_\mu]\circ \Psi^{-1\!\!},\qquad
\widehat{\pa}_d f_\mu\!=\!( \widehat{\pa}_d f)_\mu\!+\big( \widehat{\pa}_d\chi_\mu\, f\big)_{\!}\circ_{\!} \Psi_{\!\mu}
\end{equation}
and
\begin{equation}
S\big( A[f_\mu]\circ \Psi^{-1}\big)=(S A[f_\mu])\circ \Psi^{-1}
=[S, A][f_\mu]\circ \Psi^{-1}+A[S f_\mu]\circ \Psi^{-1}\!,\qquad
S f_\mu=( S f)_\mu+ \big( S\chi_\mu\, f\big)_{\!}\circ_{\!} \Psi_{\!\mu}
\end{equation}
Hence the commutators between the global operator $A$ and $\widehat{\pa}_i$ or $S$ consist of the commutators between these in the local coordinates plus terms when the derivatives fall on the cutoffs or measures which are lower order.

\subsubsection{Tangential smoothing}
\label{smoothingsec}
 Let
$\varphi \!:\! \R^2 \!\!\to\! \R$ be even, supported in $R = (-1,1)^2$ with $\int_{\R^2}
 \!\varphi =\! 1$ and
\begin{equation}
 S_\ve f(z) = \int_{\R^2} \varphi_{\ve}(z-w) f(w) dw,\qquad\text{where}\qquad
 \varphi_{\ve}(z)\! =\! {\ve^{-2}} \varphi\big({z}/{\ve}\big).
\end{equation}
be a smoothing operator.
Because $\varphi$ is even, $S_\ve$ is symmetric; for any functions
$f, g: \R^2 \to \R$:
\begin{equation}
 \int_{\R^2}  S_\ve f(z)\,\, g(z)\, dz =
 %\int \int_{\R^2} \varphi_\ve(z-w) f(w) g(z)\, dw\,  dz =
 \int_{\R^2}  f(z) \,\, S_\ve g(z)\, dz.
\end{equation}
We now define global symmetric operators on $\Omega$ or $\pa \Omega$ by \eqref{eq:localtoglobal}:
\begin{align}
 \sm f = {\sum}_{\mu = 0}^N
  S_{\ve,\mu} f.
 \label{smoothing}
\end{align}

\subsubsection{Commutators with smoothing}
We have
\begin{lemma}
  \label{smoothinglemma}
 With $\sm$ defined by \eqref{smoothing}, if $k \geq m$ then:
 \begin{equation}
  ||\sm f||_{H^k(\pa \Omega)} \lesssim \ve^{m-k} ||f||_{H^{m}(\pa \Omega)},
  \quad\text{and}\quad
  ||\sm f - f||_{H^k(\pa \Omega)} \lesssim \ve ||f||_{H^{k+1}(\pa \Omega)},
 \end{equation}
 and \begin{equation}
 \|S_\varepsilon f\|_{L^\infty(\pa \Omega)}\leq \|f\|_{L^\infty(\pa \Omega)}.
 \end{equation}
 Moreover, for $k=0,1$:
 \begin{equation}
   ||\sm(fg) -f \sm g||_{H^k(\pa \Omega
   )}\lesssim \ve^{1-k} ||f||_{C^{1+k}(\pa \Omega)} ||g||_{L^2(\pa \Omega)},
 \end{equation}
 and for $n=0,1$
 \begin{equation}
   \label{hnk}
   ||\sm(fg) -f \sm g||_{H^{n,k}(\Omega
   )}\lesssim \ve^{1-k} ||f||_{C^{n,1+k}(\Omega)} ||g||_{H^n(\Omega)},
 \end{equation}
 where
 \begin{equation*}
 \|f\|_{C^{n,k}}={\sum}_{|I|\leq k,\, S\in\mathcal{S}}\|S^I\! f\|_{C^n},
 \quad\text{and}\quad
 \|f\|_{H^{n,k}}={\sum}_{|I|\leq k,\, S\in\mathcal{S}}\|S^I\! f\|_{H^n}.
 \end{equation*}
\end{lemma}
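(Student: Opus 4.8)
\textbf{Proof plan for Lemma \ref{smoothinglemma}.}

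The plan is to reduce every estimate to the corresponding flat statement for the Euclidean mollifier $S_\ve$ acting on functions of $z \in \R^2$, using the localization machinery of Section \ref{sec:localcoord}. Concretely, writing $\sm f = \sum_{\mu} S_{\ve,\mu} f$ with $S_{\ve,\mu} f = \chi_\mu m_\mu^{-1}(S_\ve[m_\mu f_\mu]) \circ \Psi_\mu^{-1}$ and $f_\mu = (\chi_\mu f)\circ\Psi_\mu$, I would first observe that the $H^k(\pa\Omega)$ norm is equivalent (up to constants depending only on the fixed atlas) to $\sum_\mu \| (\chi_\mu \cdot)\circ\Psi_\mu\|_{H^k((-1,1)^2)}$, and similarly that for the $\Omega$-norms one uses the coordinates $\widehat z = (z,z^3)$ and only the tangential derivatives $S\in\mathcal S$ together with the radial derivative. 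So it suffices to prove each inequality for $S_\ve$ on $\R^2$, with $H^k(\pa\Omega)$ replaced by $H^k(\R^2)$, $C^j$ by $C^j(\R^2)$, and then sum over the finitely many charts; the transition terms, where a derivative $\widehat\pa_d$ or $S$ falls on a cutoff $\chi_\mu$ or on the Jacobian factor $m_\mu$ rather than on the mollified argument, are always of strictly lower order (as recorded in Section \ref{sec:localcoord}) and hence absorbed.

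For the flat estimates themselves: the bound $\|S_\ve f\|_{H^k} \lesssim \ve^{m-k}\|f\|_{H^m}$ is immediate on the Fourier side, since $\widehat{S_\ve f}(\xi) = \widehat\varphi(\ve\xi)\widehat f(\xi)$ and $|\xi|^{k-m}|\widehat\varphi(\ve\xi)| \lesssim \ve^{m-k}$ because $\widehat\varphi$ is Schwartz; the approximation bound $\|S_\ve f - f\|_{H^k}\lesssim \ve\|f\|_{H^{k+1}}$ follows from $|\widehat\varphi(\ve\xi)-1|\lesssim \ve|\xi|$, which holds since $\int\varphi = 1$ forces $\widehat\varphi(0)=1$ and $\widehat\varphi$ is $C^1$. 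The $L^\infty$ contraction $\|S_\ve f\|_{L^\infty}\le\|f\|_{L^\infty}$ is just $\int\varphi_\ve = 1$ together with $\varphi_\ve \ge 0$ — here one should note that positivity of $\varphi$ was not assumed, so strictly this bound requires $\int|\varphi_\ve| = 1$; if $\varphi$ is allowed to change sign one instead gets $\|S_\ve f\|_{L^\infty}\le \|\varphi\|_{L^1}\|f\|_{L^\infty}$, and I would either assume $\varphi\ge0$ or replace the constant accordingly. For the commutator estimates, the key is the standard identity
\begin{equation}
 (S_\ve(fg) - f S_\ve g)(z) = \int_{\R^2} \varphi_\ve(z-w)\big(f(w) - f(z)\big) g(w)\, dw,
\end{equation}
and then Taylor-expanding $f(w) - f(z) = (w-z)\cdot\nabla f(z) + O(|w-z|^2\|f\|_{C^2})$; the first-order term, paired against the even kernel $\varphi_\ve$, integrates to a convolution of $g$ with the kernel $(z-w)\varphi_\ve(z-w)$ whose $L^1$ norm is $O(\ve)$, giving the $k=0$ bound with a gain of $\ve$, while each tangential derivative $S$ applied to the commutator either hits $f$ (costing one more $C$-derivative, no $\ve$ gain) or is moved onto the kernel and then integrated by parts back onto $g$ (again no $\ve$ gain but no loss on $g$), which yields the $k=1$ bound $\lesssim \|f\|_{C^2}\|g\|_{L^2}$. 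The estimate \eqref{hnk} is the same computation carried out in the $\widehat z$ coordinates on $\Omega$, tracking $n$ extra plain derivatives and $k$ tangential ones on each factor.

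The main obstacle — really the only delicate point — is bookkeeping the chart transitions: because $S_{\ve,\mu}$ conjugates by $\Psi_\mu$ and by the factor $m_\mu$, a clean Fourier argument only applies after these conjugations, and one must check that (i) the Jacobian factors $m_\mu, m_\mu^{-1}$ are smooth and bounded with all derivatives on the relevant compact sets, so multiplying by them costs only a fixed constant in any $H^k$ or $C^j$ norm, and (ii) commuting $\widehat\pa_d$ or $S$ past the cutoff $\chi_\mu$ and across $\Psi_\mu$ produces only lower-order terms — which is exactly the content of the two displayed commutator formulas at the end of Section \ref{sec:localcoord}. Granting those, the finite sum over $\mu$ of the flat estimates gives each of the stated inequalities. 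The one genuine subtlety worth flagging in the write-up is the $\ve$-power on the transition terms in \eqref{hnk}: when a tangential derivative lands on $\chi_\mu$ rather than on $m_\mu g$, one loses the factor it would have cost but one does \emph{not} lose a power of $\ve$, so such terms are controlled by the $k' < k$ cases of the estimate and closed by induction on $k \in \{0,1\}$.
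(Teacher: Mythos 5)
Your overall strategy — localize via the charts $\Psi_\mu$, reduce to flat estimates for $S_\ve$ on $\R^2$, use the commutator identity and the small support of $\varphi_\ve$ — is exactly the paper's. The paper's proof is terser but has the same structure: write $S_\ve(fg) - f S_\ve g = \int \varphi_\ve(w) g(z-w)(f(z-w)-f(z))\,dw$, observe that $|w|\leq\ve$ on the support so $|f(z-w)-f(z)|\leq\ve\|f\|_{C^1}$, giving the $k=0$ bound directly; for $k=1$, differentiate and integrate by parts when the derivative falls on $g$, as in the proof of Lemma \ref{lem:mutiplicationsmoothingcommute}.

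Two small comments on your $k=0$ step. First, the Taylor expansion is not only unnecessary, it slightly weakens the conclusion: your argument bounds the first-order term by $\ve\|f\|_{C^1}\|g\|_{L^2}$, but the second-order remainder contributes $\ve^2\|f\|_{C^2}\|g\|_{L^2}$, and $\|f\|_{C^2}$ is not controlled by $\|f\|_{C^1}$, so as written you do not quite get the stated estimate. The direct mean-value bound $|f(z-w)-f(z)|\leq\ve\|f\|_{C^1}$ applied to the whole integrand avoids this. Second, the evenness of $\varphi$ plays no role anywhere in your argument and is a red herring here — the $L^1$ bound on $(z-w)\varphi_\ve(z-w)$ comes purely from the support, not from parity; evenness is used elsewhere in the paper for the $L^2$-symmetry of $S_\ve$, not for this lemma. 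Finally, your flag on the $L^\infty$ contraction is legitimate: the paper stipulates $\varphi$ even with $\int\varphi=1$ but not $\varphi\geq 0$, so strictly one gets $\|S_\ve f\|_{L^\infty}\lesssim\|\varphi\|_{L^1}\|f\|_{L^\infty}$ rather than constant $1$; presumably $\varphi\geq 0$ is intended, and in any case only the $\lesssim$ version is used downstream.
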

\begin{proof}
The proof for $k=0$ follows from the local expression
and the fact that $|w| \leq \ve$ in the support of $\varphi_\ve$,
\begin{equation}\label{eq:theaboveintegral}
 S_\ve(fg)(z) - f(z)S_\ve(g)(z)
 = \int_{\R^2} \varphi_\ve(w)g(z-w)\big( f(z-w) - f(z)\big)
 dw.
\end{equation}
The proof for $k=1$ follows from differentiating this and integrating by parts if the derivative falls on $g$,
see the proof of Lemma \ref{lem:mutiplicationsmoothingcommute}.
\end{proof}
There is an improvement
in the commutators with smoothing for tangential derivatives:

\begin{lemma}\label{lem:mutiplicationsmoothingcommute} We have   $[\sm,D_t]\!=\!0$. If $S\!=\!S^a(y)\pa_a$ is a tangential vector field then for
$k\!=\!0,1$:
\begin{align}
\|[\sm,S]g\,\|_{H^k(\pa\Omega)}+\|[\sm,\pa_r]\,g\|_{H^k(\pa\Omega)}&\lesssim \|g\|_{H^k(\pa\Omega)} ,\label{eq:smoothingvectorfieldcommute}\\
  ||\sm(f S g) - f \sm S g||_{H^k(\pa\Omega)}
  &\lesssim || f||_{C^{k}(\pa\Omega)} ||g||_{H^{k}(\pa\Omega)}.
  \label{eq:smoothingmultiplicationcommute}
 \end{align}
 Moreover for $n=0,1$
 \begin{align}
\|[\sm,S]g\|_{H^{n,k}(\Omega)}+\|[\sm,\pa_r]g\|_{H^{n,k}(\Omega)}&\lesssim \|g\|_{H^{n,k}(\Omega)} ,\label{eq:smoothingvectorfieldcommuteinterior}\\
  ||\sm(f S g) - f \sm S g||_{H^{n,k}(\Omega)}
  &\lesssim || f||_{C^{n,k}(\Omega)} ||g||_{H^{n,k}(\Omega)}.
  \label{eq:smoothingmultiplicationcommuteinterior}
 \end{align}
\end{lemma}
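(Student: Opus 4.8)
\textbf{Proof plan for Lemma \ref{lem:mutiplicationsmoothingcommute}.}
The plan is to reduce everything to the corresponding statement in the local coordinate patches, where $\sm$ acts as the Euclidean mollifier $S_\ve$ on $(-1,1)^2$ in the $z$-variables, and then assemble via the partition of unity $\{\chi_\mu^2\}$ exactly as in \eqref{eq:localtoglobal}. First I would dispose of $[\sm, D_t] = 0$: since $D_t = \pa_t|_{y = \text{const}}$ differentiates only in $t$ while $\sm$ convolves only in the spatial local coordinates $z$ (with kernel independent of $t$), the two operators act on independent variables and commute; one only has to note that the cutoffs $\chi_\mu$ and the local charts $\Psi_\mu$ are time-independent so no commutator terms are generated by the localization.

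Next I would handle \eqref{eq:smoothingvectorfieldcommute}. In a local chart a tangential vector field takes the form $S = S^a(z)\pa_{z^a}$ with $S^3 \equiv 0$ (as recorded in Section \ref{sec:localcoord}), so $S$ differentiates only in the directions in which $S_\ve$ smooths. Then
\begin{equation*}
 [S_\ve, S] g(z) = \int_{\R^2} \varphi_\ve(z-w)\big(S^a(w) - S^a(z)\big)\pa_{w^a} g(w)\, dw,
\end{equation*}
and integrating by parts to move $\pa_{w^a}$ off $g$ produces a kernel $\pa_{w^a}\big(\varphi_\ve(z-w)(S^a(w)-S^a(z))\big)$, in which either the $\ve^{-1}$-singular derivative of $\varphi_\ve$ is paired with the factor $S^a(w)-S^a(z) = O(|z-w|) = O(\ve)$, giving an $O(1)$ kernel, or the derivative falls on $S^a(w)$, which is harmless. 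Hence $[S_\ve,S]$ is bounded on $L^2$, and on $H^1$ after commuting one more tangential derivative through (which only generates more terms of the same type). The case $[\sm,\pa_r]$ is the same computation with the radial derivative, using that in the coordinates $\widehat z = (z, z^3)$ of Section \ref{sec:localcoord} the radial direction is $\pa_{z^3}$, which is transverse to the smoothing directions, so $\pa_r$ literally commutes with $S_\ve$ up to the terms where $\pa_r$ hits the chart-dependent cutoffs and Jacobians — all lower order. Passing from the local estimates to the global ones via \eqref{eq:localtoglobal} introduces only commutators of $S_\ve$ with $\chi_\mu$, $m_\mu$ and $\Psi_\mu$, which are again of the type just estimated.

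Then \eqref{eq:smoothingmultiplicationcommute} follows by combining the previous bound with Lemma \ref{smoothinglemma}: write
\begin{equation*}
 \sm(f Sg) - f\,\sm(Sg) = \big(\sm(f\cdot Sg) - f\,\sm(Sg)\big),
\end{equation*}
which by the commutator-with-multiplication estimate of Lemma \ref{smoothinglemma} (case $k$, with $g$ there replaced by $Sg$) is bounded by $\ve^{1-k}\|f\|_{C^{1+k}}\|Sg\|_{H^{k-?}}$; to land on the cleaner right-hand side $\|f\|_{C^k}\|g\|_{H^k}$ one instead expands $\sm(fSg)-f\sm Sg$ directly as an integral as in \eqref{eq:theaboveintegral} and integrates by parts to move the $S$-derivative off $g$ onto the kernel, exactly as in the proof of \eqref{eq:smoothingvectorfieldcommute}, absorbing one power of $\ve^{-1}$ against the factor $f(z-w)-f(z)$ or its first derivative. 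This loses the gain in $\ve$ but gains a derivative on $g$, which is the content of the stated inequality. Finally, the interior versions \eqref{eq:smoothingvectorfieldcommuteinterior}–\eqref{eq:smoothingmultiplicationcommuteinterior} follow by applying the boundary estimates slicewise on each sphere $\{z^3 = \text{const}\}$ (the smoothing acts only tangentially, so it commutes with $\pa_{z^3}$ up to lower-order chart terms), integrating in the radial variable, and iterating with up to $k$ tangential fields and up to $n$ full derivatives, using the definition of $\|\cdot\|_{C^{n,k}}$ and $\|\cdot\|_{H^{n,k}}$. The main obstacle is purely bookkeeping: tracking the commutators generated when the global operator \eqref{eq:localtoglobal} is differentiated — one must check that every term in which a derivative or vector field lands on $\chi_\mu$, $\tC_\mu$, $m_\mu$, or the Jacobian of $\Psi_\mu$ is genuinely lower order and not just of the same order, which it is because those factors are fixed smooth functions of $y$ independent of $\ve$.
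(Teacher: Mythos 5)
Your proposal matches the paper's argument almost exactly: the same reduction to local charts via \eqref{eq:localtoglobal}, the same change of variables, and the same integration by parts trading the factor $\pa_{w^a}\varphi_\ve$ of size $\ve^{-1}$ against $S^a(w)-S^a(z)=O(\ve)$, both for the commutator \eqref{eq:smoothingvectorfieldcommute} and for \eqref{eq:smoothingmultiplicationcommute}. The detour you take through Lemma \ref{smoothinglemma} before abandoning it for the direct integration by parts is an unnecessary digression (the paper goes straight to the direct expansion), and the remarks on $[\sm,\pa_r]$ and the slicewise treatment of the interior norms supply details the paper states implicitly, but the substance is identical to the paper's proof.
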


\begin{proof}[Proof of Lemma \ref{lem:mutiplicationsmoothingcommute}] In local coordinates such that $S=S^d(z)\pa/\pa z^d$, with $S^3=0$, we have,
 neglecting that the measure depends on the coordinates,
 \begin{equation*}
  \big( \sm( S g) -  S\sm \,g \big)(z)
  = \int_{\R^2}  \big( S^d(z - \ve w) - S^d(z) \big) \frac{\pa g(z - \ve w)}{\pa z^{d}} \varphi(w) \, dw.
  \end{equation*}
  Writing $(S g)(z - \ve w)= S^d(z - \ve w)\varepsilon^{-1} \pa  g(z - \ve w)/\pa w^{d}$ and integrating by parts this becomes:
  \begin{equation*}
   \big( \sm( S g) -  S\sm \,g \big)(z)=
      \int_{\R^2} \!\! \frac{\pa S^d(z \! -\! \ve w) }{\pa z^d} g(z\!-\!\ve w) \varphi(w) \, dw
    + \int_{\R^2}\!\! \frac{ S^d(z\! -\! \ve w)\! -\! S^d(z)}{\varepsilon}
    g(z\!-\!\ve w) \frac{\pa \varphi(w)}{\pa w^{ d}}  \, dw.
  \end{equation*}
   Both terms are bounded by the right-hand side of \eqref{eq:smoothingvectorfieldcommute}, for $k=0$ and the case $k=1$
   follows from differentiating this.
   In a similar way we have
  \begin{equation*}
  \big( \sm( f S g) - f \sm S g \big)(z)
  = \int_{\R^2}  \big( f(z - \ve w) - f(z) \big) (S g)(z - \ve w) \varphi(w) \, dw,
  \end{equation*}
    and integrating by parts as above we get
  \begin{multline*}
    \big( \sm( f S g) -{}_{\!} f \sm S g \big)(z)\\
    =\!  \int_{\R^2} \!\!\! (Sf)(z \! - \! \ve w) g(z \!- \!\ve w) \varphi(w) \, dw
   +\int_{\R^2}\!\!\!\frac{ f(z\! - \! \ve w)\! -{}_{\!}\! f(z)}{\varepsilon}
    g(z \!- \!\ve w) \frac{\pa \big( S^d(z\!-  \!\ve w)\varphi(w)\big)\!\!}{\pa w^{ d}}  \, dw.
  \end{multline*}
  \eqref{eq:smoothingmultiplicationcommute} follows from this.
\end{proof}

In order to control the commutators $[\nave, \sm]$, we need the following two lemmas:
\begin{lemma}\label{lem:gradientsmoothingcommute}
Suppose that
\begin{equation*}
|\pa \widetilde{x}/\pa y|+|\pa y/\pa \widetilde{x}|\leq M_0.
\end{equation*}
Then if $S=S^a(y)\pa_a$ is a tangential vector field we have
  \begin{equation}
   ||[\nave_i, \sm] S g||_{L^2(\Omega)} +
   ||S[\nave_i, \sm] g||_{L^2(\Omega)}
   + ||[\nave_i,S] g||_{L^2(\Omega)}
   \lesssim C(M_0) {\sum} _{|I|\leq 1} ||\pa S^I \widetilde{x} ||_{C^0} ||g||_{H^{1}(\Omega)}.
   \label{jpavecomm}
  \end{equation}
\end{lemma}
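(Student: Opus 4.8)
The statement to be proved is Lemma~\ref{lem:gradientsmoothingcommute}, which bounds three quantities: $\|[\nave_i,\sm]Sg\|_{L^2(\Omega)}$, $\|S[\nave_i,\sm]g\|_{L^2(\Omega)}$, and $\|[\nave_i,S]g\|_{L^2(\Omega)}$. The guiding observation is that $\nave_i = \tfrac{\pa y^a}{\pa \widetilde{x}^i}\pa_a$ is a \emph{variable-coefficient} combination of the coordinate vector fields $\pa_a$, with coefficients controlled by $|\pa y/\pa\widetilde{x}|\le M_0$; so commuting $\nave_i$ past $\sm$ or past $S$ splits into (a) commuting the coefficient function $\tfrac{\pa y^a}{\pa\widetilde{x}^i}$ (multiplication) past the operator, and (b) commuting the plain coordinate derivative $\pa_a$ past the operator. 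For part (b), the key point is that in the local ``spherical'' coordinates $\widehat{z}=(z,z^3)$ of Section~\ref{sec:localcoord}, the smoothing $\sm$ acts only in the tangential variables $z$, so $[\pa_{z^3},S_{\ve,\mu}]=0$ on the nose and $[\pa_{z^d},S_{\ve,\mu}]$ (for tangential $d$) is handled exactly as in the proof of Lemma~\ref{lem:mutiplicationsmoothingcommute} — integrate by parts in $w$, picking up either $\pa\varphi$ or a derivative of a cutoff/measure, both of which give an $O(1)$ (not $O(\ve^{-1})$) bound in $L^2$. The coordinate-change factor $\pa\widehat z/\pa y$ and the cutoff/measure factors $\chi_\mu,m_\mu$ are smooth and bounded by $C(M_0)$, and the terms where a derivative lands on them are lower order in the sense of the estimate.

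\textbf{Key steps, in order.} First I would record the identity $\nave_i g = J_i^a \pa_a g$ with $J_i^a = \pa y^a/\pa\widetilde x^i$, and note $|J|\le M_0$, while $\pa_b J_i^a$ is a rational expression in $\pa\widetilde x/\pa y$ and $\pa^2\widetilde x/\pa y^2$, hence controlled by $\sum_{|I|\le 1}\|\pa S^I\widetilde x\|_{C^0}$ and $C(M_0)$ (here the $S^I$ with $|I|=1$ is used because $\pa^2\widetilde x$ is reconstructed from $\pa\widetilde x$ and $\pa(S\widetilde x)$ modulo the angular structure, just as in Lemma~\ref{lem:mutiplicationsmoothingcommute}). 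Second, $[\nave_i,\sm] = [J_i^a\pa_a,\sm] = J_i^a[\pa_a,\sm] + [J_i^a,\sm]\pa_a$; the first piece uses the local commutator computation above (decomposing $\pa_a$ into the basis $\pa_{\widehat z^d}$ via \eqref{eq:flatcoord}, and observing that the purely-tangential part is the $S$-type commutator from Lemma~\ref{lem:mutiplicationsmoothingcommute} and the $\pa_{z^3}$ part commutes), giving an $L^2\to L^2$ bound of size $C(M_0)$; the second piece is a multiplication-smoothing commutator, bounded by $\|J\|_{C^1}\|\pa_a g\|_{L^2}\lesssim C(M_0)\sum_{|I|\le1}\|\pa S^I\widetilde x\|_{C^0}\|g\|_{H^1(\Omega)}$ by the $k=0$ case of \eqref{hnk} / Lemma~\ref{lem:mutiplicationsmoothingcommute}. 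Third, to bound $\|[\nave_i,\sm]Sg\|_{L^2}$ I apply the operator-norm bound for $[\nave_i,\sm]$ just obtained with $g$ replaced by $Sg$, using that $Sg$ has one more tangential derivative than is charged to it — i.e. $\|[\nave_i,\sm]Sg\|_{L^2}\lesssim C(M_0)\sum_{|I|\le1}\|\pa S^I\widetilde x\|_{C^0}\|Sg\|_{H^1}$ — and then absorb $\|Sg\|_{H^1}\lesssim\|g\|_{H^1(\Omega)}$ only after noting the bound we actually want is stated with $\|g\|_{H^1(\Omega)}$ on the right, so in fact the cleanest route is to write $[\nave_i,\sm]S = S[\nave_i,\sm] + [[\nave_i,\sm],S]$ and bound each: $S[\nave_i,\sm]g$ is the second quantity in the lemma, and $[[\nave_i,\sm],S]$ is an $L^2\to L^2$ operator of the required size by the same local integration-by-parts (two commutators, still no loss of $\ve^{-1}$). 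Fourth, $\|S[\nave_i,\sm]g\|_{L^2}$: commute $S$ inside, $S[\nave_i,\sm]g = [S,[\nave_i,\sm]]g + [\nave_i,\sm]Sg$ — wait, to avoid circularity I instead estimate it directly by writing $[\nave_i,\sm] = J_i^a[\pa_a,\sm] + [J_i^a,\sm]\pa_a$ and applying $S$ to each explicit integral representation from the proof of Lemma~\ref{lem:mutiplicationsmoothingcommute}, where $S$ hitting the kernel or the coefficients again produces only bounded terms. Fifth, $[\nave_i,S]g = [J_i^a\pa_a, S^b\pa_b]g$ is a first-order operator whose coefficients are $J\cdot\pa S$, $S\cdot\pa J$, etc., all bounded by $C(M_0)\sum_{|I|\le1}\|\pa S^I\widetilde x\|_{C^0}$, so $\|[\nave_i,S]g\|_{L^2}\lesssim C(M_0)\sum_{|I|\le1}\|\pa S^I\widetilde x\|_{C^0}\|g\|_{H^1(\Omega)}$ immediately.

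\textbf{Main obstacle.} The delicate point is step three/four: arranging the commutators so that the final right-hand side carries $\|g\|_{H^1(\Omega)}$ rather than $\|Sg\|_{H^1}$ or $\|g\|_{H^2}$, i.e. making sure every place where $S$ or $\nave$ lands on the smoothing kernel produces a genuinely bounded ($L^2\to L^2$) operator and not one that costs a derivative or a power of $\ve^{-1}$. This is exactly the ``improvement in the commutators with smoothing for tangential derivatives'' already exploited in Lemma~\ref{lem:mutiplicationsmoothingcommute}, and the proof is a careful bookkeeping of which integration-by-parts in the $w$-variable one performs — always moving the $\ve^{-1}$ onto $\pa\varphi$, never onto $g$ — together with tracking that coordinate-change Jacobians, partition-of-unity cutoffs $\chi_\mu,\tC_\mu$, and density factors $m_\mu$ are all $C^\infty$ with norms controlled by $C(M_0)$. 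I do not expect any new idea beyond Lemma~\ref{lem:mutiplicationsmoothingcommute}; the work is in doing this bookkeeping for the composite operator $\nave_i = J_i^a\pa_a$ and its (iterated) commutators with $S$.
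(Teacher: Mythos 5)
Your overall strategy is the same as the paper's: work in the local coordinates $\widehat z$ of Section~\ref{sec:localcoord}, factor $\nave_i$ into a Jacobian coefficient times flat derivatives, use that the tangential convolution commutes with $\widehat\pa_d$ up to cutoff terms, and invoke the improved commutator estimate \eqref{eq:smoothingmultiplicationcommute} from Lemma~\ref{lem:mutiplicationsmoothingcommute}. But you get tangled precisely at the step you flag as ``delicate,'' and I do not think you have actually closed it. After splitting $[\nave_i,\sm]S = S[\nave_i,\sm] + [[\nave_i,\sm],S]$ you correctly notice this is circular, and your fallback — ``apply $S$ to each explicit integral representation from the proof of Lemma~\ref{lem:mutiplicationsmoothingcommute}'' — is left as a gesture rather than an argument. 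That is where the whole content of the lemma sits, and the paper resolves it by a simpler algebraic rearrangement that you do not perform.

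The paper's move is to commute $\widehat\pa_d$ past $S$ \emph{first}, inside the commutator, before anything else: writing $\nave_i = \widehat J_i^d\widehat\pa_d$ with $\widehat J_i^d = \pa\widehat z^d/\pa\xve^i$,
\begin{equation*}
[\widehat J_i^d\widehat\pa_d,\sm]Sg
= [\widehat J_i^d,\sm]\,S\widehat\pa_d g
+ [\widehat J_i^d,\sm][\widehat\pa_d,S]g
+ \widehat J_i^d[\widehat\pa_d,\sm]Sg.
\end{equation*}
Now the main term has $[\widehat J_i^d,\sm]$ acting on a quantity of the form $S(\cdot)$, which is exactly the operator $[f,\sm]S$ that \eqref{eq:smoothingmultiplicationcommute} controls in $L^2\to L^2$ with norm $\|f\|_{C^0}$ — no derivative is charged to $S\widehat\pa_d g$ beyond what is already in $\widehat\pa_d g$, so the right-hand side ends with $\|g\|_{H^1(\Omega)}$, not $\|Sg\|_{H^1}$. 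The second term is a genuinely lower-order coefficient commutator, and the third vanishes at the level of the local convolution ($[\widehat\pa_d,S_\ve]=0$), leaving only cutoff contributions handled by \eqref{eq:smoothingvectorfieldcommute}. Your decomposition $[\nave_i,\sm]=J_i^a[\pa_a,\sm]+[J_i^a,\sm]\pa_a$ with $J_i^a=\pa y^a/\pa\xve^i$ puts $\pa_a$ to the \emph{right} of the multiplication commutator, so that after hitting $Sg$ you face $[J_i^a,\sm]\pa_a Sg$, which is not in the $[f,\sm]S$ form that \eqref{eq:smoothingmultiplicationcommute} exploits; your subsequent attempts to repair this (double commutators, or redoing the integral bookkeeping with an extra $S$) would, if carried through carefully, effectively reproduce the paper's rearrangement, but you have not done so and the gap is exactly at the point your lemma is designed to address. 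Concretely: before invoking \eqref{eq:smoothingmultiplicationcommute} you must arrange the algebra so that $S$ sits immediately to the left of $\widehat\pa_d g$ inside the bracket; once you insert the identity $\widehat\pa_d S = S\widehat\pa_d + [\widehat\pa_d,S]$ at the start, everything else follows mechanically from the lemmas you cite.
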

\begin{proof}[Proof of Lemma \ref{lem:gradientsmoothingcommute}]
In the local coordinates such that $S\!=S^d(z)\pa/\pa z^d$, with $S^3\!=0$,
we write $\widetilde{\pa}_i\!=\widehat{J}_i^d\widehat{\pa}_d$,
where $\widehat{J}_i^d=\pa \widehat{z}^d\!/\pa\widetilde{x}^i$,
and $\widehat{\pa}_d=\pa/\pa \widehat{z}^d$. We have
$[\widehat{J}_i^d\widehat{\pa}_d ,\sm]=[\widehat{J}_i^d ,\sm]\widehat{\pa}_d+\widehat{J}_i^d[\widehat{\pa}_d ,\sm]$
and
\begin{equation}
[\widehat{J}_i^d\widehat{\pa}_d ,\sm]S g=[\widehat{J}_i^d ,\sm]S\widehat{\pa}_d g+[\widehat{J}_i^d ,\sm][\widehat{\pa}_d,S]g+\widehat{J}_i^d[\widehat{\pa}_d ,\sm]S g.
\end{equation}
Here the first and main term on the right is dealt with using \eqref{eq:smoothingmultiplicationcommute}. The second one is lower order.
The last one is dealt with using \eqref{eq:smoothingvectorfieldcommute}
for $d=1,2$ and the fact that $[\widehat{\pa}_d,\sm]=0$.
\end{proof}

\subsubsection{The tangential fractional derivatives and norms}
We will need to use fractional tangential derivatives to control our solution
and we will define these operators
in coordinates.
If $F: \R^2 \to \R$, we define:
\begin{equation}
 \fd^s F(z) = \int_{\R^2} e^{iz\cdot \xi} \langle \xi \rangle^s \hat{F}(\xi)\, d\xi,
 \quad \text{where}\quad
 \hat{F}(\xi) = \int_{\R^2} e^{-iz\cdot \xi} F(z)\, dz,
\end{equation}
and we define fractional tangential derivatives on $\Omega$ by:
\begin{equation}
 \fd_\mu^{s} f = \widetilde{\chi}_\mu (\fd^{s} f_\mu)\circ\Psi_{\mu}^{-1},\quad f_\mu=(\chi_\mu f)\circ\Psi, \qquad \mu = 1,..., N.
 \label{fdmudef}
\end{equation}
We also set $\fd_0^s f \!=\!\chi_0( \langle \pa \rangle^{s} f_0)\!\circ\!\Psi_{0}^{-1}\!\!$,
where $\langle \pa \rangle^{s}$ is defined by taking the Fourier transform
in all directions.

For $s \in \R$, $k \in \mathbb{N}$, we define:
\begin{equation}
 || f ||_{H^s(\pa \Omega)} = {\sum}_{\mu = 1}^N ||\fdm^s f ||_{L^2(\pa \Omega)},
  \quad\text{and}\quad
 || f ||_{H^{(n, s)}(\Omega)}
  = {\sum}_{\mu = 0}^N ||\fdm^s f||_{H^n(\Omega)}.
 \label{sobspacedef}
\end{equation}

For $0<s<1$ let  $\S^s f:\Omega\to \mathbf{R}^N$, or $\langle\pa_\theta\rangle^s$ be the map
whose components are $\fd^{s}_\mu f$, for $\mu=0,\dots,N$,
and define the inner product
\begin{equation}\label{eq:globalhalfderdef}
\big(\fd^s f\big)
 \cdot \big(\fd^s g\big)={\sum}_{\mu=1,\dots,N} \big(\fd^s_\mu f\big)
\big( \fd^s_\mu g\big).
\end{equation}
Moreover let
$\S^{r+s} f:\Omega\to \mathbf{R}^{N+1}$ be the map whose components are
$\fd^{s}_\mu S^I  f$.  The norm of $\S^r f\!$ is
\begin{equation}\label{eq:simplifiedtangentialnotationfractional}
|\S^{r+s}\! f|^2= \S^{r+s}\! f\cdot \S^{r+s}\! f,\quad\text{where}\quad
\S^{r+s}\! f\cdot \S^{r+s} g= {\sum}_{\mu=1,\dots,N}{\sum}_{|I|=r,\,\,S^I\in\S^r}\fd^{s}_\mu S^I \!f \, \,\fd^{s}_\mu S^I g.
\end{equation}

\begin{lemma}
  \label{fracalg}
If $S \in \S$, then:
\begin{equation}
 \Big| \int_{\pa\Omega} f S g\, dS(y)\Big|
 \leq C||f||_{H^{1/2}(\pa \Omega)} ||g||_{H^{1/2}(\pa \Omega)},
 \qquad
 \Big| \int_{\Omega} f S g\, dy \Big|
 \leq C||f||_{H^{(0,1/2)}(\Omega)} ||g||_{H^{(0,1/2)}(\Omega)}.
 %\label{inthalf}
\end{equation}
\end{lemma}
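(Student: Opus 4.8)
The final statement to prove is Lemma \ref{fracalg}: the pairing $\int f\, Sg$ is bounded by $\|f\|_{H^{1/2}}\|g\|_{H^{1/2}}$ (on $\pa\Omega$ or in the tangentially-fractional sense on $\Omega$). This is a standard ``half-derivative moves across the pairing'' estimate, combined with the fact that a first-order tangential vector field $S$ has order one and can be split into two half-order pieces. Let me sketch the plan.

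\bigskip

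\noindent\textbf{Proof proposal.}

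The plan is to reduce everything to local coordinates via the partition of unity $\{\chi_\mu\}$ and then prove the estimate on $\R^2$ (for the boundary case) or $\R^2\times(1/2,1]$ (for the interior case), where $\fd^s$ is an honest Fourier multiplier. First I would localize: writing $f = \sum_\mu \chi_\mu^2 f$ and using the diffeomorphisms $\Psi_\mu$, it suffices to bound $\big|\int_{\R^2} \widehat{f} \, S\widehat{g}\big|$ where $\widehat{f}=f_\mu$, $\widehat{g}=g_\mu$ are the local representatives and $S = S^a(z)\pa_{z^a}$ with $S^3 = 0$, since $S$ is tangential (in the interior case the extra radial variable $z^3$ is just a parameter and contributes a harmless integration). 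The commutators generated when $S$ or the smoothing hits the cutoffs $\chi_\mu$ or the measure factors $m_\mu$ are lower-order (bounded, not half-order) operators and are absorbed using $\|\cdot\|_{L^2}\lesssim\|\cdot\|_{H^{1/2}}$.

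Next, on $\R^2$ the key step is the identity/estimate
\begin{equation*}
 \Big|\int_{\R^2} F\, G\, dz\Big|
 = \Big|\int_{\R^2} \langle\xi\rangle^{1/2}\widehat{F}(\xi)\,\langle\xi\rangle^{-1/2}\widehat{G}(\xi)\, d\xi\Big|
 \leq \|\fd^{1/2} F\|_{L^2}\,\|\fd^{-1/2}G\|_{L^2},
\end{equation*}
by Plancherel and Cauchy--Schwarz. Applying this with $F = f_\mu$ and $G = S g_\mu$, it remains to see that $\|\fd^{-1/2}(S g_\mu)\|_{L^2}\lesssim \|g_\mu\|_{H^{1/2}}$. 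Since $S$ is a first-order differential operator with smooth coefficients, $\fd^{-1/2} S \fd^{-1/2}$ is a zeroth-order operator (one checks this by commuting: $\fd^{-1/2}S = S\fd^{-1/2} + [\fd^{-1/2},S]$, and the commutator of a smooth vector field with the order $-1/2$ multiplier $\langle\pa\rangle^{-1/2}$ has order $-1/2$, so both terms map $H^{1/2}\to L^2$). Hence $\|\fd^{-1/2}Sg_\mu\|_{L^2}\lesssim \|g_\mu\|_{H^{1/2}}$, which gives the claim. Summing over $\mu$ and using that the $\chi_\mu$ are uniformly bounded with bounded derivatives recovers $\big|\int_{\pa\Omega}f\,Sg\big|\lesssim \|f\|_{H^{1/2}(\pa\Omega)}\|g\|_{H^{1/2}(\pa\Omega)}$; for the interior estimate one additionally integrates the two-dimensional bound over $z^3\in(1/2,1]$ and uses the definition \eqref{sobspacedef} of $\|\cdot\|_{H^{(0,1/2)}(\Omega)}$, noting that the coefficients $S^a$ and the Jacobian factors are uniformly controlled.

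The main technical point — and the only place any care is needed — is the boundedness of the zeroth-order operator $\fd^{-1/2} S \fd^{-1/2}$, i.e. controlling the commutator $[\langle\pa\rangle^{-1/2}, S^a\pa_{z^a}]$ on $L^2(\R^2)$. This is routine pseudodifferential calculus (or can be done by hand with the integral kernel of $\langle\pa\rangle^{-1/2}$, exactly as the commutator estimates with smoothing were done in the proof of Lemma \ref{lem:mutiplicationsmoothingcommute}), but it is the one step that is not pure Cauchy--Schwarz. Everything else — the localization, the partition-of-unity bookkeeping, and the Plancherel pairing — is bookkeeping. An alternative that avoids the calculus entirely is to integrate by parts to symmetrize: write $\int f\,Sg = -\int (Sf)\,g - \int (\div_z S)\,fg$, average the two expressions for $\int f\,Sg$, and then one only needs $f\mapsto \fd^{1/2}f$, $g\mapsto \fd^{1/2}g$ together with the trivial bound $\|\fd^{-1/2}f\|_{L^2}\le\|f\|_{H^{1/2}}$; I would present this symmetrized version since it keeps the argument self-contained.
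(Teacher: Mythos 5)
The paper states Lemma~\ref{fracalg} without giving a proof, so there is no ``paper's proof'' to compare against; I'll assess the argument on its own terms.

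Your main line of argument --- localize, use Plancherel to write $\int F\,SG = \int \langle\xi\rangle^{1/2}\hat F\,\langle\xi\rangle^{-1/2}\widehat{SG}$, Cauchy--Schwarz, then show $\fd^{-1/2}S$ is bounded $H^{1/2}(\R^2)\to L^2(\R^2)$ because $[\fd^{-1/2},S]$ has order $-1/2$ and $S\fd^{-1/2}$ has order $1/2$ --- is correct, and the observations about the radial variable being a parameter and the partition-of-unity commutators being harmless lower-order terms are the right bookkeeping points. This is the right way to do it.

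The ``alternative that avoids the calculus entirely'' at the end does not hold up. Integrating by parts and averaging does not cancel the half derivative on $g$: one is left with (a multiple of) the antisymmetric form $\int f\,Sg - \int (Sf)\,g$, and for a \emph{variable}-coefficient $S = S^a(z)\pa_{z^a}$ this has the same order as the original pairing (only for constant $S^a$ does Plancherel alone finish it, since then the multiplier is $\xi_a/\langle\xi\rangle\in L^\infty$). In particular the ``trivial bound'' $\|\fd^{-1/2}f\|_{L^2}\le\|f\|_{H^{1/2}}$, which loses a full derivative, cannot be the only input; you still have to control the variable coefficients, and that is precisely the commutator or coefficient-freezing step you were trying to avoid. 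If you want a genuinely elementary route that sidesteps the pseudodifferential calculus, the right observation is bilinear interpolation: $B(f,g)=\int f\,Sg$ is bounded $L^2\times H^1$ (trivially) and bounded $H^1\times L^2$ (by your integration by parts, since $S$ is tangential so there is no boundary term), and interpolating gives boundedness on $H^{1/2}\times H^{1/2}$. I would either present that, or simply keep the commutator argument from your first approach, which is in the same spirit as the commutator estimates the paper does spell out (Lemmas~\ref{lem:LeibnitzBasic}, \ref{lem:halfderivativepartialcommute}).
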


\subsubsection{Commutators with the fractional derivative}
In local coordinates we have ``Leibniz rule'':
\begin{lemma}\label{lem:LeibnitzBasic}
  If $F, G: \R^2 \to \R$ have compact support, then:
  \begin{align*}
   ||\fdh(FG) - F \fdh G||_{L^2(\R^2)}
   &\lesssim ||F||_{H^2(\R^2)} ||G||_{L^2(\R^2)},\\
   ||\fdh(FG) - F \fdh G||_{H^s(\R^2)}
   &\lesssim ||F||_{H^{3}(\R^2)} ||G||_{H^{s-1/2}(\R^2)},\qquad 0\leq s\leq 1,
  \end{align*}
\end{lemma}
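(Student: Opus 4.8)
\textbf{Proof proposal for Lemma \ref{lem:LeibnitzBasic}.}

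The plan is to reduce everything to a Fourier-multiplier estimate on $\R^2$. Write $H(z) = \fdh(FG)(z) - F(z)\fdh G(z)$ and take the Fourier transform; since $\widehat{FG} = \widehat F * \widehat G$ (up to the usual $(2\pi)^{-2}$ normalization already absorbed into the convention above), we obtain
\begin{equation}
 \widehat H(\xi) = \int_{\R^2} \big( \langle \xi\rangle^{1/2} - \langle \xi - \zeta\rangle^{1/2}\big)\, \widehat F(\zeta)\, \widehat G(\xi - \zeta)\, d\zeta.
\end{equation}
The key pointwise bound on the symbol is $\big|\langle\xi\rangle^{1/2} - \langle\xi-\zeta\rangle^{1/2}\big| \lesssim \langle \zeta\rangle^{1/2}$, which follows from $|\langle a\rangle - \langle b\rangle|\le |a-b|$ together with $|\sqrt{x}-\sqrt{y}|\le\sqrt{|x-y|}$; more precisely one has the sharper split $\big|\langle\xi\rangle^{1/2} - \langle\xi-\zeta\rangle^{1/2}\big| \lesssim \langle\zeta\rangle^{1/2}\langle\xi-\zeta\rangle^{-?}$ type estimates, but for the first inequality the crude bound $\lesssim \langle\zeta\rangle^{1/2}$ suffices. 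Then
\begin{equation}
 |\widehat H(\xi)| \lesssim \int_{\R^2} \langle\zeta\rangle^{1/2} |\widehat F(\zeta)|\, |\widehat G(\xi-\zeta)|\, d\zeta = \big( (\langle\cdot\rangle^{1/2}|\widehat F|) * |\widehat G|\big)(\xi),
\end{equation}
and by Young's inequality $\|\widehat H\|_{L^2} \lesssim \|\langle\cdot\rangle^{1/2}\widehat F\|_{L^1}\|\widehat G\|_{L^2}$. Finally $\|\langle\cdot\rangle^{1/2}\widehat F\|_{L^1} \lesssim \|\langle\cdot\rangle^{1/2+1+\delta}\widehat F\|_{L^2}\lesssim \|F\|_{H^{2}}$ by Cauchy--Schwarz (since $\langle\xi\rangle^{-1-\delta}\in L^2(\R^2)$ for small $\delta>0$, and $1/2 + 1 < 2$), which gives the first inequality $\|\fdh(FG)-F\fdh G\|_{L^2}\lesssim \|F\|_{H^2}\|G\|_{L^2}$.

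For the second inequality we repeat the argument with an extra factor $\langle\xi\rangle^{s}$. Using $\langle\xi\rangle^s \lesssim \langle\zeta\rangle^s \langle\xi-\zeta\rangle^s$ we get
\begin{equation}
 \langle\xi\rangle^s|\widehat H(\xi)| \lesssim \int_{\R^2} \langle\zeta\rangle^{s+1/2}|\widehat F(\zeta)|\,\langle\xi-\zeta\rangle^s|\widehat G(\xi-\zeta)|\, d\zeta,
\end{equation}
so Young again yields $\|H\|_{H^s}\lesssim \|\langle\cdot\rangle^{s+1/2}\widehat F\|_{L^1}\|\langle\cdot\rangle^s\widehat G\|_{L^2}$. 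Now $\|\langle\cdot\rangle^s \widehat G\|_{L^2} = \|G\|_{H^s} \le \|G\|_{H^{s-1/2}}$ only when $s\le s-1/2$, which is false, so here I instead exploit the sharper symbol estimate: one has $\big|\langle\xi\rangle^{1/2}-\langle\xi-\zeta\rangle^{1/2}\big| \lesssim |\zeta|\,(\langle\xi\rangle^{1/2}+\langle\xi-\zeta\rangle^{1/2})^{-1} \lesssim \langle\zeta\rangle\,\langle\xi-\zeta\rangle^{-1/2}$ in the region $|\zeta|\le \tfrac12\langle\xi-\zeta\rangle$, and $\lesssim\langle\zeta\rangle^{1/2}$ with $\langle\xi-\zeta\rangle\lesssim\langle\zeta\rangle$ otherwise. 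In the first region this replaces $\langle\xi-\zeta\rangle^s$ by $\langle\xi-\zeta\rangle^{s-1/2}$ while costing $\langle\zeta\rangle$ instead of $\langle\zeta\rangle^{1/2}$ on $F$; in the second region one trades the lost half-derivative on $G$ for extra decay that can be shifted onto $F$. Carrying $\langle\cdot\rangle^{s+1}\widehat F \in L^2\cdot L^1$ then needs $\|\langle\cdot\rangle^{s+1+1+\delta}\widehat F\|_{L^2}\lesssim\|F\|_{H^{3}}$ for $0\le s\le 1$, which is the stated regularity, and on the $G$ side one is left with $\|\langle\cdot\rangle^{s-1/2}\widehat G\|_{L^2}=\|G\|_{H^{s-1/2}}$.

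The main obstacle is precisely this bookkeeping in the second estimate: one must split the convolution into the two frequency regimes $|\zeta|\lesssim\langle\xi-\zeta\rangle$ and $|\zeta|\gtrsim\langle\xi-\zeta\rangle$ and apply the right form of the symbol bound in each, so that the half-derivative gain lands on $G$ (where it is needed, since $G$ is only in $H^{s-1/2}$) rather than on $F$ (which has three derivatives to spare). Everything else is routine: Fourier transform, the elementary inequality $|\sqrt x-\sqrt y|\le\sqrt{|x-y|}$, Young's convolution inequality, and Cauchy--Schwarz to pass from weighted $L^1$ to weighted $L^2$ bounds on $\widehat F$. Note that in the paper these are only ever applied through the localized operators $\fd_\mu^s$ of \eqref{fdmudef}, where $F = m_\mu f_\mu$ and $G$ similarly have compact support in $(-1,1)^2$ after multiplication by the cutoffs, so the compact-support hypothesis is automatically met and the passage from the local statement to the global one on $\Omega$ or $\pa\Omega$ only generates lower-order commutator terms as in Section \ref{sec:localcoord}.
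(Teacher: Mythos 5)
Your argument takes essentially the same route as the paper: pass to the Fourier side, bound the symbol difference $\langle\xi\rangle^{1/2}-\langle\xi-\eta\rangle^{1/2}$, and close via a weighted convolution estimate — the paper does a direct Cauchy--Schwarz with weights $\langle\eta\rangle^{\pm}$ where you invoke Young's inequality and then Cauchy--Schwarz on $\widehat F$, which amount to the same thing. For the $H^s$ case the paper avoids your frequency-region split by using the single bound $|\langle\xi\rangle^{1/2}-\langle\xi-\eta\rangle^{1/2}|\lesssim\langle\eta\rangle\langle\xi\rangle^{-1/2}$ uniformly and splitting instead on the sign of $s-\tfrac12$; note also that in your region $|\zeta|\le\tfrac12\langle\xi-\zeta\rangle$ one has $\langle\xi\rangle\sim\langle\xi-\zeta\rangle$, so the full $\langle\xi\rangle^s$ can be placed on $G$ and only $\langle\zeta\rangle$ (not $\langle\zeta\rangle^{s+1}$) lands on $F$, which closes comfortably inside $H^3$ rather than the $H^{3+\delta}$ your sketch briefly worried about.
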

\begin{proof} The Fourier transform of $\fdh(FG) - F \fdh G$ is
\begin{equation*}
\langle \xi\rangle^{\!1{}_{\!}/2} \widehat{ FG} (\xi)
  - \widehat{ (F \fdh G)}(\xi)=\int \big(\langle \xi\rangle^{\!1{}_{\!}/2}
  -\langle \xi-\eta\rangle^{\!1{}_{\!}/2}\big) \widehat{F}(\eta)  \widehat{G} (\xi-\eta) \, d\eta.
\end{equation*}
 Using the elementary estimate
 $|\langle\xi\rangle^{1/2} - \langle \xi-\eta\rangle^{1/2}| \lesssim
 \langle \eta\rangle\langle \xi\rangle^{-1/2}$ and Cauchy-Schwarz we have:
 \begin{equation*}
  \bigtwo|\langle \xi\rangle^{\!1{}_{\!}/2} \widehat{ FG} (\xi)
  - \widehat{ (F \fdh G)}(\xi)\bigtwo|^2
  \lesssim  \int_{\R^2}  \!\!\langle \eta\rangle^{4}
  |\widehat{F}(\eta)|^2 d\eta\!
  \int_{\R^2} \!\! \langle\eta\rangle^{\!-3} |\widehat{G}(\xi\!-\!\eta)|^2 d\eta.
 \end{equation*}
 Integrating in $\xi$, changing variables, and using the fact that
 $\int_{\R^2} \langle\xi-\eta\rangle^{-3} \, d\xi\! \leq \!C$,
 we have:
 \begin{equation*}
 ||\langle\xi\rangle^{\! 1{}_{\!}/2}\widehat{FG} - \widehat{(F \fdh G)}||_{L^2(\R^2)}^2 \lesssim
 ||F||_{H^2(\R^2)}^2\!
   \int_{\R^2}\!
  \int_{\R^2} \!\!\langle\xi\!-\!\eta\rangle^{\!-3}  |\widehat{G}(\eta)|^2 d\eta
  \, d\xi
  \lesssim ||F||_{H^2(\R^2)} ||G||_{L^2(\R^2)}.
  \end{equation*}
  The first estimate now follows from Plancherel's theorem.

  If $ s \!\leq \!1/2$ we can further estimate
   $|\langle\xi\rangle^{1/2} \!-  \!\langle \xi \!- \!\eta\rangle^{1/2}|  \langle \xi\rangle^s \!\lesssim \!
 \langle \eta\rangle\langle \xi\rangle^{s-1/2} \!\lesssim \!
 \langle \eta\rangle^{3/2-s}\langle \xi \!- \!\eta\rangle^{s-1/2}$ and if $s\geq 1/2$ we can estimate $|\langle\xi\rangle^{1/2}  \!-\langle \xi \!- \!\eta\rangle^{1/2}| \langle \xi\rangle^s\!\lesssim\!
 \big(\langle \eta\rangle^{s-1/2} \!+  \langle \xi \!- \!\eta\rangle^{s-1/2}\big)\langle \eta\rangle$,
 and this leads to the second estimate.
\end{proof}

We note that our Sobolev norms are independent of change of coordinates:
\begin{lemma}\label{lem:changeofvariables} Let $F:\R^2 \to \R$ has compact support and let $G=F\circ \Psi$ where
$\Psi$ be a $C^1$ diffeomorphism.
Then $\| \fd^s F\|_{L^2(\R^2)}\lesssim \| \fd^s G\|_{L^2(\R^2)}\lesssim \| \fd^s F\|_{L^2(\R^2)}$.
\end{lemma}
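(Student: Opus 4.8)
The statement to prove is Lemma~\ref{lem:changeofvariables}: that the homogeneous-$\tfrac12$ tangential Sobolev norm $\|\fd^s F\|_{L^2(\R^2)}$ is comparable to $\|\fd^s(F\circ\Psi)\|_{L^2(\R^2)}$ for a $C^1$ diffeomorphism $\Psi$, when $F$ has compact support. By symmetry (replace $\Psi$ by $\Psi^{-1}$ and $F$ by $F\circ\Psi$) it suffices to prove the one-sided bound $\|\fd^s(F\circ\Psi)\|_{L^2}\lesssim \|\fd^s F\|_{L^2}$. The plan is to reduce to an estimate for a single change of variables in one fixed coordinate patch (since everything is already localised by cutoffs $\chi_\mu,\tC_\mu$), and then to exploit that for $0\le s\le 1$ the $H^s$ norm has a concrete characterisation which transforms well under bi-Lipschitz maps.

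First I would recall that for $0<s<1$ the homogeneous norm $\|\fd^s F\|_{L^2(\R^2)}^2$ is equivalent, up to constants depending only on $s$, to the Gagliardo seminorm
\begin{equation}
 [F]_s^2 = \int_{\R^2}\int_{\R^2} \frac{|F(z)-F(w)|^2}{|z-w|^{2+2s}}\, dz\, dw,
\end{equation}
together with $\|F\|_{L^2}^2$ for the inhomogeneous part. This is the standard Aronszajn--Slobodeckij identity and can be cited or proved directly by Plancherel. The point of switching to this representation is that it involves only the values of $F$ and the Euclidean distance, both of which behave predictably under a bi-Lipschitz change of variables. Concretely, writing $G = F\circ\Psi$ and changing variables $z = \Psi(z')$, $w=\Psi(w')$ in the double integral for $[F]_s$, the Jacobian factors $|\det\Psi'|$ are bounded above and below, and $|\Psi(z')-\Psi(w')|$ is comparable to $|z'-w'|$ because $\Psi$ is a $C^1$ diffeomorphism with $C^1$ inverse on the relevant compact set. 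Hence
\begin{equation}
 [G]_s^2 = \int\int \frac{|F(\Psi(z'))-F(\Psi(w'))|^2}{|z'-w'|^{2+2s}}\, dz'\, dw'
 \lesssim \int\int \frac{|F(z)-F(w)|^2}{|z-w|^{2+2s}}\, dz\, dw = [F]_s^2,
\end{equation}
and likewise $\|G\|_{L^2}\lesssim \|F\|_{L^2}$ from the Jacobian bound. Applying this with $s=\tfrac12$ gives the claim, and running the same argument with $\Psi$ replaced by $\Psi^{-1}$ gives the reverse inequality.

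For the full generality stated (arbitrary real $s$, or integer plus fraction), one handles the integer part separately: $\|\fd^s F\|_{L^2}$ with $s = k + s_0$, $k\in\N$, $0\le s_0<1$, is controlled by $\sum_{|\alpha|\le k}\|\fd^{s_0}\pa^\alpha F\|_{L^2}$, and tangential derivatives of $F\circ\Psi$ expand by the chain rule into sums of $(\pa^\beta F)\circ\Psi$ times bounded coefficients built from derivatives of $\Psi$, so one reduces once more to the fractional case just treated; since in the application only $s=\tfrac12$ is needed this extension can be stated briefly. The main obstacle — really the only subtle point — is making the comparison $|\Psi(z')-\Psi(w')|\sim|z'-w'|$ rigorous globally rather than just infinitesimally: this requires that $\Psi$ and $\Psi^{-1}$ are genuinely Lipschitz on the compact set containing the (dilated) support of $F$, which follows from $\Psi\in C^1$ and the mean value theorem, together with the standing assumption (built into the construction of the coordinate patches in Section~\ref{sec:localcoord}) that the Jacobians $\pa\widetilde x/\pa y$, $\pa y/\pa\widetilde x$ are bounded. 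Once that is in hand the estimate is a routine change of variables in the Gagliardo double integral.
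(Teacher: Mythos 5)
Your proof is correct and takes essentially the same route as the paper: both pass to the Gagliardo (Aronszajn--Slobodeckij) double-integral characterisation of the fractional seminorm, change variables using the bi-Lipschitz bounds on $\Psi$, and handle the $L^2$ part of the norm separately by a Jacobian bound. Your write-up is somewhat more detailed, but the key idea is identical.
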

\begin{proof} This is directly by changing variables on the space side seen to be true for the
$L^2$ part of the norms so it suffices to prove the inequalities for homogeneous Sobolev spaces, i.e. with $\fd^s$ replaced by
$|\pa_\theta|^s$. The proof will use the alternative characterization of the fractional Sobolev norms (see Proposition 3.4 in \cite{DPV}):
\begin{equation}
\int \int \frac{|F(x)-F(y)|^2}{|x-y|^{2+2s}} dx dy= C_s \int |\xi|^{2s} |\widehat{F}(\xi)|^2 \, d\xi.
\end{equation}
With this alternative characterization the proof of the lemma just follows from changing variables, since $|x-y|\lesssim |\Psi(x)-\Psi(y)|\lesssim |x-y|$.
\end{proof}

  \begin{lemma} We have
  \begin{equation}
   ||(1-\tC_\mu) \fdh f_\mu||_{L^2(\R^2)}\lesssim ||f_\mu||_{L^2(\R^2)}.
  \end{equation}
   The same estimate holds with $\pa \Omega$ replaced by
   $\Omega$ and $H^{1/2}(\pa \Omega)$ replaced with
   $H^{(0,1/2)}(\Omega)$.
  \end{lemma}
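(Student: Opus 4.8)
The statement to prove is the commutator-type bound
\[
 \|(1-\tC_\mu)\, \fdh f_\mu\|_{L^2(\R^2)} \lesssim \|f_\mu\|_{L^2(\R^2)},
\]
together with its analogue on $\Omega$ with $H^{(0,1/2)}(\Omega)$ in place of $H^{1/2}(\pa\Omega)$. The key point is that $\fdh = \fd^{1/2}$ is a nonlocal operator of order $1/2$, so a priori $\fdh f_\mu$ is only in $H^{-1/2}$; the gain comes from the fact that $f_\mu = (\chi_\mu f)\circ \Psi_\mu$ is supported where $\chi_\mu \not\equiv 0$, while $1-\tC_\mu$ vanishes there, since $\tC_\mu \equiv 1$ on the support of $\chi_\mu$ (this is exactly the defining property of the ``fattened'' cutoffs recorded in Section \ref{sec:localcoord}). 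Thus $(1-\tC_\mu)$ and $f_\mu$ have disjoint supports, separated by a fixed positive distance $d_\mu > 0$, and the operator $g \mapsto (1-\tC_\mu)\fdh g$ restricted to functions supported in $\operatorname{supp} f_\mu$ has a smooth, rapidly decaying kernel.

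The plan is to realize this through the integral kernel of $\fdh$. First I would write $\fd^{1/2}$ as convolution with a kernel $\mathcal{K}(z) = \mathcal{F}^{-1}[\langle\xi\rangle^{1/2}](z)$; this is a tempered distribution that is smooth and decays faster than any polynomial away from the origin (it is the Bessel-type kernel associated with $\langle\xi\rangle^{1/2}$, singular only at $z = 0$). Since $(1-\tC_\mu)(z)$ and $f_\mu(w)$ are supported in sets at distance $\geq d_\mu$, we have
\[
 (1-\tC_\mu)(z)\,\fdh f_\mu(z) = (1-\tC_\mu)(z) \int_{|z-w|\geq d_\mu} \mathcal{K}(z-w) f_\mu(w)\, dw,
\]
so only the smooth tail of $\mathcal K$ enters. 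The function $\widetilde{\mathcal K}(z) = \mathbf{1}_{\{|z|\geq d_\mu\}}\mathcal K(z)$ is in $L^1(\R^2)$ (indeed in $L^1 \cap L^\infty$), and hence $g \mapsto (1-\tC_\mu)(\widetilde{\mathcal K} * g)$ is bounded on $L^2$ with norm $\lesssim \|1-\tC_\mu\|_{L^\infty}\|\widetilde{\mathcal K}\|_{L^1(\R^2)} \lesssim C(d_\mu)$, by Young's inequality. Applying this with $g = f_\mu$ (which agrees with the convolution against the full kernel since the near-diagonal part $|z-w|<d_\mu$ contributes nothing when $z \in \operatorname{supp}(1-\tC_\mu)$) gives the claimed estimate; the implied constant depends only on the fixed geometry of the cutoffs $\chi_\mu, \tC_\mu$.

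For the $\Omega$-version I would argue identically: the operator $\fd_\mu^{1/2}$ on $\Omega$ is, by \eqref{fdmudef}, conjugated by $\Psi_\mu$ and localized by $\chi_\mu, \tC_\mu$, and the $H^{(0,1/2)}(\Omega)$ norm is built from these same local pieces (see \eqref{sobspacedef}); one handles the extra $H^0 = L^2$ (radial) direction trivially since $\fd^{1/2}$ acts only in the two tangential variables and commutes with the radial integration. The main obstacle — really the only nontrivial point — is the pointwise decay of the convolution kernel $\mathcal K$ of $\langle\xi\rangle^{1/2}$ away from the origin; this is a standard fact about Bessel potential kernels (the kernel of $\langle\xi\rangle^{-s}$ decays exponentially, and $\langle\xi\rangle^{1/2} = \langle\xi\rangle\cdot\langle\xi\rangle^{-1/2}$ so $\mathcal K$ is a second-order derivative of an exponentially decaying kernel, still exponentially decaying away from $0$), which I would either cite or verify by a quick stationary-phase / contour argument. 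Everything else is Young's inequality and bookkeeping with the fixed cutoff functions.
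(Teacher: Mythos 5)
Your argument is correct but takes a genuinely different route from the paper's. The paper's proof is a one-liner given Lemma \ref{lem:LeibnitzBasic}: since $\tC_\mu\equiv 1$ on $\operatorname{supp}\chi_\mu$ and hence on $\operatorname{supp} f_\mu$, one has $\tC_\mu f_\mu = f_\mu$, and therefore
\begin{equation}
 (1-\tC_\mu)\fdh f_\mu \,=\, \fdh(\tC_\mu f_\mu) - \tC_\mu \fdh f_\mu ,
\end{equation}
which is precisely the commutator $[\fdh,\tC_\mu]f_\mu$; the fractional Leibniz estimate then gives the bound with constant $\lesssim \|\tC_\mu\|_{H^2}$. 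You instead exploit the off-diagonal smoothing of $\fdh$: you truncate the Bessel-type convolution kernel of $\langle\xi\rangle^{1/2}$ away from the diagonal, use its rapid decay there, and close with Young's inequality. Both are valid, and your approach is more self-contained (modulo the kernel-decay fact, for which your suggestion to write $\langle\xi\rangle^{1/2}=(1-\Delta)\langle\xi\rangle^{-3/2}$ and invoke the exponential decay of the two-dimensional Bessel potential is clean), while the commutator route is shorter and does not require any quantitative separation of supports. That last point is the one small gap in your write-up: you assert that $\operatorname{supp}(1-\tC_\mu)$ and $\operatorname{supp} f_\mu$ are separated by a fixed positive distance $d_\mu>0$, but the literal defining property in Section \ref{sec:localcoord} is only that $\tC_\mu\equiv 1$ \emph{on} $\operatorname{supp}\chi_\mu$, which does not by itself force a positive gap (for a smooth $\tC_\mu$ that equals $1$ exactly on a closed set, $\{\tC_\mu\neq 1\}$ can accumulate at $\pa\operatorname{supp}\chi_\mu$). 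What you really need is that $\tC_\mu\equiv 1$ on an open neighborhood of $\operatorname{supp}\chi_\mu$; this is of course how ``fattened'' cutoffs are built in practice and is implicitly intended, so the issue is cosmetic, but the paper's commutator identity sidesteps it entirely and is the more robust argument.
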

  \begin{proof}
     Since $\tC_\mu =1$ on the support of $\chi_\mu$ and hence on the support of
    $f_\mu$ it follows from Lemma \ref{lem:LeibnitzBasic} that
    \begin{equation}
     ||(1-\tC_\mu) \fdh f_\mu||_{L^2(R)}
     =|| \fdh (\tC_\mu f_\mu)-\tC_\mu \fdh f_\mu||_{L^2(R)}
     \leq C ||f_\mu||_{L^2(R)}\leq C ||f||_{L^2(\pa \Omega)}.
      \tag*{\qedhere}
    \end{equation}
  \end{proof}

\begin{lemma}\label{lem:halfderivativepartialcommute} For $k=0,1$ we have
\begin{align}
\|[\fdhm\!,\widehat{\pa}_d\, ]\, g\|_{H^k(\pa\Omega)}&\lesssim \| g\|_{H^{k+1/2}(\pa\Omega)},\\
\|[\fdhm\!,S\, ]\, g\|_{H^k(\pa\Omega)}&\lesssim \| g\|_{H^{k+1/2}(\pa\Omega)},
\end{align}
and
\begin{align}
\|[\fdhm\!,\widehat{\pa}_d\, ]\, g\|_{H^{0,k}(\Omega)}&\lesssim \|g\|_{H^{0,k+1/2}(\Omega)},\\
\|[\fdhm\!,S\, ]\, g\|_{H^{0,k}(\Omega)}&\lesssim \|g\|_{H^{0,k+1/2}(\Omega)}.
\end{align}
\end{lemma}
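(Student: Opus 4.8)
\textbf{Proof plan for Lemma \ref{lem:halfderivativepartialcommute}.}

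The plan is to reduce everything to the flat local model and exploit the Leibniz-type estimate of Lemma \ref{lem:LeibnitzBasic} together with the fact that $\langle \pa \rangle^{1/2}$ is a Fourier multiplier commuting with constant-coefficient derivatives. First I would recall that by the definition \eqref{fdmudef} and \eqref{sobspacedef}, all the norms $\|\cdot\|_{H^k(\pa\Omega)}$, $\|\cdot\|_{H^{k+1/2}(\pa\Omega)}$ are sums over the coordinate patches of the flat $H^s(\R^2)$ norms of $\chi_\mu$-localizations, and similarly for $\|\cdot\|_{H^{0,k}(\Omega)}$, $\|\cdot\|_{H^{0,k+1/2}(\Omega)}$, using also Lemma \ref{lem:changeofvariables} to pass freely between $y$-coordinates and the flat coordinates $z$ on the patch. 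So it suffices to estimate, in each local chart, $[\langle \pa\rangle^{1/2}, \widehat{\pa}_d] g_\mu$ and $[\langle \pa\rangle^{1/2}, S] g_\mu$ in flat Sobolev norms, where $g_\mu = (\chi_\mu g)\circ\Psi_\mu$ is compactly supported and $\widetilde\chi_\mu \equiv 1$ on its support.

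For the derivative commutator $[\langle\pa\rangle^{1/2},\widehat{\pa}_d]$: recall from \eqref{eq:flatcoord} that $\widehat{\pa}_d = (\Psi'_\mu)^a_d\,\pa_a$ is a variable-coefficient vector field, so write $\widehat{\pa}_d = b^a(z)\pa_a$ with $b^a = (\Psi'_\mu)^a_d \in C^\infty$. Then $[\langle\pa\rangle^{1/2},\widehat{\pa}_d]g = [\langle\pa\rangle^{1/2}, b^a]\pa_a g + b^a[\langle\pa\rangle^{1/2},\pa_a]g$. The second bracket vanishes since $\langle\pa\rangle^{1/2}$ and $\pa_a$ are both Fourier multipliers, so $[\langle\pa\rangle^{1/2},\widehat{\pa}_d]g = [\langle\pa\rangle^{1/2}, b^a]\pa_a g$, i.e.\ $\langle\pa\rangle^{1/2}(b^a \pa_a g) - b^a\langle\pa\rangle^{1/2}(\pa_a g)$. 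Applying Lemma \ref{lem:LeibnitzBasic} with $F = b^a$ and $G = \pa_a g$: for $k=0$ the first estimate there gives $\|[\langle\pa\rangle^{1/2},\widehat{\pa}_d]g\|_{L^2} \lesssim \|b^a\|_{H^2}\|\pa_a g\|_{L^2} \lesssim \|g\|_{H^1} \lesssim \|g\|_{H^{1/2+1}}$, but one can do better: writing $\pa_a g = \langle\pa\rangle^{1/2}\langle\pa\rangle^{-1/2}\pa_a g$ and noting $\langle\pa\rangle^{-1/2}\pa_a$ is order $1/2$, one gets $\|[\langle\pa\rangle^{1/2},\widehat{\pa}_d]g\|_{L^2} \lesssim \|b^a\|_{H^2}\|g\|_{H^{1/2}}$; more cleanly, since the commutator $[\langle\pa\rangle^{1/2},\widehat{\pa}_d]$ is a pseudodifferential operator of order $1/2$ with smooth symbol, it maps $H^{k+1/2}\to H^k$ with operator norm controlled by finitely many derivatives of $\Psi'_\mu$, which gives both the $k=0$ and $k=1$ estimates. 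Concretely for $k=1$, I would commute an additional tangential vector field through and iterate Lemma \ref{lem:LeibnitzBasic} (second estimate, with $s=1/2$), paying $\|b^a\|_{H^3}$. The vector-field commutator $[\langle\pa\rangle^{1/2},S]$ is handled identically: $S = S^d(z)\pa/\pa z^d$ with $S^3 = 0$ smooth, so $[\langle\pa\rangle^{1/2},S]g = [\langle\pa\rangle^{1/2},S^d]\pa_d g = \langle\pa\rangle^{1/2}(S^d\pa_d g) - S^d\langle\pa\rangle^{1/2}\pa_d g$, and Lemma \ref{lem:LeibnitzBasic} applies verbatim.

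For the interior estimates $\|[\fdhm, \widehat{\pa}_d]g\|_{H^{0,k}(\Omega)}$ and $\|[\fdhm, S]g\|_{H^{0,k}(\Omega)}$: here the coordinates are $\widehat z = (z, z^3)$ with $\fdm = \fd^{1/2}_\mu$ acting only in the $z = (z^1, z^2)$ directions (and $\fd_0$ in all directions on the interior ball $W_0$), and $H^{0,k}$ is the sum over $|I|\le k$, $S\in\S$ of $\|S^I\cdot\|_{L^2}$. Since $\fdm$ acts tangentially and commutes with $\pa_{z^3}$ (which is essentially $\pa_r$), the same flat-model computation goes through with the extra $z^3$ variable carried along as a parameter; one applies Lemma \ref{lem:LeibnitzBasic} in the $z$-variables fibered over $z^3$ and then integrates in $z^3$, picking up $\|b^a\|_{C^0_{z^3}H^{2 \text{ or } 3}_z}$-type norms which are finite since the $\Psi'_\mu$ and $S^d$ are smooth. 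One must also handle the terms where the tangential vector fields in the $H^{0,k}$ norm fall on the cutoff $\widetilde\chi_\mu$ or the change-of-variables Jacobian rather than on $g_\mu$; as noted in Section \ref{sec:localcoord}, these are lower order and are absorbed using the boundedness estimate $\|(1-\tC_\mu)\fdh f_\mu\|_{L^2}\lesssim \|f_\mu\|_{L^2}$ and Lemma \ref{lem:LeibnitzBasic} again.

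\textbf{Main obstacle.} The routine part is the flat-space multiplier/commutator estimate; the only genuine care needed is bookkeeping the passage from the global operators $\fdm$, $S$, $\widehat\pa_d$ (defined patch-by-patch with cutoffs and measures) to the clean local commutator so that no term is dropped — in particular verifying that the commutators of $\fdm$ with the patching cutoffs $\chi_\mu, \tC_\mu$ and with the Jacobian factors are all lower order, exactly as in the proof of Lemma \ref{lem:gradientsmoothingcommute}. I expect the slight loss of half a derivative on the right-hand side (which is why the estimate is stated as $H^k \to H^{k+1/2}$ rather than $H^k\to H^k$) to be the point one has to be honest about: a commutator of the order-$1/2$ operator $\fdm$ with a first-order operator is generically order $1/2$, not order $0$, so there is no way to avoid the extra half-derivative, and the statement is sharp.
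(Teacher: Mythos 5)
Your overall strategy matches the paper's: localize to a flat chart, observe that the genuine work is in the cutoff and change-of-variables errors, and control those with Lemma \ref{lem:LeibnitzBasic} and Lemma \ref{lem:changeofvariables}. For the $S$-commutator your argument is correct and is exactly what the paper does, since $S = S^d(z)\widehat{\pa}_d$ with genuinely $z$-dependent coefficients and the Leibniz lemma applies.

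The treatment of the $\widehat{\pa}_d$-commutator, however, contains an inconsistency. You write $\widehat{\pa}_d = b^a(z)\pa_a$ with $b^a = (\Psi'_\mu)^a_d$ and $\pa_a = \pa/\pa y^a$, then claim $[\langle\pa\rangle^{1/2},\pa_a] = 0$ because both are Fourier multipliers. But $\langle\pa\rangle^{1/2}$ is the multiplier in the flat $z$-variables; $\pa/\pa y^a$ is not a Fourier multiplier in those variables, so in $y$-coordinates this bracket does not vanish. If instead you take $\pa_a = \pa/\pa z^a$ so that the bracket does vanish, then $\widehat{\pa}_d = \pa/\pa\widehat{z}^d$ has constant coefficients $b^a = \delta^a_d$, and the other bracket $[\langle\pa\rangle^{1/2},b^a]$ vanishes too, so your decomposition produces nothing to estimate. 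The paper's key observation, which you should use instead, is precisely that $\fd^{1/2} = \langle(\widehat{\pa}_1,\widehat{\pa}_2)\rangle^{1/2}$ and all the $\widehat{\pa}_d$ are constant-coefficient Fourier multipliers in $\widehat{z}$, so $[\fd^{1/2},\widehat{\pa}_d]=0$ \emph{exactly} in the local model; there is no inner Leibniz term at all, and the entire contribution comes from $\widehat{\pa}_d$ falling on the cutoffs $\chi_\mu,\widetilde\chi_\mu$ and on the transition maps $\Psi_{\nu\mu}$ in the definition \eqref{fdmudef}. You identify these error terms separately, and your bookkeeping of them (invoking Lemma \ref{lem:LeibnitzBasic}, the disjoint-cutoff boundedness, and Lemma \ref{lem:changeofvariables}) is what the paper's proof actually consists of. Because your spurious Leibniz term over-estimates rather than under-estimates, the final bound still holds, but the reasoning needs to be corrected; also note that the half-derivative loss in the statement is not because ``a commutator of an order-$1/2$ operator with a first-order operator is generically order $1/2$'' — in the local model it is order $-\infty$ — but because the cutoff error terms $\widetilde\chi_\mu\,\fd^{1/2}\big((\widehat{\pa}_d\chi_\mu^{\mathrm{loc}})g^{\mathrm{loc}}\big)$ are genuinely of order $1/2$.
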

\begin{proof} Since $ \fd^{1/2}=\langle(\widehat{\pa}_1,\widehat{\pa}_2)\rangle^{1/2}$ commutes with $\widehat{\pa}_d$ it is just a matter of $\widehat{\pa}_d$ falling on the cutoffs or changes of variables in the definition of $\fdhm$ which produces a lower order term of the form
 \begin{equation}
 \widetilde{\chi}_\mu (\fd^{1/2} g_\mu)\circ\Psi_{\mu}^{-1}\!\!,\qquad g_\mu=\big((\widehat{\pa}_d\chi_\mu) f\big)\circ\Psi_\mu=\sum_\nu\big((\widehat{\pa}_d\chi_\mu)\chi_\nu^2  f \big)\circ\Psi_\nu
 \circ \Psi_{\nu\mu}=\sum_\nu\big((\widehat{\pa}_d\chi_\mu)\chi_\nu  f_\nu \big)
 \circ \Psi_{\nu\mu},
\end{equation}
where $\Psi_{\nu\mu}\!=\!\Psi_\nu^{-1}\!\circ\Psi_\nu$. The inequalities for $\widehat{\pa}_d$   follows directly from Lemma
\ref{lem:LeibnitzBasic} and Lemma \ref{lem:changeofvariables} applied to these. For the case of $S=S^d(z)\widehat{\pa}_d$ there is an additional commutator in the local coordinates of $S$ and
$\fd^{1/2}$ which is also controlled by Lemma \ref{lem:LeibnitzBasic}.
\end{proof}

As a consequence of the above lemmas we have:
\begin{lemma} \label{lem:halfderleibnitz}
We have
 \begin{align*}
  ||\fdhm (fg) - f \fdhm g||_{L^2(\pa \Omega)}
 & \lesssim ||f||_{C^2(\pa \Omega)} ||g||_{L^2(\pa \Omega)},\\
  ||\fdhm (fg) - f \fdhm g||_{H^1(\pa \Omega)}
  &\lesssim ||f||_{C^3(\pa \Omega)} || g||_{H^{1/2}(\pa \Omega)},\\
  ||\fdhm (f S g) - f \fdhm S g||_{L^2(\pa \Omega)}
  &\lesssim ||f||_{C^3(\pa \Omega)} || g||_{H^{1/2}(\pa \Omega)}.
 \end{align*}
Moreover, for $n=0,1$
\begin{align}
 ||\fdhm (fg) - f \fdhm g||_{H^n(\Omega)}
   &\lesssim ||f||_{C^{n,2}( \Omega)} || g||_{H^n( \Omega)},\\
  ||\fdhm (fg) - f \fdhm g||_{H^{n,1}(\Omega)}
   &\lesssim ||f||_{C^{n,3}( \Omega)} || g||_{H^{n,1/2}( \Omega)},\\
  ||\fdhm (f  S g) - f \fdhm S g||_{H^{n,0}(\Omega)}
   &\lesssim ||f||_{C^{n,3}( \Omega)} || g||_{H^{n,1/2}( \Omega)}.
  \label{leib3}
 \end{align}
\end{lemma}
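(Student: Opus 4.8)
\textbf{Proof proposal for Lemma \ref{lem:halfderleibnitz}.}

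The plan is to reduce every one of the six inequalities to the two basic local Leibniz estimates of Lemma \ref{lem:LeibnitzBasic} and the commutator bounds of Lemma \ref{lem:halfderivativepartialcommute}, using the partition-of-unity/localization machinery of Section \ref{sec:localcoord}. Recall that $\fdhm f = \sum_\mu \fd^{1/2}_\mu f$, with $\fd^{1/2}_\mu f = \widetilde\chi_\mu (\fd^{1/2} f_\mu)\circ\Psi_\mu^{-1}$ and $f_\mu = (\chi_\mu f)\circ\Psi_\mu$, where $\fd^{1/2} = \langle(\widehat\pa_1,\widehat\pa_2)\rangle^{1/2}$ is the genuine Fourier multiplier on $\R^2$. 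For the first inequality I would write, in each chart,
\[
 \fd^{1/2}_\mu(fg) - f\,\fd^{1/2}_\mu g
 = \widetilde\chi_\mu\Big(\fd^{1/2}\big((fg)_\mu\big) - f_{\mu}^{e}\,\fd^{1/2}(g_\mu)\Big)\circ\Psi_\mu^{-1}
 + \big(f_\mu^e - f\big)\,\widetilde\chi_\mu(\fd^{1/2}g_\mu)\circ\Psi_\mu^{-1},
\]
where $f_\mu^e$ is an extension of $f$ read off in the chart and I use $(fg)_\mu = (\chi_\mu f g)\circ\Psi_\mu = f_\mu^e\, g_\mu$ on the support of $\chi_\mu$; the second term vanishes after one more localization since $\widetilde\chi_\mu\equiv 1$ on the support of $\chi_\mu$. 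The first term is then handled by the $s=0$ case of Lemma \ref{lem:LeibnitzBasic} together with Lemma \ref{lem:changeofvariables} to pass between $F\circ\Psi$ and $F$, giving the bound $\|f\|_{C^2}\|g\|_{L^2}$ after summing over the finitely many $\mu$. (Here the $C^2$ norm rather than $H^2$ is used because we control the extension pointwise with its derivatives, which is what localizing a smooth multiplier of a product produces; the derivatives landing on cutoffs or on the change-of-variables Jacobian only produce lower-order terms by the same reasoning.)

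For the second and third inequalities (the $H^1$ and the $fSg$ versions on $\pa\Omega$) I would argue the same way but additionally commute a tangential derivative $S$ or an $\widehat\pa_d$ through $\fdhm$, which is exactly what Lemma \ref{lem:halfderivativepartialcommute} provides: $\|[\fdhm, \widehat\pa_d]g\|_{L^2}\lesssim \|g\|_{H^{1/2}}$ and the analogous bound for $S$. Concretely, for the $fSg$ estimate, write $S = S^d(z)\widehat\pa_d$ locally, move $S$ to the outside using $\fdhm(fSg) = \fdhm(S(fg)) - \fdhm((Sf)g)$ up to commutators, then use the $L^2$ Leibniz bound on $\fdhm((Sf)g)$ (costing one more derivative of $f$, hence $C^3$) and Lemma \ref{lem:halfderivativepartialcommute} plus the $H^{1/2}$ Leibniz estimate (the $s=1/2$ case of Lemma \ref{lem:LeibnitzBasic}) on the remaining piece. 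For the $H^1$ estimate one simply applies a $\widehat\pa_d$ and repeats, absorbing the commutator $[\widehat\pa_d,\fdhm]$ into the right-hand side at the cost of the extra $H^{1/2}$ of $g$ and one extra $C$-derivative of $f$.

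The interior estimates (the last three displays, with norms $H^n(\Omega)$, $H^{n,1}(\Omega)$, $H^{n,0}(\Omega)$ for $n=0,1$) follow by the same argument applied chart-by-chart in the ``fattened'' coordinates $\widehat z = (z, z^3)$ on $W_\mu$, where $\fdhm$ still acts only in the two tangential variables $z$ and hence commutes with $\widehat\pa_3 = \pa_r$; the radial derivative $\pa_r$ (and the full $H^n$ regularity) passes through $\fdhm$ freely, so the $n=1$ case reduces to the $n=0$ case plus the already-established $\pa\Omega$-type estimates applied on each sphere $z^3 = \text{const}$ and then integrated in $z^3$. The one point requiring care is that the measure $m_\mu^2\,dz\,dz^3$ depends on the coordinates; but differentiating the measure only generates lower-order (zeroth-order in the fractional sense) terms, as already noted at the end of Section \ref{sec:localcoord}, so these are absorbed into the constants.

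\textbf{Main obstacle.} The genuinely delicate point is the bookkeeping in the $fSg$ estimate: one must verify that moving $S$ from between $f$ and $g$ to the outside of the fractional operator, while simultaneously localizing the product, really costs only the claimed $\|f\|_{C^3}\|g\|_{H^{1/2}}$ and not, say, an $H^1$ norm of $g$. This works because $S$ is \emph{tangential}, so $Sg$ is one tangential derivative of $g$, and the $s=1/2$ Leibniz estimate of Lemma \ref{lem:LeibnitzBasic} together with the tangential-commutator improvement of Lemma \ref{lem:halfderivativepartialcommute} exactly balances $\fdhm \circ S$ against $\|g\|_{H^{1/2}}$ (i.e. $\fdhm S$ behaves like $H^{3/2}\to L^2$ on the pieces where the full derivative lands on $g$, but the product structure recovers the half-derivative). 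Making sure every term in the commutator expansion is of this balanced type — rather than accidentally a full derivative on $g$ paired with only $L^2$ on $f$ — is the crux; everything else is routine localization.
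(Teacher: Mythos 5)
Your plan — reduce everything to Lemma \ref{lem:LeibnitzBasic}, Lemma \ref{lem:changeofvariables} and Lemma \ref{lem:halfderivativepartialcommute} by localizing through the chart machinery of Section \ref{sec:localcoord} — is exactly the route the paper intends: the text introduces this lemma with ``As a consequence of the above lemmas we have:'' and gives no further proof, so your writeup supplies the intended deduction, and your handling of the localization (cutoffs in $H^2$ controlled by $\|f\|_{C^2}$, Jacobian derivatives being lower order) is correct.

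One step needs tightening. For the $H^1(\pa\Omega)$ estimate you propose to ``apply a $\widehat\pa_d$ and repeat'', producing, besides the commutator $[\widehat\pa_d,\fdhm]$ (handled by Lemma \ref{lem:halfderivativepartialcommute}) and the piece with $\widehat\pa_d f$, the term $\fdhm(f\,\widehat\pa_d g) - f\,\fdhm(\widehat\pa_d g)$. If you estimate that term by ``repeating'' the first ($L^2$) Leibniz bound you get $\|f\|_{C^2}\|\widehat\pa_d g\|_{L^2}$, which is an $H^1$ norm of $g$, not the claimed $H^{1/2}$ norm. The decomposition does close, but you must invoke the \emph{second}, $s$-parameterized display of Lemma \ref{lem:LeibnitzBasic} rather than the first: either apply it with $s=1$ directly to the undifferentiated error,
\begin{equation*}
\|\fdh(FG) - F\fdh G\|_{H^1(\R^2)} \lesssim \|F\|_{H^3(\R^2)}\|G\|_{H^{1/2}(\R^2)},
\end{equation*}
localized via Lemma \ref{lem:changeofvariables} and $\|\chi_\mu f\|_{H^3}\lesssim\|f\|_{C^3}$, or apply it with $s=0$ to the differentiated piece, using $\|\widehat\pa_d g\|_{H^{-1/2}}\lesssim\|g\|_{H^{1/2}}$. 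Your reference to ``the $s=1/2$ Leibniz estimate'' is gesturing at the right family of estimates, but the exponent you name isn't the one that makes the balance work. With that substituted, the $H^1$ boundary estimate closes, the $fSg$ estimate follows from it plus Lemma \ref{lem:halfderivativepartialcommute} as you describe, and the interior cases follow chart-by-chart since $\fdhm$ acts only in the tangential variables $z^1,z^2$ and hence commutes with $\pa_{z^3}$. One small remark: for the fractional derivative the paper takes $m_\mu=1$ in \eqref{eq:localtoglobal} (unlike for the symmetric smoothing $\sm$), so the measure-dependence you flag at the end does not actually enter here.
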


Moreover
\begin{lemma}\label{lem:gradientfractionalcommute}
Suppose that
\begin{equation*}
|\pa \widetilde{x}/\pa y|+|\pa y/\pa \widetilde{x}|\leq M_0.
\end{equation*}
We have
  \begin{equation}
   ||\,[\nave_i, \fdhm] f||_{L^2(\Omega)}
   \lesssim C(M_0) {\sum} _{|I|\leq 2} ||\pa S^I \widetilde{x} ||_{C^0} || f||_{H^{1}(\Omega)}.
   %\label{fractionalcomm}
  \end{equation}
  and
    \begin{equation}
   ||\,S [\nave_i, \fdhm] f||_{L^2(\Omega)}+||\, [\nave_i, \fdhm] S f||_{L^2(\Omega)}
   \lesssim C(M_0) {\sum} _{|I|\leq 3} ||\pa S^I \widetilde{x} ||_{C^0} || f||_{H^{1,1/2}(\Omega)}.
 %  \label{fractionalcommimproved}
  \end{equation}
\end{lemma}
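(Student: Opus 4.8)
\textbf{Proof proposal for Lemma \ref{lem:gradientfractionalcommute}.}

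The plan is to reduce the statement to the Leibniz-type estimates for the flat fractional derivative from Lemma \ref{lem:LeibnitzBasic} together with the commutator estimates from Lemma \ref{lem:halfderivativepartialcommute} and the change-of-variables invariance from Lemma \ref{lem:changeofvariables}, exactly paralleling the proof of Lemma \ref{lem:gradientsmoothingcommute}. Working in each of the local coordinate patches $W_\mu$ of Section \ref{sec:localcoord}, I would write $\nave_i = \widehat{J}_i^d \widehat{\pa}_d$ with $\widehat{J}_i^d = \pa \widehat{z}^d/\pa \widetilde{x}^i$ and $\widehat{\pa}_d = \pa/\pa\widehat{z}^d$, and expand
\begin{equation}
 [\nave_i, \fdhm] = [\widehat{J}_i^d \widehat{\pa}_d, \fdhm]
 = [\widehat{J}_i^d, \fdhm]\widehat{\pa}_d + \widehat{J}_i^d [\widehat{\pa}_d, \fdhm].
\end{equation}
The second term is lower order by Lemma \ref{lem:halfderivativepartialcommute} (the half-derivative commutes with $\widehat{\pa}_d$ up to terms where the derivative falls on cutoffs or coordinate changes), and is controlled by $\|f\|_{H^{1/2}(\Omega)}$, hence by $\|f\|_{H^1(\Omega)}$. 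For the first and main term I would apply the Leibniz estimate from Lemma \ref{lem:halfderleibnitz} (equivalently the first estimate in Lemma \ref{lem:LeibnitzBasic}) with $F = \widehat{J}_i^d$ and $G = \widehat{\pa}_d f$, giving a bound by $\|\widehat{J}_i^d\|_{C^2} \|\widehat{\pa}_d f\|_{L^2} \lesssim C(M_0)\sum_{|I|\le 2}\|\pa S^I \widetilde{x}\|_{C^0}\|f\|_{H^1(\Omega)}$, where the bound $|\pa\widetilde{x}/\pa y| + |\pa y/\pa\widetilde{x}| \le M_0$ and the chain rule are used to control $\|\widehat{J}_i^d\|_{C^2}$ in terms of up to two tangential derivatives of $\widetilde{x}$. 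Summing over the patches $\mu$ with the cutoffs $\chi_\mu$, using that $\fdm^{1/2}$ is defined patchwise and that the patch commutators are lower order, gives the first inequality.

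For the second inequality I would commute one extra tangential vector field $S \in \S$ through. For $S[\nave_i, \fdhm]f$, I write $S = S^d(z)\widehat{\pa}_d$ in local coordinates and distribute $S$ over the expansion above; every time $S$ lands on a coefficient $\widehat{J}_i^d$ or $S^d$ or a cutoff it produces a term with one more derivative on $\widetilde{x}$ (hence the $|I| \le 3$ in the statement), and when $S$ hits $\fdhm$ we pick up the additional commutator $[S, \fdhm]$ handled by Lemma \ref{lem:halfderivativepartialcommute}, which costs a half tangential derivative on $f$, i.e.\ a bound by $\|f\|_{H^{1,1/2}(\Omega)} = \sum_{|J|\le 1/2,\,S\in\S}\|S^J f\|_{H^1}$ in the notation of Lemma \ref{smoothinglemma}; strictly speaking here one uses the $H^{0,k}$-versions of Lemma \ref{lem:halfderivativepartialcommute} together with $\|\widehat{\pa}_d f\|_{H^{0,1/2}} \lesssim \|f\|_{H^{1,1/2}}$. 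For $[\nave_i, \fdhm]Sf$ the argument is the same but with $Sf$ in place of $f$, and then one absorbs one of the tangential derivatives onto $\widetilde{x}$ by again expanding $\nave_i = \widehat{J}_i^d\widehat{\pa}_d$ and moving $S$ past $\widehat{\pa}_d$ via the lower-order commutator $[\widehat{\pa}_d, S]$ as in Lemma \ref{lem:gradientsmoothingcommute}. Collecting all terms and summing over $\mu$ gives the claimed estimate with $C(M_0)\sum_{|I|\le 3}\|\pa S^I\widetilde{x}\|_{C^0}$ as the constant.

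The main obstacle is purely bookkeeping: one must carefully track how many tangential derivatives land on the Jacobian factors $\widehat{J}_i^d$ (and on the local-coordinate vector field coefficients $S^d$ and the cutoffs) versus how much regularity is consumed from $f$, so that the final count matches $|I|\le 2$ in the first estimate and $|I|\le 3$ in the second, and the regularity demanded of $f$ is $H^1$ and $H^{1,1/2}$ respectively. The analytic content is entirely contained in Lemmas \ref{lem:LeibnitzBasic}, \ref{lem:halfderivativepartialcommute} and \ref{lem:changeofvariables}; no new estimate is needed beyond what is already available, so the proof is a routine adaptation of the proof of Lemma \ref{lem:gradientsmoothingcommute} with the smoothing operator $\sm$ replaced by the fractional derivative $\fdhm$.
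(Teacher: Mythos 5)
Your proof is correct and follows essentially the same route as the paper: writing $\nave_i = \widehat{J}_i^d\widehat{\pa}_d$, expanding $[\widehat{J}_i^d\widehat{\pa}_d,\fdhm]=\widehat{J}_i^d[\widehat{\pa}_d,\fdhm]+[\widehat{J}_i^d,\fdhm]\widehat{\pa}_d$, and handling the first term with Lemma~\ref{lem:halfderivativepartialcommute} and the second with Lemma~\ref{lem:halfderleibnitz}. The extension to the $S$-commuted estimate by tracking where tangential derivatives land is also what the paper intends, though it spells this step out only tersely.
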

\begin{proof} Writing $\widetilde{\pa}_i\!=\widehat{J}_i^d\widehat{\pa}_d$ we have
\begin{equation}
[\widehat{J}_i^d\widehat{\pa}_d ,\fdhm]=\widehat{J}_i^d[\widehat{\pa}_d ,\fdhm]+[\widehat{J}_i^d ,\fdhm]\widehat{\pa}_d,
\end{equation}
 where the first term is estimated by Lemma \ref{lem:halfderivativepartialcommute} and the second by Lemma \ref{lem:halfderleibnitz}.
\end{proof}

\subsubsection{Commutators with smoothing and the fractional derivative}
Since both smoothing and fractional derivatives are multiplication operators on the Fourier side it follows that they commute in local coordinates and hence
\begin{equation}
\| \fd^s \sm f\|_{H^{k}(\R^2)}\lesssim  \| \fd^s  f\|_{H^{k}(\R^2)}.
\end{equation}
Similarly in local coordinates $[\widehat{\pa}_d,\sm]$ is either $0$ or lower order. Therefore as in the proof of Lemma \ref{lem:halfderivativepartialcommute} we have
\begin{lemma}\label{lem:fractionalsmoothing} For $k=0,1$, $0\leq s\leq 1$
\begin{align}
\| \fd^s_\mu \sm f\|_{H^{k}(\pa\Omega)}&\lesssim  \| f\|_{H^{k+s}(\pa\Omega)},\\
\| [\fd^s_\mu, \sm ] f\|_{H^{k}(\pa\Omega)}&\lesssim  \| f\|_{H^{k+s-1}(\pa\Omega)},\\
\| [\widehat{\pa}_d , \sm ] f\|_{H^{k}(\pa\Omega)}&\lesssim  \| f\|_{H^{k-1}(\pa\Omega)},
\end{align}
and for $n=0,1$
 \begin{align}
\| \fd^s_\mu \sm f\|_{H^{n,k}(\Omega)}&\lesssim  \|  f\|_{H^{n,k+s}(\Omega)},\\
\|[\fd^s_\mu ,\sm ] f\|_{H^{n,k}(\Omega)}&\lesssim  \|  f\|_{H^{n,k+s-1}(\Omega)},\\
\|[\widehat{\pa}_d ,\sm ] f\|_{H^{n,k}(\Omega)}&\lesssim  \|  f\|_{H^{n,k-1}(\Omega)}.
\end{align}
\end{lemma}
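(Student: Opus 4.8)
The plan is to reduce every inequality to a statement about Fourier multipliers on $\R^2$ and then transfer it back to $\pa\Omega$ and $\Omega$ through the partition-of-unity construction \eqref{eq:localtoglobal}, following the same pattern as the proof of Lemma \ref{lem:halfderivativepartialcommute}.

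First I would record the flat-space facts. In a fixed local chart, $\fd^s$ is the Fourier multiplier with symbol $\langle\xi\rangle^s$, while the smoothing $S_\ve$ is convolution with $\varphi_\ve$, hence the Fourier multiplier with symbol $\widehat\varphi(\ve\xi)$, and $|\widehat\varphi(\ve\xi)|\le\|\varphi\|_{L^1(\R^2)}=1$ for all $\xi$. Therefore $\fd^s$ and $S_\ve$ commute on $\R^2$, and for any integer $m\ge0$ one has $\|\fd^s S_\ve F\|_{H^m(\R^2)}=\|\langle\xi\rangle^{m+s}\widehat\varphi(\ve\xi)\widehat F\|_{L^2}\le\|F\|_{H^{m+s}(\R^2)}$ by Plancherel. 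Combining this with the coordinate invariance of fractional Sobolev norms (Lemma \ref{lem:changeofvariables}) and the localization yields the first inequality in both the boundary and the interior families, the extra terms generated when $\fd^s$ or a plain derivative lands on one of the cutoffs $\chi_\mu,\widetilde\chi_\mu$ or on the Jacobian weight being handled by Lemma \ref{lem:LeibnitzBasic} and absorbed, since they come with one fewer derivative.

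Next I would treat the commutator bounds. For $[\fd^s_\mu,\sm]$ I would write $\sm=\sum_\nu S_{\ve,\nu}$ and split into the diagonal piece $\nu=\mu$ and the off-diagonal pieces. On the diagonal the exact commutation of the two Fourier multipliers inside the chart leaves only the contributions where $\fd^s$ or $\widehat{\pa}_d$ falls on $\chi_\mu,\widetilde\chi_\mu$ or on the weight; these are estimated by Lemma \ref{lem:LeibnitzBasic} with a gain of one derivative, giving the bound by $\|f\|_{H^{k+s-1}}$. The off-diagonal pieces are handled the same way, using that $\widetilde\chi_\mu\equiv1$ on $\operatorname{supp}\chi_\mu$ so that only overlapping chart regions contribute. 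The commutator $[\widehat{\pa}_d,\sm]$ is, chart by chart, either zero when $\widehat{\pa}_d$ is a tangential derivative $\pa/\pa z^a$, $a=1,2$ (because $S_\ve$ only smooths in those variables), or an error coming from the cutoffs and weight, again gaining a derivative; summing over the charts gives the stated estimate. The interior ($H^{n,k}(\Omega)$) versions follow word for word, with the $n$ additional plain derivatives harmless since $S_\ve$ and $\fd^s$ act only tangentially and commute with $\pa_r$ up to lower-order terms by Lemma \ref{lem:mutiplicationsmoothingcommute}.

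The main obstacle is the bookkeeping rather than any analytic difficulty: the smoothing is globalized in \eqref{eq:localtoglobal} with the Jacobian weight $m_\mu=|\det\Psi'_\mu|^{1/2}$ so that it is symmetric with respect to $dy$, whereas the fractional derivative $\fd^s_\mu$ is globalized with $m_\mu=1$, so passing the two operators past one another in the presence of this weight mismatch and of several overlapping charts must be checked to produce only the claimed lower-order errors. Given Lemma \ref{lem:LeibnitzBasic}, Lemma \ref{lem:changeofvariables}, and the support properties of the cutoffs this is routine, but it is the step that requires the most care.
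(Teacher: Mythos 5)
Your proposal takes essentially the same route as the paper: in local coordinates $\fd^s$ and $S_\ve$ are both Fourier multipliers (with $|\widehat\varphi(\ve\xi)|\le 1$) and hence commute exactly, $[\widehat\pa_d, S_\ve]$ is zero chart by chart, and the global commutator errors come from derivatives hitting the cutoffs, weights, and changes of coordinates, handled by the same mechanism as in Lemma \ref{lem:halfderivativepartialcommute}. Your discussion of $[\widehat\pa_d,\sm]$ only addresses $d=1,2$ explicitly; you should note the $d=3$ case is trivial since $S_\ve$ does not act in $z^3$, but otherwise the argument matches the paper's.
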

We can now generalize Lemma \ref{lem:mutiplicationsmoothingcommute} to estimate in the fractional norm:
\begin{lemma}\label{lem:mutiplicationsmoothingcommutefractional} We have   $[\sm,D_t]\!=\!0$. If $S\!=\!S^a(y)\pa_a$ is a tangential vector field then for
$0\leq s\leq 1$:
\begin{align}
\|[\sm,S]g\,\|_{H^s(\pa\Omega)}+\|[\sm,\pa_r]\,g\|_{H^s(\pa\Omega)}&\lesssim \|g\|_{H^s(\pa\Omega)} ,\label{eq:smoothingvectorfieldcommutefractional}\\
  ||\sm(f S g) - f \sm S g||_{H^s\pa\Omega)}
  &\lesssim || f||_{C^{1}(\pa\Omega)} ||g||_{H^{s}(\pa\Omega)}.
  \label{eq:smoothingmultiplicationcommutefractional}
 \end{align}
 Moreover for $n=0,1$
 \begin{align}
\|[\sm,S]g\|_{H^{n,s}(\Omega)}+\|[\sm,\pa_r]g\|_{H^{n,s}(\Omega)}&\lesssim \|g\|_{H^{n,s}(\Omega)} ,\label{eq:smoothingvectorfieldcommuteinteriorfractional}\\
  ||\sm(f S g) - f \sm S g||_{H^{n,s}(\Omega)}
  &\lesssim || f||_{C^{n,1}(\Omega)} ||g||_{H^{n,s}(\Omega)}.
  \label{eq:smoothingmultiplicationcommuteinteriorfractional}
 \end{align}
\end{lemma}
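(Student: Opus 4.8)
\textbf{Proof proposal for Lemma \ref{lem:mutiplicationsmoothingcommutefractional}.}

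The plan is to follow exactly the scheme used to prove Lemma \ref{lem:mutiplicationsmoothingcommute} (for integer $k=0,1$), but replace the integer Sobolev estimates there with the fractional Leibniz/commutator estimates collected in Lemmas \ref{lem:LeibnitzBasic}, \ref{lem:halfderivativepartialcommute}, \ref{lem:halfderleibnitz}, and with the commutator identities between $\sm$ and $\fdh$ from Lemma \ref{lem:fractionalsmoothing}. Because all four estimates are local in nature (the global operators are assembled via \eqref{eq:localtoglobal} and the commutators with the cutoffs and with changes of coordinates only generate lower-order terms, as explained in Section \ref{sec:localcoord}), it suffices to establish everything in the flat coordinates $z \in \R^2$ for $\partial\Omega$, and in $(z,z^3)$ for $\Omega$, and then sum over the partition of unity.

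First I would handle $[\sm, D_t] = 0$, which is immediate since $D_t$ commutes with spatial smoothing (as in Lemma \ref{lem:mutiplicationsmoothingcommute}). Next, for \eqref{eq:smoothingvectorfieldcommutefractional}, I would write, in local coordinates with $S = S^d(z)\pa_{z^d}$, $S^3 = 0$, the explicit integral representation of $\sm(Sg) - S\sm g$ used in the proof of Lemma \ref{lem:mutiplicationsmoothingcommute}: after one integration by parts in $w$ this is a sum of two terms, one with $(\partial S^d) g$ smoothed against $\varphi$ and one with a difference quotient of $S^d$ smoothed against $\partial\varphi$. Both of these are, up to bounded multipliers, of the form "smoothing applied to $a(z)g(z-\ve w)$" with $a$ smooth, so applying $\fd^s_\mu$ and using the fractional smoothing estimate from Lemma \ref{lem:fractionalsmoothing} together with the fractional Leibniz rule \eqref{leib3} (to pass $\fd^s_\mu$ past the smooth coefficient) gives the bound $\|g\|_{H^s}$; the term $[\sm, \partial_r]$ is handled identically since $\partial_r$ is, in the local coordinates, again a vector field with smooth coefficients. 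For \eqref{eq:smoothingmultiplicationcommutefractional} I would use the analogous integral identity for $\sm(fSg) - f\sm Sg$ from the proof of Lemma \ref{lem:mutiplicationsmoothingcommute}, integrate by parts once, and again obtain terms of the form "smoothing of $(\text{smooth})\cdot g$" plus a difference-quotient term; commuting $\fd^s_\mu$ through and using Lemma \ref{lem:fractionalsmoothing} and \eqref{leib3} yields $\|f\|_{C^1}\|g\|_{H^s}$. The interior estimates \eqref{eq:smoothingvectorfieldcommuteinteriorfractional}--\eqref{eq:smoothingmultiplicationcommuteinteriorfractional} follow verbatim since the smoothing acts only in the tangential (spherical) directions and the radial variable $z^3$ is a parameter, exactly as in the passage from the $\pa\Omega$ statements to the $\Omega$ statements in Lemmas \ref{lem:mutiplicationsmoothingcommute} and \ref{lem:fractionalsmoothing}; one simply carries an $H^n$ norm in the radial variable through the same estimates.

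The main obstacle I anticipate is purely bookkeeping: making sure that when $\fd^s_\mu$ is commuted through $\sm$ and through the smooth coefficients $S^d$, $\partial S^d$, $f$, one never loses more than the claimed regularity. The worst term is the difference-quotient term $\ve^{-1}(S^d(z-\ve w) - S^d(z))\, g(z-\ve w)\, \partial_w\varphi(w)$, where applying $\fd^s$ and using $|\langle\xi\rangle^s - \langle\xi-\eta\rangle^s| \lesssim \langle\eta\rangle^s$ for $0\le s\le 1$ (a routine refinement of the estimate in the proof of Lemma \ref{lem:LeibnitzBasic}) shows the $s$-derivative can be absorbed at the cost of one extra derivative on $S^d$, i.e. $\|f\|_{C^1}$ (respectively $C^{n,1}$), which is exactly what the statement allows. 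The remaining commutators between $\fd^s_\mu$ and the cutoffs $\chi_\mu, \tC_\mu$ and between $\fd^s_\mu$ and the local diffeomorphisms are lower order by Lemma \ref{lem:halfderivativepartialcommute} and Lemma \ref{lem:changeofvariables}, so they cause no difficulty.
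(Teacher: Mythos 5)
Your proposal is correct and follows essentially the same route as the paper's proof: write the commutators $\sm(Sg)-S\sm g$ and $\sm(fSg)-f\sm Sg$ via the explicit integral formulas from the proof of Lemma \ref{lem:mutiplicationsmoothingcommute} (one integration by parts in $w$), observe that each integrand is a smooth coefficient times a translate of $g$, estimate the $H^s$ norm of that product pointwise in $w$ via a fractional Leibniz/commutator bound, and use Lemma \ref{lem:changeofvariables} to handle the coordinate changes across overlapping charts. The interior statements are obtained exactly as you say, by carrying the radial $H^n$ norm along as a parameter. One small bookkeeping remark: in your first paragraph you invoke \eqref{leib3}, which is stated only for the specific exponent $\fdhm$ (order $1/2$), as the tool for passing $\fd^s_\mu$ past a smooth coefficient; that estimate does not directly apply for general $0\le s\le 1$. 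Your second paragraph supplies the correct fix — the routine extension of the proof of Lemma \ref{lem:LeibnitzBasic} using $|\langle\xi\rangle^s-\langle\xi-\eta\rangle^s|\lesssim\langle\eta\rangle^s$ — which is in fact the same implicit step the paper takes when it says "apply Lemma \ref{lem:LeibnitzBasic} below the integral signs," so you are being slightly more explicit than the paper, not less.
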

\begin{proof} By the proof of Lemma \ref{lem:mutiplicationsmoothingcommute} in local coordinates such that $S=S^d(z)\pa/\pa z^d$, with $S^3=0$, we have,
 neglecting that the measure depends on the coordinates,
  \begin{equation*}
   \big( \sm( S g) -  S\sm \,g \big)(z)=
      \int_{\R^2} \!\! \frac{\pa S^d(z \! -\! \ve w) }{\pa z^d} g(z\!-\!\ve w) \varphi(w) \, dw
    + \int_{\R^2}\!\! \frac{ S^d(z\! -\! \ve w)\! -\! S^d(z)}{\varepsilon}
    g(z\!-\!\ve w) \frac{\pa \varphi(w)\!}{\pa w^{ d}}  \, dw.
  \end{equation*}
Since by Lemma \ref{lem:changeofvariables} the fractional Sobolev norm is invariant under changes of coordinates and the same coordinate system works in the overlap of the cutoffs we can apply Lemma \ref{lem:LeibnitzBasic} in the same coordinate system as the smoothing to the expression above below the integral signs and that gives
\eqref{eq:smoothingvectorfieldcommutefractional} and \eqref{eq:smoothingvectorfieldcommuteinteriorfractional}.

Also by the proof of Lemma  \ref{lem:mutiplicationsmoothingcommute}
\begin{multline*}
    \big( \sm( f S g) - {}_{\!}f \sm S g \big)(z)\\
    =\!  \int_{\R^2} \!\!\! (Sf)(z \! - \! \ve w) g(z \!- \!\ve w) \varphi(w) \, dw
   +\int_{\R^2}\!\!\!\frac{ f(z\! - \! \ve w)\! -{}_{\!}\! f(z)}{\varepsilon}
    g(z \!- \!\ve w) \frac{\pa \big( S^d(z\!-  \!\ve w)\varphi(w)\big)\!\!}{\pa w^{ d}}  \, dw,
  \end{multline*}
  and similarly applying Lemma \ref{lem:LeibnitzBasic} below the integral signs give
  \eqref{eq:smoothingmultiplicationcommutefractional} and
  \eqref{eq:smoothingmultiplicationcommuteinteriorfractional}.
\end{proof}

Combining the above lemmas we get:
\begin{lemma} \label{lem:halfderleibnitzandsmoothing}
We have
 \begin{align*}
  ||\fdhm\sm (fg) - f \fdhm\sm g||_{L^2(\pa \Omega)}
 & \lesssim ||f||_{C^2(\pa \Omega)} ||g||_{L^2(\pa \Omega)},\\
  ||\fdhm \sm(fg) - f \fdhm \sm g||_{H^1(\pa \Omega)}
  &\lesssim ||f||_{C^3(\pa \Omega)} || g||_{H^{1/2}(\pa \Omega)},\\
  ||\fdhm \sm(f S g) - f \fdhm \sm S g||_{L^2(\pa \Omega)}
  &\lesssim ||f||_{C^3(\pa \Omega)} || g||_{H^{1/2}(\pa \Omega)}.
 \end{align*}
Moreover, for $n=0,1$
\begin{align*}
 ||\fdhm\sm (fg) - f \fdhm\sm g||_{H^n(\Omega)}
   &\lesssim ||f||_{C^{n,2}( \Omega)} || g||_{H^n( \Omega)},\\
  ||\fdhm\sm (fg) - f \fdhm\sm g||_{H^{n,1}(\Omega)}
   &\lesssim ||f||_{C^{n,3}( \Omega)} || g||_{H^{n,1/2}( \Omega)},\\
  ||\fdhm\sm (f  S g) - f \fdhm\sm S g||_{H^{n,0}(\Omega)}
   &\lesssim ||f||_{C^{n,3}( \Omega)} || g||_{H^{n,1/2}( \Omega)}.
 \end{align*}
\end{lemma}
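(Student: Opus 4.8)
The plan is to prove Lemma \ref{lem:halfderleibnitzandsmoothing} by combining the Leibniz-type commutator estimates for $\fdhm$ from Lemma \ref{lem:halfderleibnitz} with the smoothing-commutator estimates from Lemma \ref{lem:mutiplicationsmoothingcommutefractional} and Lemma \ref{lem:fractionalsmoothing}. The key observation is that each of the three claimed inequalities splits as a telescoping sum: to bound $\fdhm \sm (fg) - f \fdhm \sm g$ one writes
\[
 \fdhm \sm(fg) - f\fdhm\sm g
 = \big(\fdhm\sm(fg) - f \sm \fdhm g\big) + f\big(\sm \fdhm g - \fdhm\sm g\big),
\]
and then further splits the first term using $\sm(fg) = f\sm g + \big(\sm(fg) - f\sm g\big)$, so that
\[
 \fdhm\sm(fg) - f\sm\fdhm g
 = \fdhm\big(\sm(fg) - f\sm g\big) + \big(\fdhm(f\sm g) - f \fdhm \sm g\big).
\]
Each of the resulting three pieces is then handled by a single invocation of an earlier lemma: the piece $f(\sm\fdhm g - \fdhm\sm g)$ by the commutator bound $\|[\fdhm,\sm]g\|$ from Lemma \ref{lem:fractionalsmoothing} (after pulling out $\|f\|_{C^0}$); the piece $\fdhm(\sm(fg) - f\sm g)$ by first applying Lemma \ref{smoothinglemma} (the multiplication-smoothing commutator, which gains a power of $\ve$ and hence absorbs the loss of $\fdhm$) and then Lemma \ref{lem:halfderivativepartialcommute} to move $\fdhm$ past; and the piece $\fdhm(f\sm g) - f\fdhm \sm g$ by the plain Leibniz estimate from Lemma \ref{lem:halfderleibnitz} applied with $\sm g$ in place of $g$, using that $\sm$ is bounded on the relevant $H^k$ and $H^{n,k}$ spaces by Lemma \ref{smoothinglemma}.

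For the estimates involving the tangential vector field $S$, i.e. $\fdhm\sm(fSg) - f\fdhm\sm Sg$, I would argue the same way but one must be careful not to differentiate $g$ one too many times. The point is that $S$ commutes with $\sm$ only up to a lower-order commutator (Lemma \ref{lem:mutiplicationsmoothingcommute}, \ref{lem:mutiplicationsmoothingcommutefractional}), and commutes with $\fdhm$ up to a commutator that costs exactly a half-derivative (Lemma \ref{lem:halfderivativepartialcommute}). So I write $f S g = S(fg) - (Sf) g$ and treat $S(fg)$ by moving $S$ to the outside through both $\sm$ and $\fdhm$ with controlled errors, reducing to the $fg$ case already handled but with $g$ replaced by $Sg$ — which is why the right-hand side features $\|g\|_{H^{1/2}}$ (resp. $\|g\|_{H^{n,1/2}}$) rather than $\|g\|_{L^2}$, and why $\|f\|_{C^3}$ (resp. $\|f\|_{C^{n,3}}$) appears: each transfer of $S$ or $\fdhm$ across a product by $f$ via Lemma \ref{lem:LeibnitzBasic} / Lemma \ref{lem:halfderleibnitz} costs one more derivative on $f$. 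The term $(Sf)g$ is strictly lower order and is bounded directly by Lemma \ref{lem:halfderleibnitz} with $Sf$ in place of $f$.

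The main obstacle, and the only point requiring genuine care, is bookkeeping the derivative count on $f$ and the regularity count on $g$ so that each term lands with exactly the exponents claimed in the statement — in particular making sure that when the half-derivative $\fdhm$ lands on $g$ (which happens in the terms coming from the $S$-transfer and from $[\fdhm, \cdot]$ commutators) this is matched against the $H^{1/2}$ (or $H^{n,1/2}$) norm of $g$ on the right, and that the power of $\ve$ gained from the multiplication-smoothing commutator in Lemma \ref{smoothinglemma} is at least enough to absorb the $\ve^{-k}$ losses from moving $\fdhm$ and $S$ around (it is, since $\fdhm$ and $S$ are order $1$ in the $\ve$-scaling while the commutator $[\sm, f]$ gains a full power of $\ve$). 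The interior cases $H^n(\Omega)$, $H^{n,k}(\Omega)$, $H^{n,s}(\Omega)$ follow verbatim from the boundary cases since all the cited lemmas are stated in both forms, and since in the local spherical coordinates of Section \ref{sec:localcoord} the radial direction $\pa_r$ commutes with $\sm$ and passes through $\fdhm$ up to lower-order terms already controlled by Lemma \ref{lem:fractionalsmoothing}; thus one simply carries the spare $\pa_r$-derivatives (at most $n \le 1$ of them) along through the entire argument without change.
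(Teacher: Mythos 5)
Your high-level plan — telescope the commutator into pieces that fall under Lemmas \ref{smoothinglemma}, \ref{lem:mutiplicationsmoothingcommute}, \ref{lem:mutiplicationsmoothingcommutefractional}, \ref{lem:fractionalsmoothing}, and \ref{lem:halfderleibnitz} — is exactly what the paper intends (it supplies no proof beyond the phrase ``combining the above lemmas''). However there is an algebraic slip and two places where the details are glossed in a way that would not survive scrutiny.

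First, your displayed ``second identity'' is wrong as written: the right-hand side
$\fdhm(\sm(fg) - f\sm g) + \big(\fdhm(f\sm g) - f\fdhm\sm g\big)$
telescopes back to $\fdhm\sm(fg) - f\fdhm\sm g$, which is the \emph{original} object, not $\fdhm\sm(fg) - f\sm\fdhm g$. The cleanest fix is to drop your first split and just use the two-term decomposition
\begin{equation}
\fdhm\sm(fg) - f\fdhm\sm g = \fdhm\big[\sm(fg) - f\sm g\big] + \big[\fdhm(f\sm g) - f\fdhm(\sm g)\big],
\end{equation}
whose second term is handled by Lemma \ref{lem:halfderleibnitz} with $\sm g$ in place of $g$ and boundedness of $\sm$ on $L^2$ and $H^1$, exactly as you say.

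Second, for the first term in the $H^1(\pa\Omega)$ case, the claim that the $\ve$-gain in Lemma \ref{smoothinglemma} ``absorbs the loss of $\fdhm$'' does not directly produce the bound $\|\fdhm[\sm(fg)-f\sm g]\|_{H^1}\lesssim\|f\|_{C^3}\|g\|_{H^{1/2}}$. Bounding $\fdhm u$ in $H^1$ requires $\|u\|_{H^{3/2}}$, and Lemma \ref{smoothinglemma} only reaches $H^1$ of the commutator; the $\ve$-gain is a multiplicative factor at $L^2$, not a gain of a half-derivative that would extend the range. You need to differentiate the commutator explicitly via $\pa[\sm,f]g = [\sm,\pa f]g + [\sm,f]\pa g$ (using that $\pa$ commutes with $\sm$ modulo lower order, Lemma \ref{lem:fractionalsmoothing}), estimate $[\sm,f]\pa g$ via the $S$-commutator bound of Lemma \ref{lem:mutiplicationsmoothingcommutefractional} at $s=1/2$, and interpolate the remaining pieces. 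Alternatively, commute the operators differently at the outset, decomposing $\fdhm\sm = \sm\fdhm + [\fdhm,\sm]$ and $\sm(fg) = f\sm g + [\sm,f]g$, so that the problematic piece becomes $[\sm,f]\fdhm g$, which Lemma \ref{smoothinglemma} handles cleanly at $H^1$ because the half-derivative already sits on $g$.

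Third, the $S$-case sketch is directionally correct but your phrase ``reducing to the $fg$ case already handled but with $g$ replaced by $Sg$'' cannot be taken literally: that substitution would put $\|Sg\|_{L^2}\sim\|g\|_{H^1}$ on the right, which is half a derivative too strong. The correct mechanism, after moving $S$ past $\fdhm\sm$ up to lower-order commutators, is to estimate the resulting term $S[\fdhm\sm(fg)-f\fdhm\sm g]$ in $L^2$ by invoking the $H^1$ estimate of the first assertion (with the \emph{original} $g$), which is where $\|g\|_{H^{1/2}}$ enters. The leftover pieces from $fSg = S(fg)-(Sf)g$ and the $[\fdhm\sm, S]$ commutators are then all of lower order.
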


\begin{lemma}\label{lem:gradientfractionalsmoothingandfractionalderivativecommute}
Suppose that
\begin{equation*}
|\pa \widetilde{x}/\pa y|+|\pa y/\pa \widetilde{x}|\leq M_0.
\end{equation*}
We have
  \begin{equation}
   ||\, [\nave_i, \fdhm\sm ] f||_{L^2(\Omega)}
   \lesssim C(M_0) {\sum} _{|I|\leq 2} ||\pa S^I \widetilde{x} ||_{C^0} || f||_{H^{1}(\Omega)},
   \label{fractionalandsmoothingcomm}
  \end{equation}
  and
   \begin{equation}
   ||\,S [\nave_i,{}_{\!} \fdhm\sm ] f||_{L^2(\Omega)}
   +||\, [\nave_i,{}_{\!} \fdhm\sm ] S f||_{L^2(\Omega)}
   \lesssim C(M_0) {\sum} _{|I|\leq 3} ||\pa S^I \widetilde{x} ||_{C^0} || f||_{H^{1,1/2}(\Omega)}.
   \label{fractionalandsmoothingcommimproved}
  \end{equation}
\end{lemma}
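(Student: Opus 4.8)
# Proof Proposal for Lemma~\ref{lem:gradientfractionalsmoothingandfractionalderivativecommute}

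\textbf{Overall approach.} The plan is to reduce the commutator $[\nave_i, \fdhm\sm]$ to commutators already estimated in the previous lemmas by writing $\widetilde{\pa}_i$ in the flat local coordinates $\widehat{z}$ via \eqref{eq:flatcoord}, namely $\widetilde{\pa}_i = \widehat{J}_i^d \widehat{\pa}_d$ with $\widehat{J}_i^d = \pa\widehat{z}^d/\pa\widetilde{x}^i$, and then splitting
\begin{equation*}
 [\widehat{J}_i^d \widehat{\pa}_d, \fdhm\sm]
 = \widehat{J}_i^d [\widehat{\pa}_d, \fdhm\sm] + [\widehat{J}_i^d, \fdhm\sm]\widehat{\pa}_d.
\end{equation*}
For the first term, since the smoothing $\sm$ and the fractional operator $\fd^{1/2}$ are Fourier multipliers in the local coordinate patch, they commute with $\widehat{\pa}_d$ there, so $[\widehat{\pa}_d, \fdhm\sm]$ is either zero or, when the derivative $\widehat{\pa}_d$ falls on cutoffs or changes of variables in the definitions of $\fdhm$ and $\sm$, a lower-order operator, exactly as in the proof of Lemma~\ref{lem:halfderivativepartialcommute} and Lemma~\ref{lem:fractionalsmoothing}. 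For the second term, $[\widehat{J}_i^d, \fdhm\sm]\widehat{\pa}_d g$ is a commutator of multiplication by $\widehat{J}_i^d$ with $\fdhm\sm$, which is controlled by the Leibniz-type estimate in Lemma~\ref{lem:halfderleibnitzandsmoothing}; this produces a bound of the form $\|\widehat{J}\|_{C^2}\|\widehat{\pa}g\|_{L^2} \lesssim C(M_0)\sum_{|I|\le 2}\|\pa S^I\widetilde{x}\|_{C^0}\|g\|_{H^1(\Omega)}$, since $\widehat{J}_i^d$ is a smooth function of $\pa\widetilde{x}/\pa y$ and $\pa y/\pa\widetilde{x}$ and hence its $C^2$ norm is controlled by $C^0$ norms of up to two $S$-derivatives of $\widetilde{x}$ together with $M_0$.

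\textbf{Key steps in order.} First I would record the decomposition above and dispose of the first term using that in local coordinates $\sm$ and $\fd^{1/2}$ commute with $\widehat{\pa}_d$ up to lower-order cutoff/change-of-variable errors, invoking Lemma~\ref{lem:fractionalsmoothing} and the argument of Lemma~\ref{lem:halfderivativepartialcommute} to see $\|\widehat{J}_i^d[\widehat{\pa}_d,\fdhm\sm]g\|_{L^2(\Omega)} \lesssim C(M_0)\|g\|_{L^2(\Omega)} \le C(M_0)\|g\|_{H^1(\Omega)}$. Second, I would bound the second term $[\widehat{J}_i^d,\fdhm\sm]\widehat{\pa}_d g$ by the first inequality of Lemma~\ref{lem:halfderleibnitzandsmoothing} with $n=0$, after which $\|\widehat{J}_i^d\|_{C^2}$ is estimated in terms of $\sum_{|I|\le 2}\|\pa S^I\widetilde{x}\|_{C^0}$ and $M_0$ via the chain rule, giving \eqref{fractionalandsmoothingcomm}. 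Third, for \eqref{fractionalandsmoothingcommimproved}, I would apply $S$ to the above decomposition and commute $S$ through, which produces one additional commutator $[\widehat{J}_i^d, S]$ (lower order, as in Lemma~\ref{lem:gradientsmoothingcommute}) and one additional commutator between $S$ and the operators in the local coordinate patch (controlled by Lemma~\ref{lem:mutiplicationsmoothingcommutefractional} and Lemma~\ref{lem:halfderivativepartialcommute}); the net effect is to replace the $C^2$ norm of $\widehat{J}$ by its $C^3$ norm, hence $\sum_{|I|\le 3}\|\pa S^I\widetilde{x}\|_{C^0}$, and to lose a half derivative on $g$, so the bound becomes $C(M_0)\sum_{|I|\le 3}\|\pa S^I\widetilde{x}\|_{C^0}\|f\|_{H^{1,1/2}(\Omega)}$. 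The case $[\nave_i,\fdhm\sm]Sf$ is handled symmetrically, first writing $\nave_i Sf = S\nave_i f + [\nave_i, S]f$ and using that $[\nave_i,S]$ is lower order.

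\textbf{Main obstacle.} The genuinely delicate point is bookkeeping the half-derivative loss in the second estimate: applying $S$ and commuting it past $\fdhm\sm$ must be done so that each commutator is estimated against $H^{1,1/2}$ (not $H^{1,1}$), which requires using the sharp Leibniz estimates of Lemma~\ref{lem:halfderleibnitzandsmoothing} (the lines with $\|g\|_{H^{n,1/2}}$ on the right) and Lemma~\ref{lem:fractionalsmoothing} in exactly the right order, rather than naively commuting $S$ first. A secondary technical nuisance is that $\fdhm$ is only defined patch-by-patch, so one must be careful that the cutoff errors generated when $S$ or $\widehat{\pa}_d$ hits $\widetilde{\chi}_\mu$ are genuinely lower order; this is exactly the mechanism already used in the proof of Lemma~\ref{lem:halfderivativepartialcommute}, and by Lemma~\ref{lem:changeofvariables} the fractional norms are coordinate-independent so the patches can be compared freely. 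Apart from these points the argument is routine once the decomposition is in place.
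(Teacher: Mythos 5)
Your decomposition differs from the paper's at the very first step. The paper splits the operator $\fdhm\sm$ and computes the two commutators $[\nave_i,\fdhm]\sm f$ and $\fdhm[\nave_i,\sm]f$ separately (via Lemma~\ref{lem:gradientfractionalcommute} and Lemma~\ref{lem:gradientsmoothingcommute}); only much later, inside the most delicate residual term $\fdhm[\nave_i,\sm]Sf$, does it introduce the factorization $\nave_i=\widehat{J}_i^d\widehat{\pa}_d$ and reduce to a single application of the combined Leibniz estimate Lemma~\ref{lem:halfderleibnitzandsmoothing}. You instead introduce $\nave_i=\widehat{J}_i^d\widehat{\pa}_d$ at the top and commute each factor past the \emph{intact} operator $\fdhm\sm$, landing on Lemma~\ref{lem:halfderleibnitzandsmoothing} directly. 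The result is a flatter, more uniform tree of cases, and it exposes more plainly that the only genuine content is (a) $\widehat{\pa}_d$ commutes with both $\sm$ and $\fdhm$ in the local chart up to cutoff/change-of-variable errors, and (b) multiplication by $\widehat{J}_i^d$ against $\fdhm\sm$ is governed by the combined Leibniz rule. The paper's route pays instead by threading through several intermediate lemmas ($\lem$\ref{lem:gradientfractionalcommute}, \ref{lem:gradientsmoothingcommute}, \ref{lem:fractionalsmoothing}, \ref{lem:halfderleibnitz}, \ref{lem:mutiplicationsmoothingcommute}) before reaching \ref{lem:halfderleibnitzandsmoothing}, but makes the half-derivative accounting for each sub-piece somewhat more visible.

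One small but substantive imprecision in your sketch: for the term $[\nave_i,\fdhm\sm]Sf$ you propose ``writing $\nave_i Sf=S\nave_i f+[\nave_i,S]f$,'' but that identity lives \emph{inside} the commutator and does not by itself transfer $S$ to the outside of $[\nave_i,\fdhm\sm]$. What your decomposition actually requires is to move $S$ past $\widehat{\pa}_d$ in the second summand, i.e.\ write $[\widehat{J}_i^d,\fdhm\sm]\widehat{\pa}_d Sf = [\widehat{J}_i^d,\fdhm\sm]S\widehat{\pa}_d f + [\widehat{J}_i^d,\fdhm\sm][\widehat{\pa}_d,S]f$, and then invoke the third line of Lemma~\ref{lem:halfderleibnitzandsmoothing} (the one with $fSg$) on the first piece; the second piece is of the same form with the spatial tangential field $[\widehat{\pa}_d,S]$ in place of $S$, hence also controlled. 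Phrased this way the argument closes; phrased as ``$\nave_i Sf=S\nave_i f+\ldots$'' it does not quite say what you mean.
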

\begin{proof} We have $[\nave_i, \! \fdhm \!\sm ] f\!=\![\nave_i,  \!\fdhm]\sm  f\!+\!\fdhm[\nave_i, \sm ] f$. The first term is estimated by Lemma \ref{lem:gradientfractionalcommute}  and  the second term can be estimated by Lemma \ref{lem:gradientsmoothingcommute}. This proves \eqref{fractionalandsmoothingcomm} and \eqref{fractionalandsmoothingcommimproved} for the first term with $S$ to the left of the commutator.

It remains to prove \eqref{fractionalandsmoothingcommimproved} for the second term with $S$ to the right of the commutator. We have
\begin{equation}
[\nave_i, \! \fdhm \!\sm ]S f\!=\![\nave_i,  \!\fdhm]\sm  S f\!+\!\fdhm[\nave_i, \sm ]S f.
\end{equation}
Here
\begin{equation}
[\nave_i,  \!\fdhm]\sm  S f=[\nave_i,  \!\fdhm]S\sm  f\!+\![\nave_i,  \!\fdhm][\sm,S]   f,
\end{equation}
where the first term is estimated by Lemma \ref{lem:gradientfractionalcommute} and
Lemma \ref{lem:fractionalsmoothing}, and the second by
Lemma \ref{lem:mutiplicationsmoothingcommute} and Lemma \ref{lem:gradientfractionalcommute}.
Hence it remains to estimate
\begin{equation}
\fdhm[\nave_i, \sm ]S f=\fdhm[\widehat{J}_i^d\widehat{\pa}_d, \sm ]S f
\!=\!\fdhm[\widehat{J}_i^d \!,\sm]\widehat{\pa}_d S f\!+\!\fdhm\!\widehat{J}_i^d[\widehat{\pa}_d ,\sm]S f.
\end{equation}
Since $[\widehat{\pa}_d,S]$ is either $0$ or a tangential vector field it follows that  $\fdhm[\widehat{J}_i^d \!,\sm][\widehat{\pa}_d, S]f$ can be estimated by  Lemma \ref{lem:mutiplicationsmoothingcommute}. Moreover
$[\fdhm\!,\widehat{J}_i^d][\widehat{\pa}_d ,\sm]S f$ can be estimated by Lemma
 \ref{lem:halfderleibnitz} and Lemma \ref{lem:mutiplicationsmoothingcommute}.
 Hence it remains to estimate
 \begin{equation}
 \fdhm[\widehat{J}_i^d \!,\sm]S\widehat{\pa}_d f
 +\widehat{J}_i^d\fdhm[\widehat{\pa}_d ,\sm]S f.
 \end{equation}
 Here the $L^2$ norm of the second term can be estimated by the $L^\infty$ norm of
 $\widehat{J}_i^d$ time the $L^2$ norm of the other factor which is a tangential pseudo differential operator of order $3/2$ so it is under control by the right hand side of \eqref{fractionalandsmoothingcommimproved}.
Hence it remains to estimate
\begin{equation}
\fdhm[\widehat{J}_i^d \!,\sm]S\widehat{\pa}_d f
=\fdhm\widehat{J}_i^d \sm S\widehat{\pa}_d f
- \fdhm\sm\bigtwo(\widehat{J}_i^d  S\widehat{\pa}_d f \bigtwo).
\end{equation}
Here
\begin{equation}
\fdhm\widehat{J}_i^d \sm S\widehat{\pa}_d f
=\widehat{J}_i^d \fdhm\sm S\widehat{\pa}_d f
+[\fdhm\!,\widehat{J}_i^d ] S\sm \widehat{\pa}_d f
+[\fdhm\!,\widehat{J}_i^d ][\sm, S]\widehat{\pa}_d f,
\end{equation}
where the second term in the right can be estimated by Lemma \ref{lem:halfderleibnitz} and Lemma \ref{lem:fractionalsmoothing} and the third term by Lemma \ref{lem:halfderleibnitz} and Lemma \ref{lem:mutiplicationsmoothingcommute}.
Hence it remains to estimate
\begin{equation}
\widehat{J}_i^d \fdhm\sm S\widehat{\pa}_d f
-\fdhm\sm\bigtwo(\widehat{J}_i^d  S\widehat{\pa}_d f \bigtwo),
\end{equation}
which follows from Lemma \ref{lem:halfderleibnitzandsmoothing}.
\end{proof}

\section{Basic elliptic estimates}\label{sec:ellipticestimatesproofs}
We collect here some elliptic estimates which will be used in the course of
the proof. These estimates all appear in \cite{GLL19} as well as in
some of the earlier references \cite{L05a}, \cite{CHS13}.
\subsubsection{The estimates used to estimate $V$}
The proof of the following lemma can be found in
\cite{L05a}, \cite{GLL19}.
\begin{lemma}
  \label{app:pwdiff}
  There is a constant $c_0 = c_0(|\pa \xve|)$ so that
  if $\alpha$ is a $(0,1)$-tensor
  on $\Omega$ then
  \begin{equation}
   |\widetilde{\pa} \alpha |
   \leq c_0\big(
   |\widetilde{\div} \alpha |+
   |\widetilde{\curl} \alpha |
   +  |\S \alpha |\big), \qquad \text{ on } \Omega.
   \label{pwnonrel}
  \end{equation}
\end{lemma}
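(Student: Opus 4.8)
We must show that for a $(0,1)$-tensor $\alpha$ on $\Omega$ (the unit ball in $\R^3$), there is a constant $c_0 = c_0(|\pave \xve|)$ with
\begin{equation*}
 |\widetilde{\pa}\alpha| \leq c_0 \big( |\widetilde{\div}\alpha| + |\widetilde{\curl}\alpha| + |\S\alpha|\big)
 \qquad \text{pointwise on } \Omega.
\end{equation*}
Here $\widetilde{\pa}_i = (\pa y^a/\pa\xve^i)\pa_{y^a}$ is the pushed-forward Cartesian derivative, $\widetilde{\div}$ and $\widetilde{\curl}$ are the corresponding Euclidean divergence and curl in the $\xve$-coordinates, and $\S$ is the collection of tangential vector fields from \eqref{Sdef}.

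\textbf{Approach.} The plan is to reduce to a purely algebraic pointwise identity in the $\xve$-variables, exactly as in \cite{L05a} and \cite{GLL19}. First, since $\xve:\Omega\to\D$ is a diffeomorphism with $|\pa\xve|, |\pa y/\pa\xve|$ controlled, it suffices to prove the inequality after transporting everything to the image domain: a generic first derivative $\widetilde{\pa}_i\alpha_j$ is a smooth bounded combination (with coefficients depending on $\pave\xve$) of the Euclidean derivatives $\partial_{\xve^i}\alpha_j$, and conversely. So the real content is the Euclidean statement: for a $1$-form $\alpha$ on a domain $D\subset\R^3$ and a point $p\in D$, the full gradient $\partial\alpha$ at $p$ is pointwise bounded by $|\div\alpha|$, $|\curl\alpha|$, and the derivatives of $\alpha$ along two independent directions tangent to a fixed sphere through $p$. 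The key algebraic fact is that the nine quantities $\partial_i\alpha_j$ decompose into the symmetric trace-free part, the trace (divergence), and the antisymmetric part (curl); moreover the symmetric trace-free part, when contracted with a full basis of tangent directions, recovers all of $\partial_i\alpha_j$ modulo the trace and antisymmetric pieces. Concretely, at a point $p$ with $|p|=\rho\in(1/2,1]$, the radial direction $\hat p$ together with the two rotation fields from \eqref{eq:rotations} spanning $T_p\mathbf{S}^2_\rho$ give a frame; one writes $\partial_i\alpha_j = \tfrac12(\partial_i\alpha_j+\partial_j\alpha_i) - \tfrac12\,\widetilde{\curl}\,\alpha_{ij}$, and then expresses the symmetrized derivative in this frame, isolating the radial-radial component via the divergence and the trace identity and the mixed/tangential components via $\S\alpha$.

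\textbf{Key steps, in order.} (i) Pass to $\xve$-coordinates and record that $\widetilde{\pa}\alpha$, $\widetilde{\div}\alpha$, $\widetilde{\curl}\alpha$ are bounded (above and below) by their Euclidean counterparts with constants $c_0(|\pave\xve|)$; this is where the constant in the statement enters and reduces the problem to $D\subset\R^3$ Euclidean. (ii) Fix $p\in D\setminus\{0\}$ (the case near $0$ is handled by the cutoff $(1-\eta)\partial_{y^a}\in\S$, which gives all Cartesian derivatives directly on $W_0$). Choose the orthonormal-up-to-scaling frame $\{e_1,e_2,e_3\}$ at $p$ with $e_3$ radial and $e_1,e_2$ realized by $\eta\,\Omega_{ab}$ after renormalizing; note $e_1\alpha, e_2\alpha$ are components of $\S\alpha$ up to bounded coefficients. (iii) Write $D_{ab}=\tfrac12(e_a\alpha_b + e_b\alpha_a)$ for the symmetrized derivative matrix in this frame. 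The six tangential/mixed entries $D_{11},D_{12},D_{22},D_{13},D_{23}$ are directly bounded by $|\S\alpha|$ plus the antisymmetric part (since $e_a\alpha_b = D_{ab} + \tfrac12(\text{curl})_{ab}$, and $e_1,e_2$ applied to $\alpha$ are in $\S\alpha$; for $D_{13},D_{23}$ use $D_{13}=\tfrac12(e_1\alpha_3 + e_3\alpha_1)$ and $e_1\alpha_3$ lies in $\S\alpha$ while $e_3\alpha_1 = (e_1\alpha)_\cdot$-type terms rearranged via symmetry and curl — one solves the small linear system). (iv) Finally $D_{33}$: from $\div\alpha = D_{11}+D_{22}+D_{33}$ (trace is frame-independent up to bounded coefficients) we get $D_{33} = \widetilde{\div}\alpha - D_{11}-D_{22}$, so it too is controlled. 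Reassembling, every $e_a\alpha_b$ is bounded by $|\widetilde{\div}\alpha| + |\widetilde{\curl}\alpha| + |\S\alpha|$, and transforming back to Cartesian $\partial_i\alpha_j$ (bounded coefficients) gives \eqref{pwnonrel}.

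\textbf{Main obstacle.} The delicate point is step (iii): recovering the mixed radial-tangential components $D_{13}, D_{23}$ of the symmetrized gradient purely from tangential derivatives. Naively $e_3\alpha_1$ (a radial derivative of a tangential component) is not itself in $\S\alpha$. The resolution is the standard trick: $e_3\alpha_1 = e_1\alpha_3 + (e_3\alpha_1 - e_1\alpha_3) = e_1\alpha_3 + (\widetilde{\curl}\,\alpha)_{31} + \text{(lower-order frame-coefficient terms)}$, so it is expressed through a tangential derivative $e_1\alpha_3\in\S\alpha$, the curl, and commutator terms $[e_1,e_3]\alpha$ which involve only undifferentiated $\alpha$ and are absorbed into $|\S\alpha|$-scale terms (or, more carefully, into the $c_0$ constant times lower-order pieces; since the inequality is stated with $|\S\alpha|$ including $\alpha$ itself via the non-derivative part is not present — in fact here one should keep track that the commutator $[e_a,e_b]$ is again a bounded combination of the frame vectors, so $[e_a,e_b]\alpha$ is bounded by $|\S\alpha|$ up to the radial field, and the radial component is folded back in recursively, which terminates because the system is finite-dimensional). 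This bookkeeping — that the finite linear system relating $\{e_a\alpha_b\}$ to $\{D_{ab}, \text{curl}, \text{div}\}$ is invertible with bounded inverse — is routine but is the crux; the cited references \cite{L05a}, \cite{GLL19}, \cite{CHS13} carry it out and we invoke that computation.
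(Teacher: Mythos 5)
Your overall structure is right and matches what the references cited by the paper actually do: decompose $\widetilde{\pa}_i\alpha_j$ pointwise into a normal and a tangential part, control the tangential part directly by $|\S\alpha|$, recover the normal--tangential pieces from the curl, and recover the normal--normal piece from the divergence minus the tangential trace. The paper itself gives no proof (it simply cites \cite{L05a}, \cite{GLL19}), so your sketch is strictly more explicit than the paper.

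However, the ``obstacle'' paragraph manufactures a difficulty that is not in the clean proof, and the resolution you offer for it does not work. If you pass to frame components $\alpha_b := \alpha(e_b) = e_b^j\alpha_j$ and compute $e_3\alpha_1 - e_1\alpha_3$, a Lie-bracket term $\alpha([e_3,e_1]) = [e_3,e_1]^j\alpha_j$ appears. This is a zeroth-order term in $\alpha$ --- it involves no derivatives of $\alpha$ at all --- and it cannot be absorbed into $|\S\alpha|$, because $\S$ consists of derivative operators and $|\S\alpha|$ gives no control over $|\alpha|$ itself. The ``recursive absorption'' you describe does not close: the offending term is not a first derivative in any direction, so it never re-enters the system you are solving. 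The clean fix, and the one the references actually use, is to decompose the \emph{operator} rather than the \emph{tensor}: at a fixed point write
\begin{equation*}
\widetilde{\pa}_i \;=\; \widetilde{\n}_i\,\widetilde{\n}^k\widetilde{\pa}_k
\;+\; \big(\delta_i^{\,k} - \widetilde{\n}_i\widetilde{\n}^k\big)\widetilde{\pa}_k,
\end{equation*}
with $\widetilde{\n}$ the unit conormal (a fixed vector at that point), and apply this to the \emph{coordinate} components $\alpha_j$. The second summand is a bounded combination of $S\alpha_j$, $S\in\S$. For the first, write $\widetilde{\n}^k\widetilde{\pa}_k\alpha_j = \widetilde{\n}^k\widetilde{\pa}_j\alpha_k + \widetilde{\n}^k\,\widetilde{\curl}\alpha_{kj}$; the curl term is given, and the first term decomposes again into a tangential piece and $\widetilde{\n}_j$ times the normal--normal contraction $\widetilde{\n}^k\widetilde{\n}^\ell\widetilde{\pa}_\ell\alpha_k$, which equals $\widetilde{\div}\alpha$ minus the tangential trace $(\delta^{k\ell}-\widetilde{\n}^k\widetilde{\n}^\ell)\widetilde{\pa}_\ell\alpha_k$. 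Since $\widetilde{\n}$ is never differentiated, no $|\alpha|$-terms arise, and \eqref{pwnonrel} holds exactly as stated, with constant depending only on $|\pa\xve|$ and the (bounded) geometry of the rotation fields away from the origin; near the origin the $(1-\eta)\pa_{y^a}\in\S$ give $\widetilde{\pa}\alpha$ directly, as you noted.
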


\subsubsection{The improved half derivative estimates used to estimate the coordinates}\label{sec:divcurlL2}

\begin{prop}\label{prop:divcurlL2} There is a constant
  $C_{\!0}$ depending on $\|\pa \xve\|_{L^\infty(\Omega)}$ so that
  if $\alpha$ is a vector field on $\Omega$ then
\begin{equation}
 \! ||\alpha||_{H^{1{}_{\!}}(\Omega)}^2\!
\leq\! C_{{}_{\!}0\!}\Big(\! ||\!\div \alpha||_{L^2(\Omega)}^2\!
 +{}_{\!} ||\!\curl \alpha||_{L^2(\Omega)}^2\!
 + \! \!\int_{\pa \Omega}\!\!\!\! \mathcal{N}_{{}_{\!}i} \mathcal{N}_{\!j}\fdh\! \alpha^i
 \!\cdot_{\!} \fdh\! \alpha^j dS
 + ||\alpha||_{L^2(\pa \Omega)}^2\!
 + ||\alpha||_{L^2(\Omega)}^2\!\Big).
 \label{ftang1}
\end{equation}
Here $\fdh$ is a half angular derivative defined locally in coordinates in \eqref{fdmudef},
and the inner product is the sum over coordinate charts $\fdh \alpha^i
 \cdot \fdh \alpha^j=\sum_{\mu} \big(\fdhm \alpha^i\big)
\big(\fdhm \alpha^j\big)$  in \eqref{eq:globalhalfderdef}. Moreover
\begin{equation}
 \! ||\alpha||_{H^{1{}_{\!}}(\Omega)}^2\!
\leq\! C_{{}_{\!}1\!}\Big(\! ||\!\div \alpha||_{L^2(\Omega)}^2\!
 +{}_{\!} ||\!\curl \alpha||_{L^2(\Omega)}^2\!
 + \! \!\int_{\pa \Omega}\!\!\!\!\gamma_{ij}\fdh\! \alpha^i
 \!\cdot_{\!} \fdh\! \alpha^j dS
 + ||\alpha||_{L^2(\pa \Omega)}^2\!
 + ||\alpha||_{L^2(\Omega)}^2\!\Big).
 %\label{ftang2}
\end{equation}
where $\gamma_{ij}$ is the tangential metric.
\end{prop}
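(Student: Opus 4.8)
\textbf{Proof proposal for \eqref{ftang1}.}

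The plan is to derive both inequalities from the basic pointwise div--curl estimate of Lemma \ref{app:pwdiff} together with an integration-by-parts identity that converts the $H^1$ norm of $\alpha$ into an interior $L^2$ control of $\div\alpha,\curl\alpha$ plus a boundary term. First I would recall the classical identity: for a vector field $\alpha$ on $\Omega$, after integrating by parts twice in the Euclidean metric,
\begin{equation}
\int_\Omega |\widetilde{\pa}\alpha|^2
= \int_\Omega |\widetilde{\div}\alpha|^2 + |\widetilde{\curl}\alpha|^2
+ \int_{\pa\Omega} \big( \mathcal{N}_i\widetilde{\pa}_j\alpha^i - \mathcal{N}_j\widetilde{\pa}_i\alpha^j\big)\alpha^j\, dS
+ (\text{l.o.t.}),
\end{equation}
where the lower-order terms involve at most one derivative of $\alpha$ times the Christoffel-type coefficients of $\widetilde{\pa}$ (controlled by $c_0 = c_0(|\pa\xve|)$). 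The essential point is that the boundary integrand only involves \emph{tangential} derivatives of $\alpha$ once one contracts against $\alpha$: writing $\widetilde{\pa}_j = \mathcal{N}_j\pa_{\mathcal{N}} + \overline{\pa}_j$, the normal-normal piece $\mathcal{N}_i\mathcal{N}_j\pa_{\mathcal{N}}\alpha^i$ is symmetric in $i,j$ (up to l.o.t.) and hence cancels in the antisymmetrized bracket. So the boundary term reduces to $\int_{\pa\Omega}(\overline{\pa}\alpha)\alpha$, an integral of a tangential derivative of $\alpha$ paired with $\alpha$.

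Next I would estimate that tangential boundary integral using the fractional Leibniz/duality estimate from Lemma \ref{fracalg}: for $S\in\mathcal{S}$,
\begin{equation}
\Big|\int_{\pa\Omega} f\, S g\, dS\Big| \lesssim \|f\|_{H^{1/2}(\pa\Omega)}\|g\|_{H^{1/2}(\pa\Omega)},
\end{equation}
applied with $f = \mathcal{N}_i\mathcal{N}_j\alpha^j$ (or the relevant contraction) and $g = \alpha^i$, after decomposing $\overline{\pa}$ into the spanning tangential fields $S\in\mathcal{S}$ using the partition of unity from Section \ref{sec:localcoord}. The point is that $\|\alpha\|_{H^{1/2}(\pa\Omega)}^2 = \sum_\mu \|\fdhm\alpha\|_{L^2(\pa\Omega)}^2$ by definition \eqref{sobspacedef}, so the bound becomes
\begin{equation}
\Big|\int_{\pa\Omega}\mathcal{N}_i\widetilde{\pa}_j\alpha^i\,\alpha^j\, dS\Big|
\lesssim \int_{\pa\Omega}\mathcal{N}_i\mathcal{N}_j\,\fdh\alpha^i\cdot\fdh\alpha^j\, dS
+ \int_{\pa\Omega}(\text{lower-order contractions of }\fdh\alpha)
+ \|\alpha\|_{L^2(\pa\Omega)}^2,
\end{equation}
where the ``lower-order'' terms are those in which the $\fdh$ derivatives land on the cutoffs/normals or produce only the tangential-metric contraction; these are absorbed into the $\gamma_{ij}\fdh\alpha^i\cdot\fdh\alpha^j$ term and into $\|\alpha\|_{L^2(\pa\Omega)}^2$. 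Combining with the pointwise estimate \eqref{pwnonrel} to pass from $\|\alpha\|_{H^1(\Omega)}^2$ to $\|\widetilde{\div}\alpha\|_{L^2}^2 + \|\widetilde{\curl}\alpha\|_{L^2}^2 + \|\mathcal{S}\alpha\|_{L^2}^2$, and using the above identity to control $\|\mathcal{S}\alpha\|_{L^2(\Omega)}^2$ by a trace estimate in terms of the boundary $\fdh$ norms, yields \eqref{ftang1}. The second inequality follows by the same argument but keeping the full tangential-metric contraction $\gamma_{ij}\fdh\alpha^i\cdot\fdh\alpha^j$ rather than dropping to the normal-normal part, at the cost of the constant $C_1$ which depends on a few more derivatives of $\xve$ (from commuting $\fdh$ through the coordinate Jacobians, via Lemma \ref{lem:gradientfractionalcommute}).

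I expect the main obstacle to be the careful bookkeeping of the commutator errors when localizing to coordinate charts and moving the fractional derivative $\fdh$ past the Jacobians $\widehat{J}_i^d$ and the cutoffs $\chi_\mu$: one must check that every such commutator is genuinely lower order, i.e. bounded by $\|\alpha\|_{L^2(\pa\Omega)}^2$ or by an interior term that can be absorbed, so that nothing of order $\|\fdh\alpha\|_{L^2(\pa\Omega)}$ is left uncontrolled on the right-hand side except in the designated boundary term. This is exactly where Lemmas \ref{lem:halfderivativepartialcommute}, \ref{lem:halfderleibnitz}, and \ref{lem:gradientfractionalcommute} are used, and assembling them so that the highest-order boundary contribution is precisely $\int_{\pa\Omega}\mathcal{N}_i\mathcal{N}_j\fdh\alpha^i\cdot\fdh\alpha^j\, dS$ (and not some other quadratic form in $\fdh\alpha$) is the delicate point.
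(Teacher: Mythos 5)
Your starting point is correct — the paper does prove Proposition \ref{prop:divcurlL2} from an integration-by-parts identity of the kind you write (this is Lemma \ref{lem:ellipticboundary}) combined with the fractional duality of Lemma \ref{fracalg}, and the observation that the boundary integrand contains only tangential derivatives is indeed the crucial structural fact. However, there is a genuine gap in how you close the estimate. When you apply Lemma \ref{fracalg} to the boundary term $\int_{\pa\Omega}\alpha^j\,(\gamma_j^k\widetilde\pa_k\alpha_i)\,\mathcal{N}^i\,dS$ you get a bound of the form $\|\alpha\|_{H^{1/2}(\pa\Omega)}\,\|\mathcal{N}\cdot\alpha\|_{H^{1/2}(\pa\Omega)}$: one factor involves the normal component (which is exactly the quantity $\int_{\pa\Omega}\mathcal{N}_i\mathcal{N}_j\fdh\alpha^i\cdot\fdh\alpha^j$ appearing on the right of \eqref{ftang1}), but the other factor is the \emph{full} $H^{1/2}(\pa\Omega)$ norm of $\alpha$ including the tangential components, and those do not appear on the right of \eqref{ftang1}. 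You write that these tangential pieces are ``absorbed into the $\gamma_{ij}\fdh\alpha^i\cdot\fdh\alpha^j$ term,'' but the first inequality \eqref{ftang1} contains no $\gamma_{ij}$ term, so there is nothing to absorb them into. To control that full $H^{1/2}(\pa\Omega)$ factor you need one additional step which your proof omits: either bound it by the trace theorem by $\|\alpha\|_{H^1(\Omega)}$ and then absorb a small multiple of $\|\alpha\|_{H^1(\Omega)}^2$ back into the left-hand side, or — as the paper does — invoke Lemma \ref{lem:normaltangential}, which converts $\int_{\pa\Omega}(\gamma^{ij}-\mathcal{N}^i\mathcal{N}^j)\alpha_i\alpha_j$ into interior divergence/curl/$L^2$ terms, so that the tangential boundary contribution can be exchanged for the normal one plus quantities already on the right. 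The paper explicitly presents the Proposition as a consequence of \emph{both} Lemma \ref{lem:ellipticboundary} and Lemma \ref{lem:normaltangential}; your proposal uses only the first.

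Separately, your concluding sentence — ``using the above identity to control $\|\mathcal{S}\alpha\|_{L^2(\Omega)}^2$ by a trace estimate in terms of the boundary $\fdh$ norms'' — is incorrect as stated. The trace inequality controls boundary norms in terms of interior norms, not the reverse; there is no bound for the interior $L^2$ norm of a tangential derivative of $\alpha$ in terms of boundary data of $\alpha$. The pointwise inequality \eqref{pwnonrel} plays no role in this Proposition: the argument must proceed entirely from the integrated identity of Lemma \ref{lem:ellipticboundary} (which already writes $\|\widetilde\pa\alpha\|_{L^2(\Omega)}^2$ in terms of div, curl, and a boundary term) rather than from a pointwise bound that would require interior control of $\S\alpha$.
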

\begin{prop}\label{prop:divcurlL22} There is a constant
  $C_{\!0}$ depending on $\|\pa \xve\|_{L^\infty(\Omega)}$ so that
  if $\beta$ is a vector field on $\Omega$ then
\begin{multline}
 \! ||\fdh \beta||_{H^{1{}_{\!}}(\Omega)}^2\!\\
\leq\! C_{{}_{\!}1\!}\Big(\! ||\!\div\fdh \beta||_{L^2(\Omega)}^2\!
 +{}_{\!} ||\!\curl \fdh\beta||_{L^2(\Omega)}^2\!
 + \! \!\int_{\pa \Omega}\!\!\!\! \mathcal{N}_{{}_{\!}i} \mathcal{N}_{\!j\,} \S \beta^i
 \!\cdot \S\beta^j dS
 + ||\beta||_{L^2(\pa \Omega)}^2\!
 + || \S\beta||_{L^2(\Omega)}^2\!\Big).
 %\label{ftang12}
\end{multline}
Here $\fdh$ is a half angular derivative defined locally in coordinates in \eqref{fdmudef},
and $S \beta^i\!\cdot \S\beta^j$ is the inner product of all tangential derivatives defined in \eqref{eq:simplifiedtangentialnotation}. Moreover
\begin{equation}
 \! ||\fdh\beta||_{H^{1{}_{\!}}(\Omega)}^2\!
\leq\! C_{{}_{\!}1\!}\Big(\! ||\!\div \fdh\beta||_{L^2(\Omega)}^2\!
 +{}_{\!} ||\!\curl \fdh\beta||_{L^2(\Omega)}^2\!
 + \! \!\int_{\pa \Omega}\!\!\!\!\gamma_{ij\,} \S \beta^i
 \!\cdot  \S\beta^j dS
 + ||\beta||_{L^2(\pa \Omega)}^2\!
 + || \S \beta||_{L^2(\Omega)}^2\!\Big).
% \label{ftang22}
\end{equation}
where $\gamma_{ij}$ is the tangential metric.
\end{prop}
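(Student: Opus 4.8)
\textbf{Proof proposal for Proposition \ref{prop:divcurlL22}.}

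The plan is to deduce this from the version of the estimate without the extra half-derivative, namely Proposition \ref{prop:divcurlL2} (and its two variants), applied to a suitably regularized vector field, together with commutator bounds for the fractional tangential derivative $\fdh$. Concretely, first I would note that it suffices to prove the two displayed estimates with $\fdh \beta$ on the left replaced everywhere by $\fdhm\beta$ for each fixed coordinate chart $\mu$, and then sum over $\mu = 0,\dots, N$; this is because by \eqref{sobspacedef} the norms $\|\cdot\|_{H^{(n,s)}(\Omega)}$ are finite sums of the chartwise pieces. So fix $\mu$ and set $\alpha = \fdhm \beta$. Applying Proposition \ref{prop:divcurlL2} to this $\alpha$ gives
\begin{equation}
 \|\fdhm\beta\|_{H^1(\Omega)}^2
 \lesssim C_1\Big( \|\div \fdhm\beta\|_{L^2(\Omega)}^2
 + \|\curl \fdhm\beta\|_{L^2(\Omega)}^2
 + \int_{\pa\Omega} \mathcal{N}_i \mathcal{N}_j \fdh(\fdhm\beta^i)\cdot \fdh(\fdhm\beta^j)\, dS
 + \|\fdhm\beta\|_{L^2(\pa\Omega)}^2 + \|\fdhm\beta\|_{L^2(\Omega)}^2\Big),
\end{equation}
and the same with $\mathcal{N}_i\mathcal{N}_j$ replaced by the tangential metric $\gamma_{ij}$. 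The terms $\|\div \fdh\beta\|_{L^2}$, $\|\curl\fdh\beta\|_{L^2}$ already appear on the right-hand side of the claimed estimate after summing over $\mu$, so those are kept as is. The term $\|\fdhm\beta\|_{L^2(\Omega)}^2$ is bounded by $\|\S\beta\|_{L^2(\Omega)}^2$ since $\fdhm$ is a tangential pseudodifferential operator of order $1/2 \le 1$ and $\fdhm \colon H^{(0,1)} \hookrightarrow L^2$; similarly on $\pa\Omega$, $\|\fdhm\beta\|_{L^2(\pa\Omega)}^2 \lesssim \|\beta\|_{H^{1/2}(\pa\Omega)}^2 \lesssim \|\S\beta\|_{L^2(\pa\Omega)}^2$, and this in turn is controlled by the interior term plus $\|\beta\|_{L^2(\pa\Omega)}^2$ (or can simply be absorbed into the stated right-hand side, both $\|\beta\|_{L^2(\pa\Omega)}^2$ and $\|\S\beta\|_{L^2(\Omega)}^2$ being present).

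The one term requiring care is the boundary integral, where we have two nested fractional derivatives $\fdh\circ\fdhm$. The point is that $\fdh(\fdhm\beta) = \fdm^1 \beta + (\text{lower order})$ up to commutators coming from the cutoffs and changes of variables in the definition \eqref{fdmudef}, and $\fdm^1\beta = S\beta$ modulo a bounded operator; more precisely, I would write, in the local coordinates of chart $\mu$,
\begin{equation}
 \fdm^{1/2}_\nu \fdm^{1/2}_\mu \beta^i = \fdm^1_\mu \beta^i + R^i, \qquad
 \|R^i\|_{L^2(\pa\Omega)} \lesssim \|\S\beta^i\|_{L^2(\pa\Omega)} + \|\beta^i\|_{L^2(\pa\Omega)},
\end{equation}
using Lemma \ref{lem:LeibnitzBasic} and Lemma \ref{lem:changeofvariables} to control the commutators when the cutoff $\chi_\mu$ or the coordinate Jacobian is differentiated, together with the fact that $\langle \pa\rangle^{1/2}\langle\pa\rangle^{1/2} = \langle \pa\rangle$ commutes with itself in a single coordinate patch. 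Since $\langle\pa_\theta\rangle^1$ in local coordinates is comparable to the span of the tangential vector fields $S \in \S$ in that patch — each $S = S^d(z)\pa/\pa z^d$ with $S^3 = 0$, and conversely $\pa/\pa z^d$ ($d=1,2$) is a tangential field — we get $|\fdh\fdhm\beta|^2 \lesssim |\S\beta|^2 + |\beta|^2$ pointwise on $\pa\Omega$, which upon substitution into the boundary integral yields exactly the right-hand side of the claimed estimates (the $\mathcal{N}_i\mathcal{N}_j$ or $\gamma_{ij}$ being a fixed bounded symmetric matrix, so $\mathcal{N}_i\mathcal{N}_j \fdh\fdhm\beta^i \cdot \fdh\fdhm\beta^j \lesssim \mathcal{N}_i\mathcal{N}_j \S\beta^i\cdot\S\beta^j + |\beta|^2$, and likewise for $\gamma_{ij}$). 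Summing over $\mu$ and collecting terms gives the proposition.

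The main obstacle I anticipate is bookkeeping the commutator estimate for the composition $\fdh\circ\fdhm$ cleanly across overlapping coordinate charts: the two half-derivatives are \emph{a priori} defined in different local coordinate systems (indexed by $\mu$ and $\nu$), so one must use Lemma \ref{lem:changeofvariables} to transfer one of them into the coordinate system of the other before the Fourier-side identity $\langle\xi\rangle^{1/2}\langle\xi\rangle^{1/2} = \langle\xi\rangle$ can be invoked, and the transition maps $\Psi_{\nu\mu}$ generate additional variable-coefficient terms that must be absorbed via Lemma \ref{lem:LeibnitzBasic}. This is the same mechanism already used in the proof of Lemma \ref{lem:halfderivativepartialcommute} and Lemma \ref{lem:mutiplicationsmoothingcommutefractional}, so it should go through, but it is the only genuinely non-routine step; everything else is a direct application of Proposition \ref{prop:divcurlL2} and mapping properties of tangential pseudodifferential operators.
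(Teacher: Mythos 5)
Your overall plan — apply Proposition \ref{prop:divcurlL2} to $\alpha = \fdhm\beta$ chart by chart, then argue that the resulting boundary term with $\fdh\fdhm\beta$ is controlled by the one with $\S\beta$ — is the right outline and is essentially how the paper deduces this from Lemmas \ref{lem:ellipticboundary}--\ref{lem:normaltangential}. However, the step where you dispose of the boundary term has a genuine gap.

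You assert the pointwise inequality $|\fdh\fdhm\beta|^2 \lesssim |\S\beta|^2 + |\beta|^2$ on $\pa\Omega$, and from this deduce pointwise that $\mathcal{N}_i\mathcal{N}_j\,\fdh\fdhm\beta^i\cdot\fdh\fdhm\beta^j \lesssim \mathcal{N}_i\mathcal{N}_j\,\S\beta^i\cdot\S\beta^j + |\beta|^2$. Neither implication holds pointwise. The composition $\fdh\fdhm$ is a nonlocal pseudodifferential operator of order $1$; its value at a point is an integral average and cannot be bounded pointwise by $|\S\beta|$ or $|\beta|$ at that point. At best one has $L^2(\pa\Omega)$ comparability $\|\fdh\fdhm f\|_{L^2(\pa\Omega)} \lesssim \|\S f\|_{L^2(\pa\Omega)} + \|f\|_{L^2(\pa\Omega)}$. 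Even granting that, the second implication still fails because $\mathcal{N}_i\mathcal{N}_j$ is a rank-one projection: $\mathcal{N}_i\fdh\fdhm\beta^i$ extracts the normal component of a nonlocal average which mixes all components of $\beta$, and there is no reason for it to be small just because the normal component of $\S\beta$ is. The step you are missing is to \emph{commute $\mathcal{N}_i$ past $\fdh\fdhm$}: write $\mathcal{N}_i\fdh\fdhm\beta^i = \fdh\fdhm(\mathcal{N}_i\beta^i) + [\mathcal{N}_i,\fdh\fdhm]\beta^i$, observe that the commutator is an operator of order $0$ on $\pa\Omega$ (and hence controlled by $\|\beta\|_{L^2(\pa\Omega)}$, e.g.\ via Lemma \ref{lem:halfderleibnitz}), and only then use the $L^2$ comparability $\|\fdh\fdhm(\mathcal{N}_i\beta^i)\|_{L^2(\pa\Omega)} \lesssim \|\S(\mathcal{N}_i\beta^i)\|_{L^2(\pa\Omega)} + \|\mathcal{N}_i\beta^i\|_{L^2(\pa\Omega)}$, followed by the Leibniz expansion $\S(\mathcal{N}_i\beta^i) = \mathcal{N}_i\S\beta^i + (\S\mathcal{N}_i)\beta^i$ to arrive at the stated boundary term plus $\|\beta\|_{L^2(\pa\Omega)}^2$. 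Without the commutation with $\mathcal{N}_i$, the argument does not close. The same remark applies verbatim to the variant with $\gamma_{ij}$. Once this commutator estimate is inserted the rest of your reduction (absorbing $\|\fdhm\beta\|_{L^2(\Omega)}$ and $\|\fdhm\beta\|_{L^2(\pa\Omega)}$ into $\|\S\beta\|_{L^2(\Omega)}$ and $\|\beta\|_{L^2(\pa\Omega)}$, and keeping the divergence/curl terms) is fine.
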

These propositions are a consequence of the following two lemmas proven below:

\begin{lemma}\label{lem:ellipticboundary}
 If $\alpha$ is a vector field then:
 \begin{equation}
   ||\widetilde{\pa} \alpha||_{L^2(\tD_t)}^2   =
   ||\widetilde{\div} \alpha||_{L^2(\tD_t)}^2 + \frac{1}{2} ||\widetilde{\curl} \alpha||_{L^2(\tD_t)}^2
  +\int_{\pa \tD_t} \Big(\alpha^j (\gamma_j^k \widetilde{\pa}_k \alpha_i) \mathcal{N}^i
  - \alpha_i (\gamma_j^k \widetilde{\pa}_k \alpha^j )\mathcal{N}^i\Big).
 \end{equation}
\end{lemma}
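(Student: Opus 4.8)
\textbf{Proof plan for Lemma \ref{lem:ellipticboundary}.}
The plan is to obtain the identity by a direct integration-by-parts computation, treating the tilded partial derivatives $\widetilde{\pa}_i$ exactly as one treats flat partials $\pa_i$ in the classical div-curl identity, keeping careful track of the boundary term and of the fact that $\widetilde{\pa}_i$ is symmetric with respect to the measure $d\widetilde{x} = \widetilde{\kappa}\,dy$ on $\widetilde{\D}_t$. First I would write, using the pointwise algebraic identity relating the full gradient to its trace-free and antisymmetric parts, something like
\begin{equation}
|\widetilde{\pa}\alpha|^2 = \widetilde{\pa}_i \alpha_j\, \widetilde{\pa}^i \alpha^j
= (\widetilde{\pa}_i\alpha^i)(\widetilde{\pa}_j\alpha^j) + \tfrac{1}{2}\widetilde{\curl}\,\alpha_{ij}\,\widetilde{\curl}\,\alpha^{ij}
+ \widetilde{\pa}_i\big(\alpha^j\,\widetilde{\pa}_j\alpha^i - \alpha^i\,\widetilde{\pa}_j\alpha^j\big) + (\text{terms from }[\widetilde{\pa}_i,\widetilde{\pa}_j]),
\end{equation}
where the commutator $[\widetilde{\pa}_i,\widetilde{\pa}_j]$ vanishes since $\widetilde{\pa}$ are coordinate derivatives in the $\widetilde{x}$ coordinates (equivalently, $\widetilde{\pa}_i = (\pa y^a/\pa\widetilde{x}^i)\pa_{y^a}$ are commuting vector fields). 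Integrating this over $\widetilde{\D}_t$ with respect to $d\widetilde{x}$ and applying the divergence theorem to the perfect-divergence term produces a boundary integral over $\pa\widetilde{\D}_t$ of $\big(\alpha^j\,\widetilde{\pa}_j\alpha^i - \alpha^i\,\widetilde{\pa}_j\alpha^j\big)\widetilde{\mathcal{N}}_i$.

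The second step is to massage the boundary term into the stated form. On $\pa\widetilde{\D}_t$ one decomposes the derivative $\widetilde{\pa}_k = \widetilde{\mathcal{N}}_k \widetilde{\pa}_{\widetilde{\mathcal{N}}} + \gamma_k^{\ell}\widetilde{\pa}_\ell$ into its normal and tangential parts, with $\gamma_k^\ell = \delta_k^\ell - \widetilde{\mathcal{N}}_k\widetilde{\mathcal{N}}^\ell$ the tangential projection. Substituting this into $\big(\alpha^j\,\widetilde{\pa}_j\alpha^i - \alpha^i\,\widetilde{\pa}_j\alpha^j\big)\widetilde{\mathcal{N}}_i$, the terms in which the derivative hits in the $\widetilde{\mathcal{N}}$-direction combine, after contracting with $\widetilde{\mathcal{N}}_i$, into an expression that cancels: $\alpha^j\widetilde{\mathcal{N}}_j \widetilde{\mathcal{N}}^i\widetilde{\pa}_{\widetilde{\mathcal{N}}}\alpha_i\,\widetilde{\mathcal{N}}_i$-type pieces are symmetric in the two occurrences of $\alpha$ in a way that makes the difference of the two bracketed terms reduce to exactly the tangential-derivative expression $\alpha^j(\gamma_j^k\widetilde{\pa}_k\alpha_i)\widetilde{\mathcal{N}}^i - \alpha_i(\gamma_j^k\widetilde{\pa}_k\alpha^j)\widetilde{\mathcal{N}}^i$. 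I would carry out this cancellation carefully, since it is where the precise index placement matters; the point is that the normal derivative of $\alpha$ only enters through the combination $\widetilde{\mathcal{N}}^i\widetilde{\pa}_{\widetilde{\mathcal{N}}}\alpha_i = \widetilde{\div}\,\alpha - \gamma^{k}_k$-tangential terms on the boundary, and when one forms the antisymmetric-looking difference these normal contributions drop out.

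The third step is bookkeeping of the non-flat corrections: because $\widetilde{\pa}_i$ is symmetric relative to $\widetilde{\kappa}\,dy$ rather than $dy$, the integration by parts generates the correct divergence theorem on $\widetilde{\D}_t$ with no extra volume terms — this is precisely the content of the identity $\widetilde{\pa}_i$ symmetric with respect to $d\widetilde{x}$ used repeatedly in Section \ref{higherorderenergies} — so no additional interior error terms appear beyond those already displayed. Raising and lowering indices is done with the Euclidean $\delta$, consistent with the definitions of $\widetilde{\div}$ and $\widetilde{\curl}$ in the Newtonian sections, so there are no Christoffel contributions. I expect the main obstacle to be the second step: verifying that the normal-derivative contributions in the boundary integral cancel exactly and that what remains is precisely $\alpha^j(\gamma_j^k\widetilde{\pa}_k\alpha_i)\mathcal{N}^i - \alpha_i(\gamma_j^k\widetilde{\pa}_k\alpha^j)\mathcal{N}^i$ with the stated signs, rather than an expression differing by a divergence-of-$\alpha$ boundary term; this is a routine but sign-sensitive computation, and once it is done the Propositions \ref{prop:divcurlL2} and \ref{prop:divcurlL22} follow by combining this identity with the companion trace lemma and absorbing the tangential boundary term using the fractional trace inequality from Lemma \ref{fracalg}.
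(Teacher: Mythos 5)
Your proposal is correct and matches the paper's proof in all essential respects: both integrate by parts (you package the two integrations by parts into a single pointwise perfect-divergence identity $|\widetilde\pa\alpha|^2 = (\widetilde\div\alpha)^2 + \tfrac12|\widetilde\curl\alpha|^2 + \widetilde\pa_i(\alpha^j\widetilde\pa_j\alpha^i - \alpha^i\widetilde\pa_j\alpha^j)$, while the paper writes out the two integrations explicitly and simplifies the resulting $\mathcal{N}^\ell\alpha^j\curl\alpha_{\ell j}$ boundary piece afterward — same algebra, slightly different bookkeeping), and both then split $\widetilde\pa_k = \mathcal{N}_k\mathcal{N}^\ell\widetilde\pa_\ell + \gamma_k^\ell\widetilde\pa_\ell$ on the boundary and observe that the two normal-derivative contributions, each equal to $(\alpha\cdot\mathcal{N})\,\mathcal{N}^k\mathcal{N}^\ell\widetilde\pa_k\alpha_\ell$, cancel identically. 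Your cancellation argument is right, and your use of the symmetry of $\widetilde\pa_i$ with respect to $\widetilde\kappa\,dy$ correctly accounts for why no extra interior terms appear.
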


\begin{lemma}\label{lem:normaltangential}
  If $\alpha$ is a (0,1)-tensor
  on $\Omega$ and $\gamma$ denotes the metric on $\pa \Omega_t$, then:
 \begin{equation}
  \Big|\int_{\pa \Omega} \big(\gamma^{ij} - \mathcal{N}^i\mathcal{N}^j\big)\alpha_i
  \alpha_j \kappa dy_\gamma\Big|
  \leq 2\Big|\int_{\Omega} \div ( \alpha)\, \alpha_{j}\mathcal{N}^j+ \curl \alpha_{ij}\,
  \alpha^i \mathcal{N}^j dx \Big|
  + K ||\alpha||_{L^2(\Omega)}^2.
 \end{equation}
\end{lemma}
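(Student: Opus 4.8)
\textbf{Proof plan for Lemma \ref{lem:normaltangential}.}

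The plan is to integrate the tangential components of $\alpha$ by parts over $\Omega$, exploiting the fact that a purely tangential vector field on the boundary has divergence on $\pa\Omega$ that can be related to the ambient divergence and curl. First I would introduce a smooth extension of the conormal $\mathcal{N}$ to a neighborhood of $\pa\Omega$ inside $\Omega$, keeping $|\mathcal{N}|=1$ near $\pa\Omega$, and split $\alpha_i = \alpha_i^\top + (\mathcal{N}^k\alpha_k)\mathcal{N}_i$ into its tangential and normal parts along the level sets of the defining function. Then $(\gamma^{ij}-\mathcal{N}^i\mathcal{N}^j)\alpha_i\alpha_j = \gamma^{ij}\alpha_i^\top\alpha_j^\top$ up to terms that are pointwise controlled by $|\alpha|^2$, so it suffices to estimate $\int_{\pa\Omega}|\alpha^\top|^2$.

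The key step is to write $\int_{\pa\Omega}|\alpha^\top|^2\,dy_\gamma = \int_{\pa\Omega}\gamma^{ij}\alpha_i^\top\alpha_j$, recognize $\gamma^{ij}\alpha_i^\top\pa/\pa y^j$ as a tangential vector field, and apply the divergence theorem on $\pa\Omega$ — but since we want to reduce to interior quantities, I would instead use Stokes' theorem on $\Omega$ itself: pick a tangential extension $X^i$ of $\gamma^{ij}\alpha_j^\top$ and write $\int_{\pa\Omega}X^i\alpha_i\,\mathcal{N}^j\cdots$ wait — more precisely, the standard trick (as in \cite{L05a}, \cite{CHS13}) is to use the identity
\begin{equation}
\int_{\pa\Omega}\!\big(\gamma^{ij}-\mathcal{N}^i\mathcal{N}^j\big)\alpha_i\alpha_j\,\kappa dy_\gamma
=\int_{\pa\Omega}\!\big(\alpha^i\mathcal{N}^j-\alpha^j\mathcal{N}^i\big)\big(\pa_i(\alpha_j)-\pa_j(\alpha_i)\big)\cdots
\end{equation}
no — cleanly: extend $\mathcal{N}$ and compute $\div\big((\mathcal{N}^j\alpha_j)\alpha^i - (\alpha_j\alpha^j)\mathcal{N}^i\big)$ inside $\Omega$, whose boundary integral is exactly $\int_{\pa\Omega}(\mathcal{N}^j\alpha_j)^2 - |\alpha|^2$, i.e. $-\int_{\pa\Omega}(\gamma^{ij}-\mathcal{N}^i\mathcal{N}^j)\alpha_i\alpha_j$. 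Expanding the interior divergence by the Leibniz rule produces $(\div\alpha)(\mathcal{N}^j\alpha_j)$, $(\mathcal{N}^j\alpha_j)\alpha^i\pa_i(\cdots)$ terms, a $\curl\alpha_{ij}\alpha^i\mathcal{N}^j$ term, and terms where derivatives fall on $\mathcal{N}$ or on $\alpha^j\alpha_j$ symmetrically, which are pointwise bounded by $K|\alpha|^2$ after integration. Collecting gives the stated inequality with the $\div$ and $\curl$ terms appearing linearly under the absolute value.

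I would carry this out by: (i) fixing the extended conormal and recording $|\pa\mathcal{N}|\lesssim K$; (ii) applying the divergence theorem to the vector field $W^i = (\mathcal{N}^k\alpha_k)\alpha^i - (\alpha^k\alpha_k)\mathcal{N}^i$ over $\Omega$; (iii) expanding $\div W$ and isolating the two good terms $\div\alpha\cdot(\alpha_j\mathcal{N}^j)$ and $\curl\alpha_{ij}\alpha^i\mathcal{N}^j$, using the antisymmetrization $\alpha^i\pa_i\alpha_j - \alpha^i\pa_j\alpha_i$ to produce the curl; (iv) bounding all remaining terms (those with a derivative on $\mathcal{N}$, and the symmetric combination $\alpha^i\pa_i(\alpha^k\alpha_k)\mathcal{N}_i$ which reorganizes into $2(\alpha^k\pa_i\alpha_k)\alpha^i\mathcal{N}_i$ and cancels against part of the $(\mathcal{N}^k\alpha_k)\alpha^i\pa_i\alpha_j$ term modulo $K|\alpha|^2$) by $K\|\alpha\|_{L^2(\Omega)}^2$. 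The main obstacle is bookkeeping: showing that the error terms genuinely reorganize so that only $\|\alpha\|_{L^2(\Omega)}^2$ (and not $\|\pa\alpha\|_{L^2}$) remains on the right — this requires the antisymmetric combination to be set up correctly so that no uncontrolled first derivative of $\alpha$ survives outside the explicit $\div$ and $\curl$ terms. Once Lemma \ref{lem:normaltangential} and Lemma \ref{lem:ellipticboundary} are in hand, Proposition \ref{prop:divcurlL2} follows by applying Lemma \ref{lem:ellipticboundary} to $\alpha$, controlling the boundary term there by Cauchy–Schwarz and $\int_{\pa\Omega}|\gamma_j^k\widetilde{\pa}_k\alpha|\,|\alpha| \lesssim \|\fdh\alpha\|_{L^2(\pa\Omega)}^2 + \|\alpha\|_{H^{1/2}(\pa\Omega)}^2$ via Lemma \ref{fracalg}, then substituting Lemma \ref{lem:normaltangential} to replace the full first-order boundary norm by the normal component plus interior $\div$/$\curl$/$L^2$ terms, and finally a trace/interpolation absorption of the highest-order piece.
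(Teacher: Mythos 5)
Your overall strategy — apply the divergence theorem to the quadratic-in-$\alpha$ vector field $W^i = (\mathcal{N}^k\alpha_k)\alpha^i - c\,(\alpha^k\alpha_k)\mathcal{N}^i$ built from an extension of $\mathcal{N}$ into $\Omega$ — is exactly the paper's approach. But you took $c=1$, and that breaks the argument on both ends. The paper's proof uses $c=\tfrac{1}{2}$, equivalently the pointwise identity
\begin{equation*}
\pa_i\big(\alpha^i\alpha_j\mathcal{N}^j\big) - \tfrac{1}{2}\pa_j\big(\alpha^i\alpha_i\,\mathcal{N}^j\big)
= \div\alpha\,(\alpha_j\mathcal{N}^j) + \curl\alpha_{ij}\,\alpha^i\mathcal{N}^j
+ \alpha^i\alpha^j\pa_i\mathcal{N}_j - \tfrac{1}{2}|\alpha|^2\,\pa_j\mathcal{N}^j,
\end{equation*}
which it then integrates over $\Omega$ and combines with $|\alpha|^2=\gamma^{ij}\alpha_i\alpha_j+(\mathcal{N}^i\alpha_i)^2$ on the boundary.

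With your $c=1$, two things go wrong. On the boundary, $\int_{\pa\Omega}W\cdot\mathcal{N}\,dS = \int_{\pa\Omega}(\mathcal{N}^j\alpha_j)^2 - |\alpha|^2\,dS = -\int_{\pa\Omega}\gamma^{ij}\alpha_i\alpha_j\,dS$, which is \emph{not} $-\int_{\pa\Omega}(\gamma^{ij}-\mathcal{N}^i\mathcal{N}^j)\alpha_i\alpha_j\,dS$ as you claim; they differ by $\int_{\pa\Omega}(\mathcal{N}^i\alpha_i)^2\,dS$, which has no sign reason to be small. In the interior, expanding $\div W$ with $c=1$ gives, after extracting the curl, a leftover term $-\mathcal{N}^i\alpha^k\pa_i\alpha_k = -\tfrac{1}{2}\mathcal{N}^i\pa_i|\alpha|^2$: this is a genuine first derivative of $\alpha$, is neither a divergence nor a curl, and cannot be bounded by $K\|\alpha\|_{L^2(\Omega)}^2$, contrary to the ``reorganizes \dots modulo $K|\alpha|^2$'' claim in your step (iv). With $c=\tfrac{1}{2}$ there is no such leftover — the derivative terms pair up exactly as $\alpha^i\mathcal{N}^j(\pa_i\alpha_j-\pa_j\alpha_i)=\curl\alpha_{ij}\,\alpha^i\mathcal{N}^j$ — and the boundary integral becomes $\int_{\pa\Omega}(\mathcal{N}^j\alpha_j)^2 - \tfrac{1}{2}|\alpha|^2\,dS = \tfrac{1}{2}\int_{\pa\Omega}(\mathcal{N}^i\mathcal{N}^j - \gamma^{ij})\alpha_i\alpha_j\,dS$, which is precisely where the factor $2$ in the stated inequality comes from. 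So: right idea, but the missing $\tfrac{1}{2}$ is not a bookkeeping detail — it is what makes the uncontrolled derivative term vanish.
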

We also need estimates for the Dirichlet problem that keep track of the regularity of the boundary and that uses the minimal amount of regularity of the boundary:
\begin{prop}\label{prop:dirichlet} Suppose that $q=0$ on $\pa \Omega$. Then
\begin{equation}%\label{eq:thewholecasedirichlet}
\|\widetilde{\pa}  T^K \widetilde{\pa} q\|_{L^2(\Omega)}
\lesssim c_0{\sum}_{S\in\mathcal{S}}\| \widetilde{\pa} {S} T^{K} \widetilde{x}\|_{L^2(\Omega)}
+c_K\!{\sum}_{|K'|\leq |K|} \bigtwo( \| T^{K'} \widetilde{\triangle} q\|_{L^2(\Omega)}
+\| \widetilde{\pa}T^{K'} \widetilde{x}\|_{L^2(\Omega)}\bigtwo) ,
\end{equation}
and
\begin{equation}\label{eq:thehalfcasedirichletrel}
\|\widetilde{\pa} \fdh  T^K \widetilde{\pa} q\|_{L^2(\Omega)}\lesssim c_K\!{\sum}_{|K'|\leq |K|,\, k=0,1}\!\bigtwo( \| \fd^{\!\nicefrac{k}{2}} T^{K'\!}\widetilde{\triangle} q \|_{L^2(\Omega)}+
\| \widetilde{\pa}\fd^{\!\nicefrac{k}{2}} \mathcal{S}^1 T^{K'\!} \widetilde{x}\|_{L^2(\Omega)} \bigtwo),
\end{equation}
where $c_K$ depends on $ \widetilde{\pa} T^{N}\widetilde{x}$ and  $\widetilde{\pa} T^{N}\widetilde{\pa}q$, for $|N|\leq |K|/2+3$.
\end{prop}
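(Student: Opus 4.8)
\textbf{Proof proposal for Proposition \ref{prop:dirichlet}.}

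The plan is to reduce the Dirichlet estimate to the interior div-curl estimates of Proposition \ref{prop:divcurlL2} together with a careful analysis of the boundary term. The starting point is the observation that $q = 0$ on $\pa\Omega$ implies $\pave_\mu q = \mathcal{N}_\mu \pa_{\mathcal{N}} q$ on $\pa\Omega$, so that $\pave q$ is normal at the boundary, i.e.\ $\gamma_i^j \pave_j q = 0$ there. Applying $T^K$ and using \eqref{smcomm} this is preserved modulo lower-order terms: the tangential part $\gamma_i^j \pave_j T^K q$ equals $-\gamma_i^j (\pave_j T^K \widetilde{x}^\nu)\pave_\nu q$ plus commutator terms controlled by $\widetilde{\pa} T^L \widetilde{x}$ and $\widetilde{\pa} T^L \widetilde{\pa} q$ for $|L| < |K|$. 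Thus the tangential components of $\alpha = T^K \pave q$ at the boundary are, to highest order, pointwise bounded by $|\widetilde{\pa} T^K \widetilde{x}|$ times the fixed quantity $|\pa_{\mathcal{N}} q|$, so the boundary term in \eqref{ftang1} — which only sees $\mathcal{N}_i\mathcal{N}_j \fdh\alpha^i \cdot \fdh\alpha^j$, equivalently the \emph{tangential} part via $\gamma_{ij}\fdh\alpha^i\cdot\fdh\alpha^j$ in the second form — can be absorbed into $\|\widetilde{\pa} S T^K \widetilde{x}\|_{L^2(\Omega)}$ after moving the half-derivative onto the coordinate.

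For the first, non-fractional estimate I would apply Lemma \ref{app:pwdiff} to $\alpha = T^K \pave q$: $|\widetilde{\pa} T^K \pave q| \lesssim |\widetilde{\div}\, T^K\pave q| + |\widetilde{\curl}\, T^K\pave q| + \sum_{S \in \S}|S T^K \pave q|$. The divergence is $\widetilde{\div}\, T^K \pave q = T^K \widetilde{\triangle} q$ plus commutators $[\widetilde{\nabla}^\mu\pave_\mu, T^K] q$ which are lower order of the type appearing in \eqref{eq:PJeq}, controlled by $\widetilde{\pa} T^L\widetilde{x}$ and $\widetilde{\pa} T^L\pave q$, $|L| < |K|$. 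The curl $\widetilde{\curl}\, T^K\pave q = \pave_\mu T^K\pave_\nu q - \pave_\nu T^K\pave_\mu q$ is purely a commutator, again of the form \eqref{eq:AKdef}, hence lower order. The tangential term $S T^K \pave q$ is rewritten using \eqref{smcomm} as $\widetilde{\pa} S T^K q - (\widetilde{\pa} S T^K\widetilde{x})\pave q + \dots$, and since $S T^K q = 0$ on $\pa\Omega$ this is controlled by $\|\widetilde{\pa} S T^K \widetilde{x}\|_{L^2}$ plus lower order (one may also absorb $\widetilde{\pa} S T^K q$ itself into $\widetilde{\pa} T^K\pave q$ after reindexing). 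Iterating to remove the lower-order $\widetilde{\pa} T^L\pave q$ terms, exactly as in the passage from the displayed estimate preceding \eqref{eq:paTpah2} to \eqref{eq:paTpah2}, gives the first claimed bound. The fractional estimate \eqref{eq:thehalfcasedirichletrel} is obtained the same way but applying $\fdh T^K$ in place of $T^K$ and invoking Proposition \ref{prop:divcurlL22} (the $\fdh$-version of the div-curl estimate) instead of Proposition \ref{prop:divcurlL2}; the half-derivative Leibniz rules of Lemma \ref{lem:halfderleibnitz} and the commutator estimates of Lemma \ref{lem:gradientfractionalcommute} are used to handle the terms where $\fdh$ lands on a coefficient rather than on $q$, which is why the constant $c_K$ in \eqref{eq:thehalfcasedirichletrel} depends on a few more derivatives ($|N| \leq |K|/2 + 3$) of $\widetilde{x}$ and of $\widetilde{\pa}\pave q$.

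The main obstacle will be the bookkeeping at the boundary: one must be sure that the boundary integral produced by the div-curl estimate genuinely involves only the \emph{tangential} trace of $\alpha = T^K\pave q$ (after the half-derivative is in place), because the \emph{normal} trace $\mathcal{N}^i T^K\pave_i q = \mathcal{N}^i\widetilde{\pa}_i T^K q + \dots$ is \emph{not} controlled — $T^K q$ vanishes on $\pa\Omega$ but its normal derivative does not, and in fact $\mathcal{N}^i T^K\pave_i q$ is exactly the boundary quantity $\sigma^I/|\pave\sigma|$-type term that the energy does \emph{not} bound directly. The resolution is precisely the identity $\gamma_i^j \pave_j T^K q = -\gamma_i^j(\pave_j T^K\widetilde{x}^\nu)\pave_\nu q + (\text{lower order})$ on $\pa\Omega$, which converts the tangential-trace boundary term into something involving only $T^K\widetilde{x}$; one should also double-check that in moving $\fdh$ past $\gamma_i^j$ and the cutoffs via the locally-defined operators of Section \ref{def T and FD} no uncontrolled term is generated, which is where Lemma \ref{lem:halfderleibnitz} and Lemma \ref{lem:changeofvariables} do the work. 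Provided this trace identity is handled cleanly, the rest is a routine iteration on the order $|K|$ identical in structure to the enthalpy elliptic estimates already carried out in Sections \ref{sec:elliptcenthalpy} and \ref{sec:extrahalf}.
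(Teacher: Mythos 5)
Your overall strategy (div-curl decomposition applied to $\alpha = T^K\pave q$, with the divergence reducing to $T^K\widetilde\triangle q$ plus commutators, the curl being a pure commutator, and the tangential trace of $\pave q$ being controlled because $\pave q$ is normal on $\pa\Omega$) is the right picture, and your identification of the boundary trace issue in the first paragraph is correct. But the argument for the key term $\sum_S\|S T^K\pave q\|_{L^2(\Omega)}$ has a genuine gap.

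You write that $S T^K\pave q = \pave S T^K q - (\pave S T^K\widetilde x)\pave q + \cdots$, and then claim that "since $S T^K q = 0$ on $\pa\Omega$ this is controlled by $\|\pave S T^K\widetilde x\|_{L^2}$", or alternatively that $\pave S T^K q$ can be "absorbed into $\pave T^K\pave q$ after reindexing." Neither of these closes the argument. The vanishing of $S T^K q$ on $\pa\Omega$ gives a Dirichlet boundary condition, not an $L^2$ bound on its gradient. And the absorption claim is circular: $\pave S T^K q = S T^K\pave q$ plus commutators, so you would be bounding the quantity by itself. The whole difficulty lives precisely in this term, and the pointwise estimate from Lemma \ref{app:pwdiff} cannot, by itself, produce the bound.

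What the paper does at exactly this point is an \emph{interior} integration by parts that exploits the Dirichlet condition without ever producing a boundary term (Lemma \ref{lem:weakdirichlet}): test $\widetilde{\div}(T^K\pave q)$ against $S^2 T^K q$, integrate by parts in $\pave_i$ — the boundary contribution vanishes because $S^2 T^K q = 0$ on $\pa\Omega$ — then commute $\pave_i$ past one $S$ and integrate by parts in that $S$. The result is an identity whose left-hand side is $\|S T^K\pave q\|_{L^2(\Omega)}^2$ up to the commutator terms you already control, and whose right-hand side involves $\widetilde{\div}(T^K\pave q)$ and one tangential derivative of $\pave T^K\widetilde x$. That single integration-by-parts step is the missing lemma. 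Your alternative route through Proposition \ref{prop:divcurlL2} and a trace estimate on $\gamma_{ij}\fdh(T^K\pave q)^i\fdh(T^K\pave q)^j$ can be made to work, but (i) it introduces a half-derivative boundary term even for the non-fractional estimate, so you then owe a trace argument converting it back to $\|\pave S T^K\widetilde x\|_{L^2(\Omega)}$, and (ii) you'd need to handle the subprincipal commutators in $\gamma_i^j T^K\pave_j q = -\gamma_i^j[\pave_j,T^K]q$ on $\pa\Omega$ carefully — all doable, but strictly more work than the boundary-less integration by parts. Once $\|S T^K\pave q\|_{L^2}$ is under control, the rest of your sketch (divergence $\to$ $T^K\widetilde\triangle q$ via Lemma \ref{lem:curl}-type commutator bounds, curl is lower order, induction on $|K|$, and the $\fdh$ version by repeating the argument with Lemma \ref{lem:halfderleibnitz} and Lemma \ref{lem:gradientfractionalcommute}) matches the paper.
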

The proof follows from Lemma \ref{lem:dirichlet} below
\begin{lemma}\label{lem:dirichlet} Suppose that $q=0$ on $\pa \Omega$. Then
\begin{multline}\label{eq:thewholecase}
\|\widetilde{\pa}  T^K \widetilde{\pa} q\|_{L^2(\Omega)}\lesssim
c_0{\sum}_{S\in\mathcal{S}}\| \widetilde{\pa} {S} T^{K} \widetilde{x}\|_{L^2(\Omega)}
+c_0\| \widetilde{\div} \big( T^{K} \widetilde{\pa} q\big)\|_{L^2(\Omega)}\\
 +c_K{\sum}_{|L|\leq |K|-1}\| \widetilde{\div} \big( T^{L} \widetilde{\pa} q\big)\|_{L^2(\Omega)}
+c_K{\sum}_{|K'|\leq |K|}\| \widetilde{\pa} T^{K'} \widetilde{x}\|_{L^2(\Omega)} ,
\end{multline}
and
\begin{equation}\label{eq:thehalfcase}
\|\widetilde{\pa} \fdh  T^K \widetilde{\pa} q\|_{L^2(\Omega)}\lesssim c_K\!{\sum}_{|K'|\leq |K|,\, k=0,1}\!\bigtwo( \| \widetilde{\div} \big(\fd^{\!\nicefrac{k}{2}} T^{K'\!} \widetilde{\pa} q\big)\|_{L^2(\Omega)}
+\| \widetilde{\pa}\fd^{\!\nicefrac{k}{2}} \mathcal{S}^1 T^{K'\!} \widetilde{x}\|_{L^2(\Omega)} \bigtwo),
\end{equation}
where $c_K$ depends on $ \widetilde{\pa} T^{N}\widetilde{x}$ and  $\widetilde{\pa} T^{N}\widetilde{\pa}q$,  for $|N|\leq |K|/2+3$.
\end{lemma}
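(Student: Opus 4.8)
\textbf{The plan} is to prove Lemma \ref{lem:dirichlet}, from which Proposition \ref{prop:dirichlet} follows by substituting the equation $\widetilde{\triangle}q = \widetilde{\div}(\widetilde{\pa}q)$ (plus lower-order curvature-type terms coming from commuting $T^{K'}$ and $\fd^{k/2}$ through $\widetilde{\triangle}$, which are absorbed into $c_K$). The starting point is the pointwise estimate of Lemma \ref{app:pwdiff} applied to the $(0,1)$-tensor $\alpha = T^K\widetilde{\pa}q$: this bounds $|\widetilde{\pa}T^K\widetilde{\pa}q|$ in terms of $|\widetilde{\div}\,T^K\widetilde{\pa}q|$, $|\widetilde{\curl}\,T^K\widetilde{\pa}q|$ and $|\mathcal{S}\,T^K\widetilde{\pa}q|$. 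The divergence term is exactly the quantity appearing on the right-hand side of \eqref{eq:thewholecase}. The curl term is handled as in Section \ref{sec:elliptcenthalpy}: since $\widetilde{\pa}q$ is a gradient, $\widetilde{\curl}\,\widetilde{\pa}q = 0$, so $\widetilde{\curl}\,T^K\widetilde{\pa}q$ equals a commutator $[\widetilde{\curl},T^K]\widetilde{\pa}q$, which by \eqref{eq:AKdef}--\eqref{eq:anitsymmetricestimate} is lower-order, of the form $c_0|\widetilde{\pa}T^K\widetilde{x}| + c_K\sum_{|L|\le |K|-1}(|\widetilde{\pa}T^L\widetilde{\pa}q| + |\widetilde{\pa}T^L\widetilde{x}|)$; the first piece goes into the $\widetilde{\pa}T^{K'}\widetilde{x}$ sum, and the terms with $|L|\le |K|-1$ are absorbed by a downward induction on $|K|$.

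The genuinely new ingredient, and the reason one cannot simply quote the interior div-curl estimate, is the treatment of the tangential term $|\mathcal{S}\,T^K\widetilde{\pa}q|$ --- ordinarily one would need $\widetilde{\pa}$ of a tangential derivative of $\widetilde{\pa}q$, i.e.\ an extra derivative, but here we exploit the Dirichlet condition $q = 0$ on $\pa\Omega$. Since $q$ vanishes on $\pa\Omega$, tangential derivatives of $q$ also vanish on $\pa\Omega$; hence on the boundary only the normal component of $\widetilde{\pa}q$ survives, $\widetilde{\pa}_i q = \widetilde{\mathcal{N}}_i\,\pa_{\widetilde{\mathcal{N}}}q$, and more generally the tangential components of $S^I T^K\widetilde{\pa}q$ at $\pa\Omega$ are controlled by tangential derivatives of $T^K\widetilde{x}$ times $\pa_{\widetilde{\mathcal{N}}}q$. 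Concretely, I would write $S\,T^K\widetilde{\pa}_i q$, use that $q=0$ on $\pa\Omega$ to re-express its restriction to $\pa\Omega$ in terms of $\widetilde{\pa}_j(\text{lower-order tangential derivatives of }q)$ and $\widetilde{\pa}S^I T^K\widetilde{x}$, and then run an energy/integration-by-parts argument: multiply $|\widetilde{\pa}T^K\widetilde{\pa}q|^2$ by $\widetilde{\kappa}$, integrate over $\Omega$, integrate by parts moving one $\widetilde{\pa}$ onto the other factor, and collect the boundary term. The boundary term, after using the Dirichlet condition, is bounded by $\|\widetilde{\pa}S T^K\widetilde{x}\|_{L^2(\Omega)}$ via a trace/restriction estimate, which is the source of the $c_0\sum_{S\in\mathcal{S}}\|\widetilde{\pa}S T^K\widetilde{x}\|_{L^2(\Omega)}$ term in \eqref{eq:thewholecase}.

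For the half-derivative version \eqref{eq:thehalfcase} the strategy is identical but one commutes $\fdh$ through as well. I would apply Lemma \ref{app:pwdiff} to $\alpha = \fdh T^K\widetilde{\pa}q$, using that $\widetilde{\curl}\,\fdh T^K\widetilde{\pa}q$ is lower-order (commutator of $\fdh$ and $T^K$ with $\widetilde{\curl}$ applied to a gradient, estimated by Lemma \ref{lem:halfderleibnitz} and Lemma \ref{lem:gradientfractionalcommute}) and that the tangential component at $\pa\Omega$ is again killed by the Dirichlet condition, now producing $\|\widetilde{\pa}\fdh\mathcal{S}^1 T^{K'}\widetilde{x}\|_{L^2(\Omega)}$ on the right after the trace estimate; the divergence is kept as $\|\widetilde{\div}(\fdh T^{K'}\widetilde{\pa}q)\|_{L^2(\Omega)}$. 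The key structural points making this work are: (i) $\widetilde{\pa}q$ has vanishing curl, so the curl term is always a pure commutator; (ii) $q=0$ on $\pa\Omega$, so the boundary contribution of the tangential term only ever sees the normal derivative of $q$ times derivatives of the coordinate; and (iii) all commutators of $\fdh$, $\mathcal{S}$ and $\widetilde{\pa}$ are lower-order by the appendix lemmas, so the whole scheme closes by induction on $|K|$.

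\textbf{The main obstacle} I expect is bookkeeping in the boundary term of the half-derivative estimate: one must carefully pair the $\fdh$ against a test function using the self-adjointness-type estimate $|\int_\Omega f S g\,\kappa dy|\lesssim \|\fdh f\|\|\fdh g\|$ from Lemma \ref{fracalg}, and make sure that after integrating by parts in $\Omega$ the fractional derivative lands symmetrically, so that the Dirichlet condition can be invoked before any ``extra'' full derivative is forced onto $q$. Getting the right factor on the coordinate --- $\mathcal{S}^1 T^{K'}\widetilde{x}$ rather than $T^{K'}\widetilde{x}$, i.e.\ one honest tangential derivative plus a half --- requires tracking precisely which derivatives are tangential at each step; everything else is a routine, if lengthy, commutator computation using the lemmas already established in the appendix.
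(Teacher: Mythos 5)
Your high-level decomposition matches the paper's: start from the pointwise estimate of Lemma~\ref{app:pwdiff} applied to $\alpha = T^K\widetilde{\pa}q$, observe that the curl is a pure commutator (since $\widetilde{\curl}\,\widetilde{\pa}q=0$) and hence lower-order by Lemma~\ref{lem:curl}, keep the divergence as the right-hand side, and close by downward induction on $|K|$. The paper's proof literally says ``By \eqref{pwnonrel}\dots Lemma~\ref{lem:curl}, Lemma~\ref{lem:weakdirichlet} and induction,'' so on points (i) and (iii) of your scheme you are in agreement with the authors.

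The gap is in how you propose to control the tangential term $\|S\,T^K\widetilde{\pa}q\|_{L^2(\Omega)}$. You suggest squaring $\widetilde{\pa}T^K\widetilde{\pa}q$, integrating over $\Omega$, integrating by parts, \emph{collecting a boundary term}, and then bounding that boundary term ``via a trace/restriction estimate.'' But look at what the boundary term actually is if you take this route (Lemma~\ref{lem:ellipticboundary} with $\alpha = T^K\widetilde{\pa}q$): it has the form $\int_{\pa\Omega}\big(\mathcal{N}\!\cdot\!\alpha\big)\big(\gamma^{jk}\widetilde{\pa}_k\alpha_j\big)$ plus a similar term, i.e.\ it involves a tangential derivative of $T^K\widetilde{\pa}q$ \emph{restricted to $\pa\Omega$}. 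A trace estimate for that quantity costs at least $\|\widetilde{\pa}T^K\widetilde{\pa}q\|_{H^{1/2}(\Omega)}$ --- half a derivative more than the left-hand side of \eqref{eq:thewholecase} --- so your argument would require already knowing something stronger than the conclusion. The Dirichlet condition $q|_{\pa\Omega}=0$ does not rescue this directly: it lets you rewrite the \emph{tangential components of $\widetilde{\pa}q$} at the boundary, but the offending factor in the boundary term is a tangential derivative of the full tensor $T^K\widetilde{\pa}q$, not of $q$ itself. You have not explained how to strip that extra derivative off.

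The paper avoids the issue entirely by never generating a boundary term. The key step (Lemma~\ref{lem:weakdirichlet}) is to integrate the divergence against the \emph{test function} $S\,S\,T^K q$, which vanishes on $\pa\Omega$ because $q|_{\pa\Omega}=0$ and $S$, $T^K$ are tangential (including $D_t$, since $q\equiv 0$ on $\pa\Omega$ for all $t$). Two interior integrations by parts --- the first against $\widetilde{\pa}_i$, the second against $S$ (again no boundary term since $S$ is tangential) --- produce $\int|S\,T^K\widetilde{\pa}q|^2$ on the nose plus commutator errors, and it is precisely the commutator $\widetilde{\pa}_i S T^K q - S T^K\widetilde{\pa}_i q = \widetilde{\pa}_i S T^K\widetilde{x}^k\,\widetilde{\pa}_k q + \dots$ that puts $\|\widetilde{\pa}S T^K\widetilde{x}\|_{L^2(\Omega)}$ on the right --- it is an interior commutator term, not a trace of anything. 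The same observation fixes your half-derivative step: commute $\fdh$ into the test function $S\fdh T^Kq$, still vanishing on $\pa\Omega$, and use Lemmas~\ref{lem:halfderleibnitz}--\ref{lem:gradientfractionalcommute} for the additional commutators. I would therefore rework your treatment of the tangential term along these lines rather than via a trace estimate.
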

Lemma \ref{lem:dirichlet} is a consequence of
the following two lemmas, Lemma \ref{app:pwdiff} applied $\alpha=T^K \widetilde{\pa} q$, and induction.
\begin{lemma}\label{lem:weakdirichlet} Suppose that $q=0$ on $\pa \Omega$. Then
\begin{multline}
\|S T^K \widetilde{\pa} q\|_{L^2(\Omega)}\lesssim
c_0{\sum}_{S\in\mathcal{S}}\| \widetilde{\pa} {S} T^{K} \widetilde{x}\|_{L^2(\Omega)}
+c_0\| \widetilde{\div} \big( T^{K} \widetilde{\pa} q\big)\|_{L^2(\Omega)}\\
 +c_K{\sum}_{|L|\leq |K|-1}\| \widetilde{\pa}  T^{L} \widetilde{\pa} q\|_{L^2(\Omega)}
+c_K{\sum}_{|K'|\leq |K|}\| \widetilde{\pa} T^{K'} \widetilde{x}\|_{L^2(\Omega)} ,
\end{multline}
and
\begin{equation}
\|S \fdh   T^K \widetilde{\pa} q\|_{L^2(\Omega)}\!
\lesssim c_K\!{\sum}_{|K'|\leq |K|}\!\bigthree(\| \widetilde{\div} \big(\fdh  T^{K'} \widetilde{\pa} q\big)\|_{L^2(\Omega)}
+{\sum}_{k=0,1}\!\| \widetilde{\pa}\fdh S^k T^{K'} \widetilde{x}\|_{L^2(\Omega)}\!\bigthree) ,
\end{equation}
where $c_K$ depends on $ \widetilde{\pa} T^{N}\widetilde{x}$ and  $\widetilde{\pa} T^{N}\widetilde{\pa}q$,  for $|N|\leq |K|/2+3$.
\end{lemma}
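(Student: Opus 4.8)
The final statement to prove is Lemma \ref{lem:weakdirichlet}, a weak (one fewer derivative) Dirichlet estimate for a function $q$ vanishing on the boundary, expressed in the smoothed Lagrangian coordinates. The plan is to reduce to a testing-against-tangential-derivatives argument: since $S T^K \widetilde{\pa} q$ is a tangential vector field applied to $\widetilde{\pa} q$, and $q=0$ on $\pa\Omega$ so that $\widetilde{\pa} q = \widetilde{\mathcal{N}} \,\pa_{\widetilde{\mathcal{N}}} q$ there, the tangential component of $\widetilde{\pa}q$ vanishes on $\pa\Omega$ and we can control $S T^K \widetilde\pa q$ in $L^2(\Omega)$ by integration by parts, moving one derivative off and picking up a boundary term that is controlled because $\widetilde{\pa} q$ is normal at the boundary (so the tangential part of $T^K \widetilde\pa q$ at the boundary is itself lower order, involving $T^K \widetilde x$).

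First I would establish the algebraic identity that $T^K$ applied to $\widetilde{\pa}_i q = \widehat{J}_i^d \widehat{\pa}_d q$ produces $T^K \widehat\pa_\bullet q$ terms times Jacobian factors; using $[\widehat{\pa}_d, T] = $ terms involving $\widetilde{\pa} T \widetilde x$ from \eqref{com}-type commutators, the highest-order piece is $\widehat{\pa}_d T^K q$, and since $q|_{\pa\Omega}=0$ we have $T^K q|_{\pa\Omega}=0$ for the tangential part of $T^K$. Second, I would integrate $\int_\Omega |S T^K\widetilde\pa q|^2$ by parts in the tangential direction $S$ (using that $S$ is tangential and self-adjoint up to lower order with respect to $\kappa dy$), transferring $S$ onto the other factor; the boundary contribution vanishes to highest order because the tangential-tangential component of $T^K\widetilde\pa q$ at $\pa\Omega$ is lower order. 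What remains after this manipulation is $\int_\Omega S^2 T^K\widetilde\pa q \cdot T^K\widetilde\pa q$ up to lower-order and boundary terms; I would then use the pointwise estimate $|S^2 T^K \widetilde\pa q| \lesssim |\widetilde\pa S^2 T^{K'} q|$ and relate $\widetilde\pa q$ to $\widetilde\div$ and $\widetilde\curl$ via Lemma \ref{app:pwdiff}: $\widetilde\curl \widetilde\pa q = 0$ exactly (it's a gradient), so modulo commutators involving $\widetilde\pa T^K\widetilde x$ we pick up only the divergence term $\widetilde\div(T^K\widetilde\pa q)$. This is precisely the structure of the claimed right-hand side.

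For the half-derivative version the same scheme runs with $\fdh$ inserted, using the commutator lemmas from the appendix (Lemma \ref{lem:mutiplicationsmoothingcommute}, Lemma \ref{lem:halfderleibnitz}, Lemma \ref{lem:gradientfractionalcommute}) to move $\fdh$ past $\widetilde\pa$, past $T^K$, and past the Jacobian factors $\widehat{J}_i^d$, each time losing only the advertised lower-order terms. The key point that $\fdh$ is defined only in local coordinate charts forces us to carry out the integration-by-parts chartwise, but the partition-of-unity cutoffs only generate tangential lower-order terms, and the fractional Leibniz rule Lemma \ref{lem:LeibnitzBasic} combined with the change-of-variables invariance Lemma \ref{lem:changeofvariables} handles the chart transitions; this is where the constant $c_K$ depending on $\widetilde\pa T^N\widetilde x$ and $\widetilde\pa T^N\widetilde\pa q$ for $|N|\le|K|/2+3$ enters (the extra $+3$ coming from the number of derivatives the fractional commutator estimates cost on the coefficients).

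The main obstacle I expect is bookkeeping the boundary term carefully: after integrating by parts we must show that the contribution from $\pa\Omega$ is genuinely lower order, which requires using that on $\pa\Omega$ the vector $\widetilde\pa q$ is purely normal, hence $T^{K}\widetilde\pa q$ restricted to $\pa\Omega$ has its tangential part expressible through tangential derivatives of $T^K\widetilde x$ and lower-order derivatives of $q$ (this is the relativistic/Lagrangian analogue of the computation in Section \ref{sec:nonrelbdy}). Getting the count of derivatives exactly right so that the boundary term lands in $\|\widetilde\pa T^{K'}\widetilde x\|_{L^2(\Omega)}$ (via the trace/restriction theorem, costing one interior derivative) and in the lower-order $\|\widetilde\pa T^L\widetilde\pa q\|_{L^2}$ with $|L|\le|K|-1$ — which then feeds the induction in Lemma \ref{lem:dirichlet} — is the delicate part; everything else is a routine application of the appendix commutator lemmas.
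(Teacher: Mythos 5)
Your first move—integrate $\int |S T^K\widetilde\pa q|^2$ by parts in $S$—is in the right direction, but you have misidentified why the boundary contribution drops out: it vanishes simply because $S$ is tangent to $\pa\Omega$, not because of the tangential/normal decomposition of $\widetilde\pa q$ there. (That normal-gradient observation is used elsewhere in the paper, e.g.\ in Section~\ref{sec:nonrelbdy} for the energy boundary terms, but it plays no role in this lemma.) The real gap comes afterwards. To manufacture $\widetilde{\div}(T^K\widetilde\pa q)$ on the right-hand side you must commute the tangential fields past $\widetilde\pa$, i.e.\ write $S^2 T^K\widetilde\pa_i q = \widetilde\pa_i S^2 T^K q + (\text{commutators in } \widetilde\pa T^J\widetilde x)$, and then integrate by parts a \emph{second} time in $\widetilde\pa_i$. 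Only this second integration by parts produces the divergence, and its boundary term vanishes for a different reason: $T^K q = 0$ on $\pa\Omega$ (since every $T\in\mathcal T$ is tangent to the boundary and $q$ vanishes there), hence $S^2 T^K q|_{\pa\Omega}=0$. The paper runs the same computation in the opposite direction, pairing $\widetilde{\div}(T^K\widetilde\pa q)$ against $S S T^K q$, integrating by parts once in $\widetilde\pa$ (boundary term killed by $SS T^K q|_{\pa\Omega}=0$) and once in $S$ (killed by tangency of $S$), and then recognising $\int \widetilde\pa_i S T^K q\cdot S T^K\widetilde\pa^i q$ as $\|S T^K\widetilde\pa q\|_{L^2}^2$ up to the commutator $R^J_i$, which supplies the $c_0\sum_S\|\widetilde\pa S T^K\widetilde x\|_{L^2}$ term.

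Your proposal substitutes an appeal to Lemma~\ref{app:pwdiff} for that second integration by parts, and this does not work---it is circular. Applied to $\alpha = T^K\widetilde\pa q$ or $S T^K\widetilde\pa q$, the right-hand side of \eqref{pwnonrel} contains $\sum_{S\in\mathcal S}|S\alpha|$, which is precisely the order of quantity the present lemma is trying to estimate; the pointwise estimate only helps once you already control the tangential derivatives. In the paper's logic Lemma~\ref{app:pwdiff} is used \emph{afterwards}, in Lemma~\ref{lem:dirichlet}, to upgrade the tangential bound from this lemma together with the curl bound from Lemma~\ref{lem:curl} to the full gradient; and the observation that $\widetilde{\curl}\,\widetilde\pa q=0$ is the content of Lemma~\ref{lem:curl}, not of this one. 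The half-derivative version of your outline inherits the same gap once the $\fdh\sm$ commutator bookkeeping is stripped away.
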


\begin{lemma}\label{lem:curl} Let
$A_{ij}^J=\widetilde{\pa}_i T^J \widetilde{\pa}_j q-\widetilde{\pa}_j T^J \widetilde{\pa}_i q$. We have
\begin{equation}\label{eq:anitsymmetricestimateappb}
\|A^K\|_{L^2(\Omega)}\lesssim
c_0\| \widetilde{\pa}  T^{K} \widetilde{x}\|_{L^2(\Omega)}
+c_K\!{\sum}_{|L|\leq |K|-1}\!\bigthree(\| \widetilde{\pa}  T^{L} \widetilde{\pa} q\|_{L^2(\Omega)}
+ \| \widetilde{\pa}  T^{L} \widetilde{x}\|_{L^2(\Omega)}\bigthree),
\end{equation}
where $c_K$ stands for a constant that depends on
$\widetilde{\pa} T^N \widetilde{x}$ and  $\widetilde{\pa} T^{N}\widetilde{\pa}q$,  for
$|N|\leq |K|/2 $.

Moreover let
$A_{ij}^{J,\nicefrac{1}{2}}
=\widetilde{\pa}_i \fdh T^J \widetilde{\pa}_j q-\widetilde{\pa}_j \fdh T^J \widetilde{\pa}_i q$. Then
\begin{multline}\label{eq:anitsymmetricestimatehalfappb}
\|A^{K,\nicefrac{1}{2}}\|_{L^2(\Omega)}
\lesssim c_0\|\widetilde{\pa} T^K \widetilde{\pa} q\|_{L^2(\Omega)}\\
+c_K\!\!\! \sum_{k=0,1}\!\!\! \bigtwo( \sum_{|L|\leq |K|-1}\!\!\!\!\!\!\| \widetilde{\pa} \big( \fd^{\!\nicefrac{k}{2}}T^{L} \widetilde{\pa} q\big)\|_{L^2(\Omega)}
+ \!\!\!\!\!\!\!\! \!\!\!\sum_{|N|\leq |K|/2+3}\!\!\!\!\!\!\!\!\!\| \widetilde{\pa} T^N \widetilde{\pa} q\|_{L^\infty(\Omega)}
\!\!\!\!\!\!\sum_{|K'|\leq |K|}\!\!\!\!\! \| \widetilde{\pa} \fd^{\!\nicefrac{k}{2}} T^{K'} \widetilde{x}\|_{L^2(\Omega)}\bigtwo).
\end{multline}
\end{lemma}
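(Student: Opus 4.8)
\textbf{Proof proposal for Lemma \ref{lem:curl}.}

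The plan is to prove the two estimates \eqref{eq:anitsymmetricestimateappb} and \eqref{eq:anitsymmetricestimatehalfappb} by directly expanding the commutators $A^K_{ij}$ and $A^{K,\nicefrac{1}{2}}_{ij}$ in terms of the coordinate derivatives $T^J\widetilde{x}$ and the lower order quantities $\widetilde{\pa}T^L\widetilde{\pa}q$, exactly as in the computation leading to \eqref{eq:AKdef}. First I would observe that since the only source of non-commutativity between $T^J$ and $\widetilde{\pa}_i = (\pa y^a/\pa\widetilde{x}^i)\pa_{y^a}$ is the factor $\pa y^a/\pa\widetilde{x}^i$ depending on $\widetilde{x}$, one has the identity $[\widetilde{\pa}_i, T^J]\widetilde{\pa}_j q = \sum a^K_{K_1\cdots K_k}\widetilde{\pa}_i T^{K_1}\widetilde{x}\cdots\widetilde{\pa}T^{K_{k-1}}\widetilde{x}\cdot\widetilde{\pa}T^{K_k}\widetilde{\pa}_j q$ with $|K_k| < |K|$, which is precisely \eqref{eq:AKdef}, and then $A^K_{ij}$ is the antisymmetrization of this in $i,j$. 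This structural formula is the key; everything else is estimating products.

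For the first estimate \eqref{eq:anitsymmetricestimateappb} the idea is to distinguish the top-order term in the sum, where all but one factor is at order $\leq |K|/2$ and one factor is $\widetilde{\pa}T^{K_k}\widetilde{\pa}q$ (or $\widetilde{\pa}T^{K_k}\widetilde{x}$) at order $|K|-1$; this is bounded by $c_K\sum_{|L|\leq|K|-1}(\|\widetilde{\pa}T^L\widetilde{\pa}q\|_{L^2} + \|\widetilde{\pa}T^L\widetilde{x}\|_{L^2})$, where $c_K$ collects $L^\infty$ bounds on the remaining factors at order $\leq|K|/2$. There is a special subcase to isolate, namely $k=2$ with $|K_1| = |K|$: this is the term $a^K\widetilde{\pa}_i T^K\widetilde{x}\cdot\widetilde{\pa}\widetilde{\pa}q$ where the single coordinate factor carries all the derivatives and $\widetilde{\pa}\widetilde{\pa}q$ is at order zero, giving the $c_0\|\widetilde{\pa}T^K\widetilde{x}\|_{L^2}$ term with $c_0 = c_0(|\widetilde{\pa}\widetilde{\pa}q|)$. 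All remaining intermediate terms are products of factors each of order strictly less than $|K|$ with total order $|K|$, so one factor has order $\leq|K|/2$ and can be pulled out in $L^\infty$ by Sobolev, while the others are absorbed into $\sum_{|L|\leq|K|-1}\|\widetilde{\pa}T^L\widetilde{\pa}q\|_{L^2}+\|\widetilde{\pa}T^L\widetilde{x}\|_{L^2}$; this is a routine Leibniz/Sobolev bookkeeping once the structural formula is in hand.

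For the second estimate \eqref{eq:anitsymmetricestimatehalfappb} I would apply $\fdh$ to the formula for $A^K_{ij}$ and use the fractional Leibniz rules from Lemma \ref{lem:halfderleibnitz}, together with Lemma \ref{lem:gradientfractionalcommute} to pass $\fdh$ through the $\widetilde{\pa}$'s. Writing $A^{K,\nicefrac{1}{2}}_{ij} = \fdh A^K_{ij} + ([\widetilde{\pa}_i,\fdh]T^K\widetilde{\pa}_j q - [\widetilde{\pa}_j,\fdh]T^K\widetilde{\pa}_i q)$, the commutator term is bounded by $c_0\|\widetilde{\pa}T^K\widetilde{\pa}q\|_{L^2}$ by Lemma \ref{lem:gradientfractionalcommute}, which produces the first term on the right of \eqref{eq:anitsymmetricestimatehalfappb}. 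For $\fdh A^K$, applying $\fdh$ to each product in \eqref{eq:AKdef} and distributing by the Leibniz rule, the half-derivative lands either on a coordinate factor — giving $\|\widetilde{\pa}\fdh T^{K'}\widetilde{x}\|_{L^2}$ terms — or on the $\widetilde{\pa}T^{K_k}\widetilde{\pa}q$ factor — giving $\|\widetilde{\pa}(\fdh T^L\widetilde{\pa}q)\|_{L^2}$ terms for $|L|\leq|K|-1$; the factors not carrying $\fdh$ are estimated in $L^\infty$ via $\|\widetilde{\pa}T^N\widetilde{\pa}q\|_{L^\infty}$ for $|N|\leq|K|/2+3$ (the extra $+3$ coming from the Sobolev embedding needed after the $C^3$ losses in Lemma \ref{lem:halfderleibnitz}), which is exactly the combination appearing on the right-hand side. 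The main obstacle I anticipate is purely combinatorial: keeping careful track of which factor in each product is allowed to be large and ensuring that in every term at least one factor can be safely placed in $L^\infty$ without exceeding the index budget $|K|/2+3$, and making sure the fractional-derivative losses ($C^2$, $C^3$) in Lemma \ref{lem:halfderleibnitz} are compatible with that budget — but this is bookkeeping rather than a conceptual difficulty, since the algebraic structure \eqref{eq:AKdef} already does the essential work.
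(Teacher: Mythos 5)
Your proposal is correct and takes essentially the same route as the paper: write $A^K_{ij}$ via the structural formula \eqref{eq:AKdef} with the top-order coordinate term $\widetilde{\pa}_i T^K\widetilde{x}^k\,\widetilde{\pa}_k\widetilde{\pa}_j q$ isolated, read off \eqref{eq:anitsymmetricestimateappb}, then split $A^{K,\nicefrac{1}{2}} = \fdh A^K + [\widetilde{\pa},\fdh]T^K\widetilde{\pa}q$-type commutators and bound using Lemma \ref{lem:gradientfractionalcommute} and the fractional Leibniz rule of Lemma \ref{lem:halfderleibnitz}. The bookkeeping you flag as the remaining obstacle is exactly what the paper handles implicitly, and your index budget $|K|/2+3$ matches the statement.
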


\subsubsection{The proofs of the basic elliptic estimates}

\begin{proof}[Proof of Lemma \ref{lem:ellipticboundary}]
  Integrating by parts:
  \begin{equation}
   ||\widetilde{\pa} \alpha||_{L^2(\tD_t)}^2
   = -\int_{\tD_t} \delta^{ij}
   \alpha_i \Dve \alpha_j + \int_{\pa \tD_t}
   \delta^{ij} \alpha_i \mathcal{N}^k \widetilde{\pa}_k \alpha_j.
   \label{ibpapp1}
  \end{equation}
  We insert the identity:
  \begin{equation}
   \Delta \alpha_j = \delta^{k\ell}\widetilde{\pa}_k(\widetilde{\pa}_\ell \alpha_j)
   = \delta^{k\ell} \widetilde{\pa}_k \big( \widetilde{\pa}_j \alpha_\ell + \curl \alpha_{\ell j}\big)
   = \widetilde{\pa}_j \div \alpha + \delta^{k\ell}\widetilde{\pa}_k \curl \alpha_{\ell j},
  \end{equation}
into the first term in \eqref{ibpapp1}
  and integrate by parts again:
  \begin{equation}
   \int_{\tD_t}\delta^{ij} \alpha_i \Dve \alpha_j
   = \int_{\pa \tD_t} \mathcal{N}^i \alpha_i \div \alpha
   + \delta^{ij} \mathcal{N}^\ell \alpha_i \curl \alpha_{\ell j} dS
   - \int_{\tD_t} (\div \alpha)^2
   + \delta^{k\ell}\delta^{ij} \widetilde{\pa}_k \alpha_i
   \curl \alpha_{\ell j}.
   \label{green1}
  \end{equation}
  Note that by the antisymmetry of curl:
  \begin{multline}
   \delta^{k\ell}\delta^{ij} \widetilde{\pa}_k \alpha_i
   \curl \alpha_{\ell j}\!\\
   =\frac{1}{2} \delta^{k\ell}\delta^{ij} (\widetilde{\pa}_k \alpha_i+\widetilde{\pa}_i \alpha_k)
   \curl \alpha_{\ell j}+\frac{1}{2} \delta^{k\ell}\delta^{ij} (\widetilde{\pa}_k \alpha_i-\widetilde{\pa}_i \alpha_k)
   \curl \alpha_{\ell j}\!=\frac{1}{2} \delta^{k\ell}\delta^{ij}  \curl \alpha_{k i}
   \curl \alpha_{\ell j},
  \end{multline}
  so \eqref{ibpapp1} becomes:
  \begin{equation}
   ||\widetilde{\pa} \alpha||_{L^2(\tD_t)}^2
   = ||\div \alpha||_{L^2(\tD_t)}^2 + \frac{1}{2} ||\curl \alpha||_{L^2(\tD_t)}^2
   +\int_{\pa \tD_t}
   \mathcal{N}^k \alpha^j \widetilde{\pa}_k \alpha_j -
   \mathcal{N}^i \alpha_i \div \alpha
    - \mathcal{N}^\ell \alpha^j
   \curl \alpha_{\ell j} .
   \end{equation}
   Here:
   \begin{multline}
    \mathcal{N}^k \alpha^j \widetilde{\pa}_k \alpha_j -
   \mathcal{N}^i \alpha_i \div \alpha
    - \mathcal{N}^\ell \alpha^j
   \curl \alpha_{\ell j}= \mathcal{N}^k \alpha^j \widetilde{\pa}_j \alpha_k -
   \mathcal{N}^i \alpha_i \div \alpha\\
   =\mathcal{N}^k \alpha_\ell \mathcal{N}^\ell  \mathcal{N}^j\widetilde{\pa}_j \alpha_k +\mathcal{N}^k \alpha_\ell \gamma^{\ell j}\widetilde{\pa}_j \alpha_k -
   \mathcal{N}^i \alpha_i( \mathcal{N}^k \mathcal{N}^\ell + \gamma^{\ell k})\widetilde{\pa}_k \alpha_\ell
   =\mathcal{N}^k \alpha_\ell \gamma^{\ell j}\widetilde{\pa}_j \alpha_k -
   \mathcal{N}^i \alpha_i \gamma^{\ell k}\widetilde{\pa}_k \alpha_\ell.\tag*{\qedhere}
   \end{multline}
\end{proof}

\begin{proof}[Proof of Lemma \ref{lem:normaltangential}]
 We have the following identity
\begin{equation*}
\pa_i\big(   \alpha^i \alpha_j \mathcal{N}^j \big)
 - \pa_j\big( \alpha^i  \alpha_i\, \mathcal{N}^j \big)/2
 =\div (\alpha)\, \alpha_{j}\mathcal{N}^j+ \curl \alpha_{ij}\,
  \alpha^i \mathcal{N}^j+\alpha^i \alpha^j \pa_i \mathcal{N}_j -|\alpha|^2  \pa_j \mathcal{N}^j /2.
\end{equation*}
Integrating this over the domain gives the lemma.
\end{proof}

\begin{proof}[Proof of Lemma \ref{lem:weakdirichlet}] Integrating by parts we get
\begin{multline}
\int_\Omega S   S T^K\! q\, \, \widetilde{\div} \big( T^K \widetilde{\pa} q\big) d\widetilde{x}
=- \int_\Omega \widetilde{\pa}_i S   S T^K\! q\, \,  T^K \widetilde{\pa}^i q d\widetilde{x}\\
=-\int_\Omega S\widetilde{\pa}_i S T^K \!q\, \, T^K \widetilde{\pa}^i q\, d\widetilde{x}
+\int_\Omega \widetilde{\pa}_i S\widetilde{x}^k\, \, \widetilde{\pa}_k S  T^K \! q\, \,  T^K \widetilde{\pa}^i q\, d\widetilde{x}\\
=\int_\Omega \widetilde{\pa}_i S T^K\!q\, \,(S+\widetilde{\div} S) T^K \widetilde{\pa}^i q\,  d\widetilde{x}
+\int_\Omega \widetilde{\pa}_i S\widetilde{x}^k\, \, \widetilde{\pa}_k S T^K \! q\, \,  T^K \widetilde{\pa}^i q\, d\widetilde{x} .
\end{multline}
The proof of the first inequality follows from this and
\begin{equation}
\widetilde{\pa}_i T^J \! q -\! T^J\widetilde{\pa}_i   q
=R_i^J\!,\quad\text{where}\quad R_i^J\!=\widetilde{\pa}_i T^J\widetilde{x}^k\,\,\widetilde{\pa}_k q+\!\!\!\!\!\!\sum_{J_1+\dots+J_k=J,\,|J_i|<|J|}\!\!\!\!\! r_{\!J_1\dots J_k}^J\!\widetilde{\pa}_i T^{J_1} \widetilde{x}\cdots  \! \widetilde{\pa}T^{J_{k-1}} \widetilde{x}\cdot  T^{J_k} {\widetilde{\pa}}^{i}  q.
\end{equation}

To prove the second inequality we integrate by parts again
\begin{multline}
\int_\Omega S  \fdh S T^K q\, \, \widetilde{\div} \big(\fdh  T^K \widetilde{\pa} q\big) d\widetilde{x}
=- \int_\Omega \widetilde{\pa}_i\big( S  \fdh S T^K q\big)\, \,\big(\fdh  T^K \widetilde{\pa}^i q\big) d\widetilde{x}\\
=-\int_\Omega S\widetilde{\pa}_i\big(\fdh  S T^K q\big)\, \,\fdh  T^K \widetilde{\pa}^i q\, d\widetilde{x}
+\int_\Omega \widetilde{\pa}_i S\widetilde{x}^k\, \, \widetilde{\pa}_k\big(\fdh S  T^K q\big)\, \,\fdh  T^K \widetilde{\pa}^i q\, d\widetilde{x}\\
=\int_\Omega \widetilde{\pa}_i\big( \fdh S T^K q\big)\, \,(S+\widetilde{\div} S)\fdh  T^K \widetilde{\pa}^i q\,  d\widetilde{x}
+\int_\Omega \widetilde{\pa}_i S\widetilde{x}^k\, \, \widetilde{\pa}_k\big(\fdh  S T^K q\big)\, \,\fdh  T^K \widetilde{\pa}^i q\, d\widetilde{x} .
\end{multline}
Here by Lemma \ref{lem:gradientfractionalcommute}
\begin{equation}
\bigtwo\|  \widetilde{\pa}_i\fdh S T^K q - \fdh \widetilde{\pa}_i S T^K  q\bigtwo\|_{L^2(\Omega)}\lesssim c_0\| \widetilde{\pa} T^K q\|_{H^{\, 0,1/2}(\Omega)}.
\end{equation}
Using Lemma \ref{lem:halfderleibnitz} we get
\begin{equation}
\| \fdh R^J\|_{L^2(\Omega)}
\lesssim c_J \!\!\!\!\sum_{|K|\leq |J|-1} \| \fdh T^K \widetilde{\pa} q \|_{L^2(\Omega)}
+c_J \!\!\!\! \sum_{|N|\leq |J|/2+3}\!\!\!\!\!\!\| T^N \widetilde{\pa} q\|_{L^\infty(\Omega)}
\!\! \!\!\sum_{|J'|\leq |J|}\!\!\!\!\| \widetilde{\pa}\fdh T^{J'} \widetilde{x}\|_{L^2(\Omega)},
\end{equation}
where $c_J$ depends on $ \widetilde{\pa} T^{N}\widetilde{x}$ for $|N|\leq |J|/2+3$.
The lemma follows from these estimates and induction.
\end{proof}

\begin{proof}[Proof of Lemma \ref{lem:curl}]
Recall that for some constants $a^K_{K_1\dots K_k}$
\begin{equation}
A^K_{ij}=\widetilde{\pa}_i T^K\widetilde{x}^k\,\,\widetilde{\pa}_k \widetilde{\pa}_j q
-\widetilde{\pa}_j T^K\widetilde{x}^k\,\,\widetilde{\pa}_k \widetilde{\pa}_i q
+{\sum}_{K_1+\dots+K_k=K,\, |K_i|<|K|}\,\, a^K_{K_1\dots K_k}
\widetilde{\pa}_i T^{K_1} \widetilde{x}\cdots  \! \widetilde{\pa}T^{K_{k-1}} \widetilde{x}\cdot \!\widetilde{\pa} T^{K_k} {\widetilde{\pa}}^{i}  q,
\end{equation}
from which \eqref{eq:anitsymmetricestimateappb} follows.
By Lemma \ref{lem:gradientfractionalcommute}
\begin{equation}
\| A^{K,\nicefrac{1}{2}}\|_{L^2(\Omega)}\lesssim \| \fdh A^K\|_{L^2(\Omega)}
+c_0\|\widetilde{\pa} T^K \widetilde{\pa} q\|_{L^2(\Omega)},
\end{equation}
and by Lemma \ref{lem:halfderleibnitz} and Lemma \ref{lem:gradientfractionalcommute}
\begin{equation*}
\| \fdh \! A^{K\!}\|_{L^2(\Omega)}\lesssim c_K\!\!\! \sum_{k=0,1}\!\!\! \bigtwo( \!\!\!\! \sum_{\,\,\,\,|L|\leq |K|-1\!\!\!\!}\!\!\!\!\!\!\!\| \widetilde{\pa} \big( \! \fd^{\!\nicefrac{k}{2}}T^{L} \widetilde{\pa} q\big)\|_{L^2(\Omega)}
+ \!\!\!\!\!\!\!\! \!\!\sum_{|N|\leq |K|/2+3}\!\!\!\!\!\!\!\!\!\!\| T^{N\!} \widetilde{\pa} q\|_{L^\infty(\Omega)}
\!\!\!\!\!\!\sum_{|K'|\leq |K|}\!\!\!\!\! \| \widetilde{\pa} \fd^{\!\nicefrac{k}{2}} T^{K'\!} \widetilde{x}\|_{L^2(\Omega)}\!\bigtwo),
\end{equation*}
which proves \eqref{eq:anitsymmetricestimatehalfappb}.
\end{proof}

\begin{proof}[Proof of Lemma \ref{prop:dirichlet}]
 By \eqref{pwnonrel} we have for $k=0,1$:
\begin{equation}
|\widetilde{\pa}\fd^{\!\nicefrac{k}{2}}  T^K \widetilde{\pa} h|
\lesssim |_{\,}\widetilde{\div}\,\fd^{\!\nicefrac{k}{2}}  T^K \widetilde{\pa} h|+|_{\,}\widetilde{\curl}\,\fd^{\!\nicefrac{k}{2}} T^K \widetilde{\pa} h|
+ {\sum}_{S\in{\mathcal S}}|S\fd^{\!\nicefrac{k}{2}} T^K \widetilde{\pa} h|.
\end{equation}
\eqref{eq:thewholecase} follows from this for $k=0$, Lemma \ref{lem:curl}, Lemma \ref{lem:weakdirichlet} and induction to deal with the lower order term.
The proof of \eqref{eq:thehalfcase} follows in the same way apart from that we also have to use
\eqref{eq:thewholecase}.
\end{proof}

\section{Basic elliptic estimates with respect to the Lorentz metric $g$}
\label{relelliptic}
In this section we prove some generalizations
of the estimates from Section \ref{sec:ellipticestimatesproofs}. The proofs
appear at the end of this section.

For a one-form $\beta = \beta_\mu d\widetilde{x}^\mu$ we write
\begin{equation}
 \widetilde{\div} \beta = \widetilde{\nabla}^\mu \beta_\mu,
 \qquad
 \widetilde{\curl}\beta_{\mu\nu}
 = \widetilde{\nabla}_\mu \beta_\nu -
  \widetilde{\nabla}_\nu \beta_\mu
  = \widetilde{\pa}_\mu \beta_\nu -
   \widetilde{\pa}_\nu \beta_\mu,
 %\label{}
\end{equation}
where in the last step we used the symmetry of the Christoffel symbols.
Let $\widehat{\Tau}$ denote the future-directed timelike vector defining
the time axis of the background spacetime $(g, M)$,
\begin{equation}
 \widehat{\Tau}^\mu = \nabla^\mu t /(-g(\nabla t, \nabla t))^{1/2}.
 \label{}
\end{equation}
We will work in terms of the following Riemannian metric on the spacetime
$M$,
\begin{equation}
 H^{\mu\nu} = g^{\mu\nu} +2 \widehat{\Tau}^\mu \widehat{\Tau}^\nu,
 %\label{Hdefa}
\end{equation}

For one-forms $\alpha$ and two-forms
$\omega$ we will use the pointwise norms
\begin{equation}
  |\alpha|^2 = H^{\mu\nu} \alpha_\mu \alpha_\nu,
  \qquad
 |\omega|^2 = H^{\mu\nu}H^{\alpha\beta} \omega_{\mu\alpha}\omega_{\nu\beta},
 %\label{}
\end{equation}

We have the following pointwise estimate.
\begin{lemma}
  \label{pwHlemma}
  There is a constant $c_0 = c_0(|\pa \xve|)$ so that
  for any one-form $\beta$ on $\D$ we have
 \begin{equation}
  |\pave \beta| \leq
  c_0\big(
  |\widetilde{\div} \beta| + |\widetilde{\curl} \beta|
  + |\S\beta| + |\beta|\big).
  \label{pwH}
 \end{equation}
\end{lemma}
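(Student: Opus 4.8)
\textbf{Proof plan for Lemma \ref{pwHlemma}.}

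The statement is the Lorentzian analogue of the Euclidean pointwise div--curl estimate of Lemma \ref{app:pwdiff} (inequality \eqref{pwnonrel}), and the plan is to reduce to that estimate by a linear-algebra argument at each point. First I would fix a point $p \in \D$ and work in the coordinate frame $\pave_\mu = \pa/\pa\xve^\mu$. At $p$ the metric $\wg$ is a fixed Lorentzian form and $H$ a fixed Riemannian form, both comparable to the Euclidean form on coordinate components with constants controlled by $|\pave\xve|$ (and the background metric bounds, which are fixed). So it suffices to bound the full Euclidean gradient $\sum_{\mu,\nu}|\pave_\mu\beta_\nu|$ by the right-hand side of \eqref{pwH}. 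The key observation is that $\widetilde{\curl}\beta_{\mu\nu} = \pave_\mu\beta_\nu - \pave_\nu\beta_\mu$ already controls the antisymmetric part of $\pave\beta$ directly (no Christoffel symbols appear, by the symmetry of $\wGamma$), so only the symmetric part $\pave_{(\mu}\beta_{\nu)}$ needs to be recovered, and for that the divergence plus tangential derivatives should suffice just as in the Euclidean case.

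The cleanest route is to split $\pave_\mu\beta_\nu$ into the part that only involves derivatives tangential to the surfaces of constant $s$ and the part involving the normal (material) direction. Using the frame decomposition already set up in Section \ref{lagrangianwavecoords} (or its abstract version in Section \ref{spatialnorms}), write $\pave_\mu = n_\mu \pave_s + \overline{\pave}_\mu$ where $\overline{\pave}$ differentiates along the spacelike slice and $n$ is the conormal. The tangential derivatives $\overline{\pave}_\mu\beta_\nu$ are controlled by $|\S\beta|$ together with lower-order terms of the form $|\pave\xve|\,|\beta|$ coming from expressing $\overline{\pave}$ in terms of the vector fields in $\S$ (which span the tangent space of $\pa\Omega$ at the boundary and, after the cutoff construction in Section \ref{def T and FD}, span the relevant directions in the interior too). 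The normal derivatives $\pave_s\beta_\nu$ are then extracted from $\widetilde{\div}\beta = \wg^{\mu\nu}\pave_\mu\beta_\nu - \wg^{\mu\nu}\wGamma^\gamma_{\mu\nu}\beta_\gamma$: since $\wg^{\mu\nu}n_\mu n_\nu \neq 0$ (the slices are spacelike, so $n$ is timelike with respect to $\wg$, a fact guaranteed by our assumption that the surfaces $\D_s$ are spacelike), one can solve algebraically for the $\pave_s\pave_s$-type component in terms of $\widetilde{\div}\beta$, the already-controlled tangential derivatives, and $|\beta|$. Combining with the curl bound for the antisymmetric part gives control of every component of $\pave\beta$.

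I expect the only genuine subtlety — not really an obstacle — to be bookkeeping the lower-order $|\beta|$ terms: each time one expresses an invariant derivative in coordinates or trades $\overline{\pave}$ for the vector fields in $\mathcal S$, Christoffel symbols and derivatives of the coordinate change $\xve$ appear, multiplying $\beta$ undifferentiated, and these are exactly the terms collected on the right of \eqref{pwH} with constant $c_0 = c_0(|\pave\xve|)$. Since this is a pointwise (algebraic) statement with no integration by parts, there is no boundary term and no need for the elliptic machinery of Appendix \ref{relelliptic}; the proof is essentially the Euclidean proof of Lemma \ref{app:pwdiff} conjugated by the uniformly bounded, uniformly invertible matrices relating $\wg$, $H$, and $\delta$ at each point. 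One could alternatively phrase it as: apply \eqref{pwnonrel} to $\beta$ regarded as a Euclidean one-form to get $|\pave\beta| \lesssim c_0(|\widetilde{\div}_\delta\beta| + |\widetilde{\curl}\beta| + |\S\beta|)$, and then note $\widetilde{\div}_\delta\beta$ differs from $\widetilde{\div}\beta$ (the $\wg$-divergence) only by $\wg$-Christoffel terms times $\beta$ plus the difference between the Euclidean and $\wg$ traces applied to $\pave\beta$, the latter absorbed back into the left-hand side after multiplying by a small constant or handled since it is lower order in the trace only.
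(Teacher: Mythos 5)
Your high-level idea — reduce to a div/curl/tangential linear algebra estimate using comparability of $H$, $\wg$, and the Euclidean metric — is the right spirit, but the plan as written has three genuine gaps, one of which is the actual crux of the argument.

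\textbf{The material derivative direction.} The paper's proof opens with the key observation that the right-hand side of \eqref{pwH} controls $\hD_s\beta$ (this is visible in the actual usage in \eqref{usepwacou}, where $\sum_{T\in\T}$ appears with $\T = \S\cup D_s$, not just $\S$). You never invoke this, and your plan needs it. When you compare the $\wg$-trace and Euclidean trace, the difference $(\delta^{\mu\nu}-\wg^{\mu\nu})\pave_\mu\beta_\nu$ is, up to the change of frame, a multiple of $\widetilde{u}^\mu\widetilde{u}^\nu\pave_\mu\beta_\nu$, i.e.\ essentially $\hD_s\beta$ contracted with $\widetilde{u}$. This is an order-one quantity — it cannot be ``absorbed back into the left-hand side after multiplying by a small constant,'' and it is not ``lower order.'' It is controlled precisely because the material derivative is on the right-hand side. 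The paper exploits this cleanly by reducing to a purely algebraic claim about symmetric $\wg$-trace-free two-tensors $\omega$ with $\widetilde{u}^\alpha\widetilde{u}^\beta\omega_{\alpha\beta}=0$, and proving pointwise positivity of the projected quadratic form via a short computation with $q^{\alpha\beta}=h^{\alpha\beta}-\widetilde{\N}^\alpha\widetilde{\N}^\beta$.

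\textbf{The decomposition near $\pa\Omega$.} Your proposed splitting $\pave_\mu = n_\mu\pave_s + \overline{\pave}_\mu$ (slice-tangential plus material) is the wrong one near the boundary. There, $\S$ only spans the two directions tangent to $\pa\Omega_s$, not the full three-dimensional tangent space of $\Omega_s$; the boundary-normal direction $\overline{\pave}_{\widetilde{\N}}$ inside the slice is exactly what is missing and exactly what the divergence and curl are supposed to supply. Meanwhile, the $\pave_s$ direction is controlled by $\hD_s\beta$ (up to slice-tangential corrections), not by the divergence. The correct decomposition is into the three directions tangent to $\Lambda$ (two spatial tangential plus $\hD_s$) and the single $\Lambda$-normal direction.

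\textbf{The ``extraction'' from the divergence.} Related to the above, ``the normal derivatives $\pave_s\beta_\nu$ are then extracted from $\widetilde{\div}\beta$'' cannot work as stated: that is four quantities and one scalar equation. The divergence yields only the single scalar $\widetilde{\N}^\mu\widetilde{\N}^\nu\pave_\mu\beta_\nu$ once the other contributions are known; the remaining $\widetilde{\N}$-$q$ components come from the curl (converting $\pave_{\widetilde{\N}}\beta_q$ into the known tangential derivative $\pave_q\beta_{\widetilde{\N}}$ plus a curl term). You acknowledge the curl controls the antisymmetric part, but you still need it again here to recover mixed symmetric components, and you should not then count on the divergence for anything beyond the single normal-normal component. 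The paper's formulation as an inequality between $\wg^{\alpha\beta}\wg^{\mu\nu}\omega_{\alpha\mu}\omega_{\beta\nu}$ and $q^{\alpha\beta}\wg^{\mu\nu}\omega_{\alpha\mu}\omega_{\beta\nu}$ sidesteps this bookkeeping entirely, using only the trace-free and $u$-$u$-null conditions and the identity $h=q+\widetilde{\N}\widetilde{\N}$.

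Finally, a minor point: \eqref{pwnonrel} is the three-dimensional Newtonian estimate and cannot be directly quoted for a four-dimensional spacetime one-form; you would need to state and use the $4$D flat analogue, and that analogue would again need the material derivative in its tangential family.
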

Recall that $\mathcal{S}$ runs over the family of spacetime vector fields which
are tangent to $\pa \Omega$.

We will also need some elliptic estimates on the surfaces $\Omega_{s'}
=\Omega \times \{s = s'\}$ of constant $s'$.
For this we work in terms of the Riemannian metric $G$ defined in
\eqref{Gdef} which we recall here.
\begin{equation}
 G_{\mu\nu} = \sg_{\mu\nu} -
 W_\mu W_\nu, \qquad
 \text{ where }\quad
 W_\mu = \frac{ \sg_{\mu\nu} \widetilde{V}^\nu}{\widetilde{V}^\nu \wTau_\nu},
% \label{}
\end{equation}
which satisfies
\begin{equation}
  G(\xi,\xi) \geq c \sg(\xi,\xi),
 \label{Gpositivityapp}
\end{equation}
for a constant $c$ (see \eqref{Gpositivity0}), for any vector $\xi \in
T\Omega_{s'}$.

For one-forms $X$ and two-tensors $\omega$ we write
\begin{equation}
  \vertiii{X}_{L^2(\Omega)}^2
   = \int_{\Omega} G^{\mu\nu} X_\mu X_\mu \, \kappa_{\oH} dy,
  \qquad
 \vertiii{\omega}_{L^2(\Omega)}^2
 =
 \int_{\Omega} G^{\alpha\beta} G^{\mu\nu} \omega_{\alpha \mu}\omega_{\beta \nu}
 \,\kappa_{\oH} dy.
 \label{appendixnormsdef}
\end{equation}
Here, $\kappa_{\oH} = \det \oH^{1/2}$. Then $\vertiii{X}_{L^2(\Omega)}$
is positive definite when restricted to one-forms $X$ which are
cotangent to $\Omega$ at fixed $s$.

We will also work in terms of covariant differentiation $\overline{\nabla}$
with respect to the metric $G$ which satisfies $\overline{\nabla}G  =0$.
 If $X$ is a one-form then it is given by
\begin{equation}
 \overline{\nabla}_\mu X_\nu = G^{\mu'}_{\mu} G_{\nu}^{\nu'} \nabla_{\mu'}X_{\nu'}.
 \label{}
\end{equation}
Here, $G^{\mu}_{\nu}$ denotes orthogonal projection to the tangent space
of $\Omega$ with respect to the metric $G$,
\begin{equation}
 G^{\mu}_{\nu} =
 g_{\nu\nu'} G^{\mu\nu'}.
 \label{}
\end{equation}
We also write
\begin{equation}
 \div_{G} X = G^{\mu\nu} \overline{\nabla}_\mu X_\nu.
 \label{divgdef}
\end{equation}
and we have
 \begin{lemma}
   \label{prop:divcurlL2rel}
There is a constant
  $C_{\!1}$ depending on $\vertiii{\pa \xve}_{L^\infty(\Omega)}$
  as well as $c_L$ from \eqref{Gpositivityapp}
  so that with
  notation as in \eqref{appendixnormsdef},
  if $X$ is a one-form on $\Omega$
\begin{multline}
 \! \vertiii{\pave X}_{L^{2{}_{\!}}(\Omega)}^2\!
\leq\! C_{{}_{\!}1\!}\Big(\! \vertiii{\div_{\oH} X}_{L^2(\Omega)}^2\!
 +{}_{\!} \vertiii{\wcurl X}_{L^2(\Omega)}^2\!
 \\
 + \! \!\int_{\pa \Omega}
 (\oH^{\mu\nu} \n_{{}_{\!}\mu} \fdh\! X_\nu)
 \cdot
 (\oH^{\alpha\beta} \n_{{}_{\!}\alpha} \fdh\! X_\beta) dS
 + \vertiii{X}_{L^2(\pa \Omega)}^2\!
 + \vertiii{X}_{L^2(\Omega)}^2\!\Big).
 \label{ftang1G}
\end{multline}
Here $\fdh$ is a half angular derivative defined locally in coordinates in \eqref{fdmudef},
and the inner product is the sum over coordinate charts $\fdh \xi^a
 \cdot \fdh \xi^b=\sum_{\mu} \big(\fdhm \xi^a\big)
\big(\fdhm \xi^b\big)$ as in \eqref{eq:globalhalfderdef}.  \end{lemma}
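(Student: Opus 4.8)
The proof proposal for Lemma \ref{prop:divcurlL2rel} is to reduce the estimate to the flat-metric estimates of Section \ref{sec:ellipticestimatesproofs} (Proposition \ref{prop:divcurlL2}, via Lemmas \ref{lem:ellipticboundary} and \ref{lem:normaltangential}) by exploiting the uniform equivalence \eqref{Gpositivityapp} between $G$ and the Euclidean metric $\delta$, together with the uniform equivalence of $\oH$ with $\delta$ on $M$. The plan is as follows. First I would record the basic Green's identity, exactly as in the proof of Lemma \ref{lem:ellipticboundary}, but now with respect to the Riemannian metric $\oH$ on the spacetime slice $\Omega_s$: integrating by parts twice, for any one-form $X$ cotangent to $\Omega_s$ one gets
\begin{equation}
 \vertiii{\onabla X}_{L^2(\Omega)}^2
 = \vertiii{\div_{\oH} X}_{L^2(\Omega)}^2
 + \tfrac12 \vertiii{\wcurl X}_{L^2(\Omega)}^2
 + \int_{\pa\Omega} \big( \text{boundary terms involving } \onabla X \text{ and } X\big),
\end{equation}
where the boundary terms, after decomposing $\onabla X$ into its normal and tangential components relative to $\pa\Omega$ (using the conormal $\n$ normalized with respect to $\oH$), reduce to the quadratic form $\oH^{\mu\nu}\n_\mu \n_\nu$ paired against tangential derivatives of $X$, exactly as in the last display of the proof of Lemma \ref{lem:ellipticboundary}. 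The curvature terms generated when commuting covariant derivatives are absorbed into the $\vertiii{X}_{L^2(\Omega)}^2$ term at the cost of a constant depending on $\vertiii{\pave\xve}_{L^\infty}$ and the metric bounds.

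Second, I would control the normal-derivative boundary contribution by the tangential one, which is the content of Lemma \ref{lem:normaltangential} in the Euclidean setting; the analogue here is the pointwise identity
\begin{equation}
 \pave_\mu\big( X^\mu (\oH^{\nu\alpha}\n_\nu X_\alpha)\big)
 - \tfrac12 \pave_\nu\big( |X|_{\oH}^2 \, \oH^{\nu\alpha}\n_\alpha\big)
 = \div_{\oH} X \cdot (\oH^{\nu\alpha}\n_\nu X_\alpha)
 + \wcurl X_{\mu\nu}\, X^\mu (\oH^{\nu\alpha}\n_\alpha)
 + (\text{lower order}),
\end{equation}
integrated over $\Omega$ and combined with Stokes' theorem; this lets one replace $\int_{\pa\Omega}(\oH^{\mu\nu}\n_\mu\n_\nu)|X|^2$ by $\int_{\pa\Omega}\gamma^{\mu\nu}_{\oH} X_\mu X_\nu$ plus divergence, curl, and interior $L^2$ terms, where $\gamma_{\oH}$ is the metric induced by $\oH$ on $\pa\Omega$. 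Third, and this is where the extra half-derivative enters, I would apply the fractional tangential derivative $\fdh$ (defined in \eqref{fdmudef}) to $X$, use that it commutes with $\div_{\oH}$, $\wcurl$, and the tangential vector fields $\S$ up to lower-order commutators controlled by Lemma \ref{lem:gradientfractionalcommute} and Lemma \ref{lem:halfderleibnitz}, and then feed $\fdh X$ into the Green's identity and the integration-by-parts identity above. The boundary term that survives is precisely $\int_{\pa\Omega}(\oH^{\mu\nu}\n_\mu \fdh X_\nu)\cdot(\oH^{\alpha\beta}\n_\alpha \fdh X_\beta)$, since the fractional derivative is defined locally in each coordinate chart and the inner product is summed over charts as in \eqref{eq:globalhalfderdef}. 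Finally, one passes from control of $\vertiii{\onabla \fdh X}$ to control of $\vertiii{\pave \fdh X}$ by the pointwise estimate \eqref{pwH} from Lemma \ref{pwHlemma}, since $\pave\fdh X$ is bounded by the $G$-divergence, $G$-curl, tangential derivatives, and $|X|$, and all of these have been estimated; the difference between $\onabla$ and $\pave$ is lower order and absorbed.

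The main obstacle I anticipate is the careful bookkeeping of the commutators between the fractional tangential derivative $\fdh$, the smoothed covariant derivative $\pave$ (equivalently $\onabla$), and the projection operators $G^\mu_\nu$ onto the tangent space of $\Omega_s$ — all of which involve the coordinate-dependent quantities $\pave\xve$ and the metric $\wg$. Unlike the Euclidean case in Section \ref{sec:ellipticestimatesproofs}, here the "tangential metric" $\gamma_{\oH}$ and the projection $G^\mu_\nu$ vary in both $s$ and $y$, so the integration-by-parts identities pick up genuinely new terms; the point is that each such term either carries a factor that makes it lower order (fewer than the full number of derivatives on $X$) or is a product of a bounded coefficient with a quantity already on the right-hand side. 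Making this precise requires Lemma \ref{lem:gradientfractionalcommute} and Lemma \ref{lem:halfderleibnitz} applied in the $\oH$-geometry, plus the uniform equivalence $\oH \simeq \delta$, $G \simeq \delta$ which guarantees that the fractional Sobolev norms built from $\oH$ or $G$ are equivalent to the flat ones used to define $\fdh$. Once this equivalence is in hand, the structure of the argument is identical to the Euclidean proof, so I expect the remaining steps to be routine.
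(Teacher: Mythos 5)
Your overall approach matches the paper's: the paper proves exactly one new fact, the $\oH$-metric Green's identity (stated as Lemma \ref{coordhalfestrel}), and then declares that the remainder follows ``in the same way that Lemma \ref{lem:ellipticboundary} implied \eqref{ftang1}''. Your steps 1, 2 and 4 --- the $\oH$-Green's identity, the $\oH$-analogue of Lemma \ref{lem:normaltangential} trading tangential for normal boundary components, and the pointwise estimate \eqref{pwH} to pass from $\onabla$ to $\pave$ --- are a correct and useful spelling-out of what the paper leaves implicit. Your concern about the $\fdh$--$\onabla$ commutator bookkeeping in the bulk is, however, misplaced: the Green's identity itself involves no $\fdh$, so no such commutators arise there; the fractional derivative only touches boundary quantities.

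The genuine misstep is in your step 3 and your final sentence. You propose to apply $\fdh$ to $X$ \emph{before} feeding it into the Green's identity, and you conclude by controlling $\vertiii{\pave\fdh X}_{L^2(\Omega)}$. But the lemma's left-hand side is $\vertiii{\pave X}_{L^2(\Omega)}^2$, with no $\fdh$. If you run the Green's identity on $\fdh X$, the boundary term is of the form $\int_{\pa\Omega}\fdh X\cdot\sonabla\fdh X$; applying Lemma \ref{fracalg} to that would yield $\|\fd X\|_{L^2(\pa\Omega)}^2$, a full boundary derivative, which is too strong and is the analogue of Proposition \ref{prop:divcurlL22}, not of Proposition \ref{prop:divcurlL2}. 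The correct order is to run the Green's identity on $X$ itself, so that the boundary term has exactly one tangential derivative on $X$ paired with $X$ (this is what the paper is emphasizing when it says ``the boundary term only involves derivatives which are tangent to $\pa\Omega$''); then Lemma \ref{fracalg} splits that single derivative into two halves, producing the $\fdh X$ boundary term of the lemma; and only at that point do the commutator estimates of Lemmas \ref{lem:halfderleibnitz} and \ref{lem:gradientfractionalcommute} come into play, together with the $\fdh$-version of Lemma \ref{lem:normaltangential}, to isolate the normal component $\oH^{\mu\nu}\n_\mu\fdh X_\nu$. This matters in the application \eqref{usefracestirel}: the lemma is invoked with $X=X^{J,\nicefrac{1}{2}}_\ve$ which already carries a $\fdh$, so your version with $\pave\fdh X$ on the left would demand more regularity than is available. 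Swap the order in which $\fdh$ enters and the rest of your argument is fine.
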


The next result is similar to Proposition \ref{prop:dirichlet}
and follows in almost exactly the same way. We omit
the proof.
\begin{prop}\label{prop:dirichletrel} Suppose that $q=0$ on $\pa \Omega$. Then with notation as in \eqref{appendixnormsdef},
\begin{multline}\label{eq:thewholecasedirichletrel}
\vertiii{\widetilde{\pa}  T^K \widetilde{\pa} q}_{L^2(\Omega)}\\
\lesssim C_0{\sum}_{S\in\mathcal{S}}\| \widetilde{\pa} {S} T^{K} \widetilde{x}\|_{L^2(\Omega)}
+C_K\!{\sum}_{|K'|\leq |K|} \bigtwo( \|T^{K'} \tr_{\oH} \pave^2 q\|_{L^2(\Omega)}
+ \|T^{K'} \hD_s \pave q\|_{L^2(\Omega)}
+\| \widetilde{\pa}T^{K'} \widetilde{x}\|_{L^2(\Omega)}\bigtwo) ,
\end{multline}
and
\begin{multline}\label{eq:thehalfcasedirichletrel2}
\vertiii{\widetilde{\pa} \fdh  T^K \widetilde{\pa} q}_{L^2(\Omega)} \\
\lesssim
C_K\!{\sum}_{|K'| \leq |K|,\, k=0,1}\!\bigtwo( \|\fd^{\!\nicefrac{k}{2}} T^{K'\!}\tr_{\oH} \pave^2 q \|_{L^2(\Omega)}
+ \| \fd^{\!\nicefrac{k}{2}} T^{K'\!} \hD_s \pave q\|_{L^2(\Omega)} +
\| \widetilde{\pa}\fd^{\!\nicefrac{k}{2}} \mathcal{S}^1 T^{K'\!} \widetilde{x}\|_{L^2(\Omega)} \bigtwo),
\end{multline}
where $C_K$ depends on $ \|\widetilde{\pa} T^{L}\widetilde{x}\|_{L^\infty(\Omega)},$
$\|\widetilde{\pa} T^{L}\widetilde{\pa}q\|_{L^\infty(\Omega)}$,
 $\|\widetilde{\pa} T^L \tH\|_{L^\infty}$
 for $|L|\!\leq\! |K|/2\!+\!3$, and on $c_L$ from \eqref{Gpositivityapp}.
\end{prop}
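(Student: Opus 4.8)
\textbf{Proof proposal for Proposition \ref{prop:dirichletrel}.}

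The plan is to mirror the proof of Proposition \ref{prop:dirichlet} (which reduces to Lemma \ref{lem:dirichlet}, built from Lemma \ref{lem:weakdirichlet} and Lemma \ref{lem:curl}), making the modifications needed because the relevant second-order operator is now the Riemannian Laplacian $\tr_{\oH}\pave^2$ on the surfaces of constant $s$ rather than the flat one, and because the natural norm is the $G$-norm $\vertiii{\cdot}$. First I would invoke the pointwise estimate from Lemma \ref{pwHlemma}, applied to $\alpha = \fd^{\nicefrac{k}{2}} T^K \pave q$ for $k = 0,1$, to bound $|\pave \fd^{\nicefrac{k}{2}} T^K \pave q|$ by its divergence, its curl, and its tangential derivatives $|\S \fd^{\nicefrac{k}{2}} T^K\pave q|$ plus a lower-order term. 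The curl piece is handled exactly as in Lemma \ref{lem:curl}: since $\fd^{\nicefrac12}$ and $T^K$ commute through $\pave_\mu$ only up to commutators involving $\pave T^J\xve$ (controlled by Lemma \ref{lem:gradientfractionalcommute}), the antisymmetric part of $\pave_\mu T^K\pave_\nu q$ is a lower-order expression of the schematic form $\pave T^{K_1}\xve\cdots \pave T^{K_{k-1}}\xve\cdot \pave T^{K_k}\pave q$, which feeds into the induction.

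The main new content is the tangential estimate, the analogue of Lemma \ref{lem:weakdirichlet}. The point is that $q = 0$ on $\pa\Omega$, so for a tangential vector field $S$ one integrates by parts twice against $S \fd^{\nicefrac{k}{2}} S T^K q$, using that $S$ is symmetric up to $\pave_\mu S\xve^\nu$ and $\widetilde{\div}_G S$ terms, and that $\widetilde{\pa}_i$ is symmetric with respect to $\kappa_{\oH}dy$ up to the logarithmic-derivative term in \eqref{divident}. This converts $\|\S\fd^{\nicefrac{k}{2}} T^K\pave q\|_{L^2}$ into $\|\div_{\oH}(\fd^{\nicefrac{k}{2}} T^K\pave q)\|_{L^2}$ up to lower-order terms and curvature commutators, where the half-derivative Leibniz and commutator lemmas (Lemma \ref{lem:halfderleibnitz}, Lemma \ref{lem:gradientfractionalcommute}, Lemma \ref{lem:changeofvariables}) are used exactly as in the Newtonian proof. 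To express $\div_{\oH}(T^K\pave q)$ in terms of the data of the proposition I would then use the decomposition \eqref{divexpression} of the spacetime divergence in terms of the $G$-divergence, the curl, and material derivatives $\hD_s$; commuting $T^K$ past this and using $\widetilde{\div}(T^K\pave q)$ together with the relation $\widetilde{\nabla}_\mu\widetilde{\nabla}^\mu q = \tr_{\oH}\pave^2 q + (\text{material derivative terms}) + (\text{lower order})$ coming again from \eqref{waveexpression}/\eqref{divexpression} produces precisely the right-hand side $\|T^{K'}\tr_{\oH}\pave^2 q\|_{L^2} + \|T^{K'}\hD_s\pave q\|_{L^2} + \|\pave T^{K'}\xve\|_{L^2}$; here the coefficients depend on $\|\pave T^L\xve\|_{L^\infty}$, $\|\pave T^L\pave q\|_{L^\infty}$, $\|\pave T^L\tH\|_{L^\infty}$ for $|L|\le |K|/2 + 3$, and on the ellipticity constant $c_L$ from \eqref{Gpositivityapp}, which is where positivity of $G$ on $T\Omega_{s}$ (needed to absorb the left-hand side and to run the elliptic estimate at all) enters.

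Finally I would assemble the pieces by induction on $|K|$, exactly as Lemma \ref{lem:dirichlet} is assembled from Lemma \ref{lem:weakdirichlet}, Lemma \ref{lem:curl} and Lemma \ref{pwHlemma}: at each stage the new terms are one order lower and hence absorbed, and the half-derivative estimate \eqref{eq:thehalfcasedirichletrel2} follows in the same way but additionally invoking \eqref{eq:thewholecasedirichletrel} for the integer-order remainder, as in the proof of Lemma \ref{lem:dirichlet}. The main obstacle I anticipate is bookkeeping rather than conceptual: keeping careful track of which commutators between $\fd^{\nicefrac12}$, $S_\ve$, $T^K$ and $\pave$ are genuinely lower order versus which contribute curvature terms $\pave T^L\tH$, $\pave T^L\wGamma$ that must be carried along in the constants — and verifying that the boundary term generated by the double integration by parts vanishes because $q$ (hence every $T^K q$ with $T$ tangential) vanishes on $\pa\Omega$, so that no boundary contribution survives on the right-hand side of \eqref{eq:thewholecasedirichletrel}.
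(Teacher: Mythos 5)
Your proposal is correct, and it fills in exactly the details that the paper itself omits: the paper simply asserts (just above the statement) that Proposition~\ref{prop:dirichletrel} ``follows in almost exactly the same way'' as Proposition~\ref{prop:dirichlet}, and your route — invoking the pointwise estimate of Lemma~\ref{pwHlemma}, the curl estimate \`a la Lemma~\ref{lem:curl}, the double integration by parts against $S\fd^{\nicefrac{k}{2}}ST^K q$ using $q|_{\pa\Omega}=0$ as in Lemma~\ref{lem:weakdirichlet}, and the decomposition \eqref{divexpression} to replace $\widetilde{\triangle}q$ by $\tr_{\oH}\pave^2 q$ plus material-derivative terms, closed by induction on $|K|$ — is precisely the intended adaptation, including the observation that the Ricci/Christoffel commutators with respect to $\oH$ are absorbed into the constants $C_K$ via $\|\pave T^L\tH\|_{L^\infty}$.
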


\subsection{Proofs of the basic elliptic estimates used
in the relativistic case}

\begin{proof}[Proof of Lemma \ref{pwHlemma}]
  This is similar to the proof of the pointwise lemma
  from \cite{GLL19}. First, we note that the metric
  $H$ is equivalent to the metric $h^{\mu\nu}
  = \wg^{\mu\nu} + 2\widetilde{u}^\mu \widetilde{u}^\nu$ and so it is enough
  to prove \eqref{pwHlemma} with all pointwise norms replaced by
  the norms with respect to $h$. It is more convenient to state the
  result in terms of $H$ since that does not depend on the fluid variables
  but for the proof it is better to work with $h$ since it is more clear
  how the material derivatives enter.
  Let $\widetilde{\N}$ denote the unit normal with respect to $h$
  to $\pa \Omega$ (at constant $s$) and extend it to a tubular neighborhood
  of the boundary.
  Since the right-hand side of \eqref{pwHlemma} controls the material derivative
  $\hD_s \beta$ and since $\hD_s = \widetilde{V}^\mu\pave_\mu$ is
  parallel to $\widetilde{u}^\mu\pave_\mu$, it is enough to prove that if $\omega =
 \omega_{\alpha\beta}d\widetilde{x}^\alpha d\widetilde{x}^\beta$ is a symmetric two-tensor
satisfying $\wg^{\alpha\beta}\omega_{\alpha\beta} = 0$ and
$\widetilde{u}^\alpha\widetilde{u}^\beta\omega_{\alpha\beta} = 0$ then
\begin{equation}
\wg^{\alpha\beta}\wg^{\mu\nu} \omega_{\alpha\mu}\omega_{\beta\nu}
 \leq C q^{\alpha\beta} \wg^{\mu\nu}\omega_{\alpha\mu}\omega_{\beta\nu},
 %\label{pwHgoal}
\end{equation}
where $q^{\alpha\beta} = h^{\alpha\beta} -
\widetilde{\N}^\alpha\widetilde{\N}^\beta$ is the projection onto
the orthogonal complement to $\widetilde{\N}$.

Writing $h^{\alpha\beta} = q^{\alpha\beta}+\widetilde{\N}^a \widetilde{\N}^b$
and using the symmetry of $\omega$ as well as the fact that the component of
$h$ along $u$ annihilates $\omega$, we have
\begin{equation}
\wg^{\alpha\beta}\wg^{\mu\nu}\omega_{\alpha\mu}
\omega_{\beta\nu}
 =
 q^{\alpha\beta} q^{\mu\nu}\omega_{\alpha\mu}\omega_{\beta\nu}
 + \widetilde{N}^\alpha \widetilde{N}^\beta
 \widetilde{N}^\mu \widetilde{N}^\nu
 \omega_{\alpha\mu}\omega_{\beta\nu}
 + 2 q^{\alpha\beta} \widetilde{N}^\mu \widetilde{N}^\nu
 \omega_{\alpha\mu}\omega_{\beta \nu}
 \label{ident1}
\end{equation}
If $\omega$ additionally satisfies $\wg^{\alpha\mu}
\omega_{\alpha\mu} = 0$ then
the second term on the first line is
\begin{equation}
  \widetilde{N}^\alpha\! \widetilde{N}^\beta\!
  \widetilde{N}^\mu\! \widetilde{N}^\nu\!
  \omega_{\alpha\mu}\omega_{\beta\nu}
  = \!\big( \widetilde{N}^\alpha \!\widetilde{N}^\mu \omega_{\alpha\mu}\big)^2\!\!
   = \big( \wg^{\alpha\mu} \omega_{\alpha \mu} - q^{\alpha\mu} \omega_{\alpha\mu}\big)^2\!\!
   =\!
   \big( \wg^{\alpha\mu} \omega_{\alpha \mu}\big)^2\! +
   \big( q^{\alpha\mu} \omega_{\alpha\mu}\big)^2\!
   - 2 \wg^{\alpha\mu} \omega_{\alpha \mu}q^{\beta\nu} \omega_{\beta\nu}.
\end{equation}
Inserting this identity into \eqref{ident1} we have
\begin{equation}
  \wg^{\alpha\beta}\wg^{\mu\nu}\omega_{\alpha\mu}
  \omega_{\beta\nu}
  =q^{\alpha\beta} q^{\mu\nu}\omega_{\alpha\mu}\omega_{\beta\nu}
  +2 q^{\alpha\beta} \widetilde{N}^\mu \widetilde{N}^\nu
  \omega_{\alpha\mu}\omega_{\beta \nu}
  +
  \big( \wg^{\alpha\mu} \omega_{\alpha \mu}\big)^2 +
  \big( q^{\alpha\mu} \omega_{\alpha\mu}\big)^2
  - 2 \wg^{\alpha\mu} \omega_{\alpha \mu}q^{\beta\nu} \omega_{\beta\nu}.
\end{equation}
Using the symmetry of $\omega$ we have
$(q^{\alpha\mu}\omega_{\alpha\mu})^2 \leq C q^{\alpha\beta}
q^{\mu\nu} \omega_{\alpha\mu}\omega_{\beta\nu}$ and this
gives the result.
\end{proof}

\begin{proof}[Proof of Lemma \ref{prop:divcurlL2rel}]

In the same way that Lemma \ref{lem:ellipticboundary}
implied \eqref{ftang1}, Lemma \ref{prop:divcurlL2rel} is a consequence
of the following identity after noting that the boundary term only involves
derivatives which are tangent to $\pa \Omega$.
We recall the definition of the norms $\vertiii{\beta}_{L^2}$ from
\eqref{Hnormdef}.
  \begin{lemma}
    \label{coordhalfestrel}
    Let $\n_\mu$ denote the spacelike unit conormal to $\pa \Omega$ normalized
    with respect to
    the metric $\oH$, defined in \eqref{Gdef}. If $X$ is a one-form, then
   \begin{multline}
     \int_\Omega \oH^{\mu\nu}\oH^{\alpha\beta}
     \overline{\nabla}_\mu X_\alpha
     \overline{\nabla}_\nu X_\beta\, \kappa_G dy\\
     =
     \int_\Omega (\overline{\nabla}^\mu X_\mu)^2\, \kappa_G dy
     + \frac{1}{2} \int_{\Omega} \oH^{\mu\nu}\oH^{\alpha\beta}
     \curl X_{\mu \alpha}
     \curl X_{\nu \beta}\, \kappa_{\oH} dy
     -\int_{\Omega}
     R_{\oH}^{\mu\nu} X_{\mu} X_\nu\, \kappa_{\oH} dy\\
    +\int_{\pa \Omega}
    \left(
    \oH^{\mu\nu}\oH^{\alpha\beta}
    X_\beta  \n_\mu
    \sonabla_\nu X_\alpha
    -
    \oH^{\mu\nu} \oH^{\alpha\beta} \n_\alpha X_\beta  \sonabla_\mu X_\nu
    -
    \oH^{\mu\nu}\oH^{\alpha\beta} \n_\mu X_\beta (\sonabla_\nu X_{\alpha}
    -\sonabla_\alpha X_\nu)\right)
    dS_{\oH}.
   \end{multline}
  Here, $\sonabla_\mu X_\nu$ denotes covariant differentiation tangent to
  $\pa \Omega$ with $s$ held constant, given by
  \begin{equation}
   \sonabla_\mu X_\nu = (\delta_\mu^\alpha - G^{\alpha\nu'}\n_\mu\n_{\nu'} )\onabla_\alpha
   X_\nu.
   \label{}
  \end{equation}
  We are also writing $R_G$ for the Ricci curvature tensor of $G$ and
  \begin{equation}
   \curl X_{\mu\nu} = \nabla_\mu X_\nu - \nabla_\nu X_\mu
   =\pave_\mu X_\nu - \pave_\nu X_\mu.
   \label{}
  \end{equation}
  \end{lemma}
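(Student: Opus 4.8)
The plan is to establish the identity in Lemma \ref{coordhalfestrel} by the standard two-integration-by-parts Bochner-type argument, carried out intrinsically with respect to the Riemannian metric $\oH$ on the surface $\Omega_s$, mimicking the proof of Lemma \ref{lem:ellipticboundary} but now on a curved background. First I would write
\begin{equation*}
 \int_\Omega \oH^{\mu\nu}\oH^{\alpha\beta}
 \onabla_\mu X_\alpha \onabla_\nu X_\beta\, \kappa_{\oH} dy
 = -\int_\Omega \oH^{\alpha\beta} X_\alpha \onabla^\mu\onabla_\mu X_\beta\, \kappa_{\oH} dy
 + \int_{\pa\Omega} \oH^{\mu\nu}\oH^{\alpha\beta} \n_\mu X_\beta \onabla_\nu X_\alpha\, dS_{\oH},
\end{equation*}
using that $\onabla \oH = 0$ so that integration by parts on $(\Omega_s,\oH)$ produces the conormal $\n$ and the induced measure $dS_{\oH}$, and that $\onabla_\mu (\sqrt{\det\oH}) = 0$. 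Next I would insert the Weitzenböck/Ricci-identity decomposition of the rough Laplacian acting on one-forms,
\begin{equation*}
 \onabla^\mu\onabla_\mu X_\beta
 = \onabla_\beta (\onabla^\mu X_\mu) + \oH^{\mu\nu}\onabla_\mu (\curl X_{\nu\beta}) + R_{\oH\;\beta}^{\;\mu} X_\mu,
\end{equation*}
which is the intrinsic analogue of the elementary identity used in \eqref{ibpapp1}; here $R_{\oH}$ is the Ricci tensor of $\oH$ and $\curl X_{\mu\nu} = \onabla_\mu X_\nu - \onabla_\nu X_\mu = \pave_\mu X_\nu - \pave_\nu X_\mu$, the last equality holding because $\onabla$ is torsion-free. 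Substituting and integrating by parts a second time turns $-\int X_\beta \onabla_\beta(\onabla^\mu X_\mu)$ into $\int (\onabla^\mu X_\mu)^2$ plus a boundary term with $\n_\alpha X_\beta (\onabla^\mu X_\mu)$, and turns $-\int \oH^{\alpha\beta} X_\alpha \oH^{\mu\nu}\onabla_\mu \curl X_{\nu\beta}$ into $+\tfrac12\int |\curl X|_{\oH}^2$ plus a boundary term, where the factor $\tfrac12$ and the antisymmetrization appear exactly as in the computation after \eqref{green1} in the proof of Lemma \ref{lem:ellipticboundary}: one symmetrizes $\onabla_\mu X_\alpha = \tfrac12(\onabla_\mu X_\alpha + \onabla_\alpha X_\mu) + \tfrac12 \curl X_{\mu\alpha}$ and the symmetric part pairs to zero against $\curl X$.

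The remaining work is purely bookkeeping of the boundary terms: collecting the three contributions and rewriting $\onabla_\nu X_\alpha$ restricted to $\pa\Omega$ in terms of the tangential covariant derivative $\sonabla$ via $\onabla_\nu X_\alpha = \sonabla_\nu X_\alpha + \oH^{\alpha'\nu'}\n_\nu\n_{\nu'}\onabla_{\alpha'}X_\alpha$ — this is the same maneuver as the last display in the proof of Lemma \ref{lem:ellipticboundary}, where the normal-normal pieces cancel between the $\onabla^\mu X_\mu$ term and the $\n_\mu X^\mu \onabla \cdot$ term, leaving only the three tangential-derivative boundary integrals displayed in the statement. Once the identity of Lemma \ref{coordhalfestrel} is in hand, Lemma \ref{prop:divcurlL2rel} follows in exactly the way that Lemma \ref{lem:ellipticboundary} implied \eqref{ftang1}: one moves all three boundary terms — each of which involves only derivatives tangential to $\pa\Omega$ — onto $\fdh X$ using the trace/fractional-derivative estimates from Lemma \ref{fracalg} and Lemma \ref{lem:halfderleibnitz} in the form $|\int_{\pa\Omega} f\, S g\, dS| \lesssim \|f\|_{H^{1/2}}\|g\|_{H^{1/2}}$, producing the $\int_{\pa\Omega}(\oH\n\fdh X)\cdot(\oH\n\fdh X)\, dS$ term plus lower-order $\vertiii{X}_{L^2(\pa\Omega)}^2$; the Ricci term $\int R_{\oH} X X$ is absorbed into $C_1\vertiii{X}_{L^2(\Omega)}^2$ since $R_{\oH}$ is bounded in terms of two derivatives of $\xve$ via \eqref{Gdef}, which is controlled by $\vertiii{\pa\xve}_{L^\infty}$ and $c_L$; and the constant $C_1$ absorbs the equivalence \eqref{Gpositivityapp} between $G$ and $\sg$ on $T\Omega_s$ needed to pass between $\vertiii{\cdot}$ and the ambient $H$-norms.

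The main obstacle I anticipate is not any single estimate but the careful intrinsic bookkeeping: unlike the Euclidean Lemma \ref{lem:ellipticboundary}, here $\onabla$ carries Christoffel symbols of $\oH$, so the second integration by parts generates curvature terms (the Ricci tensor $R_{\oH}$) that must be identified and shown to be lower-order, and one must be scrupulous that $\curl X$ computed with $\onabla$ agrees with the coordinate curl $\pave_\mu X_\nu - \pave_\nu X_\mu$ (true by torsion-freeness) so that it matches the curl appearing in the evolution equations of Section \ref{reldivergeneestimatesection}. A secondary subtlety is that the surface $\pa\Omega \subset \Omega_s$ is only a hypersurface inside the spacelike slice, so $\sonabla$ is the second-step projection $(\delta_\mu^\alpha - G^{\alpha\nu'}\n_\mu\n_{\nu'})\onabla_\alpha$, and one needs the normal components of $\onabla$ at $\pa\Omega$ to cancel cleanly among the boundary terms exactly as in the scalar Euclidean case — this cancellation is the crux and should be checked by the same algebra as the final display of the proof of Lemma \ref{lem:ellipticboundary}, now with $\delta^{ij}$ replaced by $G^{\mu\nu}$ throughout. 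Everything else is routine once this structure is set up.
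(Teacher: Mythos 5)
Your proposal is correct and follows essentially the same route as the paper: one integration by parts using $\onabla\oH=0$, the Ricci/commutator identity converting $\oH^{\mu\nu}\onabla_\nu\onabla_\mu X_\alpha$ into a divergence, curl, and Ricci-curvature piece (the paper's \eqref{divcurlident} is exactly your Weitzenböck decomposition), a second integration by parts with the antisymmetrization giving the factor $\tfrac12$, and then the decomposition $\onabla = \n\onabla_\n + \sonabla$ at the boundary so that the four normal-derivative contributions cancel pairwise. Your remarks about torsion-freeness identifying $\ocurl$ with the coordinate curl and the Ricci term being lower-order are precisely the checks the paper's proof relies on.
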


Lemma \ref{coordhalfestrel} is proven in essentially the same way
that we proved Lemma \ref{lem:ellipticboundary}.
We start by recording the divergence theorem in terms of $\div_G$,
\begin{equation}
 \int_{\Omega} \div_G X \, \kappa_G dy = \int_{\pa \Omega} G^{\mu\nu} n_\mu
 X_\nu\, dS_G,
 \label{stokesG}
\end{equation}
where $\kappa_G dy$ is the Riemannian volume element with respect to $G$
and $dS_G$ denotes the corresponding surface measure, and
$n_\mu$ is the unit conormal to $\pa \Omega$ normalized with respect to
$G$.

Using \eqref{stokesG} along with the fact that $\onabla G = 0$,
  \begin{equation}
    \int_{\Omega} \oH^{\mu\nu} \oH^{\alpha\beta}
    \onabla_{\!\mu} X_\alpha \onabla_{\!\nu} X_\beta \, \kappa_G dy
   = -\!\!\int_{\Omega}\!\! \big(\oH^{\mu\nu}\onabla_{\!\nu}\onabla_{\!\mu} X_\alpha\big)
   \oH^{\alpha\beta}
   X_\beta \,\kappa_G dy
    +\!\! \int_{\pa \Omega}\!\!\!
   \oH^{\mu\nu}\oH^{\alpha\beta}\n_\nu X_\beta \overline{\nabla}_\mu X_\alpha\,dS_G.
   \label{ibpapp1acou}
  \end{equation}

  We have the identity
  \begin{multline}
   \oH^{\mu\nu}\onabla_\nu\onabla_\mu X_\alpha =
   \oH^{\mu\nu} \onabla_\alpha \onabla_\nu X_\mu
   + \oH^{\mu\nu}
   \onabla_\mu(\onabla_\nu X_\alpha - \onabla_\alpha X_\nu)
   + \oH^{\mu\nu}R_{G \alpha \mu \nu}^\beta X_{\beta}
   \\
   = \onabla_\alpha (\div_G X) + \oH^{\mu\nu}\onabla_\mu \overline{\curl} X_{\nu\alpha}
   + \oH^{\mu\nu}R_{G \alpha \mu \nu}^\beta X_{\beta} ,
   \label{divcurlident}
  \end{multline}
  where $R_G$ denotes the curvature tensor of $G$,
  \begin{equation}
   R_{G\alpha \mu \nu}^\beta X_\beta = [\onabla_\alpha \onabla_\mu - \onabla_\mu
   \onabla_\alpha] X_\nu.
   \label{}
  \end{equation}

  Inserting \eqref{divcurlident}
into the first term on the right of \eqref{ibpapp1acou}
and integrate by parts again:
\begin{multline}
 \int_{\Omega}\int_{\Omega} \oH^{\alpha\beta}
 X_\beta  \big(\oH^{\mu\nu}\onabla_\mu\onabla_\nu X_\alpha\big)
 = -\int_{\Omega} (\div_{\oH} X)^2
 + \oH^{\mu\nu} \oH^{\alpha\beta} \onabla_\mu X_\beta  \overline{\curl} X_{\nu \alpha}
 \\
 +\int_{\pa \Omega} \oH^{\alpha\beta} \n_\alpha  X_\beta
\div_\oH X +\oH^{\mu\nu} \oH^{\alpha\beta} \n_\mu \overline{\curl} X_{\nu \alpha} X_\beta \, dS_{\oH}
 -\int_{\Omega}\oH^{\mu\nu}\oH^{\alpha\beta}
 R_{G \alpha \mu \nu}^\gamma X_{\gamma} X_\beta
 \label{acougreen1}
\end{multline}
By the antisymmetry of curl, $
 \oH^{\mu\nu}\oH^{\alpha\beta} \onabla_{\!\mu} X_\beta
 \overline{\curl} X_{\nu\alpha}
 =
 \frac{1}{2} \oH^{\mu\nu}\oH^{\alpha\beta}  \overline{\curl} X_{\mu \beta}
 \overline{\curl} X_{\nu \alpha},
 $
so \eqref{ibpapp1acou} becomes
\begin{multline}
  \int_{\Omega} \oH^{\mu\nu} \oH^{\alpha\beta}
  \onabla_\mu X_\alpha \onabla_\nu X_\beta
  =\int_{\Omega}(\div_{\oH} X)^2 + \frac{1}{2}
  \oH^{\mu\nu} \oH^{\alpha\beta} \overline{\curl} X_{\mu \beta} \overline{\curl}
  X_{\nu \alpha}\\
  + \int_{\pave \Omega} \oH^{\mu\nu}\oH^{\alpha\beta}
  X_\beta  \n_\mu
  \onabla_\nu X_\alpha -
  \oH^{\alpha\beta} \n_\alpha X_\beta  \div_{\oH} X
  - \oH^{\mu\nu}\oH^{\alpha\beta} \n_\mu X_\beta \overline{\curl} X_{\nu \alpha}
  -\int_{\Omega}\oH^{\mu\nu}\oH^{\alpha\beta}
  R_{G \alpha \mu \nu}^\gamma X_{\gamma} X_\beta.
\end{multline}
We now use that $G^{\mu\nu}\n_\mu \n_\nu = 1$ to write
\begin{equation}
 \onabla_\mu X_\nu =
 \n_\mu \onabla_\n X_\nu +
 \sonabla_\mu X_\nu,
 \qquad
 \text{ where }
 \sonabla_\mu = (\delta_\mu^\nu - G^{\nu\nu'}\n_\mu\n_{\nu'} )\onabla_\nu,
 \quad
 \onabla_\n = G^{\mu\nu} \n_\mu \onabla_\nu.
 \label{}
\end{equation}
Using this expression, the boundary term is the integral of
\begin{multline}
  \oH^{\mu\nu}\oH^{\alpha\beta}
  X_\beta  \n_\mu
  \onabla_\nu X_\alpha -
  \oH^{\alpha\beta} \n_\alpha X_\beta  \div_{\oH} X
  - \oH^{\mu\nu}\oH^{\alpha\beta} \n_\mu X_\beta \overline{\curl} X_{\nu \alpha}
 \\
 =
 \oH^{\alpha\beta} X_\beta \onabla_\n X_\alpha
 -\oH^{\mu\nu}\oH^{\alpha\beta} \n_\alpha \n_\mu X_\beta \onabla_\n X_\nu
 -\oH^{\alpha\beta} X_\beta \onabla_\n X_\alpha
 + \oH^{\mu\nu}\oH^{\alpha\beta}\n_\alpha \n_\mu X_\beta\onabla_\n X_\nu
 \\
 +\oH^{\mu\nu}\oH^{\alpha\beta}
 X_\beta  \n_\mu
 \sonabla_\nu X_\alpha
 -
 \oH^{\mu\nu} \oH^{\alpha\beta} \n_\alpha X_\beta  \sonabla_\mu X_\nu
 -
 \oH^{\mu\nu}\oH^{\alpha\beta} \n_\mu X_\beta (\sonabla_\nu X_{\alpha}
 -\sonabla_\alpha X_\nu).
 \label{}
\end{multline}
Noting that the terms on the second line cancel, we get the result.
\end{proof}

\section{The divergence theorem }
\label{divergencetheoremproofs}

The identity \eqref{divtheorem1} is nothing but
the usual divergence theorem, see e.g. \cite{CB08}.
If $\widetilde{D}$ denotes intrinsic covariant differentiation on $\Lambda$,
\begin{equation}
 \div_{\Lambda} T = \widetilde{D}_\mu T^\mu.
 %\label{}
\end{equation}
and the divergence theorem on $\Lambda$ says
\begin{equation}
 \int_{\Lambda_{\Sigma_0}^{\Sigma_1}} \div_{\Lambda} T\, dS^{\Lambda} =
 \int_{\Lambda_{\Sigma_1}} \wg(n^{\Sigma_1}, T) \,dS^{\Lambda_{\Sigma_1}}
 +
 \int_{\Lambda_{\Sigma_0}} \wg(n^{\Sigma_0}, T) \,dS^{\Lambda_{\Sigma_0}}.
 \label{divbdy}
\end{equation}
where $n^{\Sigma_i}$ denotes the future-directed normal vector field
to $\Sigma_i$ defined relative to $\wg$ and
$\Lambda_{\Sigma_i}= \Lambda \cap \Sigma_i$.
If $\widetilde{V}^\mu$ is tangent to $\Lambda$
then with $D_s = \widetilde{V}^\mu\pave_\mu$,
\begin{equation}
  \hD_s\phi =
 \widetilde{V}^\mu\pave_\mu \phi = \widetilde{V}^\mu \widetilde{D}_\mu
 \phi
 = \div_{\Lambda}(\widetilde{V} \phi) - \phi \div_{\Lambda} \widetilde{V},
 %\label{}
\end{equation}
and integrating this expression and using \eqref{divbdy} gives
\eqref{transportbdy}.

\section{Existence for the linear and smoothed problem}
\label{existence}

In this section we give a sketch of the proof of existence for the linear problems
we use in our iteration scheme. Since this is a linear problem
with tangentially smoothed coefficients, existence on a time interval
depending on the smoothing parameter is nearly an immediate consequence of the a priori estimates
we proved in the earlier sections.
We first discuss the Newtonian case.

\subsection{Existence for the linear and smoothed Newtonian problem}

Fix a tangentially smooth vector field $\widetilde{V}$
and define $\xve$ by
\begin{equation}
 \frac{d\xve(t,y)}{dt}
 =\widetilde{V}(t,y), \qquad
 \xve(0,y) = y.
 \label{}
\end{equation}
The linear problem we consider is
\begin{equation}
 D_t V_i + \pave_i h[V] = 0, \qquad \text{ in }
 [0,T_1]\times \Omega, \qquad V|_{t = 0} = V_0,
 \label{ode}
\end{equation}
with $\pave_i = \frac{\pa}{\pa \xve_i} = \frac{\pa y^a}{\pa \xve^i} \frac{\pa }{\pa y^a}$,
and where $h = h[V]$ is determined by solving the wave equation
\begin{equation}
 e_1D_t^2 h - \widetilde{\Delta} h =
 (\pave_i \widetilde{V}^j)(\pave_j V^i),
 \qquad
 h|_{\pa \Omega} = 0,
 \qquad h|_{t = 0} = h_0, \quad D_t h|_{t = 0} = h_1.
 \label{hwaveappendix}
\end{equation}
Here $\widetilde{\Delta} = \delta^{ij}\pave_i \pave_j$.

To solve \eqref{ode} we are going to show that it is
an ODE in a certain function space (for $\ve > 0)$, and existence
then follows from a standard Picard iteration.
Fix $r\geq 10$ and for $T_0 > 0$, define the norms
\begin{equation}
 \| u\|_{X_{T_0}} ={\sup}_{0 \leq t \leq T_0}
 \|u(t)\|_{r+1},
\end{equation}
where
\begin{equation}
  \|u(t)\|_{r+1} = {\sum}_{k+\ell \leq r}\|D_t D_t^k u(t)\|_{H^{\ell}(\Omega)}
  + {\sum}_{k+\ell \leq r}\|D_t^k u(t)\|_{H^{\ell}(\Omega)}.
   \label{simplenormdef}
\end{equation}
The reason we work with norms that control one
additional time derivative will be explained in
section \ref{compatsec}.
In that section we show that
the map $V\mapsto h[V]$ is well-defined if
the compatibility conditions hold and
$\|\widetilde{V}\|_{X_{T_1}} \!+\!
\|\S\widetilde{x}\|_{X_{T_1}} < \infty$,
where $\S \widetilde{x}$ is defined in
\eqref{eq:simplifiedtangentialnotation} and involves
tangential derivatives of $\xve$.
With
\begin{equation}
 H[V](t,y) = -\int_0^t \pave h[V](t',y)\, dt'.
 \label{}
\end{equation}
in Section \ref{compatsec},
we show that $H$ is bounded and Lipschitz on $X_{T_1}$,
\begin{align}
 \|H[V]\|_{X_{T_1}}
 &\leq C( \|\widetilde{V}\|_{X_{T_1}}, \|\S\widetilde{x}\|_{X_{T_1}})
 \left(\|\overline{V}\|_{X_{T_1}} + T_1
 \|V\|_{X_{T_1}}\right),
 \label{odebound}
 \\
 \|H[V_1] - H[V_2]\|_{X_{T_1}}
 &\leq T_1C( \|\widetilde{V}\|_{X_{T_1}}, \|\S\widetilde{x}\|_{X_{T_1}},
 \|V_1\|_{X_{T_1}}, \|V_2\|_{X_{T_1}})
 \|V_1 - V_2\|_{X_{T_1}}.
 \label{odebound2}
\end{align}
In \eqref{odebound}, $\overline{V}$ is a power series in time which
solves the equation at $t = 0$ to order $r$ (see \eqref{approxsln})
and is determined from the initial data $V_0, h_0$ and satisfies
$
 \|\overline{V}\|_{X_{T_1}}
 \lesssim \|V_0\|_{H^r(\Omega)} + \|\pave h_0\|_{H^{r-1}(\Omega)}.
$

Assuming these bounds hold, existence follows from a
straightforward Picard iteration.
\begin{prop}[Existence for the linear and smoothed problem]
  \label{linexist}
  Let $r \geq 10$ and suppose that the initial data $(V_0, h_0)$
  satisfies the compatibility conditions \eqref{eq:compatibilityconditiondef}
  to order $r$. Let $\widetilde{V} \in X_{T_1}$ for some $T_1 > 0$.
  Then there is a time $T \leq T_1$ so that the linear smoothed problem \eqref{ode} has a unique solution
  $V \in X_{T}$  and if
   $\overline{V}$ denotes a formal power series
  solution at $t = 0$ defined as in \eqref{approxsln}, $V$ satisfies
  the bound
  \begin{equation}
   \|V\|_{X_{T}} \leq C\left(\|\widetilde{V}\|_{X_{T_1}}   \|\S \widetilde{x}\|_{X_{T_1}}\right)
   \|\overline{V}\|_{X_{T}},
   \label{existencebd}
  \end{equation}
  and the enthalpy satisfies
  \begin{equation}
   \sup_{0 \leq t \leq T}
   \sum_{k +\ell \leq r-1}\!\!\!\!
   \|D_t^k \pave h(t)\|_{H^\ell(\Omega)}
   +\|D_t^k D_t h(t)\|_{H^\ell(\Omega)}
   \leq C
   \big(\!\!\!\!\sum_{k +\ell \leq r-1}\!\!\!\!
   \|D_t^k \pave h(0)\|_{H^\ell(\Omega)}
   +\|D_t^k D_t h(0)\|_{H^\ell(\Omega)}\big),
   \label{existencebd2}
  \end{equation}
  with $C \!= \!C\big(\|\widetilde{V}\|_{X_{T_1}},  \|\S \widetilde{x}\|_{X_{T_1}}\!\big)$.
  \!Moreover, the compatibility conditions hold at time
  $t \!=\! T\!$ to order $r$\!.
\end{prop}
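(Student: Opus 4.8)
\textbf{Proof proposal for Proposition \ref{linexist}.}
The plan is to realize \eqref{ode} as an ODE in the Banach space $X_T$ and to run a contraction-mapping argument, relying entirely on the bounds \eqref{odebound}--\eqref{odebound2} for the map $V \mapsto H[V]$ which are established (in the companion subsection on compatibility conditions, Section \ref{compatsec}). First I would fix a reference profile $\overline{V}$, the formal power-series solution at $t=0$ of order $r$ constructed from the data $(V_0,h_0)$ via the compatibility conditions \eqref{eq:compatibilityconditiondef}; this satisfies $\|\overline{V}\|_{X_{T_1}} \lesssim \|V_0\|_{H^r(\Omega)} + \|\pave h_0\|_{H^{r-1}(\Omega)}$. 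Then I would define the solution map $\Phi(V) = \overline{V}|_{t=0} + H[V]$, where $H[V](t,y) = -\int_0^t \pave h[V](t',y)\,dt'$, so that a fixed point of $\Phi$ on the interval $[0,T]$ is exactly the solution of \eqref{ode}. The key point is that the smoothing parameter $\ve>0$ makes $\pave h[V]$ depend on $V$ only through \emph{bounded} operators (the elliptic/wave solution operator for \eqref{hwaveappendix} composed with the tangential smoothing), so no derivative loss occurs; the price is that the constants in \eqref{odebound}--\eqref{odebound2} degenerate as $\ve \to 0$, which is acceptable here since $\ve$ is fixed.

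Next I would choose $T \leq T_1$ small. By \eqref{odebound}, $\Phi$ maps the ball $B_R \subset X_T$ of radius $R = 2C(\|\widetilde V\|_{X_{T_1}},\|\S\widetilde x\|_{X_{T_1}})\|\overline V\|_{X_{T_1}}$ into itself once $T_1\,\|V\|_{X_{T_1}}$ is absorbed, i.e. for $T$ small enough the term $T\|V\|_{X_T}$ is dominated. By \eqref{odebound2}, on $B_R$ the map $\Phi$ is a contraction with constant $T\,C(\cdots) < 1$ for $T$ small. The contraction mapping theorem then gives a unique $V \in B_R \subset X_T$ with $\Phi(V) = V$, and \eqref{existencebd} follows directly from the self-mapping estimate. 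The enthalpy bound \eqref{existencebd2} is then obtained from the linear wave equation \eqref{hwaveappendix}: given the control of $V$ (and hence of the source term $(\pave_i\widetilde V^j)(\pave_j V^i)$) in $X_T$, standard energy estimates for the second-order hyperbolic equation with Dirichlet data, differentiated in $D_t$ and in tangential directions and combined with the elliptic estimate for $\widetilde\Delta$ exactly as in Section \ref{enthalpyestimates}, yield the stated control of $\sum_{k+\ell \leq r-1}\|D_t^k\pave h(t)\|_{H^\ell} + \|D_t^{k+1}h(t)\|_{H^\ell}$ in terms of its value at $t=0$.

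Finally, to see that the compatibility conditions propagate to $t = T$: since $V$ and $h$ solve the linear system exactly on $[0,T]$ and are as regular as \eqref{existencebd}--\eqref{existencebd2} assert, one can evaluate the equations and their $D_t$-derivatives at $t=T$ and read off that the quantities $(V|_{t=T}, h|_{t=T})$ satisfy the same relations that define \eqref{eq:compatibilityconditiondef} to order $r$; this is the step that permits iterating the local existence result (as used in Proposition \ref{uniformtime}). The main obstacle is not the fixed-point argument itself, which is routine once \eqref{odebound}--\eqref{odebound2} are in hand, but rather verifying that the map $V \mapsto h[V]$ is well-defined and satisfies those two estimates --- this requires solving the wave equation \eqref{hwaveappendix} with sufficiently many $D_t$-derivatives controlled, which is precisely why the norm \eqref{simplenormdef} carries one extra time derivative and why the compatibility conditions enter; that bookkeeping is carried out in Section \ref{compatsec} and I would simply invoke it here.
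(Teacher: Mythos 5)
Your proposal is essentially the same argument as the paper's: realize \eqref{ode} as an ODE in the Banach space built from the norm \eqref{simplenormdef}, and run a Picard/contraction iteration using the bounds \eqref{odebound}--\eqref{odebound2}, with the enthalpy bound \eqref{existencebd2} coming out of the wave-equation estimates for \eqref{hwaveappendix}, and the compatibility conditions propagating because the solution is constructed from the (Galerkin-built) wave-equation solution, which vanishes on the boundary at all orders.

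There is one point the paper makes explicit that your write-up only gestures at, and it is not cosmetic: the map $V \mapsto h[V]$ is only well-defined (in the sense of producing an $h$ with the regularity needed for \eqref{odebound}) when $V$ has time derivatives $D_t^k V|_{t=0}$ equal to the fixed coefficients $V_k$ from \eqref{hkdef}, because those time derivatives enter the compatibility conditions for the wave equation \eqref{hwaveappendix} through the source term $\widetilde\pa_i\widetilde V^j\,\widetilde\pa_j V^i$. So the contraction must be run not on a ball in $X_T$, but on the affine subspace
\begin{equation*}
X_{T,c} = \{\,V : \|V\|_{X_T} < \infty,\ D_t^k V|_{t=0} = V_k,\ k=0,\dots,r+1\,\},
\end{equation*}
and one needs to check that the solution map preserves this constraint. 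This is exactly the computation $D_t^k W|_{t=0} = -D_t^{k-1}\pave h[V]|_{t=0} = V_k$, which is trivially true by the definition of the $V_k$ but must be stated so that each iterate is an admissible input. You note that "the compatibility conditions enter" and defer to Section \ref{compatsec}, which is in the right spirit, but you should promote this from a remark about well-definedness to the explicit choice of the space on which the contraction operates; otherwise your map $\Phi$ is not even defined on the ball $B_R \subset X_T$ you propose to iterate on. With that adjustment your argument matches the paper's.
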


\begin{proof}
  We are going to solve \eqref{ode} by iteration and so we need to ensure that the map
  $V\mapsto h[V]$ is well-defined at each step. In particular
  we need to ensure that if $V$ satisfies the compatibility conditions from the upcoming section
  then so does the resulting $W$.
  We therefore work in the space
  \begin{equation}
   X_{T_1,c} = \{V : \|V\|_{X_{T_1}} < \infty,
   D_t^k V|_{t = 0} = V_k, k = 0,..., r+1\},
  % \label{compatibilityspace}
  \end{equation}
  where the $V_k$ are given by \eqref{hkdef}.
  We claim that if $V \in X_{T, c}$ and
  $W$ satisfies $D_t W = -\pave h[V]$ then
  $W \in X_{T_1, c}$ as well.
  First, by the results of the upcoming section \ref{compatsec}
  given $V \in X_{T_1, c}$, $\pave h[V]$ is well-defined
  and by \eqref{odebound} the resulting $W$ with $D_t W
  = -\pave h$
  satisfies the bound \eqref{odebound}.
  It remains
  to check the time derivatives at $t = 0$. For these we compute
  \begin{equation}
   D_t^{k}V'|_{t = 0} = D_t^{k-1} \pave h[V]|_{t = 0} = V_k,
   \label{}
  \end{equation}
  which is just the definition of the $V_k$.
  Using the bounds \eqref{odebound}-\eqref{odebound2}, the existence result and
  the bounds follow by a standard
  iteration argument. The fact that the compatibility conditions
  hold at later times as well follows directly from the construction
  of the enthalpy, see section \ref{galerkinsec}.
\end{proof}

It remains to prove that under the hypotheses of the above
Proposition, the map $V \mapsto h[V]$ is well-defined
and that \eqref{odebound}-\eqref{odebound2} hold.
This is done in the next section.

\subsubsection{The compatibility conditions and existence for
the wave equation for the enthalpy}
\label{compatsec}
 Because of the continuity equation and that $h = 0$ on $\pa \Omega$,
the initial data $V_0$ must satisfy $ \widetilde{\div} V_0 = 0$
on $\pa \Omega$.
 Taking more time derivatives we see that we must also have
 $D_t^k (\widetilde{\div} V)|_{t = 0} = 0$ on the boundary
 which places additional restrictions on the initial data
 that we now write out explicitly.

 Fix a diffeomorphism
 $x_0:\Omega \to \Omega$. Let $\overline{V}
 = \sum_{k\geq 0}  V_k{t^k}/{k!}$, $\overline{h} =  \sum_{k \geq 0}
  h_k{t^k}/{k!}$, and $\overline{x} = x_0 + t\overline{V}$
  be a formal power series solution to
 \eqref{eq:eulerlagrangiancoordsmoothed2}-\eqref{eq:continuityequationsmoothed} at $t= 0$,
 \begin{equation}
  D_t^k\big(D_t \overline{V} + \widetilde{\pa} \overline{h}\big)|_{t = 0} = 0,
  \qquad
  D_t^k \big( e_1 D_t \overline{h} + \widetilde{\div} \overline{V}\big) = 0, \qquad
  k = 0,1,...r.
  \label{approxsln}
 \end{equation}
 Here, we are writing $\widetilde{\pa} = \widetilde{\pa}_{\overline{x}}$
 for the derivatives with respect to the smoothed version of $\overline{x}$
 and similarly for $\widetilde{\div}$.
 From these equations we see that for $k \geq 1$, there are functions $G_k,G_k,
 $ so that
 \begin{align}
  h_k &= G_k(h_0, x_0 , V_0, ..., V_{k-1}),\qquad
  V_k = F_k(h_0, x_0, V_0, ..., V_{k-1}),
  \label{hkdef}
 \end{align}
 using the second equation in \eqref{approxsln} to replace time derivatives
 of $\overline{h}$ at $t = 0$ with a function of $V_0, V_1,..., V_{k-1}$.

 We say that intial data $(V_0, h_0)$ \emph{satisfy the compatibility
 conditions to order r} if, with the sequence
 $V_1, V_2,..., V_{r}$ and the functions $G_k$ defined as in \eqref{hkdef},
 we have
 \begin{equation}
   G_k(h_0, x_0, V_0,...., V_{k-1}) \in H^1_0(\Omega),
   \quad \text{ for } k = 0,..., r.
  \label{eq:compatibilityconditiondef}
 \end{equation}
The significance of \eqref{eq:compatibilityconditiondef} is that
$G_k$ must vanish on $\pa \Omega$.

Provided the compatibility conditions \eqref{eq:compatibilityconditiondef} hold,
using e.g. a Galerkin method (see \cite{GLL19} for a detailed proof) or duality (see \cite{Hormander2007}),
one can prove that the wave equation \eqref{hwaveappendix} has a solution $h$ with
   \begin{equation}
    D_t^k h,\,D_t^{k-1} \pave h \in L^\infty([0,T_0]; H^{r+1-k}(\Omega)),\quad k = 0,...,r+1,
  %  \label{hreg}
   \end{equation}
   provided $\|V\|_{r+1,T_0} + \|\widetilde{V}\|_{r+1,T_0}
   +\|\S \xve\|_{r+1, T_0} < \infty$.

The hypothesis in Theorem \ref{mainthmnonrel} is that our
initial data satisfy the compatibility conditions
\eqref{eq:compatibilityconditiondef} to order $r$ when $\ve = 0$ but in order to construct
a solution for the smoothed problem we will also need
initial data which satisfies the compatibility conditions
to the same order with $\ve > 0$.
In Appendix E of \cite{GLL19}
it was shown that this can be done under
our hypotheses and we indicate
the main points in the upcoming section \ref{compatconstruction}.

It just remains to prove the bounds \eqref{odebound}-\eqref{odebound2}. In fact we have already proved essentially
the same bounds in section \ref{sec:enthalpywave}.
The only substantial difference is that here we need to control normal
derivatives to top order whereas in Section \ref{sec:enthalpywave} we closed
estimates for tangential derivatives
to top order. This does not cause any
serious difficulties and we sketch how
to prove the needed bounds. See also \cite{GLL19} for
a detailed proof of almost the same result.

We will just discuss how to control the highest-order
part of the norm $\|H\|_{X_{T}}$ coming from the first
term in the definition of the norm in \eqref{simplenormdef}.
The second term in the definition of the norm is simpler to deal with.
After taking one time derivative we need bounds
for $\|\pa_y^\ell D_t^k \pave h\|_{L^2(\Omega)}$
where $\ell + k = r$. If $\ell > 0$,
we start by commuting $D_t^k$ with $\pave h$. The commutator
will be harmless at this point because it involves time
derivatives of $\widetilde{V}$ which we control to higher order,
and so it is enough to control $\pa_y^\ell \pave D_t^k h$.
To control this term we first use
the pointwise estimate \eqref{app:pwdiff} and the elliptic
estimate from Proposition \ref{prop:dirichlet} for the Dirichlet problem,
and so it suffices to control $\pa_y^{\ell-1} D_t^k \Dve h$.
We note that when $k = 0$ this estimate
requires a bound for $\|\S \xve\|_{X_{T_1}}$ which is
why this quantity appears in our estimates.
Writing \eqref{hwaveappendix} as
\begin{equation}
 \Dve h = -(\pave_i \widetilde{V}^j)(\pave_j V^i)
 + e_1 D_t^2 h, \qquad
 h|_{\pa \Omega } =0.
 %\label{}
\end{equation}
and applying $\pa_y^{\ell-1}D_t^k$,
we see that the term
$\pa_y^{\ell-1} D_t^k \left((\pave_i \widetilde{V}^j)(\pave_j V^i)\right)$ is
 lower-order and so it is enough
to control $\pa_y^{\ell-1} D_t^{k+2} h$. Now we note
that the number of space derivatives falling on $h$ has
been reduced by two while the number of time derivatives
falling on $h$ has been increased by two. Repeating this
argument as many times as needed, it remains
to prove bounds for $\|D_t^r \pave h\|_{L^2(\Omega)}
+ \|D_t^{r+1} h\|_{L^2(\Omega)}$. For this we use
the estimates for the wave equation
as in section \ref{enthalpyestimates}, which requires
applying $D_t^r$ to both sides of \eqref{hwaveappendix}. We therefore need a bound for the term
$\|D_t^r (\pave_i \widetilde{V}^j \pave_j V^i)\|_{L^2(\Omega)}$.
When we encountered this term in earlier
proof of the a priori bounds, we used that
$D_t V = -\pave h$ to close the estimates (see section
\ref{sec:extradt}) but we do not have an equation for
$V$ here. Instead we just note that this term involves time
derivatives to top order and so we can
control it by the first
term in the definiton of the norm
$\|\cdot\|_{X_{T_1}}$. This is the
reason our norm involves an additional time derivative.
Integrating the lower-order terms in time we get
\eqref{odebound}.
The Lipschitz estimate \eqref{odebound2} is proven in
the same way.

\subsection{Existence for the linear and smoothed relativistic problem}
\label{existencerel}
We now prove the same result for the linear relativistic
problem. Fix a tangentially smooth vector field
$\widetilde{V}$ and define $\xve= \xve(s,y)$ by
\begin{equation}
 \frac{d}{ds} \xve^\mu(s,y) = \widetilde{V}^\mu(s,y),
 \qquad \xve^0(0,y) = 0, \quad
 \xve^i(0,y) = y^i, \quad i = 1,2,3.
% \label{}
\end{equation}
The linear problem we consider is
\begin{equation}
 D_s V_\mu + \frac{1}{2} \pave_\mu \sigma =
 \wGamma_{\mu\nu}^\alpha V_\alpha \widetilde{V}^\nu,
 \qquad \text{ in } [0,S_1]\times \Omega,
 \qquad V_\mu \big|_{s = 0} = \mathring{V}_\mu,
 \label{linVappendixrel}
\end{equation}
where $\sigma = \sigma[V]$ is determined by solving
the wave equation

\begin{equation}
 e'(\sigma) D_s^2 \sigma - \frac{1}{2}
 \widetilde{\na}_\nu ( \widetilde{g}^{\mu\nu}
 \widetilde{\na}_\mu \sigma)
 = \widetilde{\na}_\mu \widetilde{V}^\nu \widetilde{\na}_\nu
 V^\mu + \widetilde{R}_{\mu\nu\alpha}^\mu - e''(\sigma)
 (D_s\sigma)^2\!,
 \quad
 \sigma|_{\pa \Omega} \!=\! 0,
 \quad\!\!
 \sigma|_{s = 0} \!= \!\sigma_0, \quad\!\!
 D_s \sigma|_{s = 0}\! = \!\sigma_1.
 \label{linwaverelappendix}
\end{equation}

As in the previous section, we will show that
\eqref{linVappendixrel} is an ODE in a function space.
The norms we work with are
\begin{equation}
 \| u \|_{X_{S_0}} = {\sup}_{0 \leq s \leq S_0}
 \|u(s)\|_{r+1},
\end{equation}
where
\begin{equation}
 \|u(s)\|_{r+1} = {\sum}_{k+\ell \leq r}
  \|D_sD_s^{k} u(s)\|_{H^{\ell}(\Omega)}
 + {\sum}_{k+\ell \leq r}
\|D_s^k \pave u(s)\|_{H^{\ell}(\Omega)}
  + {\sum}_{k+\ell \leq r/2+2}
  \|\pa^\ell D_s u(s)\|_{L^\infty(\Omega)},
 \label{simplenormdefrel}
\end{equation}
where here
$\|\beta\|_{H^\ell(\Omega)} = \sum_{\ell' \leq \ell}
\|\pa_y^{\ell'} \beta\|_{L^2(\Omega)}$
where
$\|\cdot\|_{L^2(\Omega)}$ is defined as in \eqref{HLpnormdef} and
controls both space and time components.
In section \ref{compatsecrel} we prove that the map
$V\mapsto \sigma[V]$ is well-defined if the
compatibility conditions hold and $\|\widetilde{V}\|_{X_{S_1}}
+\|\S \xve\|_{X_{S_1}} < \infty$.
With
\begin{equation}
 \Sigma[V](s,y) = -\frac{1}{2}\int_{0}^s \pave \sigma[V](s',y)\, ds',
 \label{}
\end{equation}
in section \ref{compatsecrel} we prove the
bounds
\begin{align}
 \|\Sigma[V]\|_{X_{S_1}}
 &\leq C \left( \|\widetilde{V}\|_{X_{S_1}},
 \|\S \xve\|_{X_{S_1}},
 \|\wg\|_r\right)
 \left( \|\overline{V}\|_{X_{S_1}} + S_1 \|V\|_{X_{S_1}}\right),\label{oderel}\\
 \|\Sigma[V_1]-\Sigma[V_2]\|_{X_{S_1}}
 &\leq S_1 C \left( \|\widetilde{V}\|_{X_{S_1}},
 \|\S \xve\|_{X_{S_1}},
 \|V_1\|_{X_{S_1}},
 \|V_2\|_{X_{S_1}},\|\wg\|_{r+2}\right)
 \|V_1-V_2\|_{X_{S_1}}.
 \label{oderel2}
\end{align}
Here, $\|\wg\|_{r+2}$ is defined as in \eqref{simplenormdefrel}.
As in the previous section, this gives
existence for \eqref{linVappendixrel}.
\begin{prop}[Existence for the linear relativistic problem]
 Fix $r \geq 10$ and suppose that the initial data
 $\mathring{V}, \mathring{\sigma}$ satisfies the compatibility
 conditions \eqref{relcompat} to order $r+1$
 and so that $\mathring{\rho} \geq \rho_1 > 0$ with
 $\mathring{\rho} = \rho|_{s = 0}$, for some constant $\rho_1> 0$. Let
 $\widetilde{V}\in X_{S_1}$ for some $S_1 > 0$.
 Then there is $S > 0$ so that the linear smoothed problem \eqref{linVappendixrel}
 has a unique solution $V \in X_{S}$ with
 and moreover with $\overline{V}$ the formal power series
 solution at $s = 0$ defined as in \eqref{powerseriesrel}, $V$
 satisfies the bound
 \begin{equation}
  \|V\|_{X_{S}} \leq
  C \left( \|\widetilde{V}\|_{X_{S_1}},
  \|\xve\|_{X_{S_1}},
  \|\S \xve\|_{X_{S_1}},
  \|\wg\|_{r+2}\right)
  \|\overline{V}\|_{X_{S}},
  %\label{}
 \end{equation}
 and the enthalpy satisfies
 \begin{equation}
  \sup_{0 \leq s \leq S}
  {\sum}_{k +\ell \leq r}
  \|D_s^k \pave \sigma(s)\|_{H^{\ell}(\Omega)}
  + \|D_s^k D_s \sigma(s)\|_{H^{\ell}(\Omega)}
  \leq
  C \Big(\sum_{k+\ell \leq r}\!\!\!
  \|D_s^k \pave \sigma(0)\|_{H^{\ell}(\Omega)}
  + \|D_s^{k} D_s \sigma(0)\|_{H^{\ell}(\Omega)} \Big),
  \label{}
 \end{equation}
 with $C = C\left( \|\widetilde{V}\|_{X_{S_1}},
 \|\S\xve\|_{X_{S_1}},\|\wg\|_{r+2}\right)$. Moreover the resulting density
 $\rho = \rho(\sigma)$ defined by solving \eqref{sigmadef} satisfies $\rho(s,y) > \rho_1/2$ for
 $s \leq S, y \in \Omega$.
\end{prop}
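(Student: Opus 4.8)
The plan is to follow the strategy of Proposition~\ref{linexist} in the Newtonian case essentially verbatim, the only genuinely new features being the presence of the background geometric quantities $\wg_{\mu\nu}$, $\wGamma_{\mu\nu}^\gamma$ in the equations and the fact that all norms and inner products are taken with respect to the Riemannian metric $H$ of Section~\ref{normsdefs}. First I would reformulate \eqref{linVappendixrel} as a fixed-point problem in the Banach space
\[
 X_{S_1,c} = \{\, V : \|V\|_{X_{S_1}} < \infty,\ D_s^k V|_{s=0} = \mathring{V}_k,\ k = 0,\dots,r+1 \,\},
\]
where the $\mathring{V}_k$ are the time derivatives at $s=0$ prescribed by the formal power-series solution $\overline V$ of \eqref{powerseriesrel}; working in $X_{S_1,c}$ rather than $X_{S_1}$ guarantees that every iterate satisfies the compatibility conditions \eqref{relcompat}, so that the map $V \mapsto \sigma[V]$ is well defined. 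The solution operator is then \(V_\mu \mapsto \mathring V_\mu + \int_0^s \big( -\tfrac12 \pave_\mu \sigma[V] + \wGamma_{\mu\nu}^\gamma V_\gamma \widetilde V^\nu \big)\,ds'\), and once the bounds \eqref{oderel}--\eqref{oderel2} are in hand, a standard Picard iteration on a time interval $[0,S]$ with $S \le S_1$ sufficiently small (depending on the stated quantities) produces the unique solution $V \in X_S$ together with the asserted estimate $\|V\|_{X_S} \lesssim \|\overline V\|_{X_S}$ and the companion bound for the enthalpy.

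Second, the two bounds \eqref{oderel}--\eqref{oderel2} are where the real work lies, and they are established exactly as in Section~\ref{compatsecrel} following the linear Newtonian argument. Given $\widetilde V$, $\xve$ tangentially smooth and $V \in X_{S_1,c}$, one first solves the linear wave equation \eqref{linwaverelappendix} with Dirichlet data $\sigma|_{\pa\Omega} = 0$ by the Galerkin method (as in \cite{GLL19}) or by duality (as in \cite{Hormander2007}), obtaining $D_s^k\sigma$ and $D_s^{k-1}\pave\sigma$ in $L^\infty([0,S_1];H^{r+1-k}(\Omega))$ for $k = 0,\dots,r+1$; here it is essential both that the compatibility conditions force the relevant $G_k$ to vanish on $\pa\Omega$ and that $\widetilde V^\mu\widetilde{\N}_\mu = 0$, so that $D_s$ is tangent to the lateral boundary. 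One then upgrades this to \eqref{oderel} by running the energy-plus-elliptic scheme already used in Section~\ref{enthalpyestrel}: after one time derivative it suffices to control $\|\pa_y^\ell D_s^k \pave\sigma\|_{L^2(\Omega)}$ for $\ell + k = r$; the pointwise estimate \eqref{pwH} together with the Dirichlet elliptic estimate of Proposition~\ref{prop:dirichletrel} reduces $\ell$ space derivatives to $\ell-1$ space derivatives plus one application of $D_s^2$ to $\sigma$, using the wave equation \eqref{linwaverelappendix} to trade $\Dve\sigma$ for $D_s^2\sigma$ modulo lower-order terms; iterating this leaves only $\|D_s^r\pave\sigma\|_{L^2} + \|D_s^{r+1}\sigma\|_{L^2}$, which is handled by the wave energy identity \eqref{sigmawaveidentity}. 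The Lipschitz bound \eqref{oderel2} is obtained by the same argument applied to the difference of two solutions. The only deviation from the Newtonian computation is the systematic appearance of $T^{J}\wGamma$, $T^J\wg$ and $\pave T^J\wGamma$ terms, controlled by $\|\wg\|_{r+2}$ via the chain rule \eqref{conversion}; this accounts for the dependence on $\|\wg\|_{r+2}$ in \eqref{oderel}--\eqref{oderel2}.

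Third, once $V \in X_S$ has been constructed, the fact that the compatibility conditions hold at time $s = S$ follows from the construction of the enthalpy exactly as in the proof of Proposition~\ref{linexist}, and the density bound is elementary: $\rho = \rho(\sigma)$ is a fixed smooth positive function of $\sigma$ obtained by inverting \eqref{sigmadef}, with $\rho|_{s=0} = \mathring\rho \ge \rho_1$, and since $\|V\|_{X_S}$ — hence, via the last term in \eqref{simplenormdefrel} and the equation \eqref{linwaverelappendix} for $\sigma$, the $L^\infty$ norm of $D_s\sigma$ — is bounded uniformly on $[0,S]$, shrinking $S$ if necessary gives $|\rho(s,y) - \mathring\rho(y)| < \rho_1/2$, hence $\rho(s,y) > \rho_1/2$, for all $s \le S$, $y \in \Omega$.

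The main obstacle I anticipate is the top-order wave-equation estimate just described. In the a priori bounds of Sections~\ref{enthalpyhigherorderrel}--\ref{sec:extradt}, the term $D_s^r\big(\pave_\mu\widetilde V^\nu\pave_\nu V^\mu\big)$ produced on the right-hand side of \eqref{linwaverelappendix} after commuting with $D_s^r$ was closed using the Euler equation $D_s V = -\tfrac12\pave\sigma$ to absorb the extra time derivative, but in the present linear problem no equation for $V$ is available at that stage; instead one must exploit that the norm $\|\cdot\|_{X_{S_1}}$ in \eqref{simplenormdefrel} is designed to control one more time derivative of the velocity than of its space derivatives, so that $\|D_s^r(\pave\widetilde V\cdot\pave V)\|_{L^2}$ is dominated by the first term in the definition of the norm. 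Making this bookkeeping uniform — keeping track of which iterate each of $\widetilde V$, $\xve$, $\wg$, $\wGamma$ is evaluated at, and checking that every loss of a derivative is absorbed either by the smoothing $S_\ve$ or by the extra time derivative built into the norm — is the only delicate point; everything else is a routine transcription of the Newtonian argument.
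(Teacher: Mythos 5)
Your proposal is correct and takes essentially the same route as the paper: a Picard iteration in the constrained space $X_{S_1,c}$, with the contraction bounds \eqref{oderel}--\eqref{oderel2} supplied by the Galerkin construction for the wave equation together with the elliptic estimate of Proposition~\ref{prop:dirichletrel} and the decomposition \eqref{coordwavedecomp1}--\eqref{coordwavedecomp2} of the wave operator into $D_s^2$ plus an operator elliptic on slices of constant $s$. You have also correctly identified the one genuine subtlety the paper flags -- namely that the top-order term $D_s^r(\pave\widetilde V\cdot\pave V)$ must be closed by the extra time derivative built into the norm \eqref{simplenormdefrel} rather than by the Euler equation -- so no gap remains.
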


It just remains to prove that $V\mapsto \sigma[V]$ is
well-defined and that the bounds
\eqref{oderel}-\eqref{oderel2} hold.

\subsubsection{The compatibility conditions and existence for
the wave equation for the relativistic enthalpy}
\label{compatsecrel}

Let $\overline{V} = \sum_{k \geq 0} \frac{s^k}{k!}
V_k, \overline{\sigma} = \sum_{k \geq 0} \frac{s^k}{k!}
\sigma_k$ and $\overline{x}^i = x_0^i + s\overline{V}^i,
\overline{x}^0 = s \overline{V}^0$ be a formal
power series solution to \eqref{rescaledreleul2smoothed}-\eqref{rescaledrelcont2smoothed}
 in the sense that
\begin{equation}
 \hD_s^{k} \big( \hD_s \overline{V} + (1/2)\pave \overline{\sigma}\big)|_{s = 0} = 0,
 \qquad
 \hD_s^{k} \big( e(\overline{\sigma}) \hD_s \overline{\sigma} +
 \widetilde{\div} \overline{V}\big)|_{s = 0} = 0,
 \qquad
 k = 0,..., r,
 \label{powerseriesrel}
\end{equation}
where $\pave = \pave_{\overline{x}}$ denotes differentiation
with respect to the smoothed version of $\overline{x}$.
Here, to get more uniform notation we are writing
$V_0 = \mathring{V}$ for the initial velocity instead
of for the time component of $V$.
From these equations we see that there are functions
$G_k, F_k$ with
\begin{equation}
 \sigma_k = G_k(\sigma_0, x_0, V_0,...,
 V_{k-1}),
 \qquad
 V_k = F_k(\sigma_0, x_0, V_0, ..., V_{k-1}),
 \label{}
\end{equation}
and we say that initial data
$(V_0, \sigma_0)$ satisfy the compatibility conditions
to order $r$ if we have
\begin{equation}
 G_k(\sigma_0, x_0, V_0, ..., V_{k-1}) \in H^1_0(\Omega),
 \qquad \text{ for } k = 0,..., r.
 \label{relcompat}
\end{equation}
Here, for simplicity of notation we are ignoring the dependence on
the metric and the Christoffel symbols.
If the compatibility conditions hold to order $r$ then
as in the Newtonian case
one can use a Galerkin method to construct
a solution $\sigma$ to the wave equation \eqref{linwaverelappendix}
which satisfies
\begin{equation}
  D_s^k \sigma, \,
  D_s^{k-1} \pave \sigma \in L^\infty([0,S_0]; H^{r+1-k}(\Omega)),
  \quad k = 0,..., r+1,
\end{equation}
provided $\|V\|_{r,S_0}+\|\widetilde{V}\|_{r,S_0} + \|\S \xve\|_{r,S_0} < \infty$.
The only difference with the Newtonian case is that
the
structure of the wave operator on the left-hand side
of \eqref{linwaverelappendix} is a bit less obvious.
The observation which one needs is that using
the formula \eqref{waveexpression}
or equivalently the identities
\eqref{coordwavedecomp1}-
\eqref{coordwavedecomp2}, the operator on the left-hand side
of \eqref{linwaverelappendix} can be decomposed into the sum of $s$ derivatives
$D_s^2$ and an operator which is elliptic when restricted
to surfaces of constant $s$. Then the estimates which are
needed to construct a solution by a Galerkin approximation
follow in essentially the same way as the estimates
we proved in sections
 \ref{higherorderellipticrel}-\ref{enthalpyhigherorderrel}.
To prove the bounds \eqref{oderel}-\eqref{oderel2} one
argues exactly as in section \ref{compatsecrel} but using the energy
estimates from section \ref{enthalpyhigherorderrel} and the elliptic
estimate from Proposition \ref{prop:dirichletrel}.

\subsection{Construction of initial data satisfying the compatibility conditions
for the smoothed problem}
\label{compatconstruction}

In our main theorem we assumed that we were giving initial data which
satisfies compatibility conditions for the non-smoothed problem
 but in our construction we need to find initial data which satisfies compatibility
 conditions for the smoothed-out problem which are different.
  In this section
 we sketch how to construct such data. See Proposition
 E.2 of \cite{GLL19} for a detailed proof.

We suppose that we are given vector fields $V, \widetilde{V}$
which are sufficiently smooth and consider the wave equation
\begin{equation}
  D_t\big( e_1 D_t h\big) - \widetilde{\Delta} h = \widetilde{\pa}_i \widetilde{V}^j\, \widetilde{\pa}_j V^i, \quad \textrm{ in }
 [0, t_1]\! \times\!\Omega, \quad\text{with}\quad
 h\big|_{ [0, t_1] \times\pa \Omega}=0,\quad\text{where}\quad \widetilde{\Delta}\!=\delta^{ij}\nave_i\nave_j.
 \label{modelwavecc}
\end{equation}
As in earlier sections we will just discuss the case that
$e_1 > 0$ is a constant, the general case is  similar.

We now fix $\ve \geq 0$ and suppose that there are power series
$\overline{h}(t, y) = \sum_{k\geq 0} t^k h_k^\ve(y)/k!$,
 $\overline{\widetilde{V}}(t,y)=
 \overline{V}(t,y) = \sum_{k \geq 0} t^k V_k^\ve(y)/k!$,
 $\overline{\xve}(t,y) = \overline{x}(t,y) =
 \sum_{k \geq 0} t^k x^\ve_k(y)/k!$
 which satisfy the equation \eqref{modelwavecc},
 the Euler equations \eqref{eq:eulerlagrangiancoordxsmoothed}
 and the equations $D_t x = V$, $D_t \xve = \widetilde{V}$
 to order $r$ at $t = 0$.
 With $h_1^\ve$ defined by $e_1 h_1^\ve = \div V_0^\ve$,
we say that the initial data $(h_0^\ve, h_1^\ve)$
satisfies the compatibility
conditions to order $r$ if $h_k^\ve \in H^1_0(\Omega), k = 0,..., r$. The important part of this
definition is the vanishing at the boundary. The statement
about the power series just means that the higher-order
 coefficients $h_2^\ve,..., h_r^\ve$ are determined from
the given data $h_0^\ve, h_1^\ve$ by taking time derivatives of
\eqref{modelwavecc} at $t = 0$,
\begin{equation}
 e_1 h_k^\ve = \widetilde{\Delta} h_{k-2}^\ve
 + F_k^\ve[h_{(k-1)}^\ve],
 \label{determinephik}
\end{equation}
where we are evaluating the coefficients of $\widetilde{\Delta}$
at $t = 0$ and where we have introduced the notation
$h_{(j)}^\ve = (h_{-2}^\ve, h_{-1}^\ve, h_0^\ve,..., h_j^\ve)$
with $h_{-2}^\ve = x_0^\ve, h_{-1}^\ve = V_0^\ve$,
and where $F_k^\ve$ depends on up
to two derivatives of its arguments and is given by
\begin{equation}
 F_k^\ve[h_{(k-1)}^\ve] = D_t^{k-2}
 \Big(\big( \pave_i \overline{\widetilde{V}}^j \pave_j \overline{V}^i\big)
 +[D_t^{k-2}, \widetilde{\Delta}] \overline{h}
 \Big)\big|_{t = 0}.
 %\label{}
\end{equation}
The parameter $\ve$ enters through the definition of
$\pave$ as well as $\widetilde{\Delta}$.
In this expression, $\pave, \widetilde{\Delta}$ are defined
as in \eqref{eq:eulerlagrangiancoordsmoothed} but with $x$ replaced by $\overline{x}$.
Using the fact that \eqref{eq:eulerlagrangiancoordxsmoothed}
 holds at $t = 0$ one can write
 time derivatives of $\overline{V},
 \overline{\widetilde{V}}$ and $t = 0$ in terms of the
 higher-order coefficients $h_0^\ve,..., h_r^\ve$ and similarly one can
 write the time derivatives of $\overline{\xve}^\ve, \overline{x}^\ve$
 at $t = 0$ in terms of $V_0^\ve, h_0^\ve,..., h_r^\ve$.

The result we need is then the following.
\begin{prop}
  Suppose that the initial data $(h_0, h_1)$ is such that when
  $\ve = 0$ and with $h_k^0$ defined by \eqref{determinephik},
  we have $h_k^0 \in  H^1_0(\Omega)$ for $k = 0,..., r$.
  Suppose additionally that $e_1$ is sufficiently small.
  For $\ve > 0$ sufficiently small, there is initial data
  $(h_0^\ve, h_1^\ve)$ so that with $h_k^\ve$ defined by
  \eqref{determinephik} we have $h_k^\ve \in H^1_0(\Omega)$
  for $k = 0,..., r$.
\end{prop}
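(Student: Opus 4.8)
The plan is to follow the construction in Appendix~E of \cite{GLL19}, reading the compatibility conditions as a finite triangular system for the boundary normal jets of the data at $\pa\Omega$ and solving it by perturbing off the given $\ve=0$ data. The free data for the smoothed problem is the pair $(h_0^\ve,V_0^\ve)$ (with $h_1^\ve$ then fixed by $e_1 h_1^\ve=\widetilde{\div}\, V_0^\ve$, and $x_0$ held fixed), and the conditions $h_k^\ve\in H^1_0(\Omega)$, $k=0,\dots,r$, amount to $h_0^\ve|_{\pa\Omega}=0$, $\widetilde{\div}\, V_0^\ve|_{\pa\Omega}=0$, and $h_k^\ve|_{\pa\Omega}=0$ for $2\le k\le r$, where $h_k^\ve$ is generated by the recursion \eqref{determinephik}, $e_1 h_k^\ve=\widetilde{\Delta}h_{k-2}^\ve+F_k^\ve[h_{(k-1)}^\ve]$. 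First I would make precise how each trace depends on the data: since at $t=0$ one has $\xve=x_0$ and hence $\widetilde{\Delta}=\Delta$ (the flat Laplacian), if $h_{k-2}^\ve$ already vanishes on $\pa\Omega$ then $(\widetilde{\Delta}h_{k-2}^\ve)|_{\pa\Omega}$ equals $\pa_\n^2 h_{k-2}^\ve|_{\pa\Omega}$ up to a curvature term times $\pa_\n h_{k-2}^\ve|_{\pa\Omega}$; together with the fact that $F_k^\ve|_{\pa\Omega}$ depends only on boundary jets of $h_0^\ve,\dots,h_{k-1}^\ve$ (and of $x_0$) of controlled order, this shows that the map sending the boundary normal jets of $(h_0^\ve,V_0^\ve)$ to the vector of traces $(h_k^\ve|_{\pa\Omega})_{k=0}^r$ has lower-triangular principal part with invertible diagonal, the diagonal entry at order $k$ being the one that lets us prescribe $\pa_\n^2 h_{k-2}^\ve|_{\pa\Omega}$.

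With this structure identified, the second step is the perturbation argument. At $\ve=0$ the hypothesis is precisely that the given data solves this system. For $\ve>0$ the recursion coefficients differ from the $\ve=0$ ones only through the smoothed quantities $\widetilde V=\sm^*\sm V$, $\widetilde x=\sm^*\sm x$ that enter $F_k^\ve$, and by the smoothing estimates of Lemma~\ref{smoothinglemma} these converge to their unsmoothed counterparts as $\ve\to0$ (with a loss of derivatives that is harmless, since the fixed profiles may be taken with as much extra regularity as needed). Hence I would write $h_0^\ve=h_0+\delta h_0^\ve$, $V_0^\ve=V_0+\delta V_0^\ve$ with corrections $\delta h_0^\ve,\delta V_0^\ve$ supported in a fixed collar of $\pa\Omega$ and parametrized by their boundary normal jets, and solve the resulting system $\Phi_\ve(\mathrm{jets})=0$ by a contraction-mapping argument (equivalently, the implicit function theorem): $\Phi_0$ vanishes at the given jets, $D\Phi_0$ is invertible by the triangular structure above, and $\Phi_\ve\to\Phi_0$ in $C^1$ as $\ve\to0$, so for $\ve$ small there is a solution with $\delta h_0^\ve,\delta V_0^\ve=O(\ve)\to0$. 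The smallness of $e_1$ is used, exactly as in \cite{GLL19} and as elsewhere in the paper, to keep the coupling and nonlinear terms in $\Phi_\ve$ under control so that the contraction closes uniformly in $\ve$; one also checks that the $O(\ve)$ corrections do not spoil the paper's other standing assumptions on the data (positivity of the density, the Taylor sign condition).

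I expect the main obstacle to be the bookkeeping in the first step: verifying that, although $F_k^\ve$ involves up to two derivatives of all of $h_0^\ve,\dots,h_{k-1}^\ve$, the induced map on boundary jets really is triangular with nondegenerate diagonal — that is, that the only new normal-jet that enters the order-$k$ trace is the one we can freely prescribe via the correction, and that prescribing it (i.e.\ adjusting $\pa_\n^2 h_{k-2}^\ve|_{\pa\Omega}$ while leaving $h_{k-2}^\ve|_{\pa\Omega}$ and $\pa_\n h_{k-2}^\ve|_{\pa\Omega}$ untouched) does not retroactively destroy the lower-order conditions. This requires carefully tracking, order by order, which normal derivative of which $h_j^\ve$ is fixed at which stage, exactly as carried out in the proof of Proposition~E.2 of \cite{GLL19}; once that is in place, the remaining estimates are routine perturbations of those already established for the linear wave equation for the enthalpy in Section~\ref{compatsec}.
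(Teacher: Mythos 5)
Your proposal is correct in outline but takes a genuinely different route from the paper's own proof. You parametrize the correction to $(h_0, V_0)$ by its boundary normal jets on $\pa \Omega$, observe that the compatibility conditions give a lower-triangular system in these jets with nondegenerate diagonal, and invoke the implicit function theorem, using $\Phi_\ve \to \Phi_0$ in $C^1$ as the smoothing is removed. The paper instead modifies the data by \emph{interior} functions $u_k^\ve$ on all of $\Omega$: one writes $h_k^\ve = h_k^0 + u_k^\ve$ with $u_k^\ve \in H_0^1(\Omega)$, derives the coupled Dirichlet system $\widetilde{\Delta} u_{k-2}^\ve + G_k[u_{(k-1)}^\ve] = e_1 u_k^\ve$ with $u_{r-1}^\ve = u_r^\ve = 0$, and solves it by a Picard iteration on the whole vector $(u_0^{\ve,\nu},\dots,u_r^{\ve,\nu})$, using the elliptic estimates of Proposition \ref{prop:dirichlet} to propagate bounds and the smallness of $e_1$ to make the coupling through the $e_1 u_k^{\ve,\nu}$ terms contractive. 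The paper's route is more "global" but directly mirrors the form of the recursion \eqref{determinephik}; each $u_{k-2}^{\ve,\nu}$ is determined by a clean Dirichlet problem, and no choice of extension operator or trace bookkeeping is needed. Your route isolates the essential finite (per order) degrees of freedom, which is conceptually cleaner, but shifts the burden onto verifying the triangular structure in the jet variables with invertible diagonal and onto setting up the right Sobolev topology on the boundary jets so that the trace maps, the extension operator, and the nonlinear terms in $F_k^\ve$ are all controlled. One small correction: you attribute the jet-by-jet argument to the proof of Proposition E.2 in \cite{GLL19}, but that reference (like the present paper) uses the interior elliptic-correction scheme, not the boundary-jet reduction — so the careful triangular bookkeeping you flag as the main obstacle is work you would have to supply yourself rather than import.
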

To prove this result we look for data of the form
$(h_0^\ve, h_1^\ve) = (h_0 + u_0^\ve,
h_1 + u_1^\ve)$. Inserting this into
 \eqref{determinephik} we see that if
we define $u_k^\ve$ by solving
\begin{equation}
 \widetilde{\Delta} u_{k-2}^\ve + G_k[u_{(k-1)}^\ve] = \kappa u_k^\ve,
 \qquad \text{ in } \Omega,
 \qquad
 u_k^\ve =0, \qquad \text{ on } \pa \Omega,
\end{equation}
where
 $
 u_{r-1}^\ve = u_{r}^\ve = 0
 $
and where $G_k$ is given by
\begin{equation}
  G_k[u_{(k-1)}^\ve]
  =
  \left( F_k^\ve[h_{(k-1)} + u^\ve_{(k-1)}]
  - F_k^\ve[h_{(k-1)}]\right)
  + \left(F_k^\ve[h_{(k-1)}]
  - F_k[h_{(k-1)}]\right)
  + \big(\widetilde{\Delta} - \Delta\big)h_{k-2},
 \label{}
\end{equation}
then the resulting $h_0^\ve, h_1^\ve$ satisfy the compatibility
conditions to order $r$. To get back the data for
$V_0^\ve$ for $\ve > 0$ one just takes $V_0^\ve = V_0 +
\nabla u_{-1}^\ve$ where $\Delta u_{-1}^\ve = e_1 h_1^\ve$,
$u_{-1}^\ve = 0$ on $\pa\Omega$.
The above gives a system of nonlinear elliptic equations which can
be solved by iteration. Given $(u_{0}^{\ve,\nu-1},
\dots u_{r}^{\nu-1})$, construct $(u_{0}^{\ve,\nu},...,  u_{r}^{\ve,\nu})$
by solving the system
\begin{align}
  \widetilde{\Delta} u_{k-2}^{\ve,\nu}
  + G_k[u_{(k-1)}^{\ve,\nu-1}] = e_1 u_k^{\ve, \nu},
  \qquad \text{ in } \Omega,
  \qquad
  u_k^{\ve,\nu} =0 \qquad \text{ on } \pa \Omega,
 \label{}
\end{align}
and
\begin{equation}
 u_{r-1}^{\ve,\nu} = u_{r}^{\ve,\nu} = 0, \qquad
 \text{ in } \Omega.
 \label{}
\end{equation}
Provided $e_1$ is taken sufficiently small, one can use
the elliptic estimates from Proposition \ref{prop:dirichlet} to prove that the above sequence $(u^{\ve,\nu}_0,..., u^{\ve,\nu}_r)$ is
uniformly bounded and Cauchy with respect to the norms
$\sum_{k \leq r} \|u^{\nu,\ve}_k\|_{H^{r-k}(\Omega)}$.
See Proposition E.2 of \cite{GLL19} for a detailed proof.

\subsection{Construction of compatible data for the relativistic problem}
Data for the relativistic problem is constructed using the same steps as in the previous section.
The wave equation is
\begin{equation}
 e'({}_{\!}\sigma{}_{\!}) \hD_s^2 \sigma - \frac{1}{2} \widetilde{\nabla}_{\!\nu}
 (\widetilde{g}^{\mu\nu}\widetilde{\nabla}_{\!\mu}
 \sigma\!)
 \!= \!\widetilde{\nabla}_{\!\mu} \widetilde{V}^\nu \widetilde{\nabla}_{\!\nu} V^\mu + \widetilde{R}_{\mu\nu\alpha}^\mu \widetilde{V}^\nu V^\alpha\!
 - e''({}_{\!}\sigma{}_{\!}) (\hD_s\sigma)^2\!\!,
 \quad \textrm{in }
[0, s_1]{}_{\!} \times{}_{\!}\Omega \text{ with }
\sigma\big|_{ [0, s_1] \times\pa \Omega}\!=\!0
 \label{modelwaveccrel}
\end{equation}
The compatibility conditions for this equation are defined
as in the previous section. We suppose that we are given
formal power series in $s$,
$\overline{\sigma}(s, y) = \sum_{k\geq 0} s^k \sigma_k^\ve(y)/k!$,
 $\overline{\widetilde{V}}(s,y)=
 \overline{V}(s,y) = \sum_{k \geq 0} s^k V_k^\ve(y)/k!$,
 $\overline{\xve}(s,y) = \overline{x}(s,y) =
 \sum_{k \geq 0} s^k x^\ve_k(y)/k!$
 which satisfy the equation \eqref{modelwaveccrel} to order
 $r$ at $s = 0$. We can then solve for the higher-order
 coefficients $\sigma_2^\ve,..., \sigma_r^\ve$ in terms of
 $\sigma_0^\ve, \sigma_1^\ve$ and the compatibility conditions
 are that the $\sigma_k^\ve$ satisfy $\sigma_k^\ve \in H_0^1(\Omega), k = 0,... r$.

Simple modifications of the arguments used to prove
Proposition E.2 from \cite{GLL19}, using the elliptic
estimates from Proposition \ref{prop:dirichletrel}
in place of the elliptic estimate (5.8) from \cite{GLL19},
can be used to prove:
\begin{prop}
  Suppose that the initial data $(\sigma_0, \sigma_1)$ is such that when
  $\ve = 0$,
  we have $\sigma_k^0 \in  H^1_0(\Omega)$ for $k = 0,..., r$.
  Suppose additionally that $e_1 = e'(0)$ is sufficiently small.
  For $\ve > 0$ sufficiently small, there is initial data
  $(\sigma_0^\ve, \sigma_1^\ve)$ so that $\sigma_k^\ve \in H^1_0(\Omega)$
  for $k = 0,..., r$.
\end{prop}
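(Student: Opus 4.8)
The plan is to imitate the construction of compatible data for the smoothed Newtonian problem carried out above (and in Proposition E.2 of \cite{GLL19}), replacing the flat Laplacian by the spatial elliptic operator $\tr_{\oH}\pave^2$ which arises as the principal part of the relativistic wave operator when restricted to surfaces of constant $s$, via the decomposition \eqref{waveexpression}, and using the elliptic estimate from Proposition \ref{prop:dirichletrel} in place of Proposition \ref{prop:dirichlet}. Concretely, I look for perturbed data of the form $(\sigma_0^\ve,\sigma_1^\ve)=(\sigma_0+u_0^\ve,\sigma_1+u_1^\ve)$ together with $V_0^\ve=V_0+\pave u_{-1}^\ve$, where $u_{-1}^\ve$ solves a linear Dirichlet problem chosen so that the continuity equation at $s=0$ is respected; the content of the proposition is then the vanishing of the higher coefficients at $\pa\Omega$.

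First I would record the recursion determining the higher coefficients. Differentiating the wave equation \eqref{modelwaveccrel} in $s$ at $s=0$ and using the decomposition \eqref{waveexpression} of $\wg^{\mu\nu}\pave_\mu\pave_\nu$ into $-\,\widetilde{V}_{\wsubTau}^{-2}\hD_s^2$, the $\oH$-Laplacian $\tr_{\oH}\pave^2$ on $\{s=0\}\cap\Omega$, and terms carrying at most one factor $\hD_s$, one obtains for $k\ge 2$ a relation $e'(0)\,\sigma_k^\ve=\tr_{\oH}\pave^2\sigma_{k-2}^\ve+F_k^\ve[\sigma_{(k-1)}^\ve]$, where $\sigma_{(j)}^\ve=(x_0^\ve,V_0^\ve,\sigma_0^\ve,\dots,\sigma_j^\ve)$, the operators $\pave,\oH,\wg,\wGamma$ are all evaluated at the smoothed coordinate built from the data, and $F_k^\ve$ depends on at most two derivatives of its arguments and on the metric quantities $\wg,\wGamma$ (hence, by \eqref{metricbds}, on the data only through that smoothed coordinate). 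Since $u_{r-1}^\ve=u_r^\ve=0$ by fiat, substituting $(\sigma_0+u_0^\ve,\sigma_1+u_1^\ve)$ into this recursion converts the compatibility requirement $\sigma_k^\ve\in H^1_0(\Omega)$ into the coupled nonlinear elliptic system
\[
 \tr_{\oH}\pave^2 u_{k-2}^\ve+G_k^\ve[u_{(k-1)}^\ve]=e'(0)\,u_k^\ve\ \text{ in }\Omega,\qquad u_k^\ve=0\ \text{ on }\pa\Omega,\qquad k=2,\dots,r,
\]
where $G_k^\ve$ collects three contributions: the Lipschitz-in-$u$ remainder $F_k^\ve[\sigma_{(k-1)}+u_{(k-1)}^\ve]-F_k^\ve[\sigma_{(k-1)}]$, the $\ve$-discrepancy $F_k^\ve[\sigma_{(k-1)}]-F_k^0[\sigma_{(k-1)}]$ together with $(\tr_{\oH}\pave^2-\tr_{\overline H}\pa^2)\sigma_{k-2}$ (which tends to $0$ in $H^{r-k}(\Omega)$ as $\ve\to 0$ by the smoothing estimates of Lemma \ref{smoothinglemma}), and nothing else.

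I would then solve this system by the same downward-sweeping iteration as in the Newtonian case: given $u^{\ve,\nu-1}$, define $u^{\ve,\nu}$ by solving, for $k$ decreasing from $r$, the linear Dirichlet problems $\tr_{\oH}\pave^2 u_{k-2}^{\ve,\nu}=e'(0)u_k^{\ve,\nu}-G_k^\ve[u_{(k-1)}^{\ve,\nu-1}]$ with $u_{k-2}^{\ve,\nu}|_{\pa\Omega}=0$, starting from $u_{r-1}^{\ve,\nu}=u_r^{\ve,\nu}=0$ so that the $u_k^{\ve,\nu}$ appearing on the right is already known. Proposition \ref{prop:dirichletrel}, applied with $q=u_{k-2}^{\ve,\nu}$, gives $\sum_{k\le r}\|u_k^{\ve,\nu}\|_{H^{r-k}(\Omega)}\lesssim e'(0)\sum_{k\le r}\|u_k^{\ve,\nu}\|_{H^{r-k}(\Omega)}+\|G^\ve[u^{\ve,\nu-1}]\|$, and for $e_1=e'(0)$ sufficiently small the first term is absorbed into the left. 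Combined with the Lipschitz bound for $G^\ve$ in $u$ and the smallness of the $\ve$-discrepancy, this yields a uniform bound and the Cauchy property of $(u^{\ve,\nu})_\nu$ on a small ball, hence a fixed point $u^\ve$; shrinking $\ve$ makes $\|u^\ve\|$ as small as one wishes, and unwinding the construction produces $(\sigma_0^\ve,\sigma_1^\ve)$ with $\sigma_k^\ve\in H^1_0(\Omega)$ for $k=0,\dots,r$.

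The main obstacle I anticipate is ensuring that Proposition \ref{prop:dirichletrel} applies with constants uniform in $\ve$ and in the perturbation: its constant $C_K$ depends on $L^\infty$ norms of $\pave T^L\xve$, $\pave T^L\pave\sigma$ and $\pave T^L\tH$ at $s=0$, and on the ellipticity constant $c_L$ from \eqref{Gpositivityapp}. Because the smoothed coordinate at $s=0$ converges to $x_0$ as $\ve\to 0$ and the velocity perturbation $\pave u_{-1}^\ve$ is small in $H^r(\Omega)\hookrightarrow L^\infty(\Omega)$ (using $r\ge 10$), the Riemannian metric $\oH$ on $\{s=0\}\cap\Omega$ — built from $\wg$, $\wTau$ and the initial velocity through \eqref{Gdef} — stays uniformly positive-definite and uniformly bounded, so $c_L$ stays bounded away from $0$; this has to be checked once and then persists along the iteration. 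Apart from this geometric point, the argument is the same bookkeeping as in Proposition E.2 of \cite{GLL19}, now performed with the Lorentzian elliptic operator $\tr_{\oH}\pave^2$.
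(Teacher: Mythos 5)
The approach you outline — carry the Newtonian construction (and GLL19, Proposition~E.2) over to the relativistic setting, replace the flat Laplacian by $\tr_{\oH}\pave^2$ via the decomposition of $\wg^{\mu\nu}\pave_\mu\pave_\nu$, and substitute Proposition~\ref{prop:dirichletrel} for the Newtonian Dirichlet estimate — is exactly the route the paper takes, and your geometric remark at the end about uniform ellipticity of $\oH$ and the persistence of \eqref{Gpositivityapp} along the iteration is the right thing to check.

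However, there is a gap in your recursion, and it propagates into the absorption step. Having accepted the decomposition
\begin{equation}
\wg^{\mu\nu}\pave_\mu\pave_\nu\sigma
= -\widetilde{V}_{\wsubTau}^{-2}\,\hD_s^2\sigma + \oH^{\mu\nu}\pave_\mu\pave_\nu\sigma + 2\hD_s\pave_W\sigma + L^\mu\pave_\mu\sigma,
\end{equation}
and inserting it into the wave equation $e'(\sigma)\hD_s^2\sigma-\tfrac12\wg^{\mu\nu}\widetilde{\nabla}_\mu\widetilde{\nabla}_\nu\sigma=\text{RHS}$, the coefficient multiplying $\hD_s^2\sigma$ becomes $e'(\sigma)+\tfrac12\widetilde{V}_{\wsubTau}^{-2}$, not $e'(\sigma)$. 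Upon differentiating $k-2$ times at $s=0$, the recursion is therefore
\begin{equation}
\Bigl(e'(0)+\tfrac12\widetilde{V}_{\wsubTau}^{-2}\Bigr)\sigma_k^\ve
= \tfrac12\tr_{\oH}\pave^2\sigma_{k-2}^\ve + \pave_W\sigma_{k-1}^\ve + \widetilde{F}_k^\ve[\sigma_{(k-1)}^\ve],
\end{equation}
and the prefactor $e'(0)+\tfrac12\widetilde{V}_{\wsubTau}^{-2}$ is not small when $e'(0)$ is: the second summand is of order $1/\sigma$, hence of order one, because the hard-phase limit $e'\to 0$ leaves a genuine wave equation for $\sigma$ rather than a Poisson equation (this is a structural difference from the Newtonian incompressible limit, where $e_1\to 0$ degenerates $e_1 D_t^2 h-\Delta h$ to $-\Delta h$). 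Consequently your displayed estimate with prefactor $e'(0)$ and the sentence ``for $e_1=e'(0)$ sufficiently small the first term is absorbed into the left'' is not correct as stated; absorbing by smallness of $e'(0)$ alone cannot remove an $O(1)$ coefficient.

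What saves the construction — and what the proposal should say instead — is precisely the downward sweep you already invoke. In the pass from $k=r$ down to $k=2$, the quantities $u_k^{\ve,\nu}$ and $u_{k-1}^{\ve,\nu}$ appearing on the right-hand side of the Dirichlet problem for $u_{k-2}^{\ve,\nu}$ are \emph{already determined and bounded} before that problem is solved (starting from $u_{r-1}^{\ve,\nu}=u_r^{\ve,\nu}=0$). The resulting bound therefore propagates by recursion in $k$, with a constant that grows at most geometrically in $r$ but is multiplied by $\sum_k\|G_k^\ve\|$, which tends to zero as $\ve\to 0$ and as the perturbation shrinks. Smallness of $e'(0)$ enters only to keep this geometric factor controlled (it is also already a standing hypothesis through the large-sound-speed assumption), not to perform a self-absorption. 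You should therefore drop the absorption sentence, record the recursion with the correct coefficient and the cross term $\pave_W\sigma_{k-1}^\ve$ explicitly, and conclude boundedness and contraction from the downward-sweep bookkeeping rather than from absorbing an $e'(0)$-prefactor.
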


\section{The Galerkin method}
\label{galerkinsec}

In this section, for the sake of completeness we include a sketch of a
Galerkin method which can be used to prove existence for the wave equation
\eqref{sigmawavesetup1} for the enthalpy.
 We just discuss the Newtonian case, the relativistic case being similar.

Let $P_\lambda $ denote the orthogonal projection onto the space spanned by eigenfunctions
\begin{equation}
P_\lambda f={\sum}_{\lambda_k\leq\lambda} \langle f,\psi_k\rangle \psi_k,
\end{equation}
with eigenvalues $\leq \!\lambda$.
We now want to find the solution $h^\lambda$ to the equation
\begin{equation}\label{eq:waveequationsmoothedlambda}
  D_t\big( e_1 D_t h^\lambda\big) - \widetilde{\Delta}_\lambda  h^\lambda =P_\lambda F,\quad \textrm{ in }
 [0, t_1]\! \times\!\Omega, \quad\text{with}\quad
 h^\lambda\big|_{ \pa \Omega}=0,
\end{equation}
where
$
 \widetilde{\Delta}_\lambda =P_\lambda  \widetilde{\Delta}P_\lambda
 $, with initial data
\begin{equation}
h^\lambda \big|_{t=0}=P_\lambda h_0,\qquad D_t h^\lambda\big|_{t=0}=P_\lambda h_1,
\end{equation}
Here as before we have for simplicity assumed that $e_1$ is constant.
This equation means that $h^\lambda$ is in the span of the eigenfunctions with eigenvalues $\lambda_k \leq \lambda$:
\begin{equation}
h^\lambda(t,y)={\sum}_{\lambda_k\leq \lambda} d^\lambda_k(t)\psi_k(y)
\end{equation}
and \eqref{eq:waveequationsmoothedlambda} is nothing but a system of second order ordinary differential
equations for $d^\lambda_k$ in disguise, obtained by taking the inner product with the eigenfunction of eigenvalues $\leq \lambda$. Since the number of equations are the same as the number of eigenvalues this system and hence the equation has a unique solution.

Multiplying the equation by $D_t h^\lambda$ and integrating with respect to the measure $dy$ we can remove the projections since one factor is already in the span of the eigenfunctions with eigenvalues $\lambda_k\leq \lambda$:
\begin{equation}
\int_{\Omega} D_t h^\lambda\, D_t (e_1 D_t h^\lambda) dy
- \int_{\Omega}D_t h^\lambda \, \widetilde{\Delta} h^\lambda  dy
=\int_{\Omega}D_t h^\lambda \,F  dy.
\end{equation}
 Hence $h^\lambda$ satisfy exactly the same energy estimate as $h$ with the exception that initial data are projected, but since the projection is bounded
 on the spaces we are considering it leads to the same energy bound as for $h$.
Now, in the previous sections we mostly integrated with respect to the measure
$d\widetilde{x}=\kappa dy$ in order that $\widetilde{\triangle}$ would be symmetric, however the difference just introduces a lower order term that can be controlled by the energy. Using this uniform energy bound obtained for
\begin{equation}
\int_{\Omega} e_1 (D_t h^\lambda)^2 dy
+ \int_{\Omega}\delta^{ij}  \widetilde{\pa}_i h^\lambda\, \widetilde{\pa}_j h^\lambda  dy,
\end{equation}
one obtains weak solutions as in \cite{Evans2010}. The proof there is for
time independent operator but can easily be modified as in \cite{GLL19}. Moreover by differentiating the equation with respect to $t$ one obtains the same energy bounds for $h^\lambda $ replaced by $D_t h^\lambda$ and this  gives a solution in $H^2$ using the equation and the elliptic estimate for $\widetilde{\triangle} h^\lambda$. Since we have constructed our solution
as a limit of eigenfunctions which vanish at the boundary and since we have uniform
estimates, it follows that the compatibility conditions hold at later times.

\subsection*{Acknowledgements}
Research of DG was partially supported by
the Simons Center for Hidden Symmetries and Fusion Energy.
Research of HL was supported in part by Simons Foundation Collaboration Grant 638955.

\bibliographystyle{abbrv}

\end{document}